\numberwithin{equation}{section}
\renewcommand{\subsection}[1]{\hspace{-\parindent}\refstepcounter{subsection}{\bf (\arabic{section}\alph{subsection}) #1.}\addcontentsline{toc}{subsection}{\bf #1.}}
\newenvironment{nouppercase}{%
  \renewcommand{\uppercasenonmath}[1]{}}{}
\newcommand{\mybox}[1]{\parbox[t]{37em}{#1}}
\theoremstyle{plain}
\newtheorem{thm}{Theorem}[section]
\newtheorem{theorem}[thm]{Theorem}
\newtheorem{corollary}[thm]{Corollary}
\newtheorem{definition}[thm]{Definition}
\newtheorem{remark}[thm]{Remark}
\newtheorem{convention}[thm]{Convention}
\newtheorem{proposition}[thm]{Proposition}
\newtheorem{example}[thm]{Example}
\newtheorem{non-example}[thm]{Non-example}
\newtheorem{lemma}[thm]{Lemma}
\newtheorem{setup}[thm]{Setup}
\newtheorem{scholium}[thm]{Scholium}
\newtheorem*{claim*}{Claim} 
\newtheorem*{lemma*}{Lemma}
\newtheorem*{theorem*}{Theorem}
\newtheorem*{conjecture*}{Conjecture}
\newtheorem{application}[thm]{Application}
\newcommand{\bC}{{\mathbb C}}
\newcommand{\bG}{{\mathbb G}}
\newcommand{\bK}{{\mathbb K}}
\newcommand{\bP}{{\mathbb P}}
\newcommand{\bQ}{{\mathbb Q}}
\newcommand{\bR}{{\mathbb R}}
\newcommand{\bZ}{{\mathbb Z}}
\newcommand{\scrA}{\EuScript A}
\newcommand{\scrB}{\EuScript B}
\newcommand{\scrC}{\EuScript C}
\newcommand{\scrD}{\EuScript D}
\newcommand{\scrF}{\EuScript F}
\newcommand{\scrG}{\EuScript G}
\newcommand{\scrK}{\EuScript K}
\newcommand{\scrO}{\EuScript O}
\newcommand{\scrP}{\EuScript P}
\newcommand{\scrQ}{\EuScript Q}
\newcommand{\scrR}{\EuScript R}
\newcommand{\scrT}{\EuScript T}
\newcommand{\frakb}{\mathfrak{b}}
\newcommand{\frakc}{\mathfrak{c}}
\newcommand{\frakg}{\mathfrak{g}}
\newcommand{\frakt}{\mathfrak{t}}
\newcommand{\frakw}{\mathfrak{w}}
\newcommand{\half}{{\textstyle\frac{1}{2}}}
\newcommand{\iso}{\cong}
\newcommand{\htp}{\simeq}
\newcommand{\smooth}{C^\infty}
\renewcommand{\hom}{\mathit{hom}}
\newcommand{\cornerbar}[1]{
  \tikz[baseline=(n.base)]{\node(n)[inner sep=1pt]{$#1$};
    \draw[line cap=round](n.north west)--(n.north east)--(n.south east);
  }
}
\newcommand{\cornerdoublebar}[1]{
 \tikz[baseline=(n.base)]{\node(n)[inner sep=1pt]{$#1$};
    \draw[line cap=round](n.north west)--(n.north east)--(n.south east);
    \draw[line cap=round]($(n.north east) + (.065,0)$)--($(n.south east)+(.065,0)$);	
  }
}
\newcommand{\cornersubbar}[1]{ 
 \begin{tikzpicture}[baseline=(n.base)]
    \node(n)[inner sep=1pt]{{$\scriptstyle #1$}};
    \draw[line cap=round](n.north west)--(n.north east)--(n.south east);
  \end{tikzpicture}
}
\newcommand{\calnablaq}{
\mathcal{V}\hspace{-.45em}\raisebox{0.1ex}{$\bar{\,\,}_{\raisebox{-.1ex}{$\scriptstyle q$}}$}\hspace{-.1em}}
\newcommand{\relativenablaq}{\dot{\calnablaq}}
\newcommand{\oPerpSymbol}{\begin{tikzpicture}[scale=0.112]
  \draw (0,-.2)--(0,1); \draw (-.98,-.2)--(.98,-.2);
  \draw (0,0) circle [radius=1];
\end{tikzpicture}}
\newcommand{\operp}{\mathbin{\raisebox{-.8pt}{\oPerpSymbol}}}
\title[LEFSCHETZ FIBRATIONS]{\Large\larger\rm Fukaya $A_\infty$-structures associated to\\ Lefschetz fibrations. VIII}
\author{Paul Seidel}
\begin{document}
\begin{nouppercase}
\maketitle
\end{nouppercase}
\begin{abstract}
We use Lefschetz pencil methods to derive structural results about Fukaya categories of Calabi-Yau hypersurfaces; in particular, concerning their dependence on the Novikov parameter.
\end{abstract}

\section{Introduction\label{sec:intro}}

\fbox{\parbox{40.75em}{%
{\bf Warning to the reader.} {\em The arguments in this paper cite two preprints \cite{seidel21,seidel21b} which do not at present exist. Until that is remedied, the proofs cannot be considered complete, and this paper should be regarded as a preliminary research announcement.}}
}

\subsection{Overview}
Take a closed symplectic manifold of dimension $(2n-2)$, which is a symplectic Calabi-Yau. By this, we mean that it has zero first Chern class and integral symplectic class; for instance, it could come from a smooth projective Calabi-Yau variety and a choice of ample line bundle. The Fukaya categories of such manifolds are central objects in symplectic topology and mirror symmetry. By construction, the Fukaya category involves a formal parameter, the Novikov parameter $q$ (one could call it a formal ``family of categories'' parametrized by $q$). Intuitively, $-\log(q)$ controls the scale of the symplectic form (hence, $q \rightarrow 0$ corresponds to the large volume limit). A priori, the Fukaya category can involve arbitrary series in $q$, but one would naturally like to constrain its structure on some deeper level. 

Fundamental progress in this direction was achieved in Ganatra-Perutz-Sheridan's paper \cite{ganatra-perutz-sheridan15}, where they characterize $q$ intrinsically, among all its possible reparametrizations, in terms of the noncommutative Hodge theory associated to the Fukaya category (this is partly based on earlier ideas of Kontsevich and Barannikov, as well as the canonical coordinates in classical, meaning enumerative, mirror symmetry). 
When applying this characterization, one starts with an explicit algebraic model for the Fukaya category written in terms of an a priori unknown parameter $t = f(q)$, a kind of result that can be obtained in many examples by showing that the Fukaya category is a versal deformation of its $q = 0$ limit (e.g.\ \cite{seidel03b,sheridan11b}). The Ganatra-Perutz-Sheridan argument then determines the inverse map of $f$, thereby allowing one to recover the natural parametrization by $q$. As a highly desirable byproduct, this approach computes certain Gromov-Witten invariants, for instance providing a new proof of the classical formula for rational curves on the quintic threefold. The structural implications of this method for the Fukaya category depend on the nature of the algebraic model. In the simplest possible situation, if that model were polynomial in $t$, then the resulting picture of the Fukaya category is that it is polynomial in $f(q)$. However, that kind of information has to be obtained on a case-by-case basis, typically being read off from the mirror geometry.

This paper pursues a different approach, which eschews noncommutative Hodge theory, relying instead on noncommutative counterparts of classical algebro-geometric notions (notably that of divisor). In a nutshell one could say that, while Ganatra-Perutz-Sheridan think of the Hodge theory of the mirror, and implement the resulting insights in terms that involve only the Fukaya category, we go through a philosophically parallel process concerning the geometry of the mirror. To be clear: mirror symmetry does not actually appear in our results, it only provides the motivation. Moreover, while the outcome of our considerations has some overlap with that of Ganatra-Perutz-Sheridan, the formalism itself is entirely different.
%

Concretely, we start with a monotone symplectic manifold (such as a Fano variety) carrying an anticanonical Lefschetz pencil, and take our Calabi-Yau to be a member of that pencil. That additional geometric structure will be crucial throughout. In the end, we prove that the $q$-dependence in a subcategory of the Fukaya category is restricted to explicitly given finitely generated rings of functions of $q$. In the simplest version of our result, the dependence turns out to be polynomial in some $t = f(q)$. The function $f$ is given in terms of a Schwarzian differential equation involving Gromov-Witten invariants. Our argument does not compute those invariants (of course, there are other methods available for that). The $q$-dependence statement is part of a deeper description of the Fukaya category, which greatly constrains its structure. 

%

\subsection{Setup\label{subsec:setup}}
Let's introduce notation for the various manifolds that are part of our setup (this follows the idiosyncratic conventions from \cite{seidel15}). Write $\bC| = \bC \cup \{\infty\}$ for the projective line. The graph of our Lefschetz pencil, which one can think as being obtained by blowing up the base locus of the pencil, is a Lefschetz fibration
\begin{equation} \label{eq:cornerbar-p}
\cornerbar{p}: \cornerbar{E} \longrightarrow \bC|.
\end{equation}
We assume that the fibre at $\infty$ is smooth, and call that $\bar{M}$. From the blowup, $\cornerbar{E}$ inherits an exceptional divisor $\delta E|$, whose intersection with $\bar{M}$ we denote by $\delta M$. It comes with a canonical isomorphism $\delta E| \iso \bC| \times \delta M$. For ease of reference, here is a list of the manifolds we have mentioned, and other related ones, arranged in increasing order of dimensions:
\begin{equation}
\begin{array}{l|l} 
\delta M & \parbox{25em}{a closed $(2n-4)$-manifold, which represents the symplectic class in $\bar{M}$} \vspace{.1em}
\\ \hline
\bar{M} & \parbox{25em}{the symplectic Calabi-Yau, a closed $(2n-2)$-manifold, and the fibre at $\infty$ of \eqref{eq:cornerbar-p}} 
\\ \hline 
M = \bar{M} \setminus \delta M & \parbox{25em}{a noncompact exact symplectic $(2n-2)$-manifold} 
\\ \hline
\delta E| & \parbox{25em}{the exceptional divisor, isomorphic to $\bC| \times \delta M$} 
\\ \hline
\delta E = \delta E| \setminus \delta M & \parbox{25em}{where we think of $\delta M$ as lying over $\infty \in \bC|$} \\ \hline
\cornerbar{E} & \parbox{25em}{A closed $2n$-manifold, and the total space of \eqref{eq:cornerbar-p}} \\ \hline
\bar{E} = \cornerbar{E} \setminus \bar{M} & \parbox{25em}{which comes with the restriction of \eqref{eq:cornerbar-p}, a Lefschetz fibration $\bar{p}: \bar{E} \rightarrow \bC$ containing $\delta E$ as a fibrewise symplectic hypersurface} \\ \hline
E = \bar{E} \setminus \delta E & \parbox{25em}{which comes with an exact Lefschetz fibration $p: E \rightarrow \bC$.}
\end{array}
\end{equation}

\begin{convention} \label{th:connected}
We will always assume that $\bar{M}$, and therefore $\bar{E}$, is connected. We will not always impose a connectedness assumption on $\delta E$ (or equivalently $\delta M$), even though later, one of our results will require it (Theorem \ref{th:main}).
\end{convention}

The simplest starting point for our discussion is the last-mentioned structure, namely the exact Lefschetz fibration $p$. Choose a basis of vanishing cycles, which is an ordered collection of Lagrangian spheres $(V_1,\dots,V_m)$ in $M$. Let $\scrB$ be the full $A_\infty$-subcategory of the Fukaya category of $M$ formed by these objects. One can think of it as a single $A_\infty$-algebra,
\begin{equation} \label{eq:b-algebra}
\scrB = \bigoplus_{i,j=1}^m \mathit{CF}^*(V_i,V_j),
\end{equation}
where $\mathit{CF}^*$ are the spaces of Floer cochains (more precisely, this is an $A_\infty$-algebra over the semisimple ring $R = \bQ^{\oplus m}$). Let's pass from $M$ to its compactification $\bar{M}$, and work relative to $\delta M$. That gives rise to a deformation of $\scrB$ (part of the relative Fukaya category).
Concretely, the deformation is given by operations
\begin{equation} \label{eq:mu-b}
\mu_{\scrB_q}^d \in \mathit{Hom}^{2-d}(\scrB^{\otimes d},\scrB)[[q]],
\end{equation}
which reduce to the previous ones if we set $q = 0$. One can think of them as forming an $A_\infty$-structure on $\scrB_q = \scrB[[q]]$ (generally speaking, relative Fukaya categories involve a $\mu^0$ curvature term, but in our case one can arrange that this is zero). The main question is, naively speaking: what series in $q$ appear in this deformation? Of course, Floer-theoretic structures on the cochain level depend on many auxiliary choices, and are unique only up to quasi-isomorphism. Hence, a better way of phrasing the question is to ask whether one can find a model, within the quasi-isomorphism class of $\scrB_q$, such that only certain functions of $q$ appear in it.

\begin{remark}
We use Fukaya categories with rational coefficients ($\bQ$ for $\scrB$, and $\bQ[[q]]$ for $\scrB_q$). This is not necessary for the definition of those $A_\infty$-structures, where one can work integrally. However, in the course of our construction, differential equations with respect to $q$ play a crucial role; to make their theory well-behaved, one has to be able to integrate formal power series. It seems possible that this might work for $\scrB$ defined over $\bZ$, and $\scrB_q$ defined over $\bZ[[q,\frac12 q^2,\frac16 q^3,\dots]]$; but the benefits seemed too slim to bother. 
\end{remark}

\subsection{Polynomiality\label{subsec:1-variable}}
We begin with a form of our result which, while not the most general or precise one, is more easily accessible.

\begin{theorem} \label{th:1-variable}
Suppose that the assumptions \eqref{eq:simply-connected} (topological) and \eqref{eq:as} (on the Gromov-Witten theory of $\cornerbar{E}$) hold. Then, that Gromov-Witten theory produces a function
\begin{equation} \label{eq:initial}
f \in \bQ[[q]], \; \; f(0) = 0, \;\; f'(0) \neq 0,
\end{equation}
such that: there is a quasi-isomorphic model for $\scrB_q$, in which all operations \eqref{eq:mu-b} have coefficients which are polynomials in $f$.
\end{theorem}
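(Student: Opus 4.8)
The plan is to realize the $q$-dependence of $\scrB_q$ geometrically, by relating the relative Fukaya category of $(\bar M, \delta M)$ to the Fukaya category of the total space $E$ of the ambient Lefschetz fibration, and then to exploit the extra structure that $E$ carries: a global function $p\colon E \to \bC$ and a fibrewise divisor $\delta E$. The key point is that $\delta M \subset \bar M$ is not an arbitrary divisor — it is cut out, up to the monotonicity normalization, by the anticanonical pencil, so counts of holomorphic discs in $\bar M$ with tangency to $\delta M$ (which generate the coefficients in \eqref{eq:mu-b}) should be governed by the Gromov-Witten theory of $\cornerbar E$ rather than being independent unknowns. Concretely, I would build a "family over the $q$-line" in which $q$ is promoted from a formal parameter to (the exponential of) an actual Kähler-scale coordinate, so that differentiating the $A_\infty$-structure in $q$ becomes a geometric operation — a connection on the family of $A_\infty$-categories, whose curvature/monodromy is controlled by enumerative input on $\cornerbar E$.

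The steps, in order. First, set up the relative Fukaya category $\scrB_q$ as a deformation of $\scrB$ with deformation classes living in (the relevant graded piece of) Hochschild cohomology, identifying the first-order term with the class determined by $\delta M$ and the full deformation with a Maurer-Cartan solution over $\bQ[[q]]$. Second — and this is where the cited preprints \cite{seidel21,seidel21b} enter — construct a \emph{connection} $\nabla_q$ on $\scrB_q$ in the $q$-direction: a degree-$0$, order-$1$ differential operator on the $A_\infty$-structure (a derivation of the Hochschild complex compatible with $\mu_{\scrB_q}^d$), whose existence is the noncommutative shadow of the fact that rescaling the symplectic form is a geometric deformation. Third, show that the "leading symbol" of this connection, i.e.\ the obstruction to it being trivial, is a Hochschild cohomology class that is pulled back from the Gromov-Witten theory of $\cornerbar E$ — this is where assumption \eqref{eq:as} is used to guarantee that the relevant invariants take a controlled form (e.g.\ that a certain generating function satisfies a Picard-Fuchs/Schwarzian-type ODE). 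Fourth, solve that ODE: the function $f \in \bQ[[q]]$ is obtained by integrating the connection, i.e.\ $f$ is the flat coordinate, and $f(0)=0$, $f'(0)\neq 0$ because the first-order deformation is nonzero (this uses the topological hypothesis \eqref{eq:simply-connected} to rule out degenerations). Fifth, change gauge: in the trivialization where $q$ is replaced by $f$, the connection becomes the trivial one $\partial/\partial f$, and flatness of $\mu_{\scrB_q}^d$ under $\partial/\partial f$ — combined with a finiteness input (the deformation is "finite type" / the Hochschild cohomology in the relevant degrees is finitely generated, again from \eqref{eq:as}) — forces the structure constants to be polynomial in $f$ rather than arbitrary power series. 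That last implication is essentially: a $\partial_f$-flat section of a trivial bundle with polynomially-bounded pole order is a polynomial.

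I expect the main obstacle to be the \textbf{third step}: identifying the symbol of the connection with honest Gromov-Witten data of $\cornerbar E$, and in particular controlling the discs in $\bar M$ with prescribed tangency to $\delta M$ in terms of closed curves upstairs. This requires a gluing/degeneration argument (breaking a sphere in $\cornerbar E$ into a disc in a fibre plus a section component meeting $\bar M$), of the type that the absent preprints \cite{seidel21,seidel21b} are presumably meant to supply, together with transversality for these tangency-constrained moduli spaces; the assumption \eqref{eq:as} is precisely the hypothesis that lets one bypass the hardest parts of this (positivity/regularity of the relevant GW invariants). A secondary difficulty is the \textbf{fifth step}: promoting "$\partial_f$-flat" to "polynomial" cleanly at the chain level rather than just on cohomology — one must choose the quasi-isomorphic model carefully (a minimal model, or one adapted to the connection) so that flatness holds strictly and the polynomiality is manifest, which is a homological-perturbation-lemma argument that has to be run compatibly with the $q$-derivation.
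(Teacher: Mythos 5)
There is a genuine gap, and it sits at the centre of your plan. You propose to construct an $A_\infty$-connection $\nabla_q$ on $\scrB_q$ itself and then argue that in flat coordinates the structure constants become polynomial. But over $\bQ$ an $A_\infty$-connection exists if and only if the Kaledin class vanishes, and in that case the deformation is \emph{trivial}: there is a trivialization under which the connection becomes $d/dq$, so flat data are constant, not polynomial, in the new coordinate (see Scholium \ref{th:kaledin} and Lemma \ref{th:unique-gauge}). For $\scrB_q$ the Kaledin class does not vanish in general -- the Fukaya category of a Calabi--Yau hypersurface genuinely depends on $q$ -- so your step 2 cannot be carried out for $\scrB_q$, and your step 5 ("$\partial_f$-flat with bounded pole order implies polynomial") has no mechanism behind it: flat sections of a trivial connection are constant, and nothing in your setup produces the graded bound "degree $\leq r$ in $f$ in weight $r$". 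What the paper actually does is kill the Kaledin class only of the \emph{directed} subalgebra $\scrC_q$ (equivalently $\scrA_q$, the thimble category), using \eqref{eq:simply-connected} and \eqref{eq:as} to show the relative Floer class $k^1_q$ vanishes and transporting this through the closed--open map; the residual $q$-dependence of $\scrB_q$ is then concentrated in a single bimodule map $\epsilon_q: \scrC_q^\vee[-n] \rightarrow \scrC_q$ from the exact triangle \eqref{eq:delta-triangle-2}, which is not flat but satisfies the second-order equation \eqref{eq:2nd-order-2} coming from the Borman--Sheridan class computations of \cite{seidel16}.

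The second missing idea is the structural reduction that converts this ODE into polynomiality. One needs to know that $\scrB_q$ is determined, up to quasi-isomorphism, by the pair $(\scrC_q,[\epsilon_q])$; this is the relative classification theorem (Proposition \ref{th:1-variable-1b}, resting on Corollary \ref{th:concrete-classification-2} and the vanishing \eqref{eq:no-negative-degree} from \cite{seidel21}). Once $\scrC_q$ is trivialized, the ODE forces $\epsilon_q = \epsilon_0 s_0 + \epsilon_1 s_1$ for a basis of solutions $s_0,s_1$ of \eqref{eq:2nd-order}, i.e.\ $\epsilon_q$ lies in a two-dimensional space $V$ of $q$-series; Proposition \ref{th:1-variable-2} then produces a model on the trivial extension space whose weight-$r$ part has coefficients homogeneous of degree $r$ in $(s_0,s_1)$, and rescaling by $s_0^{-1}$ gives polynomials in $f = s_1/s_0$, which solves the Schwarzian equation \eqref{eq:schwarz-eq}. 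Without this divisor-type reduction, controlling a connection (or its symbol) on Hochschild cohomology does not constrain the $q$-series appearing in the higher-order deformation terms. Two smaller inaccuracies: \eqref{eq:simply-connected} is used to prove $\mathit{HF}^1_q(\bar{p},r)=0$ and injectivity of continuation maps (uniqueness of the bounding cochain and the deduction $k^1_q=0$), not to guarantee $f'(0)\neq 0$, which instead comes from the normalization \eqref{eq:y0y1} of $s_0,s_1$; and the Gromov--Witten input enters through closed-string (relative Hamiltonian Floer) theory and closed--open maps, not through a direct degeneration analysis of tangency-constrained discs in $\bar{M}$.
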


The first assumption is
\begin{equation} \label{eq:simply-connected}
H^1(\bar{M};\bZ) = 0.
\end{equation}
This is technical and presumably unnecessary, but it simplifies our task by removing the need to keep track of certain bounding cochains. Let's turn to the enumerative side. Take $A \in H_2(\cornerbar{E};\bZ)$ which has intersection number $k$ with $\bar{M}$. Let 
\begin{equation}
z_A \in H^{4-2k}(\cornerbar{E};\bQ)
\end{equation}
be the genus zero once-pointed Gromov-Witten invariant for curves in class $A$ (informally, this is the class of the cycle represented by $A$-curves). One can assemble these cohomology classes into formal series
\begin{equation} \label{eq:zk}
z^{(k)} = \sum_{A \cdot [\bar{M}] = k} q^{\delta E| \cdot A} z_A \in H^{4-2k}(\cornerbar{E};\bQ((q))).
\end{equation}
Out of the three nontrivial cases $k = 0,1,2$, we will only need two:
\begin{align} \label{eq:z1}
& z^{(1)} \in [\delta E|] q^{-1} + H^2(\cornerbar{E};\bQ[[q]]), \\
\label{eq:z2}
& z^{(2)} \in H^0(\cornerbar{E};\bQ[[q]]) = \bQ[[q]].
\end{align}
The leading term in \eqref{eq:z1} comes from trivial sections lying inside $\delta E| \iso \bC| \times \delta M$. For \eqref{eq:z2},  classes $A$ which have negative intersection number with $\delta E|$ don't contribute, since no stable maps representing them go through a generic point. The enumerative assumption is:
\begin{equation} \label{eq:as}
\parbox{35em}{$z^{(1)}$ lies in the linear subspace of $H^2(\cornerbar{E};\bQ((q)))$ spanned by the classes of the fibre $\bar{M}$ and exceptional divisor $\delta E|$.}
\end{equation}
Because of \eqref{eq:z1}, this assumption allows one to write
\begin{equation} \label{eq:write-z1}
q^{-1}[\delta E|] = \psi z^{(1)} - \eta [\bar{M}] \quad \text{ with }
\psi \in 1 + q\bQ[[q]], \quad \eta \in \bQ[[q]].
\end{equation}
We use the power series $\psi$, $\eta$ and $z^{(2)}$ to write down a Schwarz\-ian equation
\begin{equation} \label{eq:schwarz-eq}
S_qf + 8z^{(2)} \psi^2 + \left(\eta - \frac{\psi'}{\psi}\right)' + \frac12 \left( \eta - \frac{\psi'}{\psi} \right)^2 = 0.
\end{equation}
The function $f$ from \eqref{eq:initial} is a solution of this equation. Even though this characterizes $f$ only up to a certain ambiguity \eqref{eq:conformal}, that is irrelevant, as the statement holds for any solution (Remark \ref{th:doesnt-matter}). 

\begin{scholium} \label{th:scholium}
We need to recall some elementary background. The Schwarz\-ian differential (see e.g.\ \cite{osgood98, ovsienko-tabachnikov09}) is
\begin{equation} \label{eq:schwarz}
S_qf = \left(\frac{f''}{f'}\right)' - \frac{1}{2} \left( \frac{f''}{f'} \right)^2 = \frac{f'''}{f'} - \frac{3}{2} \left(\frac{f''}{f'}\right)^2.
\end{equation}
where $f' = df/dq$. Consider Schwarzian equations for a series $f$ as in \eqref{eq:initial}, of the form
\begin{equation} \label{eq:schwarz-2}
S_qf + g = 0 \quad \text{with some given } g \in \bQ[[q]].
\end{equation}
Any such equation has solutions, which can be found by recursively solving for the Taylor coefficients (starting with arbitrary initial values $f(q) = a_1 q + a_2 q^2 + \cdots$, $a_1 \in \bQ^\times$, $a_2 \in \bQ$). In complex geometry, the Schwarzian characterizes a function up to composition with conformal transformations on the left. In our formal situation, a solution of \eqref{eq:schwarz-2} in the class \eqref{eq:initial} is unique up to
\begin{equation} \label{eq:conformal}
f \longmapsto \frac{f}{a+bf}, \quad a \in \bQ^\times, \; b \in \bQ.
\end{equation}
Schwarzian equations appear in the following context. Consider the linear differential equation
\begin{equation} \label{eq:g-equation}
s'' - (g/2) s= 0.
\end{equation}
This has a basis of solutions
\begin{equation} \label{eq:y0y1}
s_0, s_1 \in \bQ[[q]], \quad \text{with } s_0(0) \neq 0, \; s_1(0) = 0, \; s_1'(0) \neq 0.
\end{equation}
Then, $f = s_1/s_0$ is a solution of \eqref{eq:schwarz-2}. This gives another proof of the existence of solutions, and also makes \eqref{eq:conformal} more intuitive (one can rescale the $s_k$, and also add a multiple of $s_1$ to $s_0$). Slightly more generally, given any equation
\begin{equation} \label{eq:g-equation-2}
s'' + h s' - (k/2) s = 0,
\end{equation}
we can substitute $\exp(-\!\int h/2 \, \mathit{dq}) s$ instead of $s$, which reduces it to the form \eqref{eq:g-equation} with 
\begin{equation} \label{eq:new-g}
g = k + h' + h^2/2.
\end{equation}
Hence, quotients of solutions to \eqref{eq:g-equation-2} still satisfy \eqref{eq:schwarz-2} for that choice of $g$ (one could also derive that directly, without the reduction step). 
\end{scholium}

In our context, \eqref{eq:schwarz-eq} arises from the linear second order equation
\begin{equation} \label{eq:2nd-order}
s'' + \left( \eta - \frac{\psi'}{\psi} \right) s' - 4z^{(2)} \psi^2 s = 0.
\end{equation}
It was shown in \cite{seidel16} that this equation arises naturally in the symplectic cohomology of $\bar{E}$. From there, through several intermediate stages, it makes its way to controlling the structure of $\scrB_q$. What actually happens is that the quasi-isomorphism class of $\scrB_q$ is completely determined by the following information: the directed subalgebra of $\scrB$ (which is at least in principle amenable to computation in many situations, see e.g.\ \cite{seidel04}); a small amount of extra information (which can be read off if one knows $\scrB_q$ up to first order in $q$); and the function $f$, which is an ``external'' input from Gromov-Witten theory. We refer to Section \ref{sec:outline} for further discussion. In particular, Theorem \ref{th:1-variable} will be derived in Section \ref{subsec:directed-2}, by combining two results stated there (Propositions \ref{th:1-variable-2} and \ref{th:1-variable-3}).

\begin{corollary} \label{th:convergent}
Suppose that assumptions \eqref{eq:simply-connected} and \eqref{eq:as} hold, and moreover, that the Gromov-Witten invariants $z^{(1)}$ and $z^{(2)}$ are locally convergent in $q$. Then there is an $A_\infty$-category quasi-isomorphic to $\scrB_q$, and which, for some $\epsilon>0$, is defined over the ring of $q$-series with (rational coefficients and) convergence radius $\geq \epsilon$.
\end{corollary}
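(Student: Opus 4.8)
The plan is to run the entire construction behind Theorem \ref{th:1-variable} again, but now keeping track, at every stage, of convergence radii rather than merely formality of the $q$-series. The key observation is that Theorem \ref{th:1-variable} packages the $q$-dependence of (a model for) $\scrB_q$ into the single transcendental function $f$; once $f$ is known to be locally convergent, polynomials in $f$ are again locally convergent (with the same radius as $f$), and so the model produced by the theorem will automatically be defined over a ring of convergent $q$-series, provided the finitely many coefficient polynomials that enter are honestly polynomials over $\bQ$ (which they are, by the theorem) and the finite amount of ``extra information'' referred to after \eqref{eq:2nd-order} consists of constants (which it does, since it is read off from the first-order term in $q$). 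So the whole corollary reduces to one analytic statement: under the hypothesis that $z^{(1)}, z^{(2)}$ are locally convergent, the function $f$ solving \eqref{eq:schwarz-eq} is locally convergent at $q = 0$.

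First I would check that the auxiliary power series $\psi$ and $\eta$ of \eqref{eq:write-z1} are locally convergent. This is immediate: \eqref{eq:as} expresses $q^{-1}[\delta E|]$ as a $\bQ((q))$-linear combination of the two fixed classes $[\bar M]$ and $[\delta E|]$ with coefficients $-\eta$ and $\psi$, and since $z^{(1)}$ is convergent and the leading behaviour \eqref{eq:z1} pins down $\psi \in 1 + q\bQ[[q]]$, solving the (finite-dimensional, triangular) linear system for $\psi$ and $\eta$ over the field of convergent Laurent series yields convergent answers; in particular $\psi(0) = 1 \ne 0$, so $\psi'/\psi$, $(\psi'/\psi)'$ and the nonlinear combination appearing in \eqref{eq:2nd-order} and \eqref{eq:schwarz-eq} are all convergent near $q=0$. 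Hence the linear ODE \eqref{eq:2nd-order} has locally convergent coefficients.

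Next I would invoke the standard Cauchy existence theorem for linear ODEs with analytic coefficients: \eqref{eq:2nd-order} has a basis of solutions $s_0, s_1$ that are holomorphic in a disc around $q = 0$, and normalized as in \eqref{eq:y0y1} (this uses that $q = 0$ is an ordinary point of \eqref{eq:2nd-order}, which holds because the coefficient of $s''$ is $1$). Then, exactly as in Scholium \ref{th:scholium} — after the reduction \eqref{eq:new-g} from \eqref{eq:g-equation-2} to \eqref{eq:g-equation}, which only multiplies $s$ by $\exp(-\!\int (\eta - \psi'/\psi)/2\, dq)$, a convergent factor — the quotient $f = s_1/s_0$ solves \eqref{eq:schwarz-eq}. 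Since $s_0(0) \ne 0$, the quotient $s_1/s_0$ is holomorphic near $q = 0$, so $f$ is locally convergent, with $f(0) = 0$ and $f'(0) = s_1'(0)/s_0(0) \ne 0$ as required. Because Remark \ref{th:doesnt-matter} tells us any solution of \eqref{eq:schwarz-eq} may be used and the ambiguity is only \eqref{eq:conformal} — a Möbius transformation in $f$ with constant coefficients, which preserves local convergence — the convergent $f$ just produced is an admissible input.

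Finally I would feed this convergent $f$ back into the proof of Theorem \ref{th:1-variable} (i.e.\ into Propositions \ref{th:1-variable-2} and \ref{th:1-variable-3}) and observe that it produces a model for $\scrB_q$ whose structure constants are fixed polynomials in $f$ over $\bQ$; choosing $\epsilon > 0$ smaller than the common convergence radius of $f$ (and of the finitely many convergent series $\psi, \eta, z^{(2)}$ that may also appear explicitly), this model is defined over the ring of $q$-series with rational coefficients and convergence radius $\ge \epsilon$, which is a subring of $\bQ[[q]]$ closed under the operations used. I expect the main obstacle to be purely bookkeeping: verifying that the two cited propositions genuinely express the model's operations through $f$ and finitely many constants, with no hidden non-polynomial or infinitely-generated $q$-dependence sneaking in through the ``intermediate stages'' alluded to after \eqref{eq:2nd-order}; granting that (as Theorem \ref{th:1-variable} asserts), the analytic part above is routine.
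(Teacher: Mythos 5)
Your proposal is correct and follows essentially the same route as the paper: the paper's proof consists precisely of observing that local convergence of $z^{(1)},z^{(2)}$ makes the coefficients of \eqref{eq:2nd-order} (via $\psi$, $\eta$ from \eqref{eq:write-z1}) locally convergent, hence its solutions and $f = s_1/s_0$ are convergent, and then citing Theorem \ref{th:1-variable}, whose model has coefficients polynomial in $f$. Your additional bookkeeping about the Möbius ambiguity and about feeding $f$ back through Propositions \ref{th:1-variable-2}--\ref{th:1-variable-3} is consistent with, and merely spells out, the paper's one-line argument.
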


In that situation, the coefficients of \eqref{eq:2nd-order} are locally convergent, hence so are its solutions and the function $f$. That makes Corollary \ref{th:convergent} an immediate consequence of Theorem \ref{th:1-variable}. (This kind of local convergence property of the Fukaya category has been known to hold in some instances, but those are based on homological mirror symmetry, which can't lead to a general statement of the kind made above.)

\subsection{Examples\label{subsec:examples}}
Take the pencil of cubics on $\bC P^2$. Since $\bar{M}$ is a torus, \eqref{eq:simply-connected} doesn't hold; but as we've mentioned, that is a technical condition, and we will explain how to work around it in this particular case (Remark \ref{th:elliptic-surface}). Assumption \eqref{eq:write-z1} holds by \cite[Lemma 6.2]{seidel15}. Starting with the explicit expressions in \cite[Section 6]{seidel15}, which are derived from \cite{bryan-leung97}, one can write all the functions in \eqref{eq:schwarz-eq} in terms of the (quasimodular) Eisenstein series $E_2$:
\begin{align}
& \frac{\psi'}{\psi} = \frac{E_2(q^3)-1}{2q}, \\
& \eta = -\frac{\psi'}{\psi}, \\
& 4 z^{(2)}\psi^2  = \alpha^2 - \alpha' + 2\alpha \frac{\psi'}{\psi}, 
\quad \text{where} \quad \alpha = \frac{E_2(q^3) - 9 E_2(q^9)}{8q}.
\end{align}
Using Ramajunam's formula $12 qE_2'(q) = E_2(q)^2 - E_4(q)$ and relations from \cite[Table 1]{maier11}, this turns into a remarkably simple expression for \eqref{eq:schwarz-eq}:
\begin{equation} \label{eq:e4}
S_q f + \frac{E_4(q^3)-1}{2q^2} = 0.
\end{equation}
A solution is
\begin{equation} \label{eq:eta9}
f = \frac{1}{(\eta(q)/\eta(q^9))^3+3} = q - 5 q^4 + 32 q^7 - 198 q^{10} + 1214 q^{13} + \cdots
\end{equation}
The fact that this satisfies \eqref{eq:e4} can be derived from modular form computations. Namely, $1/f$ is a hauptmodul for $\Gamma_0(9)$, and therefore $S_\tau(f) = S_\tau(1/f) = 2\pi^2 E_4(q^3)$ by \cite[Equation 7.2]{mckay-sebbar00}. Note that what appeared here is the Schwarzian with respect to $\tau = \log(q)/2\pi i$. An elementary change-of-variables formula says that $S_\tau f = 4\pi^2 (\half - q^2 S_q f)$, and that leads to \eqref{eq:e4}. The reader might want to look at \cite[Section 7]{seidel15}, which explains the relation to mirror symmetry; there, our $1/f$ appears as \cite[Equation 7.13]{seidel15}. Conceptually, the outcome of this discussion is that the modular $q$-dependence of the Fukaya category of the torus is directly related to the modularity of Gromov-Witten invariants of the rational elliptic surface.

\begin{remark} \label{th:picard-fuchs}
Suppose that we have a family $C_t$ of elliptic curves. The Picard-Fuchs equation for their periods is a linear second order differential equation. The modulus $\tau = \tau(C_t)$, which is the quotient of two periods, therefore satisfies a Schwarzian equation in $t$. Inverting the relationship, one gets a Schwarzian equation for the parameter $t$ as a function of $\tau$, or equivalently of $q = e^{2\pi i \tau}$. 
One can apply this to the mirror of $\bC P^2$, meaning the (fibrewise compactified) family $C_t = \{x+y+x^{-1}y^{-1} = t^{-1}\}$, following e.g.\ \cite{klemm-lian-roan-yau93, verrill01}. The outcome is the equation
\begin{equation} \label{eq:pf-schwarzian}
S_q t + \frac{(216t^3+1)}{2t^2(27t^3-1)^2} \Big(\frac{dt}{dq} \Big)^2 - \frac{1}{2q^2} = 0,
\end{equation}
which is indeed satisfied by the function $t = f$ from \eqref{eq:eta9} (compare \cite[Table 3]{verrill01}). Even though it arrives at the same solution, and also uses the Schwarzian, this method is substantially different from ours, even on a pedestrian computational level. Differential equations arising from periods, such as \eqref{eq:pf-schwarzian} and the underlying Picard-Fuchs equation, are self-contained in the sense that they only involve rational expressions in the unknown function and its derivatives. In constrast, both \eqref{eq:2nd-order} and \eqref{eq:schwarz-eq} contain an ``external'' transcendental term. A comparison of the two approaches yields additional relationships, whose meaning is not evident (for instance, by looking at \eqref{eq:schwarz-eq} and \eqref{eq:pf-schwarzian}, one sees that $E_4(q^3)/q^2$ is a rational expression in $f$ and $f'$).
%

For higher-dimensional Calabi-Yau manifolds, the Picard-Fuchs equations are of order $>2$, but they can still give rise to Schwarzian equations for mirror maps under certain circumstances. For one-parameter families of lattice-polarized $K3$ surfaces, one can write down equations similar to \eqref{eq:pf-schwarzian}, see for instance \cite[Equation 4.17]{lian-yau96}. In that case, the underlying phenomenon is that the Picard-Fuchs equation can be written as the symmetric square of a second order equation \cite[Theorem 5]{doran00}. A similar approach for one-parameter families of Calabi-Yau threefolds yields a more complicated result, where the Schwarzian equation has an extra term involving the Yukawa coupling, see \cite[Equation 4.24]{lian-yau96} and more generally \cite[Section 4]{doran00}. The appearance of this term makes the equation superficially more similar to ours; however, there is still no simple relation between the two, as far as this author can see.
\end{remark}

Our second example is the pencil of quintic hypersurfaces in $\bC P^4$. In that case, $Y = \cornerbar{E}$ is a hypersurface of bidegree $(1,5)$ in $X = \bC P^1 \times \bC P^4$. The Gromov-Witten invariants of such hypersurfaces can be computed using quantum Lefschetz \cite{givental96,lee01b,gathmann03}. We give a brief summary of how that works out in this particular case (for the benefit of non-specialists, and also because the computation raises a question, see Remark \ref{th:circular}). Let $x_1,x_2$ be the standard generators of $H^2(X;\bZ)$; $q_1 , q_2$ the corresponding quantum parameters; and $\hbar^{-1}$ another formal parameter. We assign degrees $|q_1| = 2$, $|q_2| = 0$, $|\hbar^{-1}| = -2$. On the side of $Y$, one looks at the (degree zero) expression
\begin{equation} \label{eq:jj}
J = 1 + \sum_{\substack{d_1,d_2 \geq 0 \\ (d_1,d_2) \neq (0,0)}} q_1^{d_1}q_2^{d_2} \sum_{m \geq 0} \hbar^{-m-2} z_{d_1,d_2,m,Y} \in H^*(Y;\bQ[[q_1,q_2,\hbar^{-1}]]),
\end{equation}
where $z_{d_1,d_2,m,Y}$ is the one-pointed Gromov-Witten invariant for rational curves of degree $(d_1,d_2)$, with an $m$-fold power of the gravitational descendant inserted. By definition, the functions $\psi, \eta, z^{(2)}$ from \eqref{eq:schwarz-eq} can be read off from the coefficients of \eqref{eq:jj} with $m = 0$: specifically, 
\begin{equation} \label{eq:read-off}
\begin{aligned}
& \psi^{-1} (25 q^{-1} x_2^2  + 5\eta x_1x_2) = \iota_! \big( \sum_{d_2 \geq 0} q^{5d_2-1} z_{1,d_2,0,Y} \big), \\
& z^{(2)} (x_1+5x_2) = \iota_! \big( \sum_{d_2 > 0} q^{5d_2-2} z_{2,d_2,0,Y} \big).
\end{aligned} 
\end{equation}
On the side of $X$, the corresponding expression comes with a twist involving the cohomology class $[Y] = x_1 + 5x_2$, and can be computed explicitly (see e.g.\ \cite{givental00}):
\begin{equation} \label{eq:i-function}
\begin{aligned}
I & \;=  (x_1+5x_2) + \sum_{\substack{d_1,d_2 \geq 0 \\ (d_1,d_2) \neq (0,0)}} y_1^{d_1}y_2^{d_2} {\textstyle \prod_{i=0}^{d_1+5d_2} (x_1 + 5x_2 + i \hbar)  \sum_{m \geq 0} \hbar^{-m-2} z_{d_1,d_2,m,X} }\\
& = (x_1+5x_2) \sum_{d_1,d_2 \geq 0} (y_1/\hbar)^{d_1}y_2^{d_2} \frac{\prod_{i=1}^{d_1+5d_2} ((x_1+5x_2)/\hbar + i)}{\prod_{i_1=1}^{d_1} (x_1/\hbar+i_1)^2 \prod_{i_2=1}^{d_2} (x_2/\hbar+i_2)^5}.
\end{aligned}
\end{equation}
Here, $y_i$ are parameters of the same degrees as the $q_i$. The quantum Lefschetz theorem says that 
\begin{equation} \label{eq:quantum-lefschetz}
 e^{-g - (k + l_1 x_1 + l_2 x_2)/\hbar} I =\big( \iota_! J\big)_{q_i = y_i e^{l_i}},
\end{equation}
where $\iota: Y \hookrightarrow X$ is the inclusion. Generally speaking, $g,k,l_1,l_2$ are functions of $(y_1,y_2)$, involving powers $y_1^{d_1}y_2^{d_2}$ with $d_1,d_2 \geq 0$, $(d_1,d_2) \neq (0,0)$. They are constrained by preserving degrees in \eqref{eq:quantum-lefschetz}, meaning that $g,l_1,l_2$ are functions of $y_2$ only, while $k$ is $y_1$ times a function of $y_2$. One can determine these functions completely in terms of $I$, by looking at specific terms in \eqref{eq:quantum-lefschetz}: 
\begin{equation}
\begin{aligned}
& e^g = \textstyle \sum_d  y_2^d \frac{(5d)!}{(d!)^5}, \\
& e^g k = \textstyle \sum_d y_1y_2^d  \frac{(5d)!}{(d!)^5}\, (5d+1), \\
& e^g l_1 =  \textstyle \sum_d y_2^d\frac{(5d)!}{(d!)^5}\, (1 + \frac12 + \cdots + \frac{1}{5d}), \\
& e^g l_2 = \textstyle \sum_d y_2^d \frac{(5d)!}{(d!)^5}\,5 (\frac{1}{d+1} + \cdots + \frac{1}{5d}),
\end{aligned}
\end{equation}
which leads to
\begin{align} 
& y_1 
= q_1 (1 - 274 q_2 - 50747 q_2^2 - 56404664 q_2^3 + \cdots), \\
& y_2 = q_2 - 770 q_2^2 + 171525 q_2^3 - 8162300 q_2^4 +\cdots.
\label{eq:quintic-mirror}
\end{align} 
Note that $g_2$ and $l_2$ agree with their counterparts for the quintic threefold. Hence, \eqref{eq:quintic-mirror} is the mirror map for the quintic threefold, as it appears in enumerative (and, by \cite{ganatra-perutz-sheridan15}, also in homological) mirror symmetry.
The outcome of applying \eqref{eq:quantum-lefschetz} and comparing that to \eqref{eq:read-off} is that
\begin{equation}
\begin{aligned}
& \psi = 1 - 496 q^5 - 335007 q^{10} - 365737016 q^{15} + \cdots, 
\\
& \eta =  940 q^4 +  + 856700 q^9 + 1097543500 q^{14} + \cdots,
\\
& z^{(2)} = 600 q^3 + 1508700 q^8 + 3364924200 q^{13} + \cdots.
\end{aligned}
\end{equation}
Then, \eqref{eq:schwarz-eq} 
has a solution
\begin{equation}
f = q - 154 q^6 - 13127 q^{11} - 17106304 q^{16} + \cdots
\end{equation}
This is a version of the mirror map \eqref{eq:quintic-mirror}, at least as far we can say given the available numerical data (more precisely, it is obtained from \eqref{eq:quintic-mirror} by substituting $q_2 = q^5$, and then considering $f = y_2^{1/5}$).

\begin{remark} \label{th:circular}
The computation above has an unsatisfactory circular nature. We apply quantum Lefschetz to $\cornerbar{E} \subset \bC P^1 \times \bC P^5$ in order to extract the coefficients for our Schwarzian equation, which then provides the function $f(q)$. On the other hand, quantum Lefschetz already contains the expression $q_2 = y_2 e^{l_2}$, whose inverse is the classical mirror map \eqref{eq:quintic-mirror}. As we have seen, that and $f$ appear to be essentially the same. There should be a reason for this purely in terms of the quantum Lefschetz formalism, and which applies generally to Calabi-Yau hypersurfaces (inside manifolds with $b_2 = 1$, because we're in a single-variable context). In other words, it should be true in general that Theorem \ref{th:1-variable} is compatible with classical mirror symmetry. However, this is beside the thrust of the present paper.
\end{remark}

\subsection{Consequences}
Theorem \ref{th:1-variable} has implications whose statements do not involve vanishing cycles or relative Fukaya categories. We don't want to engage into an extensive discussion, but we can show a little of what that means. Write $\scrF_\bK(\bar{M})$ for the Fukaya category of $\bar{M}$ in the ordinary sense \cite{fooo}, which is defined over the algebraically closed Novikov field 
\begin{equation} \label{eq:novikov-field}
\begin{aligned}
& \bK = \big\{ c = \textstyle \sum_r c_r q^r, \;:\; r \in \bR, \; c_r \in \bC, \text{ and for any $R \in \bR$,} \\[-.3em] & \qquad\qquad \qquad \text{there are only finitely many $c_r \neq 0$ with $r \leq R$}. \big\}
\end{aligned}
\end{equation}
Objects of $\scrF_{\bK}(\bar{M})$ are Lagrangian submanifolds $L \subset \bar{M}$ which are graded, {\em Spin}, and come with a bounding cochain (a solution of the curved Maurer-Cartan equation), as well as a flat complex line bundle.

\begin{corollary} \label{th:algebraic-closure}
Assume that Theorem \ref{th:1-variable} applies, and let $f$ be the function that appears there. Take the full $A_\infty$-subcategory of $\scrF_{\bK}(\bar{M})$ consisting of those $L \subset M$ such that $H^1(L) = 0$. Then, that subcategory is quasi-isomorphic to one which is defined over the algebraic closure $\overline{\bQ(f)} \subset \bK$.
\end{corollary}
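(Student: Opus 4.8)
The plan is to deduce this from Theorem \ref{th:1-variable} together with the split-generation of the Fukaya category by vanishing cycles, thereby reducing it to a descent statement for rigid objects along the field extension $\overline{\bQ(f)} \subset \bK$. First I would record the comparison. Because $\delta M$ represents the symplectic class, recording intersection numbers with $\delta M$ agrees with recording symplectic areas; hence the full $A_\infty$-subcategory of $\scrF_{\bK}(\bar{M})$ on the vanishing cycles $(V_1,\dots,V_m)$, equipped with their canonical brane structures and trivial local systems, is quasi-isomorphic to $\scrB_q \otimes_{\bQ[[q]]} \bK$. Moreover, by the generation results for anticanonical Lefschetz pencils, those objects split-generate the full $A_\infty$-subcategory $\scrD \subset \scrF_{\bK}(\bar{M})$ on the Lagrangians $L \subset M$ with $H^1(L) = 0$ (any such $L$ is automatically exact in $M$). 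So $\scrD$ is quasi-isomorphic to a full subcategory of the category $\mathrm{Perf}(\scrB_q \otimes_{\bQ[[q]]} \bK)$ of perfect modules.

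Next I would pass to the polynomial model. Since $f \in \bQ[[q]]$ is non-constant it is transcendental over $\bQ$, so $\bQ[f]$ is a polynomial ring, $\bQ(f)$ a rational function field, and one has inclusions $\bQ[f] \subset \bQ[[q]] \subset \bK$. Writing $\scrB_q'$ for the model of Theorem \ref{th:1-variable}, whose operations have coefficients in $\bQ[f]$, I put $\bF := \overline{\bQ(f)} \subset \bK$ and $\scrA := \scrB_q' \otimes_{\bQ[f]} \bF$, an $A_\infty$-category over $\bF$. Then $\scrB_q \otimes_{\bQ[[q]]} \bK \htp \scrA \otimes_{\bF} \bK$, and so by the previous step $\scrD$ is quasi-isomorphic to a full subcategory of $\mathrm{Perf}(\scrA \otimes_\bF \bK)$.

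The technical heart is then the descent of objects from $\bK$ back to $\bF$. Given $L \in \scrD$, viewed inside $\mathrm{Perf}(\scrA \otimes_\bF \bK)$, its structure constants involve only finitely many elements of $\bK$, so it is defined over $\scrA \otimes_\bF R$ for some finitely generated $\bF$-subalgebra $R \subset \bK$; that is, $L$ spreads out to a family over $\mathrm{Spec}\,R$. The hypothesis $H^1(L) = 0$ makes $L$ rigid: it forces the bounding cochain and local system to be trivial, and, since the Floer differential can only cut cohomology down, $\dim_{\bK} \mathit{HF}^1(L,L) \leq \dim_{\bK} H^1(L;\bK) = 0$. By flat base change this rigidity already holds at the generic point of $\mathrm{Spec}\,R$, hence, by semicontinuity, over a dense open $U \subseteq \mathrm{Spec}\,R$. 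Because $\bF$ is algebraically closed I may choose a closed point $u \in U$ with residue field $\bF$; since the family over the connected base $U$ is rigid, it is essentially constant, so $L \htp L_u \otimes_\bF \bK$ for the fibre $L_u \in \mathrm{Perf}(\scrA)$ at $u$. Letting $\scrC \subset \mathrm{Perf}(\scrA)$ be the full $A_\infty$-subcategory on the objects $L_u$ obtained as $L$ ranges over $\scrD$, the finite-dimensionality of Floer cohomology in $\scrF_{\bK}(\bar{M})$ and flat base change for $\mathrm{Hom}$-complexes of perfect modules give $\mathrm{Hom}_\scrC(L_u, L_u') \otimes_\bF \bK \iso \mathrm{Hom}_\scrD(L,L')$, whence $\scrD \htp \scrC \otimes_\bF \bK$ with $\scrC$ defined over $\overline{\bQ(f)}$.

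I expect the main obstacle to be the input to the first step: identifying the subcategory of $\scrF_{\bK}(\bar{M})$ on the vanishing cycles with $\scrB_q \otimes_{\bQ[[q]]} \bK$, and establishing split-generation of $\scrD$. Both rest on the comparison between relative and absolute Fukaya categories and on the generation theorem for anticanonical Lefschetz pencils, i.e.\ on the machinery of the companion papers (including the cited preprints). The spreading-out and rigidity argument of the third step requires some care with the deformation theory of families of twisted complexes --- in particular with the precise sense in which a rigid family is constant --- but is otherwise routine.
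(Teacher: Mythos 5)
Your proposal is correct and follows essentially the same route as the paper: rigidity of the objects with $H^1(L)=0$, split-generation by the vanishing cycles together with the identification of that subcategory with $\scrB_q \otimes_{\bQ[[q]]} \bK$, and then descent of rigid objects to $\overline{\bQ(f)}$ via the polynomial model of Theorem \ref{th:1-variable}. The only differences are that the paper justifies the generation statement concretely through the exact sequence of \cite{oh11} and the grading argument of \cite[Corollary 5.8]{seidel04}, rather than the unpublished companion preprints, and it delegates your spreading-out/``rigid family is essentially constant'' step to \cite[Lemma A.14]{seidel15}, which is exactly the descent statement your third paragraph sketches.
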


Such $L$, when made into objects of the Fukaya category, have the property that their Floer cohomology
\begin{equation}
\mathit{HF}^*_{\bK}(L,L) = H^*(\mathit{hom}_{\scrF_{\bK}(\bar{M})}(L,L))
\end{equation}
vanishes in degree $1$ (in other words, they are rigid objects). Our basis of vanishing cycles gives rise to a full subcategory of $\scrF_{\bK}(\bar{M})$ which is quasi-isomorphic to $\scrB_q \otimes_{\bQ[[q]]} \bK$, and which split-generates the entire Fukaya category. The split-generation statement is a consequence of the long exact sequence in \cite{oh11} and the grading argument from \cite[Corollary 5.8]{seidel04}. With that in mind, Corollary \ref{th:algebraic-closure} follows from Theorem \ref{th:1-variable} and \cite[Lemma A.14]{seidel15}. (It might be interesting to study generalizations to non-rigid objects, where instead of a single object one would have to consider a versal family; however, we will not pursue that here). As promised, let's consider some more specific implications on the level of Floer cohomology.

\begin{application} 
Take $L$ as in Corollary \ref{th:algebraic-closure}. Suppose that $\mathit{HF}^*_\bK(L,L)$ is one-dimensional in some degrees $i_1$, $i_2$, $i_3 = i_1+i_2$. Choose generators $x_1$ and $x_2$ in the first two degrees, and assume that their Floer-theoretic product $x_1 \cdot x_2$ is nonzero. If we write
\begin{equation}
(x_2 \cdot x_1) = g \,(-1)^{i_1i_2} (x_1 \cdot x_2) \in \mathit{HF}_\bK^{i_1+i_2}(L,L),
\end{equation} 
then $g \in \bK$ is independent of the choice of generators (and in general nonzero, due to the noncommutativity of the product). Corollary \ref{th:algebraic-closure} implies that $g \in \overline{\bQ(f)}$.
\end{application}

\begin{application} 
Take a symplectic automorphism $\phi: \bar{M} \rightarrow \bar{M}$ (equipped with a grading). Consider the fixed point Floer cohomology of its iterates, denoted by $\mathit{HF}_\bK^*(\phi^r)$, with the pair-of-pants product. The degree zero part of these groups forms a graded algebra over $\bK$,
\begin{equation} \label{eq:phi-ring}
\bigoplus_{r \geq 0} \mathit{HF}^0_\bK(\phi^r).
\end{equation}
(See \cite{aldi-zaslow06, gross-siebert19} for appearances of specific instances of this algebra in mirror symmetry.) In the situation of Theorem \ref{th:1-variable}, it turns out that \eqref{eq:phi-ring} is defined over $\overline{\bQ(f)}$. To see why that is true, one uses the product $\bar{M}^2 = \bar{M} \times \bar{M}$, where the sign of the symplectic form on the second factor is reversed. The graph $\Gamma_\phi = \{(\phi(x),x)\}$ is a Lagrangian submanifold of $\bar{M}^2$. All graphs can be made into objects of the Fukaya category in a preferred way (up to quasi-isomorphism), and
\begin{equation}
\mathit{HF}_\bK^*(\phi) = \mathit{HF}^*_\bK(\Gamma_\phi, \Gamma_{\mathit{id}}).
\end{equation}
There are exact triangles in $\scrF_K(\bar{M}^2)$ associated to Dehn twists along any Lagrangian sphere $S$,
\begin{equation} \label{eq:surgery}
\xymatrix{\phi(S) \times S \ar[rr]^-{\text{canonical}} && \Gamma_{\phi} \ar[rr] && \Gamma_{\phi\tau_S}
\ar@/^1pc/[llll]^-{[1]}
 }
\end{equation}
(The version of this statement for monotone symplectic manifolds is in \cite{wehrheim-woodward15, mak-wu15} and, as pointed out in the latter reference, one can think of it as a mild generalization of the exact triangle for Lagrangian surgery \cite{fukaya-oh-ohta-ono07}, hence prove it by cobordisms \cite{biran-cornea13}; that approach generalizes to the Calabi-Yau case.) By repeated use of that, for $S = V_i$, and the same grading trick as in \cite[Corollary 5.8]{seidel04}, one can show that $\scrF_\bK(\bar{M}^2)$ is split-generated by products $V_i \times V_j$. In the situation of Theorem \ref{th:1-variable}, all graphs are rigid objects, since we have assumed that $H^1(\bar{M}) = 0$. Hence, Corollary \ref{th:algebraic-closure} implies that the full subcategory of all graphs is defined over $\bQ(f)$, which in particular yields the result we have stated.
\end{application}

\subsection{Structural properties\label{subsec:mirror}}
Underlying Theorem \ref{th:1-variable} is a more detailed statement. Let's first introduce the directed version of \eqref{eq:b-algebra},
\begin{equation} \label{eq:simple-directed}
\tilde{\scrC} = \bigoplus_k \bQ \tilde{e}_{V_k} \oplus \bigoplus_{i<j} \mathit{CF}^*(V_i,V_j).
\end{equation}
The second summand inherits an $A_\infty$-structure from that of $\scrB$, and we then add the $\tilde{e}_{V_k}$ artificially, as strict identity endomorphisms of each $V_k$. Next, consider a kind of double of $\tilde{\scrC}$, namely
\begin{equation} \label{eq:trivial-extension}
\tilde{\scrC} \oplus \tilde{\scrC}^\vee[1-n],
\end{equation}
where $\tilde\scrC^\vee$ is the dual graded vector space, which appears in \eqref{eq:trivial-extension} with its grading shifted up by $1-n$. Let's give \eqref{eq:trivial-extension} an additional grading (called weight grading for lack of a better name; it has nothing to do with Hodge theory), where the first summand has weight $0$, and the second weight $-1$.

\begin{theorem} \label{th:model}
In the situation of Theorem \ref{th:1-variable}, there is a quasi-isomorphic model for $\scrB_q$ which lives on the space \eqref{eq:trivial-extension}, and which has the following properties. All $A_\infty$-operations are nondecreasing with respect to weights; the weight $0$ part of those operations is the trivial extension algebra constructed from $\tilde\scrC$; and the part that increases weights by $r$ has coefficients which are polynomials of degree $\leq r$ in $f$. Moreover, in this context, knowledge of the weight $\leq 1$ part determines the entire $A_\infty$-structure, up to quasi-isomorphism.
\end{theorem}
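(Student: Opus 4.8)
The plan is to build the model one weight at a time, extracting its ingredients from the Lefschetz fibration and using the linear equation \eqref{eq:2nd-order} to manufacture the change of variables that makes everything polynomial. First I would set up the weight-$0$ picture: that $\scrB$ itself (the $q=0$ limit) carries a model on $\tilde\scrC \oplus \tilde\scrC^\vee[1-n]$ whose operations are precisely those of the trivial extension $A_\infty$-algebra of the directed category $\tilde\scrC$. The dual summand $\tilde\scrC^\vee[1-n]$ enters because $\bar{M}$ is a symplectic Calabi-Yau of complex dimension $n-1$, so its compact Fukaya category carries a Calabi-Yau structure of that dimension; restricting to the vanishing cycles and splitting off the directed part --- which is possible because $\tilde\scrC$ is upper triangular, hence has a well-behaved bimodule duality --- identifies $\scrB$ with that trivial extension. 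The check on the chain level amounts to choosing Floer data so that the splitting is strict and no negative-weight terms survive, an obstruction-theoretic step controlled by the vanishing, in the relevant weight-graded pieces, of the Hochschild cohomology of the trivial extension algebra; directedness of $\tilde\scrC$ is what keeps that Hochschild cohomology small.

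Next, turning on $q$: the relative Fukaya deformation $\scrB_q$ is classified over $\bQ[[q]]$ by a Maurer-Cartan-type element, and its $q$-derivative is governed by a connection on the resulting one-parameter family of $A_\infty$-structures. The key geometric input --- from \cite{seidel16}, passed through the total space $\bar{E}$ and its divisor $\delta E$, and ultimately from the forthcoming \cite{seidel21,seidel21b} --- is that this connection is the one underlying the second-order linear equation \eqref{eq:2nd-order}, with the transcendental coefficient $z^{(2)}$ coming from genus-zero Gromov-Witten invariants of $\cornerbar{E}$ through a generic point. Taking a basis of solutions $s_0,s_1$ as in \eqref{eq:y0y1} and setting $f = s_1/s_0$, which by Scholium \ref{th:scholium} solves \eqref{eq:schwarz-eq}, the pair $(s_0,s_1)$ provides an explicit gauge transformation that simultaneously pushes every $q$-correction into strictly positive weight --- leaving the weight-$0$ part equal to the trivial extension of $\tilde\scrC$ --- and re-expresses the weight-$r$ part of each operation as a polynomial of degree $\le r$ in $f$. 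Since the ambiguity \eqref{eq:conformal} in $(s_0,s_1)$, hence in $f$, changes the model only by a further gauge transformation, it leaves the quasi-isomorphism class untouched (Remark \ref{th:doesnt-matter}).

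For the final assertion, I would work in the category of weight-graded, weight-nondecreasing $A_\infty$-structures of this particular shape and argue by induction on $r \ge 1$ that the weight-$\le r$ truncation determines the weight-$(r+1)$ part up to gauge equivalence: the obstruction to a lift lies in one weight-graded piece of the Hochschild cohomology of $\tilde\scrC \oplus \tilde\scrC^\vee[1-n]$, the ambiguity of a lift in the adjacent piece, and both are supported in weights $\le 1$ because $\tilde\scrC$ is directed and the Calabi-Yau duality is exact --- equivalently, the deformation is versal relative to its weight-$\le 1$ data (``the directed subalgebra plus a small amount of first-order information''). Theorem \ref{th:1-variable} then follows by discarding the weight grading and absorbing the $f$-polynomials into coefficients. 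I expect the main obstacle to be the second step: pinning down that $\scrB_q$ is controlled by exactly the equation \eqref{eq:2nd-order}, with its external term, rather than by some a priori larger system, and converting that analytic fact into the algebraic ``polynomial-in-$f$, weight-filtered'' normal form. That is the part genuinely requiring the total-space machinery of \cite{seidel21,seidel21b}; by comparison the weight-$0$ rigidification and the versality of the weight filtration are fairly standard $A_\infty$-deformation theory once the relevant Hochschild cohomology has been computed using directedness of $\tilde\scrC$.
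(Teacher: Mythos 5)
There are two genuine gaps. First, your opening step is false as stated: $\scrB$ at $q=0$ is \emph{not} quasi-isomorphic to the trivial extension algebra of $\tilde\scrC$. Theorem \ref{th:model} only asserts that the weight-preserving part of the model is the trivial extension; the weight-increasing part is already nonzero at $q=0$, because it encodes the bimodule map $\epsilon$ from the exact triangle \eqref{eq:delta-triangle}. Indeed, in the paper's normal form one has $\epsilon_q = \epsilon_0 s_0 + \epsilon_1 s_1$ with $s_0(0) \neq 0$, so the constant terms of the degree-$\leq r$ polynomials in $f$ survive at $q=0$; identifying $\scrB$ with the trivial extension would force $[\epsilon]=0$, which fails in general (in the toric example, $s_0$ is a nonzero multiple of $y_0\cdots y_n$). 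Relatedly, your second step applies the connection/ODE to ``the deformation $\scrB_q$'' as a whole, but $\scrB_q$ is not a trivial deformation and carries no such connection. The paper's organization is: split the data into the subalgebra $\scrC_q$ and the class $\epsilon_q$ of \eqref{eq:delta-triangle-2}; prove (Proposition \ref{th:1-variable-3}, via relative Hamiltonian Floer theory and closed-open maps -- this is itself a geometric conclusion, not automatic) that the Kaledin class of $\scrC_q$ vanishes and that, after trivializing $\scrC_q$, the transported $\epsilon_q$ satisfies the scalar equation \eqref{eq:2nd-order-3}; hence $\epsilon_q$ lies in $H^n(\mathit{hom}_{(\scrC,\scrC)}(\scrC^\vee,\scrC)) \otimes (\bQ s_0 \oplus \bQ s_1)$, and the reconstruction result Proposition \ref{th:1-variable-2}, followed by rescaling the dual summand by $s_0^{-1}$, yields the polynomial-in-$f$ model.

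Second, the Hochschild-type vanishing that powers both that reconstruction (Lemma \ref{th:field-of-definition}, Lemma \ref{th:first-order-2}) and the final assertion that the weight $\leq 1$ part determines everything (Corollary \ref{th:concrete-classification-2}, via \eqref{eq:no-negative-degree-3}) is \emph{not} a consequence of directedness of $\tilde\scrC$ or of Calabi-Yau duality, as you claim. It is Proposition \ref{th:no-negative-degree}: the vanishing of negative-degree bimodule maps $(\scrC^\vee[-n])^{\otimes_{\scrC} r} \to \scrC$, which is a geometric input proved in \cite{seidel21} using injectivity of twisted open-closed string maps together with the specific geometry of Lefschetz pencils. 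For a general directed $A_\infty$-algebra this vanishing can fail; even in the paper's toric example the analogous statement \eqref{eq:negative-degree-ext} has to be checked by a separate algebro-geometric computation. So an induction on weight whose obstruction and ambiguity spaces are ``small because $\tilde\scrC$ is directed'' does not go through; you must import the geometric vanishing at exactly this point, and once you do, the rest of your weight-by-weight scheme essentially reproduces the paper's Section \ref{sec:noncommutative-divisors} machinery.
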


The trivial extension algebra mentioned in the theorem is an $A_\infty$-structure one can put on \eqref{eq:trivial-extension}, using the $A_\infty$-structure on $\tilde{\scrC}$ and the structure (derived from that) of of $\tilde{\scrC}^\vee$ as the dual diagonal $A_\infty$-bimodule over $\scrC$. The trivial extension algebra structure has weight $0$ (is homogeneous with respect to weights). 

If one wants to use Theorem \ref{th:model} as a concrete tool to compute $\scrB_q$ in examples, the first step is to determine \eqref{eq:simple-directed}. This $A_\infty$-structure is defined inside the exact symplectic manifold $M$, and because of directedness, it has only finitely many nonzero $A_\infty$-operations; nevertheless, computing it is quite cumbersome, because anticanonical Lefschetz pencils tend to have a large number of vanishing cycles. However, one can also apply this method to obtain information about more degenerate (and hence simpler) situations. It is hard to explain this without digging further into the details, but we can outline what it means in one instance.

\begin{example}
Consider the Lefschetz fibration (with $(n+1)^n$ critical points)
\begin{equation} \label{eq:toric-fibration}
\begin{aligned}
& \bar{E} = \{(z,x) \in \bC \times \bC P^n \;:\; x_0^{n+1} + \cdots + x_n^{n+1} = z x_0\cdots x_n\}, \\
& \bar{p}: \bar{E} \longrightarrow \bC, \;\; \bar{p}(x,z) = z, \\
& \delta E = \bC \times \{x_0\cdots x_n = 0\} \subset \bar{E}.
\end{aligned}
\end{equation}
As usual, we set $E = \bar{E} \setminus E$ and $p = \bar{p}|E$. Temporarily departing from our usual notation, write $\bar{M}$ for some smooth fibre of $\bar{p}$, and $\delta M = \delta E \cap \bar{M}$. Even though $\delta E$, and therefore $\delta M$, are not smooth (they have normal crossing singularities), one can still define $\scrB$ and its deformation $\scrB_q$, see \cite{sheridan11b}. We will use $\bC$ instead of our usual $\bQ$ as a coefficient field in Floer cohomology, because that is more familiar from a mirror symmetry viewpoint. The directed subcategory \eqref{eq:simple-directed} is particularly simple to understand: writing $\sim$ for derived equivalence,
\begin{equation} \label{eq:mirror-projective}
\tilde{\scrC} \sim \scrD^b\mathit{Coh}_\Gamma(\bC P^n).
\end{equation}
Here, $\Gamma \iso (\bZ/n\!+\!1)^{n-1}$ acts on projective space by diagonal matrices whose entries are roots of unity, and which have determinant $1$. One considers the corresponding category of equivariant coherent sheaves, and its bounded derived category as a dg category (which is the notation $\scrD^b$ above). One can prove \eqref{eq:mirror-projective} by noticing that $p: E \rightarrow \bC$ is a fibrewise $\Gamma$-cover of the standard mirror of projective space, and then apply \cite{futaki-ueda14}; this in fact yields an equivariant exceptional collection on $\bC P^n$ which explicitly corresponds to a basis of vanishing cycles for $p$. As one consequence of \eqref{eq:mirror-projective}, one can use algebraic geometry to carry out certain computations in the category of $\tilde{\scrC}$-bimodules (we refer to Sections \ref{subsec:directed} and \ref{subsec:algebra} for the notation):
\begin{equation}
\label{eq:mirror-hh-computation}
\begin{aligned} 
& H^0(\mathit{hom}_{(\tilde{\scrC},\tilde{\scrC})}( \tilde{\scrC}^\vee[-n], \tilde{\scrC}))
\iso \mathit{Hom}^\Gamma_{\bC P^n \times \bC P^n}( \scrO_{\Delta} \otimes \scrK, \scrO_{\Delta})
\\ &
= H^0(\bC P^n, \scrK^{-1})^{\Gamma} 
= \bC \cdot y_0^{n+1} \oplus \bC \cdot y_1^{n+1} \oplus \cdots \oplus \bC \cdot y_n^{n+1} + \bC \cdot (y_0\cdots y_n).
\end{aligned}
\end{equation}
The computation takes place in the category of equivariant sheaves on $\bC P^n \times \bC P^n$; $\scrO_{\Delta}$ is the structure sheaf of the diagonal; $\scrK$ is the canonical bundle; and $y_0,\dots,y_n$ are the standard coordinates on $\bC^{n+1}$. In the same way, 
\begin{equation} \label{eq:negative-degree-ext}
\begin{aligned}
& H^*(\mathit{hom}_{(\tilde{\scrC},\tilde{\scrC})}( (\tilde{\scrC}^\vee[-n])^{\otimes r}, \tilde{\scrC}))
\iso \mathit{Ext}_{\bC P^n \times \bC P^n}^{\Gamma,*}( \scrO_{\Delta} \otimes \scrK^{\otimes r}, \scrO_{\Delta}) = 0
\\ & \qquad \qquad \qquad \qquad \qquad \qquad \qquad \qquad
\;\; \text{for $*<0$ and any $r \geq 1$.}
\end{aligned}
\end{equation}

Think of \eqref{eq:toric-fibration} as coming from the pencil of degree $(n+1)$ hypersurfaces on $\bC P^n$ generated by 
\begin{align}
& x_0^{n+1} + \cdots + x_n^{n+1}  = 0, \\
& x_0\cdots x_n = 0.  \label{eq:singular-fibre}
\end{align}
Because the latter hypersurface is singular, this is not an instance of our usual framework (as defined in Section \ref{subsec:setup}). However, one can slightly perturb \eqref{eq:singular-fibre} to obtain a Lefschetz pencil. For $n>2$, that pencil satisfies \eqref{eq:simply-connected} and \eqref{eq:as} for simple topological reasons; and the $n = 2$ case is the first example from Section \ref{subsec:examples}. The perturbation gives rise to new vanishing cycles, but the old vanishing cycles remain, and their Floer-theoretic structures are unaffected up to quasi-isomorphism. More precisely, one can choose a basis of vanishing cycles for the Lefschetz fibration which consists of the old $(n+1)^n$ ones plus others added on at the end; and then, the full subcategory formed by the old vanishing cycles, up to quasi-isomorphism, is the $\scrB$ (or $\scrB_q$) we have been considering. This implies that all the statements in Theorem \ref{th:model} hold for the situation of \eqref{eq:toric-fibration}, except possibly for the last one; however, that last one can be proved by inspecting its origin (Corollary \ref{th:concrete-classification-2}) and applying \eqref{eq:negative-degree-ext}.

The outcome of this is that, in order to determine $\scrB_q$, only one additional piece of information is necessary, namely its weight $1$ part. Moreover (see again Corollary \ref{th:concrete-classification-2}), that part is determined by an element $\epsilon_q \in H^0(\mathit{hom}_{(\tilde{\scrC},\tilde{\scrC})}(\tilde{\scrC}^\vee[-n], \tilde{\scrC}))[[q]]$. In terms of \eqref{eq:mirror-hh-computation} we can write
\begin{equation}
\epsilon_q = s_0 + f(q) s_1,
\end{equation}
where $s_0,s_1 \in H^0(\bC P^n, \scrK^{-1})^{\Gamma}$, and $f$ is a solution \eqref{eq:schwarz-eq}. Suppose that one knew two additional facts:
\begin{align} \label{eq:more-info-1}
& \text{$s_0$ is a nonzero multiple of $y_0\cdots y_n$;} \\
\label{eq:more-info-2}
& \text{$\textstyle s_1 = c\sum_k y_k^{n+1} + $ (a multiple of)$ y_0\cdots y_n$, with $c \neq 0$.}
\end{align}
The first of these is not hard to derive, at least in principle. The map $s_0$ can be read off by setting $q = 0$, which means that it is part of the structure of $\scrB$, hence inherits additional symmetries from $\pi_1(E) \iso \bZ^n$, which necessarily make it a multiple of $y_0\dots y_n$; and then, one could argue indirectly, namely that a relation between the localisation of $\tilde\scrC$ along $s_0$ and the wrapped Fukaya category of $E$ implies that $s_0$ is nonzero. For \eqref{eq:more-info-2}, one could again use additional symmetries to argue that all $y_k^{n+1}$ must come with the same coefficient. The remaining part would amount to showing that $\scrB_q$ is not $q$-independent up to quasi-isomorphism (for which there are again possible indirect approaches, which we will not attempt to discuss here).

Assuming \eqref{eq:more-info-1}, \eqref{eq:more-info-2} hold, Theorem \ref{th:model} together with the automatic split-generation result from \cite{perutz-sheridan15, ganatra16} and an easy algebro-geometric argument imply that 
 \begin{equation} \label{eq:hms}
\begin{aligned}
&\scrF_{\bK}(\bar{M}) \sim \scrD^b\mathit{Coh}_{\Gamma}(X_q), \\
& \qquad \qquad
X_q = \big\{ y_0^{n+1} + \cdots + y_n^{n+1} = w(y_0\cdots y_d), \;
w = a/f(q) + b\} \subset \bP^n(\bK)
\end{aligned}
\end{equation}
for some $a \in \bC^*$, $b \in \bC$. Here, $\sim$ stands for derived equivalence where the left side has been Karoubi-completed (perfect modules). The statement \eqref{eq:hms} is a form of homological mirror symmetry for Calabi-Yau hypersurfaces in projective space, which includes the determination of the mirror map up to two unknown constants $a,b$; this remaining ambiguity goes back to  \eqref{eq:conformal}. The outline of argument given here is incomplete, since our idea for \eqref{eq:more-info-1} is based on results not readily available in the literature, and we have given no indication of how to approach \eqref{eq:more-info-2}; the only purpose of this discussion was to give the reader an idea of how the amount of computation needed would compare to other approaches, such as the combination of \cite{sheridan11b} and \cite{ganatra-perutz-sheridan15}.
\end{example}

\subsection{Novikov rings\label{subsec:multivariable}}
We'll now discuss what to do if assumption \eqref{eq:as} fails. Take
\begin{equation}
\label{eq:h-lattice}
H = H_2(\cornerbar{E};\bZ)/\mathit{torsion} \iso \bZ^{r+2}, \;\; r = b_2(\cornerbar{E}) - 2.
\end{equation}
The intersection pairings with $\bar{M}$ and $\delta E|$ give homomorphisms $H \rightarrow \bZ$. We use them to define a graded Novikov ring $\Lambda$:
\begin{equation} \label{eq:x-series}
\begin{aligned} 
& \Lambda^i = \big\{ x = \textstyle \sum_A x_A q^A \;:\; A \in H \text { with } 2(A \cdot \bar{M}) = i; \; x_A \in \bQ; \; \text{ and for any $C \in \bZ$,} \\[-.3em] & \qquad\qquad \qquad \qquad\text{there are only finitely many $x_A \neq 0$ with $A \cdot \delta E| \leq C$}. \big\}
\end{aligned}
\end{equation}
One can use the same intersection numbers to collapse $\Lambda$ back to a simpler graded ring:
\begin{equation} \label{eq:specialize}
\begin{aligned}
& K: \Lambda \longrightarrow \bQ[t,t^{-1}]((q)), \; |t| = 2, \\
& K(q^A) = t^{A \cdot \bar{M}} q^{A \cdot \delta E|},
\end{aligned}
\end{equation}
Take a class $z \in H^2(\cornerbar{E};\Lambda^j)$, which is an expression as in \eqref{eq:x-series} but with coefficients $z_A \in H^2(\cornerbar{E};\bQ)$. One can associate to it a derivation of degree $j$,
\begin{equation} \label{eq:z-derivation}
\begin{aligned}
& \partial_z: \Lambda^i \longrightarrow \Lambda^{i+j}, \\
& \partial_z(q^{A_2}) = \textstyle \sum_{A_1} (z_{A_1} \cdot A_2) q^{A_1+A_2}.
\end{aligned}
\end{equation}

We find it convenient to introduce more explicitly the filtration with respect to which $\Lambda$ is complete. Namely, set
\begin{align}
&
H_{\geq k} = \{A \in H \;:\; A \cdot \delta E| \geq k\}, \\
&
H_k = \{A \in H\;:\; A \cdot \delta E| = k\},
\end{align}
and consider the graded subspace $\Lambda_{\geq k} \subset \Lambda$ consisting of series \eqref{eq:x-series} where the sum is only over $A \in H_{\geq k}$. Clearly,
\begin{equation}
\begin{aligned}
& \Lambda_{\geq k}/\Lambda_{\geq k+1} = \bQ[H_k], \\
& K(\Lambda_{\geq k}) \subset q^k\bQ[t,t^{-1}][[q]].
\end{aligned}
\end{equation}
From now on, we will assume (mostly for convenience) that 
\begin{equation}
\label{eq:connected} \text{$\delta M$ is connected.}
\end{equation}
As a consequence, there is a distinguished homology class
\begin{equation} \label{eq:a-star-class}
A_* \in H_2(\cornerbar{E};\bZ), \text{ represented by the constant sections in $\delta E| \iso \bC| \times \delta M$.}
\end{equation}
When regarded modulo torsion, this is an element $A_* \in H_{-1}$ of degree $2$. 
Consider more specifically the derivations \eqref{eq:z-derivation} associated to classes 
\begin{equation} \label{eq:z-field}
z \in [\delta E|] q^{A_*} + H^2(\cornerbar{E};\Lambda_{\geq 0}^2) \subset
H^2(\cornerbar{E};\Lambda_{\geq -1}^2).
\end{equation}
These satisfy $\partial_z(\Lambda^i_{\geq 0}) \subset \Lambda^{i+2}_{\geq 0}$. That is a special case ($k = 0$) of the observation that
\begin{equation} \label{eq:differential-term}
f \in \Lambda_{\geq k}\;\; \Longrightarrow\;\; \partial_z f \in k q^{A_*} f + \Lambda_{\geq k}.
\end{equation}
An elementary argument shows:

\begin{lemma} \label{th:z-function}
Any $f_0 \in \bQ[H_0]$ has a unique extension $f \in f_0 + \Lambda_{\geq 1}$ satisfying $\partial_z f = 0$. (And that extension operation preserves degrees.)
\end{lemma}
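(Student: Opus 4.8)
The plan is to solve $\partial_z f = 0$ by a direct recursion along the $\delta E|$-adic filtration; the only real input is the leading-order behaviour of $\partial_z$ recorded in \eqref{eq:differential-term}.

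First I would set up the bookkeeping. Because of the Novikov condition in \eqref{eq:x-series}, each graded piece $\Lambda_{\geq k}/\Lambda_{\geq k+1}$ is literally the group ring $\bQ[H_k]$ with \emph{finite} support, and $\Lambda$ is complete for this filtration, so every $f \in f_0 + \Lambda_{\geq 1}$ is uniquely a filtration-bounded-below sum $f = \sum_{k\geq 0} f^{[k]}$ with $f^{[0]} = f_0$ and $f^{[k]} \in \bQ[H_k]$. Next I would split $\partial_z$ according to the filtration: writing $z = [\delta E|]q^{A_*} + z'$ with $z' \in H^2(\cornerbar{E};\Lambda^2_{\geq 0})$ and grouping the terms of \eqref{eq:z-derivation} by the value of $A_1 \cdot \delta E|$, one gets $\partial_z = (\partial_z)_{-1} + \sum_{j\geq 0}(\partial_z)_j$, where $(\partial_z)_j$ raises $\delta E|$-degree by exactly $j$ and maps $\bQ[H_k] \to \bQ[H_{k+j}]$; each $(\partial_z)_j$ involves only finitely many terms of $z$ (again the Novikov condition), so it preserves the finitely supported group rings. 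The term $(\partial_z)_{-1}$ comes only from the leading coefficient $[\delta E|]$, and on $\bQ[H_k]$ it sends $q^{A_2} \mapsto ([\delta E|]\cdot A_2)\,q^{A_*+A_2} = k\,q^{A_*}q^{A_2}$, i.e.\ it is multiplication by $k q^{A_*}$ --- which is exactly \eqref{eq:differential-term}.

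Reading the equation $\partial_z f = 0$ off in $\delta E|$-degree $m$ then gives
\[
(m+1)\, q^{A_*}\, f^{[m+1]} \;=\; -\sum_{j=0}^{m} (\partial_z)_j\big(f^{[m-j]}\big), \qquad m \geq 0,
\]
with the $\delta E|$-degree $m=-1$ component vacuous (its left-hand side is $0\cdot q^{A_*}f_0 = 0$, in accordance with \eqref{eq:differential-term} for $k=0$). For each $m\geq 0$, multiplication by $(m+1)q^{A_*}$ is a bijection $\bQ[H_{m+1}] \to \bQ[H_m]$: it is multiplication by the unit $q^{A_*} \in \bQ[H]$ composed with scaling by the nonzero rational $m+1$, and this is the one place where $\bQ$-coefficients (rather than $\bZ$) are needed. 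Hence, starting from $f^{[0]} = f_0$, the displayed identity determines $f^{[1]}, f^{[2]},\dots$ successively and uniquely, which yields both existence and uniqueness of the desired $f \in f_0 + \Lambda_{\geq 1}$.

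For the degree claim I would rewrite the recursion as $f^{[m+1]} = -\tfrac{1}{m+1}\,q^{-A_*}\sum_{j=0}^{m}(\partial_z)_j(f^{[m-j]})$ and observe that, since $\partial_z$ has internal degree $2$ while multiplication by $q^{-A_*}$ has internal degree $-2$, the operator passing from $(f^{[0]},\dots,f^{[m]})$ to $f^{[m+1]}$ preserves the internal $\bZ$-grading; so if $f_0$ is homogeneous then so is every $f^{[k]}$, and the general case follows by linearity. The argument is elementary throughout; the only point needing care is keeping the two gradings apart --- the $\delta E|$-filtration, which drives the recursion, versus the internal grading --- together with the remark that each $(\partial_z)_j$ has only finitely many terms, so the recursion never leaves the finitely supported rings $\bQ[H_k]$. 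I do not anticipate a genuine obstacle.
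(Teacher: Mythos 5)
Your proof is correct and is exactly the argument the paper intends: the paper offers no details beyond ``an elementary argument shows'', with \eqref{eq:differential-term} as the hint, and your order-by-order recursion along the $\delta E|$-filtration --- using that each $(\partial_z)_j$ preserves the finitely supported pieces and that multiplication by $(m+1)q^{A_*}$ is a bijection $\bQ[H_{m+1}] \to \bQ[H_m]$, with the degree claim following since $\partial_z$ has degree $2$ and $q^{-A_*}$ degree $-2$ --- is precisely that elementary argument.
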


There is also an associated Schwarzian differential operator $S_z$, which is \eqref{eq:schwarz} with the derivative replaced by $\partial_z$. More precisely, this operator has the form
\begin{equation} \label{eq:z-schwarzian}
S_z: \{f \in \Lambda_{\geq 1}^i \text{ such that $q^{A_*}f \in \Lambda_{\geq 0}$ is invertible}\} \longrightarrow \Lambda_{\geq 0}^4.
\end{equation}
Applying the Schwarzian to such $f$ makes sense because $\partial_z f \in q^{A_*}f + \Lambda_{\geq 1}$ is invertible. $S_zf$ is unchanged under transformations
\begin{equation} \label{eq:schwarz-transformations-2}
f \in \Lambda^i_{\geq 1} \longmapsto \frac{f}{a+bf} \in \Lambda^{i-k}_{\geq 1} \;\; \text{with $a \in \Lambda_{\geq 0}^{k}$ invertible, 
$b \in \Lambda_{\geq 0}^{k-i}$, $\partial_z a = \partial_z b = 0$.}
\end{equation}
Each orbit of the transformations \eqref{eq:schwarz-transformations-2} contains a unique $f \in q^{-A_*} + \Lambda_{\geq 3}^{-2}$. Moreover,
\begin{equation}
S_z(f+\tilde{f}) \in S_zf - \partial_z^3 \tilde{f} + \Lambda_{\geq k-2}^4 \;\; 
\text{ for $f \in q^{-A_*} + \Lambda_{\geq 3}^{-2}$ and $\tilde{f} \in \Lambda_{\geq k}^{-2}$, $k \geq 3$.}
\end{equation}
Using that, one can solve Schwarzian equations order to order in $\Lambda_{\geq k}$. The outcome is:

\begin{lemma} 
Given $g \in \Lambda_{\geq 0}^4$ and any even $i$, there is a solution of $S_zf = g$ with $|f| = i$.
Moreover, any two such solutions are related by \eqref{eq:schwarz-transformations-2}.
\end{lemma}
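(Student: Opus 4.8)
The plan is to run the order-by-order argument along the filtration $\Lambda_{\geq\bullet}$ that the author points to, using the two displayed facts just above the statement: the normal-form assertion (every orbit of \eqref{eq:schwarz-transformations-2} contains a unique representative in $q^{-A_*}+\Lambda_{\geq 3}^{-2}$) and the expansion $S_z(f+\tilde f)\in S_zf-\partial_z^3\tilde f+\Lambda_{\geq k-2}^4$, together with the fact that $S_zf$ is unchanged under \eqref{eq:schwarz-transformations-2}, the leading-term formula \eqref{eq:differential-term} for $\partial_z$, and Lemma \ref{th:z-function}. The only structural input I need is that on associated graded $\partial_z$ maps grade $m$ to grade $m-1$ by multiplication by $m\,q^{A_*}$, and that $q^{A_*}$ induces isomorphisms $\bQ[H_m]\xrightarrow{\sim}\bQ[H_{m-1}]$ (translation by $A_*\in H_{-1}$, of degree $2$); iterating, $\partial_z^3$ from grade $m$ to grade $m-3$ is multiplication by $m(m-1)(m-2)\,q^{3A_*}$, which is an isomorphism of graded pieces precisely when $m\geq 3$ (or $m\leq 0$).

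\textbf{Existence.} I would first construct a solution in the normalized degree $i=-2$. Set $f^{(2)}=q^{-A_*}$, which lies in the domain \eqref{eq:z-schwarzian}, so $g-S_zf^{(2)}\in\Lambda_{\geq 0}^4$. Inductively, given $f^{(k)}\in q^{-A_*}+\Lambda_{\geq 3}^{-2}$ with $g-S_zf^{(k)}\in\Lambda_{\geq k-2}^4$ ($k\geq 2$), look for a correction $\tilde f^{(k)}\in\Lambda_{\geq k+1}^{-2}$ and put $f^{(k+1)}=f^{(k)}+\tilde f^{(k)}$. The displayed expansion (with its ``$k$'' equal to $k+1\geq 3$) gives $g-S_zf^{(k+1)}\equiv(g-S_zf^{(k)})+\partial_z^3\tilde f^{(k)}\pmod{\Lambda_{\geq k-1}^4}$; since $\partial_z^3$ from grade $k+1$ to grade $k-2$ is multiplication by $(k+1)k(k-1)\,q^{3A_*}$, an isomorphism $\bQ[H_{k+1}]^{-2}\to\bQ[H_{k-2}]^4$ for $k\geq 2$, one can choose $\tilde f^{(k)}$ cancelling the leading term of $g-S_zf^{(k)}$, so $g-S_zf^{(k+1)}\in\Lambda_{\geq k-1}^4$. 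As $\Lambda$ is complete for the filtration, $f=q^{-A_*}+\sum_{k\geq 2}\tilde f^{(k)}$ converges, lies in $q^{-A_*}+\Lambda_{\geq 3}^{-2}$, and satisfies $S_zf=g$. For arbitrary even $i$: an element of the domain \eqref{eq:z-schwarzian} in degree $i$ has $q^{A_*}$-multiple a unit of $\Lambda_{\geq 0}^{i+2}$, whose leading monomial $q^A$ ($A\in H_0$) extends by Lemma \ref{th:z-function} to a $\partial_z$-closed unit $u\in\Lambda_{\geq 0}^{i+2}$; then $uf$ is a degree-$i$ solution, since $S_z$ is insensitive to multiplication by $\partial_z$-closed elements (the ratio $\partial_z^2(uf)/\partial_z(uf)$ does not involve $u$).

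\textbf{Uniqueness.} Because $S_z$ is constant on orbits of \eqref{eq:schwarz-transformations-2} and each orbit meets $q^{-A_*}+\Lambda_{\geq 3}^{-2}$ in a unique point, it is enough to show that two solutions $f,f'$ \emph{in that normal form} with $S_zf=S_zf'$ are equal. If not, let $k\geq 3$ be minimal with $f-f'\in\Lambda_{\geq k}^{-2}$ (so its grade-$k$ part is nonzero). The displayed expansion applied to $f'=f+(f'-f)$ yields $\partial_z^3(f'-f)\in\Lambda_{\geq k-2}^4$; but the grade-$(k-3)$ part of $\partial_z^3(f'-f)$ is $k(k-1)(k-2)\,q^{3A_*}$ times the grade-$k$ part of $f'-f$, which is nonzero since $k\geq 3$ and $q^{3A_*}$ is injective — a contradiction. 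Hence $f=f'$. For the stated form: given solutions $f_1,f_2$ of degree $i$, transform each to its normal form via \eqref{eq:schwarz-transformations-2}; the two normal forms share the value $g=S_zf_1=S_zf_2$, hence coincide; and since the transformations \eqref{eq:schwarz-transformations-2} are closed under composition and inversion (they are fractional-linear in $f$ with $\partial_z$-closed coefficients), $f_1$ and $f_2$ are related by a single transformation of that type.

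\textbf{Main obstacle.} Conceptually this is a routine deformation-theoretic induction, so the difficulties are bookkeeping rather than substance: (i) verifying at each stage that $\partial_z^3$ is an isomorphism between the relevant associated-graded pieces — this is exactly why the normal form is placed in degree $-2$ and filtration level $\geq 3$, keeping $\partial_z^3$ away from the grades $0,1,2$ where \eqref{eq:differential-term} shows it fails to be injective; and (ii) handling arbitrary even $i$, since the normalization is tied to degree $-2$ and one must produce other degrees by a degree shift, for which one needs $\partial_z$-closed units in all even degrees — a point I would want to confirm follows from the standing hypotheses (in particular the connectedness assumption \eqref{eq:connected}, which supplies the class $A_*$).
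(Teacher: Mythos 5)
Your proposal is correct and is essentially the paper's argument: the paper proves this lemma only by the remark that one "solves order to order in $\Lambda_{\geq k}$" using the two displayed facts (the normal form $q^{-A_*}+\Lambda_{\geq 3}^{-2}$ and the expansion $S_z(f+\tilde f)\in S_zf-\partial_z^3\tilde f+\Lambda_{\geq k-2}^4$), and your induction, with $\partial_z^3$ acting on associated graded pieces as $m(m-1)(m-2)q^{3A_*}$, is exactly that argument spelled out, including the reduction of uniqueness to the normal form.

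Concerning the point you flag at the end: a $\partial_z$-closed unit of degree $i+2$ exists (by Lemma \ref{th:z-function}) precisely when $H_0$ contains a class of degree $i+2$, i.e.\ when $\bQ[H_0]$ has a unit in that degree; and this is also exactly the condition for the degree-$i$ domain of \eqref{eq:z-schwarzian} to be nonempty, since the leading term of an invertible $q^{A_*}f$ is such a unit. So your degree-shifting step is sharp — it produces a solution in every degree in which a solution can exist at all — and the residual issue (the map $H_0\to\bZ$, $A\mapsto A\cdot\bar{M}$, need not be surjective, e.g.\ for the quintic pencil it has image $5\bZ$) is a caveat about the blanket phrase "any even $i$" in the statement itself, not a gap in your argument. (Also, a trivial slip: $\partial_z^3$ on grade $m$ is an isomorphism for $m\geq 3$ or $m\leq -1$, not $m\leq 0$; you never use that range.)
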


\subsection{The general result\label{subsec:multivariable-2}}
We use the same enumerative invariants as before, but in a version which separates out the contribution from different homology classes. To avoid confusion, we use the subscript $\Lambda$ for the new version, which is
\begin{equation}
z^{(k)}_\Lambda = \sum_{A \cdot \bar{M} = k} z_A q^A \in H^{4-2k}(\cornerbar{E};\Lambda^{2k}).
\end{equation}
The analogues of \eqref{eq:z1}, \eqref{eq:z2} say that
\begin{align}
\label{eq:refined-z1}
& z^{(1)}_\Lambda \in [\delta E|] q^{A_*} + H^2(\cornerbar{E}; \Lambda^2_{\geq 0}), \\
\label{eq:refined-z2}
& z^{(2)}_\Lambda \in H^0(\cornerbar{E};\Lambda^4_{\geq 0}) = \Lambda^4_{\geq 0}.
\end{align}
We will use the first invariant to define a derivation \eqref{eq:z-derivation}, and the second one as the inhomogeneous term in the associated Schwarzian equation
\begin{equation} \label{eq:weird-schwarz-0}
S_{z^{(1)}_\Lambda} f_\Lambda + 8z^{(2)}_\Lambda = 0.
\end{equation}
To make the outcome more explicit, choose $A_1,\dots,A_r \in H$, which form a basis for the subspace of classes $A$ with $A \cdot \bar{M} = A \cdot \delta E| = 0$. By Lemma \ref{th:z-function}, there are unique 
\begin{equation} \label{eq:g-lambda-k}
g_{\Lambda,k} \in q^{A_k} + \Lambda_{\geq 1}^0 \subset (\Lambda_{\geq 0}^0)^\times
, \;\; \partial_{z^{(1)}_\Lambda} g_{\Lambda,k} = 0. 
\end{equation}
Take those functions as well as a solution of \eqref{eq:weird-schwarz-0} of degree $0$, and write
\begin{align}
\label{eq:specialize-2}
& f = K(f_{\Lambda}) \in \bQ[[q]], \;\; f(0) = 0, \; f'(0) \neq 0, \\
\label{eq:specialize-fk}
& g_k = K(g_{\Lambda,k}) \in \bQ[[q]]^{\times}.
\end{align}

\begin{theorem} \label{th:main}
Suppose that \eqref{eq:connected} and \eqref{eq:simply-connected} hold. Then, there is a quasi-isomorphic model for $\scrB_q$, which is defined over $\bQ[f,g_1^{\pm 1},\dots,g_r^{\pm 1}] \subset \bQ[[q]]$, where $f$ and $g_k$ are from \eqref{eq:specialize-2}, \eqref{eq:specialize-fk}.
\end{theorem}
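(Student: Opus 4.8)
The plan is to carry out the argument behind Theorems \ref{th:1-variable} and \ref{th:model} over the full graded Novikov ring $\Lambda$ of \eqref{eq:x-series} rather than over $\bQ[[q]]$, and then to specialize. One works with the $\Lambda$-linear refinement $\scrB_\Lambda$ of the relative Fukaya algebra, in which a disc contributing to an operation is weighted by $q^A$, with $A$ its homology class, rather than by $q^{\delta E| \cdot A}$; this is a $\Lambda$-linear $A_\infty$-structure on $\scrB \otimes_\bQ \Lambda$, homogeneous of the usual degrees once $q^A$ is assigned degree $2(A\cdot\bar M)$, and the ring homomorphism $q^A \mapsto q^{\delta E| \cdot A}$ (which agrees with the map $K$ of \eqref{eq:specialize} on degree-$0$ elements) sends $\scrB_\Lambda$ to $\scrB_q$ — this is the defining relation between the $\Lambda$-linear and $\bQ[[q]]$-linear relative Fukaya categories. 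Since this homomorphism carries $f_\Lambda, g_{\Lambda,k}$ to $f, g_k$, it maps $\bQ[f_\Lambda, g_{\Lambda,1}^{\pm1},\dots,g_{\Lambda,r}^{\pm1}] \subset \Lambda$ into $\bQ[f,g_1^{\pm1},\dots,g_r^{\pm1}] \subset \bQ[[q]]$, so it is enough to produce a model of $\scrB_\Lambda$ defined over that subring of $\Lambda$.

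The first main step is the $\Lambda$-linear analogue of Theorem \ref{th:model}. Using \eqref{eq:simply-connected} to dispense with bounding cochains, one puts $\scrB_\Lambda$ on $(\tilde{\scrC} \oplus \tilde{\scrC}^\vee[1-n]) \otimes_\bQ \Lambda$ with the weight grading, so that the operations are nondecreasing in weight, the weight-$0$ part is the trivial extension algebra of $\tilde{\scrC}$, and the whole $A_\infty$-structure is determined up to quasi-isomorphism by its weight-$\leq 1$ truncation. The weight-$1$ part is classified by an element $\epsilon_\Lambda \in H^0(\hom_{(\tilde{\scrC},\tilde{\scrC})}(\tilde{\scrC}^\vee[-n],\tilde{\scrC})) \otimes_\bQ \Lambda$, and — this is the symplectic input imported from \cite{seidel16}, in its $\Lambda$-refined form — the dependence of $\epsilon_\Lambda$ on the Novikov variables is governed by the second-order equation \eqref{eq:2nd-order}, now read $\Lambda$-linearly with $d/dq$ replaced by the derivation $\partial_{z^{(1)}_\Lambda}$ of \eqref{eq:z-derivation}, equivalently by the Schwarzian equation \eqref{eq:weird-schwarz-0}. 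Consequently $\epsilon_\Lambda = s_0 + f_\Lambda s_1$ with $s_0, s_1$ annihilated by $\partial_{z^{(1)}_\Lambda}$ and $f_\Lambda$ a solution of \eqref{eq:weird-schwarz-0}; solving the $A_\infty$-equations order by order in the weight filtration out of this data, the weight-$r$ part comes out as a polynomial of degree $\leq r$ in $f_\Lambda$ whose coefficients lie in the subring $\Lambda^\flat \subset \Lambda$ of elements killed by $\partial_{z^{(1)}_\Lambda}$ — the point being that $\partial_{z^{(1)}_\Lambda} f_\Lambda$ re-enters the recursion only through $\Lambda^\flat$ and through $f_\Lambda$ itself, which is exactly the content of the Schwarzian equation.

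It remains to identify the relevant part of $\Lambda^\flat$. Since $\partial_{z^{(1)}_\Lambda}$ is a derivation, $\Lambda^\flat$ is a subring; by the degree-$0$ case of Lemma \ref{th:z-function} each monomial $q^{A_k}$ has a unique flat extension $g_{\Lambda,k}$, and uniqueness together with multiplicativity of flat extensions forces the flat extensions of the group ring $\bQ[\bZ^r]$ of the sublattice $\{A \in H : A\cdot\bar M = A\cdot\delta E| = 0\}$ to span precisely $\bQ[g_{\Lambda,1}^{\pm1},\dots,g_{\Lambda,r}^{\pm1}]$, which is where the coefficients of the model live. Since $f$ and $g_k^{\pm1}$ lie in $\bQ[[q]]$ by \eqref{eq:specialize-2} and \eqref{eq:specialize-fk}, applying the homomorphism $q^A \mapsto q^{\delta E| \cdot A}$ to this model yields the asserted model of $\scrB_q$ over $\bQ[f,g_1^{\pm1},\dots,g_r^{\pm1}]$.

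The main obstacle is the $\Lambda$-linear version of Theorem \ref{th:model}: both the rigidity statement (directed data plus weight-$\leq 1$ data determine $\scrB_\Lambda$) and the identification of the weight-$1$ classifying element with the symplectic-cohomology second-order equation of \cite{seidel16} rest on obstruction-theoretic vanishing in $\tilde{\scrC}$-bimodule cohomology together with a delicate comparison argument; this is the genuinely symplectic content, handled (in the $\bQ[[q]]$ case) in the body of the paper and in its companion preprints, and passing to $\Lambda$-coefficients is largely a matter of carrying homology classes through that machinery. A secondary, purely algebraic, point is ruling out flat functions beyond $\bQ[g_{\Lambda,1}^{\pm1},\dots,g_{\Lambda,r}^{\pm1}]$ — that is, controlling $\Lambda^\flat$ in every degree in which a coefficient can occur — which is precisely what Lemma \ref{th:z-function} and the explicit description of the relevant sublattice of $H$ are there to provide.
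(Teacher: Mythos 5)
Your route is genuinely different from the paper's, and it is worth saying how. The paper (Section \ref{sec:general}) never constructs a $\Lambda$-linear Fukaya category or a connection along $\partial_{z^{(1)}_\Lambda}$: it introduces finitely many extra formal variables $q_1,\dots,q_r$ attached to chosen divisor classes $D_1,\dots,D_r$, picks a formal $B$-field so that the twisted Gromov--Witten invariants satisfy the analogue of \eqref{eq:as} (Lemma \ref{th:choose-b-field}), runs the one-variable argument essentially verbatim in the twisted setting over $\bQ[q_1^{\pm1},\dots,q_r^{\pm1}][[q]]$ (Theorem \ref{th:1-variable-twisted}), and then specializes back to $\scrB_q$ by the change of variables of Lemma \ref{th:specialize} (Corollary \ref{th:b}), matching $f$, $g_k$ with \eqref{eq:specialize-2}, \eqref{eq:specialize-fk} through the diagram \eqref{eq:kappa-diagram-2}. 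Your picture -- the flat derivation $\partial_{z^{(1)}_\Lambda}$ on the formal torus, flat coordinates $g_{\Lambda,k}$, one transverse direction $f_\Lambda$ solving \eqref{eq:weird-schwarz-0} -- is exactly the ``conceptual reformulation'' the paper gestures at in Remark \ref{th:tangent} and the paragraph after it, but as a proof it has two genuine gaps.

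First, the ``$\Lambda$-linear refinement $\scrB_\Lambda$'' is not an off-the-shelf object: a disc with boundary on the vanishing cycles has a class only in $H_2(\bar{M},\bigcup_i V_i)$, not in $H$, so weighting by $q^A$ requires cycle-level bounding data for the restrictions of the $D_l$ to each Lagrangian --- precisely the choices \eqref{eq:cycle-b-2}, \eqref{eq:big-delta} and the compatibility conditions \eqref{eq:disjoint-chord} that the paper's twisted construction supplies; you treat its existence and its relation to $\scrB_q$ as a ``defining relation''. Second, and more seriously, the step you describe as importing the input of \cite{seidel16} ``in its $\Lambda$-refined form'' is the actual crux, not bookkeeping of homology classes. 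The connections of \cite{seidel16,seidel17}, and their relative versions in Sections \ref{sec:mclean}--\ref{sec:apply-to-lefschetz}, differentiate along insertions of geometric divisor classes (the Kaledin-type obstruction is the closed--open image of the PSS image of a geometrically realized cycle, cf.\ Propositions \ref{th:first-co}, \ref{th:first-co-2}); $z^{(1)}_\Lambda$ is a Gromov--Witten cycle, not a divisor, so neither the connection in the $\partial_{z^{(1)}_\Lambda}$-direction nor the analogue of Theorem \ref{th:1st-order} (whose coefficients $\psi,\eta$ come exactly from expressing the geometric class via $z^{(1)}$, i.e.\ from \eqref{eq:as} or its twisted surrogate \eqref{eq:choose-b-field}) is provided by the existing machinery; likewise the integration theory of Section \ref{sec:connections} is developed only for $\partial_q$ over $\bQ[[q]]$ (and, in the paper, over $\bQ[q_1^{\pm1},\dots,q_r^{\pm1}][[q]]$). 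The entire purpose of the $B$-field twist is to convert the $z^{(1)}$-direction into ordinary $q$-differentiation of honest divisor weights so that those results apply without being re-proved; your proposal would have to re-derive that Floer-theoretic input over $\Lambda$ from scratch. (Your identification of the degree-zero flat subring with $\bQ[g_{\Lambda,1}^{\pm1},\dots,g_{\Lambda,r}^{\pm1}]$ via Lemma \ref{th:z-function}, and the final collapse under $K$, are fine and agree with how the paper concludes at the end of Section \ref{subsec:b-field}.)
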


Here, the notation $\bQ[f,g_1^{\pm 1},\dots,g_r^{\pm 1}]$ just stands for the subring of $\bQ[[q]]$ which those functions generate; they are not algebraically independent in general, so the structure of the subring will vary from case to case. (There is  a corresponding analogue of Theorem \ref{th:model}, which we will not state here.)

\begin{remark} \label{th:tangent}
We will now explain how this is compatible with our previous Theorem \ref{th:1-variable}. Suppose that Assumption \eqref{eq:as} holds. In that case, our derivation fits into a commutative diagram
\begin{equation} \label{eq:kappa-diagram}
\xymatrix{
\Lambda_{\geq 0}^i \ar[d]_-{\partial_{z^{(1)}_\Lambda}} \ar[rr]^-{K} && 
t^{i/2}\bQ[[q]] \ar[d]^-{\partial_{q,t}} \\
\Lambda_{\geq 0}^{i+2} \ar[rr]_-{K} &&
t^{i/2+1} \bQ[[q]],
}
\end{equation}
where, with the same functions as in \eqref{eq:write-z1}, 
\begin{equation}
\partial_{q,t} = t\psi^{-1}( \partial_q + (i/2) \eta).
\end{equation}
As a consequence, we have $\partial_q g_k = 0$, which because of $g_k(0) = 1$ implies that these functions are constant equal to $1$. Moreover, $f$ is a solution of
\begin{equation} \label{eq:schwarz-q-t}
S_{q,t} f + 8 z^{(2)} t^2 = 0,
\end{equation}
where $S_{q,t}$ is the Schwarzian operator on $\bQ[t,t^{-1}][[q]]$ associated to $\partial_{q,t}$; concretely,
\begin{equation} \label{eq:compare-schwarzians}
S_{q,t}f = t^2\psi^{-2}\Big(S_q f + \partial_q\Big(\eta - \frac{\psi'}{\psi}\Big) + \half\Big(\eta - \frac{\psi'}{\psi}\Big)^2 \Big)\;\; \text{ for $f \in \bQ[[q]]^\times$.}
\end{equation}
Hence, \eqref{eq:schwarz-q-t} is exactly our original \eqref{eq:schwarz-eq} (thereby providing a more conceptual reformulation of that equation). 
\end{remark}

It may be helpful to think in algebro-geometric terms. $T_0 = \mathit{Spec}(\bQ[H_0])$ is an algebraic torus over $\bQ$. It carries an action of the multiplicative group $\bG_m$, corresponding to the grading of that ring. Moreover, the torus has a distinguished origin, and one can consider the $\bG_m$-orbit $O_0$ through the origin. Then, $T_{\geq 0} = \mathit{Spf}(\Lambda_{\geq 0})$ is a formal thickening of that torus, still carrying the group action. Inside it, \eqref{eq:specialize} describes a formal thickening of the previously considered orbit, which we'll denote by $O_{\geq 0} \subset T_{\geq 0}$. The operators $\partial_z$ correspond to vector fields on $T_{\geq 0}$ which are transverse to $T_0$ (and contracted by the $\bG_m$-action); that explains the structure of the functions annihilated by $\partial_z$ (Lemma \ref{th:z-function}). In the situation from Remark \ref{th:tangent}, the vector field is tangent to $O_{\geq 0}$, allowing us to restrict all computations to that subspace.

\subsection{Structure of the paper} 
For the vast majority of the paper, we focus on Theorem \ref{th:1-variable}. Section \ref{sec:outline} describes its proof, which draws heavily on earlier papers in the series. After that, two ingredients in the proof are examined in more detail. One is the purely algebraic part, which is not new, but receives a streamlined presentation in Sections \ref{sec:noncommutative-divisors}--\ref{sec:connections} (in the first of those sections, the exposition goes beyond the minimum required for our applications). The second ingredient is Hamiltonian Floer cohomology relative to a symplectic divisor, which we gradually build towards in Sections \ref{sec:mclean}--\ref{sec:apply-to-lefschetz}. Here, the aim is to lift certain results from \cite{seidel16} to the relative context. Finally, Section \ref{sec:general} introduces $B$-fields, and uses them to adapt the previous argument to the more general situation of Theorem \ref{th:main}.

{\em Acknowledgments.} I'd like to thank the audiences that gracefully endured me droning on about this project for a decade and a half. Financial support was received by: the Simons Foundation, through a Simons Investigator award and the Simons Collaboration for Homological Mirror Symmetry; and the NSF, through award DMS-1904997.

\section{\label{sec:outline}The argument}
The aim of this section is to make the statement from Theorem \ref{th:1-variable} more precise, and then describe its proof, in particular explaining where the differential equation \eqref{eq:2nd-order} comes in. Rather than proceeding in logical order, we'll start with the Floer-theoretic structures which are superficially closest to the original statement, and then excavate the underlying strata, in a sense going backwards.

\subsection{Warm-up exercises\label{subsec:directed}}
Let's start by revisiting \eqref{eq:b-algebra}, which is defined using Floer theory in the fibre $M$. One can arrange (by a suitable choice of perturbation) that 
\begin{equation} \label{eq:cf-vk}
\mathit{CF}^*(V_k,V_k) = \bQ e_{V_k} \oplus \bQ t_{V_k},
\end{equation}
where $e_{V_k}$ has degree $0$ and $t_{V_k}$ has degree $(n-1)$. Set
\begin{equation} \label{eq:b-directed}
\scrC = \bigoplus_k \bQ e_{V_k} \oplus \bigoplus_{i<j} \mathit{CF}^*(V_i,V_j) \subset \scrB.
\end{equation}

\begin{lemma} \label{th:subalgebra}
$\scrC$ is an $A_\infty$-subalgebra of $\scrB$.
\end{lemma}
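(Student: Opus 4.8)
The plan is to exploit the block decomposition $\scrB = \bigoplus_{i,j}\mathit{CF}^*(V_i,V_j)$ together with a small amount of degree bookkeeping. Recall that $\mu^d_\scrB$ is assembled from the Fukaya-category operations $\mathit{CF}^*(V_{j_0},V_{j_1})\otimes\cdots\otimes\mathit{CF}^*(V_{j_{d-1}},V_{j_d})\to\mathit{CF}^*(V_{j_0},V_{j_d})$ and is extended by zero on tuples whose consecutive indices do not chain up. So I would first record the obvious fact that a potentially nonzero value $\mu^d_\scrB(x_1,\dots,x_d)$ with $x_a\in\mathit{CF}^*(V_{i_{a-1}},V_{i_a})$ must lie in the single block $\mathit{CF}^*(V_{i_0},V_{i_d})$.

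Now feed in inputs from $\scrC$. By multilinearity it suffices to treat generators, and every generator of $\scrC$ sits in a block $\mathit{CF}^*(V_i,V_j)$ with $i\le j$, where equality $i=j$ forces the generator to be a multiple of $e_{V_i}$. Hence a nonzero contribution requires a weakly increasing chain $i_0\le i_1\le\cdots\le i_d$, and its value lies in $\mathit{CF}^*(V_{i_0},V_{i_d})$. If $i_0<i_d$, that whole block is one of the summands of $\scrC$ and there is nothing to prove. The only remaining case is $i_0=i_d$, which forces $i_0=i_1=\cdots=i_d=k$ for a single index $k$, and therefore forces every input to be a multiple of $e_{V_k}$; here one must check that $\mu^d_\scrB(e_{V_k},\dots,e_{V_k})$ does not acquire a $t_{V_k}$-component.

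This diagonal check is the only real content, and I expect it to be the (mild) main obstacle; everything else is the formal combinatorics of directedness. The element $\mu^d_\scrB(e_{V_k},\dots,e_{V_k})$ has degree $2-d$ and lives in $\mathit{CF}^*(V_k,V_k)=\bQ e_{V_k}\oplus\bQ t_{V_k}$, which is concentrated in degrees $0$ and $n-1\ge 1$. For $d\ge 3$ the degree $2-d$ is negative, so the output vanishes; for $d=2$ the output has degree $0$ and hence lies in $\mathit{CF}^0(V_k,V_k)=\bQ e_{V_k}$; and for $d=1$ the Floer differential vanishes identically on $\mathit{CF}^*(V_k,V_k)$, because this rank-two complex already computes $\mathit{HF}^*(V_k,V_k)\cong H^*(S^{n-1})$, which also has total rank two (equivalently, $e_{V_k}$ is a cocycle representing the unit). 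In every case the output lies in $\bQ e_{V_k}\subset\scrC$. Thus $\scrC$ is closed under all $\mu^d_\scrB$, i.e.\ it is an $A_\infty$-subalgebra of $\scrB$.
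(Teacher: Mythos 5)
Your proof is correct and follows essentially the same route as the paper: reduce to the diagonal inputs $\mu^d_{\scrB}(e_{V_k},\dots,e_{V_k})$, dispose of everything except $\mu^1$ in the lowest dimension by degree reasons, and rule out a $t_{V_k}$-component of $\mu^1_{\scrB}(e_{V_k})$ using the Floer cohomology of $V_k$. The paper handles that last step by noting the only problematic case is $d=1$, $n-1=1$ and appealing to the nonvanishing of $\mathit{HF}^*(V_k,V_k)$, while you invoke the rank-two computation $\mathit{HF}^*(V_k,V_k)\iso H^*(S^{n-1})$ — the same mechanism.
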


\begin{proof}
All we have to check is that $\mu^d_{\scrB}(e_{V_k},\dots,e_{V_k})$ is zero for $d \neq 2$, and a multiple of $e_{V_k}$ for $d = 2$. This is true for degree reasons except for one special case ($d = 1$ and $n-1 = 1$). In that remaining case, $V_k$ is a simple closed curve with Maslov class zero on a punctured torus, and the desired property follows from the fact that $\mathit{HF}^*(V_k,V_k)$ is nonzero.
\end{proof}

In \eqref{eq:simple-directed} we considered a slightly different algebra $\tilde{\scrC}$, obtained from the second summand in \eqref{eq:b-directed} by adjoining strict units. A priori, it may appear that $\tilde{\scrC}$ has fewer nontrivial $A_\infty$-operations than $\scrC$, but in fact:

\begin{lemma} \label{th:directed}
$\tilde{\scrC}$ is isomorphic to $\scrC$.
\end{lemma}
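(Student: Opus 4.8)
The plan is to read the lemma as a strictification statement: $\scrC$ is a directed $A_\infty$-algebra whose diagonal endomorphism complexes are the one-dimensional spaces $\bQ e_{V_k}$, with $e_{V_k}$ a Floer-theoretic cocycle that is a cohomological unit and is moreover strictly idempotent (since $\hom^0_\scrC(V_k,V_k)=\bQ e_{V_k}$, one has $\mu^2_\scrB(e_{V_k},e_{V_k})=e_{V_k}$ on the nose), while $\tilde{\scrC}$ in \eqref{eq:simple-directed} simply replaces these by strict units. Before building the comparison map I would check that $\tilde{\scrC}$ really is an $A_\infty$-algebra: the $A_\infty$-relations not involving any $\tilde{e}_{V_k}$ among the inputs hold because, by directedness (Lemma \ref{th:subalgebra} and its proof), every term of such a relation again involves only off-diagonal morphisms, so it is the restriction of the corresponding relation in $\scrB$; the relations with some $\tilde{e}_{V_k}$ input reduce to the standard bookkeeping identities valid for strict units. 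I would also record the a priori bound that keeps all the inductions below finite: in a nonzero $\mu^d_\scrB(x_1,\dots,x_d)$ with the $x_i\in\mathit{CF}^*(V_{k_{i-1}},V_{k_i})$ off-diagonal one has $k_0<k_1<\cdots<k_d$, whence $d\le m-1$, and a unit can enter a nonzero operation only through $\mu^2$.

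Next I would construct an $A_\infty$-homomorphism $F\colon\scrC\to\tilde{\scrC}$ with $F^1$ the identity on each $\mathit{CF}^*(V_i,V_j)$, $i<j$, and $e_{V_k}\mapsto\tilde{e}_{V_k}$ on the units; since $F^1$ is then a linear isomorphism, $F$ is automatically an $A_\infty$-isomorphism and the lemma follows. All higher components $F^d$ ($d\ge2$) may be taken to vanish on input sequences consisting purely of off-diagonal morphisms: with that choice the $A_\infty$-functor equation for such a sequence collapses, again by directedness, to the identity $\mu^d_{\tilde{\scrC}}=\mu^d_\scrB$ on off-diagonal inputs, which holds by construction. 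It then remains to define the components of $F^d$ on sequences containing at least one unit and to verify the functor equations there, by induction on $d$. Already at $d=2$, the equation for $(e_{V_i},w)$ reads $\mu^1 F^2(e_{V_i},w)\pm F^2(e_{V_i},\mu^1 w)=\mu^2_\scrB(e_{V_i},w)-w$, whose right-hand side is a coboundary because over $\bQ$ the chain map $\mu^2_\scrB(e_{V_i},-)$ on $\mathit{CF}^*(V_i,V_j)$ is chain homotopic to the identity (this is cohomological unitality together with the fact that over a field a self-map inducing the identity on cohomology is null-homotopic rel.\ the identity); one sets $F^2(e_{V_i},-)$ equal to such a homotopy. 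No enlargement of morphism spaces is required here, precisely because each $\hom^*_\scrC(V_k,V_k)$ is already the one-dimensional $\bQ e_{V_k}$, so $F$ --- and $\tilde{\scrC}$ itself --- can be taken to live on the same underlying $R$-module as $\scrC$.

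The main obstacle will be the inductive step for $d\ge3$. Granting the functor equations in arities $<d$, the arity-$d$ equation for a fixed input sequence containing a unit exhibits a definite cochain --- assembled from the lower components $F^{<d}$ and the operations $\mu$ --- as $\mu^1_{\tilde{\scrC}}F^d(\cdots)$ modulo lower-order terms, and one needs this cochain to be a coboundary in the relevant $\hom$-complex so that $F^d$ can be solved for. That this always succeeds is the content of the standard strictification lemma for cohomologically unital $A_\infty$-categories, which I would invoke as a black box (cf.\ \cite{seidel04} for the directed Fukaya-categorical setting and \cite{fooo} and the general $A_\infty$-literature for the abstract statement). What is special to our situation --- and what upgrades the conclusion from a quasi-isomorphism to an honest isomorphism on the given spaces --- is that the unit issues are confined to the diagonal endomorphism complexes $\hom_\scrC(V_k,V_k)=\bQ e_{V_k}$, where $\scrC$ is already minimal, so no new generators need be adjoined; and that the arity bound $d\le m-1$ makes the induction terminate. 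Modulo the sign bookkeeping in the functor equations with unit insertions, which is routine, this produces the desired $F$ and hence $\scrC\cong\tilde{\scrC}$.
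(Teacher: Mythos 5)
Your overall direction (strictifying the cohomological units $e_{V_k}$) is reasonable, but the crucial step is exactly the one you outsource, and the black box you invoke does not supply it. The standard strictification lemma (\cite[Lemma 2.1]{seidel04}; see also Lemma \ref{th:unify} in this paper for the corrected hypotheses) produces a strictly unital structure $\scrC^{(1)}$ on the same graded space together with an isomorphism $\scrC^{(1)} \to \scrC$ with identity linear term; but the correcting formal diffeomorphisms in that argument are not supported on unit-containing sequences, so $\scrC^{(1)}$ in general has \emph{modified} operations on off-diagonal inputs and need not coincide with $\tilde{\scrC}$, whose off-diagonal operations are by definition the unchanged restrictions of $\mu_{\scrB}$. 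Your ansatz --- an isomorphism $F\colon \scrC \to \tilde{\scrC}$ with $F^1 = \mathrm{id}$ and $F^{\geq 2}$ vanishing on purely off-diagonal sequences --- therefore asks for something strictly stronger than the cited lemma: at each arity $d \geq 3$ you must prove that the obstruction cochain, which is supported on sequences containing at least one unit, is exact in the relevant $\mathit{Hom}$-complex. Those complexes are not acyclic (their cohomology looks like $\mathrm{Hom}$ from tensor products of $H^*(\mathit{CF})$'s and copies of $\bQ e_{V_k}$ into $H^*(\scrC)$), so exactness is a genuine claim requiring an argument with the unit homotopies, not a citation. Relatedly, your finiteness remarks are wrong: the bound $d \leq m-1$ applies only to purely off-diagonal chains, and it is false that a unit can enter a nonzero operation of $\scrC$ only through $\mu^2$ --- the nontrivial higher operations of $\scrC$ with unit insertions, of unbounded arity, are precisely what the lemma must gauge away, so your induction neither terminates nor is its inductive step justified.

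The paper circumvents exactly this difficulty with a two-step argument you could adopt. First apply \cite[Lemma 2.1]{seidel04} (noting the needed extra hypothesis that the units be nonzero in cohomology, satisfied here) to obtain a strictly unital $\scrC^{(1)}$ and an isomorphism $\scrC^{(1)} \to \scrC$. Then take the inverse isomorphism, restrict its components to the second summand of \eqref{eq:b-directed}, and extend them in the unique strictly unital way to an isomorphism $\tilde{\scrC} \to \scrC^{(1)}$: this second step needs no obstruction theory, because directedness keeps all off-diagonal compositions off-diagonal (so the functor equations on off-diagonal inputs are inherited verbatim), while on unit-containing inputs the equations hold automatically for a strictly unital functor between strictly unital algebras. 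The resulting isomorphism $\tilde{\scrC} \to \scrC$ does have nonzero higher components on off-diagonal sequences --- which is precisely the flexibility your rigid ansatz forbids, and which you would otherwise have to earn back by proving a relative obstruction-vanishing statement that your proposal leaves unaddressed.
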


\begin{proof}
For general reasons \cite[Lemma 2.1]{seidel04}, there is a strictly unital $\scrC^{(1)}$, living on the same graded space as $\scrC$, and an $A_\infty$-isomorphism
\begin{equation} \label{eq:make-unital}
\scrC^{(1)} \longrightarrow \scrC. 
\end{equation}
(Readers are hereby alerted to a mistake in the statement of \cite[Lemma 2.1]{seidel04}: the result requires the additional assumption that the identity elements are nonzero in cohomology, used in the first step in the proof; see also Lemma \ref{th:unify} later on. That assumption is clearly satisfied here.) Take the inverse of \eqref{eq:make-unital}, restrict it to the second summand in \eqref{eq:b-directed}, and then extend it again (in the unique way) to a strictly unital $A_\infty$-isomorphism
\begin{equation} \label{eq:make-unital-2}
\tilde{\scrC} \longrightarrow \scrC^{(1)}.
\end{equation}
The composition of \eqref{eq:make-unital} and \eqref{eq:make-unital-2} yields the desired statement. 
\end{proof}
%

As a consequence of Lemma \ref{th:subalgebra}, one can regard $\scrB$ and its quotient $\scrP = (\scrB/\scrC)[-1]$ (we have applied an upwards shift in the grading, for consistency with the notation used in the rest of the paper) as $\scrC$-bimodules. It is helpful to introduce a bit of terminology, which applies to any context where one has an $A_\infty$-subalgebra. Namely, let's introduce the weight filtration and its associated graded space:
\begin{equation} \label{eq:weights}
\begin{aligned}
& W^{-1}\scrB = \scrB, \quad W^0\scrB = \scrC, \quad W^1\scrB = 0, \\
& GrW\scrB = W^0\scrB \oplus (\scrB/W^0\scrB) = \scrC \oplus \scrP[1].
\end{aligned}
\end{equation}
The graded space inherits operations
\begin{equation}
\begin{aligned}
& W^0(\scrB^{\otimes d}) = \scrC^{\otimes d} \longrightarrow W^0\scrB = \scrC[2-d],
\\ 
& W^{-1}(\scrB^{\otimes d})/W^0(\scrB^{\otimes d})[-1] = \bigoplus_{r+s+1=d}
\scrC^{\otimes s} \otimes \scrP \otimes \scrC^{\otimes r} \longrightarrow (\scrB/W^0\scrB)[1-d] = \scrP[2-d].
\end{aligned}
\end{equation}
These describe the structure of $\scrC$ as an $A_\infty$-algebra, together with that of $\scrP$ as a $\scrC$-bimodule.
Of course, $\scrB$ contains more information than its graded space. For instance, from the definition of $\scrP$ as a quotient, we get an exact triangle of $\scrC$-bimodules
\begin{equation} \label{eq:rho-triangle}
\xymatrix{
\scrP \ar[rr] &&
\scrC \ar[rr]_-{\text{inclusion}} && 
\scrB \ar@/_1pc/[llll]_-{\text{projection } [1]}
}
\end{equation}
As background, we'd like to remind the reader that $A_\infty$-bimodules over $\scrC$ form a differential graded category, which we will denote by $(\scrC,\scrC)$ (see Section \ref{subsec:algebra} for a more precise account of our conventions). In \eqref{eq:rho-triangle} we are talking about chain homotopy classes of bimodule homomorphisms, which means about morphisms in the associated cohomology level (triangulated) category. 

\begin{lemma} \label{th:delta-triangle}
There is an exact triangle of $\scrC$-bimodules, involving the dual diagonal bimodule $\scrC^\vee$,
\begin{equation} \label{eq:delta-triangle}
\xymatrix{
\scrC^\vee[-n] \ar[rr]_-{\epsilon} &&
\scrC \ar[rr]_-{\mathrm{inclusion}} && 
\scrB \ar@/_1pc/[llll]_-{[1]}
}
\end{equation}
\end{lemma}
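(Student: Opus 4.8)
I would prove Lemma~\ref{th:delta-triangle} by identifying the quotient bimodule $\scrP = (\scrB/\scrC)[-1]$ with a shift of the dual diagonal bimodule $\scrC^\vee$, and then feeding that identification into the triangle \eqref{eq:rho-triangle}. Concretely, the claim is that there is a quasi-isomorphism of $\scrC$-bimodules
\begin{equation} \label{eq:p-is-dual}
\scrP \simeq \scrC^\vee[-n],
\end{equation}
after which \eqref{eq:delta-triangle} is just \eqref{eq:rho-triangle} with $\scrP$ rewritten, the connecting map $\epsilon$ being the composite $\scrC^\vee[-n] \xrightarrow{\sim} \scrP \to \scrC$ coming from the triangle. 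So the whole content is \eqref{eq:p-is-dual}.

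\textbf{Key steps.} First, I would unwind what $\scrP$ is as a graded bimodule: by definition $\scrB/\scrC$ is spanned, over the semisimple ring $R$, by the $t_{V_k}$ in degree $n-1$ together with all $\mathit{CF}^*(V_i,V_j)$ with $i>j$; after the shift $[-1]$ in $\scrP$, the $t_{V_k}$ sit in degree $n$. The dual diagonal bimodule $\scrC^\vee$ has, as its $(i,j)$-component, the dual of $\mathit{hom}_{\scrC}(V_j, V_i)$, i.e.\ essentially $\mathit{CF}^*(V_j,V_i)^\vee$; after shifting by $[-n]$, this again places the dual of the unit $e_{V_k}$ (a degree-$0$ class in $\mathit{CF}^*(V_k,V_k)$) into degree $n$. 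So at the level of graded $R$-bimodules, $\scrP$ and $\scrC^\vee[-n]$ match: the ``$t$'' direction in $\scrP$ is dual to the ``$e$'' direction in $\scrC$, and $\mathit{CF}^*(V_i,V_j)$ with $i>j$ is Poincar\'e dual to $\mathit{CF}^*(V_j,V_i)$. This last point is the crucial input: it is the statement that Floer cohomology in $M$ carries a Poincar\'e-type duality $\mathit{CF}^*(V_i,V_j) \cong \mathit{CF}^{n-1-*}(V_j,V_i)^\vee$ of the appropriate degree (the fibre $M$ is a $(2n-2)$-manifold, exact, with the $V_k$ Lagrangian spheres), compatible with the $A_\infty$-bimodule operations. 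I would invoke this as the known cyclic/Calabi-Yau-type structure on the directed Fukaya category attached to a Lefschetz fibration --- precisely the property that makes $\scrB$ a ``smooth and proper'' (in fact Calabi-Yau relative to $\scrC$) bimodule. Second, I would promote this graded-level match to an actual bimodule quasi-isomorphism: the duality pairing on Floer cochains is chain-level and respects the higher operations up to the usual signs, so one gets a closed bimodule morphism $\scrP \to \scrC^\vee[-n]$ which is an isomorphism on cohomology, hence a quasi-isomorphism. Third, substitute into \eqref{eq:rho-triangle}.

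\textbf{Main obstacle.} The real work is Step~2: producing the bimodule quasi-isomorphism $\scrP \simeq \scrC^\vee[-n]$ coherently, i.e.\ verifying that the Poincar\'e duality on Floer cochains intertwines the $\scrC$-bimodule structure on the quotient $\scrB/\scrC$ with the dual diagonal bimodule structure on $\scrC^\vee$. Degree-counting makes the underlying graded isomorphism essentially forced (and rules out most potential correction terms, much as in the proof of Lemma~\ref{th:subalgebra}), but checking compatibility with all the $\mu^{d}$-operations --- that the pairing is a cyclic structure in the relevant sense --- is where one must appeal to the structural results on directed Fukaya categories of Lefschetz fibrations from the earlier papers in the series rather than a bare-hands computation. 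I expect the formal statement of that compatibility, and its proof, to be handled by the machinery developed in Sections~\ref{sec:noncommutative-divisors}--\ref{sec:connections} together with the cited results; the role of this lemma here is to package it as the single exact triangle \eqref{eq:delta-triangle}, whose connecting map $\epsilon$ becomes the central object of study (it is the ``$\epsilon_q$'' of the introduction at $q=0$).
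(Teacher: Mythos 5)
Your route is not the paper's. The paper does not prove the triangle by exhibiting a duality $\scrP \htp \scrC^\vee[-n]$ inside the fibre; it goes the other way round, deducing that quasi-isomorphism as a consequence. Concretely, the proof is deferred to Section \ref{subsec:fukaya-of-lefschetz}: one works with the category $\scrA$ of Lefschetz thimbles and the restriction functor $\scrR$ to the fibre, invokes the exact triangle of $\scrA$-bimodules $\scrA^\vee[-n] \rightarrow \scrA \rightarrow \scrB$ proved in \cite{seidel20} (Lemma \ref{th:a-triangle}, whose $q=0$ case is what is needed here), and transports it through the quasi-isomorphism $\scrA \htp \scrC$ compatible with $\scrR$ (Lemma \ref{th:quasi-isomorphic-to-directed}), exactly as in the proof of Lemma \ref{th:delta-triangle-2}. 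Your alternative — Poincar\'e duality on Floer cochains in $M$, i.e.\ the weak Calabi--Yau structure of $\scrF(M)$, identifying $(\scrB/\scrC)[-1]$ with $\scrC^\vee[-n]$ and then rewriting \eqref{eq:rho-triangle} — is precisely the ``more self-contained approach'' the paper's remark after the lemma acknowledges and deliberately declines. For the literal statement of Lemma \ref{th:delta-triangle} your approach can be made to work, and your degree bookkeeping is right; but two caveats. First, the key input (a chain-level bimodule quasi-isomorphism $\scrB \rightarrow \scrB^\vee[1-n]$ whose restriction kills $\scrC$ up to homotopy) is an external Floer-theoretic fact about the Fukaya category of the fibre; it is not supplied by the machinery of Sections \ref{sec:noncommutative-divisors}--\ref{sec:connections}, which is purely algebraic, nor by ``structural results on directed Fukaya categories'' — the directed category $\scrC$ itself carries no Calabi--Yau structure, the duality lives on $\scrB$. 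Second, and this is why the paper takes the longer road: your construction produces some $\epsilon$, but with no geometric description, whereas the paper's $\epsilon$ comes from the Lefschetz fibration and is later identified with the image of the PSS unit class under the closed-open map (Proposition \ref{th:second-co}) and made to satisfy the differential equation \eqref{eq:2nd-order-2}; that additional information about $\epsilon$ is what the whole argument ultimately runs on, and your route would not provide it.
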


Unlike the previous algebraic observations, this is specific to Floer theory, and in particular, the map $\epsilon$ has a geometric origin. Further discussion is postponed to Section \ref{subsec:fukaya-of-lefschetz}. By comparing \eqref{eq:delta-triangle} and \eqref{eq:rho-triangle}, one sees that $\scrP$ is quasi-isomorphic to $\scrC^\vee[-n]$; and moreover, that quasi-isomorphism relates $\epsilon$ to the corresponding map in \eqref{eq:rho-triangle}, which was defined as the connecting homomorphism of the short exact sequence involving those bimodules. 

\begin{remark}
Readers looking ahead to Section \ref{subsec:fukaya-of-lefschetz} may wonder why we involve Fukaya categories of Lefschetz fibrations in the proof of Lemma \ref{th:delta-triangle}, given that there is a more self-contained approach based on the (weak) Calabi-Yau nature of $\scrF(M)$. The immediate answer is that the Lefschetz fibrations approach provides additional information about $\epsilon$, which is crucial for our argument. From a wider perspective the two approaches should be linked, by a relation between the structure of a noncommutative anticanonical divisor on the Fukaya category of the Lefschetz fibration, and the Calabi-Yau structure on the Fukaya category of its fibre. However, that is beyond the scope of the theory developed here.
\end{remark}

For $r \geq 1$, denote by $(\scrC^\vee[-n])^{\otimes_{\scrC} r}$ the $r$-th tensor power of the bimodule $\scrC^\vee[-n]$. The following result  is proved in \cite{seidel21}. The argument there combines general properties of Fukaya categories of Lefschetz fibrations (injectivity of twisted open-closed string maps) with the more specific geometry of Lefschetz pencils (which we use here for the first time in our discussion). 

\begin{proposition} \label{th:no-negative-degree}
Up to homotopy, there are no nontrivial bimodule homomorphisms from $(\scrC^\vee[-n])^{\otimes_{\scrC} r} \rightarrow \scrC$ of negative degree. In formulae,
\begin{equation} \label{eq:no-negative-degree}
H^*\big(\mathit{hom}_{(\scrC,\scrC)}((\scrC^\vee[-n])^{\otimes_{\scrC} r}, \scrC)\big) = 0
\;\; \text{ for all $r \geq 1$ and $\ast < 0$.}
\end{equation}
\end{proposition}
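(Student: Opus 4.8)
The plan is to reduce the vanishing \eqref{eq:no-negative-degree} to a statement about the Fukaya category of the Lefschetz fibration $p: E \to \bC$, where it becomes a consequence of the injectivity of a twisted open-closed map. First I would recall, from the structure recalled in Section \ref{subsec:fukaya-of-lefschetz} and Lemma \ref{th:delta-triangle}, that $\scrC^\vee[-n]$ is the bimodule corepresenting (in the category of $\scrC$-bimodules) the operation that, geometrically, inserts a copy of the anticanonical divisor $\delta M$ into a Floer-theoretic disc count. The $r$-th tensor power $(\scrC^\vee[-n])^{\otimes_\scrC r}$ then corepresents the operation of inserting $r$ such divisor constraints. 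A morphism $(\scrC^\vee[-n])^{\otimes_\scrC r} \to \scrC$ of degree $d$ is therefore the same thing as an element of a ``twisted Hochschild cohomology'' $\mathit{HH}^{d+r(n-1)}(\scrC, \scrC)$ with an $r$-fold twist by the bimodule dual, and I would like to identify this with the relevant summand of the (twisted) closed-string invariant of $E$ via an open-closed string map. The key point — this is where the Lefschetz pencil geometry enters — is that the symplectic cohomology of $E$, suitably twisted, is concentrated in nonnegative degrees: a degree count on the relevant Floer complexes, using that the vanishing cycles $V_i$ are Lagrangian \emph{spheres} (hence their self-Floer cohomology is $H^*(S^{n-1})$, concentrated in degrees $0$ and $n-1$) and that $M$ is exact, forces the negative-degree part of the closed-string side to vanish.

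The steps, in order, would be: (1) set up the twisted open-closed string map $\mathcal{OC}_r$ from $\mathit{hom}_{(\scrC,\scrC)}((\scrC^\vee[-n])^{\otimes_\scrC r}, \scrC)$, viewed as a twisted Hochschild invariant of $\scrB$ or $\scrC$, to an appropriate $r$-twisted symplectic-cohomology-type group of the Lefschetz fibration $E$ (this construction is exactly what \cite{seidel21} is cited for, so I may assume it); (2) prove that $\mathcal{OC}_r$ is injective — this is the ``injectivity of twisted open-closed string maps'' mentioned in the excerpt, and is the general-nonsense half of the \cite{seidel21} input, ultimately following from the fact that the $V_i$ split-generate and a unit/duality argument; (3) compute, or bound below, the degrees in which the target of $\mathcal{OC}_r$ can be nonzero, showing it vanishes in the degrees corresponding to $\ast < 0$ on the source side. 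Step (3) is where the specific geometry of the anticanonical Lefschetz pencil is used: the $r$-twisted closed-string group of $E$ receives contributions indexed by orbits/critical points with a grading shift governed by intersection numbers with $\delta E$ and $\bar M$, and the monotonicity of the pencil together with $c_1(E) = 0$ off $\delta E$ pins down the sign.

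The main obstacle I expect is Step (3): making the degree bound on the twisted closed-string side precise and uniform in $r$. One has to track carefully how the $r$-fold twist by $\scrC^\vee[-n]$ interacts with the grading — each twist shifts by $n-1$ on the algebraic side, and the claim is that it shifts the geometric side by an exactly compensating amount (coming from the $r$-fold divisor insertion, i.e.\ $r$ copies of a constraint of complex codimension one, contributing $2r$ to real degrees, together with the dimension shift), so that the ``negative-degree'' region maps into something that is visibly zero for degree reasons on $E$. A secondary subtlety is that the naive degree count on $\mathit{CF}^*(V_i,V_j)$ for $i \neq j$ does not immediately vanish in negative degrees, so one genuinely needs the closed-string reformulation rather than a direct chain-level argument; the exact-triangle description \eqref{eq:delta-triangle} of $\scrB$ is what lets one pass between the two. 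If the twisted open-closed machinery of \cite{seidel21} is taken as a black box with its injectivity statement, then what remains is essentially the degree bookkeeping in Step (3), and I would present that as the heart of the proof.
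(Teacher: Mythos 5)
Your outline coincides with the strategy this paper announces for Proposition \ref{th:no-negative-degree}, but there is nothing more detailed to check it against: the paper does not prove the proposition at all, it cites \cite{seidel21} (one of the two preprints that, per the warning in the introduction, do not yet exist), and the only information given is the single sentence that the argument ``combines general properties of Fukaya categories of Lefschetz fibrations (injectivity of twisted open-closed string maps) with the more specific geometry of Lefschetz pencils.'' Your Steps (1)--(2) reproduce exactly that black box, so at the level of strategy you and the paper agree; the substance of your proposal is therefore Step (3), which is precisely the part deferred to \cite{seidel21} and cannot be certified here. One concrete caveat on that step: your proposed mechanism for the negative-degree vanishing on the closed-string side (the vanishing cycles being spheres, exactness of $M$) is not obviously the operative one. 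The geometric groups that the twisted open-closed maps land in are, in this series, the Hamiltonian Floer groups $\mathit{HF}^*_{q,q^{-1}}(\bar{p},r)$ of Section \ref{subsec:hamiltonian-0}, and their degree support is controlled topologically, via the continuation long exact sequences of Lemmas \ref{th:les-2} and \ref{th:les-1}, by $H^*(\bar{E})$ and $H^*(\bar{M})$, which are concentrated in nonnegative degrees; the role of the pencil (as opposed to a general fibration) is to make these identifications and the compatibility with the $r$-fold twist work out. So if you flesh out Step (3), I would expect the bookkeeping to go through that topological description of the closed-string groups rather than through properties of the $V_i$ themselves; as written, your justification for the key vanishing is a plausible guess but not an argument.
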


Because $\scrC$ is proper, and homologically smooth (this is slightly easier to see for $\tilde\scrC$, which is a directed $A_\infty$-algebra in the terminology from \cite{seidel04}, but the two statements are equivalent), the dual diagonal bimodule is invertible with respect to tensor product. As a consequence, tensoring with $\scrC^\vee[-n]$ is a cohomologically full and faithful operation on the category $(\scrC,\scrC)$. In view of that, \eqref{eq:no-negative-degree} implies
\begin{equation} \label{eq:no-negative-degree-2}
H^*\big(\mathit{hom}_{(\scrC,\scrC)}((\scrC^\vee[-n])^{\otimes_{\scrC} r}, \scrC^\vee[-n])\big) = 0 \;\; \text{ for all $r \geq 2$ and $\ast < 0$.}
\end{equation}
Using that and purely algebraic arguments, specifically Corollary \ref{th:concrete-classification} (applied to the inclusion $\scrC \hookrightarrow \scrB$), one gets the following immediate consequence:

\begin{proposition} \label{th:1-variable-1}
$\scrB$ is determined, up to quasi-isomorphism, by $\scrC$ and the bimodule homomorphism $\epsilon$ from \eqref{eq:delta-triangle}. More precisely, what matters is the homotopy class $[\epsilon]$, up to composition with automorphisms of $\scrC^\vee$ on the right.
\end{proposition}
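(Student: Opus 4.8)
The plan is to realize $\scrB$ as the cone of the bimodule map $\epsilon$ from the exact triangle \eqref{eq:delta-triangle}, and then argue that this cone, together with the identification of the term $\scrC^\vee[-n]$, is rigid enough that the only residual freedom is precomposition of $\epsilon$ with a bimodule automorphism of $\scrC^\vee[-n]$ (equivalently, of $\scrC^\vee$, up to shift). First I would make precise what ``$\scrB$ is determined by $\scrC$ and $\epsilon$'' should mean as a classification statement: one considers the set of quasi-isomorphism classes of $A_\infty$-algebras $\scrB'$ that contain $\scrC$ as an $A_\infty$-subalgebra with the weight filtration \eqref{eq:weights} and whose associated graded is the trivial extension $\scrC \oplus \scrC^\vee[1-n]$ — i.e.\ the data appearing in \eqref{eq:rho-triangle}–\eqref{eq:delta-triangle} — and one shows this set is in bijection with homotopy classes $[\epsilon] \in H^0(\mathit{hom}_{(\scrC,\scrC)}(\scrC^\vee[-n], \scrC))$ modulo the right action of $\mathrm{Aut}(\scrC^\vee[-n])$ in the homotopy category of bimodules. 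This is exactly the kind of statement the referenced Corollary \ref{th:concrete-classification} is built to provide, so the bulk of the work is to check its hypotheses and feed it the right input.

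The key steps, in order: (i) Observe that an $A_\infty$-structure on $\scrB$ extending that of $\scrC$, with the prescribed associated graded, is governed by a deformation-theoretic obstruction problem whose input is the $\scrC$-bimodule $\scrC^\vee[-n]$ and whose higher-order terms land in $\mathit{hom}_{(\scrC,\scrC)}((\scrC^\vee[-n])^{\otimes_{\scrC} r}, \scrC)$ for $r \geq 1$ (the weight-$r$ part of the $A_\infty$-operations). (ii) Invoke Proposition \ref{th:no-negative-degree}: these bimodule hom-complexes have vanishing cohomology in negative degrees, so all the relevant obstruction and ambiguity groups for $r \geq 2$ sit in the ``wrong'' degree and vanish — hence no data beyond the $r = 1$ term $\epsilon \in H^0(\mathit{hom}_{(\scrC,\scrC)}(\scrC^\vee[-n],\scrC))$ is needed, and the extension is rigid relative to that term. (iii) Since $\tilde\scrC$ (hence $\scrC$, by Lemma \ref{th:directed}) is proper and homologically smooth, $\scrC^\vee[-n]$ is invertible for $\otimes_\scrC$, so tensoring is cohomologically fully faithful; this is what upgrades \eqref{eq:no-negative-degree} to \eqref{eq:no-negative-degree-2} and, more importantly, lets one transport the classification so that the residual gauge freedom is precisely the automorphisms of the invertible bimodule $\scrC^\vee[-n]$ acting on the source of $\epsilon$. (iv) Finally, apply Corollary \ref{th:concrete-classification} to the inclusion $\scrC \hookrightarrow \scrB$ to package (i)–(iii) into the stated bijection, and note that automorphisms of $\scrC^\vee[-n]$ and of $\scrC^\vee$ differ only by a shift that does not affect the orbit, giving the ``up to composition with automorphisms of $\scrC^\vee$ on the right'' formulation.

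The main obstacle I expect is bookkeeping in step (i)–(ii): setting up the weight-graded deformation complex so that the ``weight increases by $r$'' piece of an $A_\infty$-structure on $\scrB$ is genuinely identified with a cochain in $\mathit{hom}_{(\scrC,\scrC)}((\scrC^\vee[-n])^{\otimes_{\scrC} r},\scrC)$ — one must be careful that the bimodule tensor powers arise correctly from the bar-type resolution implicit in the $A_\infty$-relations, and that both the existence (obstruction) and uniqueness (ambiguity) parts of the classification really do reduce to the same family of hom-complexes, so that the single degree-zero vanishing statement of Proposition \ref{th:no-negative-degree} controls everything for $r \geq 2$. This is precisely the content deferred to Corollary \ref{th:concrete-classification} and the algebraic Sections \ref{sec:noncommutative-divisors}–\ref{sec:connections}; here I would cite it. A secondary point worth stating carefully is why it is the homotopy class $[\epsilon]$ rather than $\epsilon$ on the nose that matters, and why the only remaining ambiguity is on the right (coming from reparametrizing the invertible bimodule $\scrC^\vee[-n]$) and not, say, from automorphisms of $\scrC$ itself — which follows because we are fixing $\scrC$ as a subalgebra, so its automorphisms are not part of the gauge group of this particular classification problem.
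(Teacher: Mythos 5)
Your proposal matches the paper's argument: the statement is obtained exactly by verifying the hypotheses \eqref{eq:classification-groups} for $\scrP \htp \scrC^\vee[-n]$ via \eqref{eq:no-negative-degree} and its upgrade \eqref{eq:no-negative-degree-2} (using invertibility of the dual diagonal bimodule), and then applying Corollary \ref{th:concrete-classification} to the inclusion $\scrC \hookrightarrow \scrB$, with the residual $\mathit{Aut}(\scrP)$-action accounting for the ``automorphisms of $\scrC^\vee$ on the right''. Your steps (i)--(ii) are just the internal mechanism of that corollary (the weight-filtered obstruction theory of Section \ref{sec:noncommutative-divisors}), so this is the same route, not a different one.
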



\subsection{The $q$-deformation\label{subsec:directed-2}}
The relative Fukaya category $\scrF_q(\bar{M})$ is an $A_\infty$-deformation of $\scrF(M)$. We use the definition of that category from \cite{sheridan11b, perutz-sheridan20, seidel20}. Very briefly, the construction is achieved by letting the almost complex structures and inhomogeneous terms for perturbed pseudo-holomorphic discs $u: S \rightarrow \bar{M}$ depend on the position of the points $u^{-1}(\delta M) \subset S$. Restricting to our basis of vanishing cycles gives an $A_\infty$-deformation $\scrB_q$ of $\scrB$; we use this definition except in the lowest dimension $n-1 = 1$, which has a specific caveat.

Our concern in that dimension is with the way in which the vanishing cycles become objects of the relative Fukaya category. To put this in a larger context, let's start with a formally enlarged version of the relative Fukaya category, denoted by $\scrF_q(\bar{M})^{(0)}$. Objects are pairs $(L,\alpha)$, where $L$ is an object of $\scrF(M)$ and $\alpha \in q\mathit{CF}^1(L,L)[[q]]$ can be arbitrary. One uses those $\alpha$ to deform all the $A_\infty$-operations. In particular, the curvature term for $(L,\alpha)$ is
\begin{equation} \label{eq:mc-alpha}
\mu^0_{\scrF_q(\bar{M})^{(0)}} = \mu^0_{\scrF_q(\bar{M})} + \mu^1_{\scrF_q(\bar{M})}(\alpha) + \mu^2_{\scrF_q(\bar{M})}(\alpha,\alpha) + \cdots \in q\mathit{CF}^2(L,L) [[q]].
\end{equation}
To get an $A_\infty$-subcategory without curvature, one uses $(L,\alpha)$ where $\alpha$ is a Maurer-Cartan element (also called bounding cochain), which means that \eqref{eq:mc-alpha} is zero. For $L = V_k$, our convention \eqref{eq:cf-vk} means that \eqref{eq:mc-alpha} vanishes for any $\alpha$. While it may seem natural to take $\alpha = 0$, which means to define $\scrB_q$ as a subcategory of $\scrF_q(\bar{M})$ (as we've done in higher dimensions), the outcome is not invariant under changing the auxiliary choices (almost complex structures, inhomogeneous terms) involved. Different choices give rise to $A_\infty$-categories $\scrF_q(\bar{M})$ related by curved $A_\infty$-functors; and the induced $A_\infty$-functor between categories $\scrF_q(\bar{M})^{(0)}$ does not preserve the sub-class of objects with $\alpha = 0$. Instead, we will define $\scrB_q$ as a subcategory of $\scrF_q(\bar{M})^{(0})$ with respect to a particular choice of $(\alpha_1,\dots,\alpha_m)$ for the vanishing cycles $(V_1,\dots,V_m)$, which will be specified later on (see the proof of Lemma \ref{th:quasi-isomorphic-to-directed}). In higher dimensions the issue does not arise, since there, $\mathit{CF}^1(V_k,V_k) = 0$.

\begin{remark} \label{th:workaround-1}
There is a classical definition of $\scrF_q(\bar{M})$ in this dimension ($n - 1 = 1$), still counting perturbed pseudo-holomorphic discs $u$ with powers $q^{u \cdot \delta M}$, but where the Cauchy-Riemann equation is independent of $u^{-1}(\delta M) \subset S$. It yields a category, here denoted by $\scrF_q(\bar{M})^{(1)}$, with no curvature term. One could use that to define $\scrB_q$, but then that the entire subsequent discussion would have to lay down a parallel track specifically for that dimension.

There is also an algebraic trick that relies on that alternative construction to single out a class of Maurer-Cartan elements in our original definition of the relative Fukaya category. Namely, one can define an $A_\infty$-bimodule on which $\scrF_q(\bar{M})^{(0)}$ acts on one side, and $\scrF_q(\bar{M})^{(1)}$ on the other. For concreteness, let's write $L^{(0)} = (L,\alpha)$ for a Lagrangian submanifold viewed as an object of $\scrF_q(\bar{M})^{(0)}$, and $L^{(1)}$ for the same submanifold as an object of $\scrF_q(\bar{M})^{(1)}$. Part of the bimodule structure are chain complexes $\mathit{CF}^*(L^{(0)},L^{(1)})[[q]]$ (note that here, we use a single underlying Lagrangian). By asking that the resulting cohomology should not be $q$-torsion, we can single out a particular equivalence class of $\alpha$ for each $L$. It is in principle possible to show that this is the class which arises in our considerations. However, as before, this would require building at least parts of a parallel theory for $\scrF_q(\bar{M})^{(1)}$. 
\end{remark}

The next higher dimension also seems potentially problematic, since it is the only one in which the curvature term for $V_k$ could be nonzero. However, that is not actually an issue:

\begin{lemma} \label{th:2-curvature}
Take $n-1 = 2$. Then, $V_k$ as an object of $\scrF_q(\bar{M})$ has vanishing curvature term.
\end{lemma}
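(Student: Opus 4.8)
The curvature term $\mu^0_{\scrF_q(\bar M)}(V_k) \in q\,\mathit{CF}^2(V_k,V_k)[[q]]$ lives in a group that, by our normalization \eqref{eq:cf-vk}, is concentrated in degrees $0$ and $n-1$. For $n-1=2$ this means $\mathit{CF}^2(V_k,V_k) = \bQ t_{V_k}$ is one-dimensional, so a priori the curvature could be a nonzero series $c(q)\,t_{V_k}$ with $c \in q\bQ[[q]]$. The plan is to rule this out by a geometric/topological argument about the discs that can contribute. First I would recall that, by construction of the relative Fukaya category, the coefficient of $q^\ell$ in $\mu^0$ counts (perturbed) pseudo-holomorphic discs $u$ with boundary on $V_k$, one output marked point, and intersection number $u\cdot\delta M = \ell$; in particular such a disc has Maslov index determined by $\ell$ and by the expected dimension of the moduli space, since $V_k$ has vanishing Maslov class in $M$ and each intersection with $\delta M$ contributes a fixed amount to the index. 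Matching the output degree $2 = n-1$ to the index constraint pins down $\ell$ (there is only one relevant value, coming from rigid discs meeting $\delta M$ once).

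Second, I would analyze those rigid discs directly. A disc contributing to the $t_{V_k}$-coefficient of $\mu^0$ is, in the limit, a Maslov-index-$2$ disc in $\bar M$ through the generic point needed to pair with $t_{V_k}$ (using that $t_{V_k}$ is dual to the point class $e_{V_k}$ under the Poincaré pairing on $\mathit{CF}^*(V_k,V_k)$). The key geometric input is that $V_k$ is a vanishing cycle, i.e. it bounds a Lefschetz thimble in the total space and is exact in $M = \bar M\setminus\delta M$. Exactness of $V_k$ in $M$ forbids any nonconstant holomorphic disc in $\bar M$ with boundary on $V_k$ and zero intersection with $\delta M$; and any disc with positive intersection with $\delta M$ has energy bounded below by that intersection number, so contributes only to strictly positive powers of $q$ — consistent with $c\in q\bQ[[q]]$, but not yet enough. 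The finishing argument should use that $V_k$ is a Lagrangian sphere: one invokes that for a Lagrangian sphere the relevant obstruction/curvature is controlled, e.g. by the fact (as used for Lemma \ref{th:subalgebra}) that the self-Floer cohomology of $V_k$ is nonzero, hence $V_k$ must be unobstructed, which forces the curvature term to vanish. Equivalently, one can argue by an index/parity count that the signed count of the relevant index-$2$ discs is zero.

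Concretely I would proceed: (i) identify, via dimension counting, that the only possible nonzero component of $\mu^0(V_k)$ is the $t_{V_k}$-component, occurring at some fixed $q$-power $q^{\ell_0}$ with $\ell_0\ge 1$; (ii) express its coefficient as a count of index-$2$ discs in $\bar M$ on $V_k$ meeting $\delta M$ with total multiplicity $\ell_0$, passing through a generic point of $V_k$; (iii) use that $V_k$ is a Lagrangian $2$-sphere together with the already-established nonvanishing of $\mathit{HF}^*(V_k,V_k)$ to conclude $V_k$ is unobstructed and hence the curvature vanishes — or alternatively derive vanishing from the classical fact that monotone (here: exact-in-$M$) Lagrangian spheres of dimension $\ge 2$ bound no Maslov-$2$ discs, since such discs would have to be simple and the superpotential of $S^n$, $n\ge2$, is zero.

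\textbf{Main obstacle.} The delicate point is step (iii): making the vanishing rigorous without simply invoking unobstructedness as a black box, i.e. genuinely controlling the signed count of Maslov-index-$2$ discs with boundary on the Lagrangian sphere $V_k$ in the compactification $\bar M$ (as opposed to $M$), where exactness is lost. I expect one must either (a) quote the standard computation that a Lagrangian sphere $S^{n-1}$ with $n-1\ge2$ has vanishing disc superpotential — which relies on the fact that any Maslov-$2$ disc is regular and somewhere injective, and a reflection or $\mathrm{Spin}$-structure parity argument kills the count — or (b) relate $\mu^0(V_k)$ back to the exact category $\scrF(M)$ via a degeneration/neck-stretching argument along $\delta M$ and use exactness there. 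Option (a) is cleanest and is presumably what the author has in mind; the burden is just to check that the normalization \eqref{eq:cf-vk} and the definition of $\scrF_q(\bar M)$ are compatible with that computation, and that no lower-energy (smaller $q$-power) contributions sneak in, which follows from exactness of $V_k$ in $M$.
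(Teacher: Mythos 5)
Your plan has two genuine gaps, one of bookkeeping and one of substance. First, the index bookkeeping is wrong for this setting: $\bar M$ is Calabi--Yau ($c_1(\bar M)=0$) and $V_k\cong S^2$ has $H^1(V_k;\bZ)=0$, so $V_k$ is graded and \emph{every} disc class in $\pi_2(\bar M,V_k)$ has Maslov index zero; $\delta M$ is Poincar\'e dual to $[\omega_{\bar M}]$, not to an anticanonical class, so intersections with $\delta M$ do not raise the index. Consequently there is no single power $q^{\ell_0}$ singled out by index matching, and there are no ``Maslov-index-2 discs'' whose signed count is the $t_{V_k}$-coefficient: a priori the curvature is an arbitrary series $c(q)\,t_{V_k}$, and every $q$-order must be killed. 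The monotone-sphere superpotential fact you invoke is therefore not applicable, and the other route in your step (iii) is circular: since $\mathit{CF}^1(V_k,V_k)=0$ by \eqref{eq:cf-vk}, there are no candidate bounding cochains, so ``$V_k$ is unobstructed'' is literally the statement $\mu^0=0$ you are trying to prove, and the $q$-deformed Floer cohomology whose nonvanishing you want to use is not even defined until that is known; the $q=0$ nonvanishing of $\mathit{HF}^*(V_k,V_k)$ says nothing about positive $q$-orders (there is also no algebraic constraint from the curved $A_\infty$-relations that forces $c=0$).

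Second, and more importantly, your proposal misses the structural point of the paper's proof. The relative Fukaya category is defined by domain-dependent perturbed equations, so contributions to $\mu^0$ are not honest $J$-holomorphic discs, and the curvature term depends on the auxiliary choices; ``no holomorphic discs'' does not by itself kill the count. The paper argues in two steps: (a) an invariance statement, using the leading curved-$A_\infty$-functor equation \eqref{eq:mu0-relation} and $\mathit{CF}^1(V_k,V_k)=0$ (so $\scrG_q^0=0$ and $\scrG_q^1$ is an isomorphism), showing that vanishing of $\mu^0$ -- in fact vanishing modulo $q^N$ -- holds for one choice iff it holds for any other; (b) for a fixed $J$ making $\delta M$ almost complex and admitting no $J$-holomorphic discs on $V_k$ (a \emph{generic} property exactly in this dimension, because somewhere-injective Maslov-zero discs have negative virtual dimension when $n-1=2$), taking the perturbations sufficiently small and applying Gromov compactness gives $\mu^0=O(q^N)$ with $N$ arbitrarily large; combining (a) and (b) over all $N$ yields $\mu^0=0$. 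Your option (b) (degenerating/comparing with the exact situation along $\delta M$) is closer in spirit to this than your preferred option (a), but as written your proposal contains neither the choice-independence step nor the all-orders compactness argument, and those two steps are the actual content of the lemma.
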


\begin{proof}
First, we show that the desired property is independent of the choices made in constructing $\scrF_q(\bar{M})$, assuming that these satisfy \eqref{eq:cf-vk}. Suppose that we have two different versions, denoted by $\scrF_q(\bar{M})^{(+)}$ and $\scrF_q(\bar{M})^{(-)}$. They are related by a curved $A_\infty$-functor $\scrG_q$, which is the identity on objects and is a filtered quasi-isomorphism. For any object $L$, this means we have
\begin{equation}
\begin{aligned}
& \scrG_q^0 \in q\mathit{CF}^1(L,L)^{(-)}[[q]], \\
& \scrG_q^1: \mathit{CF}^*(L,L)^{(+)} \longrightarrow \mathit{CF}^*(L,L)^{(-)}[[q]], \\
& \dots
\end{aligned}
\end{equation}
and the first $A_\infty$-functor equation is
\begin{equation} \label{eq:mu0-relation}
\begin{aligned}
& \mu^{0}_{\scrF_q(\bar{M})^{(-)}} + \mu^{1}_{\scrF_q(\bar{M})^{(-)}}(\scrG_q^0) + 
\mu^2_{\scrF_q(\bar{M})^{(-)}}(\scrG_q^0,\scrG_q^0) + \cdots \\ & \qquad \qquad = 
\scrG_q^1(\mu^0_{\scrF_q(\bar{M})^{(+)}}) \in q\mathit{CF}^2(L,L)^{(-)}[[q]].
\end{aligned}
\end{equation}
Setting $L = V_k$, we find that $\scrG_q^0$ must be zero (for degree reasons), and that $\scrG_q^1$ defines an isomorphism between the endomorphism groups in both categories (because it's a filtered quasi-isomorphism and the differentials are zero). Hence, \eqref{eq:mu0-relation} implies that if the curvature term vanishes for one choice, it also does for the other one.

Fix some almost complex structure $J$ which makes $\delta M$ into an almost complex submanifold, and for which there are no holomorphic discs bounding $V_k$ (in this dimension, that is a generic property). When defining the curved $A_\infty$-structure for $V_k$ as an object of $\scrF_q(\bar{M})$, we may keep our perturbed equations close to the classical $J$-holomorphic curve equation (this includes choosing a small Hamiltonian when forming the Floer complex). Then, a Gromov compactness argument shows that $\mu^0$ has only terms of order $q^N$ and higher, for some $N$ (which can be made arbitrarily large, but depends on the choice of perturbation). Now, let's consider two versions of $\scrF_q(\bar{M})$. The same algebraic argument as before shows that if for one choice, the curvature term is of order $q^N$, then the same holds for the other choice. Since that argument can be carried out for arbitrary $N$, the curvature term must be zero.
\end{proof}

\begin{remark} \label{th:workaround-2}
One can avoid the Gromov compactness argument by a workaround in the spirit of Remark \ref{th:workaround-1}. Namely, there is a version of the relative Fukaya category in this dimension, defined in \cite{seidel03b}, which has a priori vanishing curvature terms. Let's denote that by $\scrF_q(\bar{M})^{(1)}$, and write $L^{(1)}$ for $L$ considered as an object of that category. One can also define a bimodule on which $\scrF_q(\bar{M})$ acts on one side, and $\scrF_q(\bar{M})^{(1)}$ on the other. Part of the bimodule structure is a map
\begin{equation} 
\label{eq:cf-right}
\mathit{CF}^0(L,L^{(1)}) \otimes \mathit{CF}^2(L,L) \longrightarrow \mathit{CF}^2(L,L^{(1)})[[q]].
\end{equation}
Set $L = V_k$, and arrange that all Floer groups that appear above satisfy \eqref{eq:cf-vk}. Then, the constant term of \eqref{eq:cf-right} is nonzero; this is just part of the fact that for $q = 0$, we are considering two equivalent versions of $\scrF(M)$. Moreover, given that the differential on $\mathit{CF}^2(L,L^{(1)})$ is zero, and that $\scrF_q(\bar{M})^{(1)}$ has vanishing curvature term, the $A_\infty$-bimodule equations imply that inserting $\mu^0_{\bar\scrF_q(M)} \in q\mathit{CF}^2(L,L)[[q]]$ into \eqref{eq:cf-right} yields zero. That of course implies that this element must itself be zero.
\end{remark}

Take the same subspace $\scrC \subset \scrB$ as before, but now restrict the operations of $\scrB_q$ to it. We denote the outcome by $\scrC_q$. The generalization of Lemma \ref{th:subalgebra} is:

\begin{lemma} \label{th:subalgebra-2}
$\scrC_q$ is an $A_\infty$-subalgebra of $\scrB_q$.
\end{lemma}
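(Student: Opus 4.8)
The plan is to mimic the proof of Lemma \ref{th:subalgebra}, carrying out the same degree-based argument but now tracking the extra powers of $q$. Concretely, we must show that for the $A_\infty$-operations $\mu^d_{\scrB_q}$, the inputs $(e_{V_k},\dots,e_{V_k})$ produce zero when $d \neq 2$, and a multiple of $e_{V_k}$ when $d = 2$. Write each operation as a $q$-series, $\mu^d_{\scrB_q} = \sum_{\ell \geq 0} q^\ell \mu^{d,\ell}$; the coefficient $\mu^{d,0}$ is the corresponding operation of $\scrB$, for which the claim is already known by Lemma \ref{th:subalgebra}. So it suffices to treat $\mu^{d,\ell}$ for $\ell \geq 1$.

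First I would dispose of all cases by a pure degree count. The element $e_{V_k}$ has degree $0$, so $\mu^{d,\ell}(e_{V_k},\dots,e_{V_k})$ lands in $\mathit{CF}^{2-d}(V_k,V_k)$, which by \eqref{eq:cf-vk} is nonzero only for $2-d \in \{0, n-1\}$, i.e.\ $d = 2$ or $d = 3-n$. For $d = 2$ any output is automatically a multiple of $e_{V_k}$ when $n - 1 > 0$ (the degree-$0$ part of $\mathit{CF}^*(V_k,V_k)$ is one-dimensional), and the only way $d=3-n$ can occur with $d \geq 0$ is $n \leq 3$, leaving the genuinely problematic cases $n-1 = 1$ (where $d = 1$ hits $t_{V_k}$, i.e.\ a potential $\mu^1$ term, and also $d=2$ can hit $t_{V_k}$) and $n-1 = 2$ (where $d = 1$ hits nothing but $\mu^0$, the curvature, could a priori be nonzero). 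The curvature issue in dimension $n-1 = 2$ is exactly what Lemma \ref{th:2-curvature} rules out, and in dimension $n-1=1$ the curvature vanishes identically by \eqref{eq:cf-vk}; so in both low-dimensional cases the $\mu^0$ obstruction is already handled. This is why the statement is placed after Lemma \ref{th:2-curvature}.

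The remaining work is the dimension $n - 1 = 1$ analysis of $\mu^{1,\ell}(e_{V_k})$ and of whether $\mu^{2,\ell}(e_{V_k}, e_{V_k})$ can have a $t_{V_k}$-component. Here I would appeal to the chosen bounding cochains $(\alpha_1,\dots,\alpha_m)$ that define $\scrB_q$ as a subcategory of $\scrF_q(\bar M)^{(0)}$ (the choice deferred to the proof of Lemma \ref{th:quasi-isomorphic-to-directed}): the point is that $e_{V_k}$ is the strict unit for the $\alpha_k$-deformed operations, so $\mu^d_{\scrB_q}(\dots, e_{V_k}, \dots) = 0$ for $d \neq 2$ and $\mu^2_{\scrB_q}(e_{V_k}, x) = \pm\mu^2_{\scrB_q}(x, e_{V_k}) = x$ by strict unitality, once the $\alpha_k$ are taken so that $e_{V_k}$ remains a strict unit after deformation. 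In particular $\mu^2_{\scrB_q}(e_{V_k}, e_{V_k}) = e_{V_k}$ exactly, with no $t_{V_k}$-term, and all higher products with pure $e_{V_k}$ inputs vanish. (Alternatively, if one does not want to invoke strict unitality of the model directly, one argues as in Lemma \ref{th:subalgebra}: in dimension $n-1=1$, $V_k$ is a simple closed curve of Maslov index $0$ on the once-punctured torus $M$, and $\mathit{HF}^*(V_k,V_k;\bQ[[q]]) \iso H^*(S^1;\bQ[[q]])$ is not $q$-torsion, which forces $\mu^1_{\scrB_q}(e_{V_k}) = 0$ and pins down the $\mu^2$ product.) Assembling these, the operations of $\scrB_q$ restricted to $\scrC$ close up, so $\scrC_q$ is an $A_\infty$-subalgebra.

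The main obstacle is the bookkeeping around the non-canonical definition of $\scrB_q$ in dimension $n-1=1$: one has to be careful that the specific bounding cochains $\alpha_k$ used to define $\scrB_q$ are compatible with $e_{V_k}$ being a strict unit, so that the clean degree argument of Lemma \ref{th:subalgebra} survives the deformation. Once that compatibility is granted (which is part of the setup in the proof of Lemma \ref{th:quasi-isomorphic-to-directed}), everything else is the same degree count as before, now applied coefficientwise in $q$.
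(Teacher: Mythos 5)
Your reduction of the problem is the same as the paper's: by directedness, the only issue is operations with all inputs equal to $e_{V_k}$ (plus the curvature term), and by \eqref{eq:cf-vk} the only problematic cases are the curvature when $n-1=2$, which Lemma \ref{th:2-curvature} kills, and $\mu^1_{\scrB_q}(e_{V_k})$ when $n-1=1$. (The worry that $\mu^2$ could hit $t_{V_k}$ when $n-1=1$ is empty: the output of $\mu^2$ has degree $0$ while $t_{V_k}$ has degree $1$.) The gap is in how you handle the remaining case $n-1=1$. Your primary argument takes for granted that the bounding cochains $\alpha_k$ are ``taken so that $e_{V_k}$ remains a strict unit after deformation''. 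That is not part of the setup: in the proof of Lemma \ref{th:quasi-isomorphic-to-directed}, the $\alpha_k$ are the restrictions of Maurer--Cartan elements for the Lefschetz thimbles, chosen to kill the curvature of $\scrA_q^{(0)}$, and nothing makes the geometrically defined deformed structure strictly unital (Fukaya-type structures are only cohomologically unital; strict units appear only after passing to a quasi-isomorphic model, as in Lemma \ref{th:directed}). So strict unitality cannot simply be granted; indeed $\mu^1_{\scrB_q}(e_{V_k})=0$ is part of what strict unitality would assert, so assuming it begs the question.

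Your fallback is circular for the same reason: you assert that $\mathit{HF}^*_q((V_k,\alpha_k),(V_k,\alpha_k)) \iso H^*(S^1;\bQ[[q]])$ ``as in Lemma \ref{th:subalgebra}'', but the undeformed argument (nonvanishing of exact Floer cohomology) does not carry over to the deformation. Given \eqref{eq:cf-vk}, a deformed differential $\mu^1_{\scrB_q}(e_{V_k}) = c(q)\,t_{V_k}$ with $c \in q\bQ[[q]]$, $c \neq 0$, would make the endomorphism cohomology $q$-torsion; hence ``not $q$-torsion'' is exactly equivalent to the statement being proved and requires an independent input. The paper supplies one: since $V_k$ has Maslov class zero on the torus, it represents a primitive homology class, so there is a simple closed curve $L$ meeting $V_k$ transversally in a single point; then $\mathit{HF}^*_q((V_k,\alpha_k),L) \iso \bQ[[q]]$ because the chain complex has a single generator, and the unital module structure of this group over $\mathit{HF}^*_q((V_k,\alpha_k),(V_k,\alpha_k))$ prevents the latter from being $q$-torsion, which in view of \eqref{eq:cf-vk} forces \eqref{eq:mu1q-e}. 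Adding this step (or an equivalent argument establishing non-$q$-torsion) is what is needed to close the gap.
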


\begin{proof}
As before, this mostly follows from degree reasons. The additional ingredients are Lemma \ref{th:2-curvature} and the following fact:
\begin{equation} \label{eq:mu1q-e}
\mu^1_{\scrB_q}(e_{V_k}) = 0 \;\; \text{ for $n-1 = 1$.}
\end{equation}
This is the case where $V_k \subset \bar{M}$ is a simple closed curve on a torus, which has Maslov index zero and therefore represents a primitive homology class, and we have chosen some $\alpha_k$ to make it into an object of $\scrF(\bar{M})^{(0)}$. Denote cohomology level morphisms in that category by $\mathit{HF}^*_q(\cdot,\cdot)$, for the sake of brevity. If we take another such curve $L$ which intersects $V_k$ transversally in a point, then $\mathit{HF}^*_q((V_k,\alpha_k),L) \iso \bQ[[q]]$. Because of the ring structure, this means that $\mathit{HF}^*((V_k,\alpha_k),(V_k,\alpha_k))$ can't be $q$-torsion, and in view of \eqref{eq:cf-vk}, that implies \eqref{eq:mu1q-e}
\end{proof}

Let $\tilde{\scrC}_q$ be the $A_\infty$-deformation of $\tilde{\scrC}$ obtained by taking the restriction of the operations on $\scrB_q$ to the second summand in \eqref{eq:b-directed}, and again adjoining strict units. 

\begin{lemma} \label{th:directed-2}
$\tilde{\scrC}_q$ is isomorphic to $\scrC_q$.
\end{lemma}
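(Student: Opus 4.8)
The plan is to mimic the proof of Lemma \ref{th:directed}, upgrading each step from the undeformed setting to the $q$-deformed one, using the formal parameter $q$ as a filtration. First I would invoke the $q$-deformed analogue of \cite[Lemma 2.1]{seidel04}: because $\scrC_q$ is a strictly unital-izable $A_\infty$-deformation (the identity elements $e_{V_k}$ are nonzero in cohomology at $q=0$, hence nonzero in the cohomology of $\scrC_q$ viewed over $\bQ[[q]]$, and the obstruction-theoretic construction of strict units goes through $q$-order by $q$-order), there is a strictly unital model $\scrC_q^{(1)}$ living on the same graded $\bQ[[q]]$-module as $\scrC_q$, together with an $A_\infty$-isomorphism $\scrC_q^{(1)} \to \scrC_q$ reducing mod $q$ to \eqref{eq:make-unital}. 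Here one should remark (as the excerpt flags for the undeformed case) that the needed hypothesis --- cohomological nontriviality of the units --- is in place.

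Next I would take the inverse isomorphism $\scrC_q \to \scrC_q^{(1)}$, restrict it to the second summand $\bigoplus_{i<j}\mathit{CF}^*(V_i,V_j)$ of \eqref{eq:b-directed}, and then extend it to a strictly unital $A_\infty$-isomorphism $\tilde{\scrC}_q \to \scrC_q^{(1)}$, exactly as in the proof of Lemma \ref{th:directed}: a strictly unital $A_\infty$-isomorphism out of $\tilde{\scrC}_q$ is uniquely determined by its restriction to the non-unit part, because strict unitality forces the values on any tensor word containing a $\tilde{e}_{V_k}$. One has to check this extension is well-defined over $\bQ[[q]]$, but that is formal: the defining equations for the higher-order components of the isomorphism can be solved recursively in $q$, the $q=0$ term being \eqref{eq:make-unital-2}. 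Composing with $\scrC_q^{(1)} \to \scrC_q$ then yields the asserted isomorphism $\tilde{\scrC}_q \iso \scrC_q$, and reducing mod $q$ recovers Lemma \ref{th:directed}, so the construction is genuinely a deformation of that one.

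The one point that needs a little care --- and the main potential obstacle --- is in the lowest dimension $n-1=1$, where $\mathit{CF}^1(V_k,V_k)$ is nonzero, the objects $V_k$ are equipped with the bounding cochains $\alpha_k$ of \eqref{eq:mc-alpha}, and the component $\mu^1_{\scrC_q}(e_{V_k})$ could a priori be a nonzero multiple of $t_{V_k}$ in positive $q$-order. However, this is precisely ruled out by \eqref{eq:mu1q-e} from the proof of Lemma \ref{th:subalgebra-2}: with the chosen $\alpha_k$ one has $\mu^1_{\scrB_q}(e_{V_k})=0$, so $e_{V_k}$ is already a genuine (cohomological, indeed strict up to this operation) unit and the strictification argument is unobstructed in this dimension too. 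With that in hand the rest is the same bookkeeping as above, and no further dimension-dependent caveats arise, since for $n-1\geq 2$ one has $\mathit{CF}^1(V_k,V_k)=0$ and the argument is purely formal.

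I do not expect the higher $A_\infty$-coherence conditions to cause trouble: each is a $\bQ[[q]]$-linear equation whose $q=0$ reduction is already solved, and the relevant obstruction groups are the same ones that vanished (for degree reasons) in the undeformed case, now tensored with $\bQ[[q]]$, so solvability propagates order by order in $q$ and the whole isomorphism is obtained as a $q$-adic limit.
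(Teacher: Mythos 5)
Your proof follows the same skeleton as the paper's (produce a strictly unital model, restrict its inverse to the directed summand, extend by strict unitality, and compose), but the middle step is implemented differently. The paper never strictifies $\scrC_q$ directly: it takes the $q=0$ strictification \eqref{eq:make-unital} and transfers the deformation across it, using the fact that for a strictly unital $A_\infty$-algebra the deformation theory over $\bQ[[q]]$ is unchanged if strict unitality is required to be retained (quasi-isomorphism of full and reduced Hochschild complexes plus obstruction theory); this yields $\scrC^{(1)}_q$ and the deformed map \eqref{eq:make-unital-3}, with curvature terms vanishing for degree reasons. You instead run a $q$-deformed version of \cite[Lemma 2.1]{seidel04} on $\scrC_q$ itself. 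That route is viable --- in fact Lemma \ref{th:unify}, stated later over a general base ring $R$ of finite global dimension, is exactly the tool that legitimizes ``unitalize over $\bQ[[q]]$'' --- but its hypothesis is stronger than what you verify: you need $e_{V_k}$ to be a cohomological unit of $\scrC_q$ over $\bQ[[q]]$, i.e.\ multiplication by it must be chain homotopic to the identity, and ``nonzero in cohomology at $q=0$'' is not by itself the right condition in the deformed setting. This is repairable: $\mu^1_{\scrC_q}(e_{V_k})=0$ by \eqref{eq:mu1q-e} and degree reasons; multiplication by $e_{V_k}$ is a quasi-isomorphism because it is one mod $q$ (freeness and completeness over $\bQ[[q]]$); and \eqref{eq:cf-vk} forces $\mu^2_{\scrC_q}(e_{V_k},e_{V_k})$ to be an invertible multiple of $e_{V_k}$, so after rescaling the units the idempotent-up-to-homotopy argument from Lemma \ref{th:unify} goes through. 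What the paper's transfer-of-deformation argument buys is precisely the avoidance of this verification (and of redoing the unitalization construction in the deformed setting); what your route buys is a more hands-on, self-contained construction, provided you add the supplement above. Your treatment of the $n-1=1$ case via \eqref{eq:mu1q-e} is correct and is indeed the point where the geometric input is needed.
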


\begin{proof}
This is an extension of the proof of Lemma \ref{th:directed}. We again start by constructing \eqref{eq:make-unital}. Given a strictly unital $A_\infty$-algebra, the deformation theory to a curved $A_\infty$-structure over $\bQ[[q]]$ remains the same (in the sense of a bijection of equivalence classes of deformations) if we require that strict unitality be retained. This follows from the quasi-isomorphism between the full and reduced Hochschild complex, by general obstruction theory methods. Hence, in our case one can deform $\scrC^{(1)}$ to some $\scrC^{(1)}_q$, and \eqref{eq:make-unital} to some
\begin{equation} \label{eq:make-unital-3}
\scrC^{(1)}_q \longrightarrow \scrC_q.
\end{equation}
For degree reasons, $\scrC^{(1)}_q$ has zero curvature term, and so does \eqref{eq:make-unital-3}.
Consider its inverse, which also has zero curvature. By restricting and then adding strict units, one can obtain an isomorphism $\tilde{\scrC}_q \rightarrow \scrC^{(1)}_q$, which one combines with \eqref{eq:make-unital-3} as before.
\end{proof}

\begin{lemma} \label{th:delta-triangle-2}
There is an exact triangle of $\scrC_q$-bimodules
\begin{equation} \label{eq:delta-triangle-2}
\xymatrix{
\scrC_q^\vee[-n] \ar[rr]_-{\epsilon_q} &&
\scrC_q \ar[rr]_-{\text{inclusion}} && 
\scrB_q \ar@/_1pc/[llll]_-{[1]}
}
\end{equation}
\end{lemma}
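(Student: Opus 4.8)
The plan is to deduce the existence of the triangle \eqref{eq:delta-triangle-2} by deforming the $q=0$ statement, Lemma \ref{th:delta-triangle}, using obstruction theory for $A_\infty$-bimodules over the flat $\bQ[[q]]$-linear $A_\infty$-algebra $\scrC_q$. First, I would set up the relevant deformation-theoretic framework: the category $(\scrC_q,\scrC_q)$ of $A_\infty$-bimodules over $\scrC_q$ reduces, upon setting $q = 0$, to $(\scrC,\scrC)$, and a bimodule over $\scrC_q$ that is free as a $\bQ[[q]]$-module is a deformation of its restriction. The dual diagonal bimodule $\scrC_q^\vee$ and the diagonal bimodule $\scrC_q$ are such deformations, of $\scrC^\vee$ and $\scrC$ respectively. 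What needs to be produced is (i) a bimodule homomorphism $\epsilon_q : \scrC_q^\vee[-n] \to \scrC_q$ reducing to $\epsilon$ mod $q$, and (ii) the identification of the mapping cone of $\epsilon_q$ with $\scrB_q$ as a $\scrC_q$-bimodule.

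For step (ii), the cleanest route is to reverse the logic: rather than first building $\epsilon_q$ abstractly and then matching its cone to $\scrB_q$, I would produce $\epsilon_q$ geometrically, in the same way that $\epsilon$ in \eqref{eq:delta-triangle} arises from Floer theory (as promised, the geometric origin is deferred to Section \ref{subsec:fukaya-of-lefschetz}, so here I may cite that the construction of $\epsilon$ comes with a $q$-deformation). Concretely, the map $\epsilon$ is the connecting homomorphism for the short exact sequence of $\scrC$-bimodules $0 \to \scrP \to \scrC \to \scrB \to 0$ under the identification $\scrP \simeq \scrC^\vee[-n]$; the analogous short exact sequence of $\scrC_q$-bimodules $0 \to \scrP_q \to \scrC_q \to \scrB_q \to 0$ exists by the same construction ($\scrP_q := (\scrB_q/\scrC_q)[-1]$, using Lemma \ref{th:subalgebra-2}), and gives the triangle \eqref{eq:rho-triangle} over $\scrC_q$. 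So the content is really the $q$-deformed version of the quasi-isomorphism $\scrP \simeq \scrC^\vee[-n]$, i.e.\ that $\scrP_q$ is quasi-isomorphic as a $\scrC_q$-bimodule to $\scrC_q^\vee[-n]$; granting that, \eqref{eq:delta-triangle-2} is just \eqref{eq:rho-triangle} rewritten, with $\epsilon_q$ the connecting map.

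To establish $\scrP_q \simeq \scrC_q^\vee[-n]$ I would argue by obstruction theory order by order in $q$: at $q = 0$ we have a quasi-isomorphism $\phi_0 : \scrP \xrightarrow{\sim} \scrC^\vee[-n]$ by Lemma \ref{th:delta-triangle}; the obstruction to extending a given $N$-th order quasi-isomorphism to order $N+1$ lies in $H^1(\mathit{hom}_{(\scrC,\scrC)}(\scrP, \scrC^\vee[-n])) = H^1(\mathit{hom}_{(\scrC,\scrC)}(\scrC^\vee[-n], \scrC^\vee[-n]))$, and the ambiguity in such an extension is governed by $H^0$ of the same complex. Here is where I would invoke Proposition \ref{th:no-negative-degree}, or rather its consequence \eqref{eq:no-negative-degree-2} with $r = 2$: tensoring with the invertible bimodule $\scrC^\vee[-n]$ is cohomologically fully faithful, so $H^*\mathit{hom}_{(\scrC,\scrC)}(\scrC^\vee[-n],\scrC^\vee[-n]) \iso H^*\mathit{hom}_{(\scrC,\scrC)}(\scrC, \scrC)$, and more to the point one wants the relevant negative/low-degree groups to vanish — but actually the obstruction group $H^1$ need not vanish in general, so the real input is that $\scrP_q$ and $\scrC_q^\vee[-n]$ are \emph{already known} to have the same cohomology-level bimodule structure (both fit into triangles with the same two other terms $\scrC_q$, $\scrB_q$ via \eqref{eq:rho-triangle} and the deformed \eqref{eq:delta-triangle}); the deformation argument then only has to propagate an isomorphism, and the vanishing of negative-degree maps keeps the space of such isomorphisms from collapsing.

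I expect the main obstacle to be exactly this last point: making precise why the geometric map $\epsilon$ deforms to $\epsilon_q$, i.e.\ that the Floer-theoretic construction underlying \eqref{eq:delta-triangle} is compatible with turning on the Novikov parameter. In higher dimensions this is essentially formal once one knows $\scrC_q$ is an $A_\infty$-subalgebra (Lemma \ref{th:subalgebra-2}) and that the relevant bimodule quasi-isomorphism type is rigid in the sense above; but in dimensions $n-1 = 1, 2$ one has to be careful about the curvature issues already flagged — Lemma \ref{th:2-curvature} and the choice of bounding cochains $(\alpha_1,\dots,\alpha_m)$ — to ensure that $\scrB_q$, $\scrC_q$, $\scrP_q$ are honestly uncurved $\scrC_q$-bimodules so that the triangle makes sense. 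A secondary technical point is that $\scrC_q^\vee$ denotes the dual \emph{diagonal} bimodule of the deformed algebra (not the $q=0$ one tensored up), and one should check this is the correct deformation of $\scrC^\vee$; this follows because dualization of the diagonal bimodule is a functorial operation compatible with base change along $\bQ[[q]] \to \bQ$, but it is worth stating explicitly.
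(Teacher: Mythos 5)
There is a genuine gap. You correctly reduce the statement to showing that $\scrP_q = (\scrB_q/\scrC_q)[-1]$ is quasi-isomorphic to $\scrC_q^\vee[-n]$ as a $\scrC_q$-bimodule, but your proposed order-by-order deformation argument does not close, and you essentially admit this: the obstruction/ambiguity groups $H^1$ and $H^0$ of $\mathit{hom}_{(\scrC,\scrC)}(\scrC^\vee[-n],\scrC^\vee[-n])$ are not known to vanish (the paper's vanishing results \eqref{eq:no-negative-degree}, \eqref{eq:no-negative-degree-2}, \eqref{eq:no-negative-degree-3} only concern strictly negative degrees), so the fact that $\scrP_q$ and $\scrC_q^\vee[-n]$ have quasi-isomorphic $q=0$ reductions does not force them to be quasi-isomorphic deformations. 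Your attempted repair --- that both ``fit into triangles with the same two other terms $\scrC_q$, $\scrB_q$ via \eqref{eq:rho-triangle} and the deformed \eqref{eq:delta-triangle}'' --- is circular: the deformed \eqref{eq:delta-triangle} \emph{is} Lemma \ref{th:delta-triangle-2}, the statement to be proven. And your fallback, ``I may cite that the construction of $\epsilon$ comes with a $q$-deformation,'' is precisely the content of the lemma, not something available to cite.

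The missing idea is the route the paper actually takes, which is geometric rather than deformation-theoretic: one works with the category $\scrA_q$ of Lefschetz thimbles in the relative Fukaya category of $\bar{p}$, where the exact triangle $\scrA_q^\vee[-n] \xrightarrow{\epsilon_q} \scrA_q \xrightarrow{\scrR_q} \scrB_q$ of $\scrA_q$-bimodules (Lemma \ref{th:a-triangle}, a form of a result of \cite{seidel20} about the restriction functor to the fibre) is already known over $\bQ[[q]]$, curvature issues in low dimensions being handled exactly as in Lemma \ref{th:quasi-isomorphic-to-directed}. One then pulls this triangle back along the filtered quasi-isomorphism $\scrA_q \rightarrow \scrC_q$ of Lemma \ref{th:quasi-isomorphic-to-directed} and fills in the comparison diagram \eqref{eq:transfer-exactness}; the dotted arrow so obtained is by definition $\epsilon_q$, and exactness of the bottom row follows from exactness of the top row. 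This is also why the paper defers the proof to Section \ref{subsec:fukaya-of-lefschetz} and, per the remark after Lemma \ref{th:delta-triangle}, deliberately avoids a ``self-contained'' algebraic derivation: the Lefschetz-fibration construction is what supplies both the existence of $\epsilon_q$ over $\bQ[[q]]$ and the extra information about it used later (Propositions \ref{th:second-co} and \ref{th:1-variable-3}). Your proposal identifies the right reduction and the right danger spots (curvature in dimensions $n-1=1,2$, the meaning of $\scrC_q^\vee$), but it does not supply the geometric input that constitutes the actual proof.
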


This is a stronger version of Lemma \ref{th:delta-triangle}. As before, it implies that $\scrP_q = (\scrB_q/\scrC_q)[1]$ is quasi-isomorphic to $\scrC_q^\vee$, in a way that relates $\epsilon_q$ to the connecting homomorphism in the obvious bimodule short exact sequence. Further discussion is again postponed to Section \ref{subsec:fukaya-of-lefschetz} (see Lemma \ref{th:a-triangle}). 

By applying a $q$-filtration argument to \eqref{eq:no-negative-degree} and \eqref{eq:no-negative-degree-2}, one sees that
\begin{equation} \label{eq:no-negative-degree-3}
\left.
\begin{aligned}
& H^*\big(\mathit{hom}_{(\scrC_q,\scrC_q)}((\scrC_q^\vee[-n])^{\otimes_{\scrC_q} r}, \scrC_q)\big) = 0,\;\; r \geq 1, \\
& H^*\big(\mathit{hom}_{(\scrC_q,\scrC_q)}((\scrC_q^\vee[-n])^{\otimes_{\scrC_q} r}, \scrC_q^\vee[-n])\big) = 0, \;\; r \geq 2
\end{aligned}
\right\} \text{ for $\ast < 0$.}
\end{equation}
In view of Corollary \ref{th:concrete-classification-2}, this implies the following analogue of Proposition \ref{th:1-variable-1}:

\begin{proposition} \label{th:1-variable-1b}
$\scrB_q$ is determined, up to quasi-isomorphism, by $\scrC_q$ and the bimodule homomorphism $\epsilon_q$ from \eqref{eq:delta-triangle}. More precisely, what matters is the homotopy class $[\epsilon_q]$, up to composition with automorphisms of $\scrC_q^\vee$ on the right.
\end{proposition}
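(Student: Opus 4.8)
The plan is to run the proof of Proposition \ref{th:1-variable-1} again, with $\scrC$, $\scrB$, $\epsilon$ replaced throughout by their $q$-deformed counterparts $\scrC_q$, $\scrB_q$, $\epsilon_q$. The geometric input is already assembled: Lemma \ref{th:subalgebra-2} supplies the $A_\infty$-subalgebra $\scrC_q \subset \scrB_q$, Lemma \ref{th:directed-2} identifies it with the directed model $\tilde{\scrC}_q$, and Lemma \ref{th:delta-triangle-2} gives the exact triangle \eqref{eq:delta-triangle-2}, which exhibits $\scrB_q$ as an extension of $\scrC_q$ by the bimodule $\scrC_q^\vee[-n]$ glued along $\epsilon_q$ --- precisely the kind of ``noncommutative anticanonical divisor'' data to which the purely algebraic classification of Sections \ref{sec:noncommutative-divisors}--\ref{sec:connections} applies. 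What remains is formal: to feed this data into Corollary \ref{th:concrete-classification-2} and read off the conclusion, including the residual ambiguity, namely precomposition of $\epsilon_q$ with an automorphism of the bimodule $\scrC_q^\vee[-n]$ (equivalently of $\scrC_q^\vee$) acting on the right.

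The step that requires attention is checking the hypotheses of Corollary \ref{th:concrete-classification-2}. First, $\scrC_q$ should be proper and homologically smooth over $\bQ[[q]]$, so that $\scrC_q^\vee$ is invertible for the tensor product over $\scrC_q$ and tensoring with it is cohomologically full and faithful on the bimodule category $(\scrC_q,\scrC_q)$; this follows from the same property of $\scrC$ (equivalently of $\tilde{\scrC}$, a directed $A_\infty$-algebra) by a $q$-adic filtration argument, since properness and smoothness are preserved under formal deformation. Second, one needs the negative-degree vanishing of the relevant bimodule $\mathit{hom}$-groups; this is exactly \eqref{eq:no-negative-degree-3}, which was already deduced from the $q = 0$ statements \eqref{eq:no-negative-degree}, \eqref{eq:no-negative-degree-2} by a spectral sequence argument for the $q$-adic filtration, whose $E_1$-page is computed by the undeformed groups. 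Granting these two points, Corollary \ref{th:concrete-classification-2} tells us that the obstructions to, and the ambiguities in, reconstructing $\scrB_q$ from the pair $(\scrC_q, [\epsilon_q])$ all live in the vanishing groups $H^{<0}\big(\mathit{hom}_{(\scrC_q,\scrC_q)}((\scrC_q^\vee[-n])^{\otimes_{\scrC_q} r},\scrC_q)\big)$ and $H^{<0}\big(\mathit{hom}_{(\scrC_q,\scrC_q)}((\scrC_q^\vee[-n])^{\otimes_{\scrC_q} r},\scrC_q^\vee[-n])\big)$, so the quasi-isomorphism type of $\scrB_q$ is pinned down as claimed.

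The genuine content is therefore hidden inside Corollary \ref{th:concrete-classification-2}, whose proof belongs to the algebraic part of the paper: it says that building $\scrB_q$ as a filtered $A_\infty$-algebra with associated graded $\scrC_q \oplus \scrC_q^\vee[-n]$ amounts to solving a sequence of Maurer-Cartan-type reconstruction problems whose successive obstruction and gauge groups are the summands appearing above. If one were to carry this out in full, I expect the main obstacle to be the simultaneous bookkeeping of two nested filtrations --- the weight filtration \eqref{eq:weights} organizing powers of the bimodule $\scrC_q^\vee[-n]$, and the $q$-adic filtration --- together with the verification that the order-by-order solution converges $q$-adically over $\bQ[[q]]$. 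Once Corollary \ref{th:concrete-classification-2} and \eqref{eq:no-negative-degree-3} are in hand, however, the present proposition is immediate.
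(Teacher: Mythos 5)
Your argument matches the paper's: there, \eqref{eq:no-negative-degree-3} is obtained from the $q=0$ statements \eqref{eq:no-negative-degree}, \eqref{eq:no-negative-degree-2} by a $q$-filtration argument, and Corollary \ref{th:concrete-classification-2} is then applied to the inclusion $\scrC_q \subset \scrB_q$, with Lemma \ref{th:delta-triangle-2} identifying $(\scrB_q/\scrC_q)[-1]$ with $\scrC_q^\vee[-n]$ and the structural map $\rho_q$ with $\epsilon_q$, which gives exactly the stated ambiguity by automorphisms of $\scrC_q^\vee$. Your preliminary check of properness/smoothness of $\scrC_q$ and invertibility of $\scrC_q^\vee$ over $\bQ[[q]]$ is harmless but not needed at this stage: Corollary \ref{th:concrete-classification-2} only requires the vanishing \eqref{eq:no-negative-degree-3}, which you in any case deduce directly from the undeformed statements (invertibility of the dual diagonal bimodule was only used at $q=0$, to pass from \eqref{eq:no-negative-degree} to \eqref{eq:no-negative-degree-2}).
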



The statement of Proposition \ref{th:1-variable-1b} is somewhat qualitative in nature. We now turn to a more explicit, but less general, version. Namely, suppose that $\scrC_q$ is a trivial $A_\infty$-deformation of $\scrC$. This means that there is an $A_\infty$-homomorphism $\Psi_q: \scrC[[q]] \rightarrow \scrC_q$ which is the identity modulo $q$. Concretely, such a trivialization consists of maps
\begin{equation} \label{eq:trivialization-maps}
\Psi^d_q: \scrC^{\otimes d} \longrightarrow \scrC_q[1-d] \quad \text{for $d \geq 1$, and with }
\Psi^d_q\big|_{q = 0} = \begin{cases} \mathit{id} & d = 1, \\ 0 & \text{otherwise,} \end{cases}
\end{equation}
which relate the two $A_\infty$-structures (in the general theory of $A_\infty$-deformations, such a trivialization would have an additional curvature term, but in our case that's zero for degree reasons). One can carry $\epsilon_q$ across this trivialization, and accordingly think of it as a family of bimodule maps $\scrC^\vee[-n] \rightarrow \scrC$ depending on the parameter $q$. In formulas, the trivialization yields an isomorphism
\begin{equation} \label{eq:delta-is-q-dependent}
H^n\big(\mathit{hom}_{(\scrC_q,\scrC_q)}( \scrC_q^\vee,\scrC_q)\big) \iso H^n\big(\mathit{hom}_{(\scrC,\scrC)}(\scrC^\vee,\scrC)\big)[[q]].
\end{equation}
To exploit the consequences of this, it is convenient to replace the given $\scrB_q$ by a quasi-isomorphic $A_\infty$-algebra, which has both the triviality of $\scrC_q$ and the quasi-isomorphism $\scrB_q/\scrC_q \htp \scrC_q^\vee[1-n]$ built in more directly. That algebra will live on the space
\begin{equation} \label{eq:trivial-q-spaces}
\tilde{\scrB}_q = \scrC[[q]] \oplus (\scrC^\vee[[q]])[1-n]).
\end{equation}
As in our previous discussion of \eqref{eq:trivial-extension}, this comes with an additional grading, where the first summand has weight $0$ and the second weight $-1$.

\begin{proposition} \label{th:1-variable-2}
Assume that the deformation $\scrC_q$ is trivial. Pick a trivialization, and correspondingly think of $\epsilon_q$ as living on the right hand side of \eqref{eq:delta-is-q-dependent}, and suppose that it lies in
\begin{equation} \label{eq:v-subspace}
H^n(\mathit{hom}_{(\scrC,\scrC)}(\scrC^\vee,\scrC)) \otimes V
\subset H^n(\mathit{hom}_{(\scrC,\scrC)}(\scrC^\vee,\scrC))[[q]] \;\;
\text{ for a $\bQ$-subspace $V \subset \bQ[[q]]$.}
\end{equation}
Then there is is an $A_\infty$-structure on \eqref{eq:trivial-q-spaces}, such that:
\begin{align}
& \label{eq:q-independent}
\mybox{$\scrC[[q]] \subset \tilde{\scrB}_q$ is an $A_\infty$-subalgebra (just to reiterate, $\scrC[[q]]$ carries the $q$-linear extension of the $A_\infty$-structure on $\scrC$). Moreover, the induced bimodule structure on $\tilde{\scrB}_q/\scrC[[q]] = (\scrC^\vee[[q]])[1-n])$ is exactly that of the dual diagonal bimodule.} \\
& \label{eq:q-weight}
\mybox{For any $r \geq 0$, the part of the $A_\infty$-structure of $\tilde{\scrB}_q$ of weight $r$ has coefficients which are homogeneous polynomials of degree $r$ in $V$ (thanks to \eqref{eq:q-independent}, we already knew this for $r = 0$).} \\
& \label{eq:trivial-isomorphism}
\mybox{There is a filtered quasi-isomorphism $\tilde\scrB_q \rightarrow \scrB_q$, which extends the given trivialization $\scrC[[q]] \rightarrow \scrC_q$.}
\end{align}
\end{proposition}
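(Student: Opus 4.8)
The idea is to reconstruct $\scrB_q$ from the weight-filtered data $(\scrC_q,\scrC_q^\vee[1-n],[\epsilon_q])$ of Lemma \ref{th:delta-triangle-2} by an order-by-order construction along the weight filtration, using the vanishing \eqref{eq:no-negative-degree-3} to annihilate all obstructions, while tracking the $V$-degree of coefficients at every step; this is the concrete classification packaged (later) by Corollary \ref{th:concrete-classification-2}, applied to the inclusion $\scrC_q\hookrightarrow\scrB_q$. The one genuinely decisive reduction is to use the given trivialization first, so that the ambient homological algebra lives over $\bQ$ rather than over $\bQ[[q]]$.

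\textbf{Step 1: trivialize and split.} Using $\Psi_q\colon\scrC[[q]]\to\scrC_q$ (an $A_\infty$-isomorphism, being the identity modulo $q$) I would transport everything so that the weight-$0$ subalgebra is literally $\scrC[[q]]$ with its $q$-linear $A_\infty$-structure, and $\epsilon_q$ becomes a family of bimodule maps $\scrC^\vee[-n]\to\scrC$ as in \eqref{eq:delta-is-q-dependent}. Combining this with the quasi-isomorphism $\scrB_q/\scrC_q\simeq\scrC_q^\vee[1-n]$ and a homological-perturbation argument, replace $\scrB_q$ by a quasi-isomorphic $A_\infty$-algebra whose underlying graded module is $\tilde\scrB_q=\scrC[[q]]\oplus(\scrC^\vee[[q]])[1-n]$, in which $\scrC[[q]]$ is a strict $A_\infty$-subalgebra and the quotient bimodule is strictly the dual diagonal bimodule; this is \eqref{eq:q-independent}. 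The key point to record is that after the trivialization every bimodule in sight is the $q$-linear extension of a bimodule over $\scrC$, so the controlling complexes $\mathit{hom}_{(\scrC_q,\scrC_q)}((\scrC_q^\vee[-n])^{\otimes_{\scrC_q}r},-)$ are just $\mathit{hom}_{(\scrC,\scrC)}((\scrC^\vee[-n])^{\otimes_\scrC r},-)\otimes_\bQ\bQ[[q]]$ with $q$-independent differential.

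\textbf{Step 2: build the positive-weight operations inductively.} The weight-$0$ part is forced to be the trivial extension algebra. For weight $1$, take the homotopy components of a degree-preserving bimodule cocycle representing $[\epsilon_q]$ in $\mathit{hom}_{(\scrC,\scrC)}(\scrC^\vee[-n],\scrC)$; by hypothesis \eqref{eq:v-subspace} this representative can be chosen with coefficients in $V$, so the weight-$1$ operations are linear in $V$. Suppose the operations of weight $<r$ have been chosen with coefficients in $V^{\cdot a}$ (the span of $a$-fold products of elements of $V$) at weight $a$. The $A_\infty$-relations at weight $r$ read $\partial(\text{weight-}r\text{ piece})=(\text{obstruction assembled from brackets of a weight-}a\text{ and a weight-}(r-a)\text{ piece})$; the obstruction is a cocycle sitting in a negative degree of $\mathit{hom}_{(\scrC,\scrC)}((\scrC^\vee[-n])^{\otimes_\scrC r},\scrC)$ or $\mathit{hom}_{(\scrC,\scrC)}((\scrC^\vee[-n])^{\otimes_\scrC r},\scrC^\vee[-n])$, hence is exact by \eqref{eq:no-negative-degree-3}. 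Its coefficients lie in $\sum_a V^{\cdot a}\cdot V^{\cdot(r-a)}\subseteq V^{\cdot r}$, and since the differential is $q$-independent a primitive can be chosen with coefficients in the same span; this completes the induction and yields \eqref{eq:q-weight}. Finally, $\tilde\scrB_q$ and the transported $\scrB_q$ carry the same weight-filtered data, so the rigidity half of Corollary \ref{th:concrete-classification-2} (again powered by \eqref{eq:no-negative-degree-3}, exactly as in Proposition \ref{th:1-variable-1b}) provides a filtered quasi-isomorphism $\tilde\scrB_q\to\scrB_q$ restricting to $\Psi_q$ on $\scrC[[q]]$, which is \eqref{eq:trivial-isomorphism}.

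\textbf{Main obstacle.} Existence of the model is immediate from \eqref{eq:no-negative-degree-3}; the delicate part is the bookkeeping that forces the weight grading on $\tilde\scrB_q$ to match the $V$-degree of the coefficients, i.e.\ checking at each inductive step that both the obstruction classes and the remaining freedom in the primitives stay inside $V^{\cdot r}$ tensored with the $q$-independent cochain complexes. This is precisely what the trivialization of $\scrC_q$ buys us, and it is the hypothesis \eqref{eq:v-subspace} on the seed $\epsilon_q$ that propagates through. A secondary, more routine point is the homological-perturbation step in Step 1 that makes the splitting of the graded module and the dual-diagonal bimodule structure strict.
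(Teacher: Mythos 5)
Your proposal is correct and follows essentially the same route as the paper: first use the trivialization (extended to all of $\scrB$) together with a bimodule-transfer step to get a model with $\scrC[[q]]$ as a strict subalgebra and $(\scrC^\vee[[q]])[1-n]$ as the strict quotient bimodule, then construct the $V$-graded $A_\infty$-structure weight by weight, killing obstructions via the negative-degree vanishing (this is the paper's Lemma \ref{th:field-of-definition}), and finally identify it with the transported $\scrB_q$ by the rigidity statement of Lemma \ref{th:first-order-2}/Corollary \ref{th:concrete-classification-2}. Your ``homological perturbation'' step is carried out in the paper by the obstruction-theoretic Lemma \ref{th:bimodule-transfer-2}, but this is only a difference in packaging, not in substance.
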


\begin{remark}
Readers who find this level of detail overwhelming may be satisfied with a less precise version (which is sufficient in order to obtain Theorem \ref{th:1-variable}). Namely, there is an $A_\infty$-algebra $\tilde{\scrB}_q$ over $\bQ[[q]]$ whose underlying space is explicitly given as $\tilde{\scrB}[[q]]$ for some graded $\bQ$-vector space $\tilde{\scrB}$, such that 
\begin{equation} \label{eq:polydeg}
\mu^d_{\tilde{\scrB}_q} \in \mathit{Hom}^{2-d}(\tilde{\scrB}^{\otimes d}, \tilde{\scrB}) \otimes \mathit{Sym}^{\leq d}(V) \subset \mathit{Hom}^{2-d}(\tilde{\scrB}^{\otimes d}, \tilde{\scrB})[[q]],
\end{equation}
and which is filtered quasi-isomorphic to $\scrB_q$ (the property \eqref{eq:polydeg} follows from our original statement, because that operation has only nonzero pieces of weight $0,\dots,d$). 
\end{remark}

\begin{proof}[Proof of Proposition \ref{th:1-variable-2}] 
This has several steps. Take the maps \eqref{eq:trivialization-maps} and extend them to $\Psi_q^d: \scrB^{\otimes d} \rightarrow \scrB_q[1-d]$, still having the same $q = 0$ reduction property as in \eqref{eq:trivialization-maps} (where of course $\mathit{id}$ is now the identity on $\scrB$). Having done that, there is a unique $A_\infty$-deformation of $\scrB$, which is trivial on $\scrC$, and which turns these maps into an $A_\infty$-isomorphism to the given $\scrB_q$. Denote that structure by $\tilde\scrB_q^{(1)}$. By construction, it contains $\scrC[[q]]$ as an $A_\infty$-subalgebra. From $\Psi_q$ and the corresponding property of $\scrB_q$, the $\tilde{\scrB}_q^{(1)}$-bimodule $\tilde\scrP_q^{(1)} = (\tilde\scrB_q^{(1)}/\scrC[[q]])[-1]$ inherits a quasi-isomorphism $\gamma_q^{(1)}: (\scrC[[q]]^\vee)[-n] \rightarrow \tilde{\scrP}_q^{(1)}$.

Next, apply Lemma \ref{th:bimodule-transfer-2} to $\scrC[[q]] \subset \tilde\scrB_q^{(1)}$ and the map $\gamma_q^{(1)}$. The outcome is another $A_\infty$-algebra $\tilde{\scrB}_q^{(2)}$ containing $\scrC[[q]]$, such that $\tilde{\scrB}_q^{(2)}/\scrC[[q]]$ is exactly equal to $(\scrC[[q]]^\vee)[1-n]$. This comes with a quasi-isomorphism $\tilde\scrB^{(2)}_q \rightarrow \tilde{\scrB}^{(1)}_q$ which is the identity on $\scrC[[q]]$, and which induces $\gamma_q^{(1)}$ on the associated bimodules. As usual, to the inclusion $\scrC[[q]] \subset \tilde{\scrB}^{(2)}_q$ is associated a map of bimodules, of the form
\begin{equation}
\tilde{\epsilon}_q^{(2)}: (\scrC[[q]]^\vee)[-n] \longrightarrow \scrC[[q]].
\end{equation}
Tracing through the construction, one sees that this is precisely $\epsilon_q$, when thought of as lying in the right hand side of \eqref{eq:delta-is-q-dependent}, hence its homotopy class lies in \eqref{eq:v-subspace}.

Now we start working from the other end. Using Lemma \ref{th:field-of-definition}, one can find an $A_\infty$-structure on the space $\tilde{\scrB}_q$ which satisfies \eqref{eq:q-independent} and \eqref{eq:q-weight}, and such that the associated bimodule map is homotopic to $\tilde{\epsilon}_q^{(2)}$. Finally, applying the classification result from Lemma \ref{th:first-order-2}, one can find an $A_\infty$-isomorphism $\tilde{\scrB}_q \rightarrow \tilde{\scrB}^{(2)}_q$ which is the identity on $\scrC[[q]]$, and induces the identity on the quotient bimodule $(\scrC[[q]])^\vee[1-n]$. Note that both of the last-mentioned Lemmas depended on \eqref{eq:no-negative-degree}, \eqref{eq:no-negative-degree-2}. Together with the previous steps, this gives a sequence of quasi-isomorphisms which establishes \eqref{eq:trivial-isomorphism}:
\begin{equation}
\tilde{\scrB}_q \longrightarrow \tilde{\scrB}_q^{(2)} \longrightarrow \tilde{\scrB}_q^{(1)} \longrightarrow \scrB_q.
\end{equation}
\end{proof}

\begin{remark} \label{th:use-tilde-c}
Throughout Proposition \ref{th:1-variable-2}, one can replace $\scrC$ by the quasi-isomorphic $\tilde{\scrC}$ from \eqref{eq:simple-directed}. One can think of this as an application of Lemma \ref{th:v-transfer}; or alternatively, we could have used $\tilde{\scrC}$ throughout the argument, at the price of a slight increase in complexity (because the functor $\tilde{\scrC} \rightarrow \scrB$ is not an inclusion).
\end{remark}

\begin{scholium} \label{th:kaledin}
We need a bit more algebraic background. For an $A_\infty$-deformation $\scrA_q$, let $\mathit{HH}^*(\scrA_q,\scrA_q)$ be the Hochschild cohomology. (One can think of it as the endomorphisms of the diagonal bimodule, even though that's not particularly helpful here.) The $q$-dependence of the $A_\infty$-structure is measured by the Kaledin class (the terminology is from \cite{seidel15}, but the notion itself goes back to \cite{kaledin07, lunts10})
\begin{equation} \label{eq:kaledin}
\kappa_q \in \mathit{HH}^2(\scrA_q,\scrA_q).
\end{equation}
Suppose that the Kaledin class vanishes, and that we choose a bounding cochain for the underlying Hochschild cocycle. More intrinsically, one can consider this as the choice of an $A_\infty$-connection $\calnablaq$, which is a kind of  $q$-differentiation operation with additional $A_\infty$-terms. Homotopy classes of $A_\infty$-connections form an affine space over $\mathit{HH}^1(\scrA_q,\scrA_q)$, meaning that the choice of bounding cochain matters only modulo coboundaries. Moreover, a filtered quasi-isomorphism of curved $A_\infty$-algebras induces a bijection between homotopy classes of connections. As explained in \cite{seidel21b}, an $A_\infty$-connection $\calnablaq$ induces connections, in the ordinary sense of $q$-differentiation operations, on Hochschild cohomology and related groups. These depend only on the homotopy class of $\calnablaq$. The one of interest to us is
\begin{equation} \label{eq:nabla-2}
\nabla^2_q: H^*(\mathit{hom}_{(\scrA_q,\scrA_q)}(\scrA_q^\vee,\scrA_q)) \longrightarrow
H^*(\mathit{hom}_{(\scrA_q,\scrA_q)}(\scrA_q^\vee,\scrA_q)).
\end{equation}
Since we are working over $\bQ$, we can obtain stronger results by, roughly speaking, integrating $A_\infty$-connections. Namely, as shown in \cite{kaledin07,lunts10}, 
the Kaledin class is zero if and only $\scrA_q$ is a trivial $A_\infty$-deformation (of its $q = 0$ part, denoted by $\scrA$). More precisely, for every $A_\infty$-connection $\calnablaq$ there is a unique trivialization $\scrA[[q]] \iso \scrA_q$ under which $\calnablaq$ corresponds to the trivial connection (meaning $d/dq$) on $\scrA[[q]]$. We will give an exposition of this theory in Section \ref{sec:connections}.
\end{scholium}

Let's return to the specific $A_\infty$-algebra $\scrC_q$ arising in our geometric situation.

\begin{proposition} \label{th:1-variable-3}
Assume that \eqref{eq:simply-connected} and \eqref{eq:as} hold. Then the Kaledin class of $\scrC_q$ is zero. Moreover, there is a choice of $A_\infty$-connection such that for the associated operation \eqref{eq:nabla-2}, the element $\epsilon_q$ from \eqref{eq:delta-triangle-2} satisfies
\begin{equation} \label{eq:2nd-order-2}
\nabla_q^2 \nabla_q^2 \epsilon_q + \left( \eta - \frac{\psi'}{\psi} \right) \nabla_q^2 \epsilon_q - 4z^{(2)} \psi^2 \epsilon_q =0.
\end{equation}
 \end{proposition}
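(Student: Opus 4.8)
The plan is to import the linear second-order equation \eqref{eq:2nd-order} from the symplectic cohomology of $\bar{E}$, where it was established in \cite{seidel16}, and to transport it along the chain of structures relating that symplectic cohomology to $\scrB_q$ and its subalgebra $\scrC_q$ --- the ``several intermediate stages'' alluded to after \eqref{eq:2nd-order}, carried out in \cite{seidel21,seidel21b}. On the symplectic cohomology side one has a connection, meromorphic in $q$, whose restriction to a distinguished rank-two flat subsystem --- spanned by the images of the fibre class $[\bar{M}]$ and the exceptional class $[\delta E|]$, equivalently by $[\bar{M}]$ and $z^{(1)}$ --- takes precisely the shape of the first-order system underlying \eqref{eq:2nd-order}. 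Assumption \eqref{eq:as} is exactly what keeps this two-dimensional span invariant; it is what produces the scalars $\psi$ and $\eta$ through \eqref{eq:write-z1}, while $z^{(2)}$ enters as the remaining structure constant. The canonical flat section of this rank-two system satisfies \eqref{eq:2nd-order}.

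For the first assertion, vanishing of the Kaledin class of $\scrC_q$: the relative Fukaya category carries a geometrically defined $q$-connection, where differentiation in $q$ is implemented by creating an interior marked point constrained to the divisor $\delta M$ (the construction of \cite{seidel21b}, where \eqref{eq:simply-connected} enters to dispose of bounding cochains). By Scholium \ref{th:kaledin}, the existence of any $A_\infty$-connection forces $\kappa_q = 0$. One checks that this connection restricts from $\scrB_q$ to $\scrC_q$ by the same degree considerations used in Lemmas \ref{th:subalgebra-2} and \ref{th:directed-2}: since the groups $\mathit{CF}^1(V_k,V_k)$ are as small as \eqref{eq:cf-vk} permits, the connection terms with all inputs in $\scrC_q$ are forced to land in $\scrC_q$. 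This produces the $A_\infty$-connection $\calnablaq$, and hence the operator $\nabla_q^2$ of \eqref{eq:nabla-2} with respect to which $\epsilon_q$ will be differentiated.

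The heart of the proof is to match $\nabla_q^2$ and $\epsilon_q$ with the symplectic cohomology picture. One identifies $\epsilon_q$, the connecting map of the triangle \eqref{eq:delta-triangle-2}, with the image of the distinguished symplectic cohomology element under the relevant closed-open comparison maps; this is the geometric origin of $\epsilon$ referred to after Lemma \ref{th:delta-triangle}, and here the Lefschetz pencil structure enters (trivial sections inside $\delta E| \iso \bC| \times \delta M$, sections of intersection number one with $\delta E|$), following \cite{seidel21}. One then verifies that these comparison maps intertwine the symplectic cohomology connection of \cite{seidel16} with the connection induced by $\calnablaq$ --- a naturality statement for closed-open maps under $q$-differentiation, established in \cite{seidel21b}. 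Since the scalars $\psi,\eta,z^{(2)}$ pass unchanged through the comparison maps, transporting \eqref{eq:2nd-order} gives \eqref{eq:2nd-order-2} for $\epsilon_q$, for the $A_\infty$-connection induced by the geometric one. For a general $A_\infty$-connection the operator $\nabla_q^2$ and the iterated derivatives of $\epsilon_q$ change by terms coming from $\mathit{HH}^1(\scrC_q,\scrC_q)$, which is why the statement only asserts the existence of a suitable connection.

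The main obstacle is precisely this naturality claim: establishing that the geometric $q$-connection on the Fukaya side and the connection on the symplectic cohomology of $\bar{E}$ correspond under the closed-open comparison maps, and that $\epsilon_q$ really is the image of the distinguished symplectic cohomology class. These are the substantial geometric inputs, resting on \cite{seidel21,seidel21b}; granted them, the remaining ingredients --- the vanishing of the Kaledin class, its restriction to $\scrC_q$, and the purely formal passage from a rank-two flat system to the scalar second-order equation --- are routine.
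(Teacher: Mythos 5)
Your overall architecture is the paper's: the second-order equation is obtained from the Floer theory of $\bar{p}$ as in \cite{seidel16} (Theorem \ref{th:1st-order}), $\epsilon_q$ is identified with the closed-open image of the identity class (Proposition \ref{th:second-co}), and the closed-open maps intertwine connections (Proposition \ref{th:first-co}, from \cite{seidel21b}). But your argument for the first assertion has a genuine gap. There is no ``geometrically defined $q$-connection'' on the relative Fukaya category: inserting an interior marked point constrained to $\delta M$ implements $q$-differentiation of the $A_\infty$-structure, i.e.\ it produces a pre-connection whose Hochschild coboundary represents the Kaledin class -- not an $A_\infty$-connection. If an honest connection existed on $\scrB_q$, Lemma \ref{th:kaledin-2} would make the whole deformation $\scrB_q$ trivial, which is false in the examples (quintic, elliptic curve); so your route of ``restrict the connection from $\scrB_q$ to $\scrC_q$ by degree considerations'' starts from a premise that fails, and it never uses \eqref{eq:as}, which is precisely what kills the obstruction. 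The paper's mechanism is: work with the thimble category $\scrA_q$, filtered quasi-isomorphic to $\scrC_q$ (Lemma \ref{th:quasi-isomorphic-to-directed}, with transfer of Kaledin classes and connections via Lemma \ref{th:functoriality-of-connections-2}); there $\kappa_q$ is the closed-open image of $k^1_q = \mathit{PSS}^1_q(q^{-1}[\delta E])$ (Proposition \ref{th:first-co-2}), the PSS image of $z^{(1)}|E$ vanishes for $r>1$ (Lemma \ref{th:kernel-of-pss-2}), \eqref{eq:as} via \eqref{eq:write-z1} rewrites $q^{-1}[\delta E]$ in terms of $z^{(1)}$, and \eqref{eq:simply-connected} yields the injectivity of the continuation map needed to conclude $k^1_q=0$ (Corollary \ref{th:k2}); \eqref{eq:simply-connected} also kills $\mathit{HF}^1$, making the bounding-cochain choices immaterial (Corollary \ref{th:unique-connection}).

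There is a second gap: the coefficient ring. The connection and the two first-order identities $\nabla e = -\psi s$, $\nabla s = -\eta s - 4z^{(2)}\psi e$ of \cite{seidel16} live on $\mathit{HF}^*_{q,q^{-1}}(\bar{p},r)$, i.e.\ over $\bQ((q))$ (and combining them at $r=2$ already needs Proposition \ref{th:compatible-connections} and injectivity of continuation in degree zero); transporting through the closed-open maps therefore only gives the statement for $\epsilon_{q,q^{-1}}$ on $\scrA_{q,q^{-1}} = \scrA_q \otimes_{\bQ[[q]]}\bQ((q))$ (Corollary \ref{th:preliminary}). The Proposition is a statement over $\bQ[[q]]$, and neither the vanishing of $\kappa_q \in \mathit{HH}^2(\scrC_q,\scrC_q)$ nor \eqref{eq:2nd-order-2} follows formally from its $\bQ((q))$-counterpart, since a priori these groups could have $q$-torsion. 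This is exactly why the paper constructs relative Hamiltonian Floer cohomology $\mathit{HF}^*_q(\bar{p},r)$ over $\bQ[[q]]$ (Sections \ref{sec:relative-floer}--\ref{sec:apply-to-lefschetz}), proves the relative statements Lemma \ref{th:les-3}, Lemma \ref{th:kernel-of-pss-2}, Proposition \ref{th:first-co-2}, and only then -- knowing the deformation is trivial, hence the relevant modules torsion-free and base change applicable -- descends the differential equation (Corollary \ref{th:1-variable-5}). Your ``transport'' step silently assumes this descent. A minor further inaccuracy: the rank-two system is spanned by the degree-zero classes $e$ and the Borman--Sheridan class $s$, not by $[\bar{M}]$ and $[\delta E|]$, and $e$ is not flat ($\nabla e = -\psi s$); it satisfies the second-order equation, which is a different statement.
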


This is the main course on our menu, and will be extensively discussed later. By combining it with the previous material, one can quickly complete the proof of the result stated in the Introduction.


\begin{proof}[Proof of Theorem \ref{th:model}] 
Take the $A_\infty$-connection from Proposition \ref{th:1-variable-3}, and use that to trivialize the deformation $\scrC_q$ (see Scholium \ref{th:kaledin}). This induces a commutative diagram, with the vertical arrows as in \eqref{eq:delta-is-q-dependent},
\begin{equation}
\xymatrix{
H^*(\mathit{hom}_{(\scrC_q,\scrC_q)}(\scrC_q^\vee,\scrC_q)) \ar[rr]^-{\nabla_q^2} \ar[d]_-{\iso} &&
H^*(\mathit{hom}_{(\scrC_q,\scrC_q)}(\scrC_q^\vee,\scrC_q)) \ar[d]^-{\iso} 
\\
H^*(\mathit{hom}_{(\scrC,\scrC)}(\scrC^\vee,\scrC))[[q]] \ar[rr]^-{d/dq}  &&
H^*(\mathit{hom}_{(\scrC,\scrC)}(\scrC^\vee,\scrC))[[q]] 
}
\end{equation}
Let's combine that with Proposition \ref{th:1-variable-3}. The outcome is that, if we think of $\epsilon_q$ as lying in $H^n(\mathit{hom}_{(\scrC,\scrC)}(\scrC^\vee,\scrC))[[q]]$, it satisfies
\begin{equation} \label{eq:2nd-order-3} 
\epsilon_q'' + \left( \eta - \frac{\psi'}{\psi} \right) \epsilon_q' - 4z^{(2)} \psi^2 \epsilon_q =0,
\end{equation}
Let $s_0,s_1$ be a basis of solutions of \eqref{eq:2nd-order}, as in \eqref{eq:y0y1}. Then, \eqref{eq:2nd-order-3} implies that there are bimodule maps $\epsilon_0,\epsilon_1: \scrC^\vee[-n] \rightarrow \scrC$ such that
\begin{equation} \label{eq:write-delta}
\epsilon_q = \epsilon_0 s_0 + \epsilon_1 s_1.
\end{equation}
In other words, $\epsilon_q$ lies in \eqref{eq:v-subspace} for $V = \bQ s_0 \oplus \bQ s_1 \subset \bQ[[q]]$. Proposition \ref{th:1-variable-2} then yields an $A_\infty$-structure on $\tilde{\scrB}_q$ quasi-isomorphic to $\scrB_q$, and so that the weight $r$ part has coefficients which are homogeneous polynomials of degree $r$ in $(s_0,s_1)$. We can rescale the operations on $\tilde{\scrB}_q$ by pulling them back by the automorphism which is the identity on $\scrC[[q]]$, and $s_0^{-1}$ times the identity on $(\scrC^\vee[[q]])[1-n])$. This rescales the weight $r$ part by $s_0^{-r}$. Hence, for the pullback structure, the weight $r$ part has coefficients which are degree $\leq r$ polynomials in $f = s_1/s_0$. We know that $f$ is of the form \eqref{eq:initial} and a solution of \eqref{eq:schwarz-eq}. Finally, note that Theorem \ref{th:model} was formulated using $\tilde{\scrC}$ instead of $\scrC$, but that is irrelevant, see Remark \ref{th:use-tilde-c}.
\end{proof}

\begin{remark} \label{th:doesnt-matter}
In the proof above, we could have used any pair $(s_0,s_1)$ of solutions satisfying \eqref{eq:y0y1}. Correspondingly, for our statement to hold, $f$ is not constrained to be a specific solution of \eqref{eq:schwarz-eq}; any solution of the form \eqref{eq:initial} will do. 
\end{remark}

\subsection{The Fukaya category of the Lefschetz fibration\label{subsec:fukaya-of-lefschetz}}
The main role in our argument is actually played by the Fukaya category $\scrF(p)$ of the Lefschetz fibration $p: E \rightarrow \bC$, and its deformation $\scrF_q(\bar{p})$ associated to the fibrewise compactification $\bar{p}: \bar{E} \rightarrow \bC$, with its symplectic divisor $\delta E$. 
This comes with a restriction functor \cite{seidel20}
\begin{equation} \label{eq:restriction}
\scrF_q(\bar{p}) \longrightarrow \scrF_q(\bar{M}).
\end{equation}
To understand that, it is useful to think of $M$ not as being located at infinity, but as the fibre of $p$ over some finite (but large) point of $\bC$; of course, up to isomorphism there is no difference between the two. The construction of the restriction functor involves suitably adapted choices of almost complex structures and Hamiltonian terms. Having done that, it is simply a projection map on the spaces of Floer cochains, retaining only those Lagrangian chords (the perturbed version of intersection points) which lie in $M$. Algebraically, the restriction functor is the simplest kind of $A_\infty$-homomorphism, with no  terms other than the linear one, and constant in $q$.

Each vanishing cycle $V_k \subset M$ in our chosen basis comes from a Lefschetz thimble $\Delta_k \subset E$. Let $\scrA \subset \scrF(p)$ be the resulting full $A_\infty$-category, and $\scrA_q \subset \scrF_q(\bar{p})$ the corresponding part of the relative Fukaya category. We want to set up things in a very specific way, which is compatible with our previous \eqref{eq:cf-vk} via \eqref{eq:restriction}. One can arrange that the endomorphisms of each Lefschetz thimble have the form
\begin{equation} \label{eq:cf-delta}
\mathit{CF}^*(\Delta_k,\Delta_k) = \bQ e_{\Delta_k} \oplus \bQ t_{\Delta_k} \oplus \bQ u_{\Delta_k},
\end{equation}
where the degrees of the generators (in that order) are $0$, $(n-1)$, $n$. Geometrically, these generators are given by points of $\tilde{\Delta}_k \cap \Delta_k$ for a suitably chosen Hamiltonian perturbation $\tilde{\Delta}_k$. The first two generators correspond to intersection points lying in $M$, while the third one is outside. Hence, restriction takes $e_{\Delta_k}$ and $t_{\Delta_k}$ to their counterparts in \eqref{eq:cf-vk}, while it kills $u_{\Delta_k}$. Along similar lines, one can arrange that for $i<j$, all of $\mathit{CF}^*(\Delta_i,\Delta_j)$ is generated by intersection points in $M$, hence maps isomorphically to $\mathit{CF}^*(V_i,V_j)$. Finally, by the definition of a basis of vanishing cycles, the differential $\mu^1_{\scrA}$ is acyclic on $\mathit{CF}^*(\Delta_i,\Delta_j)$ for $i>j$.

\begin{lemma}
The $A_\infty$-deformation $\scrA_q$ satisfies
\begin{align} 
\label{eq:mu0-aq}
& \mu^0_{\scrA_q} = 0\;\; \text{ except possibly if $n = 2$,} \\
\label{eq:mu1-aq}
& \mu^1_{\scrA_q}(e_{\Delta_k}) = 0, \\
\label{eq:mu1-aq2}
& \mu^1_{\scrA_q}(t_{\Delta_k}) = \text{(some $\bQ[[q]]^\times$ multiple of) } u_{\Delta_k}.
\end{align}
\end{lemma}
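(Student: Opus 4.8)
The plan is to dispose of the three claims by degree bookkeeping in the model \eqref{eq:cf-delta}, feeding in the restriction functor \eqref{eq:restriction} and the classical ($q=0$) structure of $\scrA$ exactly where degrees alone do not suffice. Start with the curvature term: $\mu^0_{\scrA_q}\in q\,\mathit{CF}^2(\Delta_k,\Delta_k)[[q]]$, and the three generators of $\mathit{CF}^*(\Delta_k,\Delta_k)$ sit in degrees $0,n-1,n$, so this group vanishes unless $n\in\{2,3\}$. When $n=3$ it is spanned by $t_{\Delta_k}$; here I would push $\mu^0_{\scrA_q}$ forward along the restriction functor — which by construction is a $q$-constant, purely linear $A_\infty$-homomorphism carrying $t_{\Delta_k}$ to $t_{V_k}$ — to obtain the curvature of $V_k$ as an object of $\scrF_q(\bar M)$, which vanishes by Lemma \ref{th:2-curvature}; since restriction is injective on $\bQ t_{\Delta_k}$, this forces $\mu^0_{\scrA_q}=0$. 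When $n=2$ the only room is $\bQ u_{\Delta_k}$, which is precisely the kernel of restriction, so nothing can be concluded — this is the genuine exception recorded in the statement.

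For $\mu^1_{\scrA_q}(e_{\Delta_k})$, the target $\mathit{CF}^1(\Delta_k,\Delta_k)$ again vanishes on degree grounds unless $n=2$, in which case it equals $\bQ t_{\Delta_k}$. There I would argue either that $e_{\Delta_k}$, which spans the one-dimensional degree-$0$ part $\mathit{CF}^0(\Delta_k,\Delta_k)[[q]]$, represents up to a $\bQ[[q]]^\times$ scalar the unit of $\scrA_q$ and is therefore a $\mu^1$-cocycle; or, more safely in the presence of possible curvature, that restriction sends $\mu^1_{\scrA_q}(e_{\Delta_k})$ to $\mu^1_{\scrB_q}(e_{V_k})$, which is $0$ by the non-$q$-torsion argument \eqref{eq:mu1q-e} from the proof of Lemma \ref{th:subalgebra-2}, after which injectivity of restriction on $\bQ t_{\Delta_k}$ finishes the job.

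For $\mu^1_{\scrA_q}(t_{\Delta_k})$, degrees force $\mathit{CF}^n(\Delta_k,\Delta_k)=\bQ u_{\Delta_k}$, hence $\mu^1_{\scrA_q}(t_{\Delta_k})=c(q)\,u_{\Delta_k}$ with $c\in\bQ[[q]]$, and $c(0)\,u_{\Delta_k}$ is the classical differential $\mu^1_{\scrA}(t_{\Delta_k})$. Since $\mu^1_{\scrA}$ annihilates $e_{\Delta_k}$ and, by degree, $u_{\Delta_k}$, if $c(0)$ were zero then both $t_{\Delta_k}$ and $u_{\Delta_k}$ would survive into $\mathit{HF}^*(\Delta_k,\Delta_k)$ in degrees $n-1$ and $n$; but Lefschetz thimbles are exceptional objects of $\scrF(p)$, so $\mathit{HF}^*(\Delta_k,\Delta_k)\cong\bQ$ is concentrated in degree $0$. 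Hence $c(0)\neq 0$ and $c(q)\in\bQ[[q]]^\times$. The only ingredient here beyond routine bookkeeping is this exceptionality fact, which I would simply quote from the foundations of Fukaya categories of Lefschetz fibrations; apart from that, the one thing requiring care — and the reason the curvature claim carries an exception and must invoke Lemma \ref{th:2-curvature} — is the genuine low-dimensional degeneracy of the grading at $n=2,3$.
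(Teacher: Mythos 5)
Your proposal is correct and follows essentially the same route as the paper: degree bookkeeping in the model \eqref{eq:cf-delta}, pushing the curvature and $\mu^1(e_{\Delta_k})$ through the restriction functor to invoke Lemma \ref{th:2-curvature} and \eqref{eq:mu1q-e}, and deducing \eqref{eq:mu1-aq2} at $q=0$ from the fact that $\mathit{HF}^*(\Delta_k,\Delta_k)$ is the ordinary cohomology of the (contractible) thimble, i.e.\ $\bQ$ in degree $0$. The only cosmetic difference is that you phrase the last ingredient as exceptionality of the thimbles, which is the same statement the paper uses.
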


\begin{proof}
We know (see Lemma \ref{th:2-curvature} for the only nontrivial case) that $V_k$ as an object of $\scrF_q(\bar{M})$ has vanishing curvature term. Because of the structure of the restriction functor, this means that $\mu^0_{\scrA_q}$ would need to consist of multiples of the $u_{\Delta_k}$, which implies \eqref{eq:mu0-aq} for degree reasons. Similarly, \eqref{eq:mu1-aq} is true for degree reasons except possibly if $n = 2$, and in that case it follows from Lemma \ref{th:subalgebra-2}. If we set $q = 0$, then $\mu^1_{\scrA}(t_{\Delta_k})$ must be a $\bQ^\times$-multiple of $u_{\Delta_k}$, simply because the Floer cohomology of $\Delta_k \subset E$ is isomorphic to its ordinary cohomology, and that yields \eqref{eq:mu1-aq2}.
\end{proof}

\begin{lemma} \label{th:quasi-isomorphic-to-directed}
The restriction functor yields an $A_\infty$-homomorphism $\scrR_q: \scrA_q \rightarrow \scrB_q$. Moreover, there is a filtered quasi-isomorphism $\scrA_q \rightarrow \scrC_q$, where $\scrC_q \subset \scrB_q$ is the $A_\infty$-subalgebra structure on the subspace \eqref{eq:b-directed}, which fits into a homotopy commutative diagram
\begin{equation} \label{eq:a-restriction}
\xymatrix{
\scrA_q \ar[rr]^-{\scrR_q} \ar[drr]_-{\htp}  && \scrB_q
\\ &&
\scrC_q \ar@{_{(}->}[u]
}
\end{equation}
\end{lemma}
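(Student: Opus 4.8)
\emph{The two assertions are proved separately; the second is the substantive one.}

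\emph{Existence of $\scrR_q$ and the choice of $\alpha_k$.} The relative Fukaya category $\scrF_q(\bar p)$ of the Lefschetz fibration is constructed in the same enlarged style as $\scrF_q(\bar M)^{(0)}$, so each thimble appears as a pair $(\Delta_k,\tilde\alpha_k)$ with a bounding cochain, and we choose the $\tilde\alpha_k$ so that these become uncurved objects. This is automatic in dimensions $n\neq 2$, where $\mu^0_{\scrA_q}=0$ by \eqref{eq:mu0-aq}; in the one remaining case $n=2$, where the curvature at $\Delta_k$ is a multiple $m_k(q)\,u_{\Delta_k}$ with $m_k\in q\bQ[[q]]$, one uses \eqref{eq:mu1-aq2} (which realizes $u_{\Delta_k}$ as $\mu^1_{\scrA_q}$ of a $\bQ[[q]]^\times$-multiple of $t_{\Delta_k}$) to cancel it by a $q$-adically convergent correction of $\tilde\alpha_k$. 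The restriction functor \eqref{eq:restriction}, which is strict (only the linear term) and $q$-independent, then carries $(\Delta_k,\tilde\alpha_k)$ to $(V_k,\alpha_k)$ with $\alpha_k=\scrR_q^1(\tilde\alpha_k)$, necessarily a multiple of $t_{V_k}$; we \emph{define} $\scrB_q$ to be the full subalgebra of $\scrF_q(\bar M)^{(0)}$ on these objects (this fixes the choice deferred in Section \ref{subsec:directed-2}), so that restriction yields the required strict $A_\infty$-homomorphism $\scrR_q\colon\scrA_q\to\scrB_q$.

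\emph{An intermediate subalgebra.} Inside the now uncurved $\scrA_q$, consider $\scrA'_q=\bigoplus_k\bQ e_{\Delta_k}\oplus\bigoplus_{i<j}\mathit{CF}^*(\Delta_i,\Delta_j)$, the thimble analogue of \eqref{eq:b-directed}. I would check that $\scrA'_q$ is an $A_\infty$-subalgebra: $\mu^1_{\scrA_q}(e_{\Delta_k})=0$ by \eqref{eq:mu1-aq}, the $e_{\Delta_k}$ are strict units, and any nonzero $\mu^d_{\scrA_q}$ fed from $\scrA'_q$ runs along a non-decreasing chain of vanishing-cycle indices, so its output again lies in some $\mathit{CF}^*(\Delta_i,\Delta_j)$ with $i\le j$ (the indices are equal only when all inputs are the corresponding unit, in which case the output is a multiple of $e_{\Delta_i}$). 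Moreover $\scrA'_q$ is a $\mu^1$-subcomplex whose quotient is $\bigoplus_k(\bQ t_{\Delta_k}\oplus\bQ u_{\Delta_k})\oplus\bigoplus_{i>j}\mathit{CF}^*(\Delta_i,\Delta_j)$, and this is acyclic over $\bQ[[q]]$: the first summand by \eqref{eq:mu1-aq2}, the second because it is acyclic at $q=0$ by the definition of a basis of vanishing cycles, and acyclicity of a finite-dimensional complex persists under $q$-adic deformation. Hence the inclusion $\scrA'_q\hookrightarrow\scrA_q$ is a filtered quasi-isomorphism.

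\emph{Identification with $\scrC_q$ and assembly.} The key observation is that the strict linear term $\scrR^1_q$ restricts to a bijection of graded spaces $\scrA'_q\to\scrC_q$: it sends $e_{\Delta_k}\mapsto e_{V_k}$ and, for $i<j$, carries $\mathit{CF}^*(\Delta_i,\Delta_j)$ isomorphically onto $\mathit{CF}^*(V_i,V_j)$. Since $\scrA'_q\subset\scrA_q$ and $\scrC_q\subset\scrB_q$ are subalgebras and $\scrR_q=\scrR^1_q$ is strict, this bijection intertwines all operations, i.e.\ is a strict $A_\infty$-isomorphism $F\colon\scrA'_q\xrightarrow{\sim}\scrC_q$ satisfying $\iota\circ F=\scrR_q|_{\scrA'_q}$ on the nose, where $\iota\colon\scrC_q\hookrightarrow\scrB_q$. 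Choosing a quasi-inverse $G\colon\scrA_q\to\scrA'_q$ of the inclusion and setting $\scrG_q:=F\circ G$ gives a filtered quasi-isomorphism $\scrA_q\to\scrC_q$ with $\iota\circ\scrG_q=\scrR_q|_{\scrA'_q}\circ G=\scrR_q\circ(\mathrm{incl}\circ G)\simeq\scrR_q$, which is precisely the homotopy-commutative triangle \eqref{eq:a-restriction}.

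\emph{Main difficulty.} None of the algebra above is serious; the delicate point is the geometric setup in dimension $n=2$, namely verifying that $\scrF_q(\bar p)$ can indeed be built with uncurved thimbles compatibly with the restriction functor and with the conventions \eqref{eq:cf-vk}, \eqref{eq:cf-delta}, and that the $\scrB_q$ produced this way is the one tacitly used in Section \ref{subsec:directed-2}. The subalgebra and acyclicity checks, and the persistence of acyclicity along the $q$-filtration, are routine.
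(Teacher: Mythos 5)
Your argument is essentially the paper's: the same directed thimble subalgebra $\tilde{\scrA}_q$ from \eqref{eq:directed-a} (your $\scrA'_q$), the same use of \eqref{eq:mu0-aq}, \eqref{eq:mu1-aq}, \eqref{eq:mu1-aq2} and of the acyclicity of $\mathit{CF}^*(\Delta_i,\Delta_j)$ for $i>j$ to show the inclusion is a filtered quasi-isomorphism, the same observation that the strict restriction functor carries this subalgebra isomorphically onto $\scrC_q$, and the same homotopy-inverse filling of the triangle \eqref{eq:a-restriction}; your $n=2$ device (Maurer--Cartan elements $\alpha_k$ produced via \eqref{eq:mu1-aq2}, with $\scrB_q$ defined as the subcategory of the enlarged category on the $V_k$ with the restricted $\alpha_k$) is also the paper's. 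The one divergence is bookkeeping in the case $n=2$: the paper keeps $\scrA_q$ as the subcategory of $\scrF_q(\bar{p})$ on the plain thimbles, which may be curved there, and instead forms the uncurved $\scrA_q^{(0)}$ inside the enlarged category $\scrF_q(\bar{p})^{(0)}$, relating it back to $\scrA_q$ by the translation functor $\scrT_q$ of \eqref{eq:translation-functor}; as a result $\scrR_q = \scrR_q^{(0)} \circ \scrT_q^{-1}$ acquires a curvature term rather than being strict, and the quasi-isomorphism $\scrA_q \rightarrow \scrC_q$ is likewise obtained by composing with $\scrT_q^{-1}$. You instead build the bounding cochains into the thimble objects from the start, i.e.\ you prove the statement for $\scrA_q^{(0)}$ and tacitly rename it $\scrA_q$; this is harmless in substance, but if one insists on the $\scrA_q$ already fixed in Section \ref{subsec:fukaya-of-lefschetz}, your claim that $\scrR_q$ is strict should be weakened as in the paper, or supplemented by the $\scrT_q^{-1}$ step.
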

%

\begin{proof}
Assume first that $n>2$. One defines $\scrR_q$ to be \eqref{eq:restriction} applied to the objects $\Delta_k$. Take
\begin{equation} \label{eq:directed-a}
\tilde{\scrA} = \bigoplus_k \bQ e_{\Delta_k} \oplus \bigoplus_{i<j} \mathit{CF}^*(\Delta_i,\Delta_j) \subset \scrA.
\end{equation}
From \eqref{eq:mu0-aq} and \eqref{eq:mu1-aq}, it follows that this gives an $A_\infty$-subalgebra $\tilde{\scrA}_q \subset \scrA_q$. From \eqref{eq:mu1-aq2} and the acyclicity of $\mathit{CF}^*(\Delta_i,\Delta_j)$ for $i>j$, one sees that the inclusion of that subalgebra is a filtered quasi-isomorphism. By looking at the restriction of $\scrR_q$, we get a commutative diagram
\begin{equation}
\xymatrix{
\scrA_q \ar[rr]^-{\scrR_q}
&&
\scrB_q
\\
\tilde{\scrA}_q \ar@{^{(}->}[u]^-{\htp} \ar[rr]_-{\scrR_q|\tilde{\scrA}_q}^{\iso}
&&
\scrC_q.
\ar@{_{(}->}[u]
}
\end{equation}
Up to homotopy, one can fill this in with a diagonal map and obtain \eqref{eq:a-restriction}.

The remaining case $n = 2$ brings up the previously mentioned issue with defining $\scrB_q$ in that dimension. Let's consider the larger categories $\scrF_q(\bar{p})^{(0)}$ and $\scrF_q(\bar{M})^{(0)}$ whose objects are Lagrangians together with a degree $1$ Floer cochain whose $q=0$ term is zero. This is a purely algebraic construction, so the restriction functor extends (in a somewhat obvious way) to
\begin{equation}
\scrF_q(\bar{p})^{(0)} \longrightarrow \scrF_q(\bar{M})^{(0)}.
\end{equation}
Thanks to \eqref{eq:mu1-aq2}, one can find, for each $1 \leq k \leq m$, some $\alpha_k \in q\mathit{CF}^1(\Delta_k,\Delta_k)[[q]]$ which solves the Maurer-Cartan equation, meaning that the analogue of \eqref{eq:mc-alpha} is zero. Let $\scrA_q^{(0)} \subset \scrF_q(\bar{p})^{(0)}$ be the full subcategory formed by $(\Delta_k,\alpha_k)$. As the definition of $\scrB_q$, we use the full subcategory of $\scrF_q(\bar{p})$ formed by the $V_k$ with the restrictions of the $\alpha_k$. By construction, neither of these has curvature, and the restriction functor yields
\begin{equation} \label{eq:0-restriction}
\scrR_q^{(0)}: \scrA_q^{(0)} \longrightarrow \scrB_q.
\end{equation}
Moreover, there is an obvious (curved) $A_\infty$-homomorphism 
\begin{equation} \label{eq:translation-functor}
\scrT_q: \scrA_q^{(0)} \rightarrow \scrA_q,
\end{equation}
whose curvature term is given by the $\alpha_k$; with the identity as linear term; and all higher order terms equal to zero. One defines $\scrR_q$ by composing \eqref{eq:0-restriction} with the inverse of \eqref{eq:translation-functor} (note that this will cause $\scrR_q$ to have a curvature term). The rest of the argument is as before: one defines a quasi-isomorphic subalgebra $\tilde{\scrA}_q^{(0)} \subset \scrA_q^{(0)}$ as in \eqref{eq:directed-a}, and then $\scrR_q^{(0)}|\tilde{\scrA}_q^{(0)}$ is an isomorphism from that subalgebra to $\scrC_q$. The relation with $\scrA_q$ is again provided by combining this with the inverse of \eqref{eq:translation-functor}.
\end{proof}

\begin{remark} \label{th:clunky}
The proof of Lemma \ref{th:quasi-isomorphic-to-directed} may strike the reader as clunky. In fact, what is contrived is our use of $\scrC_q$, while $\scrA_q$ is the natural geometric object. In principle, one could remove $\scrC_q$ from our argument entirely, and take \eqref{eq:restriction} as the starting point (in which case, the specific choices made in defining $\scrA_q$ would be unnecessary, since their only purpose is to facilitate the relation with $\scrC_q$). However, that seemed unwise from an expository perspective, since the definition of $\scrC_q$ is much easier to grasp at first sight.
\end{remark}

\begin{lemma} \label{th:a-triangle}
There is an exact triangle of $\scrA_q$-bimodules
\begin{equation} \label{eq:delta-triangle-4}
\xymatrix{
 \scrA_q^\vee[-n] \ar[rr]_-{\epsilon_q} && \scrA_q \ar[rr]_-{\scrR_q} && \scrB_q
\ar@/_1pc/[llll]_-{[1]}
}
\end{equation}
where the last term is an $\scrA_q$-bimodule by pullback along $\scrR_q$.
\end{lemma}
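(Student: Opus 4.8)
The plan is to obtain \eqref{eq:delta-triangle-4} as the thimble-level shadow of the noncommutative anticanonical divisor structure carried by the Fukaya category of the Lefschetz fibration. Recall that $\cornerbar{E}$ is the blowup of the monotone manifold $X$ along the base locus $B$ of its anticanonical pencil, and that under this identification the fibre at infinity $\bar{M}$ is the proper transform of an anticanonical divisor, hence represents $-K_{\cornerbar{E}}$. Therefore $\bar{E} = \cornerbar{E} \setminus \bar{M}$, made into a relative Fukaya category with respect to the fibrewise hypersurface $\delta E$ that produces the $q$-deformation, is exactly the geometric input for a noncommutative anticanonical divisor in the sense developed earlier in the series and in \cite{seidel15, seidel20, seidel21}: one has a bimodule homomorphism $\epsilon_q \colon \scrF_q(\bar{p})^\vee[-n] \to \scrF_q(\bar{p})$ --- the noncommutative section cutting out the divisor, defined by counting perturbed holomorphic sections of $\bar{p}$ carrying one interior marked point constrained to lie on $\bar{M}$ --- whose mapping cone is the relative Fukaya category $\scrF_q(\bar{M})$ of the divisor, pulled back along the restriction functor \eqref{eq:restriction}. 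Restricting all of this to the full subcategory $\scrA_q \subset \scrF_q(\bar{p})$ on the thimbles $\Delta_1,\dots,\Delta_m$, and noting that $\scrR_q$ is (up to the low-dimensional adjustments made in Lemma \ref{th:quasi-isomorphic-to-directed}) the restriction of \eqref{eq:restriction}, yields \eqref{eq:delta-triangle-4}.

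Concretely I would proceed in three steps. First, at $q = 0$: assemble the moduli spaces of perturbed pseudo-holomorphic sections of $\bar{p} \colon \bar{E} \to \bC$ whose interior marked point maps into $\bar{M}$, with boundary on the thimbles, and run the standard gluing/TQFT package to turn them into a bimodule map $\epsilon \colon \scrA^\vee[-n] \to \scrA$; the identification $\mathrm{Cone}(\epsilon) \simeq \scrR^*\scrB$ comes from degenerating the constraint point onto the fibre $M$ and matching with the operations of $\scrF(M)$. This is the noncommutative-divisor incarnation of Lemma \ref{th:delta-triangle}, and it is where the extra geometric information on $\epsilon$ alluded to in the remark following Lemma \ref{th:delta-triangle} resides. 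Second, deform over $\bQ[[q]]$: the same moduli spaces, now weighted by $q^{u\cdot\delta E}$ with the position of $u^{-1}(\delta E)$ feeding into the perturbation data, produce $\epsilon_q$ directly, and exactness of the deformed triangle either falls out of the same compactness/gluing analysis or is deduced order by order in $q$ from the $q = 0$ case, the obstructions living in the groups shown to vanish in \eqref{eq:no-negative-degree-3}. Third, handle the small dimensions as in the proof of Lemma \ref{th:quasi-isomorphic-to-directed}: for $n = 2$, Lemma \ref{th:2-curvature} together with \eqref{eq:mu0-aq} ensures that $\scrA_q$ is uncurved, so its bimodule category is defined; for $n = 1$, one runs the construction inside $\scrF_q(\bar{p})^{(0)}$ with the bounding cochains $\alpha_k$ and transports the triangle along the inverse of the translation functor \eqref{eq:translation-functor}.

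As a consistency check, and to see that the $\epsilon_q$ produced here is the one used elsewhere, transport \eqref{eq:delta-triangle-4} along the filtered quasi-isomorphism $\scrA_q \to \scrC_q$ of Lemma \ref{th:quasi-isomorphic-to-directed}: since that map intertwines $\scrR_q$ with the inclusion $\scrC_q \hookrightarrow \scrB_q$, the resulting triangle is \eqref{eq:delta-triangle-2}, with connecting morphism the $\epsilon_q$ of Lemma \ref{th:delta-triangle-2}, pinned down up to the ambiguity recorded in Proposition \ref{th:1-variable-1b}.

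The main obstacle is precisely the cone identification $\mathrm{Cone}(\epsilon_q) \simeq \scrR_q^*\scrB_q$: one must show that the noncommutative divisor cut out by $\epsilon_q$ is not merely abstractly quasi-isomorphic to the vanishing-cycle category, but is so compatibly with the restriction functor, and that this compatibility survives the $q$-deformation and respects the weight filtration. This is a genuine moduli-theoretic statement --- controlling how sections of $\bar{p}$ meeting $\bar{M}$ degenerate, as the constraint point runs to the fibre at infinity, into configurations assembled from honest holomorphic discs in $M$ --- and it is exactly the part of the argument that rests on \cite{seidel21}; until that preprint exists, this step, like Proposition \ref{th:no-negative-degree}, cannot be considered complete. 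A secondary, purely bookkeeping difficulty is keeping the $n = 1$ and $n = 2$ conventions aligned with those fixed in Lemma \ref{th:quasi-isomorphic-to-directed}.
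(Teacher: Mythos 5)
Your proposal follows essentially the same route as the paper: the triangle is deduced from the divisor-type structure on the full relative Fukaya category of the Lefschetz fibration together with the restriction functor \eqref{eq:restriction}, then restricted to the full subcategory of thimbles, with the lowest dimension handled via the formal enlargement $\scrF_q(\bar{p})^{(0)}$ and the bounding cochains $\alpha_k$, exactly as in Lemma \ref{th:quasi-isomorphic-to-directed}. Two small corrections: the paper attributes the full-category statement to \cite{seidel20} (an existing preprint), not to the unavailable \cite{seidel21}, so this lemma is not among the steps whose proof is genuinely missing; and your labels for the low-dimensional cases are shifted by one relative to the paper's conventions (the bounding-cochain workaround is the paper's $n=2$, while the curvature concern arises for fibre dimension $n-1=2$).
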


A form of this result was proved in \cite{seidel20} for the entire relative Fukaya categories, and the functor \eqref{eq:restriction}. For $n>2$, one immediately obtains it in the form stated above, by considering only our basis of Lefschetz thimbles. For $n = 2$, one needs a minor workaround involving formal enlargements, exactly as in the proof of Lemma \ref{th:quasi-isomorphic-to-directed}. At this point, we can check off one of the tasks left open earlier:

\begin{proof}[Proof of Lemma \ref{th:delta-triangle-2}]
Let's pull back \eqref{eq:delta-triangle-4} via the filtered quasi-isomorphism from Lemma \ref{th:quasi-isomorphic-to-directed} (we do so without changing the notation). The outcome is a diagram of $\scrC_q$-bimodules
\begin{equation} \label{eq:transfer-exactness}
\xymatrix{
 \scrA_q^\vee[-n] \ar[rr]_-{\epsilon_q} && \scrA_q \ar[rr]_-{\scrR_q} && \scrB_q
 \\
\ar@{-->}[rr]
\scrC_q^\vee[-n] \ar[u]^-{\htp} && \scrC_q \ar[u]^-{\htp} \ar@{^{(}->}[rr] && \scrB_q \ar@{=}[u]
}
\end{equation}
The dotted arrow, which is by definition the map $\epsilon_q$ in \eqref{eq:delta-triangle-2}, is obtained by filling in the diagram. Since the top row admits a third morphism making it into an exact triangle, so does the bottom one.
\end{proof}

\subsection{Hamiltonian Floer cohomology\label{subsec:hamiltonian-0}}
We will use the formalism from \cite{seidel17} for Hamiltonian Floer groups associated to $\bar{p}$, with slightly different notation. The Floer-theoretic invariants are finite-dimensional graded $\bQ((q))$-vector spaces
\begin{equation} \label{eq:all-floer}
\mathit{HF}^*_{q,q^{-1}}(\bar{p},r),\;\; r \in \bR.
\end{equation}
For $r \notin \bZ$, they are constructed from Hamiltonians that rotate the base at infinity by an angle $2\pi r$. The groups with $r = 1,2,\dots$ use different Hamiltonians, and have a fundamentally closer relationship with the Fukaya category. The groups for different values of $r$ are related by continuation maps
\begin{equation} \label{eq:continuation}
C_{q,q^{-1}}^{r_1,r_2}: \mathit{HF}^*_{q,q^{-1}}(\bar{p},r_1) \longrightarrow \mathit{HF}^*_{q,q^{-1}}(\bar{p},r_2), \quad r_1 < r_2.
\end{equation}
There are also PSS (Piunikhin-Salamon-Schwarz \cite{piunikhin-salamon-schwarz94}) maps
\begin{equation} \label{eq:pss-r}
\mathit{PSS}_{q,q^{-1}}^r: H^*(\bar{E};\bQ)((q)) \longrightarrow \mathit{HF}^*_{q,q^{-1}}(\bar{p},r),\;\; r>0.
\end{equation}
The PSS maps for different $r$ are related by composition with continuation maps. The following two statements are basic Floer theory results:
\begin{align} \label{eq:less-than-1}
& \mathit{PSS}_{q,q^{-1}}^r \text{ is an isomorphism for } r \in (0,1), \\
& C_{q,q^{-1}}^{r_1,r_2} \text{ is an isomorphism if $r_1,r_2 \in (k,k+1)$ for some $k \in \bZ$.} \label{eq:small-increase-of-c}
\end{align}
One can partially describe \eqref{eq:all-floer} in topological terms, along the lines of \cite{mclean12}. Rather than encapsulating that into a spectral sequence, we break up the statement into pieces (the proofs of which will be described in Section \ref{subsec:hyperbolic}; we emphasize that this follows known ideas):

\begin{lemma} \label{th:les-2}
For $r_2 \in \bZ$ and $r_1 \in (r_2 - 1 ,r_2)$, the continuation map fits into a long exact sequence
\begin{equation} \label{eq:hf-exact-2}
\cdots \rightarrow \mathit{HF}^*_{q,q^{-1}}(\bar{p},r_1) \xrightarrow{C_{q,q^{-1}}^{r_1,r_2}} \mathit{HF}^*_{q,q^{-1}}(\bar{p},r_2) \longrightarrow H^{*-1}(\bar{M};\bQ)((q)) \rightarrow \cdots
\end{equation}
\end{lemma}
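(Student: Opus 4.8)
The plan is to realize the continuation map $C_{q,q^{-1}}^{r_1,r_2}$ as the map on Floer cohomology induced by a one-parameter family of Hamiltonians interpolating from a ``rotation by $2\pi r_1$'' Hamiltonian (with $r_1 \in (r_2-1,r_2)$) to the special ``integer-speed'' Hamiltonian at $r_2 \in \bZ$, and to identify its mapping cone with a Morse-Bott contribution localized near $\bar{M}$. The key geometric input is that, as one crosses the integer value $r_2$, the periodic orbits of the rotating Hamiltonian near the fibre over $\infty$ degenerate: the fixed locus at infinity becomes (a cover of) $\bar{M}$ itself, with normal directions governed by the rotation in the base. This is the relative analogue of McLean's picture \cite{mclean12} for the behaviour of symplectic cohomology under changing the wrapping, and of the corresponding statements in \cite{seidel17}.

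Concretely, I would first set up an action/energy filtration on the Floer complex $\mathit{CF}^*_{q,q^{-1}}(\bar{p},r)$ that separates the orbits lying in a compact part of $\bar{E}$ (which are insensitive to whether $r$ is slightly below or equal to $r_2$, by \eqref{eq:small-increase-of-c}-type reasoning) from the orbits that stream off towards the fibre at infinity. Passing from $r_1$ to $r_2 \in \bZ$ the latter orbits change: near $\bar{M}$, the relevant orbits for $r_2$ form a Morse-Bott family parametrized by $\bar{M}$, shifted in degree by $1$ because of the index contribution of the rotation becoming resonant. A standard Morse-Bott gluing/perturbation argument (choosing an auxiliary Morse function on $\bar{M}$) then shows that the mapping cone of $C_{q,q^{-1}}^{r_1,r_2}$ is quasi-isomorphic to $C^{*-1}(\bar{M};\bQ)((q))$, the singular cochains shifted by one, which on cohomology gives exactly the third term $H^{*-1}(\bar{M};\bQ)((q))$ in \eqref{eq:hf-exact-2}. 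The long exact sequence is then the one associated to that mapping cone.

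There are two points that require care. The first is the identification of the degree shift and the coefficient ring: one must check that the contribution at infinity appears with a shift by $+1$ (not $-1$ or $0$) and with $\bQ((q))$ rather than $\bQ[[q]]$ coefficients --- here the shift comes from comparing Conley-Zehnder indices on the two sides of the resonance $r = r_2$, and the appearance of $q^{-1}$ reflects that the orbits escaping to infinity pick up negative intersection number with $\delta E$ in the relevant regime (this is consistent with \eqref{eq:pss-r}--\eqref{eq:less-than-1}, which is the $r_2 = 1$, $\bar{M} = \bar{M}$ case where the sequence degenerates to an isomorphism $\mathit{PSS}$ after identifying $H^*(\bar{E})$ with the truncation). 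The second, and the main obstacle, is the compactness and transversality analysis needed to justify that no Floer trajectories run off to infinity in an uncontrolled way: one needs a maximum principle or an integrated-maximum-principle argument (in the style of \cite{mclean12} and the conventions of \cite{seidel17}) adapted to the relative setting with the divisor $\delta E$ present, so that the count defining the mapping cone is finite and the Morse-Bott degeneration is the only source of the extra term. I expect this confinement estimate, together with the bookkeeping of which orbits are ``new'' at $r = r_2$, to be where essentially all the work lies; the algebra of extracting the long exact sequence from the mapping cone is then formal.
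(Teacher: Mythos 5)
Your overall strategy --- separate the orbits in a compact part from those created near infinity at integer slope, identify the latter with a shifted copy of $H^*(\bar{M};\bQ)((q))$, and show by a confinement argument that the continuation map is the inclusion of the corresponding subcomplex --- is the same McLean-type computation the paper performs. However, the step where you localize the new generators contains a genuine gap. For $r_2 \in \bZ$ the group $\mathit{HF}^*_{q,q^{-1}}(\bar{p},r_2)$ is, by definition \eqref{eq:time}, built from Hamiltonians whose monodromy at infinity is \emph{hyperbolic}; for such Hamiltonians there is no Morse--Bott family of orbits degenerating onto the fibre at infinity at all. If instead you work with the naive pure-rotation Hamiltonian of integer slope, the degenerate locus is not ``(a cover of) $\bar{M}$'': every point of the whole trivial region $W_{\geq 1/2}\times\bar{M}$ lies on a closed base orbit, so the degenerate family is a noncompact annulus times the fibre orbits, and an auxiliary Morse function on $\bar{M}$ alone does not break the base degeneracy. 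One must perturb in the base direction as well, and it is precisely that choice which decides whether the new contribution appears with shift $1$ or shift $0$: the paper's Hamiltonian \eqref{eq:r2-hamiltonian} uses the interpolation $H_2$ of Lemma \ref{th:morse-function}, whose single extra critical point is a saddle $w_2$, and the new generators are the orbits over $w_2$ paired with the fibre Floer complex, giving $H^{*-1}(\bar{M};\bQ)((q))$ (Lemma \ref{th:h1-h2}); the companion interpolation $H_3$, with a local minimum $w_3$, is what produces the unshifted copy in Lemma \ref{th:les-1}. Your appeal to ``comparing Conley--Zehnder indices at the resonance'' does not by itself determine which of these two shifts occurs, and the index bookkeeping also uses the trivialization convention \eqref{eq:constant-section}, which supplies the $+2r_2$ making the shift independent of $r_2$.

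Two further remarks. The confinement you defer to a maximum principle is handled in the paper by the winding-number barrier (Lemmas \ref{th:winding-number-ineq} and \ref{th:barrier}), applied both to Floer trajectories (to get the subcomplex/quotient structure) and to a monotone continuation homotopy \eqref{eq:chi-hamiltonian} to show that $C^{r_1,r_2}_{q,q^{-1}}$ is, on the chain level, the inclusion (Lemma \ref{th:continuation-inclusion}); some argument of this kind is indeed where much of the work lies, and your sketch is compatible with it, but note you still need the chain-level statement for the continuation map itself, not just for the differential. Finally, your remarks about coefficients are off target: in this lemma everything is defined over $\bQ((q))$ from the outset \eqref{eq:floer-complex}, so there is nothing to verify about $q^{-1}$, and \eqref{eq:less-than-1} is a statement about the PSS map for $r \in (0,1)$, not a degenerate instance of the sequence \eqref{eq:hf-exact-2}.
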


\begin{lemma} \label{th:les-1}
For $r_2 \in \bZ$ and $r_3 \in (r_2,r_2 + 1)$, the continuation map fits into a long exact sequence
\begin{equation} \label{eq:hf-exact-1}
\cdots \rightarrow \mathit{HF}^*_{q,q^{-1}}(\bar{p},r_2) \xrightarrow{C_{q,q^{-1}}^{r_2,r_3}} \mathit{HF}^*_{q,q^{-1}}(\bar{p},r_3) 
\longrightarrow H^*(\bar{M};\bQ)((q)) \rightarrow \cdots
\end{equation}
\end{lemma}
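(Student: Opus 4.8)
The plan is to prove Lemma \ref{th:les-1} by a neck-stretching (equivalently, action-filtration) computation at the end of $\bar E$ lying over a neighbourhood of $\infty \in \bC|$, in the style of McLean's description of symplectic cohomology \cite{mclean12} and its Lefschetz-fibration versions \cite{seidel16, seidel17}, carried out relative to the fibrewise divisor $\delta E$. Over that end, $\bar E$ is (symplectically, up to lower-order terms) a product $\{|z| > R\} \times \bar M$, with $\delta E$ cut out by $\{|z| > R\} \times \delta M$; the Hamiltonians defining \eqref{eq:all-floer} rotate the base coordinate, and the rotation number $r$ is the asymptotic radial slope. As $r$ passes an integer $k$, a new Morse--Bott family of one-periodic orbits -- those winding $k$ times around the circle at infinity while staying close to $\bar M$ and away from the interior -- is created; because $r_3 \in (r_2, r_2+1)$ with $r_2 = k$, no orbit of $H_{r_2}$ disappears, no higher winding is reached, and the only orbits gained are the ``complementary half'' of this winding-$k$ family (the half not already retained by the integer Hamiltonian $H_{r_2}$, the other half being what is added, with a degree shift, in Lemma \ref{th:les-2}).

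Concretely, first I would fix admissible $H_{r_2}, H_{r_3}$ agreeing wherever the orbits of $H_{r_2}$ live and differing only by raising the radial slope from $k$ to $r_3$ near the base-infinity end, so that $C_{q,q^{-1}}^{r_2,r_3}$ from \eqref{eq:continuation} is computed by a chain map respecting the induced action filtration, whose mapping cone is quasi-isomorphic to the subquotient complex generated by the new winding-$k$ orbits. The task then reduces to identifying the cohomology of that complex with $H^*(\bar M;\bQ)((q))$. After a $C^2$-small Morse perturbation, the new orbits are indexed by critical points of a Morse function on $\bar M$; the internal differential is the Morse differential of $\bar M$ (from Floer cylinders that stay near the end, carrying $q$-weights given by their intersection number with $\delta E$), yielding a Morse--Bott spectral sequence with $E_1$-page $H^*(\bar M;\bQ)((q))$. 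I would then argue this degenerates at $E_1$: by action and index bounds, together with \eqref{eq:less-than-1} and \eqref{eq:small-increase-of-c} pinning down the low-winding regime, there is no room for differentials other than the one realising the connecting homomorphism into $\mathit{HF}^*_{q,q^{-1}}(\bar p, r_2)$. Assembling the cone then gives the asserted long exact sequence \eqref{eq:hf-exact-1}.

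The main obstacle is the relative part: making the Hamiltonian Floer theory of $\bar p$ interact cleanly with the divisor $\delta E$. One must choose the Hamiltonians and almost complex structures so that the winding-$k$ family is a genuine copy of $\bar M$ (with orbits inside $\delta E$ allowed, contributing $q$-powers, rather than excised) and so that the relevant Floer trajectories have controlled intersection with $\delta E$ -- this is precisely what produces the Novikov variable and makes the third term $H^*(\bar M;\bQ)((q))$ rather than $H^*(\bar M;\bQ)$. Supplying this is the purpose of the relative Floer theory developed in Sections \ref{sec:mclean}--\ref{sec:apply-to-lefschetz}, lifting \cite{seidel16}. The remaining delicate point is the Conley--Zehnder index bookkeeping that distinguishes the present ``winding from above'' situation (cone $H^*(\bar M)$) from the ``winding from below'' situation of Lemma \ref{th:les-2} (cone $H^{*-1}(\bar M)$), together with the verification that no higher Morse--Bott differentials survive.
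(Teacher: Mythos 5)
Your outline follows essentially the same route as the paper's proof: choose the two Hamiltonians so that they agree on the region where the orbits of the integer-slope Hamiltonian live, arrange that the only new one-periodic orbits sit over a single critical point of the base Hamiltonian near the boundary of the disc, show the old orbits span a subcomplex whose quotient is the Floer complex of the fibre (hence has cohomology $H^*(\bar{M};\bQ)((q))$), and check that $C^{r_2,r_3}_{q,q^{-1}}$ is, on the chain level, the inclusion of that subcomplex; this is exactly Lemmas \ref{th:morse-function-2}, \ref{th:h2-h3} and \ref{th:continuation-inclusion-2}.

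Two corrections of emphasis. First, the ``main obstacle'' you identify is misplaced: Lemma \ref{th:les-1} concerns the groups $\mathit{HF}^*_{q,q^{-1}}(\bar{p},r)$, which are defined over $\bQ((q))$ with $q$-powers given by intersection with an arbitrary codimension-two cycle representing $[\omega_{\bar{E}}]$, so no control of intersections with $\delta E$ (positivity, admissibility) is needed here, and the $((q))$ in the third term is simply the coefficient ring rather than an output of divisor control; the divisor-relative refinement is the separate Lemma \ref{th:les-3}, proved later with the fibrewise-admissibility machinery. Second, the paper's confinement mechanism is not neck-stretching but the elementary winding-number barrier of Lemma \ref{th:barrier}, supplemented by a monotone interpolation $H_{r_2,t} \leq H_{r_3,t}$ for the continuation map and the action comparison \eqref{eq:difference-of-actions} between the saddle $w_2$ and the local minimum $w_3$ of the base function; and once the quotient complex is identified with the fibre Floer complex, no Morse--Bott spectral sequence degeneration needs to be argued --- the long exact sequence \eqref{eq:hf-exact-1} is just that of the sub/quotient pair, with the connecting map unconstrained.
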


Write
\begin{equation} \label{eq:k-class}
k^r_{q,q^{-1}} = \mathit{PSS}_{q,q^{-1}}^r(q^{-1}[\delta E]) \in \mathit{HF}^2_{q,q^{-1}}(\bar{p},r), \;\; r>0.
\end{equation}
Suppose that $k^1_{q,q^{-1}}$ is zero, and choose a bounding cochain for the underlying Floer cocycle. This bounding cochain matters only up to coboundaries, so the space of choices is an affine space over $\mathit{HF}^1_{q,q^{-1}}(\bar{p},1)$. Such a choice gives rise to connections, constructed in \cite{seidel17},
\begin{equation} \label{eq:hamiltonian-connections}
\begin{aligned} 
& \nabla_{q,q^{-1}}^r: \mathit{HF}^*_{q,q^{-1}}(\bar{p},r) \longrightarrow \mathit{HF}^*_{q,q^{-1}}(\bar{p},r),\;\; r = 1,2,\dots, \\
& \nabla_{q,q^{-1}}^r( f(q) x) = f(q)\, \nabla_{q,q^{-1}}^r x + f'(q) x \;\; \text{ for $f \in \bQ((q))$.}
\end{aligned}
\end{equation}
The corresponding theory for non-integral $r$, which is the subject of \cite{seidel16}, is more complicated. Suppose that $k_{q,q^{-1}}^{r_0} = 0$  for some $r_0 \in (0,\infty) \setminus \bZ$. As usual, we need to pick a bounding cochain, with the space of effective choices being an affine space over $\mathit{HF}^1_{q,q^{-1}}(\bar{p},r_0)$. This gives rise to operations
\begin{equation} \label{eq:hamiltonian-connections-2}
\begin{aligned}
& \nabla^{w,r_1,r_2}_{q,q^{-1}}: \mathit{HF}^*_{q,q^{-1}}(\bar{p},r_1) \longrightarrow \mathit{HF}^*_{q,q^{-1}}(\bar{p},r_2), \\
& \nabla_{q,q^{-1}}^{w,r_1,r_2}(f(q)x) = f(q) \nabla^{w,r_1,r_2}_{q,q^{-1}}(x) + 
f'(q) C_{q,q^{-1}}^{r_1,r_2}(x), \\
& \qquad \qquad \text{for any $w \in \bZ$, and $r_1, r_2 \in (0,\infty) \setminus \bZ$ with $r_0 + r_1 \leq r_2$.}
\end{aligned}
\end{equation}
Note that here $w$ is independent of the $r_k$. A partial compatibility statement between the integral and non-integral cases is given by \cite[Proposition 7.13]{seidel17}. We will only need a special case:

\begin{proposition} \label{th:compatible-connections}
Suppose that $k_{q,q^{-1}}^1 = 0$. Choose a bounding cochain, and use that to define the connection $\nabla^2_{q,q^{-1}}$ from \eqref{eq:hamiltonian-connections}. Fix
\begin{equation} \label{eq:r012}
r_0>1, \; r_1 \in (1,2), \; r_2 \in (2,3),\;  \text{ such that $r_0+r_1 \leq r_2$.}
\end{equation}
Under the continuation map, our bounding cochain gives rise to one for $k^{r_0}_{q,q^{-1}}$.
If we use that to define the connection $\nabla^{2,r_1,r_2}_{q,q^{-1}}$ from \eqref{eq:hamiltonian-connections-2}, the following diagram commutes:
\begin{equation} \label{eq:compatible-connections}
\xymatrix{
\ar@/_1pc/[rrr]_-{\nabla^{2,r_1,r_2}_{q,q^{-1}}}
\mathit{HF}^*_{q,q^{-1}}(\bar{p},r_1) \ar[r]^-{C_{q,q^{-1}}^{r_1,2}} & 
\mathit{HF}^*_{q,q^{-1}}(\bar{p},2) \ar[r]^-{\nabla^2_{q,q^{-1}}} & 
\mathit{HF}^*_{q,q^{-1}}(\bar{p},2) \ar[r]^-{C_{q,q^{-1}}^{2,r_2}} &
\mathit{HF}^*_{q,q^{-1}}(\bar{p},r_2).
}
\end{equation}
\end{proposition}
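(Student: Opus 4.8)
The plan is to deduce this from the general compatibility statement \cite[Proposition 7.13]{seidel17}, which relates the connection on Floer cohomology for integral rotation numbers to the family of connection-type operations for non-integral rotation numbers; what remains is to specialize that result and to verify that the constraints \eqref{eq:r012} place us in the situation it covers. First I would recall how the two sides share their geometric input. Starting from a bounding cochain $\theta$ for the Floer cocycle representing $k^1_{q,q^{-1}}$, the continuation map transports $\theta$ to a bounding cochain for $k^{r_0}_{q,q^{-1}}$ — this uses $r_0 > 1$ together with the fact that $k^{r_0}_{q,q^{-1}} = C^{1,r_0}_{q,q^{-1}}(k^1_{q,q^{-1}})$, since the PSS maps for different $r$ are intertwined by continuation — and likewise for any auxiliary rotation numbers needed to set up the domain and target of $\nabla^{2,r_1,r_2}_{q,q^{-1}}$. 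Thus both $\nabla^2_{q,q^{-1}}$ (via \eqref{eq:hamiltonian-connections}) and $\nabla^{2,r_1,r_2}_{q,q^{-1}}$ (via \eqref{eq:hamiltonian-connections-2}) are built from the same $\theta$, which is exactly the hypothesis under which \cite[Proposition 7.13]{seidel17} applies.

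Next I would match the indices. The conditions in \eqref{eq:r012} — namely $r_1 \in (1,2)$, $r_2 \in (2,3)$ with a single integer (here $2$) strictly between them, and the stability bound $r_0 + r_1 \leq r_2$ required for $\nabla^{2,r_1,r_2}_{q,q^{-1}}$ to be defined — are precisely those under which the degeneration argument of \cite{seidel17} identifies the composite $C^{2,r_2}_{q,q^{-1}} \circ \nabla^2_{q,q^{-1}} \circ C^{r_1,2}_{q,q^{-1}}$ with a non-integral connection operation whose weight is the integer that the continuation maps pass through, namely $2$; this is why the result is $\nabla^{2,r_1,r_2}_{q,q^{-1}}$ and not an operation of some other weight. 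It is worth recording that the two Leibniz rules are formally consistent with such a factorization: the correction term $f'(q)\, C^{r_1,r_2}_{q,q^{-1}}(x)$ in \eqref{eq:hamiltonian-connections-2} is exactly what $f'(q)\, C^{2,r_2}_{q,q^{-1}}\!\big(C^{r_1,2}_{q,q^{-1}}(x)\big) = f'(q)\, C^{r_1,r_2}_{q,q^{-1}}(x)$ produces by applying \eqref{eq:hamiltonian-connections} inside the composite, so there is no obstruction at the level of $\bQ((q))$-module structures.

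The substantive content — the reason this is not purely formal — lies inside \cite[Proposition 7.13]{seidel17} itself: the connection $\nabla^2_{q,q^{-1}}$ is constructed from the special Hamiltonians adapted to integral rotation numbers (the ones with a closer relationship to the Fukaya category), whereas the operations $\nabla^{w,r_1,r_2}_{q,q^{-1}}$ are built from Hamiltonians that genuinely rotate the base near infinity. Bridging the two requires a homotopy of Floer data passing through $r = 2$, together with a gluing and compactness analysis showing that a parametrized moduli space computing $\nabla^{2,r_1,r_2}_{q,q^{-1}}$ degenerates into a concatenation of a continuation cylinder, an integral-connection cylinder, and a second continuation cylinder, with the inserted bounding cochain distributed over the pieces correctly; controlling energy in that degeneration and checking the orientation signs is the main work, and I expect the careful identification of which break configuration survives (i.e., that only the expected three-piece concatenation contributes) to be the principal obstacle. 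Since that analysis is carried out in \cite{seidel17} at a level of generality that subsumes \eqref{eq:r012}, for present purposes it suffices to invoke it after the index matching above.
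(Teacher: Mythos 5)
Your proposal matches the paper's treatment: the paper does not prove this proposition independently, but presents it precisely as a special case of \cite[Proposition 7.13]{seidel17}, which is exactly the reduction you carry out (transporting the bounding cochain by continuation and checking that \eqref{eq:r012} places one within the scope of that result). The additional remarks you make about the Leibniz-rule consistency and the degeneration analysis are elaboration of what the cited reference already contains, so your argument is essentially the same as the paper's.
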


\begin{remark} \label{th:c-vs-r}
We need to clarify some notational discrepancies. In \cite{seidel16} the connections \eqref{eq:hamiltonian-connections-2} are denoted by $\nabla^c$, where $c = r-1$; and in \cite{seidel17} the connections \eqref{eq:hamiltonian-connections} are written just as $\nabla$. 
\end{remark}

\begin{lemma} \label{th:kernel-of-pss}
Take the Gromov-Witten invariant \eqref{eq:z1} and restrict it to $\bar{E}$. Then, the image of that element under $\mathit{PSS}_{q,q^{-1}}^r$, $r > 1$, is zero.
\end{lemma}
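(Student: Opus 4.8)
The plan is to realize the element $\mathit{PSS}^r_{q,q^{-1}}(z^{(1)}|_{\bar E})$ as a count of perturbed holomorphic half-planes for $\bar p$ carrying one interior marked point constrained to a Gromov--Witten cycle, and to show that this count vanishes by a cobordism argument internal to the definition of $z^{(1)}$. First I would unwind \eqref{eq:z1}: since the PSS map \eqref{eq:pss-r} is $q$-linear and continuous,
\[
\mathit{PSS}^r_{q,q^{-1}}(z^{(1)}|_{\bar E}) = \sum_{A\cdot[\bar M]=1} q^{\delta E|\cdot A}\,\mathit{PSS}^r_{q,q^{-1}}(z_A|_{\bar E}) \in \mathit{HF}^2_{q,q^{-1}}(\bar p, r),
\]
where $z_A|_{\bar E}\in H^2(\bar E;\bQ)$ is the restriction of the once-pointed genus zero Gromov--Witten class of $\cornerbar{E}$ in class $A$. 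A key point is that this must \emph{not} be attacked class by class: the individual images $\mathit{PSS}^r_{q,q^{-1}}(z_A|_{\bar E})$ need not vanish --- the leading $A=A_*$ term is a nonzero multiple of the obstruction class $k^r_{q,q^{-1}}$ of \eqref{eq:k-class}, whose vanishing is elsewhere an \emph{assumption}, not a theorem --- so what is really claimed is a cancellation internal to the Gromov--Witten sum.

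The mechanism I would invoke is the one by which a cycle swept out by curves that all meet the fibre $\bar M$ at infinity exactly once is killed by a sufficiently wrapped PSS map: a Borman--Sheridan/Viterbo-type vanishing, here for the fibration $\bar p$ relative to its fibre $\bar M$ at infinity. Concretely, $\mathit{PSS}^r_{q,q^{-1}}(z^{(1)}|_{\bar E})$ is computed by a moduli space of Floer half-planes for $\bar p$, asymptotic at infinity to a degree $r$ Hamiltonian orbit, with an interior marked point lying on a pseudocycle representative of $z^{(1)}|_{\bar E}$; by construction that pseudocycle is the restriction to $\bar E$ of the image of the evaluation map on the one-pointed stable map spaces $\overline{M}_{0,1}(\cornerbar{E}, A)$, $A\cdot[\bar M]=1$, with weights $q^{\delta E|\cdot A}$. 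Gluing an $A$-sphere at the marked point converts such a configuration into a broken Floer half-plane in $\cornerbar{E}$ whose class differs from the original by $A$; since $r>1$ the half-planes wrap the base enough that these are genuine broken solutions with the same asymptotics, and the $q$-exponents are precisely the intersection numbers with $\delta E|$. Organizing the one-dimensional moduli space of ``half-plane with a single $\bar M$-meeting spherical bubble'' configurations --- whose signed boundary count consists of the terms of the displayed sum together with strata invariant under reparametrization, which contribute zero --- then gives the claim. The reduction to $r\in(1,2)$, and hence to all $r>1$, via $\mathit{PSS}^{r_2}_{q,q^{-1}}=C^{r_1,r_2}_{q,q^{-1}}\circ\mathit{PSS}^{r_1}_{q,q^{-1}}$ and \eqref{eq:small-increase-of-c} is consistent with this but does not replace it; for $r<1$ one instead has the isomorphism \eqref{eq:less-than-1} and no such vanishing, so the threshold at $r=1$ is essential.

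The main obstacle will be controlling the compactification. One must show that for $r>1$ the only codimension one degenerations of the relevant one-dimensional moduli space are the two types above, and in particular that breaking of the Floer cylinder at the end, spherical bubbling into $\bar M$ or into $\delta E$, and degenerations in which the half-plane itself crosses $\bar M$ either do not occur or cancel in pairs --- this is exactly the point where the hypothesis $r>1$, as opposed to $r\le 1$, is used decisively. A secondary difficulty is reconciling the virtual/transversality framework for the Gromov--Witten factor with the Floer-theoretic count over $\bQ((q))$. I would not redo the analytic foundations: the absolute version of all of this is contained in \cite{seidel16, seidel17}, and the relative (divisor) version of the Hamiltonian Floer package needed here is precisely what Sections \ref{sec:mclean}--\ref{sec:apply-to-lefschetz} are built to provide, in the spirit of \cite{mclean12}; the task is to import and assemble these, adapting the absolute arguments rather than re-proving them.
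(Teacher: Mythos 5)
Your setup is pointed in the right direction: the paper's own proof of this lemma is just the citation to \cite[Lemma 8.4]{seidel16}, and the mechanism (recapped in its relative form in Section \ref{sec:apply-to-lefschetz}) indeed works in the compactification $\cornerbar{E}$ with thimble maps satisfying $u \cdot \cornerbar{E}_\infty = 1$ and $u(\infty) \in \cornerbar{E}_\infty$; your gluing picture, where a sphere in a class $A$ with $A \cdot [\bar{M}] = 1$ bubbles off at the interior marked point of a PSS half-plane in $\bar{E}$, is exactly the degeneration that identifies such a count with $\mathit{PSS}^r_{q,q^{-1}}(z^{(1)}|\bar{E})$ (this is Lemma \ref{th:right}), and your observation that the vanishing cannot be proved class by class is correct. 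But the proposal is missing the actual vanishing mechanism, which is the heart of the argument. What produces exactness is an auxiliary one-parameter family: the moduli space \eqref{eq:through-infinity} carries an extra moving incidence condition $u(s_*,0) \in \cornerbar{E}_{w_*}$, and one slides $s_*$. At $s_* \rightarrow +\infty$ the sphere bubbles off and one recovers the PSS image of the Gromov--Witten class; at $s_* \rightarrow -\infty$ one shows, via intersection numbers with $\cornerbar{E}_\infty$ (\cite[Lemma 7.7]{seidel16}) and crucially $r>1$, that the parametrized space has no limit points at all (Lemma \ref{th:left}), so the parametrized count supplies a bounding cochain. Your substitute --- a one-dimensional space of ``half-plane with one $\bar{M}$-meeting bubble'' configurations whose other boundary strata are ``invariant under reparametrization, hence contribute zero'' --- is not a mechanism: boundary analysis of that space can at best show the count is a cocycle or reproduce the identification above; without a parameter whose far end is provably empty, nothing forces the cohomology class to vanish.

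Relatedly, you place the hypothesis $r>1$ in the wrong spot. It is not needed to make the glued configurations ``genuine broken solutions''; its role is to exclude, at the $s_* \rightarrow -\infty$ end, degenerations in which the intermediate periodic orbit (and possibly the whole PSS component) escapes into the fibre $\cornerbar{E}_\infty$ (the configurations of Figure \ref{fig:degenerate-3} and its variants). You do flag compactness as the main obstacle, but the missing piece is not a technical check on a moduli space you have already described --- it is the construction of that moduli space (the moving constraint) in the first place; as written, your outline would apply verbatim for $r \in (0,1)$, where the conclusion fails since $\mathit{PSS}^r_{q,q^{-1}}$ is an isomorphism and $z^{(1)}|\bar{E} \neq 0$. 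A minor further point: in the paper's argument there is no separate Gromov--Witten virtual count to reconcile with the Floer count; one works directly with thimble maps into $\cornerbar{E}$ weighted by $q^{u \cdot (\delta E|)_{\zeta}}$, and the class $z^{(1)}$ only appears a posteriori through the bubbling at $s_* \rightarrow +\infty$, which sidesteps the pseudocycle-gluing issues you raise.
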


This is \cite[Lemma 8.4]{seidel16}. As an immediate consequence, if \eqref{eq:as} holds, then $k_{q,q^{-1}}^r = 0$ for $r>1$. Our next result involves two specific elements of Floer cohomology. The first is
\begin{equation}
e^r_{q,q^{-1}} = \mathit{PSS}_{q,q^{-1}}^r(1) \in \mathit{HF}^0_{q,q^{-1}}(\bar{p},r), \;\; r>0; \label{eq:diagonal-class}
\end{equation}
the second is the Borman-Sheridan class, defined in \cite[Section 8]{seidel16}, and denoted here by
\begin{equation}
s^r_{q,q^{-1}} \in \mathit{HF}^0_{q,q^{-1}}(\bar{p},r), \;\; r \in (1,\infty) \setminus \bZ.
\end{equation}
In both cases, the classes for different $r$ are related by continuation maps.

\begin{theorem} \label{th:1st-order}
Take \eqref{eq:r012}, and suppose that \eqref{eq:as} holds, making $\nabla_{q,q^{-1}}^{2,r_1,r_2}$ defined (by Lemma \ref{th:kernel-of-pss}). For a suitable choice of bounding cochain in the construction of that connection, we have the following equalities in $\mathit{HF}^0_{q,q^{-1}}(\bar{p},r_2)$:
\begin{align} \label{eq:nabla-equality-1}
& \nabla^{2,r_1,r_2}_{q,q^{-1}} e^{r_1}_{q,q^{-1}} = -\psi\, s^{r_2}_{q,q^{-1}}, \\
\label{eq:nabla-equality-2}
& \nabla^{2,r_1,r_2}_{q,q^{-1}} s^{r_1}_{q,q^{-1}} = -\eta\, s^{r_2}_{q,q^{-1}} - 4z^{(2)} \psi\, e^{r_2}_{q,q^{-1}}.
\end{align}
\end{theorem}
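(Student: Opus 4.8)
The plan is to follow the template of \cite{seidel16}, where both the Borman--Sheridan class and the connections on non-integral Floer cohomology are constructed, and to adapt its computations to the relative setting for $\bar p$. First I would recall from \cite{seidel16} the defining geometric input: the Borman--Sheridan class $s^r_{q,q^{-1}}$ is obtained by counting sections of the (compactified) fibration that pass through the divisor $\delta E$ once, with the count weighted by $q^{\,(\cdot)\cdot\delta E|}$; and the connection $\nabla^{w,r_1,r_2}_{q,q^{-1}}$ is defined by counting the same kind of curves but with a puncture equipped with a connection one-form that differentiates in $q$. The key algebraic fact I would use is the structure of $\mathit{HF}^*_{q,q^{-1}}(\bar p, r)$ for $r$ slightly above $1$ and slightly above $2$ coming from Lemmas \ref{th:les-2} and \ref{th:les-1}: the long exact sequences relate these groups to $H^*(\bar M;\bQ)((q))$, and together with \eqref{eq:less-than-1}, \eqref{eq:small-increase-of-c} they pin down $\mathit{HF}^0_{q,q^{-1}}(\bar p,r_2)$ precisely enough to identify a class by its image under the topological ``leading term'' maps together with the constraint that it be divisible by the right power of $q$.

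The core of the argument is a curve-count identity. Applying $\nabla^{2,r_1,r_2}_{q,q^{-1}}$ to $e^{r_1}_{q,q^{-1}}=\mathit{PSS}^{r_1}_{q,q^{-1}}(1)$ should, by a degeneration/gluing argument on the moduli of sections with one differentiated puncture, produce exactly a signed count of once-divisor-crossing sections, i.e.\ the Borman--Sheridan class, up to the scalar series that records how the divisor class $[\delta E|]$ is expressed in terms of $z^{(1)}$ and $[\bar M]$. Concretely, the Gromov--Witten input \eqref{eq:write-z1}, $q^{-1}[\delta E|]=\psi z^{(1)}-\eta[\bar M]$, is what converts the raw count into the stated combination; the coefficient of $s^{r_2}_{q,q^{-1}}$ picks up $-\psi$ in \eqref{eq:nabla-equality-1}. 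For \eqref{eq:nabla-equality-2}, I would apply $\nabla^{2,r_1,r_2}_{q,q^{-1}}$ to $s^{r_1}_{q,q^{-1}}$ itself; now the differentiated puncture sits on a section that already crosses $\delta E$ once, so the degeneration produces (a) configurations where the derivative lands on the existing crossing — giving the $-\eta\, s^{r_2}_{q,q^{-1}}$ term via the $[\bar M]$-part of \eqref{eq:write-z1} — and (b) configurations with a second divisor crossing or a bubbled-off $\bar M$-component carrying the genus-zero two-pointed invariant, which is precisely $z^{(2)}$, contributing $-4z^{(2)}\psi\, e^{r_2}_{q,q^{-1}}$. The numerical factor $4$ and the placement of $\psi$ versus $\eta$ are forced by matching with the symplectic cohomology computation of \cite{seidel16} and with the Schwarzian normalization in \eqref{eq:2nd-order}; indeed the whole point is that \eqref{eq:nabla-equality-1}--\eqref{eq:nabla-equality-2} are the relative lift of the second-order ODE \eqref{eq:2nd-order} satisfied by $(e,s)$.

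The ``suitable choice of bounding cochain'' is handled as follows: the bounding cochain for $k^1_{q,q^{-1}}$ is chosen first (this is where \eqref{eq:as} and Lemma \ref{th:kernel-of-pss} enter, guaranteeing $k^{r_0}_{q,q^{-1}}=0$ and hence that $\nabla^{2,r_1,r_2}_{q,q^{-1}}$ is defined), and then Proposition \ref{th:compatible-connections} lets us pass between the integral connection $\nabla^2_{q,q^{-1}}$ and the non-integral one without ambiguity; any residual freedom in the bounding cochain shifts the connection by an element of $\mathit{HF}^1_{q,q^{-1}}(\bar p,\cdot)$ acting on $e$ and $s$, and one checks this freedom can be absorbed so that the equalities hold on the nose rather than merely up to such a term. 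I expect the main obstacle to be the bookkeeping in the degeneration argument for \eqref{eq:nabla-equality-2}: one must carefully enumerate the codimension-one strata of the relevant moduli space of sections-with-one-differentiated-puncture crossing $\delta E$, correctly account for orientations and for the contributions where a component sinks into the fibre $\bar M$ (producing the $z^{(2)}$ term), and verify that no further strata contribute — in particular ruling out multiply-covered or higher-multiplicity divisor contributions by the same genericity/dimension reasoning used for \eqref{eq:z2}. This is exactly the relative analogue of the symplectic-cohomology-of-$\bar E$ computation in \cite{seidel16}, so the strategy is to lift that computation verbatim, the only genuinely new point being the $q$-grading and the divisor-crossing constraints, which are precisely what \eqref{eq:write-z1} is designed to organize.
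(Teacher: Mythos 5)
The paper does not reprove these identities at all: its ``proof'' of Theorem \ref{th:1st-order} is the observation that \eqref{eq:nabla-equality-1} and \eqref{eq:nabla-equality-2} are precisely \cite[Equation (2.32), case $c=1$]{seidel16} and \cite[Equation (2.35)]{seidel16}, together with two reduction remarks: the argument there, via \cite[Proposition 9.5]{seidel16}, already proves the statement for the finite-$r$ groups (the published result being phrased for symplectic cohomology, the $r \to \infty$ limit), and in any case the distinction is immaterial because, by Lemmas \ref{th:les-2} and \ref{th:les-1}, all continuation maps are injective in degree zero. Your plan is in spirit a sketch of that cited computation, and its overall shape (apply the connection to $e$ and to $s$, feed in \eqref{eq:write-z1}, obtain $z^{(2)}$ from a degenerate configuration) matches the source. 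But the one point the paper actually has to argue --- transferring the identities to $\mathit{HF}^0_{q,q^{-1}}(\bar{p},r_2)$ rather than the limit --- is only gestured at in your write-up; the precise mechanism is degree-zero injectivity of continuation maps, not that the long exact sequences ``pin down'' the group.

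Beyond that, there are two genuine problems. First, the framing as an adaptation ``to the relative setting for $\bar p$'' misreads the statement: Theorem \ref{th:1st-order} is formulated over $\bQ((q))$, exactly the coefficient ring of \cite{seidel16}; the genuinely relative ($\bQ[[q]]$) material enters only later (Lemmas \ref{th:les-3}, \ref{th:kernel-of-pss-2}, Proposition \ref{th:first-co-2}), and the paper deliberately avoids lifting the connections and the Borman--Sheridan class to the relative theory --- the differential equation is transported to $\scrA_q$ by the injectivity of \eqref{eq:localize-2} in Corollary \ref{th:1-variable-5}, not by a relative version of this theorem, so the extra work you propose is neither needed nor what is done. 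Second, your geometric description conflates the two divisors: as used here, the Borman--Sheridan class counts configurations meeting the fibre at infinity $\cornerbar{E}_\infty$ (a copy of $\bar{M}$) exactly once, with the powers of $q$ recording intersection numbers with $\delta E|$; it is not a count of sections through $\delta E$ once weighted by $q$. This matters, because the degeneration producing the $z^{(2)}$-term in \eqref{eq:nabla-equality-2} comes from configurations meeting the fibre at infinity twice (classes $A$ with $A \cdot [\bar{M}] = 2$, and $z^{(2)}$ is a once-pointed invariant), not from a second crossing of $\delta E$; with your definition the coefficients $-\psi$, $-\eta$, $-4z^{(2)}\psi$ would not come out. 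Finally, fixing those coefficients ``by matching with \cite{seidel16}'' is circular if the aim is an independent proof; if one is willing to invoke \cite{seidel16}, the efficient argument is the citation plus the finite-$r$ reduction above, which is exactly the route the paper takes.
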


This is a form of the main result of \cite{seidel16}. More specifically, \eqref{eq:nabla-equality-1} is essentially the $c = 1$ case of \cite[Equation (2.32)]{seidel16}. The statement there is given in terms of symplectic cohomology, which is the $r \rightarrow \infty$ limit of the groups $\mathit{HF}^*_{q,q^{-1}}(\bar{p},r)$. However, the proof via \cite[Proposition 9.5]{seidel16} actually proves the result stated here; and the distinction is in any case irrelevant for our application, since (as a consequence of Lemmas \ref{th:les-2} and \ref{th:les-1}) all continuation maps are injective in degree zero. The other part \eqref{eq:nabla-equality-2} is \cite[Equation (2.35)]{seidel16}, with the same remarks applying.

Finally, we need to explain the relation between these considerations and the Fukaya category of the Lefschetz fibration. Unsurprisingly, these go via closed-open string maps.
Since we have worked over $\bQ((q))$ on the Hamiltonian Floer cohomology side, we will do the same for the Fukaya category, passing to
\begin{equation} \label{eq:generic-fukaya}
\scrA_{q,q^{-1}} = \scrA_q \otimes_{\bQ[[q]]} \bQ((q)).
\end{equation}
Then, the closed-open string maps which are relevant to us have the form
\begin{align}
\label{eq:oc-1}
& \mathit{CO}_{q,q^{-1}}^1 : \mathit{HF}^*_{q,q^{-1}}(\bar{p}, 1) \longrightarrow \mathit{HH}^*(\scrA_{q,q^{-1}},\scrA_{q,q^{-1}}), \\
\label{eq:oc-2}
& \mathit{CO}_{q,q^{-1}}^2:  \mathit{HF}^*_{q,q^{-1}}(\bar{p}, 2) \longrightarrow H^{*+n}(\mathit{hom}_{(\scrA_{q,q^{-1}},\scrA_{q,q^{-1}})}(\scrA_{q,q^{-1}}^\vee,\scrA_{q,q^{-1}})).
\end{align}
The elementary theory of connections from Scholium \ref{th:kaledin} has a straightforward analogue for \eqref{eq:generic-fukaya}. Namely, if the Kaledin class $\kappa_{q,q^{-1}} \in \mathit{HH}^2(\scrA_{q,q^{-1}},\scrA_{q,q^{-1}})$ vanishes, one can equip $\scrA_{q,q^{-1}}$ with an $A_\infty$-connection, and that in turn gives rise to an analogue $\nabla_{q,q^{-1}}^2$ of \eqref{eq:nabla-2}. The following is proved in \cite{seidel21b}:

\begin{proposition} \label{th:first-co}
The Kaledin class $\kappa_{q,q^{-1}}$ is the image of \eqref{eq:k-class} under \eqref{eq:oc-1}. Suppose that $k_{q,q^{-1}}^1 = 0$; choose a bounding cochain; and use the chain level map underlying \eqref{eq:oc-1} to associate to that a bounding cochain for $\kappa_{q,q^{-1}}$. Then, for the connections defined by those bounding cochains, we have a commutative diagram
\begin{equation}
\xymatrix{
\ar[d]_-{\mathit{CO}_{q,q^{-1}}^2}
\mathit{HF}^*_{q,q^{-1}}(\bar{p},r) \ar[r]^-{\nabla_{q,q^{-1}}^2} 
& \mathit{HF}^*_{q,q^{-1}}(\bar{p},r) \ar[d]^-{\mathit{CO}_{q,q^{-1}}^2}
\\
H^{*+n}(\mathit{hom}_{(\scrA_{q,q^{-1}},\scrA_{q,q^{-1}})}(\scrA_{q,q^{-1}}^\vee,\scrA_{q,q^{-1}}))
\ar[r]^-{\nabla_{q,q^{-1}}^2} &
H^{*+n}(\mathit{hom}_{(\scrA_{q,q^{-1}},\scrA_{q,q^{-1}})}(\scrA_{q,q^{-1}}^\vee,\scrA_{q,q^{-1}})).
}
\end{equation}
\end{proposition}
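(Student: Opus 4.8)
\emph{Overall plan.} I would prove both assertions at the cochain level, after fixing a single coherent system of Floer and perturbation data that simultaneously defines the relevant truncation $\scrA_{q,q^{-1}}$ of $\scrF_q(\bar p)$, the Hamiltonian Floer complexes underlying $\mathit{HF}^*_{q,q^{-1}}(\bar p,r)$ for the slopes $r$ in play, the $\mathit{PSS}$ maps, and the chain-level maps underlying $\mathit{CO}^1_{q,q^{-1}}$ and $\mathit{CO}^2_{q,q^{-1}}$. The organizing principle is a divisor axiom for the relative Fukaya category: the only $q$-dependence of the operations $\mu_{\scrA_{q,q^{-1}}}$ enters through the weight $q^{u\cdot\delta E}$ of a perturbed holomorphic disc $u$, so the naive $q$-derivative $\partial_q\mu_{\scrA_{q,q^{-1}}}$, which is a Hochschild $2$-cochain representing the Kaledin cocycle, is computed by the same moduli spaces, now with one transverse intersection with $\delta E$ singled out and the $q$-weight divided by $q$.

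\emph{The identity $\kappa_{q,q^{-1}}=\mathit{CO}^1_{q,q^{-1}}(k^1_{q,q^{-1}})$.} A singled-out transverse intersection point with $\delta E$ can be traded for a free interior marked point constrained to $\delta E$: for generic data a disc meeting $\delta E$ transversally in $k$ points contributes to $\partial_q\mu_{\scrA_{q,q^{-1}}}$ with multiplicity $k$ and weight $q^{k-1}$, which is exactly its contribution to the count defining $\mathit{CO}^1_{q,q^{-1}}\circ\mathit{PSS}^1_{q,q^{-1}}(q^{-1}[\delta E])$, since $\mathit{PSS}^1_{q,q^{-1}}$ sends $q^{-1}[\delta E]$ to a Floer cocycle constraining a point to $\delta E$ with precisely that $q^{-1}$-normalization. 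To promote this numerical match to a cochain identity $\partial_q\mu_{\scrA_{q,q^{-1}}}=\mathit{CO}^1_{q,q^{-1}}(c)+\partial_{\mathrm{Hoch}}\gamma$, with $c$ a fixed Floer cocycle representing $k^1_{q,q^{-1}}$, I would run the standard gluing/degeneration argument identifying $\mathit{CO}^1_{q,q^{-1}}\circ\mathit{PSS}^1_{q,q^{-1}}$ with the point-constrained closed--open map (collapsing the interior orbit), the resulting parametrized moduli space having the two right-hand terms and the homotopy $\gamma$ in its codimension-one boundary. On cohomology this is the first assertion, and it holds unconditionally.

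\emph{From bounding cochains to the $\mathit{CO}^2$-square.} Assume now $k^1_{q,q^{-1}}=0$ and fix a Floer cochain $b$ with $db=c$. By the cochain identity, $\mathit{CO}^1_{q,q^{-1}}(b)-\gamma$ is a Hochschild $1$-cochain with coboundary $\partial_q\mu_{\scrA_{q,q^{-1}}}$; this is the bounding cochain datum of an $A_\infty$-connection $\calnablaq$ on $\scrA_{q,q^{-1}}$ in the sense of Scholium \ref{th:kaledin}, which induces the operation $\nabla^2_{q,q^{-1}}$ on $H^{*+n}(\mathit{hom}_{(\scrA_{q,q^{-1}},\scrA_{q,q^{-1}})}(\scrA_{q,q^{-1}}^\vee,\scrA_{q,q^{-1}}))$; the same $b$ defines $\nabla^2_{q,q^{-1}}$ on $\mathit{HF}^*_{q,q^{-1}}(\bar p,2)$ via \cite{seidel17}. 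The square then follows from two facts: $\mathit{CO}^2_{q,q^{-1}}$, at the chain level and together with $\mathit{CO}^1_{q,q^{-1}}$, is a morphism of modules over the quantum/closed--open structures; and both $\nabla^2_{q,q^{-1}}$ have the shape ``$\partial_q$ plus correction by the chosen bounding cochain.'' The $\partial_q$-parts are intertwined because $\mathit{CO}^2_{q,q^{-1}}$ is built from $q^{u\cdot\delta E}$-weighted counts, so $q$-differentiation commutes with it up to exactly the boundary terms produced by a divisor-axiom analysis of the closed--open moduli spaces --- and those are the terms the correction absorbs --- while the correction-parts match because the Hochschild bounding cochain is, by construction, the $\mathit{CO}^1_{q,q^{-1}}$-image of $b$ and $\mathit{CO}^2$ is a chain map over $\mathit{CO}^1$. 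I would state and prove this once and for all as a general principle: connections built from corresponding bounding cochains are compatible under module maps; this is the formalism to be developed in \cite{seidel21b}.

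\emph{Main obstacle.} The hard part is not the cohomology-level equality but its upgrade to a cochain identity carrying all the coherence homotopies, so that the bounding cochain really transports and $\mathit{CO}^2_{q,q^{-1}}$ can be shown to respect the entire homotopy-coherent connection datum, not merely the induced maps on cohomology. In practice this requires: (i) constructing the parametrized moduli spaces for the divisor axiom and the $\mathit{PSS}$-collapse with perturbations compatible with the domain-dependent structures defining $\scrF_q(\bar p)$, and controlling their codimension-one strata; (ii) establishing the chain-level module-morphism property of the pair $(\mathit{CO}^1_{q,q^{-1}},\mathit{CO}^2_{q,q^{-1}})$, including its compatibility with $\mathit{PSS}$; and (iii) the careful bookkeeping of the $q^{-1}$-weight and of degrees. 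One also has to dispose of the dimension $n=2$ caveat: Lemma \ref{th:2-curvature} gives $\mu^0_{\scrA_{q,q^{-1}}}=0$ so the uncurved theory of $A_\infty$-connections applies verbatim, but the $\mathit{PSS}$ and divisor analyses must still be carried out with the small-Hamiltonian conventions used in that dimension.
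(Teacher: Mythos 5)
The first thing to say is that the paper itself contains no proof of Proposition \ref{th:first-co}: it is stated with the words ``the following is proved in \cite{seidel21b}'', a preprint that does not yet exist, and the only internal hints are Scholium \ref{th:kaledin} (which also defers the construction of the induced connection on $H^*(\mathit{hom}_{(\scrA_q,\scrA_q)}(\scrA_q^\vee,\scrA_q))$ to \cite{seidel21b}) and the remark in the $B$-field section that differentiating the $q^{u\cdot\delta E}$-weights ``allows us to relate the Kaledin class to the image of \eqref{eq:z-cycle} under a closed-open string map''. Your outline is consistent with those hints, and the first half of your argument (the Kaledin cocycle as $\partial_q\mu_{\scrA_{q,q^{-1}}}$, the divisor-type trade of a singled-out intersection with $\delta E$ for an interior marked point constrained to $\delta E$, and the $q^{-1}$-normalization matching $\mathit{PSS}^1_{q,q^{-1}}(q^{-1}[\delta E])$) is surely the intended route. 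One caveat even there: in the relative Fukaya category as used here (Section \ref{subsec:directed-2}, following \cite{seidel20,sheridan11b,perutz-sheridan20}) the perturbation data for a disc depend on the positions of $u^{-1}(\delta E)$, so $\partial_q\mu$ is not literally ``the same moduli space with one intersection point singled out''; the parametrized moduli space you invoke in (i) has to absorb exactly this domain-dependence, and that is more than bookkeeping.

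The genuine gap is in the second half. The commutativity of the square is deduced from two assertions — that $(\mathit{CO}^1_{q,q^{-1}},\mathit{CO}^2_{q,q^{-1}})$ is a chain-level morphism of module-type structures, and that ``connections built from corresponding bounding cochains are compatible under module maps'' — which you propose to ``state and prove once and for all as a general principle''. But that principle, applied to this pair of maps, \emph{is} the content of the proposition: the Floer-side $\nabla^2_{q,q^{-1}}$ is the geometric connection of \cite{seidel17}, defined by a moduli problem in which the chosen bounding cochain for $k^1_{q,q^{-1}}$ is inserted, while the algebraic $\nabla^2_{q,q^{-1}}$ is only defined through the (deferred) formalism of \cite{seidel21b}; saying that ``both have the shape $\partial_q$ plus correction'' does not identify the two corrections, nor the two $\partial_q$-parts, under $\mathit{CO}^2_{q,q^{-1}}$. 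What is actually needed is a specific chain homotopy: differentiating the $q$-weights in the $\mathit{CO}^2$ moduli problem produces a point-constrained-to-$\delta E$ variant of $\mathit{CO}^2$, and one must produce parametrized moduli spaces whose boundary strata compare this simultaneously with insertion of the Floer cocycle representing $k^1_{q,q^{-1}}$ at the closed-string input and with insertion of its $\mathit{CO}^1$-image into the bimodule-map output, coherently enough that the chosen bounding cochains on the two sides cancel the discrepancy. You flag this yourself as the ``main obstacle'', which is accurate — but it means the proposal is an outline of the expected strategy rather than a proof, and since the paper's own argument lives entirely in the unavailable \cite{seidel21b}, the decisive technical content (the construction of these homotopies and of the induced connection on $H^*(\mathit{hom}(\scrA^\vee,\scrA))$) remains unsupplied on both sides.
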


The bimodule homomorphism $\epsilon_q$ from \eqref{eq:delta-triangle-4} induces one in the target space of \eqref{eq:oc-2}, which we denote by $\epsilon_{q,q^{-1}}$. Another result from \cite{seidel21b} says that:

\begin{proposition} \label{th:second-co}
$\epsilon_{q,q^{-1}}$ is the image of \eqref{eq:diagonal-class} under \eqref{eq:oc-2}.
\end{proposition}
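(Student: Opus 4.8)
The plan is to reduce the statement to the unitality of the closed-open map $\mathit{CO}^1_{q,q^{-1}}$ together with a naturality property of closed-open maps under the continuation maps \eqref{eq:continuation} for $\bar p$. Recall that $e^2_{q,q^{-1}}$ and $e^1_{q,q^{-1}}$ are the $\mathit{PSS}$-images of the unit $1\in H^0(\bar E;\bQ)$, and that the $\mathit{PSS}$ maps for different $r$ differ by continuation; hence $e^2_{q,q^{-1}}=C^{1,2}_{q,q^{-1}}(e^1_{q,q^{-1}})$. The map $\mathit{CO}^1_{q,q^{-1}}$ of \eqref{eq:oc-1} is a unital ring homomorphism, so it sends $e^1_{q,q^{-1}}$ to the unit $\mathbf 1$ of $\mathit{HH}^*(\scrA_{q,q^{-1}},\scrA_{q,q^{-1}})$. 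The target of $\mathit{CO}^2_{q,q^{-1}}$ is a module over $\mathit{HH}^*(\scrA_{q,q^{-1}},\scrA_{q,q^{-1}})$ by post-composition of bimodule maps, under which $\mathbf 1$ fixes $\epsilon_{q,q^{-1}}$. Thus the proposition follows once one knows the compatibility that $\mathit{CO}^2_{q,q^{-1}}\circ C^{1,2}_{q,q^{-1}}$ sends $\zeta$ to $\mathit{CO}^1_{q,q^{-1}}(\zeta)\circ\epsilon_{q,q^{-1}}$; equivalently, that the continuation from ``wrapping the base once'' to ``wrapping it twice'' is intertwined by the closed-open maps with post-composition by $\epsilon_{q,q^{-1}}$.

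This last compatibility is where the Lefschetz-fibration geometry enters. Geometrically, $C^{1,2}_{q,q^{-1}}$ records one passage of the rotating-at-infinity Hamiltonian across the fibre $\bar M$ at infinity — exactly the datum from which the restriction functor \eqref{eq:restriction} and the exact triangle \eqref{eq:delta-triangle-4} are built in \cite{seidel20}, the power $q^{u\cdot\delta E}$ tracking intersections with the fibrewise divisor. So I would prove the compatibility by a moduli-space comparison: glue a continuation cylinder (for the $1\to 2$ homotopy of Hamiltonians) onto the interior puncture of a disc defining $\mathit{CO}^1_{q,q^{-1}}$, and identify the resulting count — after a standard neck-stretching or interpolation of Floer data, in the spirit of the degeneration arguments behind Lemmas~\ref{th:les-2}, \ref{th:les-1} and \cite[Section~9]{seidel16} — with the composite of $\mathit{CO}^2_{q,q^{-1}}$ and the chain-level map underlying $\epsilon_{q,q^{-1}}$ from the triangle of Lemma~\ref{th:a-triangle}. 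This is precisely the type of relation that \cite{seidel21b} is set up to establish, and it runs parallel to the connection-compatibility statement of Proposition~\ref{th:first-co}.

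A more hands-on way to organize the same content, which also fixes conventions, is to verify the identity at $q=0$ and propagate it in the $q$-adic filtration. At $q=0$ the bimodule map $\epsilon_{q,q^{-1}}$ specializes to the connecting morphism of the triangle \eqref{eq:delta-triangle} for $\scrB$, and the equality of $\mathit{CO}^2(e^2_{q,q^{-1}})$ with $\epsilon$ at $q=0$ is the Floer-theoretic expression of the (weak) Calabi-Yau structure on $\scrF(M)$, compatibly with its description via the Lefschetz fibration. The moduli comparison above is filtered with respect to $q$-order, so it upgrades this base case to the full statement over $\bQ((q))$. Note that only the homotopy class of $\epsilon_{q,q^{-1}}$ up to the automorphisms of $\scrA_{q,q^{-1}}^\vee$ from Proposition~\ref{th:1-variable-1b} is canonical, so the identity is to be read for the natural chain-level representatives.

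The main obstacle is carrying out the moduli-space comparison establishing the intertwining: proving transversality and Gromov compactness for the one-parameter families of glued configurations, and ruling out or accounting for all boundary degenerations — sphere bubbles in $\bar E$, disc bubbles on the thimbles $\Delta_k$, breaking of the relevant Hamiltonian orbits into pieces of lower winding, and splitting off of the continuation/$\mathit{PSS}$ cap — while checking that the surviving contributions reproduce exactly the degree shift by $n$, the correct signs, and the correct $q$-weights $q^{u\cdot\delta E}$. A secondary difficulty, already implicit in the reliance on \cite{seidel21b}, is reconciling the two Floer-theoretic packages involved — Hamiltonian Floer cohomology of $\bar p$ from \cite{seidel17, seidel16} and the relative-divisor operations of \cite{seidel20} — at the chain level and as filtered objects over $\bQ[[q]]$.
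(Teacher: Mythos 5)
The paper does not actually prove this statement: Proposition \ref{th:second-co} is quoted from the in-preparation preprint \cite{seidel21b}, and (as the warning in the introduction makes explicit) no argument for it appears in the present text. So there is no proof here to compare yours against; what can be assessed is whether your sketch would stand on its own, and it does not.

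First, your reduction is essentially circular. The ``compatibility'' you isolate --- that $\mathit{CO}^2_{q,q^{-1}}\circ C^{1,2}_{q,q^{-1}}(\zeta)$ equals $\mathit{CO}^1_{q,q^{-1}}(\zeta)$ acting on $\epsilon_{q,q^{-1}}$ --- already has $\epsilon_{q,q^{-1}}$ on its right-hand side and specializes, at $\zeta = e^1_{q,q^{-1}}$ and using unitality of $\mathit{CO}^1_{q,q^{-1}}$, to exactly the proposition; it is a strictly stronger statement, not an available ingredient, so nothing has been reduced. All the substance remains in the moduli-space comparison, which you only gesture at and explicitly defer to the kind of analysis \cite{seidel21b} is supposed to carry out; in effect your proposal repeats the paper's citation, dressed as an outline. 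Second, the ``hands-on'' paragraph does not supply a workable alternative: an equality of cohomology classes over $\bQ((q))$ cannot be ``propagated in the $q$-adic filtration'' from its $q=0$ reduction --- knowing the identity modulo $q$ gives no control over higher-order terms, and if the chain-level moduli comparison were in hand one would not need the base case at all. Moreover, identifying $\mathit{CO}^2(e^2_{q,q^{-1}})$ at $q=0$ with the connecting map of \eqref{eq:delta-triangle} via the weak Calabi-Yau structure of $\scrF(M)$ is precisely the link the paper declines to use: the remark following Lemma \ref{th:delta-triangle} states that the relation between the Calabi-Yau approach and the Lefschetz-fibration description of $\epsilon$ is beyond the scope of the theory developed here, so it cannot be invoked as an established fact. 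Your identification of the geometric mechanism (gluing a PSS/continuation cap into the interior puncture of the closed-open discs, with weights $q^{u\cdot\delta E}$) is the right heuristic, but as written the proposal contains no step that actually closes the gap the paper itself leaves to \cite{seidel21b}.
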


Essentially all we have done so far was to collect results from the literature. Let's see how these results are brought to bear.

\begin{corollary} \label{th:unique-connection}
Suppose that \eqref{eq:simply-connected} holds. Then $\mathit{HF}^1_{q,q^{-1}}(\bar{p},r) = 0$ for $1 \leq r<2$; and the continuation map $\mathit{HF}^2_{q,q^{-1}}(\bar{p},1) \rightarrow \mathit{HF}^2_{q,q^{-1}}(\bar{p},r)$ is injective.
\end{corollary}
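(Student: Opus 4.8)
The plan is to reduce everything to the two long exact sequences of Lemmas \ref{th:les-2} and \ref{th:les-1}, feeding in \eqref{eq:simply-connected} to kill the pieces of $H^*(\bar M;\bQ)((q))$ and of $H^1(\bar E;\bQ)((q))$ that occur, and using Lemma \ref{th:kernel-of-pss} to handle the one connecting map that survives. I would begin by collecting the topological inputs. For $r_1\in(0,1)$ the map $\mathit{PSS}^{r_1}_{q,q^{-1}}$ is an isomorphism by \eqref{eq:less-than-1}, so $\mathit{HF}^*_{q,q^{-1}}(\bar p,r_1)\cong H^*(\bar E;\bQ)((q))$; and since $\bar E$ is the total space of a Lefschetz fibration over the contractible base $\bC$, it is homotopy equivalent to a regular fibre (diffeomorphic to $\bar M$) with finitely many cells of dimension $n\ge 2$ attached, so $H_1(\bar E;\bZ)$ is a quotient of $H_1(\bar M;\bZ)$, which is finite by \eqref{eq:simply-connected}; hence $H^1(\bar E;\bQ)=0$ and $\mathit{HF}^1_{q,q^{-1}}(\bar p,r_1)=0$. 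I would also note that $z^{(1)}|_{\bar E}\in q^{-1}[\delta E]+H^2(\bar E;\bQ[[q]])$ (see \eqref{eq:z1}) is nonzero in $H^2(\bar E;\bQ)((q))$, its leading coefficient $[\delta E]$ restricting to the nonzero symplectic class of a regular fibre.

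For the injectivity assertion, fix $r\in(1,2)$ and look at the segment
\[
\cdots\longrightarrow H^1(\bar M;\bQ)((q))\longrightarrow \mathit{HF}^2_{q,q^{-1}}(\bar p,1)\xrightarrow{\ C^{1,r}_{q,q^{-1}}\ }\mathit{HF}^2_{q,q^{-1}}(\bar p,r)\longrightarrow\cdots
\]
of the long exact sequence of Lemma \ref{th:les-1} with $r_2=1$, $r_3=r$. Since $H^1(\bar M;\bQ)=0$ by \eqref{eq:simply-connected}, the incoming map vanishes and $C^{1,r}_{q,q^{-1}}$ is injective; together with the trivial case $r=1$ this gives injectivity of $\mathit{HF}^2_{q,q^{-1}}(\bar p,1)\to\mathit{HF}^2_{q,q^{-1}}(\bar p,r)$ for all $1\le r<2$.

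For the vanishing of $\mathit{HF}^1$, I would use that the $\mathit{PSS}$ maps are compatible with continuation maps, so under the isomorphism $\mathit{PSS}^{r_1}_{q,q^{-1}}$ the continuation map $C^{r_1,1}_{q,q^{-1}}$ becomes $\mathit{PSS}^1_{q,q^{-1}}\colon H^*(\bar E;\bQ)((q))\to\mathit{HF}^*_{q,q^{-1}}(\bar p,1)$. Reading the long exact sequence of Lemma \ref{th:les-2} (with $r_1\in(0,1)$, $r_2=1$) in degrees $1$ and $2$, and using $\mathit{HF}^1_{q,q^{-1}}(\bar p,r_1)=0$, one gets $\mathit{HF}^1_{q,q^{-1}}(\bar p,1)\cong\ker\partial$ and $\operatorname{im}\partial=\ker\bigl(\mathit{PSS}^1_{q,q^{-1}}|_{H^2(\bar E;\bQ)((q))}\bigr)$ for the connecting map $\partial\colon H^0(\bar M;\bQ)((q))=\bQ((q))\to H^2(\bar E;\bQ)((q))$. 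Since the source is one-dimensional over $\bQ((q))$, it suffices to see $\partial(1)\ne 0$. By the injectivity just proved, $\ker\mathit{PSS}^1_{q,q^{-1}}$ and $\ker\mathit{PSS}^r_{q,q^{-1}}$ agree in degree $2$ for $r\in(1,2)$; and by Lemma \ref{th:kernel-of-pss} the nonzero class $z^{(1)}|_{\bar E}$ lies in $\ker\mathit{PSS}^r_{q,q^{-1}}$. Hence $\operatorname{im}\partial=\bQ((q))\,\partial(1)$ contains $z^{(1)}|_{\bar E}\ne 0$, so $\partial(1)\ne 0$, $\ker\partial=0$, and $\mathit{HF}^1_{q,q^{-1}}(\bar p,1)=0$. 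Finally, for $r\in(1,2)$ the long exact sequence of Lemma \ref{th:les-1} with $r_2=1$, $r_3=r$ shows $\mathit{HF}^1_{q,q^{-1}}(\bar p,1)\twoheadrightarrow\mathit{HF}^1_{q,q^{-1}}(\bar p,r)$ (the next term being $H^1(\bar M;\bQ)((q))=0$), so $\mathit{HF}^1_{q,q^{-1}}(\bar p,r)=0$ as well.

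The step I expect to require the most care is the last one: the vanishing of $\mathit{HF}^1_{q,q^{-1}}(\bar p,1)$ hinges on the single connecting map $\partial$ being nonzero, and establishing this means correctly matching up the exactness of the Lemma \ref{th:les-2} sequence, the identification of $C^{r_1,1}_{q,q^{-1}}$ with $\mathit{PSS}^1_{q,q^{-1}}$, and the input of Lemma \ref{th:kernel-of-pss} (which in particular forces one to prove the injectivity assertion first, so that $\ker\mathit{PSS}^1_{q,q^{-1}}$ can be compared with $\ker\mathit{PSS}^r_{q,q^{-1}}$ for $r\in(1,2)$). Everything else is formal linear algebra over the field $\bQ((q))$ together with the standard homotopy type of a Lefschetz fibration over a disc.
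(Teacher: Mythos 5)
Your argument is correct and follows essentially the same route as the paper's own proof: the degree-$2$ injectivity comes from the Lemma \ref{th:les-1} sequence with $H^1(\bar{M};\bQ)=0$, and the vanishing of $\mathit{HF}^1$ comes from the Lemma \ref{th:les-2} sequence (with $\mathit{HF}^*(\bar{p},r_1)\cong H^*(\bar{E};\bQ)((q))$ via \eqref{eq:less-than-1} and $H^1(\bar{E};\bQ)=0$), using Lemma \ref{th:kernel-of-pss} together with the just-proved injectivity to show the degree-$2$ PSS map kills the nonzero class $z^{(1)}|\bar{E}$, forcing the connecting map out of $\bQ((q))$ to be nonzero. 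Your explicit bookkeeping (identifying $C^{r_1,1}$ with $\mathit{PSS}^1$ and checking $[\delta E]\neq 0$ by restriction to a fibre) only makes precise steps the paper leaves implicit.
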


\begin{proof}
Let's look first at Lemma \ref{th:les-1}, with $r_1 = 1$ and $r_2  = r \in (1,2)$:
\begin{equation} \label{eq:its-injective}
\begin{aligned}
& \cdots \rightarrow
\mathit{HF}^1_{q,q^{-1}}(\bar{p},1) \longrightarrow \mathit{HF}^1_{q,q^{-1}}(\bar{p},r)
\longrightarrow 0 = H^1(\bar{M};\bQ)((q))
\\ & \qquad \qquad \qquad 
\xrightarrow{\text{connecting map}} \mathit{HF}^2_{q,q^{-1}}(\bar{p},1) \xrightarrow{\text{continuation map}} \mathit{HF}^2_{q,q^{-1}}(\bar{p},r) \rightarrow \cdots
\end{aligned}
\end{equation}
This shows the desired injectivity statement. Next, we combine \eqref{eq:less-than-1} and Lemma \ref{th:les-2}, with $r_1 \in (0,1)$ and $r_2 = 1$. Since $\bar{E}$ is obtained from $\bar{M}$ by attaching $n$-cells, \eqref{eq:simply-connected} implies that $H^1(\bar{E}) = 0$. The outcome is 
\begin{equation} \label{eq:tricky-sequence}
\begin{aligned}
& 0 = H^1(\bar{E};\bQ)((q)) \longrightarrow \mathit{HF}^1_{q,q^{-1}}(\bar{p},1) \longrightarrow H^0(\bar{M};\bQ)((q)) = \bQ((q))
\\ & \qquad \qquad \qquad \qquad
\xrightarrow{\text{connecting map}} H^2(\bar{E};\bQ)((q)) \xrightarrow{\text{PSS map}} \mathit{HF}^2_{q,q^{-1}}(\bar{p},1) \rightarrow \cdots
\end{aligned}
\end{equation}
The Gromov-Witten invariant from Lemma \ref{th:kernel-of-pss} is nonzero, $z^{(1)}|\bar{E} = q^{-1}[\delta E] + \cdots$, but the PSS map takes it to the zero element in $\mathit{HF}^2_{q,q^{-1}}(\bar{p},r)$, and therefore by \eqref{eq:its-injective} also in $\mathit{HF}^2_{q,q^{-1}}(\bar{p},1)$. This implies that the PSS map in \eqref{eq:tricky-sequence} is not injective, hence the connecting map is nonzero, and therefore $\mathit{HF}^1_{q,q^{-1}}(\bar{p},1) = 0$. The corresponding statement for $1<r<2$ follows from \eqref{eq:its-injective}.
\end{proof}

\begin{corollary} \label{th:k1}
Suppose that \eqref{eq:simply-connected} and \eqref{eq:as} hold. Then $k^1_{q,q^{-1}} = 0$.
\end{corollary}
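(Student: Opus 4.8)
\emph{Proof proposal.} The plan is to deduce $k^1_{q,q^{-1}}=0$ from the already-available vanishing $k^r_{q,q^{-1}}=0$ at a nearby non-integral level $r$, combined with the injectivity of the continuation map out of $\mathit{HF}^2_{q,q^{-1}}(\bar p,1)$ supplied by Corollary \ref{th:unique-connection}.

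First I would pin down the vanishing at a non-integral level. Fix $r\in(1,2)$. Restricting \eqref{eq:write-z1} to $\bar E=\cornerbar E\setminus\bar M$, the fibre class dies, because $[\bar M]$ is the pullback under $\cornerbar p$ of the point class on $\bC|$ and $\bar E$ fibres over the contractible base $\bC$; hence $z^{(1)}|\bar E=\psi^{-1}q^{-1}[\delta E]$. Applying $\mathit{PSS}^r_{q,q^{-1}}$, using $\bQ((q))$-linearity and the invertibility of $\psi\in 1+q\bQ[[q]]$, and invoking Lemma \ref{th:kernel-of-pss}, I obtain $k^r_{q,q^{-1}}=\psi\,\mathit{PSS}^r_{q,q^{-1}}(z^{(1)}|\bar E)=0$. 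This is exactly the ``immediate consequence'' recorded after Lemma \ref{th:kernel-of-pss}.

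Next I would propagate this back to $r=1$. Since the PSS maps for different positive parameters are intertwined by continuation maps, $k^r_{q,q^{-1}}=C^{1,r}_{q,q^{-1}}(k^1_{q,q^{-1}})$. Under \eqref{eq:simply-connected}, Corollary \ref{th:unique-connection} states that $C^{1,r}_{q,q^{-1}}\colon \mathit{HF}^2_{q,q^{-1}}(\bar p,1)\to \mathit{HF}^2_{q,q^{-1}}(\bar p,r)$ is injective for $r\in(1,2)$. Combining this with the vanishing of $k^r_{q,q^{-1}}$ forces $k^1_{q,q^{-1}}=0$.

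I do not expect a genuine obstacle here: all inputs are already assembled in Section \ref{subsec:hamiltonian-0}. The only point requiring a little care is the identification $z^{(1)}|\bar E=\psi^{-1}q^{-1}[\delta E]$, i.e.\ the passage from the ambient statement \eqref{eq:as}/\eqref{eq:write-z1} on $\cornerbar E$ to a statement on $\bar E$; once that is noted, the remainder is a one-line chase through the relation between PSS and continuation maps.
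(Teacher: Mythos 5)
Your proposal is correct and follows the same route as the paper: the remark after Lemma \ref{th:kernel-of-pss} already records that \eqref{eq:as} gives $k^r_{q,q^{-1}}=0$ for $r>1$ (your restriction argument killing $[\bar M]$ on $\bar E$ is exactly the implicit step there), and the paper's proof then invokes the injectivity of the continuation map from Corollary \ref{th:unique-connection} precisely as you do. Your write-up simply makes explicit what the paper compresses into one line.
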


\begin{proof} 
This follows from Lemma \ref{th:kernel-of-pss} and Corollary \ref{th:unique-connection}.
\end{proof}

\begin{corollary} \label{th:1-variable-4.5}
Suppose that assumptions \eqref{eq:simply-connected} and \eqref{eq:as} hold. Then, for any choice of bounding cochain underlying the construction of $\nabla_{q,q^{-1}}^2$, we have the following equality in $\mathit{HF}^0_{q,q^{-1}}(\bar{E},2)$:
\begin{equation} \label{eq:the-equation-0}
\nabla_{q,q^{-1}}^2 \nabla_{q,q^{-1}}^2 e_{q,q^{-1}} + \left( \eta - \frac{\psi'}{\psi} \right) \nabla_{q,q^{-1}}^2 e_{q,q^{-1}} - 4z^{(2)} \psi^2 e_{q,q^{-1}} =0.
\end{equation}
\end{corollary}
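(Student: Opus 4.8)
The plan is to deduce \eqref{eq:the-equation-0} by transporting it, via a continuation map, from $\mathit{HF}^0_{q,q^{-1}}(\bar{p},2)$ to $\mathit{HF}^0_{q,q^{-1}}(\bar{p},r_2)$ with $r_2\in(2,3)$, where every term becomes directly computable from Theorem \ref{th:1st-order}. First, under \eqref{eq:simply-connected} and \eqref{eq:as}, Corollary \ref{th:k1} gives $k^1_{q,q^{-1}}=0$, so $\nabla^2_{q,q^{-1}}$ as in \eqref{eq:hamiltonian-connections} exists; and since $\mathit{HF}^1_{q,q^{-1}}(\bar{p},1)=0$ by Corollary \ref{th:unique-connection}, the bounding cochain is unique modulo coboundaries, so there is only one such connection (this disposes of the ``for any choice'' clause). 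Now fix $r_0,r_1,r_2$ as in \eqref{eq:r012} and note that $r_0<r_2-r_1<2$, so $r_0\in(1,2)$. By Lemma \ref{th:kernel-of-pss} together with \eqref{eq:as} we have $k^{r_0}_{q,q^{-1}}=0$, and $\mathit{HF}^1_{q,q^{-1}}(\bar{p},r_0)=0$ again by Corollary \ref{th:unique-connection}; hence the non-integral operation $\nabla^{2,r_1,r_2}_{q,q^{-1}}$ of \eqref{eq:hamiltonian-connections-2} is also canonical, so the ``suitable'' bounding cochain of Theorem \ref{th:1st-order} agrees with the one propagated from $k^1_{q,q^{-1}}$ in Proposition \ref{th:compatible-connections}. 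Consequently \eqref{eq:nabla-equality-1} and \eqref{eq:nabla-equality-2} hold for this canonical $\nabla^{2,r_1,r_2}_{q,q^{-1}}$, and $\nabla^{2,r_1,r_2}_{q,q^{-1}}=C^{2,r_2}_{q,q^{-1}}\circ\nabla^2_{q,q^{-1}}\circ C^{r_1,2}_{q,q^{-1}}$.

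Write $e=e^2_{q,q^{-1}}$, $N=\nabla^2_{q,q^{-1}}$, and $C=C^{2,r_2}_{q,q^{-1}}$, which is injective on degree-zero classes (a consequence of Lemmas \ref{th:les-2} and \ref{th:les-1}); thus it suffices to show $C$ kills the left side of \eqref{eq:the-equation-0}. Since continuation maps carry the PSS classes and the Borman--Sheridan classes to one another and $C^{r_1,r_2}_{q,q^{-1}}=C\circ C^{r_1,2}_{q,q^{-1}}$, one has $C(e)=e^{r_2}_{q,q^{-1}}$, while Proposition \ref{th:compatible-connections}, $C^{r_1,2}_{q,q^{-1}}(e^{r_1}_{q,q^{-1}})=e$ and \eqref{eq:nabla-equality-1} give
\[
C(Ne)=\nabla^{2,r_1,r_2}_{q,q^{-1}}(e^{r_1}_{q,q^{-1}})=-\psi\, s^{r_2}_{q,q^{-1}}.
\]
Next put $u=-\psi\,s^{r_1}_{q,q^{-1}}$; then $C\big(C^{r_1,2}_{q,q^{-1}}(u)\big)=-\psi\,s^{r_2}_{q,q^{-1}}=C(Ne)$, so $C^{r_1,2}_{q,q^{-1}}(u)=Ne$ by injectivity of $C$, whence Proposition \ref{th:compatible-connections} again yields $C(N^2e)=C\big(N(C^{r_1,2}_{q,q^{-1}}(u))\big)=\nabla^{2,r_1,r_2}_{q,q^{-1}}(u)$. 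Expanding the right-hand side with the Leibniz rule in \eqref{eq:hamiltonian-connections-2} and then \eqref{eq:nabla-equality-2},
\[
C(N^2e)=-\psi\,\nabla^{2,r_1,r_2}_{q,q^{-1}}(s^{r_1}_{q,q^{-1}})-\psi'\,s^{r_2}_{q,q^{-1}}=(\psi\eta-\psi')\,s^{r_2}_{q,q^{-1}}+4z^{(2)}\psi^2\,e^{r_2}_{q,q^{-1}}.
\]

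Applying $C$ to the left side of \eqref{eq:the-equation-0} and substituting these three identities (using $\bQ((q))$-linearity of $C$), the $s^{r_2}_{q,q^{-1}}$-coefficients cancel since $(\psi\eta-\psi')+(\eta-\psi'/\psi)(-\psi)=0$, and the $e^{r_2}_{q,q^{-1}}$-coefficients cancel since $4z^{(2)}\psi^2-4z^{(2)}\psi^2=0$; hence $C$ annihilates the left side of \eqref{eq:the-equation-0}, and injectivity of $C$ in degree $0$ then gives \eqref{eq:the-equation-0}.

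The step I expect to be the main obstacle is the bookkeeping in the first paragraph: ensuring that the bounding cochain for which Theorem \ref{th:1st-order} holds is the \emph{same} one that enters Proposition \ref{th:compatible-connections} (equivalently, the one induced from the canonical bounding cochain for $k^1_{q,q^{-1}}$). This is exactly where the vanishing $\mathit{HF}^1_{q,q^{-1}}(\bar{p},r)=0$ for $r\in[1,2)$ from Corollary \ref{th:unique-connection} is essential, and it also forces $r_0$ into the range $(1,2)$ covered by that corollary; granting this, the remaining steps are routine linear algebra over $\bQ((q))$.
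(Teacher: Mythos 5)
Your argument is correct and follows essentially the same route as the paper's proof: vanishing of $\mathit{HF}^1_{q,q^{-1}}(\bar{p},r)$ for $1\leq r<2$ makes both connections canonical, so Proposition \ref{th:compatible-connections} and Theorem \ref{th:1st-order} apply simultaneously, and injectivity of $C^{2,r_2}_{q,q^{-1}}$ in degree zero transfers the computation back to level $2$. The only (cosmetic) difference is that the paper first extracts the two level-$2$ identities $\nabla^2 e^2=-\psi s^2$ and $\nabla^2 s^2=-\eta s^2-4z^{(2)}\psi e^2$ and then combines them, whereas you apply $C^{2,r_2}_{q,q^{-1}}$ to the whole left-hand side and cancel at level $r_2$ — the same computation in a different order.
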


\begin{proof}
In this context, the connections appearing in Proposition \ref{th:first-co} exist (Corollary \ref{th:k1}) and are unique (Corollary \ref{th:unique-connection}). Hence, we can apply both Proposition \ref{th:compatible-connections} and Theorem \ref{th:1st-order}, without worrying about choices of bounding cochains. With that in mind, the formulae \eqref{eq:nabla-equality-1}, \eqref{eq:nabla-equality-2} can be written as
\begin{align}
& C^{2,r_2}_{q,q^{-1}} \nabla^2_{q,q^{-1}} C^{r_1,2} e^{r_1}_{q,q^{-1}} = C^{2,r_2}_{q,q^{-1}}(-\psi \,C^{r_1,2}_{q,q^{-1}} s^{r_1}_{q,q^{-1}}), \\
& C^{2,r_2}_{q,q^{-1}} \nabla^2_{q,q^{-1}} C^{r_1,2} s^{r_1}_{q,q^{-1}} = C^{2,r_2}_{q,q^{-1}}(-\eta \, C^{r_1,2}_{q,q^{-1}} s^{r_1}_{q,q^{-1}} - 4z^{(2)} \psi\, C^{r_1,2}_{q,q^{-1}} e^{r_1}_{q,q^{-1}}).
\end{align}
(We apologize for the proliferation of subscripts and superscripts; it seemed worth while keeping track of the distinction between related but different objects.) Applying $C_{q,q^{-1}}^{r_1,2}$ to the class $e_{q,q^{-1}}^{r_1}$ yields its counterpart $e_{q,q^{-1}}^2$. As for $s_{q,q^{-1}}^{r_1}$, we denote its image in $\mathit{HF}^*_{q,q^{-1}}(\bar{p},2)$ by $s^2_{q,q^{-1}}$. The map $C^{2,r_2}_{q,q^{-1}}$ is injective in degree zero, because of Lemma \ref{th:les-1}. It follows that
\begin{align}
& \nabla^2_{q,q^{-1}} e^2_{q,q^{-1}} = -\psi \, s^2_{q,q^{-1}}, \\
& \nabla^2_{q,q^{-1}} s^2_{q,q^{-1}} = -\eta \, s^2_{q,q^{-1}} - 4z^{(2)} \psi\,  e^2_{q,q^{-1}}.
\end{align}
Combining those two equations yields \eqref{eq:the-equation-0}.
\end{proof}

\begin{corollary} \label{th:preliminary}
Suppose that assumptions \eqref{eq:simply-connected} and \eqref{eq:as} hold. Then the Kaledin class of $\scrA_{q,q^{-1}}$ is zero, and for a suitable choice of bounding cochain, the induced connection on $H^*(\mathit{hom}_{(\scrA_{q,q^{-1}},\scrA_{q,q^{-1}})}(\scrA_{q,q^{-1}}^\vee,\scrA_{q,q^{-1}}))$ satisfies
\begin{equation} \label{eq:the-equation-1}
\nabla_{q,q^{-1}}^2 \nabla_{q,q^{-1}}^2 \epsilon_{q,q^{-1}} + \left( \eta - \frac{\psi'}{\psi} \right) \nabla_{q,q^{-1}}^2 \epsilon_{q,q^{-1}} - 4z^{(2)} \psi^2 \epsilon_{q,q^{-1}} =0.
\end{equation}
\end{corollary}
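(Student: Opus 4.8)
The plan is to derive \eqref{eq:the-equation-1} by transporting the identity \eqref{eq:the-equation-0} of Corollary~\ref{th:1-variable-4.5} across the closed-open string map $\mathit{CO}^2_{q,q^{-1}}$ of \eqref{eq:oc-2}, whose interaction with connections is governed by Propositions~\ref{th:first-co} and \ref{th:second-co}.

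First I would dispose of the Kaledin class. Under the hypotheses, Corollary~\ref{th:k1} gives $k^1_{q,q^{-1}} = 0$; Proposition~\ref{th:first-co} identifies the Kaledin class of $\scrA_{q,q^{-1}}$ with the image of $k^1_{q,q^{-1}}$ under $\mathit{CO}^1_{q,q^{-1}}$, so $\kappa_{q,q^{-1}} = 0$ and the first assertion holds. Next I would fix a bounding cochain for the Floer cocycle representing $k^1_{q,q^{-1}}$; this is essentially unique by Corollary~\ref{th:unique-connection} (which gives $\mathit{HF}^1_{q,q^{-1}}(\bar p,1) = 0$). It determines the connection $\nabla^2_{q,q^{-1}}$ on $\mathit{HF}^*_{q,q^{-1}}(\bar p,2)$ of \eqref{eq:hamiltonian-connections}; and, via the chain-level $\mathit{CO}^1_{q,q^{-1}}$, a bounding cochain for $\kappa_{q,q^{-1}}$, hence an $A_\infty$-connection on $\scrA_{q,q^{-1}}$ and the induced connection $\nabla^2_{q,q^{-1}}$ on $H^{*}(\mathit{hom}_{(\scrA_{q,q^{-1}},\scrA_{q,q^{-1}})}(\scrA_{q,q^{-1}}^\vee,\scrA_{q,q^{-1}}))$. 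This is the choice made in the statement, and by Proposition~\ref{th:first-co} these two operations $\nabla^2_{q,q^{-1}}$ are intertwined by $\mathit{CO}^2_{q,q^{-1}}$.

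Finally I would apply $\mathit{CO}^2_{q,q^{-1}}$ to both sides of \eqref{eq:the-equation-0}. Since $\mathit{CO}^2_{q,q^{-1}}$ is $\bQ((q))$-linear it commutes with multiplication by the scalar series $\eta - \psi'/\psi$ and $z^{(2)}\psi^2$; by the previous paragraph it commutes with $\nabla^2_{q,q^{-1}}$; and by Proposition~\ref{th:second-co} it sends $e_{q,q^{-1}} = \mathit{PSS}^2_{q,q^{-1}}(1)$ to $\epsilon_{q,q^{-1}}$. Substituting these three facts into the image of \eqref{eq:the-equation-0} produces precisely \eqref{eq:the-equation-1}.

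This argument is mostly assembly: the substantive content is already contained in Corollary~\ref{th:1-variable-4.5} (built on \cite{seidel16}) and in Propositions~\ref{th:first-co}, \ref{th:second-co} (from \cite{seidel21b}). The only delicate point I anticipate is making sure that the two objects denoted $\nabla^2_{q,q^{-1}}$ — on Hamiltonian Floer cohomology and on the bimodule hom-group — arise from bounding cochains matched under the chain-level $\mathit{CO}^1_{q,q^{-1}}$, so that the commuting square of Proposition~\ref{th:first-co} genuinely applies to the equation at hand; the vanishing of $\mathit{HF}^1_{q,q^{-1}}(\bar p,1)$ from Corollary~\ref{th:unique-connection} makes the relevant bounding cochain canonical and removes that concern.
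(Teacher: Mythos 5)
Your proposal is correct and coincides with the paper's own argument, which deduces Corollary \ref{th:preliminary} precisely by combining Corollary \ref{th:1-variable-4.5} with Propositions \ref{th:first-co} and \ref{th:second-co}; your extra care about matching the bounding cochains through the chain-level $\mathit{CO}^1_{q,q^{-1}}$ is exactly the compatibility already built into Proposition \ref{th:first-co}.
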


\begin{proof}
This follows from Corollary \ref{th:1-variable-4.5} by using Propositions \ref{th:first-co} and \ref{th:second-co}. 
\end{proof}

\subsection{Relative Floer cohomology}
The reader may have noticed a discrepancy between the closed and open string theories: the first one was defined over $\bQ((q))$, whereas our relative Fukaya categories live over $\bQ[[q]]$, and we had to forget that fact by passing to \eqref{eq:generic-fukaya}. We will now remedy that, by introducing suitable relative Hamiltonian Floer cohomology groups which are defined over $\bQ[[q]]$. This is a rather finicky construction (carried out in Sections \ref{sec:relative-floer}--\ref{sec:apply-to-lefschetz}), leading us to state and use only a minimal amount of its properties. The notation for the relative groups will be
\begin{equation} \label{eq:relative-p-floer}
\mathit{HF}^*_q(\bar{p},r), \;\; 1 \leq r < 2,
\end{equation}
and the relation with the previously defined ones is that 
\begin{equation}
\mathit{HF}^*_{q,q^{-1}}(\bar{p},r) \iso \mathit{HF}^*_q(\bar{p},r) \otimes_{\bQ[[q]]} \bQ((q))
\end{equation}
canonically. The continuation maps \eqref{eq:continuation} have relative lifts, denoted by $C_q^{r_1,r_2}$ (and the analogue of \eqref{eq:small-increase-of-c} holds, so effectively only two different groups appear). We will also need the relative version of a specific case of Lemma \ref{th:les-1}:

\begin{lemma} \label{th:les-3}
For $r \in (1,2)$, the continuation map fits into a long exact sequence
\begin{equation} \label{eq:hf-exact}
\cdots \rightarrow \mathit{HF}^*_q(\bar{p},1) \xrightarrow{C_q^{1,r}} \mathit{HF}^*_q(\bar{p},r) 
\longrightarrow H^*(\bar{M};\bQ)[[q]] \rightarrow \cdots
\end{equation}
\end{lemma}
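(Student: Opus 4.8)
The plan is to lift to the relative (i.e.\ $\bQ[[q]]$) setting the chain-level mechanism that, in Section \ref{subsec:hyperbolic}, produces the non-relative long exact sequence of Lemma \ref{th:les-1}. Recall the shape of that argument: for $r \in (1,2)$ one picks admissible Hamiltonians $H_1 \leq H_r$ for the two rotation numbers, arranged so that the only $1$-periodic orbits of $H_r$ that are not already orbits of $H_1$ form a small Morse--Bott family sitting in a collar of the fibre $\bar M$ at infinity; after a further $C^2$-small perturbation these become nondegenerate and are matched with the critical points of a Morse function on $\bar M$, with a degree shift placing them in the grading appearing in \eqref{eq:hf-exact-1}. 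An action estimate then shows that the monotone continuation map $\mathit{CF}^*(\bar p,1) \to \mathit{CF}^*(\bar p,r)$ may be taken to be, on the chain level, the inclusion of a subcomplex, with quotient complex isomorphic to the Morse cochain complex of $\bar M$; the long exact sequence of this short exact sequence is \eqref{eq:hf-exact-1}.

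First I would observe that, by the construction of the relative groups in Sections \ref{sec:relative-floer}--\ref{sec:apply-to-lefschetz}, the complex $\mathit{CF}^*_q(\bar p,r)$ computing \eqref{eq:relative-p-floer} is the $q$-adic completion of $\mathit{CF}^*(\bar p,r) \otimes_\bQ \bQ[q]$ with a differential obtained from the non-relative one by adding terms that count Floer cylinders weighted by $q$ to the intersection number with $\delta E$, and that inverting $q$ recovers $\mathit{CF}^*_{q,q^{-1}}(\bar p,r)$. All auxiliary data (Hamiltonians, almost complex structures, and the Morse perturbation near $\bar M$) can be chosen simultaneously for the relative and non-relative constructions, and the continuation map has a relative lift $C^{1,r}_q$. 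The key point is that the $q$-deformation terms are strictly energy-positive, hence respect the same action filtration that made the $q=0$ continuation map a subcomplex inclusion; therefore $C^{1,r}_q$ is again the inclusion of a subcomplex $\mathit{CF}^*_q(\bar p,1) \hookrightarrow \mathit{CF}^*_q(\bar p,r)$, and the quotient complex $Q^*_q$ is free over $\bQ[[q]]$ on the collar orbits. This yields a short exact sequence of complexes of $\bQ[[q]]$-modules, hence a long exact sequence
\begin{equation*}
\cdots \rightarrow \mathit{HF}^*_q(\bar p,1) \xrightarrow{C^{1,r}_q} \mathit{HF}^*_q(\bar p,r) \longrightarrow H^*(Q_q) \rightarrow \cdots,
\end{equation*}
and tensoring with $\bQ((q))$ identifies it with \eqref{eq:hf-exact-1} via the canonical isomorphism $\mathit{HF}^*_{q,q^{-1}}(\bar p,r) \cong \mathit{HF}^*_q(\bar p,r) \otimes_{\bQ[[q]]} \bQ((q))$.

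It remains to identify $H^*(Q_q)$ with $H^*(\bar M;\bQ)[[q]]$, and this is where I expect the real work to lie. The plan is: (i) a monotonicity/neck-length argument confining the cylinders contributing to the differential of $Q_q$ to a neighbourhood of $\bar M$ in which, as the neck is stretched, they converge to gradient trajectories in $\bar M$ with bubbles; (ii) showing that configurations with positive intersection number with $\delta E$ raise the $q$-order, so that $Q_q/q^N$ is a finite iterated extension whose leading ($q=0$) piece is the classical Morse complex of $\bar M$; (iii) concluding that $Q_q$ is a bounded complex of finitely generated free $\bQ[[q]]$-modules whose cohomology, after $\otimes\,\bQ((q))$, is $H^*(\bar M;\bQ)((q))$ by the (already known) Lemma \ref{th:les-1}, so that a Nakayama-type argument, together with $q$-torsion-freeness of $H^*(Q_q)$ — which follows from $q$-torsion-freeness of $\mathit{CF}^*_q(\bar p,r)$, the long exact sequence, and the corresponding property of $\mathit{HF}^*_q(\bar p,1)$ — forces $H^*(Q_q) \cong H^*(\bar M;\bQ)[[q]]$. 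The main obstacle is steps (i)--(ii): controlling the interaction of $\delta E$ with the neck-stretching region near $\bar M$ well enough that no $q^0$-order quantum correction enters the quotient differential, i.e.\ that its $q=0$ reduction is genuinely the classical Morse complex of $\bar M$ and not a relative or otherwise deformed variant.
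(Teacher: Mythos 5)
Your overall skeleton is the same as the paper's: make the relative continuation map a chain-level subcomplex inclusion (this part is fine, and is exactly how Section \ref{subsec:concrete} proceeds, via the barrier/action arguments of Lemmas \ref{th:continuation-inclusion-2} and \ref{th:h2-h3}, which count the same solutions with $q$-weights $q^{u\cdot(\delta E)_\zeta}$), and then identify the cohomology of the quotient complex. The genuine gap is precisely the step you defer to (i)--(iii). Your fallback (iii) does not work as stated: $q$-torsion-freeness of $H^*(Q_q)$ cannot be deduced from torsion-freeness of the Floer cochain groups together with the long exact sequence (cokernels of maps of free $\bQ[[q]]$-modules are routinely torsion), and more importantly the danger is not abstract torsion but that the quotient is a genuinely different free module: with an unlucky choice of data the quotient complex is the one of Example \ref{th:transverse-max}, whose cohomology is $q^{-1}H^*(\bar{M},M;\bQ)\oplus H^*(\bar{M};\bQ)[[q]]$ rather than $H^*(\bar{M};\bQ)[[q]]$. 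So knowing the answer after $\otimes\,\bQ((q))$ (Lemma \ref{th:les-1}) plus soft module theory cannot pin down the $\bQ[[q]]$-form of the third term; the ``$q^0$-order quantum correction'' you worry about in (i)--(ii) is a red herring compared to the real issue, which is the $q$-weights $q^{w_{N(\delta E)}(x)}$ attached to the new generators and to the trajectories joining them.

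The paper closes this gap not by neck-stretching but by building the answer into the choice of data. The fibre Morse function $H'_{\bar{M}}$ is required in \eqref{eq:e-hessian-1b} to have positive definite normal Hessian along $\delta M$, so that the fibrewise computation falls under Example \ref{th:transverse-min} (answer $H^*(\bar{M};\bQ)[[q]]$) and not Example \ref{th:transverse-max}; the normal bundle of $\delta E$ is trivialized with the twist $w/|w|^2$ (\eqref{eq:can-be-extended}, Setup \ref{th:product-section}), which via \eqref{eq:two-w} controls the winding numbers $w_{N(\delta E)}$ of the orbits over the extra critical point $w_3$ and hence the $q$-weights of the quotient generators; and fibrewise admissibility of the specific Hamiltonians \eqref{eq:relative-2}, \eqref{eq:relative-3} is verified directly in Lemma \ref{th:contained} (with the interpolation having zero fibrewise curvature), which is what lets the non-relative barrier and regularity arguments run at the relative chain level. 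Note also that in this paper the groups $\mathit{HF}^*_q(\bar{p},r)$ are only defined through these specific choices, so ``choosing all auxiliary data simultaneously'' is not a formality: the construction of Hamiltonians that are simultaneously of the rotational product form needed in Section \ref{subsec:compute-hf} and fibrewise Floer-admissible, with the correct normal Hessians, is the actual content of the proof, and it is what your proposal is missing.
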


The relative lift of the PSS map is of the form
\begin{equation} \label{eq:pss-r-2}
\mathit{PSS}_q^r: q^{-1} H^*(\bar{E}, E;\bQ) \oplus
H^*(\bar{E};\bQ)[[q]] \longrightarrow \mathit{HF}^*_q(\bar{p},r),\;\; 1 \leq r < 2,
\end{equation}
where multiplication by $q$ maps the first summand to the second. The class $q^{-1}[\delta E]$ lies in the domain of \eqref{eq:pss-r-2}, and that allows us to lift \eqref{eq:k-class} to relative Floer cohomology. We denote the outcome by $k_q^r$. Similarly, if we take $z^{(1)}$ as in \eqref{eq:z1}, then $z^{(1)}|E$ lies in the domain of \eqref{eq:pss-r-2}. A relative version of Lemma \ref{th:kernel-of-pss} holds:

\begin{lemma} \label{th:kernel-of-pss-2}
The image of $z^{(1)}|E$ under $\mathit{PSS}_q^r$, $r > 1$, is zero.
\end{lemma}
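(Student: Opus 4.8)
The plan is to transfer the proof of its non-relative counterpart, Lemma~\ref{th:kernel-of-pss} (that is, \cite[Lemma 8.4]{seidel16}), to the relative framework of Sections~\ref{sec:relative-floer}--\ref{sec:apply-to-lefschetz}. First I would record the compatibility between the two PSS maps, use it to see that the image in question is at worst $q$-torsion, and then argue that it actually vanishes over $\bQ[[q]]$ by re-running the geometric argument of \cite{seidel16} in the relative setting.

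\emph{Compatibility and a first reduction.} By construction one has $\mathit{HF}^*_{q,q^{-1}}(\bar p,r)\iso\mathit{HF}^*_q(\bar p,r)\otimes_{\bQ[[q]]}\bQ((q))$, and tensoring the domain of \eqref{eq:pss-r-2} with $\bQ((q))$, then composing with the long exact sequence map $H^*(\bar E,E;\bQ)\to H^*(\bar E;\bQ)$, produces exactly the domain $H^*(\bar E;\bQ)((q))$ of $\mathit{PSS}^r_{q,q^{-1}}$; under these identifications $\mathit{PSS}^r_{q,q^{-1}}$ is the base change of $\mathit{PSS}^r_q$, and $z^{(1)}|\bar E$ is the image of $z^{(1)}|E$. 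Hence Lemma~\ref{th:kernel-of-pss} shows that $\mathit{PSS}^r_q(z^{(1)}|E)$ maps to $0$ in $\mathit{HF}^*_{q,q^{-1}}(\bar p,r)$, i.e.\ it is annihilated by some power of $q$.

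\emph{Vanishing over $\bQ[[q]]$.} To upgrade this I would redo the argument of \cite[Lemma 8.4]{seidel16} directly. By definition of the relative PSS map, $\mathit{PSS}^r_q(z^{(1)}|E)$ is computed by counting the same PSS-type moduli spaces that appear there (spiked half-cylinders with a marked point constrained to the pseudocycles representing $z^{(1)}$, now carrying the incidence data along $\delta E$ that defines the relative refinement), the only change being that a configuration in class $A$ is recorded with weight $q^{A\cdot\delta E}$ while the $q^{-1}$ is supplied by $[\delta E]\in H^2(\bar E,E;\bQ)$. The cobordism argument of \cite{seidel16} that makes this count vanish for $r>1$ produces a one-dimensional moduli space whose boundary realises the relevant configurations; since these moduli spaces carry the $\delta E$-weights consistently (the weight is additive under the degenerations appearing in the cobordism, and the bound $A\cdot\delta E\geq$ const guarantees $q$-adic convergence), the resulting chain homotopy is already defined over $\bQ[[q]]$. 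Thus the relative cocycle representing $\mathit{PSS}^r_q(z^{(1)}|E)$ is exact over $\bQ[[q]]$, which is the claim. (One could instead try to deduce the lemma from the first reduction by showing the relevant relative group is $q$-torsion-free, e.g.\ via the exact sequence of Lemma~\ref{th:les-3}, but that seems to require extra hypotheses one does not want here, so the direct route is preferable.)

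\emph{Main obstacle.} The substantive work lies not in this lemma but in the preceding sections: one must set up the relative PSS moduli spaces carefully enough --- transversality, a Gromov-type compactness statement tracking intersection multiplicities with $\delta E$, and the $q^{-1}$-normalisation attached to $[\delta E]\in H^2(\bar E,E;\bQ)$ --- that the cobordism of \cite{seidel16} applies with essentially no change. Granted that machinery, Lemma~\ref{th:kernel-of-pss-2} follows by the same formal argument as Lemma~\ref{th:kernel-of-pss}, with the $q$-bookkeeping inserted as above.
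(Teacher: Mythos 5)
Your overall strategy does coincide with the paper's: the proof is a transfer of \cite[Lemma 8.4]{seidel16}, implemented by counting sections of $\cornerbar{p}$ through the fibre at infinity with a point constraint at $(s_*,0)$ (the moduli spaces \eqref{eq:through-infinity}), producing a cocycle \eqref{eq:provisional} whose class is identified with the $\mathit{PSS}^r_q$-image of the restricted class $z^{(1)}$ by moving $s_*\to+\infty$ (Lemma \ref{th:right}), and which is shown to be nullhomologous for $r>1$ by moving $s_*\to-\infty$ (Lemma \ref{th:left}). Your preliminary base-change observation (that the image is at worst $q$-torsion) is harmless but plays no role in the paper.

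The gap is in your justification of the one point where the relative statement genuinely differs from the non-relative one, namely that the cocycle and the nullhomotopy are defined over $\bQ[[q]]$ and not merely over $\bQ((q))$. ``Additivity of the weights plus a lower bound on $A\cdot\delta E$ guarantees $q$-adic convergence'' is not the issue: what is needed is the pointwise inequality $u\cdot(\delta E|)_{\zeta}\geq w_{N(\delta E)}(x)$ of \eqref{eq:lies-in} for \emph{every} configuration entering the counts, since the relative complex has generators $q^{w_{N}(x)}\bQ[[q]]\,x$. For solutions not contained in the exceptional divisor this follows from positivity of intersection, but for solutions contained in $\delta E|$ positivity fails, and the paper obtains the inequality from the fibrewise PSS-admissibility condition \eqref{eq:fibrewise-admissible-3} combined with the energy--action--index identity \eqref{eq:aei} --- which in the compactified setting acquires the extra term $u\cdot\cornerbar{E}_\infty$, exactly where the constraint $u\cdot\cornerbar{E}_\infty=1$ is used --- together with automatic regularity in the normal direction (Corollary \ref{th:final-regularity}); the twisted normal bundle \eqref{eq:twisted-normal} and the section chosen in Setup \ref{th:product-section-2} are what make these intersection numbers well defined in $\cornerbar{E}$. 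Note also that $r>1$ has nothing to do with the weight bookkeeping: it enters only to exclude limit configurations breaking through orbits (or entire PSS components) lying in $\cornerbar{E}_\infty$ as $s_*\to-\infty$, via \cite[Lemma 7.7]{seidel16}. Your proposal defers all of this to ``the machinery of the preceding sections'', but these estimates concern the new moduli spaces of maps to the compactification $\cornerbar{E}$ and are established inside this very proof; they are not simply quotable from the earlier relative setup.
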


\begin{corollary} \label{th:k2}
Suppose that \eqref{eq:simply-connected} and \eqref{eq:as} hold. Then $k^1_q = 0$.
\end{corollary}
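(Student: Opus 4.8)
The plan is to run the argument of Corollary~\ref{th:k1} one level down, over $\bQ[[q]]$, using the relative maps $\mathit{PSS}^r_q$ and $C^{r_1,r_2}_q$ in place of their $\bQ((q))$-counterparts; the inputs that replace Lemma~\ref{th:kernel-of-pss} and Corollary~\ref{th:unique-connection} are Lemma~\ref{th:kernel-of-pss-2} and Lemma~\ref{th:les-3}. First I would pin down the element $z^{(1)}|E$ inside the domain $q^{-1}H^*(\bar E,E;\bQ)\oplus H^*(\bar E;\bQ)[[q]]$ of $\mathit{PSS}^r_q$. By \eqref{eq:as} and \eqref{eq:write-z1} we have $z^{(1)}=\psi^{-1}\big(q^{-1}[\delta E|]+\eta[\bar M]\big)$ in $H^2(\cornerbar E;\bQ((q)))$; restricting to $\bar E$ kills the fibre class $[\bar M]$ (it is the image of a Thom class in $H^2(\cornerbar E,\bar E)$, hence zero in $H^2(\bar E;\bQ)$), while $q^{-1}[\delta E|]$ restricts to the relative Thom class $q^{-1}[\delta E]\in q^{-1}H^2(\bar E,E;\bQ)$. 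A short bookkeeping check using $\psi^{-1}-1\in q\bQ[[q]]$ then identifies $z^{(1)}|E$ with $\psi^{-1}\cdot q^{-1}[\delta E]$, where $\psi^{-1}$ acts through the $\bQ[[q]]$-module structure of the domain; here it is essential that $\psi\in 1+q\bQ[[q]]$ is a genuine unit of $\bQ[[q]]$, even though the class $[\delta E]$ itself is not invertible.

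Applying $\mathit{PSS}^r_q$ and Lemma~\ref{th:kernel-of-pss-2} gives $\psi^{-1}k^r_q=\mathit{PSS}^r_q(z^{(1)}|E)=0$, hence $k^r_q=0$, for $r\in(1,2)$. To descend from $r\in(1,2)$ to $r=1$, note that $k^r_q=C^{1,r}_q(k^1_q)$, so it suffices to show $C^{1,r}_q$ is injective on $\mathit{HF}^2_q(\bar{p},1)$. In the long exact sequence of Lemma~\ref{th:les-3}, the term immediately preceding $\mathit{HF}^2_q(\bar{p},1)$ is $H^1(\bar M;\bQ)[[q]]$, which vanishes by \eqref{eq:simply-connected}; hence the connecting map is zero, $C^{1,r}_q$ is injective in degree $2$, and $k^1_q=0$.

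I do not anticipate a genuine obstacle: the argument is carried entirely by the cited relative statements, and is in fact slightly simpler than its $\bQ((q))$-analogue, since here one needs only $H^1(\bar M;\bQ)=0$ rather than the finer vanishing of $\mathit{HF}^1_{q,q^{-1}}(\bar{p},1)$ used in Corollary~\ref{th:unique-connection}. The one point demanding care — and the reason one cannot simply invoke the already-established $k^1_{q,q^{-1}}=0$ — is that the relative groups may carry $q$-torsion, so the vanishing of $k^1_q$ must be deduced directly from injectivity of the relative continuation map rather than after inverting $q$.
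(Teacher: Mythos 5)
Your proof is correct and takes essentially the same route as the paper's: Lemma~\ref{th:kernel-of-pss-2} combined with \eqref{eq:as} (via \eqref{eq:write-z1}, with $[\bar{M}]$ dying on restriction and $\psi$ a unit of $\bQ[[q]]$) gives $k^r_q=0$ for $r\in(1,2)$, and then \eqref{eq:simply-connected} together with the long exact sequence of Lemma~\ref{th:les-3} yields injectivity of $C^{1,r}_q$ on $\mathit{HF}^2_q(\bar{p},1)$, hence $k^1_q=0$. Your closing remarks (the $q$-torsion caveat explaining why one cannot just quote $k^1_{q,q^{-1}}=0$, and the fact that only the injectivity half of the Corollary~\ref{th:unique-connection}-type argument is needed) are accurate and consistent with the paper's intent.
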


\begin{proof}
This is the relative version of Corollary \ref{th:k1}, proved in the same way but with the ingredients replaced with their relative lifts. Lemma \ref{th:kernel-of-pss-2} and \eqref{eq:as} imply that $k^r_q = 0$ for $r>1$. To get the desired result from that, one uses \eqref{eq:simply-connected} and Lemma \ref{th:les-3} to show injectivity of the continuation map $\mathit{HF}^2_q(\bar{p},1) \rightarrow \mathit{HF}^2_q(\bar{p},r)$.
 \end{proof}

There is a relative lift of the closed-open string map \eqref{eq:oc-1},
\begin{equation} \label{eq:oc-1b}
\mathit{CO}_q^1 : \mathit{HF}^*_q(\bar{p}, 1) \longrightarrow \mathit{HH}^*(\scrA_q,\scrA_q), \\
\end{equation}
and part of Proposition \ref{th:first-co} carries over, meaning:

\begin{proposition} \label{th:first-co-2}
The Kaledin class $\kappa_q$ is the image of $k_q^1$ under \eqref{eq:oc-1b}. 
\end{proposition}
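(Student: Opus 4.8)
The plan is to treat this as the relative refinement over $\bQ[[q]]$ of the first half of Proposition \ref{th:first-co}, rerunning the argument of \cite{seidel21b} with the relative lifts of its ingredients. Conceptually, the identity is a chain-level incarnation of the statement that the only source of $q$-dependence in the relative Fukaya category of $\bar{p}$ is pseudo-holomorphic curves meeting $\delta E$: differentiating the $A_\infty$-operations $\mu_{\scrA_q}$ in $q$ produces, up to an overall factor of $q^{-1}$, the count of the same configurations carrying one additional interior marked point constrained to $\delta M$ and weighted by its local intersection multiplicity, and — after the usual neck-stretching identification of such a marked point with a Hamiltonian orbit — this is precisely what the chain-level closed-open map $\mathit{CO}_q^1$ does to a cocycle PSS-related to $q^{-1}[\delta E]$.

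Concretely I would proceed in three steps. First, fix chain-level representatives over $\bQ[[q]]$: a Hochschild $2$-cocycle $\tilde\kappa_q$ representing the Kaledin class $\kappa_q$ (obtained by $q$-differentiating the operations of $\scrA_q$), the chain map underlying $\mathit{CO}_q^1$ on the relative Floer complex, and the relative PSS cochain computing $k_q^1 = \mathit{PSS}_q^1(q^{-1}[\delta E])$; all of these are available over $\bQ[[q]]$ thanks to the relative Hamiltonian Floer theory of Sections \ref{sec:relative-floer}--\ref{sec:apply-to-lefschetz} together with \eqref{eq:oc-1b}. Second, import from \cite{seidel21b} the interpolating moduli space whose count $\tilde{H}_q$ furnishes a chain homotopy $d\tilde{H}_q \pm \tilde{H}_q d = \mathit{CO}_q^1(\text{cocycle for } k_q^1) - \tilde\kappa_q$ — the same one-parameter family of curves used there, in which the interior puncture asymptotic to a Hamiltonian orbit degenerates, in a neck-stretching limit, to a marked point mapped to $\delta M$. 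Third — the one genuinely new point — check that in the range $1 \le r < 2$ every moduli space entering $\tilde\kappa_q$, $\mathit{CO}_q^1$ and $\tilde{H}_q$ contributes only non-negative powers of $q$ (equivalently, that the relevant topological energies, or intersection numbers with $\delta E$, are $\ge 0$), so that $\tilde{H}_q$ is defined over $\bQ[[q]]$ and the identity already holds in $\mathit{HH}^2(\scrA_q,\scrA_q)$, not merely after inverting $q$. Tensoring with $\bQ((q))$ recovers Proposition \ref{th:first-co}, a useful consistency check.

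The hard part is exactly this last step, which is also why one cannot shortcut the argument by simply reducing to Proposition \ref{th:first-co}: the localization map $\mathit{HH}^2(\scrA_q,\scrA_q) \to \mathit{HH}^2(\scrA_{q,q^{-1}},\scrA_{q,q^{-1}})$ need not be injective — its kernel is the $q$-power-torsion submodule, which a Bockstein long exact sequence expresses through the failure of surjectivity of $\mathit{HH}^1(\scrA_q,\scrA_q) \to \mathit{HH}^1(\scrA,\scrA)$ and can be nonzero. So the content lies in the chain-level refinement, with the work concentrated in the energy bookkeeping for the interpolating moduli spaces; this is precisely why the relative groups, and hence this proposition, are confined to $1 \le r < 2$, and why the connection half of Proposition \ref{th:first-co} is not asserted relatively.
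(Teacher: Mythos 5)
Your proposal matches the paper's approach: the paper proves Proposition \ref{th:first-co-2} simply by asserting that the argument of \cite{seidel21b} for Proposition \ref{th:first-co} carries over verbatim to the relative Floer complexes and the relative closed-open map \eqref{eq:oc-1b}, which is precisely your plan of rerunning that chain-level argument with the relative weights and checking that all powers of $q$ stay nonnegative (the content supplied by the admissibility conditions of Sections \ref{sec:relative-floer}--\ref{sec:apply-to-lefschetz}). Your remark that one cannot instead deduce the statement from Proposition \ref{th:first-co} by localization is also consistent with the paper, where torsion-freeness of $\mathit{HH}^*(\scrA_q,\scrA_q)$ is only obtained \emph{after} the Kaledin class is known to vanish (proof of Corollary \ref{th:1-variable-5}).
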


The proof follows exactly the non-relative version from \cite{seidel21b}, and we won't discuss it further. The other properties of relative Hamiltonian Floer cohomology used above (Lemmas \ref{th:les-3} and \ref{th:kernel-of-pss-2}) will be proved in Section \ref{sec:apply-to-lefschetz}. 

\begin{corollary} \label{th:1-variable-5}
Suppose that assumptions \eqref{eq:simply-connected} and \eqref{eq:as} hold. Then the Kaledin class of $\scrA_q$ is zero, and for a suitable choice of bounding cochain, the induced connection \eqref{eq:nabla-2} satisfies
\begin{equation} \label{eq:the-equation-2}
\nabla_{q}^2 \nabla_{q}^2 \epsilon_{q} + \left( \eta - \frac{\psi'}{\psi} \right) \nabla_{q}^2 \epsilon_{q} - 4z^{(2)} \psi^2 \epsilon_{q} =0.
\end{equation}
\end{corollary}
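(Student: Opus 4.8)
The plan is to deduce \eqref{eq:the-equation-2} from its already-established counterpart over $\bQ((q))$, Corollary \ref{th:preliminary}, by a flat base-change argument, the point being that the relevant cohomology group over $\bQ[[q]]$ has no $q$-torsion — which is exactly what Proposition \ref{th:no-negative-degree} furnishes. The vanishing of the Kaledin class is immediate: by Corollary \ref{th:k2} we have $k_q^1 = 0$, and by Proposition \ref{th:first-co-2} the Kaledin class $\kappa_q$ is the image of $k_q^1$ under $\mathit{CO}_q^1$, so $\kappa_q = 0$. Concretely, I would choose a bounding cochain for a Floer cocycle representing $k_q^1$, push it forward through the chain-level map underlying $\mathit{CO}_q^1$ to get a bounding cochain for the Hochschild cocycle representing $\kappa_q$, and take $\calnablaq$ to be the resulting $A_\infty$-connection on $\scrA_q$, with $\nabla_q^2$ the induced operation \eqref{eq:nabla-2}.

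Next, the comparison with $\bQ((q))$. Since $\bQ((q))$ is flat over $\bQ[[q]]$ and $\scrA_{q,q^{-1}} = \scrA_q \otimes_{\bQ[[q]]}\bQ((q))$, localization at $q$ induces a natural map
\[
H^n\big(\mathit{hom}_{(\scrA_q,\scrA_q)}(\scrA_q^\vee,\scrA_q)\big) \longrightarrow H^n\big(\mathit{hom}_{(\scrA_{q,q^{-1}},\scrA_{q,q^{-1}})}(\scrA_{q,q^{-1}}^\vee,\scrA_{q,q^{-1}})\big),
\]
under which $\epsilon_q \mapsto \epsilon_{q,q^{-1}}$ (this is how $\epsilon_{q,q^{-1}}$ was defined), the connection $\calnablaq$ base-changes to an $A_\infty$-connection on $\scrA_{q,q^{-1}}$ arising from a bounding cochain for $k^1_{q,q^{-1}}$, hence $\nabla_q^2 \mapsto \nabla_{q,q^{-1}}^2$, while the coefficients $\psi,\eta,z^{(2)} \in \bQ[[q]]\subset\bQ((q))$ are unchanged. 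By Corollaries \ref{th:k1} and \ref{th:unique-connection} the bounding cochain for $k^1_{q,q^{-1}}$ is unique up to coboundary, so the connection obtained is, up to homotopy, the one appearing in Corollary \ref{th:preliminary}. Thus the image of the left-hand side of \eqref{eq:the-equation-2} under the displayed map is the left-hand side of \eqref{eq:the-equation-1}, which vanishes.

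It remains to show the displayed map is injective, for which it is enough that its source carry no $q$-torsion. Here I would run the Bockstein long exact sequence of the short exact sequence of cochain complexes $0 \to \mathit{hom}_{(\scrA_q,\scrA_q)}(\scrA_q^\vee,\scrA_q) \xrightarrow{\,q\,} \mathit{hom}_{(\scrA_q,\scrA_q)}(\scrA_q^\vee,\scrA_q) \to \mathit{hom}_{(\scrA,\scrA)}(\scrA^\vee,\scrA) \to 0$ (reduction mod $q$, using $\scrA_q/q\scrA_q = \scrA$): the kernel of multiplication by $q$ on $H^n$ of the first term is a quotient of $H^{n-1}(\mathit{hom}_{(\scrA,\scrA)}(\scrA^\vee,\scrA)) \iso H^{-1}(\mathit{hom}_{(\scrA,\scrA)}(\scrA^\vee[-n],\scrA))$. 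Since $\scrA \htp \scrC$ (Lemma \ref{th:quasi-isomorphic-to-directed} at $q = 0$), this last group is isomorphic to $H^{-1}(\mathit{hom}_{(\scrC,\scrC)}(\scrC^\vee[-n],\scrC))$, which vanishes by the $r = 1$ case of Proposition \ref{th:no-negative-degree}. Hence $q$ acts injectively on $H^n(\mathit{hom}_{(\scrA_q,\scrA_q)}(\scrA_q^\vee,\scrA_q))$, so that group embeds into its localization, and combined with the previous paragraph this gives \eqref{eq:the-equation-2}.

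The main obstacle is making sure the localization map above is genuinely well defined — that is, that the $A_\infty$-bimodule morphism complex is compatible with the base change $\bQ[[q]] \to \bQ((q))$ and that its $q$-localized cohomology is indeed $H^n(\mathit{hom}_{(\scrA_{q,q^{-1}},\scrA_{q,q^{-1}})}(\scrA_{q,q^{-1}}^\vee,\scrA_{q,q^{-1}}))$. The bar-type morphism complex is a direct \emph{product} over bar lengths, which does not commute with localization, so one should first replace $\scrA_q$ by a homologically smooth and proper model (it inherits these properties from $\scrA$, which is finite-dimensional and, via $\scrA\htp\scrC$, homologically smooth), so that the relevant cohomology is finitely generated over $\bQ[[q]]$ and the product-versus-sum issue disappears. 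Granting this reduction, everything else in the argument is formal, and Proposition \ref{th:no-negative-degree} does the real work by forcing the torsion-freeness.
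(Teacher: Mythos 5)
Your proposal is correct, and its overall strategy is the paper's: establish $\kappa_q=0$ via Corollary \ref{th:k2} and Proposition \ref{th:first-co-2}, choose the bounding cochain for $k_q^1$ first so that the induced connection is compatible with the one in Corollary \ref{th:preliminary}, note $\epsilon_q \mapsto \epsilon_{q,q^{-1}}$, and then deduce \eqref{eq:the-equation-2} from \eqref{eq:the-equation-1} by injectivity of the comparison map to the $\bQ((q))$-theory; you also correctly flag the same base-change subtlety (the bimodule hom complex is an infinite product) and resolve it the same way the paper does, by homological smoothness inherited through $\scrA \htp \scrC \htp \tilde{\scrC}$. The one genuinely different ingredient is how you get the needed torsion-freeness: the paper observes that vanishing of the Kaledin class makes $\scrA_q$ a \emph{trivial} deformation (Scholium \ref{th:kaledin}, i.e.\ Lemma \ref{th:kaledin-2}), so that $H^*(\mathit{hom}_{(\scrA_q,\scrA_q)}(\scrA_q^\vee,\scrA_q))$ is automatically a torsion-free $\bQ[[q]]$-module, whereas you run the Bockstein sequence of $0 \to C \xrightarrow{q} C \to C/qC \to 0$ and kill the $q$-torsion in degree $n$ using $H^{-1}(\mathit{hom}_{(\scrC,\scrC)}(\scrC^\vee[-n],\scrC))=0$, the $r=1$ case of Proposition \ref{th:no-negative-degree}. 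Both are valid: the paper's route is a one-liner since the Kaledin vanishing is already in hand, while yours trades the formality/integration input for the geometric vanishing statement and is closer in spirit to the $q$-filtration argument the paper uses to derive \eqref{eq:no-negative-degree-3}; in exchange it needs the (standard, but worth stating) invariance of the bimodule hom cohomology under the quasi-isomorphism $\scrA \htp \scrC$ at $q=0$.
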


\begin{proof}
Vanishing of the Kaledin class follows from Corollary \ref{th:k2} and Proposition \ref{th:first-co-2}. As a consequence, $\scrA_q$ is a trivial $A_\infty$-deformation (see Scholium \ref{th:kaledin}). Therefore, both $\mathit{HH}^*(\scrA_q,\scrA_q)$ and $H^*(\mathit{hom}_{(\scrA_q,\scrA_q)}(\scrA_q^\vee,\scrA_q))$ are torsion-free $\bQ[[q]]$-modules, so one has
\begin{align} \label{eq:localize-1}
& \mathit{HH}^*(\scrA_q,\scrA_q) \hookrightarrow \mathit{HH}^*(\scrA_q,\scrA_q) \otimes_{\bQ[[q]]} \bQ((q)) \iso \mathit{HH}^*(\scrA_{q,q^{-1}},\scrA_{q,q^{-1}}), \\
\label{eq:localize-2}
& 
\begin{aligned} 
& H^*(\mathit{hom}_{(\scrA_q,\scrA_q)}(\scrA_q^\vee,\scrA_q)) \hookrightarrow
H^*(\mathit{hom}_{(\scrA_q,\scrA_q)}(\scrA_q^\vee,\scrA_q)) \otimes_{\bQ[[q]]} \bQ((q)) 
\\ & \qquad \qquad
\iso H^*(\mathit{hom}_{(\scrA_{q,q^{-1}},\scrA_{q,q^{-1}})}(\scrA_{q,q^{-1}}^\vee,\scrA_{q,q^{-1}})). \end{aligned}
\end{align}
The isomorphisms above require a bit of explanation. In general, Hochschild cohomology does not commute with base change, because of the nature of the underlying chain complex as an infinite product. However, $\scrA$ is cohomologically smooth, as one can see clearly from its quasi-isomorphism with the directed algebra $\tilde{\scrC}_q$ (see Lemmas \ref{th:directed-2} and \ref{th:quasi-isomorphic-to-directed}); and for such algebras, the base change formula holds. The same applies to \eqref{eq:localize-2}.

We need to return for a moment to the argument underlying Corollary \ref{th:preliminary}. The bounding cochain which appears there is one inherited from a bounding cochain for $k_{q,q^{-1}}^1$ via the chain level closed-open map. In our situation, one can start by first choosing a bounding cochain for $k_q^1$; then, the outcome is that the connection $\nabla_q^2$ on $H^*(\mathit{hom}_{(\scrA_q,\scrA_q)}(\scrA_q^\vee,\scrA_q))$ is compatible with that from \eqref{eq:the-equation-1}. Moreover, $\epsilon_q$ maps to $\epsilon_{q,q^{-1}}$ under \eqref{eq:localize-2}, by definition of the latter element.
Therefore, \eqref{eq:the-equation-1} just says that the desired equation holds after tensoring with $\bQ((q))$. Because of the injectivity of \eqref{eq:localize-2}, it must hold on the nose.
\end{proof}

This completes our account of the overall argument: all the steps leading up to Theorem \ref{th:1-variable} have now been introduced. It remains to make a note concerning the topological assumption \eqref{eq:simply-connected}.

\begin{remark} \label{th:elliptic-surface}
Suppose that we have a finite group $\Gamma$ acting on $\cornerbar{E}$, which preserves $\delta E|$. We also require that in a neighbourhood of $\infty$, that group should map each fibre of $\cornerbar{p}$ to itself. One then has induced actions on all the Hamiltonian Floer cohomology groups we have discussed. In particular, one can consider $\Gamma$-equivariant connections, which have the advantage that the space of choices is an affine space over the $\Gamma$-invariant part of $\mathit{HF}^1$ (in its various versions, depending on which connection we are talking about). The connection which appears in Theorem \ref{th:1st-order} is automatically $\Gamma$-invariant, because the entire argument is geometric, hence compatible with additional symmetries.
As a consequence, the argument above goes through under the weaker assumption that
\begin{equation} \label{eq:invariant-vanish}
H^1(\bar{M};\bQ)^\Gamma = 0.
\end{equation}
For instance, suppose that we start with the Hesse pencil of elliptic curves on $\bC P^2$ and blow up its base locus, which yields the rational elliptic surface
\begin{equation}
\cornerbar{E} = \{y_0 (x_0^3+x_1^3+x_2^3) + y_1 x_0x_1x_2 = 0\} \subset \bC P^1 \times \bC P^2.
\end{equation}
The pencil has an action of $\mathit{Sym}_3$ permuting the $x$ variables. If one looks at a smooth fibre $\bar{M}$, say that at $[y_0:y_1] = [1:1]$, the action has four orbits with isotropy subgroups $\bZ/2$, namely those of $[x_0:x_1:x_2] = [1:\lambda:\lambda]$ for $2\lambda^3 + \lambda^2 + 1 = (\lambda+1)(2\lambda^2-\lambda+1) = 0$, as well as $[x_0:x_1:x_2] = [0:1:-1]$. From Euler characteristic considerations alone, it follows that $\bar{M}/\Gamma$ is a rational curve, hence \eqref{eq:invariant-vanish} holds. (One could object that the projection $\cornerbar{E} \rightarrow \bC P^1$ is not exactly a Lefschetz fibration in the ordinary sense, since each of the four singular fibres has three nodal singularities; however, that is not a problem for the argument leading up to Proposition \ref{th:1-variable-5}, which is the only part of our overall approach where \eqref{eq:simply-connected} was used.) 
This allows Theorem \ref{th:1-variable} to be applied to the elliptic surface, justifying the discussion of that example in Section \ref{subsec:examples}.
\end{remark}

\section{A relative classification theorem for $A_\infty$-structures\label{sec:noncommutative-divisors}}

The theory developed in this section is purely algebraic, and in a sense elementary. For a given $A_\infty$-algebra $\scrA$, it seeks to classify pairs consisting of another such algebra $\scrB$ and a homomorphism $\scrF: \scrA \rightarrow \scrB$. We will also describe a modified version for $A_\infty$-deformations, meaning that it includes a formal parameter $q$, and possible curvature terms. Part of this material has appeared elsewhere \cite{seidel08, seidel14b, seidel15}, but the account here places it in a more organic framework (which can also be considered more natural from a geometric viewpoint, compare Remark \ref{th:clunky}).

\subsection{Introduction to the problem\label{subsec:algebra}}
We work with $A_\infty$-algebras over $\bQ$. All $A_\infty$-algebras and related structures will be graded and cohomologically unital. As already mentioned before, the $A_\infty$-category (actually dg category) of bimodules over an $A_\infty$-algebra $\scrA$ will be denoted by $(\scrA,\scrA)$. Sign conventions are as in \cite{seidel04} for $A_\infty$-algebras, and \cite{seidel14b} for bimodules; as in those references, we use $|\cdot|$ for the degree of an element, and $\|\cdot\| = |\cdot|-1$ for the reduced degree. (Those sign conventions do not agree with those in the classical world of dg algebras; when passing between the two, additional signs appear.)

\begin{remark} \label{th:semisimple}
Actually, our applications are to $A_\infty$-categories with finitely many objects, or equivalently $A_\infty$-algebras over the semisimple ground ring $R = \bQ^{\oplus m}$. This has been implicit in our discussion, starting back in Section \ref{subsec:directed}. The necessary adaptations are straighforward, so we prefer to stick to the simpler language of working over $\bQ$.
\end{remark}

Fix an $A_\infty$-algebra $\scrA$. We consider pairs consisting of an $A_\infty$-algebra $\scrB$ and an $A_\infty$-homomorphism $\scrF: \scrA \rightarrow \scrB$. Two such pairs $(\scrB,\scrF)$ and $(\tilde\scrB,\tilde\scrF)$ are called quasi-isomorphic if there is a homotopy commutative diagram (with $\scrG$ a quasi-isomorphism of $A_\infty$-algebras, denoted here by $\htp$)
\begin{equation} \label{eq:bf-quasi-isomorphism}
\xymatrix{
&& \scrB \ar[d]_-{\htp}^-{\scrG} \\
\scrA \ar[urr]^-{\scrF} \ar[rr]_-{\tilde\scrF}
&& \tilde\scrB.
}
\end{equation}
Any $A_\infty$-algebra is a bimodule over itself (the diagonal bimodule; the bimodule operations are the $A_\infty$-algebra operations, with certain added signs \cite[Equation 2.3]{seidel14b}). In our context, pulling back the diagonal $\scrB$-bimodule by $\scrF$ (on both sides) yields a $\scrA$-bimodule, which we denote by $\scrF^*\scrB$. This comes with a bimodule map $\scrA \rightarrow \scrF^*\scrB$, which we denote by $\scrF$ as well. We consider the mapping cone bimodule, and its projection map
\begin{equation} \label{eq:cone-bimodule}
\left\{
\begin{aligned}
& \scrP = \big\{ \scrA \xrightarrow{\scrF} \scrF^*\scrB \big\}[-1], \\
& \rho: \scrP \longrightarrow \scrA.
\end{aligned}
\right.
\end{equation}
(The notation $\{\cdots\}$ will be used for mapping cones throughout our discussion.) The datum $(\scrP,\rho)$ is an invariant of our situation. This means that if we have quasi-isomorphic pairs as in \eqref{eq:bf-quasi-isomorphism}, with associated bimodule data $(\scrP,\rho)$ and $(\tilde\scrP,\tilde\rho)$, there is a homotopy commutative diagram of $\scrA$-bimodules
\begin{equation} \label{eq:pdelta-quasi-isomorphism}
\xymatrix{
\scrP \ar[d]^-{\htp}_{\gamma} \ar[drr]^-{\rho} \\ 
\tilde\scrP \ar[rr]_-{\tilde\rho} && \scrA.
}
\end{equation}

\begin{lemma} \label{th:ambidextrous}
In the situation of \eqref{eq:cone-bimodule}, one always has
\begin{equation} \label{eq:delta-left-right}
\mathit{id} \otimes_{\scrA} \rho \htp \rho \otimes_{\scrA} \mathit{id}: 
\scrP \otimes_{\scrA} \scrP \longrightarrow \scrP.
\end{equation}
\end{lemma}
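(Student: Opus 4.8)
The plan is to produce an explicit chain homotopy $H$ realizing $\mathit{id}\otimes_{\scrA}\rho\htp\rho\otimes_{\scrA}\mathit{id}$, whose shape is dictated by one extra piece of structure on $\scrQ:=\scrF^*\scrB$. By the invariance statement \eqref{eq:pdelta-quasi-isomorphism}, the pair $(\scrP,\rho)$ is determined up to quasi-isomorphism by $\scrF$, so I will work throughout with the strict model $\scrP=\{\scrA\xrightarrow{\scrF}\scrQ\}[-1]$, with $\rho$ the (chain) projection onto $\scrA$. The extra structure is this: since $\scrF$ is an $A_\infty$-homomorphism, pulling back the diagonal multiplication $\scrB\otimes_{\scrB}\scrB\xrightarrow{\htp}\scrB$ along $\scrF$, precomposed with the base-change map $\scrF^*\scrB\otimes_{\scrA}\scrF^*\scrB\to\scrF^*\scrB\otimes_{\scrB}\scrF^*\scrB$, yields an $\scrA$-bimodule map $m:\scrQ\otimes_{\scrA}\scrQ\to\scrQ$ for which $\scrF:\scrA\to\scrQ$ is a two-sided homotopy unit: there are bimodule homotopies $h_L:m\circ(\scrF\otimes\mathit{id}_{\scrQ})\htp\mathit{id}_{\scrQ}$ and $h_R:m\circ(\mathit{id}_{\scrQ}\otimes\scrF)\htp\mathit{id}_{\scrQ}$ (silently identifying $\scrA\otimes_{\scrA}\scrQ$ and $\scrQ\otimes_{\scrA}\scrA$ with $\scrQ$), coming from cohomological unitality of $\scrB$.

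Next I would present $\scrP\otimes_{\scrA}\scrP$ as a total complex. Using $\scrP\otimes_{\scrA}\scrP=\{\scrF\otimes\mathit{id}_{\scrP}\}[-1]=\{\mathit{id}_{\scrP}\otimes\scrF\}[-1]$ together with $\scrP\otimes_{\scrA}\scrQ=\{\iota_L\}[-1]$ and $\scrQ\otimes_{\scrA}\scrP=\{\iota_R\}[-1]$, where $\iota_L$ corresponds to $\scrF\otimes\mathit{id}_{\scrQ}$ and $\iota_R$ to $\mathit{id}_{\scrQ}\otimes\scrF$, one exhibits $\scrP\otimes_{\scrA}\scrP$ as the total complex of the (strictly) commuting square of $\scrA$-bimodules with corners $\scrA$, $\scrQ$, $\scrQ$, $\scrQ\otimes_{\scrA}\scrQ$ and edges $\scrF$, $\scrF$, $\iota_L$, $\iota_R$. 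In this model $\rho\otimes\mathit{id}$ is (up to the structural unit quasi-isomorphisms) the projection onto the "row" $\{\scrA\xrightarrow{\scrF}\scrQ\}[-1]=\scrP$, while $\mathit{id}\otimes\rho$ is the projection onto the "column" $\{\scrA\xrightarrow{\scrF}\scrQ\}[-1]=\scrP$. Consequently the difference $\rho\otimes\mathit{id}-\mathit{id}\otimes\rho$ is supported on the two off-diagonal corners $\scrA\otimes_{\scrA}\scrQ$ and $\scrQ\otimes_{\scrA}\scrA$, plus, on the corner $\scrA\otimes_{\scrA}\scrA$, the difference of the left- and right-unit quasi-isomorphisms onto $\scrA$.

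The homotopy $H:\scrP\otimes_{\scrA}\scrP\to\scrP$ of degree $-1$ then has three blocks: a leading block $\pm m:\scrQ\otimes_{\scrA}\scrQ\to\scrQ$ landing in the $\scrQ$-summand of $\scrP$; a block on $\scrA\otimes_{\scrA}\scrA$ given by the standard homotopy between the two unit isomorphisms; and blocks on $\scrA\otimes_{\scrA}\scrQ$ and $\scrQ\otimes_{\scrA}\scrA$ built from $h_L$ and $h_R$. Computing $d\circ H+H\circ d$: the term $\partial m$ pairs the internal differential of the square against the two off-diagonal corners, while $\partial h_L=m\circ\iota_L-\mathit{id}$ and $\partial h_R=m\circ\iota_R-\mathit{id}$ supply exactly the remaining $\iota_L$- and $\iota_R$-contributions there, and the block on $\scrA\otimes_{\scrA}\scrA$ absorbs the left-versus-right-unit discrepancy; the upshot is $\rho\otimes\mathit{id}-\mathit{id}\otimes\rho=d\circ H+H\circ d$. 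Conceptually this is just the relative incarnation of the fact that the mapping cone of a map admitting a one-sided homotopy inverse is contractible, applied to $\scrF\otimes\mathit{id}_{\scrQ}$ and $\mathit{id}_{\scrQ}\otimes\scrF$, both of which have $m$ as a homotopy left inverse.

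I expect the main obstacle to be purely a matter of bookkeeping. Everything above must be carried out at the level of $A_\infty$-bimodules: one has to track the full bar-type differentials on the mapping-cone and tensor-product bimodules; promote $m$, $h_L$, $h_R$ and the unit homotopy to honest collections of multilinear maps, built from the higher components $\scrF^{\ge 2}$ of $\scrF$ and from $\mu^{\ge 2}_{\scrB}$, along with the higher coherences these entail; and reconcile every sign with the conventions of \cite{seidel14b}. None of this is conceptually delicate, but it is where the actual work lies. (Alternatively, one can phrase the vanishing of $[\mathit{id}\otimes\rho]-[\rho\otimes\mathit{id}]$ in $H^0\big(\mathit{hom}_{(\scrA,\scrA)}(\scrP\otimes_{\scrA}\scrP,\scrP)\big)$ homologically, via the triangle $\scrP\xrightarrow{\rho}\scrA\xrightarrow{\scrF}\scrQ$ and its long exact sequence, but this still ultimately rests on the homotopy unit $m$ to see that the relevant class is actually zero rather than merely small.)
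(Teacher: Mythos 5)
Your starting point coincides with the paper's: the key object is the pulled-back product $m=\phi\colon \scrF^*\scrB\otimes_{\scrA}\scrF^*\scrB\to\scrF^*\scrB$, and $\scrP\otimes_{\scrA}\scrP$ is expanded as the total complex with corners $\scrA\otimes_{\scrA}\scrA$, $\scrA\otimes_{\scrA}\scrF^*\scrB$, $\scrF^*\scrB\otimes_{\scrA}\scrA$, $\scrF^*\scrB\otimes_{\scrA}\scrF^*\scrB$, as in \eqref{eq:spell-out-left-right}. But where the paper uses a strict identity, you substitute a weaker homotopy statement, and that is where the gap lies. With the explicit representatives $\epsilon^{\mathit{left}},\epsilon^{\mathit{right}}$ of \cite{seidel08}, one has the equalities $\phi\circ(\scrF\otimes_{\scrA}\mathit{id})=\epsilon^{\mathit{left}}$ and $\phi\circ(\mathit{id}\otimes_{\scrA}\scrF)=\epsilon^{\mathit{right}}$ on the nose (both sides are literally the same sum over decompositions in which the distinguished factor sits in the outermost $\scrF$-block); this is \eqref{eq:phi-diagram}, it uses no cohomological unitality of $\scrB$, and it makes your $h_L,h_R$ unnecessary (they may be taken to be zero). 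Because of it, the single block $\phi$ on the bottom corner is already a nullhomotopy for the difference, which with the conventions of \cite{seidel14b} is exactly $(\epsilon^{\mathit{left}},-\epsilon^{\mathit{right}})$ on the middle term and has no block on $\scrA\otimes_{\scrA}\scrA$. Note also that you cannot ``silently'' identify $\scrA\otimes_{\scrA}\scrQ$ and $\scrQ\otimes_{\scrA}\scrA$ with $\scrQ$: the two maps being compared in \eqref{eq:delta-left-right} are defined using the specific collapse maps $\epsilon^{\mathit{left}},\epsilon^{\mathit{right}}$, so any homotopies you introduce must be homotopies to those maps, not to ``the identity''.

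The genuine problem is in the verification of $dH+Hd=\rho\otimes_{\scrA}\mathit{id}-\mathit{id}\otimes_{\scrA}\rho$ for your three-block $H$. Composing the blocks $h_L$, $h_R$, and the unit-discrepancy block $u$ with the cone differentials of $\scrP\otimes_{\scrA}\scrP$ and of $\scrP$ produces cross-terms supported on the corner $\scrA\otimes_{\scrA}\scrA$ with values in the $\scrF^*\scrB$-summand of $\scrP$, namely $h_L\circ(\mathit{id}\otimes_{\scrA}\scrF)$, $h_R\circ(\scrF\otimes_{\scrA}\mathit{id})$ and $\scrF\circ u$; your sketch never accounts for these. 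For arbitrary choices of $h_L,h_R,u$ (which is all your existence appeal to unitality provides) the identity fails by a cocycle in $\mathit{hom}_{(\scrA,\scrA)}(\scrA\otimes_{\scrA}\scrA,\scrF^*\scrB)$, and its exactness is not automatic -- the class changes if $h_L$ is altered by a closed term, and the receiving cohomology group is typically nonzero (it already contains the class of $\scrF$ after collapsing $\scrA\otimes_{\scrA}\scrA\htp\scrA$). So one needs compatibly chosen homotopies, and the canonical compatible choice is $h_L=h_R=0$, $u=0$, i.e.\ precisely the strict commutativity of \eqref{eq:phi-diagram}. In other words, the missing step is not $A_\infty$ bookkeeping or signs; it is the one identity that constitutes the paper's proof. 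If you replace your homotopy-unit input by that identity, your argument collapses to the paper's one-block homotopy and becomes correct.
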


Let's decrypt the statement a little. Writing $T(\scrA[1]) = \bigoplus_{d \geq 0} \scrA[1]^{\otimes d}$, the bimodule tensor product is $\scrQ \otimes_{\scrA} \scrR = \scrQ \otimes T(\scrA[1]) \otimes \scrR$, with suitable operations \cite[Equation (2.12)]{seidel14b}. For any $\scrA$-bimodule $\scrQ$, one has canonical (up to homotopy) quasi-isomorphisms $\scrA \otimes_{\scrA} \scrQ \htp \scrQ$ and $\scrQ \otimes_{\scrA} \scrA \htp \scrQ$. Explicit representatives $\epsilon^{\mathit{left}}$ and $\epsilon^{\mathit{right}}$ are given in \cite[Equations (2.21) and (2.26)]{seidel08}. Using those quasi-isomorphisms, and given any bimodule map $\scrQ \rightarrow \scrA$, one can form two bimodule maps $\scrQ \otimes_{\scrA} \scrQ \rightarrow \scrQ$, as in the two sides of \eqref{eq:delta-left-right}. If the two are homotopic, the original bimodule map is called ambidextrous (in the terminology from \cite{seidel08}). Lemma \ref{th:ambidextrous} asserts that $\rho$ always has this property. 

\begin{proof}
The $A_\infty$-algebra structure on $\scrB$, and the functor $\scrF$, produce a bimodule homomorphism
$\phi: \scrF^*\scrB \otimes_{\scrA} \scrF^*\scrB \rightarrow \scrF^*\scrB$.  Concretely, the linear component is
\begin{equation}
\begin{aligned}
& \phi^{0,1,0}: \scrB \otimes T(\scrA[1]) \otimes \scrB \longrightarrow \scrB, \\
& \phi^{0,1,0}(b' \otimes a_t \otimes \cdots \otimes a_1 \otimes b) =
\sum_{\substack{ r \geq 0 \\ \!\!\!d_1+\cdots+d_r = t\!\!\!}} (-1)^{|b'|} \mu_{\scrB}^{r+2}(b',\scrF^{d_r}(a_t,\dots,a_{t-d_r+1}),\dots, \\[-1.5em] & \hspace{25em} \scrF^{d_1}(a_{d_1},\dots,a_1),b).
\end{aligned}
\end{equation}
and the higher order ones make use of additional elements of $\scrA$ on the left and right in the same way. This fits into a commutative diagram
\begin{equation} \label{eq:phi-diagram}
\xymatrix{
\scrA \otimes_{\scrA} \scrF^*\scrB 
\ar[drr]^-{\epsilon^{\mathit{left}}}
\ar[d]_-{\scrF \otimes_{\scrA} \mathit{id}}
\\
\scrF^*\scrB \otimes_{\scrA} \scrF^*\scrB \ar[rr]^-{\phi} && \scrF^*\scrB
\\
\scrF^*\scrB \otimes_{\scrA} \scrA
\ar[urr]_-{\epsilon^\mathit{right}}
\ar[u]^-{\mathit{id} \otimes_{\scrA} \scrF}
}
\end{equation}
The difference between the two sides in \eqref{eq:delta-left-right} is the following bimodule map (grading shifts have been omitted for simplicity):
\begin{equation} \label{eq:spell-out-left-right}
\xymatrix{
\scrP \otimes_{\scrA} \scrP = \big\{ \scrA \otimes_{\scrA} \scrA 
\ar[r]^-{\raisebox{.5em}{$\scriptstyle\left(\begin{smallmatrix} \mathit{id} \otimes_{\scrA} \scrF \\ \scrF \otimes_{\scrA} \mathit{id} \end{smallmatrix}\right)$}}
& (\scrA \otimes_{\scrA} \scrF^*\scrB) \oplus 
(\scrF^*\scrB \otimes_{\scrA} \scrA) \ar[rr]^-{\raisebox{.5em}{$\scriptstyle(\scrF \otimes_{\scrA} \mathit{id},\, -\mathit{id} \otimes_{\scrA} \scrF)$}}
 \ar[d]^-{(\epsilon^{\mathit{left}}, -\epsilon^{\mathit{right}})}
 && \scrF^*\scrB \otimes_{\scrA} \scrF^*\scrB \big\}
\\
\scrP = \{\scrA \ar[r]_-{\scrF} & \scrF^*\scrB\} 
}
\end{equation}
In view of \eqref{eq:phi-diagram}, one can use $\phi$ to give a nullhomotopy for this map.
\end{proof}

To summarize the discussion so far in a more down-to-earth way, let's fix a bimodule $\scrP$. Write $\mathit{Aut}(\scrP) = H^0(\mathit{hom}_{(\scrA,\scrA)}(\scrP,\scrP))^\times$ for its automorphism group; and $\mathit{Ambi}(\scrP) \subset H^0(\hom_{(\scrA,\scrA)}(\scrP,\scrA))$ for the subspace of ambidextrous bimodule maps, meaning that they satisfy the analogue of \eqref{eq:delta-left-right}. Then, we have a canonical map
\begin{equation} \label{eq:map-to-bimodule-data}
\left\{ 
\parbox{17.5em}{
pairs $(\scrB,\scrF)$ whose associated bimodule is quasi-isomorphic to $\scrP$, up to \eqref{eq:bf-quasi-isomorphism}}\right\} \longrightarrow \mathit{Ambi}(\scrP)/\mathit{Aut}(\scrP).
\end{equation}

\begin{theorem} \label{th:relative-classification}
Take some $(\scrB,\scrF)$, with associated bimodule $\scrP$. Assume that the tensor powers of $\scrP$ satisfy
\begin{equation} \label{eq:classification-groups}
\left.
\begin{aligned} 
& H^{2-2r}(\mathit{hom}_{(\scrA,\scrA)}(\scrP^{\otimes_{\scrA} r}, \scrA)) = 0 \\
& H^{3-2r}(\mathit{hom}_{(\scrA,\scrA)}(\scrP^{\otimes_{\scrA} r}, \scrP)) = 0
\end{aligned}
\right\}\; \text{for all $r \geq 2$.}
\end{equation}
In this situation, the bimodule data determine $(\scrB,\scrF)$ in the following sense. Suppose we have another pair $(\tilde{\scrB},\tilde{\scrF})$, such that associated bimodule data satisfy \eqref{eq:pdelta-quasi-isomorphism} for some $\gamma$. Then, there is a diagram \eqref{eq:bf-quasi-isomorphism} such that the bimodule map induced by $\scrG$ is homotopic to $\gamma$.
\end{theorem}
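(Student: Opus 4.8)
The plan is to build the quasi-isomorphism $\scrG: \scrB \to \tilde\scrB$ by obstruction theory, filtering everything by powers of the mapping-cone bimodule $\scrP$. First I would set up the filtration: since $\scrB$ sits in the exact triangle $\scrP \to \scrA \xrightarrow{\scrF} \scrF^*\scrB \to \scrP[1]$ and $\scrP$ is (by Lemma \ref{th:ambidextrous}) ambidextrous, one can present $\scrB$ up to quasi-isomorphism on the graded space $\scrA \oplus \scrP[1] \oplus \scrP^{\otimes_{\scrA} 2}[2] \oplus \cdots$ with a ``weight'' grading, where $\scrA$ has weight $0$ and $\scrP^{\otimes r}[r]$ has weight $r$; the weight-$0$ part of the $A_\infty$-structure is that of $\scrA$, and the weight-increasing parts are governed by Hochschild-type cochains valued in $\mathit{hom}_{(\scrA,\scrA)}(\scrP^{\otimes r},\scrP^{\otimes s})$. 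The ambidexterity of $\rho$ is exactly what lets the higher tensor powers $\scrP^{\otimes r}$ carry well-defined (up to homotopy) bimodule maps to $\scrA$ and to each other, so this ``bar-type'' resolution is internally consistent. The point of the hypotheses \eqref{eq:classification-groups} is that the relevant obstruction and deformation groups for the weight-$\geq 2$ part vanish, so the weight $\leq 1$ data — which is precisely $\scrA$ together with the ambidextrous map $\rho: \scrP \to \scrA$ (equivalently the homotopy class of $\scrF$, or of $\epsilon$ in the geometric incarnation) — rigidifies everything.

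Concretely, here are the steps in order. (1) Normalize both $(\scrB,\scrF)$ and $(\tilde\scrB,\tilde\scrF)$: replace each by a quasi-isomorphic model living on the weight-graded space above, with weight-$0$ part equal to $\scrA$ and with the weight-$1$ part of the structure encoding $\rho$ (resp.\ $\tilde\rho$) — this uses only the exact triangle \eqref{eq:cone-bimodule} and is the analogue of the passage to $\tilde\scrB_q$ of Proposition \ref{th:1-variable-2}. (2) Use the given $\gamma: \scrP \to \tilde\scrP$ (a homotopy equivalence over $\scrA$ compatible with $\rho,\tilde\rho$) to build the weight $\leq 1$ part of a putative $A_\infty$-map $\scrG$: in weight $0$ it is $\mathrm{id}_\scrA$, in weight $1$ it is (induced by) $\gamma$, and on $\scrP^{\otimes r}$ for $r \geq 2$ it is $\gamma^{\otimes r}$ to leading order. (3) Inductively extend $\scrG$ over increasing weight: at weight $r$ the obstruction to extending the $A_\infty$-functor equations is a cocycle in $H^{2-2r}(\mathit{hom}_{(\scrA,\scrA)}(\scrP^{\otimes_{\scrA} r},\scrA))$ (controlling the component of $\scrG$ that lands back in $\scrA$) together with, from the next round of the same induction, $H^{3-2r}(\mathit{hom}_{(\scrA,\scrA)}(\scrP^{\otimes_{\scrA} r},\scrP))$-type ambiguities; both vanish for $r \geq 2$ by \eqref{eq:classification-groups}, so the extension exists and the induction closes. (4) Since $\scrG$ is weight-nondecreasing, is the identity on the weight-$0$ associated graded, and is a homotopy equivalence on the weight-$1$ associated graded (namely $\gamma$), a standard filtration/five-lemma argument shows $\scrG$ is a quasi-isomorphism; and by construction the bimodule map it induces is homotopic to $\gamma$, giving \eqref{eq:bf-quasi-isomorphism}.

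A couple of bookkeeping points worth flagging. The degree shifts in \eqref{eq:classification-groups} are the ones that make the arithmetic work: an $A_\infty$-functor component of weight $r$ with $d$ inputs raises degree by $1-d$, and tracking how the $r$-fold tensor power of $\scrP$ contributes its own grading shift turns the ``$\mu^d$ has degree $2-d$'' bookkeeping into the superscripts $2-2r$ and $3-2r$; I would not grind through this but simply note it matches the cone construction. Also, the statement only claims the induced bimodule map is homotopic to $\gamma$, not equal, so at each stage we are free to modify $\scrG$ by a coboundary, which is what absorbs the $H^{3-2r}$-ambiguity rather than requiring it to vanish outright (although here it does).

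The main obstacle is step (3): making precise the claim that the obstruction classes to extending $\scrG$ one weight at a time live exactly in the groups \eqref{eq:classification-groups}. This requires identifying the relevant piece of the $A_\infty$-functor deformation complex — with its weight filtration — with a sum of $\mathit{hom}_{(\scrA,\scrA)}(\scrP^{\otimes_{\scrA} r},\scrA)$ and $\mathit{hom}_{(\scrA,\scrA)}(\scrP^{\otimes_{\scrA} r},\scrP)$, and checking that the ambidexterity of Lemma \ref{th:ambidextrous} is precisely what guarantees the differentials in this complex are the expected ones (so that no cross-terms between different tensor powers spoil the vanishing). Everything else is a routine, if lengthy, obstruction-theoretic bootstrap of the kind already used in the proof of Proposition \ref{th:1-variable-2}.
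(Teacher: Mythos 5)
There is a genuine gap, and it sits at the very start of your construction. You propose to put a quasi-isomorphic model of $\scrB$ on the graded space $\scrA \oplus \scrP[1] \oplus \scrP^{\otimes_{\scrA}2}[2] \oplus \cdots$, and to prescribe the weight-$r$ component of $\scrG$ as ``$\gamma^{\otimes r}$ to leading order''. That is not the structure of the problem: once $\scrF$ is replaced by an inclusion, the cone presentation gives $\scrB \simeq \scrA \oplus \scrP[1]$ on the nose, with a \emph{two-step} weight filtration; the higher tensor powers $\scrP^{\otimes_{\scrA}r}$ never appear as summands of the algebra. They appear only as the sources in the weight-graded pieces of the \emph{deformation complex} $\frakw = W^1\mathit{Hom}(T(\scrB[1]),\scrB[1])$, via \eqref{eq:weight-cone} and the long exact sequence \eqref{eq:les-weight}. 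Correspondingly, ``$\gamma^{\otimes r}$'' does not even typecheck as a component of an $A_\infty$-homomorphism $\scrB \to \tilde\scrB$: it maps $\scrP^{\otimes r}$ to $\tilde\scrP^{\otimes r}$, which is not a summand of $\tilde\scrB$. So your step (1) builds the induction on a space that is not a model of $\scrB$, and your step (2) prescribes data that have no place to live.

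The second problem is that your step (3) --- which you yourself flag as ``the main obstacle'' --- is precisely where the mathematical content lies, and it is not routine from your setup. What the paper actually does is: (i) convert $\scrF$ and $\tilde\scrF$ into inclusions (Lemma \ref{th:turn-into-inclusion}, an obstruction argument whose graded pieces are acyclic cones, so no hypotheses are needed there); (ii) use Lemma \ref{th:bimodule-transfer} to replace $\tilde\scrB$ by a model whose underlying bimodule is literally $\scrP$ and whose map to $\scrA$ is homotopic to $\rho$ (this step also needs no hypotheses); and only then (iii) compare the two resulting $A_\infty$-structures on the \emph{same} space $\scrA\oplus\scrP[1]$ as Maurer--Cartan elements of $\frakw$, where \eqref{eq:classification-groups}, fed through \eqref{eq:les-weight}, gives $H^1(W^r\frakw/W^{r+1}\frakw)=0$ for $r\geq 2$ and the injectivity of the first-order map, so that agreement of the classes $[\rho]$ forces gauge equivalence (Lemma \ref{th:first-order}). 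Your proposal collapses (i)--(ii) into an unjustified ``normalization'' and then asserts the identification of obstruction groups for a direct functor-by-induction construction without deriving it; moreover, your suggestion that ambidexterity (Lemma \ref{th:ambidextrous}) is what keeps the differentials of the deformation complex in the expected form is a misattribution --- ambidexterity is only needed to define the invariant \eqref{eq:map-to-bimodule-data}, and plays no role in the proof of the theorem; the differentials in \eqref{eq:weight-cone} are determined by the $A_\infty$-structure of $\scrA$ and the bimodule structure of $\scrP$ alone. To repair the argument you would essentially have to reproduce the three reduction/uniqueness lemmas above, at which point you have the paper's proof.
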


One can rephrase the main implication of Theorem \ref{th:relative-classification} as follows:

\begin{corollary} \label{th:concrete-classification}
If \eqref{eq:classification-groups} holds, \eqref{eq:map-to-bimodule-data} is injective.
\end{corollary}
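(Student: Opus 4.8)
The plan is to deduce this directly from Theorem~\ref{th:relative-classification}, the work being entirely a matter of unwinding the definition of the map~\eqref{eq:map-to-bimodule-data} and of the quotient $\mathit{Ambi}(\scrP)/\mathit{Aut}(\scrP)$ on its right-hand side. So the first step is to make ``injective'' concrete. Suppose $(\scrB,\scrF)$ and $(\tilde\scrB,\tilde\scrF)$ are two pairs in the source of~\eqref{eq:map-to-bimodule-data} with the same image; write $\scrP_{\scrB}$, $\scrP_{\tilde\scrB}$ for their associated cone bimodules~\eqref{eq:cone-bimodule}, with projections $\rho$, $\tilde\rho$. Fix quasi-isomorphisms of $\scrA$-bimodules $\beta\colon \scrP \to \scrP_{\scrB}$ and $\tilde\beta\colon \scrP \to \scrP_{\tilde\scrB}$ exhibiting these as the classes $[\rho\circ\beta]$ and $[\tilde\rho\circ\tilde\beta]$ in $\mathit{Ambi}(\scrP)/\mathit{Aut}(\scrP)$ (both are ambidextrous by Lemma~\ref{th:ambidextrous}, transported along the respective quasi-isomorphism). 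Equality of the two classes means there is $\phi \in \mathit{Aut}(\scrP)$ with $\rho\circ\beta \simeq \tilde\rho\circ\tilde\beta\circ\phi$ in $H^0(\mathit{hom}_{(\scrA,\scrA)}(\scrP,\scrA))$.

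Next I would build the bimodule intertwiner required by Theorem~\ref{th:relative-classification}. Since $\beta$ is a quasi-isomorphism it is invertible in the homotopy category of $\scrA$-bimodules, so set $\gamma := \tilde\beta\circ\phi\circ\beta^{-1}\colon \scrP_{\scrB} \to \scrP_{\tilde\scrB}$, again a quasi-isomorphism. Then $\tilde\rho\circ\gamma \simeq \rho$, which is precisely the homotopy-commutative diagram~\eqref{eq:pdelta-quasi-isomorphism} for the pairs $(\scrP_{\scrB},\rho)$ and $(\scrP_{\tilde\scrB},\tilde\rho)$. Because $\scrP_{\scrB}$ is quasi-isomorphic to $\scrP$, and the cohomology groups appearing in~\eqref{eq:classification-groups} depend on the bimodule only through its quasi-isomorphism type (derived tensor powers of quasi-isomorphic bimodules are quasi-isomorphic, and likewise the relevant $\mathit{hom}$-complexes), the hypothesis~\eqref{eq:classification-groups} holds with $\scrP_{\scrB}$ in place of $\scrP$. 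Theorem~\ref{th:relative-classification}, applied with $(\scrB,\scrF)$ as the fixed pair and $(\tilde\scrB,\tilde\scrF)$ as the ``other'' one, then produces a homotopy-commutative triangle~\eqref{eq:bf-quasi-isomorphism} with $\scrG$ a quasi-isomorphism; that is exactly the statement that $(\scrB,\scrF)$ and $(\tilde\scrB,\tilde\scrF)$ represent the same point of the source of~\eqref{eq:map-to-bimodule-data}. (The additional conclusion of Theorem~\ref{th:relative-classification}, that $\scrG$ may be arranged to induce $\gamma$ on bimodules, is not needed here.)

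There is no genuinely hard step: all the substance sits in Theorem~\ref{th:relative-classification} (and, upstream of it, in the ambidexterity statement Lemma~\ref{th:ambidextrous}, which is what makes $\mathit{Ambi}(\scrP)$ the correct target in the first place). The only thing demanding a little care is the bookkeeping of ambiguities --- the auxiliary quasi-isomorphisms $\beta,\tilde\beta$ and the automorphism $\phi$ --- and the observation that ``same image in $\mathit{Ambi}(\scrP)/\mathit{Aut}(\scrP)$'' translates verbatim into the hypothesis~\eqref{eq:pdelta-quasi-isomorphism} needed to invoke the theorem. One should also recall, in passing, that the well-definedness of~\eqref{eq:map-to-bimodule-data} (independence of the chosen $\beta$) has already been established when that map was introduced, so no circularity is involved in choosing these quasi-isomorphisms.
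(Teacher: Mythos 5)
Your proposal is correct and follows the paper's intended route: the paper offers no separate argument, presenting Corollary \ref{th:concrete-classification} as a direct rephrasing of Theorem \ref{th:relative-classification}, and your write-up simply makes that translation explicit (choice of $\beta,\tilde\beta$, the automorphism $\phi$, the induced $\gamma$ realizing \eqref{eq:pdelta-quasi-isomorphism}, and the quasi-isomorphism invariance of the hypothesis \eqref{eq:classification-groups}). Nothing further is needed.
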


It is easy to impose additional conditions that make \eqref{eq:map-to-bimodule-data} a bijection (namely, those in \eqref{eq:one-degree-higher} below).

\begin{example} (A silly example, but one that provides a check on why the degrees in \eqref{eq:classification-groups} are the relevant ones.)  Suppose that $\scrB$ is a strictly unital $A_\infty$-algebra, and not zero, so that the unit map $\bQ \rightarrow \scrB$ is injective. Then, setting $\scrA = \bQ$, the unit map is an $A_\infty$-homomorphism, with $\scrP = (\scrB/\bQ)[-1]$. The cohomology groups that appear in \eqref{eq:classification-groups} are 
\begin{equation} \label{eq:classification-groups-x}
\left.
\begin{aligned}
& \mathit{Hom}^{2-r}( (\scrB/\bQ)^{\otimes r}, \bQ), \\
& \mathit{Hom}^{2-r}( (\scrB/\bQ)^{\otimes r}, \scrB/\bQ)
\end{aligned} \right\}
\; \text{for $r \geq 2$,}
\end{equation}
which together add up to $\mathit{Hom}^{2-r}( (\scrB/\bQ)^{\otimes r}, \scrB)$. If one thinks of $\scrB$ just as a chain complex containing a distinguished unit element, then these groups parametrize the choices of potential $A_\infty$-operations on $\scrB$ for which our distinguished element is the strict unit (``potential'' because that does not take the associativity constraints into account). As a more concrete (and trivial) example, take $\scrB = \bQ \oplus \bQ[1]$, a rare case in which \eqref{eq:classification-groups-x} vanish, and where the unitality condition determines $\mu^d_{\scrB}$, $d \geq 2$, uniquely. The remaining operation $\mu_{\scrB}^1$ corresponds exactly to the choice of  $\rho$. 
\end{example}
%
%
%

\subsection{Simplifying the homomorphism}
The proof of Theorem \ref{th:relative-classification} will be broken down into several steps. The first stage is turning $A_\infty$-homomorphisms into inclusions.

\begin{lemma} \label{th:turn-into-inclusion}
Let $\tilde\scrF: \scrA \rightarrow \tilde\scrB$ be an $A_\infty$-homomorphism. Suppose that we have a chain complex $\scrB$, containing $\scrA$ as a subcomplex, together with a quasi-isomorphism $\scrG^1: \scrB \rightarrow \tilde\scrB$ such that $\scrG^1|\scrA = \tilde{\scrF}^1$. Then there is an $A_\infty$-algebra structure on $\scrB$ (with the differential as before) and an $A_\infty$-homomorphism $\scrG: \scrB \rightarrow \tilde\scrB$ (with the linear part $\scrG^1$ as before), such that $\scrA \subset \scrB$ is an $A_\infty$-subalgebra, and $\scrG|\scrA = \tilde{\scrF}$. In other words, there is a diagram \eqref{eq:bf-quasi-isomorphism} where $\scrF$ is an inclusion, and which is strictly commutative.
\end{lemma}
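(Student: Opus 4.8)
The plan is to build the $A_\infty$-structure on $\scrB$ and the homomorphism $\scrG$ simultaneously, by a standard obstruction-theoretic (homological perturbation) argument, transporting the structure along $\scrG^1$. Since $\scrG^1\colon \scrB \to \tilde\scrB$ is a quasi-isomorphism of chain complexes, we may choose a chain homotopy inverse $\scrH^1\colon \tilde\scrB \to \scrB$ and a homotopy $\scrG^1\scrH^1 \simeq \mathit{id}$ (and, if convenient, arrange $\scrH^1\scrG^1 = \mathit{id}$ on a complement, or use the homotopy transfer formulae in the form that does not require this). Feeding the $A_\infty$-operations $\mu^d_{\tilde\scrB}$ through these data via the usual sum-over-trees formulae produces an $A_\infty$-structure $\mu^d_{\scrB}$ on $\scrB$ (with $\mu^1_{\scrB}$ the given differential, since that is preserved) together with an $A_\infty$-quasi-isomorphism $\scrG\colon \scrB \to \tilde\scrB$ whose linear term is $\scrG^1$. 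This is the unconstrained version of the statement.

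The real content is the compatibility with $\scrA$: we must arrange that $\scrA \subset \scrB$ is a genuine $A_\infty$-subalgebra (i.e. $\mu^d_{\scrB}(\scrA^{\otimes d}) \subset \scrA$ and these restrictions agree with $\mu^d_{\scrA}$) and that $\scrG|\scrA = \tilde\scrF$ on the nose. The clean way to do this is to run the transfer argument \emph{relative} to $\scrA$: choose the homotopy inverse $\scrH^1$ and the contracting homotopy so that they restrict correctly on $\tilde\scrF^1(\scrA) \subset \tilde\scrB$ — concretely, split $\scrB = \scrA \oplus \scrB'$ as chain complexes (using that $\scrA$ is a subcomplex) and split $\tilde\scrB$ compatibly via $\tilde\scrF^1$ up to homotopy, then build all auxiliary data as block-triangular maps fixing the $\scrA$-summand. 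With such adapted data, the tree-sum formulae, when all inputs lie in $\scrA$, collapse to exactly the $\scrA$-operations and $\tilde\scrF$, because every internal edge stays inside $\scrA$ and the homotopy acts as the identity there. Thus $\mu^d_{\scrB}|_{\scrA^{\otimes d}} = \mu^d_{\scrA}$ and $\scrG^d|_{\scrA^{\otimes d}} = \tilde\scrF^d$, giving strict commutativity of \eqref{eq:bf-quasi-isomorphism}.

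Alternatively, and perhaps more in the spirit of the paper, one can proceed by induction on $d$: suppose $\mu^{<d}_{\scrB}$ and $\scrG^{<d}$ have been constructed with the subalgebra and strict-compatibility properties. The $A_\infty$-associativity equation in order $d$ and the $A_\infty$-functor equation in order $d$ express $\mu^d_{\scrB}$ and $\scrG^d$ as solutions of an inhomogeneous linear problem whose inhomogeneity is a cocycle built from lower terms; since $\scrG^1$ is a quasi-isomorphism, the relevant obstruction space vanishes and solutions exist. The point is that the inhomogeneous terms, restricted to $\scrA^{\otimes d}$, already agree with what $\mu^d_{\scrA}$ and $\tilde\scrF^d$ would contribute (by the inductive hypothesis and the $A_\infty$-equations for $\scrA$ and $\tilde\scrF$), so one may \emph{choose} the extension to satisfy $\mu^d_{\scrB}|_{\scrA^{\otimes d}} = \mu^d_{\scrA}$, $\scrG^d|_{\scrA^{\otimes d}} = \tilde\scrF^d$; the freedom in the choice of extension exactly accommodates this because restriction to $\scrA$ is surjective on the relevant cochain groups (here one uses the chain-level splitting $\scrB = \scrA \oplus \scrB'$).

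The main obstacle is bookkeeping rather than conceptual: ensuring that the inductive choices can be made compatibly on \emph{all} of $\scrB^{\otimes d}$ while being pinned down on $\scrA^{\otimes d}$, i.e. that the restriction maps on the Hochschild-type cochain complexes governing the extension problem are surjective and that the cocycle condition is compatible with the prescribed values on $\scrA$. This is where the hypothesis that $\scrA \subset \scrB$ is a subcomplex (not merely a sub-up-to-homotopy) is used, and where one must be slightly careful with signs (following the conventions of \cite{seidel04}); once the splitting is fixed, everything is formal. I expect no cohomological vanishing hypotheses beyond those automatic from $\scrG^1$ being a quasi-isomorphism are needed here — unlike Theorem \ref{th:relative-classification}, this lemma is a pure normal-form statement.
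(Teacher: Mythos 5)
Your second ("alternative") argument is essentially the paper's own proof. The paper constructs $(\mu_{\scrB},\scrG)$ exactly by fixing arbitrary extensions $\mu_{\scrB}^{\mathit{given},d}$ of $\mu^d_{\scrA}$ and $\scrG^{\mathit{given},d}$ of $\tilde\scrF^d$, and solving for correction terms that vanish on $\scrA^{\otimes d}$; the only difference is packaging: instead of an ad hoc induction it forms the curved $L_\infty$-algebra $\frakc = \mathit{Hom}(T^{>1}\scrB[1]/T^{>1}\scrA[1],\scrB[1]\oplus\tilde\scrB)$, filters by tensor length, and observes that the associated graded pieces are $\mathit{Hom}$ of $H^*(\scrB)^{\otimes r}/H^*(\scrA)^{\otimes r}$ into the cohomology of the cone of $\scrG^1$, which vanishes since $\scrG^1$ is a quasi-isomorphism. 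That formalism is precisely what discharges the bookkeeping point you flag at the end (why the order-$d$ obstruction is a cocycle in the relative complex); the paper even remarks that one could instead verify this by hand, as you propose. So the inductive route is correct and matches the paper, and you are right that no vanishing hypotheses beyond acyclicity of the cone are needed.

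Your primary route via homotopy transfer, however, has a genuine gap as stated. The claim that with ``block-triangular'' contraction data the tree-sum formulas, evaluated on inputs from $\scrA$, collapse to $\mu_{\scrA}$ and $\tilde\scrF$ ``because every internal edge stays inside $\scrA$'' is not justified: already $\mu^2_{\tilde\scrB}(\tilde\scrF^1 a_1,\tilde\scrF^1 a_2)$ is not $\tilde\scrF^1\mu^2_{\scrA}(a_1,a_2)$, only cohomologous to it via $\tilde\scrF^2$, so after one operation the internal edges live in $\tilde\scrB$ with no relation to $\scrA$, and there is no evident choice of homotopy $h$ making the transferred operations restrict on the nose to the prescribed structure and making $\scrG|\scrA = \tilde\scrF$ strictly; arranging this is essentially the relative extension problem again. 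A related slip: a subcomplex $\scrA\subset\scrB$ over a field splits off degreewise as graded vector spaces, but not in general as a chain complex, so the splitting $\scrB=\scrA\oplus\scrB'$ you invoke is only a graded one --- which suffices for the surjectivity of restriction used in the inductive argument, but not for the ``$\scrA$-summand-preserving'' transfer data you want in the first argument.
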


\begin{proof}
Since this is the first of several similar arguments, we'll give ourselves some expository leeway. The space $\mathit{Hom}(T(\scrB[1]),\scrB[1])$ comes with a graded bracket $[\cdot,\cdot]$, which is natural if one thinks of it as the space of coderivations of the tensor coalgebra. (Our sign conventions are such that a graded Lie algebra is a special case of an $L_\infty$-structure.) Write $T^{> r}(\scrB[1]) = T(\scrB[1])/\bigoplus_{d \leq r} T(\scrB[1])^{\otimes d}$. The $A_\infty$-associativity equations can be written as 
\begin{equation} \label{eq:classical-mc}
\half [\mu_{\scrB},\mu_{\scrB}] = 0 \;\; \text{ for }\mu_{\scrB} = (\mu_{\scrB}^d)_{d \geq 1} 
\in \mathit{Hom}^1(T^{>0}\scrB[1],\scrB[1]) \subset \mathit{Hom}^1(T(\scrB[1]),\scrB[1]).
\end{equation}
To adapt this picture to our situation, where we have an existing differential on $\scrB$ and an existing $A_\infty$-structure on $\scrA \subset \scrB$, we proceed as follows. For $d \geq 2$, choose $\mu_{\scrB}^{\mathit{given},d}$ to be some extension of $\mu_{\scrA}^d$ to a map $\scrB^{\otimes d} \rightarrow \scrB[2-d]$. We are looking for an $A_\infty$-algebra with the given $\mu_{\scrB}^1$, and with
\begin{equation} \label{eq:modified-mu-b}
\mu_{\scrB}^d = \mu_{\scrB}^{\mathit{given},d} + \mu_{\scrB}^{\mathit{new},d} \;\; \text{for $d >1$,}
\end{equation}
where the new term vanishes on $\scrA^{\otimes d}$. Thus, $\mu_{\scrB}^{\mathit{new}}$ is an element of
\begin{equation} \label{eq:b-lie}
\frakb = \mathit{Hom}(T^{>1} \scrB[1]/ T^{>1} \scrA[1], \scrB[1]).
\end{equation}
This space comes with the structure of a curved dg Lie algebra (which we think of as a curved $L_\infty$-structure with operations $\lambda_{\frakb}^d$ that vanish for $d > 2$). To define that structure, we start with the previously used bracket and set
\begin{equation} \left\{
\begin{aligned}
& \lambda^0_{\frakb} = \half [\mu_{\scrB}^{\mathit{given}}, \mu_{\scrB}^{\mathit{given}}], \\
& \lambda^1_{\frakb} = [\mu_{\scrB}^{\mathit{given}},\cdot] = [\cdot,\mu_{\scrB}^{\mathit{given}}], \\
& \lambda^2_{\frakb} = [\cdot,\cdot].
\end{aligned} \right.
\end{equation}
The $A_\infty$-associativity equation for \eqref{eq:modified-mu-b} can be written as a curved Maurer-Cartan equation
\begin{equation} \label{eq:012-mc}
\lambda^0_{\frakb} + \lambda^1_{\frakb}(\mu_{\scrB}^{\mathit{new}}) + \half \lambda^2_\frakb(\mu_{\scrB}^{\mathit{new}},\mu_{\scrB}^{\mathit{new}}) = 0.
\end{equation}
We have a complete decreasing filtration of $\frakb$, whose pieces $F^r \frakb$, $r \geq 1$, consist of those multilinear maps which vanish on $\bigoplus_{d \leq r} \scrB^{\otimes d}$. Our curved dg Lie structure is compatible with that filtration.
That makes it possible to analyze \eqref{eq:012-mc} through order-by-order obstruction theory, by which we mean looking at the cohomology of the associated graded spaces. Ignoring shifts in the grading, for the sake of brevity, these are
\begin{equation} \label{eq:b-filtered}
H^*(F^{r-1} \frakb/F^r \frakb) = \mathit{Hom}\big(H^*(\scrB)^{\otimes r}/H^*(\scrA)^{\otimes r}, H^*(\scrB)\big).
\end{equation}

Let's take a look at the other piece of our problem, $A_\infty$-homomorphisms, by itself. Namely, suppose that we are given $A_\infty$-algebras $\scrB$ and $\tilde{\scrB}$, and want to construct an $A_\infty$-homomorphism $\scrG: \scrB \rightarrow \tilde{\scrB}$. The $A_\infty$-homomorphism property can be written as an extended Maurer-Cartan equation,
\begin{equation} \label{eq:l-infinity-mc}
\lambda^1(\scrG) + \half \lambda^2(\scrG,\scrG) + \textstyle\frac16 \lambda^3(\scrG,\scrG,\scrG) + \cdots = 0,
\end{equation}
where the $\lambda^d$ are $L_\infty$-operations on $\mathit{Hom}(T^{>0}(\scrB[1]),\tilde{\scrB})$. As before, we can introduce a relative version, in which we start with a given $\tilde\scrF: \scrA \rightarrow \tilde\scrB$ defined on an $A_\infty$-subalgebra $\scrA \subset \scrB$, as well as an extension $\scrG^1: \scrB \rightarrow \tilde{\scrB}$ of $\tilde\scrF^1$ as a chain map. One proceeds as before: first, pick arbitrary extensions $\scrG^{\mathit{given},d}$ of the $\tilde{\scrF}^d$, for $d \geq 2$; then, look for a solution $\scrG^d = \scrG^{\mathit{given},d} + \scrG^{\mathit{new},d}$, $d \geq 2$, where $\scrG^{\mathit{new},d}$ vanishes
on $\scrA^{\otimes d}$. This means that $\scrG^{\mathit{new}}$ lies in
\begin{equation} \label{eq:g-lie}
\frakg = \mathit{Hom}(T^{>1}\scrB[1]/T^{>1}\scrA[1], \tilde{\scrB}).
\end{equation}
That space has the structure of a curved $L_\infty$-algebra, defined in terms of the previous one as
\begin{equation}
\begin{aligned}
& \lambda^0_\frakg = \lambda^1(\scrG^\mathit{given}) + \half \lambda^2(\scrG^{\mathit{given}},\scrG^{\mathit{given}}) + \textstyle\frac16 \lambda^3(\scrG^{\mathit{given}},\scrG^{\mathit{given}},\scrG^{\mathit{given}}) + \cdots, \\
& \lambda^1_\frakg = \lambda^1(\cdot) + \lambda^2(\scrG^{\mathit{given}},\cdot) + \half \lambda^3(\scrG^{\mathit{given}}, \scrG^{\mathit{given}}, \cdot) + 
\textstyle\frac16 \lambda^4(\scrG^{\mathit{given}},\scrG^{\mathit{given}},\scrG^{\mathit{given}},\cdot) + 
\cdots, \\
& \dots
\end{aligned}
\end{equation}
Then, the equation for $\scrG^{\mathit{new}}$ is a version of \eqref{eq:l-infinity-mc} with curvature term:
\begin{equation} \label{eq:l0-infinity}
\lambda^0_\frakg + \lambda^1_\frakg(\scrG^{\mathit{new}}) + \half \lambda^2_\frakg(\scrG^{\mathit{new}},\scrG^{\mathit{new}})+ \textstyle\frac16 \lambda^3_\frakg(\scrG^{\mathit{new}},\scrG^{\mathit{new}},\scrG^{\mathit{new}}) + \cdots = 0.
\end{equation}
The analogue of \eqref{eq:b-filtered}, for the same kind of filtration by tensor length of inputs as before, is 
\begin{equation} \label{eq:g-filtered}
H^*(F^{r-1} \frakg/F^r\frakg) = \mathit{Hom}\big(H^*(\scrB)^{\otimes r}/H^*(\scrA)^{\otimes r}, H^*(\tilde\scrB)\big).
\end{equation}

Finally, one can combine the two theories into one which covers the construction of the pair $(\scrB,\scrG)$. We will directly proceed to the version which works relatively to $\scrA$. One can write this as an equation \eqref{eq:l0-infinity} for the pair  $(\mu_{\scrB}^\mathit{new}, \scrG^{\mathit{new}})$, in a curved $L_\infty$-algebra
\begin{equation} \label{eq:c-lie-algebra}
\frakc = \mathit{Hom}(T^{>1}\scrB[1]/T^{>1}\scrA[1], \scrB[1] \oplus \tilde{\scrB}).
\end{equation}
As a graded vector space, this is the direct sum of \eqref{eq:b-lie} and \eqref{eq:g-lie}, but the curved $L_\infty$-operations have cross-terms, which is natural since the property of $\scrG$ being an $A_\infty$-homomorphism depends on our choice of $\mu_{\scrB}$. What's relevant for us is the analogue of \eqref{eq:b-filtered} and \eqref{eq:g-filtered}, namely
\begin{equation} \label{eq:p-filtered}
H^*(F^{r-1} \frakc/F^{r}\frakc) = \mathit{Hom}\big(H^{*}(\scrB)^{\otimes r}/H^{*}(\scrA)^{\otimes r}, H^*( \{\scrB \stackrel{\scrG^1}\rightarrow \tilde{\scrB} \}) \big).
\end{equation}
In the situation we are considering, the cone in \eqref{eq:p-filtered} is acyclic, so the cohomology is zero. It then follows by a standard obstruction theory argument that a solution $(\scrB,\scrG)$ always exists.
\end{proof}

\begin{remark}
Since we have ultimately just worked order-by-order with respect to the filtration, $L_\infty$-structures are not really necessary, and one could rewrite the argument without appealing to that formalism (that removes all denominators from the argument, hence makes it available in arbitrary characteristic). However, one would then need to check by hand that the obstruction terms which occur are cocycles in $F^{r-1} \frakc/F^r \frakc$. Fitting our problem into an $L_\infty$-framework provides a more conceptual explanation.
\end{remark}

\subsection{$A_\infty$-subalgebras}
With Lemma \ref{th:turn-into-inclusion} in mind, we simplify the problem under discussion, considering only inclusions of $A_\infty$-subalgebras $\scrC \subset \scrB$. In this case, it is convenient to replace \eqref{eq:cone-bimodule} by the equivalent
\begin{equation}
\left\{
\begin{aligned}
& \scrP = (\scrB/\scrC)[-1], \text{ as an $A_\infty$-bimodule over $\scrC$,} \\
& \rho = \text{ the boundary map of the short exact sequence $0 \rightarrow \scrC \rightarrow \scrB \rightarrow \scrP[1] \rightarrow 0$.}
\end{aligned}
\right.
\end{equation}
We consider the weight filtration on $\scrB$ as in \eqref{eq:weights}. Suppose that $\scrC$ and $\scrP$ (or equivalently, $GrW\scrB$) are given. The construction of the rest of the $A_\infty$-structure can be thought of as solving a Maurer-Cartan equation in a dg Lie algebra
\begin{equation} \label{eq:frakw}
\frakw = W^1\mathit{Hom}(T(\scrB[1]), \scrB[1]),
\end{equation}
whose complete decreasing filtration by weight satisfies
\begin{equation} \label{eq:weight-cone}
W^r \frakw / W^{r+1} \frakw \iso \big\{
\mathit{hom}_{(\scrC,\scrC)}(\scrP[2]^{\otimes_{\scrC} r}, \scrC) \longrightarrow
\mathit{hom}_{(\scrC,\scrC)}(\scrP[2]^{\otimes_{\scrC} r+1}, \scrP[2]) \big\}.
\end{equation}
This means that there is a long exact sequence
\begin{equation} \label{eq:les-weight}
\begin{aligned}
& \cdots \rightarrow H^{*-2r}(\mathit{hom}_{(\scrC,\scrC)}(\scrP^{\otimes_{\scrC} r+1}, \scrP)) \longrightarrow
H^*(W^r \frakw / W^{r+1} \frakw)
\\ & \qquad \qquad \longrightarrow H^{*+1-2r}(\mathit{hom}_{(\scrC,\scrC)}(\scrP^{\otimes_{\scrC} r}, \scrC)) \rightarrow \cdots,
\end{aligned}
\end{equation}
see \cite[Equation (2.61)]{seidel14b}. As discussed there, the connecting homomorphism is $\rho \mapsto \mathit{id} \otimes_{\scrC} \rho - \rho \otimes_{\scrC} \mathit{id}$. 
%

\begin{lemma} \label{th:first-order}
(A minor generalization of \cite[Lemma 2.12]{seidel14b}) Fix $\scrC$ and a bimodule $\scrP$, such that \eqref{eq:classification-groups} holds. Consider $A_\infty$-structures on $\scrB = \scrC \oplus \scrP[1]$ which extend the given structure on $\scrA$, and induce the given bimodule structure on $\scrP$. If two such structures give rise to the same class $[\rho] \in H^0(\hom_{(\scrC,\scrC)}(\scrP,\scrC))$, they must be related by an $A_\infty$-homomorphism which restricts to the identity on $\scrC$, and which induces the identity on $\scrP$.
\end{lemma}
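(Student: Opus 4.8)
The plan is to translate the problem into deformation theory inside the weight-filtered dg Lie algebra $\frakw$ of \eqref{eq:frakw}, and to match the two $A_\infty$-structures by a gauge transformation built up one filtration layer at a time. Since $\scrC$ and the $\scrC$-bimodule $\scrP$ are fixed, the weight-$0$ part $\mu^{(0)}_\scrB$ of any admissible structure on $\scrB=\scrC\oplus\scrP[1]$ is the trivial (square-zero) extension structure, so $\tfrac12[\mu^{(0)}_\scrB,\mu^{(0)}_\scrB]=0$ and $\frakw$ is an honest (uncurved) dg Lie algebra with differential $\partial=[\mu^{(0)}_\scrB,\,\cdot\,]$, which preserves the weight grading. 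Admissible $A_\infty$-structures correspond to Maurer--Cartan solutions $a\in\frakw$ (solving $\partial a+\tfrac12[a,a]=0$), and an $A_\infty$-homomorphism $\scrB\to\scrB$ that restricts to the identity on $\scrC$ and induces the identity on $\scrB/\scrC=\scrP[1]$ acts on these solutions as a gauge transformation: its length-$1$, weight-$1$ component is exactly a map $\scrP[1]\to\scrC$, and more generally such homomorphisms are governed by the degree-$0$, weight-$\geq 1$ part of $\frakw$, as in \cite{seidel14b}. Let $a$ and $\tilde a$ be the Maurer--Cartan elements attached to the two given $A_\infty$-structures.

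The homological input is as follows. From the long exact sequence \eqref{eq:les-weight} together with the hypothesis \eqref{eq:classification-groups} one gets $H^1(W^r\frakw/W^{r+1}\frakw)=0$ for every $r\geq 2$: the groups flanking it are $H^{2-2r}(\mathit{hom}_{(\scrC,\scrC)}(\scrP^{\otimes_{\scrC} r},\scrC))$ and, after setting $s=r+1$, $H^{3-2s}(\mathit{hom}_{(\scrC,\scrC)}(\scrP^{\otimes_{\scrC} s},\scrP))$, both of which vanish by \eqref{eq:classification-groups}. For $r=1$ the same sequence, using the $s=2$ case of the second vanishing in \eqref{eq:classification-groups}, yields an injection $H^1(W^1\frakw/W^2\frakw)\hookrightarrow H^0(\mathit{hom}_{(\scrC,\scrC)}(\scrP,\scrC))$, and this map is the one that reads off the class $[\rho]$ (its image is the subspace of ambidextrous maps, consistently with Lemma \ref{th:ambidextrous}).

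Now I run the induction: I claim that for each $r\geq 1$ there is an $A_\infty$-homomorphism $\scrG_r\colon(\scrB,a)\to(\scrB,\tilde a)$ of the allowed type with $(\scrG_r)_*a\equiv\tilde a\pmod{W^{r+1}\frakw}$. For $r=1$: $a$ and $\tilde a$ both lie in $W^1\frakw$, so their weight-$1$ parts are $\partial$-cocycles in $W^1\frakw/W^2\frakw$; under the injection above their classes go to $[\rho]$ and $[\tilde\rho]$, which are equal by hypothesis, so the classes themselves coincide and a gauge transformation $\scrG_1$ by a degree-$0$ element of weight $\geq 1$ realizes a primitive. For the step $r\to r+1$, replace $a$ by $(\scrG_r)_*a$, so that $c:=a-\tilde a\in W^{r+1}\frakw$; subtracting the two Maurer--Cartan equations and noting $[a,a]-[\tilde a,\tilde a]=2[\tilde a,c]+[c,c]\in W^{r+2}\frakw$ shows $\partial c\in W^{r+2}\frakw$, i.e. $c$ is a cocycle in $W^{r+1}\frakw/W^{r+2}\frakw$; since $r+1\geq 2$ this group has vanishing $H^1$, so $c$ is a coboundary there, and a gauge transformation by a degree-$0$ primitive of weight $\geq r+1$ produces $\scrG_{r+1}=\scrH_{r+1}\circ\scrG_r$ with $\scrH_{r+1}-\mathit{id}$ of weight $\geq r+1$. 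Finally the infinite composite $\cdots\circ\scrH_3\circ\scrH_2\circ\scrH_1$ converges in the weight-adic topology, since for fixed arity $d$ an operation has weight at most $d$ and hence is affected only by $\scrH_1,\dots,\scrH_d$; the limit is an $A_\infty$-homomorphism $\scrG$ of the required type with $\scrG_*a=\tilde a$.

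The main obstacle is the weight-$1$ step, where the relevant cohomology group does not vanish and one must correctly identify the boundary map $H^1(W^1\frakw/W^2\frakw)\to H^0(\mathit{hom}_{(\scrC,\scrC)}(\scrP,\scrC))$ of \eqref{eq:les-weight} with ``extract $[\rho]$'' and verify its injectivity; this is precisely the point at which the hypothesis $[\rho]=[\tilde\rho]$ is consumed, and after it all steps in weights $\geq 2$ are routine obstruction theory controlled by \eqref{eq:classification-groups}. A secondary technical point is the bookkeeping matching $A_\infty$-homomorphisms that are the identity on $\scrC$ and induce the identity on $\scrP[1]$ with degree-$0$, weight-$\geq 1$ gauge parameters in $\frakw$, and checking that the primitives produced along the way have the correct arity and weight so the homomorphisms built from them are genuinely of the prescribed type; both are handled exactly as in \cite{seidel14b}.
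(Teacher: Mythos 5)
Your argument is correct and follows essentially the same route as the paper: the paper deduces from \eqref{eq:les-weight} and \eqref{eq:classification-groups} that $H^1(W^r\frakw/W^{r+1}\frakw)=0$ for $r\geq 2$ and that $H^1(\frakw/W^2\frakw)\to H^0(\mathit{hom}_{(\scrC,\scrC)}(\scrP,\scrC))$ is injective, identifies the first-order class of a Maurer--Cartan solution with $[\rho]$, and then invokes standard obstruction theory, with equivalence in $\frakw$ being exactly the class of homomorphisms in the statement. Your order-by-order gauge-matching induction (including the convergence remark for the infinite composite) is precisely the ``standard fact from obstruction theory'' that the paper leaves implicit, so there is no substantive difference.
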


\begin{proof}
In view of \eqref{eq:les-weight}, the assumption \eqref{eq:classification-groups} means that $H^1(W^r \frakw/ W^{r+1} \frakw) = 0$ for all $r \geq 2$, and that 
\begin{equation} \label{eq:weight-1}
H^1(\frakw/W^2 \frakw) \longrightarrow H^0(\mathit{hom}_{(\scrC,\scrC)}(\scrP,\scrC))
\end{equation}
is injective. A solution of the Maurer-Cartan equation yields, at first order, a cocycle in $\frakw/W^2 \frakw$. It is not hard to see that the image of that cohomology class under \eqref{eq:weight-1} is exactly $[\rho]$. Given that, it is a standard fact from obstruction theory that this class determines the Maurer-Cartan solution up to equivalence. In our context, equivalence in $\frakw$ is precisely given by the relation described in the statement of this Lemma.
\end{proof}

There is one final part to our construction, which is a transfer result allowing one to replace the bimodule by a quasi-isomorphic one:

\begin{lemma} \label{th:bimodule-transfer}
Let $\scrC \subset \tilde\scrB$ be an $A_\infty$-subalgebra, with associated bimodule $\tilde\scrP$. Suppose that we are given a bimodule $\scrP$ and quasi-isomorphism $\gamma: \scrP \rightarrow \tilde{\scrP}$. Then there is another $A_\infty$-algebra $\scrB$ containing $\scrC$, with associated bimodule $\scrP$, and an $A_\infty$-homomorphism $\scrG: \scrB \rightarrow \tilde{\scrB}$ such that $\scrG|\scrC = \mathit{id}$, and where the induced map of $A_\infty$-bimodules equals $\gamma$.
\end{lemma}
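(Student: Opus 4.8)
The plan is to run the order-by-order obstruction argument from the proof of Lemma~\ref{th:turn-into-inclusion}, applied to the \emph{simultaneous} construction of $\scrB$ and $\scrG$, but refined by the weight filtration so as to pin down the induced bimodule structure. The point will be that no hypothesis of the form \eqref{eq:classification-groups} is needed, because the obstruction groups that appear are cohomologies of $\mathit{hom}$-complexes with acyclic target, the acyclicity coming from $\gamma$ being a quasi-isomorphism.

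First I would fix the data that is not free. As a graded vector space set $\scrB:=\scrC\oplus\scrP[1]$, with $\scrC$ of weight $0$ and $\scrP[1]$ of weight $-1$. Prescribe the weight-$0$ part of the sought $A_\infty$-structure on $\scrB$ to be the $A_\infty$-operations of $\scrC$ together with the bimodule operations of $\scrP$ (on a tensor word with exactly one $\scrP[1]$-factor, the corresponding bimodule operation); any $A_\infty$-structure on $\scrB$ with this weight-$0$ part automatically contains $\scrC$ as an $A_\infty$-subalgebra and has $(\scrB/\scrC)[-1]=\scrP$ with its given bimodule structure. Choose a $\bQ$-linear section $s\colon\tilde\scrB/\scrC=\tilde\scrP[1]\to\tilde\scrB$ of the projection, and prescribe the weight-$0$ part of $\scrG$ to be $\mathit{id}$ on $\scrC$, zero on higher all-$\scrC$ words, and $s$ composed with the components of $\gamma$ on the words with exactly one $\scrP[1]$-factor; this guarantees $\scrG|_\scrC=\mathit{id}$ and that the induced map of quotient bimodules is literally $\gamma$. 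On the weight-$0$ associated graded of the two-step filtrations ($\scrC$ inside $\scrB$, resp.\ inside $\tilde\scrB$) the linear term $\scrG^1$ is a chain map — namely $\mathit{id}_\scrC$ and the linear part of $\gamma$ — hence a quasi-isomorphism $\scrB\to\tilde\scrB$; what is left undetermined is the weight-$\ge1$ part of $\mu_\scrB$ (in particular the boundary map $\rho$, which has weight $1$) and the weight-$\ge1$ part of $\scrG$.

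Then I would set up the joint obstruction problem. Exactly as with the curved $L_\infty$-algebra of \eqref{eq:c-lie-algebra}, the pair (missing operations of $\scrB$, missing components of $\scrG$) is a curved Maurer--Cartan element of a curved $L_\infty$-algebra $\frakh$, complete for the weight filtration (the cross-terms are present because the homomorphism equation for $\scrG$ involves $\mu_\scrB$; we take $\scrG$ uncurved). The one computation is the associated graded. Combining the long exact sequence \eqref{eq:les-weight} for the $\mu_\scrB$-part with its analogue for the $\scrG$-part — whose $\mathit{hom}$-complexes have target $\tilde\scrB$, filtered with quotient $\tilde\scrP[1]$ — and using the identifications of the all-$\scrC$ direction with the cone of $\mathit{id}_\scrC$ and of the quotient direction with the cone of $\gamma\colon\scrP\to\tilde\scrP$, one finds that $W^r\frakh/W^{r+1}\frakh$ is a mapping cone built from the complexes $\mathit{hom}_{(\scrC,\scrC)}(\scrP^{\otimes_\scrC r}[\,\cdot\,],\mathrm{Cone}(\mathit{id}_\scrC))$ and $\mathit{hom}_{(\scrC,\scrC)}(\scrP^{\otimes_\scrC r+1}[\,\cdot\,],\mathrm{Cone}(\gamma))$. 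Since $\mathrm{Cone}(\mathit{id}_\scrC)$ and $\mathrm{Cone}(\gamma)$ are acyclic ($\gamma$ being a quasi-isomorphism of bimodules), and $\mathit{hom}$ into an acyclic bimodule is acyclic, we get $H^*(W^r\frakh/W^{r+1}\frakh)=0$ for all $r\ge1$. A standard obstruction-theory argument, identical in form to the end of the proof of Lemma~\ref{th:turn-into-inclusion}, then produces the desired Maurer--Cartan element, i.e.\ the pair $(\scrB,\scrG)$.

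The main obstacle is the weight bookkeeping in that last step: one must organise $\frakh$ so that, after dividing out the prescribed weight-$0$ data, its weight-$r$ graded piece is visibly the acyclic cone above, keeping track of how many input tensor factors lie in $\scrP[1]$ and into which summand ($\scrC$, $\tilde\scrP[1]$, or all of $\tilde\scrB$) each operation maps, together with the signs. One can sidestep part of this by first factoring $\gamma$, via a mapping cylinder, as an injective quasi-isomorphism followed by a surjective one, reducing to the two cases in which $\gamma$ has acyclic cokernel, respectively acyclic kernel; in each of those the required modification of $\tilde\scrB$ is manifestly an extension by, respectively a passage to a quotient by, an acyclic bimodule in the quotient-bimodule direction, and the vanishing of the obstruction groups is then immediate.
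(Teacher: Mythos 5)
Your proposal is correct and follows essentially the same route as the paper: the paper also constructs $(\scrB,\scrG)$ simultaneously as a Maurer--Cartan element in a curved $L_\infty$-algebra $W^1\mathit{Hom}(T(\scrB[1]),\scrB[1]\oplus\tilde\scrB)$, filtered by weight rather than length, whose associated graded pieces are $\mathit{hom}_{(\scrC,\scrC)}$-complexes into the cones of $\mathit{id}_\scrC$ and of $\gamma$, hence acyclic, so obstruction theory applies with no hypothesis like \eqref{eq:classification-groups}. Your mapping-cylinder reduction is an optional extra, not needed once the weight-graded pieces are identified as above.
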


\begin{proof}
Let's set $\scrB = \scrC \oplus \scrP[1]$. The task of constructing the $A_\infty$-algebra $\scrB$ and $A_\infty$-homomorphism $\scrG$, with the prescribed properties, can be formalized as a solution of an extended Maurer-Cartan equation in a curved $L_\infty$-algebra
\begin{equation}
\frakt = W^1\mathit{Hom}(T(\scrB[1]), \scrB[1] \oplus \tilde{\scrB}).
\end{equation}
The situation is as in \eqref{eq:c-lie-algebra}, but we use the weight filtration rather than the length filtration. 
The associated graded space $W^r \frakt / W^{r+1} \frakt$ can be written as in \eqref{eq:weight-cone}, except that this time the pieces are 
\begin{equation}
\begin{aligned}
& \mathit{hom}_{(\scrC,\scrC)}(\scrP[2]^{\otimes_{\scrC} r}, \{\scrC \xrightarrow{\mathit{id}} \scrC\}[-1]) 
\\ \text{and } \;\; &
\mathit{hom}_{(\scrC,\scrC)}(\scrP[2]^{\otimes_{\scrC} r+1}, \{\scrP \xrightarrow{\gamma} \tilde\scrP\}[1] \big).
\end{aligned}
\end{equation}
By assumption, these are both acyclic.
\end{proof}

We can now bring all the pieces together:

\begin{proof}[Proof of Theorem \ref{th:relative-classification}] Take some $(\tilde\scrB,\tilde\scrF)$. As a chain complex, form
\begin{equation} \label{eq:tilde-b-cone}
\scrB = \mathit{Cone}\big(\scrA \xrightarrow{(-\mathit{id},\mathit{inclusion})} \scrA \oplus \tilde\scrB\big).
\end{equation}
We have an inclusion $\scrA \hookrightarrow \scrB$, which takes $a$ to $(a,0)$ in the second term of \eqref{eq:tilde-b-cone}. We also have a chain map $\scrG^1: \scrB \rightarrow \tilde{\scrB}$, which is zero on the first term in \eqref{eq:tilde-b-cone}, and $(a,\tilde{b}) \mapsto a+\tilde{b}$ on the second term. This is clearly a quasi-isomorphism, and $\scrG^1|\scrA$ is the inclusion $\scrA \hookrightarrow \tilde{\scrB}$. Using Lemma \ref{th:turn-into-inclusion}, we can equip $\scrB$ with an $A_\infty$-algebra structure compatible with $\scrA \hookrightarrow \scrB$, and so that $(\scrB,\mathit{inclusion})$ is quasi-isomorphic to $(\tilde{\scrB},\tilde{\scrF})$.

With that in mind, it is sufficient to focus on the case of $A_\infty$-algebras containing $\scrA$ as a subalgebra. Suppose we have two such algebras, such that the associated bimodule data are quasi-isomorphic, in the sense of \eqref{eq:pdelta-quasi-isomorphism}. Then, by applying Lemma \ref{th:bimodule-transfer}, we can replace these by quasi-isomorphic ones where the two bimodules that occur are actually the same, and their maps to $\scrA$ are homotopic. After that, Lemma \ref{th:first-order} completes the argument.
\end{proof}

\subsection{Curvature and $A_\infty$-deformations}
Let $\scrA$ be an $A_\infty$-algebra. A (curved) $A_\infty$-deformation consists of operations
\begin{equation}
\left\{
\begin{aligned}
& \mu^0_{\scrA_q} \in (\scrA[[q]])^2, \\
& \mu^1_{\scrA_q}: \scrA \longrightarrow (\scrA[1])[[q]], \\
& \mu^2_{\scrA_q}: \scrA^{\otimes 2} \longrightarrow \scrA[[q]], \\
& \dots
\end{aligned}
\right.
\end{equation}
which reduce to those of $\scrA$ if we set $q = 0$, meaning in particular that the curvature $\mu^0_{\scrA_q}$ has no $q$-constant term; and which satisfy the $A_\infty$-associativity equations extended by including that curvature term. An equivalent point of view is to take $\scrA_q = \scrA[[q]]$, and to consider the operations as multilinear ones living on that space. When we adopt that viewpoint, we refer to $\scrA_q$ as a curved $A_\infty$-algebra. There are corresponding $q$-notions of $A_\infty$-homomorphisms, homotopies between such homomorphisms, and bimodules (see e.g.\ \cite[Sections 3.2, 3.7, 4.2]{fooo}). Recall that the structure of an $A_\infty$-bimodule does not have its own curvature term; instead, the bimodule equations involve the curvature of the underlying $A_\infty$-algebra. For instance, given a bimodule $\scrP_q$ over $\scrA_q$, the failure of the differential on $\scrP_q$ to square to zero is measured by the formula
\begin{equation}
\mu^{0,1,0}_{\scrP_q}(\mu^{0,1,0}_{\scrP_q}(\cdot)) + (-1)^{|\cdot|} \mu^{1,1,0}_{\scrP_q}(\mu_{\scrA_q}^0,\,\cdot\,)
+ \mu^{0,1,1}_{\scrP_q}(\,\cdot\,\mu_{\scrA_q}^0) = 0.
\end{equation}
 Similarly, when we have a homomorphism $\scrF_q: \scrA_q \rightarrow \scrB_q$, its curvature term $\scrF_q^0 \in \scrB_q^1$ enters into the pullback of bimodules. Specifically, for $\scrP_q = \scrF_q^*\scrQ_q$, the differential is given by
\begin{equation}
\mu^{0,1,0}_{\scrP_q}(\cdot) = \sum_{s,r \geq 0} \mu^{s,1,r}_{\scrQ_q}(
\overbrace{\scrF_q^0,\dots,\scrF_q^0}^s,\,\cdot\,,\overbrace{\scrF_q^0,\dots,\scrF_q^0}^r).
\end{equation}

As before, we fix $\scrA_q$, and consider pairs consisting of some $\scrB_q$ and a curved $A_\infty$-homomorphism $\scrF_q: \scrA_q \rightarrow \scrB_q$. More precisely, we consider them up to an equivalence relation as in \eqref{eq:bf-quasi-isomorphism}, where $\scrG_q: \scrB_q \rightarrow \tilde{\scrB}_q$ is a filtered quasi-isomorphism (meaning, it is a curved $A_\infty$-homomorphism which reduces to a quasi-isomorphism if we set $q = 0$). To such a pair one associates an $\scrA_q$-bimodule and bimodule map, as in \eqref{eq:cone-bimodule}:
\begin{equation} \label{eq:cone-bimodule-2}
\left\{
\begin{aligned}
& \scrP_q = \big\{ \scrA_q \xrightarrow{\scrF_q} \scrF_q^*\scrB_q \big\}[-1], \\
& \rho_q: \scrP_q \longrightarrow \scrA_q.
\end{aligned}
\right.
\end{equation}
These are again considered up to filtered quasi-isomorphism. The map $\rho_q$ will always be ambidextrous, as in Lemma \ref{th:ambidextrous}. Hence, as in \eqref{eq:map-to-bimodule-data}, we have a map
\begin{equation} \label{eq:map-to-bimodule-data-2}
\left\{ 
\parbox{18em}{
pairs $(\scrB_q,\scrF_q)$ whose associated bimodule is filtered quasi-isomorphic to $\scrP_q$, up to \eqref{eq:bf-quasi-isomorphism}}\right\} \longrightarrow \mathit{Ambi}(\scrP_q)/\mathit{Aut}(\scrP_q).
\end{equation}

There is an analogue of Theorem \ref{th:relative-classification}, which we skip and proceed directly to the more concrete analogue of Corollary \ref{th:concrete-classification}:

\begin{corollary} \label{th:concrete-classification-2}
If $\scrP_q$ satisfies
\begin{equation} \label{eq:classification-groups-2}
\left.
\begin{aligned} 
& H^{2-2r}(\mathit{hom}_{(\scrA_q,\scrA_q)}(\scrP_q^{\otimes_{\scrA_q} r}, \scrA_q)) = 0 \\
& H^{3-2r}(\mathit{hom}_{(\scrA_q,\scrA_q)}(\scrP_q^{\otimes_{\scrA_q} r}, \scrP_q)) = 0
\end{aligned}
\right\}\; \text{for all $r \geq 2$.}
\end{equation}
then \eqref{eq:map-to-bimodule-data-2} is injective.
\end{corollary}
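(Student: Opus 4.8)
The plan is to repeat, almost word for word, the proof of Theorem~\ref{th:relative-classification} (hence of its restatement Corollary~\ref{th:concrete-classification}), replacing $A_\infty$-algebras, $A_\infty$-homomorphisms and bimodules over $\bQ$ by curved $A_\infty$-deformations, filtered $A_\infty$-homomorphisms and $A_\infty$-bimodules over $\bQ[[q]]$ throughout. Concretely, I would first record curved/filtered analogues of the three ingredients of that proof: a version of Lemma~\ref{th:turn-into-inclusion} turning any curved $A_\infty$-homomorphism $\tilde\scrF_q\colon\scrA_q\to\tilde\scrB_q$ into a strict inclusion $\scrA_q\hookrightarrow\scrB_q$ of curved $A_\infty$-algebras together with a filtered quasi-isomorphism $\scrB_q\to\tilde\scrB_q$ over $\scrA_q$; a version of Lemma~\ref{th:first-order} saying that two curved $A_\infty$-structures on $\scrA_q\oplus\scrP_q[1]$ which extend $\scrA_q$, induce the given $\scrA_q$-bimodule structure on $\scrP_q$, and give rise to homotopic maps $\rho_q$, are related by a filtered $A_\infty$-isomorphism that is the identity on $\scrA_q$ and induces the identity on $\scrP_q$; and a version of Lemma~\ref{th:bimodule-transfer} letting one replace $\scrP_q$ by any filtered-quasi-isomorphic bimodule, compatibly with the inclusion of $\scrA_q$. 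Granting these, the assembly is exactly as in the proof of Theorem~\ref{th:relative-classification}: given two pairs $(\scrB_q,\scrF_q)$, $(\tilde\scrB_q,\tilde\scrF_q)$ with the same image in $\mathit{Ambi}(\scrP_q)/\mathit{Aut}(\scrP_q)$, first use the analogue of Lemma~\ref{th:turn-into-inclusion} to realise both as inclusions of curved $A_\infty$-subalgebras, then use the analogue of Lemma~\ref{th:bimodule-transfer} to arrange that the two cone bimodules are literally equal with homotopic maps to $\scrA_q$, and finally conclude with the analogue of Lemma~\ref{th:first-order}; this gives injectivity of \eqref{eq:map-to-bimodule-data-2}.

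Each of the three analogues is proved, as in Section~\ref{subsec:algebra}, by casting the construction as a (curved) Maurer--Cartan problem in a curved $L_\infty$-algebra built exactly as $\frakw$, $\frakt$ or $\frakc$ but with $\scrB$ replaced by $\scrB_q=\scrB[[q]]$, and by running the order-by-order obstruction argument with respect to the \emph{combined} filtration: the weight filtration of \eqref{eq:weights} together with the $q$-adic filtration. Two structural facts make this go through. First, this combined filtration is complete, so a solution assembled degree by degree converges; and the $A_\infty$-curvature terms $\mu^0_{\scrA_q}$, $\scrF^0_q$ are simply absorbed into the $\lambda^0$-term of the relevant curved $L_\infty$-structure, precisely as the $L_\infty$-curvature was absorbed in the proof of Lemma~\ref{th:turn-into-inclusion}. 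Second, the associated graded pieces of $\frakw_q$ (and of $\frakt_q$, $\frakc_q$) are computed, as in \eqref{eq:weight-cone}--\eqref{eq:les-weight}, by the bimodule hom-complexes $\mathit{hom}_{(\scrA_q,\scrA_q)}(\scrP_q^{\otimes_{\scrA_q}r},\scrA_q)$ and $\mathit{hom}_{(\scrA_q,\scrA_q)}(\scrP_q^{\otimes_{\scrA_q}r},\scrP_q)$; the hypotheses \eqref{eq:classification-groups-2} then kill exactly the obstruction groups in weights $r\ge 2$ and make the weight-one comparison map injective, which is all the three lemmas require.

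The step I expect to be the main obstacle is the second structural fact in the curved, $q$-linear setting, i.e.\ redoing the identification of the associated graded of the controlling $L_\infty$-algebras with the bimodule hom-complexes of the \emph{deformed} bimodule $\scrP_q$ over the \emph{curved} algebra $\scrA_q$. The point is that with a genuine curvature term $\mu^0_{\scrA_q}$ the cone bimodule $\scrP_q$ of \eqref{eq:cone-bimodule-2} has its differential twisted by $\scrF^0_q$ and $\mu^0_{\scrA_q}$, so the computation behind \cite[Equation (2.61)]{seidel14b}---and in particular the statement that the connecting homomorphism is $\rho_q\mapsto\mathit{id}\otimes_{\scrA_q}\rho_q-\rho_q\otimes_{\scrA_q}\mathit{id}$---must be re-derived keeping track of these extra terms; one also has to check that the bar-type tensor powers $\scrP_q^{\otimes_{\scrA_q}r}$ behave well $q$-adically, which they do since everything is $q$-adically complete and reduces at $q=0$ to the situation already handled. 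Once this bookkeeping is in place, no new phenomena arise and the induction closes exactly as in the undeformed case.
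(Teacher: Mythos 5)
Your proposal is correct and matches the paper's own argument: the paper proves exactly the three curved analogues you describe (Lemmas \ref{th:turn-into-inclusion-2}, \ref{th:first-order-2}, \ref{th:bimodule-transfer-2}), obtained by running the same Maurer--Cartan obstruction theory in $q$-deformed versions of $\frakc$ and $\frakw$, with \eqref{eq:classification-groups-2} killing the weight-$r\geq 2$ obstruction groups, and then assembles them as in Theorem \ref{th:relative-classification}. The bookkeeping you flag as the main obstacle is handled in the paper simply by the definitions of curved bimodules and their hom-complexes (which absorb $\mu^0_{\scrA_q}$ and $\scrF^0_q$ into the differentials), so no genuinely new computation is needed there.
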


Note that, if we take $(\scrA,\scrP)$ to be the $q = 0$ specializations of $(\scrA_q,\scrP_q)$, then our original vanishing assumption \eqref{eq:classification-groups} implies \eqref{eq:classification-groups-2}, by a $q$-filtration argument.

\begin{example}
Suppose that 
\begin{equation} \label{eq:rank-one}
\mathit{Ambi}(\scrP) \iso \bQ[[q]] .
\end{equation}
Since $\bQ[[q]]^\times \subset \mathit{Aut}(\scrP)$, and since the order of vanishing of an element of \eqref{eq:rank-one} is invariant under the $\mathit{Aut}(\scrP)$, then we have $\mathit{Ambi}(\scrP)/\mathit{Aut}(\scrP) = \{0,1,2,\cdots,\infty\}$.
\end{example}

The proof of Corollary \ref{th:concrete-classification-2} follows the spirit of our earlier arguments, with certain departures in the details. We lean on our previous Lemma \ref{th:turn-into-inclusion}, and extend that to higher orders in $q$ as follows:

\begin{lemma} \label{th:turn-into-inclusion-2}
Take $A_\infty$-algebras $\scrC \subset \scrB$, and a quasi-isomorphism $\scrG: \scrB \rightarrow \tilde{\scrB}$. Set $\tilde\scrF = \scrG|\scrC$. Suppose that $(\scrC,\tilde\scrB,\tilde\scrF)$ come with curved $A_\infty$-deformations $\tilde\scrF_q: \scrC_q \rightarrow \tilde{\scrB}_q$. Then there are curved $A_\infty$-deformations $\scrB_q$ and $\scrG_q: \scrB_q \rightarrow \tilde{\scrB}_q$, such that $\scrC_q \subset \scrB_q$ is a curved $A_\infty$-subalgebra, and $\scrG_q|\scrC_q = \tilde{\scrF}_q$.
\end{lemma}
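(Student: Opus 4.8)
The plan is to mimic the structure of the proof of Lemma \ref{th:turn-into-inclusion}, but now working over $\bQ[[q]]$ and keeping track of curvature terms. The essential point is that Lemma \ref{th:turn-into-inclusion} already handled the $q=0$ situation, so one reuses its output as the bottom layer of an obstruction-theoretic tower in the $q$-adic filtration.

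\begin{proof}[Proof sketch]
We work with the underlying graded $\bQ$-vector spaces, so that $\scrB_q$ will be $\scrB[[q]]$ and $\tilde\scrB_q = \tilde\scrB[[q]]$, with $\scrC_q = \scrC[[q]]$ already sitting inside $\scrB[[q]]$ as a graded subspace via the $q$-linear extension of $\scrC \subset \scrB$. We need to produce curved $A_\infty$-operations $\mu^d_{\scrB_q}$ on $\scrB[[q]]$ (with $\mu^0_{\scrB_q}$ having no $q$-constant term) which restrict to $\mu^d_{\scrC_q}$ on $\scrC^{\otimes d}[[q]]$, together with curved $A_\infty$-homomorphism terms $\scrG_q^d: \scrB^{\otimes d}[[q]] \to \tilde\scrB[[q]]$ with $\scrG_q^d|\scrC^{\otimes d}[[q]] = \tilde\scrF_q^d$, subject to the equation that $\scrG_q$ intertwines $\mu_{\scrB_q}$ and $\mu_{\tilde\scrB_q}$. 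As in the proof of Lemma \ref{th:turn-into-inclusion}, fix once and for all some $\bQ[[q]]$-linear extensions $\mu_{\scrB_q}^{\mathit{given}}$ of $\mu_{\scrC_q}$ and $\scrG_q^{\mathit{given}}$ of $\tilde\scrF_q$ (for $d \geq 1$; for $d=0$ one takes $\mu_{\scrB_q}^{\mathit{given},0}$ and $\scrG_q^{\mathit{given},0}$ to be the given curvature terms of $\scrC_q$ and $\tilde\scrF_q$, noting these lie in $\scrC[[q]] \subset \scrB[[q]]$ and $\tilde\scrB[[q]]$ respectively and vanish at $q=0$). The unknowns $(\mu_{\scrB_q}^{\mathit{new}}, \scrG_q^{\mathit{new}})$ then lie in the curved $L_\infty$-algebra
\begin{equation}
\frakc_q = \mathit{Hom}_{\bQ[[q]]}(T^{>0}(\scrB_q[1])/T^{>0}(\scrC_q[1]),\ \scrB_q[1] \oplus \tilde\scrB_q),
\end{equation}
built from the standard bracket exactly as $\frakc$ was built in \eqref{eq:c-lie-algebra}; note we now also allow the tensor length $1$ (and $0$) pieces, since curvature and the linear terms of $\scrG_q$ are being deformed. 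The curved Maurer-Cartan equation \eqref{eq:l0-infinity} for this pair governs simultaneously the curved $A_\infty$-associativity of $\scrB_q$ and the curved homomorphism property of $\scrG_q$.

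The key observation is that $\frakc_q$ carries \emph{two} complete decreasing filtrations: the $q$-adic one, $q^k \frakc_q$, and (on the $q=0$ reduction) the tensor-length filtration used in Lemma \ref{th:turn-into-inclusion}. We solve the Maurer-Cartan equation order by order in $q$. At order $q^0$ the equation is precisely the (uncurved) relative Maurer-Cartan problem solved in the proof of Lemma \ref{th:turn-into-inclusion}, whose solvability rested on the acyclicity of the cone $\{\scrB \xrightarrow{\scrG^1} \tilde\scrB\}$ appearing in \eqref{eq:p-filtered}; here $\scrG^1$ is a quasi-isomorphism by hypothesis, so that cone is acyclic. For the inductive step, suppose we have a solution modulo $q^{k+1}$; the obstruction to extending it modulo $q^{k+2}$ is a degree-one cocycle in $\frakc_q/q\frakc_q$, i.e.\ in the graded $L_\infty$-algebra associated to the $q$-filtration, whose cohomology is again controlled by the tensor-length filtration and the acyclicity of that same cone. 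Hence the obstruction vanishes and we can lift. (The usual care is needed that the $q^0$-term of $\mu_{\scrB_q}^{\mathit{new},0}$ is zero, which is automatic since $\mu_{\scrC_q}^0$ and $\tilde\scrF_q^0$ already vanish at $q=0$ and the recursion preserves this.) Passing to the $q$-adic limit, we obtain $(\mu_{\scrB_q}, \scrG_q)$ with all the required properties; by construction $\scrC_q \subset \scrB_q$ is a curved $A_\infty$-subalgebra and $\scrG_q|\scrC_q = \tilde\scrF_q$.
\end{proof}

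The main obstacle I anticipate is purely bookkeeping: ensuring that the curvature terms are handled consistently, in particular that $\mu^0_{\scrB_q}$ genuinely has no $q$-constant term and that enlarging $\frakc$ to include the length $\leq 1$ pieces does not introduce spurious cohomology. The relevant cohomology in \eqref{eq:p-filtered}, with the length-$r$ piece equal to $\mathit{Hom}(H^*(\scrB)^{\otimes r}/H^*(\scrC)^{\otimes r}, H^*(\{\scrB \to \tilde\scrB\}))$, still vanishes for the same reason when $r \leq 1$ because the target cone is acyclic — so in fact no new subtlety arises, and the proof goes through verbatim modulo the $q$-adic outer induction. There are no serious analytic difficulties, since over $\bQ[[q]]$ everything is a formal limit; the only genuinely new input beyond Lemma \ref{th:turn-into-inclusion} is the trivial remark that an inverse limit of order-by-order solutions solves the equation, because the $L_\infty$-operations are continuous for the $q$-adic topology.
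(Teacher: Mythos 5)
Your proposal follows essentially the same route as the paper: one forms the curved $L_\infty$-algebra $\mathit{Hom}(T(\scrB[1])/T(\scrC[1]), \scrB[1]\oplus\tilde{\scrB})[[q]]$ from arbitrary "given" extensions of $\mu_{\scrC_q}$ and $\tilde{\scrF}_q$, and solves the extended Maurer--Cartan equation by obstruction theory, using that the $q=0$ reduction is acyclic (via the tensor-length filtration and the acyclicity of the cone $\{\scrB \to \tilde{\scrB}\}$, i.e.\ the quasi-isomorphism hypothesis), hence the algebra is filtered acyclic for the $q$-adic filtration. The only quibbles are cosmetic: at order $q^0$ nothing needs to be solved (the given $(\mu_{\scrB},\scrG)$ already satisfy the equations, so the new terms should be taken in $q\frakc_q^1$, exactly as in the paper), and the obstruction cocycles live in degree two rather than one — neither affects the argument since the relevant complex is acyclic in all degrees.
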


\begin{proof}
Consider
\begin{equation}
\frakc_q = \mathit{Hom}(T(\scrB[1])/T(\scrC[1]), \scrB[1] \oplus \tilde{\scrB})[[q]],
\end{equation}
which carries a curved $L_\infty$-algebra structure. The definition follows the same idea as in \eqref{eq:c-lie-algebra}. More explicitly, fix operations $\mu_{\scrB_q}^{\mathit{given},d}$ (including $d = 0$) which restrict to those of $\scrC_q$, and which are $q$-deformations of those on $\scrB$. Similarly, fix some $\scrG_q^{\mathit{given},d}$ which restrict to $\tilde{\scrF}_q$, and which are $q$-deformations of $\scrG$. These determine the curved $L_\infty$-structure: for instance, the curvature term $\lambda^0_{\frakc_q}$ is the failure of the ``given'' data to satisfy the requirements ($A_\infty$-associativity and $A_\infty$-homomorphism equations). 
Repairing this failure amounts to solving an extended Maurer-Cartan equation for an element 
\begin{equation}
(\mu_{\scrB_q}^{\mathit{new}}, \scrG_q^{\mathit{new}}) \in q\frakc_q^1.
\end{equation}
If we set $q = 0$, the resulting $\frakc_q/q\frakc_q$ is a version of \eqref{eq:c-lie-algebra} without the $>1$ restriction. As a chain complex, one can filter it by tensor length, and the quotients are acyclic, as in \eqref{eq:p-filtered}. It follows that $\frakc_q/q\frakc_q$ is acyclic, or in a different terminology, that $\frakc_q$ is filtered acyclic. The rest is once more standard obstruction theory: filtered acyclicity ensures that we can always find a solution to the extended Maurer-Cartan equation.
\end{proof}

\begin{lemma} \label{th:first-order-2}
(A generalization of \cite[Lemma 2.6]{seidel15}) Fix $\scrC_q$ and a bimodule $\scrP_q$, such that \eqref{eq:classification-groups-2} holds. Consider curved $A_\infty$-structures on $\scrB_q = \scrC_q \oplus \scrP_q[1]$ which extend the given structure on $\scrC_q$, and induce the given bimodule structure on $\scrP_q$. If two such structures give rise to the same class $[\rho_q] \in H^0(\hom_{(\scrC_q,\scrC_q)}(\scrP_q,\scrC_q))$, they must be related by an $A_\infty$-homomorphism which restricts to the identity on $\scrC_q$ (hence has no curvature term), and which induces the identity on $\scrP_q$.
\end{lemma}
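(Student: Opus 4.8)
The plan is to run the obstruction-theoretic argument of Lemma \ref{th:first-order} verbatim, but over $\bQ[[q]]$ and with the bookkeeping needed to handle curvature. First observe that the curvature is not really part of the data being classified: a $0$-ary operation on $\scrB_q=\scrC_q\oplus\scrP_q[1]$ lands in weight $0$ or $-1$, so weight-nondecreasingness forces it into the weight-$0$ summand $\scrC_q$, where, by the requirement that the structure extend that of $\scrC_q$, it must equal $\mu^0_{\scrC_q}$. Hence the remaining freedom is a Maurer--Cartan element in the ($q$-adically complete) dg Lie algebra $\frakw_q = W^1\mathit{Hom}(T(\scrB_q[1]),\scrB_q[1])$, with differential given by bracketing with the weight-$0$ (curved trivial extension) structure $\mu^{(0)}_q$ assembled from $\scrC_q$ and the $\scrC_q$-bimodule $\scrP_q$. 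As in \eqref{eq:weight-cone}, the weight filtration has associated graded pieces
\[ W^r\frakw_q/W^{r+1}\frakw_q \;\iso\; \big\{ \mathit{hom}_{(\scrC_q,\scrC_q)}(\scrP_q[2]^{\otimes_{\scrC_q} r},\scrC_q) \longrightarrow \mathit{hom}_{(\scrC_q,\scrC_q)}(\scrP_q[2]^{\otimes_{\scrC_q} r+1},\scrP_q[2]) \big\}, \]
yielding a long exact sequence as in \eqref{eq:les-weight} with $(\scrC,\scrP)$ replaced by $(\scrC_q,\scrP_q)$, whose connecting map sends $\rho\mapsto \mathit{id}\otimes_{\scrC_q}\rho-\rho\otimes_{\scrC_q}\mathit{id}$.

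Next I would feed \eqref{eq:classification-groups-2} into that long exact sequence, by the same degree count that underlies \eqref{eq:weight-1}. This gives $H^1(W^r\frakw_q/W^{r+1}\frakw_q)=0$ for all $r\geq 2$, and injectivity of the composite $H^1(\frakw_q/W^2\frakw_q)\to H^0(\mathit{hom}_{(\scrC_q,\scrC_q)}(\scrP_q,\scrC_q))$; moreover the first-order (weight-$1$) part of a Maurer--Cartan solution is a cocycle whose class maps, under this composite, to the associated $[\rho_q]$.

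With these two facts in hand, take two curved $A_\infty$-structures of the required form, represented by Maurer--Cartan elements $\mu^{(0)}_q+\nu$ and $\mu^{(0)}_q+\nu'$ inducing the same $[\rho_q]$. By the injectivity statement their first-order parts are cohomologous in $H^1(\frakw_q/W^2\frakw_q)$, so after one gauge transformation (the exponential action on Maurer--Cartan elements) we may assume $\nu\equiv\nu'\bmod W^2\frakw_q$. Then induct on $r\geq 2$: if $\nu\equiv\nu'\bmod W^r\frakw_q$, the two Maurer--Cartan equations show $\nu-\nu'$ reduces to a degree-$1$ cocycle in $W^r\frakw_q/W^{r+1}\frakw_q$ (the quadratic correction lies in $W^{2r}\subseteq W^{r+1}$ as $r\geq 1$); since that cohomology vanishes, a further gauge transformation supported in weight $\geq r$ makes the two agree modulo $W^{r+1}\frakw_q$. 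The weight filtration is complete (for fixed arity $d$ every operation has weight at most $d$, so $W^{d+1}$ kills the $d$-ary part), so the infinite composition of these gauge transformations converges to a single $A_\infty$-homomorphism identifying the two structures. It has the form $\mathit{id}+g$ with $g\in\frakw_q=W^1$; in particular it restricts to the identity on $\scrC_q$ and has no $0$-ary term (both forced because $g$ strictly increases weights), and it induces the identity on the weight-$(-1)$ quotient $\scrP_q$. That is exactly the claim.

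The one step I expect to be the main obstacle is the identification of the associated graded of $\frakw_q$ with the mapping cone of bimodule hom complexes over the \emph{curved} algebra $\scrC_q$: one must check that $\scrP_q^{\otimes_{\scrC_q} r}$ and the hom complexes $\mathit{hom}_{(\scrC_q,\scrC_q)}(-,-)$ are well defined (the bar-type infinite sums converge $q$-adically precisely because $\mu^0_{\scrC_q}$ has positive $q$-order), and that the internal and connecting differentials match \eqref{eq:weight-cone}--\eqref{eq:les-weight}. This is where care is needed, but it parallels \cite[Equation (2.61)]{seidel14b} together with the curved bimodule conventions already recorled in Section \ref{subsec:algebra}; everything else is the standard obstruction theory used in Lemma \ref{th:first-order}.
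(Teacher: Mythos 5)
Your argument is correct and takes essentially the same route as the paper: the paper's proof likewise introduces $\frakw_q = W^1\mathit{Hom}(T(\scrB_q[1]),\scrB_q[1])$ with its complete weight filtration and repeats the obstruction theory of Lemma \ref{th:first-order}, with \eqref{eq:classification-groups-2} supplying the vanishing of $H^1(W^r\frakw_q/W^{r+1}\frakw_q)$ for $r \geq 2$ and the injectivity of the weight-one map to $H^0(\mathit{hom}_{(\scrC_q,\scrC_q)}(\scrP_q,\scrC_q))$. Your additional bookkeeping --- that the curvature is forced into the weight-$0$ (given) part, and that the curved bimodule hom complexes and the analogue of \eqref{eq:weight-cone}--\eqref{eq:les-weight} make sense $q$-adically --- is exactly the detail the paper leaves implicit.
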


This is the analogue of Lemma \ref{th:first-order}, and uses the same method of proof: one uses $\scrC_q$ and $\scrP_q$ to define a dg Lie algebra
\begin{equation} \label{eq:frakw-2}
\frakw_q = W^1\mathit{Hom}(T(\scrB_q[1]), \scrB_q[1]),
\end{equation}
which comes with its complete decreasing filtration $W^r \frakw_q$. The assumption \eqref{eq:classification-groups-2} provides the necessary information about the groups $H^1(W^r \frakw_q/W^{r+1} \frakw_q)$. Similar considerations appear in the analogue of Lemma \ref{th:bimodule-transfer}: 

\begin{lemma} \label{th:bimodule-transfer-2}
Let $\scrC_q \subset \tilde\scrB_q$ be a curved $A_\infty$-subalgebra, with associated bimodule $\tilde\scrP_q$. Suppose that we are given a bimodule $\scrP_q$ and quasi-isomorphism $\gamma_q: \scrP_q \rightarrow \tilde{\scrP}_q$. Then there is another curved $A_\infty$-algebra $\scrB_q$ containing $\scrC_q$, with associated bimodule $\scrP_q$, and an $A_\infty$-homomorphism $\scrG_q: \scrB_q \rightarrow \tilde{\scrB}_q$ such that $\scrG_q|\scrC_q = \mathit{id}$, and where the induced map of $A_\infty$-bimodules equals $\gamma_q$.
\end{lemma}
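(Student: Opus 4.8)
The plan is to imitate the proof of Lemma~\ref{th:bimodule-transfer}, replacing the dg Lie algebra used there with a curved $L_\infty$-algebra over $\bQ[[q]]$, and to run the same weight-filtered obstruction argument. First I would set $\scrB_q = \scrC_q \oplus \scrP_q[1]$ as a graded $\bQ[[q]]$-module, with the weight filtration $W^{-1}\scrB_q = \scrB_q$, $W^0\scrB_q = \scrC_q$, $W^1\scrB_q = 0$, and with its weight-$0$ operations fixed to be the curved $A_\infty$-structure of $\scrC_q$ together with the bimodule operations of $\scrP_q$. After choosing a $\bQ[[q]]$-linear splitting $\tilde\scrB_q \iso \scrC_q \oplus \tilde\scrP_q[1]$ of graded modules, the linear term of the sought homomorphism is pinned down as $\scrG_q^1 = \mathit{id}_{\scrC_q} \oplus \gamma_q[1]$; in particular $\scrG_q|\scrC_q$ will be the identity, so $\scrG_q$ carries no curvature term. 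Everything still to be constructed --- the weight-$\geq 1$ part of the curved $A_\infty$-structure on $\scrB_q$, and the remaining components of $\scrG_q$ --- is then encoded as an extended Maurer--Cartan element in the curved $L_\infty$-algebra
\[
\frakt_q = W^1\mathit{Hom}\big(T(\scrB_q[1]), \scrB_q[1] \oplus \tilde{\scrB}_q\big),
\]
whose curvature term records the failure of the chosen ``given'' data to satisfy the $A_\infty$-associativity and $A_\infty$-homomorphism equations. This is the $q$-deformed, weight-filtered counterpart of \eqref{eq:c-lie-algebra}, built exactly as in Lemma~\ref{th:bimodule-transfer} and Lemma~\ref{th:turn-into-inclusion-2}.

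The key step is to identify the associated graded of the complete weight filtration $W^r\frakt_q$. As in the proof of Lemma~\ref{th:bimodule-transfer}, $W^r\frakt_q/W^{r+1}\frakt_q$ splits, up to shifts, into
\[
\mathit{hom}_{(\scrC_q,\scrC_q)}\big(\scrP_q[2]^{\otimes_{\scrC_q} r}, \{\scrC_q \xrightarrow{\mathit{id}} \scrC_q\}[-1]\big)
\quad\text{and}\quad
\mathit{hom}_{(\scrC_q,\scrC_q)}\big(\scrP_q[2]^{\otimes_{\scrC_q} r+1}, \{\scrP_q \xrightarrow{\gamma_q} \tilde\scrP_q\}[1]\big),
\]
the first arising from the interplay between the $\scrB_q[1]$-valued operations and the $\tilde\scrB_q$-valued homomorphism components, the second from transferring the bimodule structure along $\gamma_q$. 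The cone $\{\scrC_q \xrightarrow{\mathit{id}} \scrC_q\}$ is contractible by the tautological homotopy, and $\{\scrP_q \xrightarrow{\gamma_q} \tilde\scrP_q\}$ is an acyclic $\scrC_q$-bimodule since $\gamma_q$ is a quasi-isomorphism; hence both displayed complexes are acyclic, so $\frakt_q$ is filtered acyclic with respect to the weight filtration. A standard order-by-order obstruction argument along that filtration then produces a solution of the extended Maurer--Cartan equation in $\frakt_q$: at stage $r$ the obstruction lies in $H^*(W^r\frakt_q/W^{r+1}\frakt_q) = 0$, so one corrects and proceeds, the corrections converging by completeness. Unwinding the solution yields the curved $A_\infty$-algebra $\scrB_q \supset \scrC_q$ with associated bimodule $\scrP_q$ together with the $A_\infty$-homomorphism $\scrG_q : \scrB_q \to \tilde{\scrB}_q$ restricting to the identity on $\scrC_q$ and inducing $\gamma_q$ on bimodules, as required.

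I expect the only delicate point to be bookkeeping rather than substance: checking that the curved $L_\infty$-structure on $\frakt_q$ is compatible with the weight filtration once the curvature $\mu^0_{\scrC_q}$ is present --- it enters both the bimodule equations for $\scrP_q$ and the pullback of bimodules along $\scrG_q$ --- and that the identification of the graded pieces with the two $\hom$-cones survives. Since curvature does not change weight, this should go through verbatim from the uncurved case of Lemma~\ref{th:bimodule-transfer}, with the acyclicity inputs and the obstruction theory then entirely routine.
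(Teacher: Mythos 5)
Your proposal is correct and follows essentially the same route as the paper: the paper proves the curved version by the same device as Lemma \ref{th:bimodule-transfer}, namely the curved $L_\infty$-algebra $W^1\mathit{Hom}(T(\scrB_q[1]),\scrB_q[1]\oplus\tilde\scrB_q)$ with its complete weight filtration, whose associated graded pieces are $\mathit{hom}$-complexes into the acyclic cones $\{\scrC_q\xrightarrow{\mathit{id}}\scrC_q\}$ and $\{\scrP_q\xrightarrow{\gamma_q}\tilde\scrP_q\}$, followed by order-by-order obstruction theory. Your bookkeeping remarks (curvature not affecting weights, no curvature term for $\scrG_q$) are consistent with the paper's treatment, so nothing is missing.
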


Finally, these ingredients combine to yield Corollary \ref{th:concrete-classification-2} in the same way as before; we omit the details. 

We also want to consider a more specific situation, following \cite{seidel14,seidel15}. Namely, suppose that $\scrC_q = \scrC[[q]]$ is the trivial $q$-linear extension of some $A_\infty$-structure on $\scrC$, and similarly for $\scrP_q = \scrP[[q]]$. Given some curved $A_\infty$-structure on $\scrB_q = \scrC_q \oplus \scrP_q[1]$ which extends those data (in the sense of Lemma \ref{th:first-order-2}), one can write 
\begin{equation}
[\rho_q] \in H^0(\mathit{hom}_{(\scrC,\scrC)}(\scrP,\scrC))[[q]].
\end{equation}

\begin{lemma} \label{th:field-of-definition}
(A mild generalization of \cite[Lemma 2.7]{seidel15}; see also \cite[Lemma A.12]{seidel15} for a discussion in more general terms.) Assume that
\begin{equation} \label{eq:one-degree-higher}\left\{
\begin{aligned} 
& H^{3-2r}(\mathit{hom}_{(\scrC,\scrC)}(\scrP^{\otimes_{\scrC} r}, \scrC)) = 0 && \text{for $r \geq 2$,} \\
& H^{4-2r}(\mathit{hom}_{(\scrC,\scrC)}(\scrP^{\otimes_{\scrC} r}, \scrP)) = 0 && \text{for $r \geq 3$.}
\end{aligned}
\right.
\end{equation}
Take a $\bQ$-linear subspace $V \subset \bQ[[q]]$, and an element $[\rho_q] \in \mathit{Ambi}(\scrP) \otimes V$. Then, there is some $\scrB_q$ whose associated bimodule map is $[\rho_q]$, and such that for each $r \geq 1$, the part of the $A_\infty$-structure on $\scrB_q$ which has weight $r$ has coefficients which lie in $\mathit{Sym}^r(V) \subset \bQ[[q]]$.
\end{lemma}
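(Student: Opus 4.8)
The plan is to run the usual order-by-order obstruction-theory construction of the $A_\infty$-structure on $\scrB_q = \scrC[[q]] \oplus \scrP[[q]][1]$, but to keep track not only of the weight filtration (as in the proof of Lemma \ref{th:first-order}) but simultaneously of the $q$-adic content of the coefficients, proving by induction on the weight $r$ that one can choose the weight-$r$ piece to have coefficients in $\mathrm{Sym}^r(V)$. Concretely, as in \eqref{eq:frakw-2}, the construction is a Maurer–Cartan problem in the dg Lie algebra $\frakw_q = W^1\mathit{Hom}(T(\scrB_q[1]),\scrB_q[1])$, filtered by weight, with associated graded pieces fitting into the long exact sequence \eqref{eq:les-weight} (now over $\bQ[[q]]$). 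The weight-$0$ part is fixed: it is the $q$-linear extension of the $A_\infty$-structure on $\scrC$ together with the dual-bimodule structure on $\scrP$, both of which have weight-$0$ coefficients in $\mathrm{Sym}^0(V) = \bQ$. The weight-$1$ part is governed by the class $[\rho_q] \in \mathit{Ambi}(\scrP)\otimes V$, via the connecting map $\rho \mapsto \mathit{id}\otimes_\scrC\rho - \rho\otimes_\scrC\mathit{id}$ of \eqref{eq:les-weight}; since $[\rho_q]$ is by hypothesis a sum of terms $\rho_0\otimes v$ with $v \in V$, we may pick a weight-$1$ cochain whose coefficients lie in $V = \mathrm{Sym}^1(V)$.

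For the inductive step, suppose the $A_\infty$-operations have been chosen up through weight $r-1$ with the weight-$j$ piece having coefficients in $\mathrm{Sym}^j(V)$ for $j < r$. The obstruction to extending to weight $r$ is the weight-$r$ component of $\half[\mu,\mu]$ built from the already-chosen lower-weight pieces; because the bracket is bilinear and multiplicative on coefficients, and because a weight-$r$ term of $[\mu,\mu]$ pairs a weight-$j$ piece with a weight-$(r-j)$ piece, this obstruction cocycle has coefficients in $\sum_{j}\mathrm{Sym}^j(V)\cdot\mathrm{Sym}^{r-j}(V) \subset \mathrm{Sym}^r(V)$. The assumption \eqref{eq:one-degree-higher} says precisely that $H^1(W^{r-1}\frakw/W^r\frakw) = 0$ for $r \geq 2$ via \eqref{eq:les-weight} (the two lines of \eqref{eq:one-degree-higher} kill the two terms flanking $H^1(W^{r-1}\frakw/W^r\frakw)$ in that sequence, for $r\geq 2$ and $r\geq 3$ respectively, the case $r=2$ being covered by ambidexterity of $\rho$ — this is where the ``one degree higher'' shift relative to \eqref{eq:classification-groups} is exactly what is needed). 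Hence the obstruction cocycle is a coboundary; moreover, since the differential on the associated graded is $\bQ$-linear (it does not mix or create $q$-powers beyond those present), one may choose the primitive — i.e.\ the weight-$r$ correction term — to again have coefficients in $\mathrm{Sym}^r(V)$. This closes the induction and produces the desired $\scrB_q$; that its bimodule map is the prescribed $[\rho_q]$ follows from the identification of the weight-$1$ cocycle with $\rho_q$ exactly as in Lemma \ref{th:first-order}.

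The main obstacle I anticipate is not the existence half of the argument but the bookkeeping that the \emph{primitive} can be taken with coefficients in $\mathrm{Sym}^r(V)$ rather than merely in $\bQ[[q]]$. This requires observing that the relevant chain complex computing $H^*(W^{r-1}\frakw_q/W^r\frakw_q)$ is obtained from a fixed $\bQ$-complex by extension of scalars along $\bQ \hookrightarrow \bQ[[q]]$, so that its cohomology vanishing is ``coefficient-blind'': one can solve the cochain-level equation $dx = (\text{obstruction})$ working coefficient-wise against any $\bQ$-basis of $\bQ[[q]]$ compatible with the filtration by $\mathrm{Sym}^\bullet(V)$, and since the obstruction already lies in $\mathrm{Sym}^r(V)$-coefficients the same is true of the solution. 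A secondary technical point is to make sure the weight filtration and the grading interact correctly (the part of weight $r$ sits in the appropriate degree so that \eqref{eq:one-degree-higher}, rather than \eqref{eq:classification-groups-2}, is the hypothesis invoked); this is a matter of re-reading the degree shifts in \eqref{eq:weight-cone}–\eqref{eq:les-weight} and matching them against the statement, and should be entirely routine. Finally one records that the construction is compatible with the transfer and inclusion lemmas already proved, so that the resulting $\scrB_q$ genuinely realizes $[\rho_q]$ up to the equivalence of \eqref{eq:bf-quasi-isomorphism}.
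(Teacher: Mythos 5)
Your overall strategy is the same as the paper's: the paper also runs the weight-by-weight Maurer--Cartan construction, and it packages your ``coefficient-blind primitive'' point by working from the start inside the bigraded sub-dg-Lie algebra $\frakw_V = \prod_r W^{=r}\frakw \otimes \mathit{Sym}^r(V) \subset \frakw[[q]]$, so that the weight-$r$ obstruction automatically lies in $H^2(W^{=r}\frakw)\otimes \mathit{Sym}^r(V)$ and vanishing over $\bQ$ suffices. So the architecture of your argument is fine.

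However, your identification of which cohomology groups the hypothesis kills is off, and as written that step is incorrect. The obstruction to choosing the weight-$r$ term (the weight-$r$ component of the Maurer--Cartan failure built from the lower-weight pieces) is a degree-$2$ class in the weight-$r$ graded piece, so the group that must vanish is $H^2(W^r\frakw/W^{r+1}\frakw)$ for $r \geq 2$, not $H^1(W^{r-1}\frakw/W^{r}\frakw)$. In \eqref{eq:les-weight} at $\ast = 2$ and weight $r$, the flanking terms are $H^{3-2r}(\mathit{hom}_{(\scrC,\scrC)}(\scrP^{\otimes_{\scrC} r},\scrC))$ and $H^{2-2r}(\mathit{hom}_{(\scrC,\scrC)}(\scrP^{\otimes_{\scrC} r+1},\scrP)) = H^{4-2(r+1)}(\mathit{hom}_{(\scrC,\scrC)}(\scrP^{\otimes_{\scrC} r+1},\scrP))$, and these are exactly what \eqref{eq:one-degree-higher} kills for $r\geq 2$ (the second line applied with $r+1\geq 3$, which is where the threshold $r\geq 3$ in the hypothesis comes from). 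By contrast, the flanking terms of your $H^1(W^{r-1}\frakw/W^r\frakw)$ are $H^{3-2r}(\mathit{hom}_{(\scrC,\scrC)}(\scrP^{\otimes_{\scrC} r},\scrP))$ and $H^{4-2r}(\mathit{hom}_{(\scrC,\scrC)}(\scrP^{\otimes_{\scrC} r-1},\scrC))$, which \eqref{eq:one-degree-higher} does not control; your parenthetical patch that ``the case $r=2$ is covered by ambidexterity'' is a symptom of this mismatch rather than a fix. Ambidexterity is indeed needed, but at weight $1$: it is precisely the condition that $[\rho_q]$ is annihilated by the connecting map $\rho \mapsto \mathit{id}\otimes_{\scrC}\rho - \rho\otimes_{\scrC}\mathit{id}$ of \eqref{eq:les-weight}, hence lifts to a weight-$1$ cocycle with coefficients in $V$; it plays no role in the $r\geq 2$ obstructions. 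With the degree bookkeeping corrected in this way, your argument coincides with the paper's proof.
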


\begin{proof} 
Because the weight filtration on $\scrB = \scrC \oplus \scrP[1]$ comes with a preferred splitting, the dg Lie algebra \eqref{eq:frakw} is bigraded. More precisely, one can write $\frakw = \prod_{r \geq 1} W^{=r}\frakw$, where $W^{=r}$ stands for the part that increases weights exactly by $r$. We use that to define another dg Lie algebra,
\begin{equation} \label{eq:v-algebra}
\frakw_V = \prod_r W^{=r} \frakw \otimes \mathit{Sym}^r V \subset \frakw[[q]].
\end{equation}
By \eqref{eq:les-weight}, we can lift our class $[\rho_q]$ to $H^0(W^{=1}\frakw) \otimes V = H^0(W^1\frakw/W^2 \frakw) \otimes V$, and then represent it by an element of $W^{=1} \frakw \otimes V \subset \frakw_V$. We then extend that to a Maurer-Cartan element in $\frakw_V$, order by order in $r$. At each step, the obstruction lies in $H^2(W^{=r} \frakw_V) \iso H^2(W^r \frakw/W^{r+1} \frakw) \otimes \mathit{Sym}^r(V)$. From \eqref{eq:les-weight}, one sees that \eqref{eq:one-degree-higher} implies the vanishing of those cohomology groups. 
\end{proof}

It will occasionally be useful to be able to transfer this result across quasi-isomorphisms, in the following sense.

\begin{lemma} \label{th:v-transfer}
Take a pair $(\scrC,\scrP)$ consisting of an $A_\infty$-algebra a bimodule over it, as well as another such pair $(\tilde{\scrC},\tilde{\scrP})$, together with a quasi-isomorphism of pairs between them. Suppose that we have an $A_\infty$-structure on $(\scrC \oplus \scrP[1])[[q]]$ which is nondecreasing with respect to weights, and such that the weight $r$ part has coefficients in $\mathit{Sym}^r(V) \subset \bQ[[q]]$. Then there is an $A_\infty$-structure on 
\begin{equation} \label{eq:tilde-double}
(\tilde{\scrC} \oplus \tilde{\scrP}[1])[[q]]
\end{equation}
with the same property, as well as a filtered quasi-isomorphism between the two; and the filtered quasi-isomorphism also has the same properties with respect to weights.
\end{lemma}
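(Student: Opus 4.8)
The plan is to realize the desired transfer as a single obstruction-theory argument in a curved $L_\infty$-algebra, exactly parallel to the proof of Lemma \ref{th:bimodule-transfer-2} but carried out inside the refined coefficient algebra $\bQ[[q]]$ that keeps track of weights. First I would fix the given quasi-isomorphism of pairs $(\scrC,\scrP) \to (\tilde{\scrC},\tilde{\scrP})$; concretely this means a quasi-isomorphism $\phi: \scrC \to \tilde{\scrC}$ of $A_\infty$-algebras together with a compatible bimodule quasi-isomorphism $\psi: \scrP \to \phi^*\tilde{\scrP}$ (with the appropriate higher terms). Write $\scrB = \scrC \oplus \scrP[1]$ and $\tilde{\scrB} = \tilde{\scrC} \oplus \tilde{\scrP}[1]$, each carrying its weight filtration with a preferred splitting. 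I would then consider the graded vector space of pairs ``$A_\infty$-structure on \eqref{eq:tilde-double} together with a filtered $A_\infty$-homomorphism from it to the given structure on $(\scrC\oplus\scrP[1])[[q]]$,'' which, as in \eqref{eq:c-lie-algebra} and its $q$-version in Lemma \ref{th:turn-into-inclusion-2}, is governed by a curved $L_\infty$-algebra $\frakt$ built from chosen ``given'' extensions of $\phi,\psi$ and of the $A_\infty$-structures.

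The key point is to run this obstruction theory not over all of $\bQ[[q]]$ but over the bigraded subobject that remembers weights, in the spirit of \eqref{eq:v-algebra}. Since the weight filtration on both $\scrB$ and $\tilde\scrB$ has a preferred splitting, $\frakt$ decomposes as $\prod_{r\geq 0} W^{=r}\frakt$, and I would set
\begin{equation}
\frakt_V = \prod_{r \geq 0} W^{=r}\frakt \otimes \mathit{Sym}^r(V) \subset \frakt[[q]],
\end{equation}
a curved $L_\infty$-subalgebra (one must check the brackets and the differential preserve this, which follows because the weight of a bracket is the sum of the weights and $\mathit{Sym}^{r_1}(V)\cdot\mathit{Sym}^{r_2}(V)\subset\mathit{Sym}^{r_1+r_2}(V)$; the differential is weight-preserving). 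The given $A_\infty$-structure on $(\scrC\oplus\scrP[1])[[q]]$ is, by hypothesis, an element whose weight-$r$ part lies in $W^{=r}\otimes\mathit{Sym}^r(V)$, so it defines a Maurer-Cartan element on one side; and by the argument in Lemma \ref{th:bimodule-transfer-2} the relevant associated-graded pieces $W^{=r}\frakt/\cdots$ are acyclic (they are $\hom$-complexes into mapping cones of quasi-isomorphisms, namely $\{\tilde\scrC \xrightarrow{\mathit{id}} \tilde\scrC\}$ and $\{\scrP \xrightarrow{\gamma} \tilde\scrP\}$-type cones, hence acyclic). Filtered acyclicity of $\frakt_V$ then lets me solve the extended Maurer-Cartan equation order by order in $r$, producing both the $A_\infty$-structure on \eqref{eq:tilde-double} with the prescribed weight/$\mathit{Sym}^r(V)$ property and the filtered quasi-isomorphism to the original; and the quasi-isomorphism automatically lands in $\frakt_V$, so its components have the same coefficient restriction. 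Reducing mod $q$ (equivalently, the $r=0$ step) recovers the original quasi-isomorphism of pairs, confirming it is filtered.

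The main obstacle I anticipate is purely bookkeeping rather than conceptual: setting up $\frakt$ so that the ``given'' data simultaneously extend the quasi-isomorphism of pairs \emph{and} are compatible with the weight splittings on both sides, so that the decomposition $\frakt_V\subset\frakt[[q]]$ is genuinely a sub-$L_\infty$-algebra and not merely a subspace. Once that is arranged, the acyclicity input is word-for-word that of Lemma \ref{th:bimodule-transfer-2}, and the tensoring with $\mathit{Sym}^r(V)$ is harmless since it is an exact functor on $\bQ$-vector spaces (this is where working over $\bQ$, noted in Remark after Lemma \ref{th:turn-into-inclusion}, is convenient but not essential). I would therefore only sketch the $\frakt_V$ construction and the acyclicity, and cite the earlier lemmas for the obstruction-theory mechanics.
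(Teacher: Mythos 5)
Your argument is essentially the paper's own (very terse) proof: the paper likewise forms a curved $L_\infty$-algebra modelled on \eqref{eq:v-algebra} that simultaneously governs the $A_\infty$-structure on \eqref{eq:tilde-double} and the quasi-isomorphism, observes filtered acyclicity with respect to the weight filtration, and solves order by order, so your $\frakt_V$ construction with the $\mathit{Sym}^r(V)$-decorated weight decomposition fills in exactly the intended details. The only point to tidy is the direction mismatch between your chosen $\phi:\scrC\to\tilde\scrC$ and the homomorphism you build out of the tilde side (one either builds the map in the direction of the given quasi-isomorphism or first replaces it by a homotopy inverse), which is immaterial here since in the paper's application the given map is an isomorphism.
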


\begin{proof}
This is fairly straightforward. There is a curved $L_\infty$-algebra similar to \eqref{eq:v-algebra} which governs, simultaneously, the construction of the $A_\infty$-structure on \eqref{eq:tilde-double} and that of the quasi-isomorphism. With respect to the filtration by weights, that $L_\infty$-algebra is filtered acyclic, allowing us to solve the construction problem order by order.
\end{proof}

\section{Differentiation operations in the $A_\infty$-context\label{sec:connections}}
The aim of this section is to provide the context on $A_\infty$-connections needed in Section \ref{subsec:directed-2}. This is entirely elementary material, and we will explain the basic features while keeping the discussion relatively narrow. For some related aspects, notably the process by which an $A_\infty$-connection gives rise to connections such as \eqref{eq:nabla-2} on Hochschild-type groups, we refer to \cite{seidel21b}.

\subsection{Definitions}
Let $\scrA$ be an $A_\infty$-algebra over $\bQ$, and $\scrA_q$ a (curved) deformation. To define Hochschild cohomology $\mathit{HH}^*(\scrA_q,\scrA_q)$, we use the standard chain complex 
\begin{equation} \label{eq:hochschild-complex}
\begin{aligned}
& \mathit{CC}^*(\scrA_q,\scrA_q) = \mathit{Hom}(T(\scrA_q[1]),\scrA_q), \\
& (\delta \gamma_q)^d(a_d,\dots,a_1) = \\ & \qquad\qquad 
\textstyle \sum_{ij} (-1)^{\|\gamma_q\|+\|a_1\|+\cdots+\|a_i\|} \gamma_q^{d-j+1}(a_d,\dots,\mu_{\scrA_q}^j(a_{i+j},\dots,a_{i+1}),\dots,a_1) \\
& \qquad \qquad -\textstyle \sum_{ij}
(-1) ^{\|\gamma_q\| (\|a_1\| + \cdots +\|a_i \|)} \mu^{d-j+1}_{\scrA_q}(a_d,\dots,\gamma_q^j(a_{i+j},\dots,a_{i+1}),\dots,a_1).
\end{aligned}
 \end{equation}

An $A_\infty$-pre-connection $\calnablaq$ on $\scrA_q$ consists of
\begin{equation} \label{eq:calnabla-def}
\left\{
\begin{aligned} 
& \calnablaq^0 \in \scrA_q^1, \\
& \calnablaq^1: \scrA_q \longrightarrow \scrA_q, \\
& \calnablaq^2: \scrA_q^{\otimes 2} \longrightarrow \scrA_q[-1], \\
& \cdots
\end{aligned}
\right.
\end{equation}
such that each $\calnablaq^d$, $k \neq 1$, is $q$-linear, while 
\begin{equation} \label{eq:q-derivation}
\calnablaq^1( g a) = g \calnablaq^1 a + g' \, a \;\; \text{ for } g \in \bQ[[q]], \, a \in \scrA_q.
\end{equation}
Equivalently, a pre-connection is of the form $\calnablaq = \partial_q + \alpha_q$, where the first term is ordinary differentiation as a linear endomorphism, and $\alpha_q \in \mathit{CC}^1(\scrA_q,\scrA_q)$. Even though the pre-connection itself is not a Hochschild cochain, one can apply the Hochschild differential to it, which yields an element $\delta(\calnablaq) \in \mathit{CC}^2(\scrA_q,\scrA_q)$. Clearly, its cohomology class
\begin{equation}
\kappa_q = [\delta \calnablaq] \in \mathit{HH}^2(\scrA_q,\scrA_q)
\end{equation}
is independent of all choices. This is what we call the Kaledin class. If one takes $\calnablaq = \partial_q$, the corresponding representative of the Kaledin class is just the ordinary $q$-derivative $\partial_q \mu_{\scrA_q}$ of the $A_\infty$-structure. An $A_\infty$-connection is one such that $\partial(\calnablaq) = 0$. Two such connections are homotopic if their difference is a Hochschild coboundary. Clearly, an $A_\infty$-connection exists if and only if the Kaledin class is zero; and in that case, the homotopy classes of connections form an affine space over $\mathit{HH}^1(\scrA_q,\scrA_q)$.

\begin{lemma} \label{th:functoriality-of-connections}
Consider an isomorphism $\scrF_q: \scrA_q \rightarrow \tilde{\scrA}_q$ of curved $A_\infty$-algebras (this means that if we specialize to $q = 0$, $\scrF^1: \scrA \rightarrow \tilde{\scrA}$ is an isomorphism). Then, there is an induced bijection between pre-connections on both sides; and the associated representatives for the Kaledin class also correspond to each other, under the induced isomorphism of Hochschild complexes. In particular, we get a bijection between actual connections on both sides, and that is compatible with the notion of homotopy.
\end{lemma}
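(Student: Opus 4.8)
The plan is to trace through the explicit formulae defining pre-connections, the Hochschild differential, and the functoriality of the Hochschild complex, checking that everything is compatible. Throughout, write $\scrF_q$ for the given isomorphism and recall that, as part of the data of a curved $A_\infty$-homomorphism, we have a curvature term $\scrF_q^0 \in \tilde{\scrA}_q^1$ and higher components $\scrF_q^d : \scrA_q^{\otimes d} \to \tilde{\scrA}_q[1-d]$; since $\scrF_q^1$ reduces to an isomorphism at $q=0$, the homomorphism is invertible (one constructs $\scrF_q^{-1}$ order by order in $q$, the obstruction at each stage being the invertibility of $\scrF^1$). The key point to recall is that $\scrF_q$ induces an isomorphism of Hochschild complexes
\begin{equation}
(\scrF_q)_* : \mathit{CC}^*(\scrA_q,\scrA_q) \xrightarrow{\ \sim\ } \mathit{CC}^*(\tilde{\scrA}_q,\tilde{\scrA}_q),
\end{equation}
which is the standard conjugation action: $(\scrF_q)_*(\gamma_q)$ is assembled from $\scrF_q$, $\gamma_q$, and $\scrF_q^{-1}$ by the usual tree formula, and it intertwines the differentials in \eqref{eq:hochschild-complex}. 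This is a purely formal fact about $A_\infty$-algebras and requires no hypotheses beyond invertibility of $\scrF_q$.

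First I would spell out the transport of pre-connections. Given a pre-connection $\calnablaq$ on $\scrA_q$, write it as $\partial_q + \alpha_q$ with $\alpha_q \in \mathit{CC}^1(\scrA_q,\scrA_q)$. The induced operation on $\tilde{\scrA}_q$ should be $\tilde{\calnablaq} := \scrF_q \circ \calnablaq \circ \scrF_q^{-1}$ interpreted correctly at the level of $A_\infty$-maps: concretely, one differentiates the relation $\scrF_q \circ \mu_{\scrA_q} = \mu_{\tilde{\scrA}_q} \circ \scrF_q$ (the $A_\infty$-homomorphism equation) with respect to $q$. Because $\partial_q$ satisfies the Leibniz rule \eqref{eq:q-derivation} while all the $\mu$'s and $\scrF_q^{\geq 2}$ are $q$-linear, the $q$-derivative $\partial_q \scrF_q$ (meaning the collection $(\partial_q \scrF_q^d)_d$, with the linear term carrying the anomalous derivative) is precisely the obstruction that must be absorbed. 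The formula is $\tilde{\alpha}_q = (\scrF_q)_*(\alpha_q) + (\text{a canonical correction term built from } \partial_q \scrF_q \text{ and } \scrF_q^{-1})$; I would write this correction term out once, using the tensor-coalgebra language from the proof of Lemma \ref{th:turn-into-inclusion}, where a pre-connection is just a coderivation-like object with an anomalous scalar piece. The map $\calnablaq \mapsto \tilde{\calnablaq}$ is manifestly a bijection because $\scrF_q$ is invertible and the correction term depends only on $\scrF_q$, not on $\calnablaq$; applying the construction to $\scrF_q^{-1}$ gives the inverse.

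Second I would verify the compatibility with the Kaledin class. The representative $\delta(\calnablaq) \in \mathit{CC}^2(\scrA_q,\scrA_q)$ is, by definition, the Hochschild differential applied to the cochain part plus the derivative of the structure maps; by the previous step and the fact that $(\scrF_q)_*$ intertwines Hochschild differentials, one gets $\delta(\tilde{\calnablaq}) = (\scrF_q)_*(\delta(\calnablaq))$ on the nose — the correction terms cancel precisely because they were designed to make $\tilde{\calnablaq}$ a genuine pre-connection for the transported structure. Equivalently, and perhaps more cleanly, one observes that $\delta(\calnablaq)$ is characterised (up to the usual sign) as $\partial_q \mu_{\scrA_q} - [\alpha_q, \mu_{\scrA_q}]$, differentiate the homomorphism equation, and match terms. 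Hence $\kappa_{\tilde{\scrA}_q} = (\scrF_q)_* \kappa_{\scrA_q}$ under $H^2$ of the isomorphism of Hochschild complexes; in particular one vanishes iff the other does, connections correspond to connections, and since homotopy of connections means differing by a Hochschild coboundary — a notion preserved by the chain isomorphism $(\scrF_q)_*$ — the bijection descends to homotopy classes.

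The main obstacle I expect is bookkeeping: getting the correction term in the transported pre-connection exactly right, including signs, and confirming that it is the unique term making $\tilde{\calnablaq}$ satisfy the defining property \eqref{eq:q-derivation} while $\delta(\tilde{\calnablaq})$ lands correctly. This is where a sloppy treatment could hide an error, because the anomalous Leibniz term in $\calnablaq^1$ does not commute naively past the tree-summation defining $(\scrF_q)_*$. The cleanest way to organise this is to note that ``pre-connection on $\scrA_q$'' is the same as ``section of the natural affine torsor over $\mathit{CC}^1(\scrA_q,\scrA_q)$ whose underlying vector space is the Hochschild complex'', that $\scrF_q$ acts on the total space of such torsors covering its action on Hochschild complexes, and that $\delta$ is equivariant for this action by the functoriality of the bar construction; once phrased this way the statement is almost tautological, and the only real content is writing down that the ``natural affine torsor'' is functorial, which is a one-line check using $\partial_q$.
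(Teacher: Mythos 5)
Your argument is correct, and at bottom it produces the same bijection as the paper, but the route is organised differently. You invert $\scrF_q$ (order by order in $q$), conjugate Hochschild cochains by the induced coalgebra automorphism of the bar construction, transport $\calnablaq = \partial_q + \alpha_q$ by conjugation with the correction term $-(\partial_q\scrF_q)\scrF_q^{-1}$, and get the Kaledin statement from $[\mu,\cdot\,]$-equivariance of conjugation (equivalently, by differentiating the homomorphism equation). The paper instead never inverts $\scrF_q$ and never writes a correction term: it introduces the mixed complex $\mathit{CC}^*(\scrA_q,\scrF_q^*\tilde{\scrA}_q)$ with the two maps ``insert $\gamma_q$ into $\scrF_q$'' and ``insert $\scrF_q$ into $\tilde{\gamma}_q$'', shows both are isomorphisms by an order-by-order argument, and likewise matches pre-connections on either side with a notion of connection $\relativenablaq$ over $\scrF_q$; the bijection and the Kaledin compatibility then fall out of these correspondences. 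What your version buys is conceptual transparency (everything is literally conjugation on coderivations, and $\delta = [\mu,\cdot\,]$ makes the Kaledin statement one line); what it costs is exactly the bookkeeping you flag — the explicit inverse, the sign-laden correction term, and (in the curved setting) the fact that conjugation involves infinitely many insertions of the curvature terms of $\scrF_q$ and $\scrF_q^{-1}$, which converge only because those terms lie in $q\tilde{\scrA}_q^1$; you should say this last point explicitly, though it is automatic from $q$-adic completeness. The paper's roof through the mixed complex sidesteps all three issues at the price of introducing the auxiliary complex. Either way the content is the same and your plan closes the statement.
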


\begin{proof}
Alongside the standard Hochschild complexes of the two algebras, there is a mixed version (the Hochschild complex of $\scrA$ with coefficients in the pullback via $\scrF$ of the diagonal bimodule of $\tilde{\scrA}$):
\begin{equation}
\begin{aligned}
& \mathit{CC}^*(\scrA_q,\scrF_q^*\tilde\scrA_q) = \mathit{Hom}(T(\scrA_q[1]),\tilde{\scrA}), \\
& (\delta\phi_q)^d(a_d,\dots,a_1) = 
\\ & \qquad
\textstyle \sum_{ij} (-1)^{\|\phi_q\|+\|a_1\|+\cdots+\|a_i\|} \phi_q^{d-j+1}(a_d,\dots,\mu_{\scrA_q}^j(a_{i+j},\dots,a_{i+1}),\dots,a_1) \\
& \qquad -\textstyle \sum_{k,\, i_1+\cdots+i_k = d,\, l}
(-1) ^{\|\phi_q\| (\|a_1\| + \cdots +\|a_{i_1+\cdots+i_{l-1}}\|)} \mu^{d-i_1-\cdots-i_k+k}_{\tilde\scrA_q}(
\scrF_q^{i_k}(a_d,\dots,a_{d-i_k+1}),\\
& \qquad \qquad \qquad \qquad \dots,
\phi_q^{i_l}(a_{i_1+\cdots+i_l},\dots,a_{i_1+\cdots+i_{l-1}+1}),\dots,\scrF_q^{i_1}(a_{i_1},\dots,a_1)).
\end{aligned}
\end{equation}
It comes with isomorphisms
\begin{equation} \label{eq:back-and-forth}
\mathit{CC}^*(\scrA_q,\scrA_q) \longrightarrow \mathit{CC}^*(\scrA_q,\scrF_q^*\tilde{\scrA}_q) \longleftarrow \mathit{CC}^*(\tilde{\scrA}_q,\tilde{\scrA}_q).
\end{equation}
Explicitly, the image of $\gamma \in \mathit{CC}^*(\scrA_q,\scrA_q)$ under the map on the left is
\begin{equation} \label{eq:back}
(a_d,\dots,a_1) \longmapsto \textstyle \sum_{ij} (-1)^{\|\phi_q\|(\|a_1\|+\cdots+\|a_i\|)} \scrF_q^{d-j+1}(a_d,\dots,\gamma_q(a_{i+j},\dots,a_{i+1}),\dots,a_1);
\end{equation}
and the map on the right in \eqref{eq:back-and-forth} sends $\tilde{\gamma}$ to
\begin{equation} \label{eq:forth}
\textstyle \sum_{k,\, i_1+\cdots+i_k = d} \tilde{\gamma}_q^k(\scrF_q^{i_k}(a_d,\dots,a_{d-i_k+1}),\dots, \scrF_q^{i_1}(a_{i_1},\dots,a_1)).
\end{equation}
Arguing order-by-order in $q$, and then with respect to the number of inputs, shows that both maps are indeed isomorphisms.

One can introduce a notion of connection over $\scrF_q$, which is a sequence of maps
\begin{equation}
\begin{aligned}
& \relativenablaq^d: \scrA_q^{\otimes d} \longrightarrow \tilde{\scrA}_q[1-d], \\
& \relativenablaq^d(a_d,\dots, g a_j,\dots,a_1) = g \relativenablaq^d(a_d,\dots,a_1) + g'\, \scrF_q^d(a_d,\dots,a_1).
\end{aligned}
\end{equation}
The formulae from \eqref{eq:back} and \eqref{eq:forth} produce, starting with a pre-connection on $\scrA_q$ or $\tilde{\scrA}_q$, such a relative connection. Again, these are bijective correspondences. With that at hand, the desired properties are easy to see.
\end{proof} 

\begin{lemma} \label{th:functoriality-of-connections-2}
Suppose that $\scrF_q: \scrA_q \rightarrow \tilde{\scrA}_q$ is a filtered quasi-isomorphism. Then, the associated isomorphism $\mathit{HH}^*(\scrA_q,\scrA_q) \iso \mathit{HH}^*(\tilde{\scrA}_q,\tilde{\scrA}_q)$ takes one Kaledin class to the other. If those classes vanish, there is a preferred bijection between homotopy classes of $A_\infty$-connections on both sides.
\end{lemma}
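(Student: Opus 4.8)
The plan is to reduce to the case of an \emph{isomorphism}, where Lemma \ref{th:functoriality-of-connections} already does all the work, by factoring an arbitrary filtered quasi-isomorphism through honest isomorphisms. First I would recall that, since we are over $\bQ[[q]]$ and $\scrF_q$ is filtered (reduces to a quasi-isomorphism at $q=0$), one can find a filtered quasi-isomorphism $\scrG_q$ in the opposite direction together with homotopies $\scrG_q \circ \scrF_q \htp \mathit{id}$ and $\scrF_q \circ \scrG_q \htp \mathit{id}$; this is the standard statement that filtered quasi-isomorphisms of (curved) $A_\infty$-algebras over a complete local ring are homotopy equivalences (one builds the inverse order by order in $q$, starting from a chain-level homotopy inverse of $\scrF^1$ at $q=0$, which exists since $\scrF^1$ is a quasi-isomorphism of $\bQ$-complexes). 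In particular $\scrF_q$ induces an isomorphism $\mathit{HH}^*(\scrA_q,\scrA_q)\iso \mathit{HH}^*(\tilde\scrA_q,\tilde\scrA_q)$ on Hochschild cohomology (the mixed complex $\mathit{CC}^*(\scrA_q,\scrF_q^*\tilde\scrA_q)$ from the proof of Lemma \ref{th:functoriality-of-connections} receives a map from each side, and both maps are filtered quasi-isomorphisms when $\scrF_q$ is only a filtered quasi-isomorphism, by the same order-by-order argument applied to the associated graded).

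Next I would prove the Kaledin-class statement. Take any $A_\infty$-pre-connection $\calnablaq$ on $\scrA_q$; its image under the left map in \eqref{eq:back-and-forth} is a degree-one cochain in the mixed complex whose Hochschild differential represents the image of $\kappa_q$ under $\mathit{CC}^*(\scrA_q,\scrA_q)\to\mathit{CC}^*(\scrA_q,\scrF_q^*\tilde\scrA_q)$. Symmetrically, pulling back a pre-connection $\dot{\tilde{\calnablaq}}$ on $\tilde\scrA_q$ via the right map produces a cochain whose differential represents the image of $\tilde\kappa_q$. The point is that a \emph{relative connection} $\dot{\calnablaq}$ over $\scrF_q$, as introduced at the end of the proof of Lemma \ref{th:functoriality-of-connections}, always exists: one chooses its components order by order, the only constraint being the affine condition $\dot{\calnablaq}^d(\dots,ga_j,\dots)=g\dot{\calnablaq}^d(\dots)+g'\scrF_q^d(\dots)$, which is solvable because $\partial_q$ differentiates $\scrF_q^1$ and the higher $\scrF_q^d$ are $q$-linear. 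Such a relative connection maps to a representative of $\kappa_q$ from the left and to a representative of $\tilde\kappa_q$ from the right, and these two representatives differ by $\delta$ of $\dot{\calnablaq}$ viewed as a degree-one cochain in the mixed complex; hence $\kappa_q$ and $\tilde\kappa_q$ have the same image in $\mathit{HH}^2$ of the mixed complex, and since both comparison maps are isomorphisms on cohomology, $\scrF_q$ carries $\kappa_q$ to $\tilde\kappa_q$.

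Finally, assuming both classes vanish, I would produce the promised bijection on homotopy classes of connections. Given an $A_\infty$-connection $\calnablaq$ on $\scrA_q$, lift it to a relative connection $\dot{\calnablaq}$ over $\scrF_q$ (existence as above), then push it forward through the right-hand isomorphism in \eqref{eq:back-and-forth} to get a pre-connection $\tilde{\calnablaq}$ on $\tilde\scrA_q$; its Kaledin representative $\delta\tilde{\calnablaq}$ corresponds, under the mixed-complex comparison, to $\delta\calnablaq = 0$, so $\tilde{\calnablaq}$ is an honest $A_\infty$-connection. One checks directly from the formulas \eqref{eq:back} and \eqref{eq:forth} that changing $\calnablaq$ by a Hochschild coboundary changes $\tilde{\calnablaq}$ by one as well (the comparison maps are chain maps), so this descends to homotopy classes; running the same construction with $\scrG_q$ produces an inverse map up to homotopy, using the homotopies $\scrG_q\circ\scrF_q\htp\mathit{id}$ together with the fact, inherited from Lemma \ref{th:functoriality-of-connections}, that homotopic $A_\infty$-isomorphisms induce the same map on homotopy classes of connections. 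The main obstacle I anticipate is not any single computation but the bookkeeping in the non-invertible setting: unlike in Lemma \ref{th:functoriality-of-connections}, the maps in \eqref{eq:back-and-forth} are no longer isomorphisms of complexes, only quasi-isomorphisms, so every statement (``takes Kaledin class to Kaledin class'', ``induces a bijection on homotopy classes'') has to be phrased and verified at the level of cohomology via the mixed complex, and one must be careful that the relative connection $\dot{\calnablaq}$ is only well-defined up to the relevant equivalence — checking that this ambiguity is exactly matched on the two sides is the delicate point.
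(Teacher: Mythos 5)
Your plan is essentially the paper's own proof: the paper simply reuses the mixed complex $\mathit{CC}^*(\scrA_q,\scrF_q^*\tilde{\scrA}_q)$ and the relative-connection formalism from Lemma \ref{th:functoriality-of-connections}, observing that the maps \eqref{eq:back-and-forth} are now only filtered quasi-isomorphisms, so everything (equality of Kaledin classes, bijection of homotopy classes of connections) is verified at the level of cohomology — exactly the strategy you describe. The one wording to repair is that you cannot literally ``push forward through the right-hand isomorphism'' since that map is no longer invertible at the cochain level; instead one compares homotopy classes of connections on each side with homotopy classes of relative connections over $\scrF_q$, as equivariant maps of torsors over the isomorphisms on $\mathit{HH}^1$ induced by \eqref{eq:back-and-forth} — a fix you anticipate in your closing paragraph, and which also makes the initial detour through a homotopy inverse $\scrG_q$ unnecessary.
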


\begin{proof}
We can use exactly the same strategy as in Lemma \ref{th:functoriality-of-connections}. The only difference is that the maps in \eqref{eq:back-and-forth} are now only filtered quasi-isomorphisms, which explains the weaker outcome.
\end{proof}

\subsection{Gauge equivalence classes}
Everything we have said previously would work over an arbitrary coefficient field. At this point, we start using the fact that we are working over $\bQ$. The starting point is \cite[Proposition 7.3(b)]{lunts10}:

\begin{lemma} \label{th:kaledin-2}
The Kaledin class is zero if and only if $\scrA_q$ is a trivial $A_\infty$-deformation.
\end{lemma}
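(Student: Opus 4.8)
The statement to prove is Lemma~\ref{th:kaledin-2}: the Kaledin class $\kappa_q \in \mathit{HH}^2(\scrA_q,\scrA_q)$ vanishes if and only if $\scrA_q$ is a trivial $A_\infty$-deformation of $\scrA$, i.e.\ there is a filtered $A_\infty$-isomorphism $\scrA[[q]] \to \scrA_q$ extending the identity at $q = 0$.

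The plan is as follows. The ``if'' direction is essentially formal: if $\scrA_q \iso \scrA[[q]]$ as curved $A_\infty$-algebras, then by Lemma~\ref{th:functoriality-of-connections} the Kaledin class of $\scrA_q$ corresponds to that of $\scrA[[q]]$; but on $\scrA[[q]]$ the operation $\partial_q$ is itself an $A_\infty$-connection (since $\partial_q \mu_{\scrA[[q]]} = 0$ for the $q$-linearly extended structure), so $\kappa_q = 0$. For the ``only if'' direction, suppose $\kappa_q = 0$ and pick an $A_\infty$-connection $\calnablaq$ on $\scrA_q$, so that $\delta(\calnablaq) = 0$ on the nose (adjusting the chosen bounding cochain). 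Write $\calnablaq = \partial_q + \alpha_q$ with $\alpha_q \in \mathit{CC}^1(\scrA_q,\scrA_q)$. The idea is to integrate the flow of $\alpha_q$: one constructs a $q$-dependent family of $A_\infty$-automorphisms $\Phi_q$ of $\scrA_q$, starting from $\Phi_0 = \mathit{id}$, solving the differential equation $\partial_q \Phi_q = \Phi_q \circ \alpha_q$ in an appropriate sense (the gauge-fixing ODE on the space of $A_\infty$-structures). Concretely, one solves for $\Phi_q$ order by order in $q$; at each order the equation is linear in the new coefficient, and since we are over $\bQ$ the required antiderivatives of formal power series exist. The output is a filtered $A_\infty$-isomorphism $\Phi_q$ conjugating $\scrA_q$ to an $A_\infty$-structure on $\scrA[[q]]$ for which $\calnablaq$ becomes the trivial connection $\partial_q$; since that structure has $\partial_q \mu = 0$, it is $q$-independent, i.e.\ equals $\scrA[[q]]$. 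This also yields the more precise statement mentioned in Scholium~\ref{th:kaledin}: the trivialization is unique once $\calnablaq$ is fixed.

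In more detail, the order-by-order step works like this. Assume inductively that we have a filtered $A_\infty$-isomorphism $\Phi_q^{(N)}$, defined modulo $q^N$, such that the pulled-back structure $\mu^{(N)}$ on $\scrA[[q]]/q^N$ is $q$-independent and the pulled-back connection is $\partial_q$ modulo $q^N$. One then seeks a correction $\Phi_q^{(N+1)} = \Phi_q^{(N)} + q^N \phi_N + \cdots$; the requirement that the transported connection agree with $\partial_q$ to order $q^{N+1}$ forces a condition of the shape $\delta \phi_N = (\text{the order-}q^{N}\text{ part of the transported }\alpha_q)$, and $\delta(\calnablaq) = 0$ guarantees that the right-hand side is a Hochschild cocycle. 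But the point of being over $\bQ$ is different: rather than needing the cocycle to be a coboundary, we simply integrate --- the obstruction to finding $\phi_N$ is automatically resolved because we are free to add an antiderivative, and the needed cocycle identity is exactly $\delta(\calnablaq) = 0$ together with the Jacobi identity for the Gerstenhaber bracket. I would phrase this cleanly by working with the extended complex $\partial_q + \mathit{CC}^1$ acting on the (complete, filtered) space of $A_\infty$-structures and invoking that a complete filtered dg Lie algebra with an ``integrating'' derivation of the base has no deformation obstructions of this type; this is the content of \cite[Proposition 7.3(b)]{lunts10}, and the exposition promised for Section~\ref{sec:connections} would spell it out.

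The main obstacle is purely expository: making precise the sense in which one ``integrates the connection'' --- i.e.\ setting up the correct formal ODE for the gauge transformation $\Phi_q$ and checking that solving it order by order both terminates (in the $q$-adic sense) and produces a genuine filtered $A_\infty$-isomorphism rather than merely a morphism of underlying complexes. The key input that removes any actual obstruction is the availability of formal antiderivatives over $\bQ$ (equivalently, that $q\bQ[[q]] \to q\bQ[[q]]$, $f \mapsto \int_0^q f$, is a bijection onto its image splitting $\partial_q$), which is precisely why the hypothesis is that the ground field contains $\bQ$; the same argument fails in positive characteristic, where the Kaledin class can be nonzero even for formally trivial-looking deformations. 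Everything else --- the compatibility with the Hochschild differential, the naturality under $\scrF_q$ established in Lemmas~\ref{th:functoriality-of-connections} and~\ref{th:functoriality-of-connections-2}, and the uniqueness of the trivialization for a fixed $\calnablaq$ --- follows by the standard bookkeeping of obstruction theory in a complete filtered setting.
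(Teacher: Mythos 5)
Your proposal is correct in outline, but be aware that the paper itself does not prove this lemma at all: it is quoted directly from \cite[Proposition 7.3(b)]{lunts10} (going back to Kaledin), and the only gauge-theoretic argument actually written out in Section \ref{sec:connections} is the proof of Lemma \ref{th:unique-gauge}, which \emph{presupposes} triviality and then kills the $\alpha_q$-part of a connection order by order by exponentiating Hochschild cochains. Your route --- pick $\calnablaq = \partial_q + \alpha_q$ with $\delta(\calnablaq)=0$ and integrate the resulting $q$-dependent gauge flow to obtain a trivialization under which $\calnablaq$ becomes $\partial_q$ --- is precisely the Kaledin--Lunts mechanism, and it is the same device as in Lemma \ref{th:unique-gauge}, applied to the pair (structure, connection) rather than to the difference of two connections on an already trivialized deformation; what it buys is a self-contained Section \ref{sec:connections} and, in one stroke, the sharper statement of Scholium \ref{th:kaledin} (existence \emph{and} uniqueness of the trivialization adapted to a given $\calnablaq$), whereas the paper's choice buys brevity by outsourcing to \cite{lunts10}. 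To make your sketch complete you would need to: state the ODE $\partial_q \Phi_q = \Phi_q \circ \alpha_q$ precisely, which is best done via the relative-connection formulae \eqref{eq:back}, \eqref{eq:forth} from Lemma \ref{th:functoriality-of-connections}; check that the order-by-order solution is a genuine curved filtered $A_\infty$-isomorphism, including the curvature components $\Phi_q^0$ and $\mu^0_{\scrA_q}$ (automatic here because the Hochschild complex \eqref{eq:hochschild-complex} contains the length-zero term); and note that the iteration converges $q$-adically, with the characteristic-zero hypothesis entering exactly through the existence of formal antiderivatives, as you say. Your ``if'' direction, via Lemma \ref{th:functoriality-of-connections} and the observation that $\partial_q$ is a connection on $\scrA[[q]]$ with $\partial_q\mu = 0$, is fine.
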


This, together with Lemmas \ref{th:functoriality-of-connections}, means that we can always restrict to $\scrA_q = \scrA[[q]]$. In that case, a connection has the form $\calnablaq = \partial_q + \alpha_q$, where $\alpha_q \in \mathit{CC}^1(\scrA,\scrA)[[q]]$ is a cocycle.

\begin{lemma} \label{th:unique-gauge}
Any two $A_\infty$-connections on $\scrA_q$ are related by an automorphism of $\scrA_q$ which is the identity modulo $q$. Moreover, that automorphism is unique.
\end{lemma}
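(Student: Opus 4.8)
The plan is as follows. Since an $A_\infty$-connection exists, the Kaledin class vanishes, so by Lemma~\ref{th:kaledin-2} the deformation is trivial, and by Lemma~\ref{th:functoriality-of-connections} we may assume $\scrA_q = \scrA[[q]]$ with the $q$-linear extension of the $A_\infty$-structure on $\scrA$. Then an $A_\infty$-connection is $\calnablaq = \partial_q + \alpha_q$ with $\alpha_q \in \mathit{CC}^1(\scrA,\scrA)[[q]]$ a Hochschild cocycle, the set of all connections is an affine space over the space $Z^1$ of such cocycles, and the group $\mathrm{Aut}_1(\scrA_q)$ of $A_\infty$-automorphisms of $\scrA_q$ reducing to $\mathit{id}$ modulo $q$ acts on it. First I would make this action explicit: unwinding the ``connection over $\scrF_q$'' construction from the proof of Lemma~\ref{th:functoriality-of-connections} in the case $\scrF_q \in \mathrm{Aut}_1(\scrA_q)$, one sees that an infinitesimal automorphism --- a $\delta_\mu$-closed element $\psi \in q\,\mathit{CC}^0(\scrA,\scrA)[[q]]$ --- moves a connection $\calnablaq$ by the covariant derivative $\partial_q\psi + [\alpha_q,\psi] \in Z^1$, the bracket being the Gerstenhaber bracket.

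The key point, and the one place where working over $\bQ$ is essential (exactly as in Lemma~\ref{th:kaledin-2}), is that this infinitesimal action is a \emph{bijection} from the $\delta_\mu$-closed elements of $q\,\mathit{CC}^0(\scrA,\scrA)[[q]]$ onto $Z^1$. Expanding in powers of $q$, the equation $\partial_q\psi + [\alpha_q,\psi] = \beta_q$ becomes $(k+1)\psi_{k+1} + \sum_{i+j=k}[\alpha_i,\psi_j] = \beta_k$, which together with $\psi_0 = 0$ determines the $\psi_k$ recursively, the only division being by the positive integers $k+1$. That the solution $\psi$ is again $\delta_\mu$-closed follows since $\delta_\mu\psi$ then solves the homogeneous equation $\partial_q(\delta_\mu\psi) + [\alpha_q,\delta_\mu\psi] = 0$ with zero $q$-constant term --- using that $\alpha_q$ and $\beta_q$ are cocycles and that $\delta_\mu$ is a derivation for the Gerstenhaber bracket --- and hence vanishes.

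Granting this, existence follows by integration. Given connections $\calnablaq^{(0)}, \calnablaq^{(1)}$, I would join them by the affine path of connections $\calnablaq^{(t)} = \calnablaq^{(0)} + t(\alpha_q^{(1)}-\alpha_q^{(0)})$, take the unique infinitesimal automorphism $\psi_q^{(t)}$ with $\partial_q\psi_q^{(t)} + [\alpha_q^{(t)},\psi_q^{(t)}] = \alpha_q^{(1)}-\alpha_q^{(0)} = \tfrac{d}{dt}\calnablaq^{(t)}$ provided by the previous step, and let $\scrG_q$ be the time-one map of the flow of the $q$-divisible time-dependent generator $\psi_q^{(t)}$. Since $\psi_q^{(t)} \in q\,\mathit{CC}^0[[q]]$ the flow converges $q$-adically, lands in $\mathrm{Aut}_1(\scrA_q)$, and by construction transports $\calnablaq^{(0)}$ to $\calnablaq^{(1)}$. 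For uniqueness it suffices to show that an element $\scrH_q \in \mathrm{Aut}_1(\scrA_q)$ fixing a connection $\calnablaq$ is the identity: if $\scrH_q - \mathit{id}$ had $q$-adic valuation $k \geq 1$ with leading Hochschild cochain $\chi \neq 0$, then $\scrH_q$ being an $A_\infty$-automorphism makes $\chi$ $\delta_\mu$-closed, while comparing the valuation-$(k-1)$ parts of $\scrH_q\cdot\calnablaq$ and $\calnablaq$ forces $k\chi = 0$, a contradiction.

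The main obstacle I anticipate lies in the first paragraph: carefully extracting the exact gauge-action formula --- including all higher $A_\infty$-components of the automorphism and of the connection --- from Lemma~\ref{th:functoriality-of-connections}, and verifying that $\delta_\mu$-closedness is precisely the condition that a $q$-divisible family of degree-$0$ Hochschild cochains exponentiate to an element of $\mathrm{Aut}_1(\scrA_q)$. Everything after that is the order-by-order obstruction bookkeeping familiar from Lemmas~\ref{th:turn-into-inclusion} and~\ref{th:turn-into-inclusion-2}, now made trivial by the invertibility established in the second paragraph. (This is of course the uniqueness half of the trivialization statement recorded in Scholium~\ref{th:kaledin}.)
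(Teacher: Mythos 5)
Your proposal is correct, and it shares the paper's skeleton: reduce to $\scrA_q = \scrA[[q]]$ via Lemmas \ref{th:functoriality-of-connections} and \ref{th:kaledin-2}, and then exploit that $q$-divisible Hochschild cocycles exponentiate to automorphisms that are the identity modulo $q$, with characteristic zero entering only through division by positive integers. Where you diverge is in the mechanism. The paper never writes down the infinitesimal gauge action: it gauges an arbitrary connection to the trivial one $\partial_q$ by successive approximation, at each stage exponentiating the explicit cocycle $q\alpha_q/(m+1)$ to raise the order of vanishing of $\alpha_q$ by one, and taking the ($q$-adically convergent) infinite composition; uniqueness is then the one-line remark that an automorphism preserving $\partial_q$ must be $q$-constant, hence the identity. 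You instead establish infinitesimal transitivity -- the recursion solving $\partial_q\psi + [\alpha_q,\psi]=\beta_q$, together with the closedness argument, which is correct -- and then integrate a Moser-type flow in an auxiliary parameter $t$. That buys a statement made directly at an arbitrary pair of connections (and your uniqueness step, a leading-order valuation argument at an arbitrary base connection, is fine and slightly more flexible than the paper's stabilizer-of-$\partial_q$ remark), but it costs exactly the two things the paper sidesteps: the precise gauge-action formula extracted from Lemma \ref{th:functoriality-of-connections} (the paper only ever uses the leading-order transformation $\tilde{\alpha}_q = \alpha_q - \partial_q\scrF_q + O(q^{m+1})$, which is also all your uniqueness step needs), and the extra bookkeeping of the $t$-flow (checking it stays inside the group of $A_\infty$-automorphisms and transports connections), where a single order-by-order construction already suffices. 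One cosmetic point: in the paper's grading convention \eqref{eq:hochschild-complex}, infinitesimal automorphisms are degree-$1$ Hochschild cocycles (components $\scrA^{\otimes d}\to\scrA[1-d]$), not elements of $\mathit{CC}^0$; your objects are the right ones, just indexed in a shifted convention.
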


\begin{proof}
Without loss of generality, take $\scrA_q =\scrA[[q]]$. Suppose that $\alpha_q$ is of order $q^m$ (meaning, all terms of lower order are zero). Let $\scrF_q$ be an automorphism of $\scrA[[q]]$ which is equal to the identity up to (and including) order $q^m$. By going through the formulae in Lemma \ref{th:functoriality-of-connections}, one sees that this transforms our connection into a new one with
\begin{equation} \label{eq:ftransf}
\tilde{\alpha}_q = \alpha_q - (\partial_q \scrF_q) + O(q^{m+1}).
\end{equation}
Recall that Hochschild cocycles can be regarded as infinitesimal automorphisms. If such a cocycle has order $q^{m+1}$ for some $m \geq 0$, one can exponentiate it, which yields an automorphism that is equal the identity, plus our cocycle, plus terms of order $q^{2m+2}$. By applying this to $q\alpha_q/(m+1)$ and inserting that into \eqref{eq:ftransf}, one obtains a new connection with $\tilde{\alpha}_q$ of order $q^{m+1}$. One can iterate that process, and the composition of the resulting infinite sequence of automorphisms makes sense, yielding an overall automorphism that transforms any given connection to the trivial one $\partial_q$.

The last step is to show that there are no nontrivial automorphisms of the kind appearing in the Lemma, which preserve the trivial connection. But that's obvious, since such an automorphism must be $q$-constant.
\end{proof}

\section{Hamiltonian Floer cohomology for Lefschetz fibrations\label{sec:mclean}}
This section reviews the groups $\mathit{HF}^*_{q,q^{-1}}(\bar{p},r)$ from \eqref{eq:all-floer}, following \cite{seidel17}, and adds more concrete computational material along the lines of \cite{mclean12}. We only deal with fibrations which have trivial monodromy at infinity, which makes both the technical arguments and the outcome of the computations particularly simple. None of this material is substantially new, but it serves as a reminder, and sets up our later discussion of the relative version of those Floer groups.

\subsection{A topological barrier trick}
We start with elementary technical ingredients. Fix some $r \in \bR$ and $\rho>0$. Consider maps
\begin{equation} \label{eq:toy-floer}
\left\{
\begin{aligned}
& 
u: \bR \times S^1 \longrightarrow \bC,
\\ &
\textstyle \lim_{s \rightarrow \pm\infty} u(s,t) = x_{\pm}(t), \text{ where $x_\pm$ are loops disjoint from $\{|w|=\rho\} \subset \bC$.}
\\ &
\partial_s u + i(\partial_t - 2\pi i r) u = 0 \text{ in a neighbourhood of $u^{-1}(\{|w|=\rho\})$.}
\end{aligned}
\right.
\end{equation}

\begin{lemma} \label{th:winding-number-ineq}
Suppose $u$ intersects $\{|w| = \rho\}$ transversally, and set $\Omega = u^{-1}(\{|w| \geq \rho\})$. Then we have the winding number inequality
\begin{equation} \label{eq:winding-number-ineq}
\mathrm{wind}(u|\partial \Omega) \leq r\,\mathrm{wind}(\partial \Omega),
\text{ with equality only if $\partial \Omega = \emptyset$.}
\end{equation}
The notation requires some explanation. By assumption, $\partial \Omega = u^{-1}(\{|w| = \rho\})$ is the union of finitely many oriented loops $C_1,\dots,C_k \subset \bR \times S^1$. On the left hand side of \eqref{eq:winding-number-ineq}, we consider the winding number (or degree) of $u|C_j$, and add up all those numbers. On the right hand side, we similarly add up the winding numbers (degrees) of the projections $C_j \rightarrow S^1$.
\end{lemma}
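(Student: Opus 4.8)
\textbf{Proof plan for Lemma \ref{th:winding-number-ineq}.}

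The strategy is a classical monotonicity/subharmonicity argument for the holomorphic part of $u$ near the barrier circle $\{|w|=\rho\}$, combined with a degree count on the boundary. First I would set $v = \log u$ (well-defined locally near $u^{-1}(\{|w|=\rho\})$, or work with $\log|u|$ and the argument separately) and rewrite the Cauchy--Riemann-type equation $\partial_s u + i(\partial_t - 2\pi i r)u = 0$ in these coordinates: it says that $v - 2\pi i r t$ is (anti)holomorphic in $(s,t)$ on a neighbourhood of $u^{-1}(\{|w|=\rho\})$, so in particular $\log|u|$ is harmonic there, and the function $w \mapsto |w|$ pulls back to something whose level set $\partial\Omega = u^{-1}(\{|w|=\rho\})$ we may assume (by transversality) is a disjoint union of embedded oriented loops $C_1,\dots,C_k$ in $\bR\times S^1$, oriented as the boundary of $\Omega = u^{-1}(\{|w|\geq\rho\})$.

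The key computation is to evaluate $\int_{\partial\Omega} d(\arg u)$ in two ways. On one hand this integral is by definition $2\pi\cdot\mathrm{wind}(u|\partial\Omega)$, the left-hand side of \eqref{eq:winding-number-ineq} (up to the factor $2\pi$). On the other hand, along $\partial\Omega$ the holomorphicity of $v - 2\pi i r t$ gives a Cauchy--Riemann relation between the tangential derivative of $\arg(u) - 2\pi r t$ and the \emph{normal} derivative of $\log|u|$: concretely, if $\nu$ is the outward normal to $\Omega$ and $\tau$ the boundary tangent, then $\partial_\tau(\arg u - 2\pi r t) = -\partial_\nu \log|u|$ (the sign is dictated by the complex structure and the orientation conventions). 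Integrating over $\partial\Omega$,
\begin{equation} \label{eq:key-integral}
2\pi\,\mathrm{wind}(u|\partial\Omega) - 2\pi r\,\mathrm{wind}(\partial\Omega) = \int_{\partial\Omega} \partial_\tau(\arg u - 2\pi r t)\, = -\int_{\partial\Omega} \partial_\nu \log|u|\,.
\end{equation}
Now $\log|u|$ is harmonic on a neighbourhood of $\partial\Omega$ and, on $\Omega$ itself, $\log|u| \geq \log\rho$ with equality exactly on $\partial\Omega$; so $\partial_\nu\log|u| \leq 0$ along $\partial\Omega$ wherever the equation holds, with strict inequality somewhere on each component $C_j$ unless $C_j$ is empty (by the strong maximum principle / Hopf lemma applied to the harmonic function $\log|u| - \log\rho$ on the side $\Omega$). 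Hence the right-hand side of \eqref{eq:key-integral} is $\geq 0$, and $>0$ if $\partial\Omega\neq\emptyset$, which is exactly \eqref{eq:winding-number-ineq} after dividing by $2\pi$.

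The main obstacle I anticipate is \emph{not} the analytic core — the maximum principle argument is standard — but rather making the winding-number bookkeeping and the neighbourhood-of-$\partial\Omega$ hypothesis fit together cleanly: the equation in \eqref{eq:toy-floer} is only assumed to hold near $u^{-1}(\{|w|=\rho\})$, so one must be careful that $\log|u|$ is genuinely harmonic on a full two-sided neighbourhood of $\partial\Omega$ (so that the Hopf lemma applies from inside $\Omega$), and that $\arg u$ is single-valued enough along each $C_j$ for $\mathrm{wind}(u|C_j)$ and $\mathrm{wind}(C_j\to S^1)$ to be well-defined integers summing as claimed. A secondary technical point is handling the non-transverse case and loops $C_j$ that may be null-homotopic in $\bR\times S^1$ versus those wrapping the $S^1$ factor; but since the inequality is stated under the transversality assumption and only compares total degrees, this should reduce to the embedded-loop picture above. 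I would also double-check the sign in the Cauchy--Riemann boundary relation against the paper's orientation conventions for $\bR\times S^1$, since getting it backwards would flip the inequality.
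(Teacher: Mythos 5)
Your argument is in essence the paper's own: the paper sets $\tilde{u}(s,t)=\exp(-2\pi i r t)u(s,t)$, which is honestly holomorphic near $\partial\Omega$, and observes that along each component of $\partial\Omega$ the argument of $\tilde u$ is strictly monotone (it ``moves clockwise with nonzero speed''), which integrates component by component to $\mathrm{wind}(u|C) < r\,\mathrm{wind}(C)$. Your boundary Cauchy--Riemann identity plus the integral of $d(\arg u)$ is the same computation in different clothing, so the core is fine.

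The one point you flagged but did not resolve is, however, exactly where the proof currently fails: with the standard boundary orientation (outward normal $\nu$ first, then tangent $\tau$, so that $(\nu,\tau)$ is positively oriented and Stokes holds with the usual sign), the Cauchy--Riemann equations for the holomorphic function $\log|\tilde u| + i\arg\tilde u$ give $\partial_\tau \arg\tilde u = +\,\partial_\nu \log|\tilde u|$, not the relation $\partial_\tau(\arg u - 2\pi r t) = -\partial_\nu\log|u|$ you wrote. Since $\Omega=\{\log|u|\geq\log\rho\}$ and $\nu$ points out of $\Omega$, transversality gives $\partial_\nu\log|u|<0$ pointwise along $\partial\Omega$ (no Hopf lemma is needed here -- the hypothesis that $u$ meets $\{|w|=\rho\}$ transversally already yields strictness, and it also disposes of your worry about the equation holding only near the level set), hence $\partial_\tau(\arg u - 2\pi r t)<0$, which integrates to $\mathrm{wind}(u|\partial\Omega) - r\,\mathrm{wind}(\partial\Omega)<0$ whenever $\partial\Omega\neq\emptyset$. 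As written, your sign produces the reversed inequality $\mathrm{wind}(u|\partial\Omega)\geq r\,\mathrm{wind}(\partial\Omega)$. A quick sanity check that settles the convention: $u=e^{2\pi(k-r)s+2\pi i k t}$ with $k>r$ solves the equation, has $\Omega=\{s\geq s_0\}$ with $\mathrm{wind}(\partial\Omega)=-1$ and $\mathrm{wind}(u|\partial\Omega)=-k$, consistent with $-k< -r$ and not with the reverse. Once the sign is fixed, your proof and the paper's coincide.
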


\begin{proof} Take
\begin{equation} \label{eq:rotate-u}
\tilde{u}(s,t) = \exp(-2\pi i r t) u(s,t): \bR^2 = \bC \longrightarrow \bC.
\end{equation}
This is holomorphic in a neighbourhood of $\widetilde{\partial\Omega} = \tilde{u}^{-1}(\{|w| = \rho\})$. Therefore, $\tilde{u}|\widetilde{\partial\Omega}$ moves along the circle $\{|w| = \rho\}$ clockwise with nonzero speed (here, the choice of orientation is important). Take any component $C \subset \partial\Omega$, parametrize it by $c = c(\theta): S^1 \rightarrow C$, and lift that to a map $\tilde{c}: [0,1] \rightarrow \bR^2$. Since $\tilde{u}(\tilde{c}(\theta))$ moves clockwise, we see from \eqref{eq:rotate-u} that
\begin{equation}
u(\tilde{c}(\theta)) = \rho\exp(2\pi i \gamma(\theta)) \;\; \text{ with }
\frac{d\gamma}{d\theta} < r \frac{d\mathrm{im}(\tilde{c})}{d\theta}.
\end{equation}
As a consequence,
\begin{equation}
\mathrm{wind}(u|C) = \gamma(1) - \gamma(0) < r\big(\mathrm{im}(\tilde{c}(1))-\mathrm{im}(\tilde{c}(0))\big) = r\, \mathrm{wind}(C).
\end{equation}
\end{proof}

\begin{lemma} \label{th:barrier}
(i) If both $x_{\pm}$ lie inside $\{|w| < \rho\}$, then all of $u$ lies in that subset.

(ii) If both $x_{\pm}$ lie outside $\{|w| \leq \rho\}$, then $\mathrm{wind}(x_-) \leq \mathrm{wind}(x_+)$; moreover, if those two winding numbers are equal, then all of $u$ lies outside $\{|w| \leq \rho\}$.

(iii) If $x_+$ lies in $\{|w| \leq \rho\}$ and $x_-$ lies outside it, then $\mathrm{wind}(x_-) < r$.
\end{lemma}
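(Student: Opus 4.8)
The plan is to apply the winding number inequality of Lemma~\ref{th:winding-number-ineq} to the subsurface $\Omega = u^{-1}(\{|w|\geq\rho\})\subset\bR\times S^1$. First I would reduce to the case where $u$ is transverse to $\{|w|=\rho\}$, so that $\partial\Omega = u^{-1}(\{|w|=\rho\})$ is a finite disjoint union of embedded circles, oriented as the boundary of $\Omega$; this can be arranged by replacing $\rho$ with a nearby regular value of $|u|$, which keeps $x_\pm$ on the same side of the circle and, for $\rho$ close enough, keeps the Cauchy--Riemann condition of \eqref{eq:toy-floer} valid near $u^{-1}(\{|w|=\rho\})$. Then I would compute both sides of \eqref{eq:winding-number-ineq}. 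For the left-hand side I would use the closed $1$-form $\lambda = \frac{1}{2\pi}\,d(\arg w)$ on $\bC^\ast$: since $|u|\geq\rho>0$ on $\Omega$, the pullback $u^\ast\lambda$ is a well-defined closed $1$-form there, with $\int_\gamma u^\ast\lambda = \mathrm{wind}(u|_\gamma)$ for any loop $\gamma$ in $\Omega$. Applying Stokes's theorem on the compact truncation $\Omega_T = \Omega\cap([-T,T]\times S^1)$ and letting $T\to\infty$, the end $\pm\infty$ contributes nothing when $x_\pm$ lies in $\{|w|<\rho\}$ (that end is disjoint from $\Omega$) and contributes $\pm\mathrm{wind}(x_\pm)$---with sign fixed by the boundary orientation of the cap $\{\pm T\}\times S^1$ inside $\partial\Omega_T$---when $x_\pm$ lies in $\{|w|>\rho\}$, so $\mathrm{wind}(u|\partial\Omega)$ is expressed through the $\mathrm{wind}(x_\pm)$. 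For the right-hand side, $\mathrm{wind}(\partial\Omega)$ is the class $[\partial\Omega]\in H_1(\bR\times S^1;\bZ)\cong\bZ$, and by cobordism invariance of the $S^1$-winding it equals the sum of the $S^1$-degrees of whichever end-caps appear in $\partial\Omega_T$, taken with their boundary orientations.

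With these inputs in hand I would split into the three cases. In \emph{(i)} both ends lie in $\{|w|<\rho\}$, so $\Omega=\Omega_T$ is compact, there are no caps, $\mathrm{wind}(\partial\Omega)=0$, and $\mathrm{wind}(u|\partial\Omega)=0$ by Stokes ($u^\ast\lambda$ being closed); then \eqref{eq:winding-number-ineq} is an equality, forcing $\partial\Omega=\emptyset$, and a closed-and-open proper subset of the connected cylinder is empty, so $|u|<\rho$ throughout. In \emph{(ii)} both ends lie in $\{|w|>\rho\}$, so $\Omega$ contains both ends but $\partial\Omega$ still bounds the compact surface $u^{-1}(\{|w|\leq\rho\})$, giving $\mathrm{wind}(\partial\Omega)=0$; the two caps carry opposite orientations, Stokes gives $\mathrm{wind}(u|\partial\Omega)=\mathrm{wind}(x_-)-\mathrm{wind}(x_+)$, and \eqref{eq:winding-number-ineq} reads $\mathrm{wind}(x_-)\leq\mathrm{wind}(x_+)$, with equality only if $\partial\Omega=\emptyset$, in which case $\Omega=\bR\times S^1$ and transversality upgrades $|u|\geq\rho$ to $|u|>\rho$. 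In \emph{(iii)} $x_+$ lies in $\{|w|<\rho\}$ and $x_-$ in $\{|w|>\rho\}$ (both disjoint from the circle by hypothesis), so $\Omega$ meets the end at $-\infty$ but not the one at $+\infty$; only the $-T$ cap occurs, contributing $S^1$-degree $1$, hence $\mathrm{wind}(\partial\Omega)=1$ and $\mathrm{wind}(u|\partial\Omega)=\mathrm{wind}(x_-)$; since $\Omega$ is a nonempty proper closed subset of the connected cylinder, $\partial\Omega\neq\emptyset$, so \eqref{eq:winding-number-ineq} is strict and $\mathrm{wind}(x_-)<r$.

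The step I expect to be the real obstacle is the transversality reduction: Lemma~\ref{th:winding-number-ineq} presupposes transversality of $u$ with $\{|w|=\rho\}$, which is absent from Lemma~\ref{th:barrier}, and one cannot perturb $u$ freely without jeopardizing the Cauchy--Riemann condition of \eqref{eq:toy-floer}. I would handle this by perturbing $\rho$ using Sard's theorem, and then recover the \emph{strict} conclusions of (i)--(iii) by unique continuation, using the subharmonicity of $|u|^2$ near $u^{-1}(\{|w|=\rho\})$ (equivalently, the holomorphicity of $e^{-2\pi i r t}u$ there). The remaining work is bookkeeping: keeping straight the orientation conventions for $\partial\Omega$ and for the caps $\{\pm T\}\times S^1$, and checking that the orientation of $\partial\Omega$ used above is the one intended in Lemma~\ref{th:winding-number-ineq}, since a sign error in case (iii) would invalidate the comparison with $r$.
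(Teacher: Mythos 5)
Your argument is correct and is essentially the paper's own proof: apply Lemma \ref{th:winding-number-ineq} to $\Omega = u^{-1}(\{|w|\geq\rho\})$, compute $\mathrm{wind}(u|\partial\Omega)$ and $\mathrm{wind}(\partial\Omega)$ by compactifying with the end circles (Stokes for the closed form $d(\arg w)/2\pi$), and use the equality clause to extract the strict statements in each of the three cases. The only difference is in the transversality bookkeeping you flag at the end: the paper simply perturbs $\rho$ slightly smaller in (i) and slightly larger in (ii), which yields the strict conclusions at the original $\rho$ immediately, so no unique-continuation or minimum-principle step is needed.
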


\begin{proof}
(i) We can perturb $\rho$, making it slightly smaller, and thereby achieve the transversality condition required to apply Lemma \ref{th:winding-number-ineq}. Because of the limit assumptions, $\Omega$ is compact, and as a consequence, $\mathrm{wind}(u|\partial \Omega)$ and $\mathrm{wind}(\partial \Omega)$ are both zero. This implies $\partial \Omega = \emptyset$ by \eqref{eq:winding-number-ineq}, and therefore $\Omega = \emptyset$.

(ii) In this case, we make $\rho$ slightly larger to achieve transversality. By assumption, $\Omega$ contains all points $(s,t)$ where $\pm s \gg 0$, and can therefore be compactificed by adding circles $\{\pm \infty\} \times S^1$. As a consequence,
$\mathrm{wind}(u|\partial \Omega) = \mathrm{wind}(x_-) - \mathrm{wind}(x_+)$, while $\mathrm{wind}(\partial \Omega) = 0$ as before. Hence, \eqref{eq:winding-number-ineq} implies that $\mathrm{wind}(x_-) \leq \mathrm{wind}(x_+)$, with equality only if $\partial \Omega = \emptyset$, in which case we must have $\Omega = \bR \times S^1$.

(iii) Here, we can compactify $\Omega$ just by adding $\{-\infty\} \times S^1$. Therefore, $\mathrm{wind}(u|\partial \Omega) = \mathrm{wind}(x_-)$, whereas $\mathrm{wind}(\partial \Omega) = 1$. One again ends by applying \eqref{eq:winding-number-ineq}.
\end{proof}

\subsection{Symplectic geometry of the hyperbolic disc\label{subsec:hyperbolic}}
Let's assemble some basic terminology concerning the hyperbolic disc (see \cite[Sections 3,4, and 8]{seidel17} or \cite[Section 3]{seidel18} for closely related expositions). Take the Lie group
\begin{equation}
G = \mathit{PU}(1,1) = \left\{ \textstyle g = \left(\begin{smallmatrix} a & b \\ \bar{b} & \bar{a} \end{smallmatrix}\right)\, : \, a,b \in \bC, \;
|a|^2 - |b|^2 = 1 \right\}/\pm\!\mathit{id},
\end{equation}
which is isomorphic to $\mathit{PSL}_2(\bR)$. We denote its universal cover by $\tilde{G}$. Any $\tilde{g} \in \tilde{G}$ has a rotation number $\mathrm{rot}(\tilde{g}) \in \bR$, which satisfies
\begin{equation}
\mathrm{rot}(\tilde{g}) \in \begin{cases} \bR \setminus \bZ & \text{if } g \text{ (the image of $\tilde{g}$ in $G$) is elliptic,} \\ \bZ & \text{otherwise.} \end{cases}
\end{equation}
The Lie algebra of $G$ is
\begin{equation}
\frakg = \mathfrak{u}(1,1) = \big\{ \gamma = \left(\begin{smallmatrix} i\alpha & \beta \\ \bar{\beta} & -i\alpha \end{smallmatrix}\right) \;:\;
\alpha \in \bR, \;\beta \in \bC \big\}.
\end{equation}
The nonnegative cone $\frakg_{\geq 0} \subset \frakg$ consists of those $\gamma$ such that $\alpha \geq |\beta|$ (the ``square root'' of the condition $\mathrm{det}(\gamma) \geq 0$). A path $(g_t)$ in $G$ is called nonnegative if $(dg_t/dt) g_t^{-1} \in \frakg_{\geq 0}$ for all $t$. 

Suppose that we are given a loop $(\gamma_t)_{t \in S^1}$ in $\frakg$. By integrating the differential equation 
\begin{equation} \label{eq:define-monodromy}
(dg_t/dt)g_t^{-1} = \gamma_t, \;\; g_0 = \mathit{id},
\end{equation}
one gets an element $g = g_1 \in G$, called the monodromy of $(\gamma_t)$. Any two loops with conjugate monodromies are gauge equivalent, meaning that they are related by a transformation
\begin{equation} \label{eq:gauge}
\gamma_t \longmapsto h_t^{-1} \gamma_t h_t - h_t^{-1} (\partial_t h_t),
\;\; \text{for $(h_t)$ a loop in $G$.}
\end{equation}
The monodromy element also comes with a preferred lift $\tilde{g}$ to $\tilde{G}$. Two loops $(\gamma_t)$ with conjugate $\tilde{g}$ are $\tilde{G}$-gauge equivalent, which means they are related by a transformation \eqref{eq:gauge} in which $(h_t)$ is a contractible loop.
%

Let $W$ be the open unit disc in $\bC$. We denote its closure by $W\|$. For future use, we also want to introduce the notation
\begin{equation}
\begin{aligned} 
& W_{\leq \rho} = \{w \in W\;:\; |w| \leq \rho\}, \\
& W_{\geq \rho} = \{w \in W\;:\; |w| \geq \rho\}.
\end{aligned}
\end{equation}
The group $G$ acts on $W$ by
\begin{equation}
g(w) = \frac{aw+b}{\bar{b}w + \bar{a}}.
\end{equation}
The action preserves the complex structure and the symplectic form
\begin{equation} \label{eq:hyperbolic}
\omega_W = \frac{d\mathrm{re}(w) \wedge d\mathrm{im}(w)}{(1-|w|^2)^2}.
\end{equation}
The action extends smoothly to $W\|$. On the infinitesimal level, one gets a homomorphism from $\frakg$ to the Lie algebra $\smooth(W,\bR)$ with the Poisson bracket, and from there to Hamiltonian vector fields: 
\begin{align} \label{eq:h-gamma}
& H_\gamma = \frac{1}{1-|w|^2} \left( \half(1+|w|^2)\alpha - \mathrm{im}(\beta w) \right), \\ 
& X_\gamma = (\bar{\beta} + 2i\alpha w - \beta w^2) \, \partial_w.
\end{align}
The nonnegative elements are precisely those for which $X_\gamma$ is a nonnegative vector field on the boundary of $W\|$ (meaning, $X_\gamma$ is either zero or anticlockwise-pointing at each boundary point). Equivalently, they are those which satisfy $H_\gamma \geq 0$. 

We pick two sample elements of $\frakg$ with different behaviour (elliptic and hyperbolic):
\begin{align} \label{eq:sample-gamma-1}
& \gamma^{\mathit{rot}} = \mathrm{diag}(\pi,-\pi),\;\; \textstyle H^{\mathit{rot}} = H_{\gamma^{\mathit{rot}}} = \frac{\pi}{2} \frac{1+|w|^2}{1-|w|^2}, \\
& \label{eq:sample-gamma-2}
\gamma^{\mathit{hyp}} = \textstyle\left(\begin{smallmatrix} 0 & i\pi \\ -i\pi & 0 \end{smallmatrix}\right),
\;\;
H^{\mathit{hyp}} = H_{\gamma^{\mathit{hyp}}} = \textstyle \frac{\pi\mathrm{re}(w)}{1-|w|^2}.
\end{align}
Here, \eqref{eq:sample-gamma-1} has been normalized so that its Hamiltonian vector field is the rotational vector field with one-periodic flow, meaning $2\pi i w\partial_w$; and we have matched the same constant in \eqref{eq:sample-gamma-2}, in order to achieve that
\begin{equation} \label{eq:h-functions}
-\! H^{\mathit{rot}} < H^{\mathit{hyp}} < H^{\mathit{rot}}.
\end{equation}

\begin{lemma} \label{th:morse-function}
Fix some $\rho_1 \in (0,1)$. Then there are $\rho_2 \in (\rho_1,1)$, $H_2 \in \smooth(W,\bR)$, such that:
\begin{align}
& H_2 \geq -H^{\mathit{rot}}\text{ everywhere, with equality on $W_{\leq \rho_1}$;} \\
& H_2 = H^{\mathit{hyp}} \text{ on $W_{\geq \rho_2}$;} \\
& \mybox{$H_2$ is a Morse function with exactly two critical points: a local maximum (at the origin, inherited from $-H^{\mathit{rot}}$), and a saddle point $w_2$ (in the annulus $\rho_1 < |w_2| < \rho_2$, where neither of the previous equalities applies).}
\end{align}
\end{lemma}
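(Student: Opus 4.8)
\textbf{Proof strategy for Lemma \ref{th:morse-function}.}

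The plan is to build $H_2$ by interpolating between the two model Hamiltonians $-H^{\mathit{rot}}$ and $H^{\mathit{hyp}}$ across an annulus, keeping careful track of the rotational symmetry in order to control the critical points. Note that both $-H^{\mathit{rot}}$ and $H^{\mathit{hyp}}$ are radially symmetric only for the former; $H^{\mathit{hyp}} = \pi\,\mathrm{re}(w)/(1-|w|^2)$ depends on the angle. This is precisely what will force the interpolation region to contain a genuine saddle rather than a circle of critical points, so it is a feature, not a bug.

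First I would set up the interpolation. Fix $\rho_1 \in (0,1)$. Write $H_2 = \chi(|w|)\,(-H^{\mathit{rot}}) + (1-\chi(|w|))\,H^{\mathit{hyp}}$ is too crude because the product rule produces derivative terms; instead it is cleaner to write, in the annular region $\rho_1 \le |w| \le \rho_2$ (with $\rho_2$ to be chosen),
\begin{equation}
H_2 = -H^{\mathit{rot}} + \beta(|w|)\,\big(H^{\mathit{hyp}} + H^{\mathit{rot}}\big),
\end{equation}
where $\beta:[0,1]\to[0,1]$ is a smooth cutoff with $\beta \equiv 0$ near $|w|\le\rho_1$ and $\beta\equiv 1$ near $|w|\ge\rho_2$, and $\beta$ monotone increasing in between. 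By \eqref{eq:h-functions} we have $H^{\mathit{hyp}} + H^{\mathit{rot}} > 0$ everywhere, so the factor $\beta \ge 0$ guarantees $H_2 \ge -H^{\mathit{rot}}$ with equality exactly on $W_{\le \rho_1}$; and $H_2 = H^{\mathit{rot}} + (H^{\mathit{hyp}} - H^{\mathit{rot}})\cdot 1 \le H^{\mathit{hyp}}$... wait, at $\beta=1$ one gets $H_2 = -H^{\mathit{rot}} + (H^{\mathit{hyp}}+H^{\mathit{rot}}) = H^{\mathit{hyp}}$ exactly, as required. On $W_{\le\rho_1}$ it agrees with $-H^{\mathit{rot}}$, whose only critical point is the nondegenerate maximum at the origin (since $H^{\mathit{rot}}$ is a strictly convex radial function). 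So the two equality/matching conditions and the max at the origin are automatic for any admissible $\beta$, $\rho_2$.

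The substance is the claim that $\beta$ and $\rho_2$ can be chosen so that in the open annulus $\rho_1 < |w| < \rho_2$ the function $H_2$ has exactly one critical point, a nondegenerate saddle. I would analyze $dH_2 = 0$ in polar-type coordinates. Write $w = \sigma e^{i\theta}$. The angular derivative $\partial_\theta H_2 = \beta(\sigma)\,\partial_\theta H^{\mathit{hyp}}$, and $\partial_\theta H^{\mathit{hyp}} = -\pi\sigma\sin\theta/(1-\sigma^2)$ vanishes exactly on $\theta \in \{0,\pi\}$ (for $\sigma$ in the region where $\beta(\sigma)>0$). So any interior critical point lies on the real axis. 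Restricting to $\theta = 0$ (resp. $\theta=\pi$), $H_2$ becomes a function of $\sigma$ alone, and its $\sigma$-derivative is a combination of $\beta'(\sigma)$ times a positive quantity plus $\beta(\sigma)$ times $\partial_\sigma(H^{\mathit{hyp}}\mp H^{\mathit{rot}})$ (signs according to $\theta$) plus $-\partial_\sigma H^{\mathit{rot}}$; I would choose $\beta'$ large enough on a short subinterval that this radial derivative has exactly one sign change on the ray $\theta=0$ and none on $\theta=\pi$ (on $\theta=\pi$ both model functions are decreasing in $\sigma$ there, so the sum stays negative with a suitably chosen $\beta$). That yields a unique critical point $w_2$ with $\rho_1 < |w_2| < \rho_2$. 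Nondegeneracy: the Hessian at $w_2$ is block-diagonal in the $(\sigma,\theta)$ splitting because $w_2$ is on the axis of the reflection symmetry $\theta \mapsto -\theta$ of $H_2$; the $\theta\theta$-entry is $\beta(\sigma)\,\partial_\theta^2 H^{\mathit{hyp}} = \beta(\sigma)\cdot(-\pi\sigma/(1-\sigma^2)) < 0$ at $\theta=0$, while the $\sigma\sigma$-entry is positive at a simple sign change of an increasing-through-zero radial derivative; opposite signs give a saddle, and for a generic choice of $\beta$ the sign change is simple, hence the Hessian is nondegenerate. Finally I would note that outside the annulus there are no further critical points (the origin inside, and $dH^{\mathit{hyp}} \ne 0$ on $W_{\ge\rho_2}$ since $H^{\mathit{hyp}}$ has no critical points in $W$), so $H_2$ is Morse with precisely the two advertised critical points.

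\textbf{Main obstacle.} The delicate point is arranging that the interpolation introduces exactly one critical point and not an odd number greater than one, and that it is nondegenerate. This is genuinely a one-dimensional problem once the angular analysis pins critical points to the real axis, but it requires a careful choice of the profile $\beta$ (essentially, make the transition steep and monotone so the relevant radial derivative is itself monotone where it matters). An alternative, perhaps cleaner, route is to avoid explicit cutoffs entirely: take any smooth $H_2$ satisfying the three boundary/matching conditions and with no critical points in the open annulus except a single one, whose existence can be argued by a Morse-theoretic count (the difference of the sublevel topologies of $-H^{\mathit{rot}}$ and $H^{\mathit{hyp}}$ on the annulus forces at least one critical point, and a generic such $H_2$ has exactly one of index $1$), then perturb to make it Morse. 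I would present the explicit construction since it is more self-contained, flagging the choice of $\beta$ as the only real content.
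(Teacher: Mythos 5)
Your proposal is correct and follows essentially the same route as the paper: the identical interpolation ansatz $H_2=-H^{\mathit{rot}}+\beta(|w|)(H^{\mathit{hyp}}+H^{\mathit{rot}})$, the reduction of interior critical points to the real axis via the angular derivative, a one-variable analysis of the radial derivative on the two rays (with the precise choice of cutoff profile, taken steep near $|w|=1$, left as the elementary but only substantive step — exactly as in the paper's ``for $\rho_2$ close to $1$''), and the opposite-sign Hessian argument at $w_2$ using the reflection symmetry. No gaps beyond the same deferred cutoff choice the paper itself defers.
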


\begin{proof}
This is done by a simple interpolation:
\begin{equation} \label{eq:interpolation}
\begin{aligned}
& H_2 = -H^{\mathit{rot}} + \psi_2(|w|) (H^{\mathit{hyp}} + H^{\mathit{rot}}), \\
& \psi_2(\rho) = 0 \text{ for $\rho \leq \rho_1$,} \;\; 0<\psi_2(\rho)< 1 \text{ for $\rho_1<\rho<\rho_2$}, \;\;
\psi_2(\rho) = 1 \text{ for $\rho \geq \rho_2$.}
\end{aligned}
\end{equation}
If $w$ is a critical point of $H_2$ and $|w| > \rho_1$, then $(\nabla H^{\mathit{hyp}})_{w}$ must point in radial direction, so as to be in linear dependence from the other terms in $(\nabla H_2)_w$; and this means that $w \in (-1,1)$. It is elementary to find a $\psi_2$ (for $\rho_2$ close to $1$) such that $H_2|(-1,1)$ has exactly one critical point $w_2$ other than the origin; that point satisfies $\rho_1<w_2<\rho_2$ and is a nondegenerate local minimum of $H_2|(-1,1)$ (see Figure \ref{fig:interpolate}). By differentiating \eqref{eq:interpolation} along the circle of radius $|w_2|$, one sees that the Hessian at $w_2$ in imaginary direction is negative. As a consequence, $w_2$ is a nondegenerate saddle point of $H_2$.
\end{proof}
\begin{figure}
\begin{centering}
\includegraphics{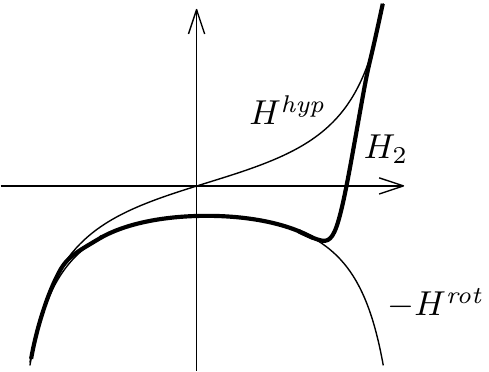}
\caption{\label{fig:interpolate}The function from \eqref{eq:interpolation}, restricted to real values of $w$.}
\end{centering}
\end{figure}

\begin{lemma} \label{th:morse-function-2}
(The notation continues from the previous Lemma.)
There are $\rho_3 \in (\rho_2,1)$, $H_3 \in 
\smooth(W,\bR)$, such that:
\begin{align}
& \label{eq:everywhere}
H_3 \geq H_2 \text{ everywhere, with equality on $W_{\leq \rho_2}$;} 
\\ &
H_3 = H^{\mathit{rot}} \text{ on $W_{\geq \rho_3}$;}
\\ &
\mybox{$H_3$ is a Morse function with exactly three critical points. Two are inherited from $H_2$. The third one, called $w_3$, is a local minimum (and satisfies $\rho_2 < |w_3| < \rho_3$).}
\end{align}
\end{lemma}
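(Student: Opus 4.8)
The plan is to repeat the interpolation scheme of Lemma \ref{th:morse-function}, this time pushing $H_2$ (which already equals $H^{\mathit{hyp}}$ on $W_{\geq\rho_2}$) up to $H^{\mathit{rot}}$ near the boundary. Concretely, I would choose $\rho_3\in(\rho_2,1)$ and a smooth profile $\psi_3\colon[0,1)\to[0,1]$ with $\psi_3\equiv 0$ on $[0,\rho_2]$, $\psi_3\equiv 1$ on $[\rho_3,1)$, and $0<\psi_3<1$ on $(\rho_2,\rho_3)$ (flat at both ends), and set
\begin{equation}
H_3 = H_2 + \psi_3(|w|)\,\big(H^{\mathit{rot}}-H^{\mathit{hyp}}\big) = \big(1-\psi_3(|w|)\big)H^{\mathit{hyp}} + \psi_3(|w|)H^{\mathit{rot}}\quad\text{on }W_{\geq\rho_2},
\end{equation}
extended by $H_3=H_2$ on $W_{\leq\rho_2}$; the two formulas agree to infinite order along $|w|=\rho_2$, so $H_3$ is smooth. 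The first two requirements are then immediate: $H_3=H^{\mathit{rot}}$ on $W_{\geq\rho_3}$, and $H_3-H_2=\psi_3(|w|)(H^{\mathit{rot}}-H^{\mathit{hyp}})\geq 0$ with equality exactly on $W_{\leq\rho_2}$, using $H^{\mathit{hyp}}<H^{\mathit{rot}}$ from \eqref{eq:h-functions}.

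For the critical points I would localize as in Lemma \ref{th:morse-function}. On $W_{\leq\rho_2}$ one has $H_3=H_2$ on a neighbourhood, contributing exactly the origin (the inherited local maximum) and the saddle $w_2$; on $W_{\geq\rho_3}$ one has $H_3=H^{\mathit{rot}}$, whose only critical point is the origin, lying outside that region. On the remaining set $\rho_2\leq|w|<\rho_3$, since $H^{\mathit{rot}}$ is rotationally invariant and the $\psi_3'$ contribution to $\nabla H_3$ is radial, the angular part of $\nabla H_3$ is $(1-\psi_3(|w|))$ times that of $\nabla H^{\mathit{hyp}}$; as $1-\psi_3(|w|)>0$ there and $\nabla H^{\mathit{hyp}}$ has nonzero angular component off the real axis, any critical point in this set lies on $(-1,1)$. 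On the positive real axis one finds $H_3'>0$ termwise (both $H^{\mathit{hyp}}$ and $H^{\mathit{rot}}$ are increasing for $w>0$, $H^{\mathit{rot}}>H^{\mathit{hyp}}$, and $\psi_3'\geq 0$), so there is no critical point there; on the negative real axis, $H_3'=(H^{\mathit{hyp}})'>0$ near $-\rho_2$ while $H_3'=(H^{\mathit{rot}})'<0$ near $-\rho_3$, so $H_3'$ must change sign. Exactly as the profile $\psi_2$ is chosen in Lemma \ref{th:morse-function}, one arranges $\psi_3$ so that $H_3|_{(-1,1)}$ has precisely one critical point $w_3$ in $\rho_2<|w|<\rho_3$ — necessarily with $-\rho_3<w_3<-\rho_2$ — and so that it is a nondegenerate local minimum of $H_3|_{(-1,1)}$.

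It then remains to upgrade this to a nondegenerate local minimum of $H_3$ on $W$. The reflection $w\mapsto\bar w$ is a symmetry of $H_3$ (it preserves $H^{\mathit{hyp}}$, $H^{\mathit{rot}}$ and $\psi_3(|w|)$), so at the real point $w_3$ the Hessian is diagonal in the radial/angular frame. Its radial entry is positive because $w_3$ is a nondegenerate minimum of $H_3$ along the real axis, and its angular entry is $(1-\psi_3(|w_3|))\,\partial_\theta^2 H^{\mathit{hyp}}|_{w_3}$, which is positive because on the negative real axis $\partial_\theta^2 H^{\mathit{hyp}}=\pi|w|/(1-|w|^2)>0$ while $1-\psi_3(|w_3|)>0$. (It is precisely the sign of this angular second derivative that forces $w_3$ onto the \emph{negative} real axis.) Hence $H_3$ has exactly the three critical points claimed, with the stated types. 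The one genuinely delicate point, entirely parallel to the corresponding step for $H_2$, is producing a one-variable profile $\psi_3$ that yields a single nondegenerate critical point along the real axis; everything else is sign bookkeeping with \eqref{eq:h-functions} and the explicit formulas \eqref{eq:sample-gamma-1} and \eqref{eq:sample-gamma-2}.
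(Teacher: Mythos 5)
Your argument is correct and is essentially the paper's own proof: the same radial interpolation $H_3 = H_2 + \psi_3(|w|)(H^{\mathit{rot}}-H_2)$ (which agrees with your formula since $H_2 = H^{\mathit{hyp}}$ on $W_{\geq\rho_2}$), with critical points in the annulus forced onto the real axis, a single new nondegenerate minimum of $H_3|(-1,1)$ at some $w_3\in(-\rho_3,-\rho_2)$, and the positive angular Hessian there (coming from the sign flip on the negative real axis) upgrading it to a local minimum of $H_3$. Your explicit exclusion of the positive real axis and the symmetry/diagonal-Hessian bookkeeping are just slightly more detailed versions of what the paper leaves implicit.
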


\begin{proof}
We use the same approach as in \eqref{eq:interpolation},
\begin{equation}
\begin{aligned} \label{eq:interpolation-2}
& H_3 = H_2 + \psi_3(|w|) (H^{\mathit{rot}}-H_2), \\
& \psi_3(\rho) = 0 \text{ for $\rho \leq \rho_2$,} \;\; 0<\psi_3(\rho)< 1 \text{ for $\rho_2<\rho<\rho_3$}, \;\;
\psi_3(\rho) = 1 \text{ for $\rho \geq \rho_3$.}
\end{aligned}
\end{equation}
This time, one can choose $\psi_3$ so that $H_3|(-1,1)$ has a nondegenerate local minimum at some point $w_3 \in (-\rho_3,-\rho_2)$. Because of the difference in the sign of $w_3$, the Hessian in imaginary direction is now positive, so what we get is a local minimum of $H_3$.
\end{proof}

\subsection{Hamiltonian Floer groups\label{subsec:hamiltonian-review}}
With those preliminaries at hand, we summarize the definition of the Floer cohomology groups, and prove Lemmas \ref{th:les-2} and \ref{th:les-1}.

\begin{convention} \label{th:nullhomologous}
Throughout the discussion of Hamiltonian Floer cohomology, we will consider only those one-periodic orbits $x$ which are nullhomologous. This means that there is an oriented compact surface $S$ with $\partial S \iso S^1$, and a map $v$ from that surface to our symplectic manifold, with $v|\partial S = x$; which allows us to define many quantities, such as the Conley-Zehnder index of $x$, in terms of such $v$. (This assumption is not absolutely necessary, but simplifies some arguments.)
\end{convention}

As in \cite{seidel17}, we consider symplectic manifolds that map to the hyperbolic disc, rather than to the complex plane. From a purely symplectic geometry viewpoint this distinction is irrelevant, since the two spaces are symplectically isomorphic; but it affects the class of almost complex structures that will be used.

\begin{setup} \label{th:fibration}
Take a symplectic manifold $\bar{E}$ with integral symplectic class $[\omega_{\bar{E}}]$ and vanishing first Chern class, together with a proper map
\begin{equation}
\bar{p}: \bar{E} \longrightarrow W,
\end{equation}
which is a trivial symplectic fibration over $W_{\geq 1/2}$, with respect to \eqref{eq:hyperbolic}. This means that if we set $\bar{M} = \bar{E}_{1/2} = \bar{p}^{-1}(1/2)$, there is a (unique) symplectic isomorphism extending the inclusion $\bar{M} \subset \bar{E}$, which fits into a commutative diagram
\begin{equation} \label{eq:outside-trivialization}
\xymatrix{
W_{\geq 1/2} \times \bar{M} \ar[rr]^-{\iso} \ar[dr] && 
\bar{E}|W_{\geq 1/2} \ar[dl] \\
& W_{\geq 1/2}. &
}
\end{equation}
We will usually treat this isomorphism as an inclusion, writing $W_{\geq 1/2} \times \bar{M} \subset \bar{E}$. As one consequence of this structure, there is an obvious compactification $\cornerdoublebar{E} \rightarrow W\|$. We make another topological assumption:
\begin{equation} \label{eq:constant-section}
\mybox{
The constant lift of a loop in $W_{\geq 1/2}$ to a loop in $\bar{E}|W_{\geq 1/2}$ (constant being understood with respect to our trivialization) must be nullhomologous in $\bar{E}$. Note that if $x$ is such a lift, then $x^*T\bar{E}$ has a preferred trivialization (coming from the triviality of $TW$ and the constancy in fibre direction). Then, for a bounding surface $v$ as in Convention \ref{th:nullhomologous}, we require that the relative degree (integral of the relative first Chern class) of $v^*T\bar{E}$ should be $-1$ times the winding number of the loop in $W_{\geq 1/2}$.
}
\end{equation}
We consider compatible almost complex structures on $\bar{E}$ which, outside a compact subset, are the product of the complex structure on $W$ and an almost complex structure on $\bar{M}$. Similarly, we consider Hamiltonians $H$ on $\bar{E}$ such that outside a compact subset,
\begin{equation} \label{eq:product-h}
H = H_\gamma + H_{\bar{M}}.
\end{equation}
Here, $\gamma \in \frakg$ (and we say that $H$ is of type $\gamma$), $H_{\bar{M}}$ is a function on the fibre, and we have used \eqref{eq:outside-trivialization}. 
\end{setup}

\begin{remark}
We would like to mention an equivalent formulation of \eqref{eq:outside-trivialization}. Let's introduce this terminology:
\begin{equation} \label{eq:locally-trivial}
\mybox{
$\bar{p}$ is symplectically locally trivial at $x \in \bar{E}$ if: $x$ is a regular point; $T\bar{E}^v_x = \mathit{ker}(D\bar{p}_x)$ is a symplectic subspace, hence has a symplectic orthogonal complement $T\bar{E}^h_x$ which maps isomorphically to $TW_{\bar{p}(x)}$; and finally, $(\omega_{\bar{E}} - \bar{p}^*\omega_W)\,|\,T\bar{E}^h_x = 0$.
}
\end{equation}
Then, \eqref{eq:outside-trivialization} is equivalent to saying that, first, $\bar{p}$ is locally trivial at any point of $\bar{E}|W_{\geq 1/2}$ (which makes it a trivial symplectic fibration over any simply-connected open subset of $W_{\geq 1/2}$); and secondly, that the monodromy around the ``hole'' in $W_{\geq 1/2}$ is the identity.
\end{remark}

The time-dependent Hamiltonians that enter the definition of Floer cohomology are as follows.
\begin{equation} \label{eq:time}
\mybox{
Given a rotation number $r$, choose a loop $(\gamma_t)$ in $\frakg$ such that: the monodromy has rotation number $r$, and is hyperbolic if $r \in \bZ$ (in the remaining cases, it is automatically elliptic). Next, choose $(H_t)$ so that each $H_t$ is of type $\gamma_t$, and which has nondegenerate one-periodic orbits.
}
\end{equation}
Note that a hyperbolic element of $G$ acts fixed-point freely on $W$, and an elliptic element of $G$ has a unique fixed point. Hence, the one-periodic orbits of $(H_t)$ must be contained in a compact subset of $\bar{E}$. Take a codimension two submanifold (not necessarily symplectic) $\Omega \subset \bar{E}$ which represents $[\omega_{\bar{E}}]$, and which is disjoint from the one-periodic orbits of $(H_t)$. Additionally, make a generic choice of almost complex structure $(J_t)$, obviously within the class from Setup \ref{th:fibration}. One then defines the Floer complex as usual:
\begin{equation} \label{eq:floer-complex}
\mathit{CF}^*_{q,q^{-1}}(H_t,J_t) = \bigoplus_x \bQ((q)) x\,.
\end{equation}
The degree of each $x$ is its Conley-Zehnder index $i(x) \in \bZ$, and we have also fixed a trivialization of its orientation space (the determinant line of a suitable Cauchy-Riemann operator on a thimble). The differential $d_{q,q^{-1}}$ counts solutions $u$ of Floer's equation which are isolated up to translation, and with limits $x_{\pm}$, like this:
\begin{equation} \label{eq:floer-differential}
\text{$(x_-)$-coefficient of } d_{q,q^{-1}}x_+ = \sum_u \pm q^{u \cdot \Omega} \, .
\end{equation}
Here, the sign involves the previously considered trivialization of the orientation spaces.

\begin{remark} \label{th:unique-hf}
As part of the construction, one shows that the Floer complex is independent of $\Omega$ up to canonical isomorphism. Given two choices $\Omega$ and $\tilde{\Omega}$, and $(x,v)$ as in Convention \ref{th:nullhomologous}, the intersection number $v \cdot (\tilde{\Omega}-\Omega)$ is independent of $v$. Multilying each generator $x$ by $q^{v \cdot (\tilde{\Omega} -\Omega)}$ relates the differentials defined using those two submanifolds.
\end{remark}

The cohomology $\mathit{HF}^*_{q,q^{-1}}(H_t,J_t)$ of the Floer complex only depends on $r$, up to canonical isomorphism, and that justifies our notation $\mathit{HF}^*_{q,q^{-1}}(\bar{p},r)$. We refer to \cite{seidel17} for details. The main tool in this and other arguments are continuation maps, whose geometric basics we will now recall quickly. Continuation map equations are of the form
\begin{equation} \label{eq:continuation-map-equation}
(\partial_s u - W_{s,t}) + J_{s,t}(u) (\partial_t u - X_{s,t}) = 0.
\end{equation}
Here, $(J_{s,t})$ is a family of almost complex structures, and $(W_{s,t}, X_{s,t})$ families of Hamiltonian vector fields associated to functions $(F_{s,t}, H_{s,t})$. They must satisfy
\begin{equation} \label{eq:plusminus-limit}
(F_{s,t},H_{s,t},J_{s,t}) \longrightarrow (0, H_{\pm,t},J_{\pm,t}) \;\; \text{as $\pm s \rightarrow \infty$, exponentially fast.}
\end{equation}

\begin{remark}
Classically, one often considers only the special case $F_{s,t} = 0$; there is no substantial loss involved in imposing that restriction, since any equation \eqref{eq:continuation-map-equation} can be reduced to that case. Still, the general framework fits more naturally into a discussion of $G$-connections \cite{seidel17}, which is suitable for addressing issues such as the uniqueness (well-definedness) of continuation and PSS maps. Hence, we have adopted it in our exposition, even though the examples arising in concrete computations will still have $F_{s,t} = 0$.
\end{remark}

The analysis of \eqref{eq:continuation-map-equation} runs parallel to Floer's equation, except for slight complications (whose study goes back at least to \cite{oh97b}) involving energy. The topological version of the energy is
\begin{equation} \label{eq:topological-energy}
\begin{aligned} 
E^{\mathit{top}}(u) & = \int_{\bR \times S^1} u^*\omega_{\bar{E}} - d(u^*F_{s,t} \mathit{ds} + u^*H_{s,t} \mathit{dt}) \\
& = \int_{\bR \times S^1} \omega_{\bar{E}}(\partial_s u - W_{s,t}, \partial_t u - X_{s,t}) \mathit{ds}\, \mathit{dt}
+ \int_{\bR \times S^1} u^*R_{s,t} \,\mathit{ds}\, \mathit{dt},
\end{aligned}
\end{equation}
where 
\begin{equation} \label{eq:curvature}
R_{s,t} = -\partial_s H_{s,t} + \partial_t F_{s,t} - \{F_{s,t}, H_{s,t}\}.
\end{equation}
In the last line of \eqref{eq:topological-energy}, the first term is the actual geometric energy for a solution of \eqref{eq:continuation-map-equation}, and so the important point is the sign of the curvature term \eqref{eq:curvature}. In our situation, the continuation map will be well-defined if:
\begin{equation} \label{eq:nonnegative-curvature}
\mybox{$R_{s,t}$ is of type $\gamma_{s,t}$, with $\gamma_{s,t} \in \frakg_{\geq 0}$ for all $(s,t)$.}
\end{equation}

\subsection{Computational aspects\label{subsec:compute-hf}}
We now turn to the proofs of Lemma \ref{th:les-2} and \ref{th:les-1}, which depend on specifically chosen Hamiltonians. Fix $r_2 \in \bZ$. The ingredients are: 
\begin{align}
\label{eq:fibre-hamiltonian}
&
\mybox{
On $\bar{M}$, take a time-dependent Hamiltonian $(H_{\bar{M},t})$, and a corresponding family $(J_{\bar{M},t})$, with the transversality properties necessary to define Floer cohomology. 
}
\\ &
\label{eq:only-constants}
\mybox{
Take $H_2$, $H_3$ provided by Lemmas \ref{th:morse-function} and \ref{th:morse-function-2}, and a small $\epsilon>0$ such that: every $\epsilon$-periodic orbit of $H_2$ or $H_3$ is constant, and the linearized Hamiltonian flow at such points is nondegenerate for all times in $(0,\epsilon]$.
}
\\ &
\label{eq:1-hamiltonian}
\mybox{
Set $r_1 = r_2 - \epsilon$. Let $(H_{r_1,t})$ be a time-dependent Hamiltonian on $\bar{E}$ such that, on $W_{\geq 1/2} \times \bar{M}$, $H_{r_1,t} = r_1 H^{\mathit{rot}} + H_{\bar{M},t}$. We also assume that all one-periodic orbits of $H_{r_1,t}$ are nondegenerate. Correspondingly, we choose $(J_t)$ whose restriction to $W_{\geq 1/2} \times \bar{M}$ is $i \times J_{\bar{M},t}$, and which can be used to define $\mathit{HF}^*_{q,q^{-1}}(H_{r_1,t},J_t)$.
}
\end{align}
All one-periodic orbits of $(H_{r_1,t})$ are disjoint from $W_{\geq 1/2} \times \bar{M}$; and all Floer trajectories are contained in $\bar{E}|W_{\leq 1/2}$. For the orbits, that follows from the fact that the Hamiltonian vector field projects to $2\pi i r_1 w\partial_w$ on $W_{\geq 1/2}$ (and $r_1$ is not an integer); for Floer trajectories $u$, one uses Lemma \ref{th:barrier}(i) applied to the projection $\bar{p} \circ u$, with $\rho$ slightly larger than $1/2$, and then takes the limit $\rho \rightarrow 1/2$ (a maximum principle argument also works, if you prefer). These observations also explain why, in spite of the strong restrictions we have placed on the behaviour of $(H_{r_1,t})$ and $(J_t)$ on $W_{\geq 1/2} \times \bar{M}$, nondegeneracy of one-periodic orbits, and regularity of Floer trajectories, can be easily achieved. 

We now define another Floer cohomology group $\mathit{HF}^*_{q,q^{-1}}(H_{r_2,t},J_t)$. The almost complex structures are the same as before, and the Hamiltonian is
\begin{equation} \label{eq:r2-hamiltonian}
H_{r_2,t} = \begin{cases} H_{r_1,t} & \text{on } \bar{E}|W_{\leq 1/2}, \\
 \epsilon (H_2 \circ \phi^{\mathit{rot}}_{-r_2t}) + r_2 H^{\mathit{rot}} + H_{\bar{M},t} & \text{on } W_{\geq 1/2} \times \bar{M}.
\end{cases}
\end{equation}
Here, $(\phi^{\mathit{rot}}_t)$ is the one-periodic rotational circle action on $W$, generated by $H^{\mathit{rot}}$. The two parts of \eqref{eq:r2-hamiltonian} match up since
\begin{equation}
\epsilon (H_2 \circ \phi^{\mathit{rot}}_{-r_2t}) + r_2 H^{\mathit{rot}} = -\epsilon H^{\mathit{rot}} + r_2 H^{\mathit{rot}} = r_1 H^{\mathit{rot}} 
\text{ on $W_{\leq \rho_1}$.}
\end{equation}
By looking at the behaviour on $\bar{E}|W_{\geq \rho_2}$, one sees that $H_{r_2,t}$ is of type
\begin{equation} \label{eq:2-type}
\exp(r_2t\gamma^{\mathit{rot}}) (\epsilon\gamma^{\mathit{hyp}} + r_2\gamma^{\mathit{rot}}) \exp(-r_2t\gamma^{\mathit{rot}}) \in \frakg.
\end{equation}
The loop \eqref{eq:2-type} in $\frakg$ generates the path $\exp(r_2 t \gamma^{\mathit{rot}}) \exp(\epsilon t \gamma^{\mathit{hyp}})$ in $G$. The endpoint of that path is $\exp(\epsilon\gamma^{\mathit{hyp}})$, but its canonical lift to $\tilde{G}$ has rotation number $r_2$. Hence, the Hamiltonian \eqref{eq:r2-hamiltonian} has the behaviour at infinity required in the definition of $\mathit{HF}^*_{q,q^{-1}}(\bar{p},r_2)$. 

\begin{lemma} \label{th:h1-h2}
The one-periodic orbits of $(H_{r_2,t})$ are of two kinds. 
\begin{align} \label{eq:first-orbits}
&
\mybox{
The first kind are disjoint from $W_{\geq 1/2} \times \bar{M}$, and agree with the orbits of $(H_{r_1,t})$. They form a subcomplex of $\mathit{CF}^*_{q,q^{-1}}(H_{r_2,t},J_{t})$ which can be identified with $\mathit{CF}_{q,q^{-1}}^*(H_{r_1,t},J_{t})$.
} 
\\
& \label{eq:second-orbits}
\mybox{
The second kind of orbits are of the form $(\phi^{\mathit{rot}}_{r_2t}(w_2), x(t))$, where $w_2$ is the critical point from Lemma \ref{th:morse-function}, and $x$ is a one-periodic orbit of $(H_{\bar{M},t})$. They form a quotient complex, whose cohomology is $H^{*-1}(\bar{M};\bQ)((q))$.
}
\end{align}
\end{lemma}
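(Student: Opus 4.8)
The plan is to analyze the one-periodic orbits of $(H_{r_2,t})$ by exploiting the fact that, on $W_{\geq 1/2}\times\bar M$, the Hamiltonian is (up to the conjugation by the rotation that appears in \eqref{eq:r2-hamiltonian}) a split sum of a base term and a fibre term. First I would recall that the Hamiltonian vector field of $H_{r_2,t}$ on $W_{\geq 1/2}\times\bar M$ splits as a base component generated by $\epsilon(H_2\circ\phi^{\mathit{rot}}_{-r_2t})+r_2H^{\mathit{rot}}$ and a fibre component generated by $H_{\bar M,t}$; hence a one-periodic orbit has a base projection which is a one-periodic orbit of the time-dependent base Hamiltonian, and a fibre component which is a one-periodic orbit of $(H_{\bar M,t})$. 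For the base projection, the flow of $\epsilon(H_2\circ\phi^{\mathit{rot}}_{-r_2t})+r_2H^{\mathit{rot}}$ is $\phi^{\mathit{rot}}_{r_2t}$ composed with the time-rescaled flow of $H_2$; by the choice of $\epsilon$ in \eqref{eq:only-constants}, the only $\epsilon$-periodic orbits of $H_2$ are its critical points, and by Lemma \ref{th:morse-function} these are the origin and the saddle $w_2$. The origin lies in $W_{\leq\rho_1}$, where the Hamiltonian coincides with $H_{r_1,t}$; this is why orbits through the region $W_{\leq 1/2}$ are exactly the orbits of $(H_{r_1,t})$, giving \eqref{eq:first-orbits}. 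The saddle $w_2$ gives the orbits $(\phi^{\mathit{rot}}_{r_2t}(w_2),x(t))$ of \eqref{eq:second-orbits}.

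Next I would establish the chain-level structure. Orbits of the first kind lie in $\bar E|W_{\leq 1/2}$, and by a barrier argument (Lemma \ref{th:barrier}(i) applied to $\bar p\circ u$ with $\rho$ slightly above $1/2$, then $\rho\to 1/2$) any Floer trajectory with both asymptotes of the first kind stays in $\bar E|W_{\leq 1/2}$, where $(H_{r_2,t},J_t)=(H_{r_1,t},J_t)$; so these orbits span a subcomplex isomorphic to $\mathit{CF}^*_{q,q^{-1}}(H_{r_1,t},J_t)$. For the second kind: a Floer trajectory asymptotic to an orbit of the second kind at $-\infty$ and one of the first kind at $+\infty$ is impossible by an action/energy or winding argument, but the reverse (first kind at $-\infty$, second kind at $+\infty$) is not excluded — hence the second-kind orbits form a quotient complex, not a subcomplex in the other order. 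Here I need to match the grading shift: the orbit $(\phi^{\mathit{rot}}_{r_2t}(w_2),x(t))$ has Conley-Zehnder index equal to $i_{\mathrm{CZ}}(x)$ (computed in $\bar M$) plus a contribution from the base factor, which is $-1$ because $w_2$ is a saddle of a function on the two-dimensional base with the sign normalization forced by \eqref{eq:constant-section} (the imaginary-direction Hessian being negative, as in the proof of Lemma \ref{th:morse-function}, together with the winding-number convention). This produces the shift by $-1$ in $H^{*-1}(\bar M;\bQ)((q))$.

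Finally I would identify the cohomology of the quotient complex with $H^{*-1}(\bar M;\bQ)((q))$. The differential on this quotient counts Floer trajectories with both asymptotes of the second kind. Such a trajectory has base projection a Floer trajectory from $w_2$ to $w_2$ for the (rotated) Hamiltonian on the base; since $w_2$ is the unique saddle and there is no other critical point at the same level to flow through, and since the conjugating rotation $\phi^{\mathit{rot}}_{r_2t}$ is an exact symplectomorphism that can be absorbed, the base part contributes only the constant trajectory (for generic data on the base, all rigid base trajectories from $w_2$ to itself are constant). Hence the quotient differential reduces to the Floer differential of $(H_{\bar M,t},J_{\bar M,t})$ in the fibre, whose cohomology is $\mathit{HF}^*(\bar M)\iso H^*(\bar M;\bQ)$, coefficients extended to $\bQ((q))$, shifted by $1$. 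The main obstacle I anticipate is precisely this last step — controlling the base-direction trajectories emanating from the saddle $w_2$ and verifying that they do not contribute to the differential beyond the constant solution, which requires either a careful maximum-principle/barrier argument (using Lemma \ref{th:barrier}(ii),(iii) to confine trajectories to the relevant annular region) or an explicit computation of the linearized operator at the saddle; getting the signs and the index bookkeeping right in that computation is the delicate part.
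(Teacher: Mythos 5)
Your overall strategy coincides with the paper's: separate orbits according to whether they lie in $\bar{E}|W_{\leq 1/2}$ or in $W_{\geq 1/2} \times \bar{M}$, use Lemma \ref{th:barrier}(i) to confine trajectories between first-kind orbits, use winding numbers (Lemma \ref{th:barrier}(iii), together with the fact that the vector field rotates at speed $r_1 < r_2$ on $\{1/2 \leq |w| \leq \rho_1\}$) to exclude trajectories with $x_+$ of the first kind and $x_-$ of the second, and reduce the quotient complex to fibre Floer theory. The genuine gap is exactly at the step you flag: constancy of the $W$-component of trajectories between two second-kind orbits. None of your proposed mechanisms closes it. Genericity/rigidity is not the right tool (the total cylinder is rigid, not its base projection, and transversality cannot rule out a nonconstant base cylinder a priori); a barrier argument only confines, and the linearized operator at $w_2$ addresses regularity, not existence. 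The correct argument is short: first apply Lemma \ref{th:barrier}(ii) (both asymptotic winding numbers equal $r_2$) to confine the whole trajectory to $W_{\geq 1/2} \times \bar{M}$, so the equation decouples; then observe that $\omega_W$ is exact and the base component is a Floer cylinder asymptotic to the \emph{same} orbit $\phi^{\mathit{rot}}_{r_2 t}(w_2)$ at both ends, hence has zero energy and is stationary. No genericity enters at this point.

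Your index bookkeeping also has a sign slip and skips the essential subtlety. For the quotient cohomology to be $H^{*-1}(\bar{M};\bQ)((q))$, the Floer degree of $(\phi^{\mathit{rot}}_{r_2 t}(w_2), x(t))$ must be the degree of $x$ \emph{plus} one, not minus one. Moreover the shift is not simply ``saddle contributes its Morse index'': with respect to the naive product trivialization of $TW \oplus T\bar{M}$ along the orbit, the $W$-contribution is $1 - 2r_2$ (the orbit winds $r_2$ times), and the trivialization coming from a bounding surface in $\bar{E}$ differs from the product one by $+2r_2$, which is precisely where \eqref{eq:constant-section} enters, via the decomposition \eqref{eq:decompose-loop}; the two effects combine to the overall shift by one. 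One should also record, as the paper does, that only nullhomologous orbits are used, and that fibre orbits which are not nullhomologous in $\bar{M}$ contribute trivially, so they do not affect the identification of the quotient cohomology.
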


\begin{proof}
Since the Hamiltonian isotopy generated by $(H_{r_2,t})$ preserves $\bar{E}|W_{\leq 1/2}$, any one-periodic orbit is either contained or disjoint from that subset. The orbits in $\bar{E}|W_{\leq 1/2}$ obviously agree with those of $(H_{r_1,t})$. Next, observe that we have the following relation between the Hamiltonian isotopies associated to $(H_{r_2,t})$ (on $\bar{E}$), $H_2$ (on $W$), and $H_{\bar{M},t}$ (on $\bar{M}$):
\begin{equation} \label{eq:twisted-flow}
\phi_{H_{r_2},t}(w,x) = (\phi^{\mathit{rot}}_{r_2t}(\phi_{H_2,\epsilon t}(w)), \phi_{H_{\bar{M},t}}(x)) \;\; \text{ for $(w,x) \in W_{\geq 1/2} \times \bar{M}$;}
\end{equation}
in view of \eqref{eq:only-constants}, this implies that the one-periodic orbits lying in
$W_{\geq 1/2} \times \bar{M}$ are precisely those described in \eqref{eq:second-orbits}.
A topological clarification is necessary: as usual, we are considering only orbits which are nullhomologous. For an orbit as in \eqref{eq:second-orbits}, we have a (free) homotopy 
\begin{equation} \label{eq:decompose-loop}
(\phi^{\mathit{rot}}_{r_2t}(w_2),x(t)) \htp (\phi_{r_2t}^{\mathit{rot}}(w_2),x(0)) \text{ composed with } (w_2,x(t)).
\end{equation}
On the right hand side, the first loop is nullhomologous in $\bar{E}$ by \eqref{eq:fibre-hamiltonian}. If the second loop is nullhomologous in $\bar{M}$, the left hand side is nullhomologous in $\bar{E}$. On the other hand, there could be $x(t)$ which are not nullhomologous in $\bar{M}$, but such that $(\phi^{\mathit{rot}}_{rt}(w_2),x(t))$ is nullhomologous in $\bar{E}$; these would contribute to the Floer complex of $(H_{r_2,t})$, but turn out to be ultimately irrelevant, since orbits which are not nullhomologous (or nullhomotopic) give trivial summands of the Floer cohomology of $(H_{\bar{M},t})$.

Lemma \ref{th:barrier}(i), again applied to the projections of Floer trajectories to $W$, shows that a trajectory whose limits are both in the class \eqref{eq:first-orbits} remains inside $\bar{p}^{-1}(W_{\leq 1/2})$. The orbits of type \eqref{eq:second-orbits} project to loops with winding number $r_2$ in $W_{\geq 1/2}$, while the projection of the vector field of $(H_{r_2,t})$ rotates with speed $r_1<r_2$ in $\{1/2 \leq |w| \leq \rho_1\}$. Hence, Lemma \ref{th:barrier}(iii) shows that there can't be Floer trajectories whose limit $x_+$ is of type \eqref{eq:first-orbits}, and $x_-$ of type \eqref{eq:second-orbits}. This completes the verification of the properties stated in \eqref{eq:first-orbits}.

Let's look at orbits of type \eqref{eq:second-orbits}. Lemma \ref{th:barrier}(ii) shows that a Floer trajectory connecting two such orbits must remain inside $\bar{p}^{-1}(W_{\geq 1/2})$. One can therefore decouple the Floer equation into its $W$ and $\bar{M}$ components. The $W$-component has the same limits at both ends, and since the symplectic form on $W$ is exact, it must necessarily be stationary at the one-periodic orbits. The $\bar{M}$ component just satisfies the Floer equation for $(H_{\bar{M},t}, J_{\bar{M},t})$, and from that, one easily shows that the quotient complex computes the Floer cohomology of $(H_{\bar{M},t})$, meaning the ordinary cohomology of $\bar{M}$. The only piece of \eqref{eq:second-orbits} that needs a bit of extra attention concerns the grading. From a choice of bounding surface in $\bar{E}$, we get a preferred homotopy class of trivializations of the $T\bar{E}$ along our orbit, meaning the vector bundle
\begin{equation}
T\bar{E}_{(\phi^{\mathit{rot}}_{r_2t}(w_2),x(t))} = TW_{\phi^{\mathit{rot}}_{r_2t}(w_2)} \oplus T\bar{M}_{x(t)}
\end{equation}
over the circle. The Conley-Zehnder index is defined by using that trivialization to write the linearization of our Hamiltonian isotopy as a path in the linear symplectic group. Assume that $x$ is nullhomologous already in $\bar{M}$ (which is unproblematic since, as mentioned before, the other homology classes of orbits contribute trivially to Floer cohomology); choose a bounding surface in $\bar{M}$, and the resulting trivialization of $x^*T\bar{M}$. For $TW_{\phi^{\mathit{rot}}_{r_2t}(w_2)} = \bC$, choose a trivialization which rotates $r_2$ times as $t$ goes around the circle. This agrees with the trivialization coming from a bounding surface in $T\bar{E}$: to check that, one goes through the homotopy \eqref{eq:decompose-loop}, and then uses assumption \eqref{eq:fibre-hamiltonian} for the first loop on the right hand side. The consequence is that the Conley-Zehnder of our periodic orbit is $2r_2$ higher than it would be if we considered the orbit naively as lying in the product $W \times \bar{M}$. Now, the last-mentioned naive index is $1-2r_2$ ($W$ component) plus the Conley-Zehnder index in $\bar{M}$. Altogether, this yields the shift in the grading which appears in \eqref{eq:second-orbits}.
\end{proof}

Lemma \ref{th:h1-h2} provides an exact sequence with the same terms as \eqref{eq:hf-exact-2}, but where the map $\mathit{HF}^*_{q,q^{-1}}(\bar{p},r_1) \rightarrow \mathit{HF}^*_{q,q^{-1}}(\bar{p},r_2)$ is induced by the inclusion of the subcomplex from \eqref{eq:first-orbits}. To complete the proof of Lemma \ref{th:les-2}, we have to show that this coincides with the continuation map:

\begin{lemma} \label{th:continuation-inclusion}
For suitable choices made in its construction, the chain level continuation map $C_{q,q^{-1}}^{r_1,r_2}$ realizes the inclusion of complexes from Lemma \ref{th:h1-h2}.
\end{lemma}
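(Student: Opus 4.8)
The plan is to construct the continuation map $C_{q,q^{-1}}^{r_1,r_2}$ using a specific homotopy of Hamiltonians that interpolates between $(H_{r_1,t})$ and $(H_{r_2,t})$ in the simplest possible way, namely by turning on the extra term in \eqref{eq:r2-hamiltonian} gradually. Concretely, I would take $F_{s,t} = 0$ throughout, keep $(J_{s,t}) = (J_t)$ independent of $s$, and set
\begin{equation}
H_{s,t} = H_{r_1,t} + \beta(s)\big(H_{r_2,t} - H_{r_1,t}\big),
\end{equation}
where $\beta: \bR \to [0,1]$ is a monotone increasing cutoff with $\beta(s) = 0$ for $s \ll 0$ and $\beta(s) = 1$ for $s \gg 0$. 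Since $H_{r_2,t} - H_{r_1,t}$ is supported in $W_{\geq 1/2} \times \bar M$ and there equals $\epsilon(H_2 \circ \phi^{\mathit{rot}}_{-r_2 t}) + (r_2 - r_1) H^{\mathit{rot}}$, which is a nonnegative multiple of $H^{\mathit{rot}} + $ (a function bounded below), I would first check that the curvature term \eqref{eq:curvature}, which here is just $R_{s,t} = -\partial_s H_{s,t} = -\beta'(s)(H_{r_2,t} - H_{r_1,t})$, is of type $\gamma_{s,t}$ with $\gamma_{s,t} \in \frakg_{\geq 0}$. This needs $\beta' \le 0$; so in fact the orientation of the monotonicity must be chosen so that $H_{s,t}$ \emph{decreases} in $s$ where the added term is positive — i.e. one runs the homotopy from $r_2$ at $s = -\infty$ to... no: the correct reading is that because we pass from a larger rotation number toward a smaller one should be reconsidered, but since $r_1 < r_2$ and continuation goes $r_1 \to r_2$, the added piece is turned \emph{on}, and the sign works out because $H^{\mathit{rot}} \ge 0$ forces the type of $-\beta'(H_{r_2} - H_{r_1})$ into $\frakg_{\le 0}$; so I would instead realize the continuation map by a homotopy that is monotone in the \emph{Hamiltonian} rather than in $\beta$, using the standard trick (as in the remark following \eqref{eq:continuation-map-equation}) of reducing a general $G$-connection equation to the $F_{s,t} = 0$ case. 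The upshot is: a valid continuation datum exists, giving some chain map $C^{r_1,r_2}_{q,q^{-1}}$, and the only remaining task is to identify it.

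Next I would analyze the moduli spaces of solutions of \eqref{eq:continuation-map-equation} for this datum. The key geometric input is once again the barrier argument of Lemma \ref{th:barrier}, applied to $\bar p \circ u$: because the Hamiltonian terms all project on $W_{\geq 1/2}$ to rotations by $r_1 H^{\mathit{rot}}$ near $|w| = 1/2$ (with the hyperbolic perturbation only kicking in further out) and because both $r_1$ and the limits are controlled, any solution with limit $x_+$ of the first kind \eqref{eq:first-orbits} (hence inside $\bar E | W_{\le 1/2}$) stays inside $\bar E | W_{\le 1/2}$, where $H_{s,t} \equiv H_{r_1,t}$ is $s$-independent. On that region the continuation equation degenerates exactly to Floer's equation for $(H_{r_1,t}, J_t)$, which is translation invariant, so the only isolated solutions (counted with the $q^{u\cdot\Omega}$ weight) are the constant cylinders at the orbits. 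This shows that on the subcomplex \eqref{eq:first-orbits} the chain map $C^{r_1,r_2}_{q,q^{-1}}$ is the identity followed by the inclusion into $\mathit{CF}^*_{q,q^{-1}}(H_{r_2,t}, J_t)$ — no correction terms appear. Since $C^{r_1,r_2}_{q,q^{-1}}$ is a chain map and the first-kind orbits form a subcomplex of the target (Lemma \ref{th:h1-h2}), this fully pins down the map: it is the inclusion of the subcomplex from \eqref{eq:first-orbits}, which is what we want. The grading matches automatically because the constant solutions have index difference zero and the identification of $\mathit{CF}^*_{q,q^{-1}}(H_{r_1,t},J_t)$ with the subcomplex in Lemma \ref{th:h1-h2} was set up to respect Conley-Zehnder indices.

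The main obstacle I anticipate is getting the curvature sign \eqref{eq:nonnegative-curvature} right so that the continuation map is actually well-defined for \emph{this} particular interpolating family, rather than some auxiliary one; this requires being careful about which direction the rotation number changes and exploiting that the hyperbolic generator satisfies $H^{\mathit{hyp}} < H^{\mathit{rot}}$ from \eqref{eq:h-functions}, possibly inserting an intermediate stage where one first deforms $(H_{r_1,t})$ within its own rotation class before turning on the hyperbolic term. A secondary technical point is the homological bookkeeping for orbits of the second kind that are not nullhomologous in $\bar M$ but become nullhomologous in $\bar E$ after composing with the rotation loop; as already noted in the proof of Lemma \ref{th:h1-h2}, these contribute trivially, so they do not affect the identification, but one should say a word about why the continuation map respects that splitting. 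Everything else is a routine repetition of the barrier/maximum-principle arguments already deployed in the proofs of Lemmas \ref{th:h1-h2} and the surrounding material.
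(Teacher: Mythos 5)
Your overall strategy is the paper's: interpolate between the two Hamiltonians with a cutoff, check the curvature condition \eqref{eq:nonnegative-curvature} using \eqref{eq:h-functions}, and then apply Lemma \ref{th:barrier} to $\bar{p} \circ u$ (the interpolation being constant, equal to $H_{r_1,t} = H_{r_2,t}$, on the annulus $\{1/2 \leq |w| \leq \rho_1\}$) to trap all relevant solutions in $\bar{E}|W_{\leq 1/2}$, where the equation is $s$-independent and the isolated solutions are constant. The genuine gap is that you never actually produce a valid continuation datum. You set the cutoff with the wrong orientation, correctly observe that the curvature $-\beta'(s)(H_{r_2,t}-H_{r_1,t})$ then has the wrong sign, and retreat to the assertion that ``a valid continuation datum exists'', invoking the reduction of general $G$-connection data to the $F_{s,t}=0$ case --- a reparametrization trick that does not change the sign of the curvature and therefore cannot repair the problem; the ``intermediate stage'' you float at the end is likewise unnecessary. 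The fix is one line: in the conventions used here, a continuation map goes from the data at $s = +\infty$ to the data at $s = -\infty$ (compare \eqref{eq:continuation-map}), so for $C^{r_1,r_2}_{q,q^{-1}}$ one takes $\chi(s) = 1$ for $s \ll 0$, $\chi(s) = 0$ for $s \gg 0$, $\chi' \leq 0$, i.e.\ $H_{r_2,t}$ at $-\infty$ and $H_{r_1,t}$ at $+\infty$. Then $R_{s,t} = -\chi'(s)(H_{r_2,t}-H_{r_1,t})$ is of type $-\chi'(s)\,\epsilon \exp(r_2 t \gamma^{\mathit{rot}})(\gamma^{\mathit{hyp}}+\gamma^{\mathit{rot}})\exp(-r_2 t \gamma^{\mathit{rot}}) \in \frakg_{\geq 0}$ by \eqref{eq:h-functions}, and no further manoeuvre is needed.

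This orientation is not merely a matter of bookkeeping: it is what your own barrier argument silently assumes. You apply parts (i) and (iii) of Lemma \ref{th:barrier} to ``a solution with limit $x_+$ of the first kind'', but with your stated cutoff ($\beta = 0$ for $s \ll 0$) the $H_{r_1}$-orbit is the limit at $s = -\infty$, i.e.\ $x_-$, and the Lemma contains no clause excluding a trajectory which starts inside at $x_-$ and escapes to an outside orbit at $x_+$ (part (ii) only constrains winding numbers when both limits lie outside). With the corrected orientation the input orbit sits at $s = +\infty$, inside $W_{\leq 1/2}$, and then parts (i) and (iii) do exclude everything except solutions entirely contained in $\bar{E}|W_{\leq 1/2}$, since an orbit of type \eqref{eq:second-orbits} has winding number $r_2 > r_1$. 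From that point on, your argument --- $s$-independence of the data on that region, translation invariance, only constant isolated solutions, hence the chain map is the inclusion from Lemma \ref{th:h1-h2} --- coincides with the proof in the paper.
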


\begin{proof}
When defining \eqref{eq:continuation-map-equation}, let's use the Hamiltonians
\begin{equation} \label{eq:chi-hamiltonian}
\begin{aligned}
& H_{r_1,r_2,s,t} = H_{r_1,t} + \chi(s) (H_{r_2,t}-H_{r_1,t}), \\
& \chi(s) = 1 \text{ for $s \ll 0$, } \chi(s) = 0 \text{ for $s \gg 0$, } \chi'(s) \leq 0 \text{ everywhere,}
\end{aligned}
\end{equation}
(with trivial $F_{s,t}$ term), and the same $(J_t)$ as before. The functions \eqref{eq:chi-hamiltonian} are of class 
\begin{equation}
\gamma_{r_1,r_2,s,t} = -\gamma^{\mathit{rot}} + \epsilon\chi(s) \exp(r_2t\gamma^{\mathit{rot}}) (\gamma^{\mathit{hyp}} + \gamma^{\mathit{rot}}) \exp(-r_2t\gamma^{\mathit{rot}}),
\end{equation}
Since $\gamma^{\mathit{hyp}}+\gamma^{\mathit{rot}} \in \frakg_{\geq 0}$ by \eqref{eq:h-functions}, the curvature condition \eqref{eq:nonnegative-curvature} is satified. 

Over the annulus $\{1/2 \leq |w| \leq \rho_1\}$ we have $H_{r_1,r_2,s,t} = H_{r_1,t} = H_{r_2,t}$, hence Lemma \ref{th:barrier} applies to projections to $W$ of solutions of the continuation map equation, with the following outcome. Since all solutions necessarily have a limit $x_+$ lying in $\bar{p}^{-1}(W_{\leq 1/2})$, they must be entirely contained in that subset (parts (i) and (iii) of the Lemma). Hence, they are really solutions of Floer's equation for $H_{r_1,t}$, and the only isolated ones are stationary at one-periodic orbits. 
\end{proof}

Next we tackle Lemma \ref{th:les-1}, which is similar if a little more complicated. Set $r_3 = r_2 + \epsilon$, and
\begin{equation} \label{eq:r3-hamiltonian}
H_{r_3,t} = \begin{cases} H_{r_1,t} & \text{on $\bar{E}|W_{\leq 1/2}$,} \\
\epsilon(H_3 \circ \phi_{-r_2t}^{\mathit{rot}}) + r_2 H^{\mathit{rot}} + H_{\bar{M},t} & \text{on $W_{\geq 1/2} \times \bar{M}$.}
\end{cases}
\end{equation}
This agrees with $H_{r_2,t}$ on $\bar{E}|W_{\leq \rho_2}$. It is of type $r_3\gamma^{\mathit{rot}}$, hence has monodromy $\exp(\epsilon \gamma^{\mathit{rot}})$ and rotation number $r_3$. The analogue of Lemma \ref{th:h1-h2} reads as follows:

\begin{lemma} \label{th:h2-h3}
The one-periodic orbits of $(H_{r_3,t})$ are of three kinds. 
\begin{align} \label{eq:first-orbits-2}
&
\mybox{
The first kind are disjoint from $W_{\geq 1/2} \times \bar{M}$, and agree with the orbits of $(H_{r_1,t})$. They form a Floer subcomplex, which can be identified with $\mathit{CF}^*_{q,q^{-1}}(H_{r_1,t},J_t)$.
} 
\\
& \label{eq:second-orbits-2}
\mybox{
The second kind of orbits are as in \eqref{eq:second-orbits}. Together with \eqref{eq:first-orbits-2}, they form a larger subcomplex, which can be identified with $\mathit{CF}_{q,q^{-1}}^*(H_{r_2,t},J_t)$.
}
 \\
& \label{eq:third-orbits}
\mybox{
Finally, we have orbits $(\phi^{\mathit{rot}}_{r_2t}(w_3),x(t))$, which are as in \eqref{eq:second-orbits-2} but instead use the critical point $w_3$ of $H_3$ from Lemma \ref{th:morse-function-2}. The resulting quotient complex has cohomology $H^*(\bar{M};\bQ)((q))$. (The grading shift between this and the corresponding statement in \eqref{eq:second-orbits} comes from the difference in Morse indices of the critical points.)
}
\end{align}
\end{lemma}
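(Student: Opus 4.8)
The plan is to imitate the proof of Lemma \ref{th:h1-h2} essentially verbatim; the only new feature is a third stratum of one-periodic orbits, which forces a two-step filtration of the Floer complex. \textbf{Step 1 (the orbits).} Since $H_3 = H_2 = -H^{\mathit{rot}}$ on $W_{\leq \rho_1}$, the Hamiltonian $H_{r_3,t}$ coincides with $H_{r_1,t}$ on $\bar E|W_{\leq \rho_1}$, so its isotopy preserves $\bar E|W_{\leq 1/2}$; on $W_{\geq 1/2}\times\bar M$ one has the conjugation identity (the analogue of \eqref{eq:twisted-flow})
\begin{equation*}
\phi_{H_{r_3},t}(w,x) = \big( \phi^{\mathit{rot}}_{r_2 t}(\phi_{H_3,\epsilon t}(w)),\, \phi_{H_{\bar M,t}}(x) \big), \qquad (w,x)\in W_{\geq 1/2}\times\bar M.
\end{equation*}
Together with \eqref{eq:only-constants}, this shows the one-periodic orbits are either orbits of $H_{r_1,t}$ lying in $\bar E|W_{\leq 1/2}$ (automatically disjoint from $W_{\geq 1/2}\times\bar M$, as noted after \eqref{eq:1-hamiltonian}), or of the form $(\phi^{\mathit{rot}}_{r_2 t}(w_*),x(t))$ with $w_*$ a critical point of $H_3$ in $W_{\geq 1/2}$ and $x$ a one-periodic orbit of $(H_{\bar M,t})$; by Lemma \ref{th:morse-function-2} the latter are exactly the saddle $w_2$ and the minimum $w_3$ (the maximum at the origin lies in $W_{\leq 1/2}$ and its orbit is covered by the first case). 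The nullhomology bookkeeping is identical to that in Lemma \ref{th:h1-h2}, using \eqref{eq:decompose-loop} and \eqref{eq:fibre-hamiltonian}. This gives the three kinds \eqref{eq:first-orbits-2}, \eqref{eq:second-orbits-2}, \eqref{eq:third-orbits}.

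\textbf{Step 2 (the filtration).} I would apply Lemma \ref{th:barrier} to $\bar p\circ u$ for a rigid trajectory $u$, on the annuli where $H_{r_3,t}$ is of rotational type. Near $\{|w|=\rho\}$ with $\rho\in(1/2,\rho_1)$ it equals $H_{r_1,t}$, rotating with speed $r_1<r_2$; since the orbits of kinds \eqref{eq:second-orbits-2}, \eqref{eq:third-orbits} project to loops of winding number $r_2$, part (iii) of Lemma \ref{th:barrier} rules out trajectories from a \eqref{eq:first-orbits-2}-orbit to one of kind \eqref{eq:second-orbits-2} or \eqref{eq:third-orbits}, so \eqref{eq:first-orbits-2} spans a subcomplex, and part (i) (with $\rho\to\rho_1$) identifies it with $\mathit{CF}^*_{q,q^{-1}}(H_{r_1,t},J_t)$. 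Near $\{|w|=\rho\}$ with $\rho$ slightly larger than $\rho_2$, all orbits of kinds \eqref{eq:first-orbits-2}, \eqref{eq:second-orbits-2} lie in $\bar E|W_{\leq\rho_2}$, where $H_{r_3,t}=H_{r_2,t}$; by part (i), trajectories between two such orbits stay there and are $H_{r_2,t}$-trajectories, and since by Lemma \ref{th:h1-h2} the orbits of $H_{r_2,t}$ are precisely those of kinds \eqref{eq:first-orbits-2}, \eqref{eq:second-orbits-2}, this will identify their span with $\mathit{CF}^*_{q,q^{-1}}(H_{r_2,t},J_t)$, provided it is a subcomplex.

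\textbf{Step 3 (the obstruction and the quotient).} The one point beyond a transcription of Lemma \ref{th:h1-h2} is to rule out a rigid trajectory from a \eqref{eq:second-orbits-2}-orbit to a \eqref{eq:third-orbits}-orbit. By part (ii) of Lemma \ref{th:barrier}, applied with $\rho$ slightly below $\rho_1$ and using that the two orbits have equal winding number $r_2$, any such trajectory stays in $\bar p^{-1}(W_{\geq 1/2})$, where the data are of product type, hence decouples; its $W$-component, conjugated by $\phi^{\mathit{rot}}_{r_2 t}$, is a non-stationary gradient line of $\epsilon H_3$ from the saddle $w_2$ to the minimum $w_3$. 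Such a line exists, but it lowers the ($W$-)Morse index, so the target orbit has Conley--Zehnder index one below the source, which is incompatible with contributing to the cohomological differential; hence \eqref{eq:first-orbits-2}, \eqref{eq:second-orbits-2} do span a subcomplex. For the quotient complex, spanned by the \eqref{eq:third-orbits}-orbits, part (ii) of Lemma \ref{th:barrier} again confines the relevant trajectories to $\bar p^{-1}(W_{\geq 1/2})$; after decoupling, the $W$-component has equal limits and so, $\omega_W$ being exact, is stationary, while the $\bar M$-component is an arbitrary Floer trajectory of $(H_{\bar M,t},J_{\bar M,t})$. Thus the quotient is the Floer complex of $(H_{\bar M,t},J_{\bar M,t})$ over $\bQ((q))$ up to a degree shift, with cohomology $H^*(\bar M;\bQ)((q))$ up to that shift, the shift relative to \eqref{eq:second-orbits} being the difference of Morse indices of $w_3$ and $w_2$, computed exactly as in Lemma \ref{th:h1-h2}.

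I expect Step 3 --- ruling out the second-to-third differential --- to be the only genuinely new point: one has to confine the trajectories to the product region before the Morse-theoretic index count becomes available, and one should also double-check that the orientation and grading conventions of the decoupled $W$- and $\bar M$-problems match those in force for $\bar p$. Everything else is the same bookkeeping with Lemma \ref{th:barrier}, nullhomology, and Conley--Zehnder indices already carried out for Lemma \ref{th:h1-h2}.
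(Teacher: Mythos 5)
Your Steps 1 and 2 follow the paper's template, but Step 3 --- which you correctly single out as the only new point --- is where the argument goes wrong, and it is wrong in two ways. First, the trajectory you need to exclude (a second-kind orbit at the $s\to+\infty$ end, a third-kind orbit at the $s\to-\infty$ end) does not have as its $W$-component an honest gradient connection between $w_2$ and $w_3$: by the energy identity $E(u_W)=A(x_-)-A(x_+)\geq 0$ and the fact that the actions of the two $W$-orbits differ by $H_3(w_2)-H_3(w_3)$, a nonconstant $W$-cylinder can only have the $w_2$-orbit as its \emph{negative} limit and the $w_3$-orbit as its \emph{positive} limit. The gradient lines you say ``exist'' are exactly these, and they are harmless (they feed third-kind orbits into second-kind ones, which is compatible with the subcomplex); the problematic direction has \emph{no} solutions at all, because its energy would be $H_3(w_3)-H_3(w_2)<0$. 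That inequality $H_3(w_2)>H_3(w_3)$ is the actual content of the step and you never establish or use it; the paper derives it from the Morse complex of $H_3|W_{\geq 1/2}$ (which computes $H^*$ of the annulus, with the cap-product/module structure forcing gradient connections from $w_3$ up to $w_2$), or by inspecting the construction of $H_2,H_3$. Second, the index argument you substitute is invalid: from the $W$-part dropping the degree by one it does not follow that ``the target orbit has Conley--Zehnder index one below the source'', because the fibre components of the two limits need not be the same orbit of $(H_{\bar M,t})$; an $\bar M$-component of index difference $2$ combined with the $W$-drop of $1$ gives total index difference $1$, precisely the index of a rigid differential contribution. Moreover, since the data on $W_{\geq 1/2}\times\bar M$ are deliberately of non-generic product form, one cannot discard wrong-index configurations by appealing to generic regularity; this is exactly why the paper runs an action/energy argument rather than an index count.

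A secondary point: in Step 2 you invoke Lemma \ref{th:barrier}(i) at a circle $\{|w|=\rho\}$ with $\rho$ slightly larger than $\rho_2$, but the hypothesis of the barrier lemma fails there --- near $|w|=\rho_2$ the $W$-part of $H_{r_3,t}$ involves $H_3=H^{\mathit{hyp}}$ (and the interpolation), so the projected equation is not of the rotational form \eqref{eq:toy-floer}. The identification of the first-plus-second span with $\mathit{CF}^*_{q,q^{-1}}(H_{r_2,t},J_t)$ should instead be argued as in Lemma \ref{th:h1-h2}: first-to-first trajectories are confined to $\bar E|W_{\leq 1/2}$ by the barrier at a radius in $(1/2,\rho_1)$; second-to-second trajectories are confined to the product region by part (ii) at such a radius, decouple, and have stationary $W$-component, hence lie where $H_{r_3,t}=H_{r_2,t}$.
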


\begin{proof}
Most of the argument repeats the previous material, and we'll omit it. The only new issue concerns Floer trajectories both of whose limits $x_{\pm}$ lie in $W_{\geq 1/2} \times \bar{M}$. Lemma \ref{th:barrier}(ii) (the winding numbers being always equal to $r_2$) implies that each such trajectory remains in $W_{\geq 1/2} \times \bar{M}$, hence projects to a Floer trajectory in $W_{\geq 1/2}$. The actions of the one-periodic orbits in $W$ satisfy
\begin{equation} \label{eq:difference-of-actions}
A(t \mapsto \phi^{\mathit{rot}}_{r_2t}(w_2)) - A(t \mapsto \phi^{\mathit{rot}}_{r_2t}(w_3)) = H_3(w_2) - H_3(w_3).
\end{equation}
Consider the Morse complex of $H_3$, but restricted to $W_{\geq 1/2}$. This computes the cohomology of $W_{\geq 1/2}$ (because $\nabla H_3$ points outwards along $\partial W_{\geq 1/2}$, and the function grows to $+\infty$ as $|w| \rightarrow 1$), and the cap product action of cycles on that Morse complex computes the ring structure. From the latter observation, one can see that there must be gradient trajectories flowing upwards from $w_3$ to $w_2$, so \eqref{eq:difference-of-actions} must be positive (or course, one could get the same result by close inspection of the definition of those functions). Hence, a Floer trajectory in $W_{\geq 1/2}$ is either stationary at one of the two orbits $(\phi^{\mathit{rot}}_{r_2t}(w_2))$, $(\phi^{\mathit{rot}}_{r_2t}(w_3))$; or otherwise, it must have $(\phi^{\mathit{rot}}_{r_2t}(w_2))$ as its negative limit, and $(\phi^{\mathit{rot}}_{r_2t}(w_3))$ as its positive limit. This implies the subcomplex property from \eqref{eq:second-orbits-2} and, by the same argument as for \eqref{eq:second-orbits}, the comohology computation in \eqref{eq:third-orbits}.
\end{proof}

As before, to complete the proof of Lemma \ref{th:les-1}, we have to show:

\begin{lemma} \label{th:continuation-inclusion-2}
For suitable choices, the continuation map $C_{q,q^{-1}}^{r_2,r_3}$ agrees with the inclusion of the subcomplex from \eqref{eq:second-orbits-2}.
\end{lemma}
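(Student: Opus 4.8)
The plan is to mirror, as closely as possible, the proof of Lemma~\ref{th:continuation-inclusion}. I would define the continuation map from the Hamiltonians $H_{r_2,r_3,s,t} = H_{r_2,t} + \chi(s)(H_{r_3,t}-H_{r_2,t})$ with $\chi$ as in \eqref{eq:chi-hamiltonian}, a trivial $F_{s,t}$-term, and the same $(J_t)$ as before; by construction this interpolation is $s$-independent on $\bar p^{-1}(W_{\leq \rho_2})$, where $H_{r_2,t}$ and $H_{r_3,t}$ coincide. The curvature is $R_{s,t} = -\epsilon\,\chi'(s)\,(H_3-H_2)\circ\phi^{\mathit{rot}}_{-r_2t}$, which is pointwise nonnegative because $\chi'\leq 0$ and $H_3\geq H_2$ (Lemma~\ref{th:morse-function-2}), and which on $W_{\geq\rho_3}\times\bar M$ is of a type proportional to $\gamma^{\mathit{rot}}-\gamma^{\mathit{hyp}}\in\frakg_{\geq 0}$ (by \eqref{eq:h-functions}); so \eqref{eq:nonnegative-curvature} holds and the map is well-defined. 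Before starting I would also use the freedom in Lemma~\ref{th:morse-function} to arrange $H_2(w_2)>-\pi/2$, and (in \eqref{eq:fibre-hamiltonian}) take $H_{\bar M,t}$ sufficiently $C^0$-small; the point of these normalizations appears in the last step.

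As in Lemma~\ref{th:continuation-inclusion}, the heart of the argument is a barrier analysis of a solution $u$ through its projection $v=\bar p\circ u$. On the annulus $\{1/2\leq|w|\leq\rho_1\}$ all of $H_{r_2,t}$, $H_{r_3,t}$ and the interpolation equal $r_1H^{\mathit{rot}}+H_{\bar M,t}$, so near $\{|w|=\rho\}$ with $\rho\in(1/2,\rho_1)$ the map $v$ solves \eqref{eq:toy-floer} with rotation number $r=r_1$ and Lemma~\ref{th:barrier} applies. The source limit $x_+$ is an orbit of $H_{r_2,t}$, hence of type \eqref{eq:first-orbits-2} or \eqref{eq:second-orbits-2}. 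If $x_+$ is of type \eqref{eq:first-orbits-2} (inside $\{|w|<\rho\}$) the argument is verbatim that of Lemma~\ref{th:continuation-inclusion}: a target limit of type \eqref{eq:second-orbits-2} or \eqref{eq:third-orbits} has winding number $r_2>r_1$, contradicting Lemma~\ref{th:barrier}(iii), so $x_-$ is of type \eqref{eq:first-orbits-2}, and then Lemma~\ref{th:barrier}(i) confines $u$ to $\bar p^{-1}(W_{\leq 1/2})$, where it solves the $s$-independent Floer equation for $H_{r_1,t}$; the only isolated such solutions are stationary and contribute $x_+$ with coefficient $1$.

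The new case, $x_+$ of type \eqref{eq:second-orbits-2}, is where the real work lies. Now $x_+$ lies outside $\{|w|\leq\rho\}$ with winding number $r_2$. If $x_-$ is of type \eqref{eq:second-orbits-2} or \eqref{eq:third-orbits} (also winding $r_2$), then Lemma~\ref{th:barrier}(ii) (letting $\rho\to\rho_1$) confines $u$ to $\bar p^{-1}(W_{\geq\rho_1})\subset W_{\geq 1/2}\times\bar M$, where the equation decouples into a $W$-component $v$ and a Floer trajectory on $\bar M$ for $(H_{\bar M,t},J_{\bar M,t})$. Untwisting by $\phi^{\mathit{rot}}$, the map $v$ is a continuation trajectory on $W$ between the constant orbit at $w_2$ and the constant orbit at $w_2$ or $w_3$. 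A short computation with \eqref{eq:topological-energy} — using that the action of such an $r_2$-fold orbit under $\epsilon H_j+r_2H^{\mathit{rot}}$ is $-\tfrac{\pi r_2}{2}-\epsilon H_j(\cdot)$ up to an irrelevant constant — gives topological energy $\epsilon\big(H_3(w_{\mathrm{target}})-H_3(w_2)\big)$ (using $H_2=H_3$ near $w_2\in W_{\leq\rho_2}$). This is negative when the target is $w_3$, since $H_3(w_3)<H_3(w_2)$ exactly as in \eqref{eq:difference-of-actions} and the proof of Lemma~\ref{th:h2-h3}, contradicting nonnegativity of energy; so there is no type-\eqref{eq:third-orbits} contribution. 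When the target is $w_2$ the energy vanishes, forcing $v$ and the $\bar M$-component to be stationary, so $u$ is stationary at $x_+$ and contributes coefficient $1$.

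It remains to exclude continuation trajectories from $x_+$ of type \eqref{eq:second-orbits-2} to $x_-$ of type \eqref{eq:first-orbits-2}; this is the main obstacle, since the barrier is inconclusive here (it only yields $\mathrm{wind}(x_+)\geq r_1$). I would instead use the action filtration: nonnegativity of $R_{s,t}$ forces $A_{H_{r_2}}(x_+)\geq A_{H_{r_3}}(x_-)$. By the computation above $A_{H_{r_2}}(x_+)=-\tfrac{\pi r_2}{2}-\epsilon H_2(w_2)+A_{\bar M}(\cdot)$, while $A_{H_{r_3}}(x_-)=A_{H_{r_1}}(x_-)$ (the type-\eqref{eq:first-orbits-2} orbits of $H_{r_3}$ lie where $H_{r_3}=H_{r_1}$); choosing $H_{r_1,t}$ near $\bar p^{-1}(W_{<1/2})$ so that these orbits have action close to $-\tfrac{\pi r_1}{2}$, and recalling $r_2=r_1+\epsilon$, the required inequality becomes a lower bound of the form $A_{\bar M}(\cdot)-A_{\bar M}(\cdot)\geq\epsilon\big(\tfrac{\pi}{2}+H_2(w_2)\big)$ on a difference of fibre-orbit actions. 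Having arranged $H_2(w_2)>-\pi/2$ and $H_{\bar M,t}$ small enough that the spread of its orbit actions is smaller than $\epsilon(\tfrac{\pi}{2}+H_2(w_2))$, this fails for every pair of fibre orbits, so no such trajectory exists. Assembling the three cases, $C^{r_2,r_3}_{q,q^{-1}}$ sends each $H_{r_2,t}$-orbit to itself with coefficient $1$ (stationary solutions meet $\Omega$ trivially, so no powers of $q$ occur), i.e.\ it is exactly the inclusion of the subcomplex from \eqref{eq:second-orbits-2}, completing the proof of Lemma~\ref{th:les-1}.
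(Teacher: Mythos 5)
Most of your argument is the paper's: the straight-line monotone interpolation, the barrier analysis for solutions whose input is of type \eqref{eq:first-orbits-2}, and, for solutions with both limits over $W_{\geq 1/2}$, confinement by Lemma \ref{th:barrier}(ii), decoupling, and the action comparison \eqref{eq:difference-of-actions} for the $W$-component --- this is exactly what the paper means by ``action arguments as in the proof of Lemma \ref{th:h2-h3}'', and that part of your write-up is correct. You are also right to single out the one configuration these tools do not reach, namely a continuation solution from an input of type \eqref{eq:second-orbits-2} to an output of type \eqref{eq:first-orbits-2}; the paper's own (very terse) proof does not address it explicitly either.

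The extra argument you offer for that residual case does not work, however. First, the normalization you ask for is unavailable: since $H_2=-H^{\mathit{rot}}$ on $W_{\leq\rho_1}$ and $w_2$ is the only other critical point of the real restriction, $H_2$ decreases along $[0,w_2]$, so $H_2(w_2)<-H^{\mathit{rot}}(\rho_1)<-\tfrac{\pi}{2}$ automatically, and your threshold $\epsilon(\tfrac{\pi}{2}+H_2(w_2))$ is negative. More fundamentally, in the setting of Setup \ref{th:fibration} the fibres $\bar{M}$ are closed, so $\omega_{\bar{E}}$ is not exact: the action of a one-periodic orbit is defined only after choosing a bounding surface and changes by periods of $[\omega_{\bar{E}}]$ (integers) when the capping changes, and the energy identity for a continuation solution involves the relative class of that solution, which is unconstrained. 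An $O(\epsilon)$ pigeonhole estimate therefore cannot rule out solutions in all homotopy classes; in the weighted formulation, monotonicity only bounds $u\cdot\Omega$ from above, and since $\Omega$ here is merely a smooth representative of $[\omega_{\bar{E}}]$ (no positivity of intersections, and negative $q$-powers are allowed over $\bQ((q))$), no contradiction results. In addition, the type-\eqref{eq:first-orbits-2} orbits live in $\bar{E}|W_{\leq 1/2}$, where $H_{r_1,t}$ is far from small near the boundary and essentially unconstrained inside, so there is no reason their actions (even for fixed cappings) can be squeezed into a window of that size. Note that what you can salvage without this step is still enough for Lemma \ref{th:les-1}: any such matrix entries land in the span of the type-\eqref{eq:first-orbits-2} orbits, so the continuation map would be the inclusion plus a strictly triangular term, hence still a chain-level isomorphism onto the subcomplex \eqref{eq:second-orbits-2}; but the literal chain-level identity you claim, and the argument you give for it, has a genuine gap at this point.
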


\begin{proof}
This is parallel to Lemma \ref{th:continuation-inclusion}, again with an added wrinkle. From \eqref{eq:everywhere} and the definitions \eqref{eq:r2-hamiltonian}, \eqref{eq:r3-hamiltonian}, we see that $H_{r_2,t} \leq H_{r_3,t}$ everywhere. Hence, if we interpolate between these functions, as in \eqref{eq:chi-hamiltonian}, then that produce a family of functions which is everywhere non-increasing in the parameter $s$. This allows one to restrict the behaviour of solutions of the continuation map equation by action arguments, exactly as in the proof of Lemma \ref{th:h2-h3}.
\end{proof}

\section{Relative Floer cohomology\label{sec:relative-floer}}

This section concerns Hamiltonian Floer cohomology in a relative sense, meaning for symplectic manifolds with a fixed symplectic divisor (a codimension two symplectic submanifold) which is preserved by the Hamiltonian vector fields under consideration. The issues one encounters have to do with one-periodic orbits, and Floer trajectories, contained in the divisor. These would be serious problems in general; for us, they will be made tractable by additional constraints imposed on the symplectic manifold and (maybe more surprisingly) on the Hamiltonians.

\subsection{Floer trajectories}
Take a closed symplectic manifold. For consistency with the notation elsewhere in the paper, we write this as $\bar{M}$, and its dimension as $2n-2$. We assume that $c_1(\bar{M}) = 0$, and that $[\omega_{\bar{M}}]$ is integral. Let $\delta M \subset \bar{M}$ be a symplectic divisor which represents $[\omega_{\bar{M}}]$; we write $M = \bar{M} \setminus \delta M$. 

\begin{setup} \label{th:tangent-hamiltonians}
Take the normal bundle $N = N(\delta M) \rightarrow \delta M$, as defined by the symplectic orthogonal splitting
\begin{equation} \label{eq:tangent-splitting}
T\bar{M}\,|\,\delta M = T(\delta M) \operp N.
\end{equation}
Fix a complex structure $J_N$ on $N$ compatible with its orientation (or equivalently, with the symplectic structure).  For almost complex structures $J$ on $\bar{M}$ compatible with $\omega_{\bar{M}}$, we require that $\delta M$ is an almost complex submanifold, and that $J|N = J_N$.
For a Hamiltonian $H \in \smooth(\bar{M},\bR)$, we require its vector field $X$ to be tangent to $\delta M$. Then, $L_X$ necessarily preserves the splitting \eqref{eq:tangent-splitting}. We additionally require that the $N$ part of that operation should preserve $J_N$, meaning $L_X(J_NY) = J_N(L_XY)$ for $Y \in \smooth(\delta M,N)$.
\end{setup}

Near a point of $\delta M$, the situation is isomorphic (by a form of the symplectic tubular neighbourhood theorem) to a neighbourhood of $(0,0)$ in the following local model:
\begin{equation}
\left\{
\begin{aligned}
& \bar{M}^{\mathit{local}} = \bR^{2n-4} \times \bC, \;\; \text{with its standard symplectic form}, \\ 
&\delta M^{\mathit{local}} = \bR^{2n-4} \times \{0\}, \\
& J_N^{\mathit{local}} = i.
\end{aligned}
\right.
\end{equation}
In such local coordinates $(y_1,y_2) \in \bR^{2n-4} \times \bC$, the Hamiltonians and almost complex structures from Setup \ref{th:tangent-hamiltonians} have the form
\begin{equation} \label{eq:local-hamiltonian}
\left\{
\begin{aligned} &
H^{\mathit{local}}(y_1,y_2) = f(y_1) + \half g(y_1) |y_2|^2 + O(|y_2|^3), \\
& J^{\mathit{local}}_{(y_1,y_2)} = \begin{pmatrix} \ast & O(|y_2|) \\ O(|y_2|) & i + O(|y_2|) \end{pmatrix}.
\end{aligned}
\right.
\end{equation}

Suppose that we have a time-dependent Hamiltonian $(H_t)_{t \in S^1 = \bR/\bZ}$ as in Setup \ref{th:tangent-hamiltonians}, its vector field $(X_t)$, and the resulting Hamiltonian isotopy $\phi = (\phi_t)_{t \in \bR}$. Let $x = x(t)$ be a one-periodic orbit, contained in $\delta M$. The linearization of the flow in normal direction gives maps
\begin{equation} \label{eq:normal-derivative}
D\phi_t|N_{x(0)}: N_{x(0)} \longrightarrow N_{x(t)},
\end{equation}
which are unitary with respect to $J_N$ and the symplectic form. In particular, the monodromy is a rotation of $N_{x(0)}$,
\begin{equation} \label{eq:alpha-n}
D\phi_1|N_{x(0)} = \exp( \alpha_N(x)J_N), \quad \alpha_N(x) \in [0,2\pi).
\end{equation}
This gives rise to a preferred class of nowhere zero sections of $x^*N$, namely 
\begin{equation} \label{eq:preferred-trivialization}
\nu_x(t) = \exp(-t\alpha_N(x)J_N) D\phi_t(\text{some nonzero vector in $N_{x(0)}$}).
\end{equation}
One can think of \eqref{eq:normal-derivative} as parallel transport maps for a unitary connection $\nabla_N$ on $x^*N$. Associated to that connection is the selfadjoint operator $Q_N(x) = J_N \nabla_N$, whose spectrum is $\alpha_N(x) + 2\pi \bZ$. The sections \eqref{eq:preferred-trivialization} form the $\alpha_N$-eigenspace.

Our first relevant notion is a kind of action, for one-periodic orbits $x$ contained in $\delta M$ (as usual, assumed to be nullhomologous).
\begin{equation} \label{eq:normal-action}
\mybox{
Write $x$ as the boundary value of $v: S \rightarrow \bar{M}$, as in Convention \ref{th:nullhomologous}. More precisely, we require that $v$ should intersect $\delta M$ transversally along $\partial S$, and that the resulting nowhere zero section of $x^*N$ should be in the same homotopy class as \eqref{eq:preferred-trivialization}. Then, we define
\[
A_N(x) =  - \int_S v^*\omega_{\bar{M}} + \int_{S^1} H_t(x(t)) \, \mathit{dt} +
(v|(S \setminus \partial S) \cdot \delta M).
\]
}
\end{equation}
The intersection number in the last term makes sense because the points of $v^{-1}(\delta M)$ cannot accumulate along $\partial S$, by the transversality condition on $v$. Because $\delta M$ represents $[\omega_{\bar{M}}]$, \eqref{eq:normal-action} is independent of the choice of bounding surface. 

\begin{example} \label{th:constant-orbit}
Suppose that $x(t) = x$ is a constant one-periodic orbit at some point of $\delta M$. As we go from $t = 0$ to $t = 1$, $D\phi_t|N_x$ rotates by $\alpha_N(x) + 2\pi w_N(x)$, for some $w_N(x) \in \bZ$. One can choose $v$ to be contained in a small neighbourhood of $x$, and a local computation leads to the formula
\begin{equation}
A_N(x) = \int_{S^1} H_t(x) \, \mathit{dt} + w_N(x).
\end{equation}
\end{example}
Let $u: \bR \times S^1 \longrightarrow \bar{M}$ be a map such that, for $s \rightarrow \pm \infty$, $u(s,t)$ is asymptotic to orbits $x_{\pm}(t)$. Recall the classical definition of energy,
\begin{equation} \label{eq:e}
E(u) = \int_{\bR \times S^1} u^*\omega_{\bar{M}} - dH_t(\partial_ s u) \mathit{ds}\, \mathit{dt} = \int_{\bR \times S^1} \omega_{\bar{M}}(\partial_s u, \partial_t u - X_t) \mathit{ds}\, \mathit{dt}.
\end{equation}
Suppose now that $u$ lies in $\delta M$. 
\begin{equation} \label{eq:relative-energy}
\mybox{
Choose a section $\nu_u$ of $u^*N$ which, as $s \rightarrow \pm\infty$, is asymptotic to nonzero sections of $x_{\pm}^*N$ in the homotopy classes \eqref{eq:preferred-trivialization}. Then 
\[
E(u) = A_N(x_-) - A_N(x_+) + (\nu_u \cdot \text{zero-section}),
\]
where the last term is the algebraic count of zeros of $\nu_u$. 
}
\end{equation}

From this point onwards, we will assume that all one-periodic orbits are nondegenerate. For orbits in $\delta M$, this implies that $\alpha_N(x) \in (0,2\pi)$. To any map $u$ as before, one can associate a class of Cauchy-Riemann operators $D_u$ on $u^*TM$, which become Fredholm in suitable Sobolev completions (for simplicity, let's say from $W^{1,2}$ to $L^2$), and whose index we denote by $I(u)$. If $u$ is contained in $\delta M$, the operator $D_u$ can be chosen so that it fits into a short exact sequence
\begin{equation} \label{eq:operator-les}
\xymatrix{
0 \ar[r] & 
W^{1,2}(u^*T(\delta M)) \ar[d]_-{D_{\delta M,u}} \ar[r] & 
\ar[d]_-{D_u} W^{1,2}(u^*T\bar{M}) \ar[r] & 
\ar[d]_-{D_{N,u}} W^{1,2}(u^*N) \ar[r] & 0
\\
0 \ar[r] & 
L^2(u^*T(\delta M)) \ar[r] & 
L^2(u^*T\bar{M}) \ar[r] & 
L^2(u^*N) \ar[r] & 0.
}
\end{equation}
The indices $I_{\delta M}(u) = \mathrm{index}(D_{\delta M,u})$ and $I_N(u) = \mathrm{index}(D_{N,u})$ therefore satisfy 
\begin{equation}
I(u) = I_{\delta M}(u) + I_N(u).
\end{equation}
Take the operator $D_{N,u}$ which appears in \eqref{eq:operator-les}. Asymptotically as $s \rightarrow \pm\infty$, that operator becomes equal to $\partial_s + Q_N(x_{\pm})$, where the $Q_N$ are the previously mentioned selfadjoint operators. The index can therefore be computed as
\begin{equation} \label{eq:normal-index}
\half I_N(u) = \nu_u \cdot \text{zero-section},
\end{equation}
where the right hand side is as in \eqref{eq:relative-energy}. Combining the two equations yields
\begin{equation} \label{eq:action-energy-index}
E(u) = A_N(x_-) - A_N(x_+) + \half I_N(u).
\end{equation}

\begin{lemma} \label{th:automatic-regularity}
If $I_N(u) \geq 0$, then $D_{N,u}$ is surjective.
\end{lemma}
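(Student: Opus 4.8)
The plan is to reduce the statement to the standard automatic-transversality argument for Cauchy–Riemann operators on complex \emph{line} bundles, and then run the Carleman similarity-principle computation. First I would record the structure of $D_{N,u}$: since $u$ lies in $\delta M$, the operator appearing in \eqref{eq:operator-les} is a Cauchy–Riemann-type operator on the complex line bundle $u^*N \to \bR \times S^1$, which is asymptotic at the two ends to $\partial_s + Q_N(x_\pm)$. These asymptotic operators are nondegenerate, because we assume all one-periodic orbits nondegenerate, so $\alpha_N(x_\pm) \in (0,2\pi)$; the normal hypothesis in Setup \ref{th:tangent-hamiltonians} (that $L_{X_t}$ preserve $J_N$) makes the Hamiltonian contribution to the zeroth-order term complex-linear, and in any case the similarity principle applies to real-linear Cauchy–Riemann operators as well. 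Consequently $u^*N$ carries a holomorphic structure on the cylinder, solutions in the kernel decay exponentially at the punctures, each puncture carries the preferred asymptotic trivialization given by the sections \eqref{eq:preferred-trivialization}, and relative to those trivializations the degree of $u^*N$ is exactly $\tfrac12 I_N(u)$ by \eqref{eq:normal-index}.

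Next I would pass to the cokernel. Since $D_{N,u}$ is Fredholm, $\mathrm{coker}(D_{N,u}) \iso \ker(D_{N,u}^*)$, and using a Hermitian metric together with complex conjugation one identifies $D_{N,u}^*$ with a Cauchy–Riemann operator $\overline{D}$ on $\overline{u^*N} \otimes K$, where $K$ is the canonical bundle of the cylinder; conjugation reverses the relative Chern number, and the $K$-twist, combined with the fact that at each nondegenerate end the smallest positive eigenvalue of $Q_N(x_\pm)$ and the largest negative eigenvalue of $-Q_N(x_\pm)$ have windings differing by one, makes the relative degree of the line bundle underlying $\overline{D}$ equal to $-\tfrac12 I_N(u) - 1$, hence strictly negative whenever $I_N(u) \geq 0$. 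Now apply the similarity principle: a nonzero $\sigma \in \ker(\overline{D})$ has only isolated zeros, each of strictly positive order, and by exponential convergence at the ends its asymptotic winding numbers at the punctures are forced to be the extremal ones compatible with decay; the algebraic count of its zeros therefore equals that relative degree, which is negative. This is impossible, so $\ker(\overline{D}) = 0$, i.e. $D_{N,u}$ is surjective.

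The \emph{hard part}, and the only step that is not routine, is the winding-number/degree bookkeeping in the second paragraph: one must verify carefully that, after taking the formal adjoint and conjugating, the correct relative Chern number is $-\tfrac12 I_N(u) - 1$ (in particular strictly negative), the decisive point being the $-1$ coming from the canonical bundle of the cylinder together with the shift between the smallest positive and the largest negative eigenvalue of $Q_N$ at each end. This is exactly where the hypothesis $\alpha_N(x_\pm) \in (0,2\pi)$ and the identity \eqref{eq:normal-index} enter, and it is where one has to keep track of orientation and sign conventions (the $s \to +\infty$ versus $s \to -\infty$ ends, and the convention in \eqref{eq:operator-les} for $D_{N,u}$). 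Everything else — exponential decay at nondegenerate ends, the Carleman similarity principle and positivity of local zero orders, and the Fredholm identification of the cokernel with the adjoint kernel — is standard.
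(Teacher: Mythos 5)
Your proposal is correct and is essentially the paper's own argument: the paper proves the kernel statement (injectivity for $I_N(u)\leq 0$) by the similarity principle plus the winding/zero count against the preferred sections \eqref{eq:preferred-trivialization}, and then obtains surjectivity by ``applying that same idea to the dual operator'', which is exactly the step you spell out via the adjoint and conjugation. One small caveat on your bookkeeping: the $-1$ in the effective degree $-\tfrac12 I_N(u)-1$ comes entirely from the asymmetry in the admissible asymptotic eigenvalues for the adjoint at the two ends (the analogue of the paper's $m_-(\xi)\geq 1$ versus $m_+(\xi)\geq 0$ in \eqref{eq:linearized-m}), not from the canonical bundle of the cylinder, which is trivial in the cylindrical framing; also the asymptotic windings of a cokernel element need not be the extremal ones compatible with decay, but any other choice only lowers the zero count further, so your contradiction goes through unchanged.
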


\begin{proof}[Sketch of proof]
This is a form of automatic regularity. Let $\xi$ be a nontrivial solution of $D_{N,u}(\xi) = 0$, which decays as $s \rightarrow \pm\infty$. More precisely, the decay will always be exponential, with rates $-\alpha_N(x_{\pm}) \mp 2\pi m_{\pm}(\xi)$ for some integers 
\begin{equation}
m_-(\xi) \geq 1, \;\; m_+(\xi) \geq 0.
\end{equation}
The set $\xi^{-1}(0)$ is finite. Each point in it has a multiplicity $m_z(\xi) > 0$. By comparing the behaviour of $\xi$ with that of a section $\nu_u$ as in \eqref{eq:relative-energy}, one sees that
\begin{equation} \label{eq:linearized-m}
0 < m(\xi) \stackrel{\text{def}}{=} m_+(\xi) + m_-(\xi) + \sum_z m_z(\xi) = \nu_u \cdot \text{zero-section}.
\end{equation}
By \eqref{eq:normal-index} it follows that if $I_N(u) \leq 0$, then $D_{N,u}$ is injective. The statement we have made is obtained by applying that same idea to the dual operator.
\end{proof}

\begin{definition} \label{th:admissible}
We say that $(H_t)$ is Floer-admissible if
\begin{equation} \label{eq:admissible}
|A_N(x_-) - A_N(x_+)|  < 1
\end{equation}
for all one-periodic orbits $x_{\pm}$ contained in $\delta M$.
\end{definition}
%
%

Take some $(H_t)$ with nondegenerate one-periodic orbits, as well as a family $(J_t)$ of almost complex structures as in Setup \ref{th:tangent-hamiltonians}, and consider the resulting Floer equation. Any Floer trajectory $u$ has an associated linearized operator, which is a specific $D_u$ within the class described previously. If $u$ is contained in $\delta M$, one also has distinguished operators $D_{\delta M,u}$ and $D_{N,u}$ as in \eqref{eq:operator-les}. 

\begin{lemma} \label{th:using-admissibility}
Suppose that $(H_t)$ is Floer-admissible. Then, any Floer trajectory $u$ contained in $\delta M$ has surjective operator $D_{N,u}$.
\end{lemma}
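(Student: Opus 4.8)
The plan is to combine the energy–action–index identity \eqref{eq:action-energy-index} with the Floer-admissibility hypothesis \eqref{eq:admissible} and the automatic regularity result of Lemma \ref{th:automatic-regularity}. The key observation is that a nonconstant Floer trajectory $u$ contained in $\delta M$ has strictly positive energy, $E(u)>0$, while if it is constant (stationary at a one-periodic orbit $x=x_-=x_+$) then $E(u)=0$ and $I_N(u)=0$, in which case surjectivity is immediate from Lemma \ref{th:automatic-regularity}. So the content is entirely in the nonconstant case.

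First I would dispose of the constant case: if $u$ is stationary at an orbit $x\subset\delta M$, then $x_-=x_+=x$, so $A_N(x_-)-A_N(x_+)=0$ and $E(u)=0$; plugging into \eqref{eq:action-energy-index} gives $I_N(u)=0\geq 0$, and Lemma \ref{th:automatic-regularity} applies directly. Next, for a nonconstant Floer trajectory $u$ in $\delta M$ with asymptotics $x_\pm$ (both necessarily contained in $\delta M$, since $\delta M$ is preserved by the flow), standard Floer theory gives $E(u)>0$. Now apply \eqref{eq:action-energy-index}:
\begin{equation*}
\tfrac12 I_N(u) = E(u) - \big(A_N(x_-)-A_N(x_+)\big) > -\big(A_N(x_-)-A_N(x_+)\big) \geq -1,
\end{equation*}
where the last inequality is exactly Floer-admissibility \eqref{eq:admissible}. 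Since $\tfrac12 I_N(u)$ is an integer and is strictly greater than $-1$, it is $\geq 0$, hence $I_N(u)\geq 0$. Lemma \ref{th:automatic-regularity} then gives surjectivity of $D_{N,u}$.

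The only point requiring a little care — and the step I expect to be the main (mild) obstacle — is justifying that $E(u)>0$ strictly for a nonconstant trajectory, and more precisely that $\tfrac12 I_N(u)$ is genuinely an \emph{integer} so that the strict inequality $>-1$ upgrades to $\geq 0$. Integrality of $\tfrac12 I_N(u)$ follows from \eqref{eq:normal-index}, which identifies $\tfrac12 I_N(u)$ with the algebraic intersection number $\nu_u\cdot(\text{zero-section})$; since $u$ is a genuine solution and $\delta M$ is a symplectic (hence, with our choices, almost complex) submanifold, the asymptotic behaviour of $\nu_u$ is governed by the eigenvalues $\alpha_N(x_\pm)\in(0,2\pi)$, and the count is a well-defined integer exactly as in the proof of Lemma \ref{th:automatic-regularity}. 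Positivity of energy for nonconstant solutions is the usual argument: $E(u)=\int \omega_{\bar M}(\partial_s u,\partial_t u - X_t)\,\mathit{ds}\,\mathit{dt}\geq 0$ with equality iff $\partial_s u\equiv 0$, i.e. $u$ stationary. Putting these together closes the argument; no further input beyond Lemma \ref{th:automatic-regularity}, \eqref{eq:action-energy-index}, and Definition \ref{th:admissible} is needed.
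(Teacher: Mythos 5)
Your proof is correct and follows essentially the same route as the paper: combine $E(u)\geq 0$ with \eqref{eq:action-energy-index} and \eqref{eq:admissible} to get $I_N(u)>-2$, use integrality of $\tfrac12 I_N(u)$ from \eqref{eq:normal-index} to conclude $I_N(u)\geq 0$, and finish with Lemma \ref{th:automatic-regularity}. The only (harmless) difference is your split into constant and nonconstant trajectories: it is unnecessary, since the strictness in \eqref{eq:admissible} already yields $\tfrac12 I_N(u)>-1$ from $E(u)\geq 0$ alone.
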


\begin{proof}
We know that $E(u) \geq 0$. By \eqref{eq:action-energy-index} the index satisfies $I_N(u) > -2$, hence is actually nonnnegative. Lemma \ref{th:automatic-regularity} does the rest.
\end{proof}

Here is the appropriate version of the standard transversality result, following \cite{floer-hofer-salamon94, hofer-salamon95}:

\begin{lemma} \label{th:relative-transversality}
Suppose that $(H_t)$ is Floer-admissible, and choose a generic $(J_t)$ within the class from Setup \ref{th:tangent-hamiltonians}. Then all solutions of Floer's equation will be regular ($D_u$ is onto). Moreover, for solutions $u$ which are contained in $\delta M$ and have $I(u) \leq 2$, $D_{N,u}$ will be invertible. Finally, if $v: \bC P^1 \rightarrow \bar{M}$ is a non-constant $J_t$-holomorphic sphere, then $x(t) \notin v(\bC P^1)$ for any one-periodic orbit, and $u(s,t) \notin v(\bC P^1)$ for any solution $u$ of Floer's equation with $I(u) \leq 2$.
\end{lemma}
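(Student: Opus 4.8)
The plan is to run the standard Floer-theoretic transversality machinery of \cite{floer-hofer-salamon94, hofer-salamon95}, but carried out simultaneously on $\bar M$ and on the divisor $\delta M$, exploiting the product structure \eqref{eq:tangent-splitting} that the admissible class of almost complex structures respects. First I would set up the universal moduli space: fix the Floer data to be admissible as in Setup \ref{th:tangent-hamiltonians}, and let $\mathcal{J}$ be the Banach manifold of such $(J_t)$ (with a $C^\varepsilon$ or Floer-type norm). The Sard--Smale argument then proceeds by showing that the universal linearized operator is surjective; the only subtlety, as always, is the injectivity/density argument for solutions contained in $\delta M$. For a Floer trajectory $u$ \emph{not} contained in $\delta M$, somewhere-injectivity of $u$ combined with the fact that $J_t$ can be perturbed freely away from $\delta M$ gives regularity of $D_u$ in the usual way. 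For $u$ contained in $\delta M$, I would use the short exact sequence \eqref{eq:operator-les}: perturbing $J_t$ tangent to $\delta M$ (which is allowed, since $J|T(\delta M)$ is unconstrained apart from compatibility) makes $D_{\delta M,u}$ surjective by the usual argument applied inside $\delta M$; and $D_{N,u}$ is automatically surjective by Lemma \ref{th:using-admissibility} under the admissibility hypothesis. Since surjectivity of $D_{\delta M,u}$ and $D_{N,u}$ forces surjectivity of $D_u$ by the snake lemma, this handles all Floer trajectories.

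Next, the statement that $D_{N,u}$ is \emph{invertible} (not just surjective) for $u \subset \delta M$ with $I(u)\le 2$ follows by combining Lemma \ref{th:using-admissibility} (surjectivity) with an index count: for a genuine, non-stationary Floer trajectory in a regular moduli space of dimension $\le 2$, after quotienting by the $\bR$-translation one expects $\dim \le 1$, and the inclusion $\ker D_{\delta M,u} \hookrightarrow \ker D_u$ together with \eqref{eq:normal-index} and \eqref{eq:action-energy-index} pins down $I_N(u)$; admissibility \eqref{eq:admissible} forces $I_N(u) \in \{0, 1\}$ when $E(u)$ is controlled, and then the proof of Lemma \ref{th:automatic-regularity} (the injectivity-from-multiplicity estimate \eqref{eq:linearized-m}) shows $D_{N,u}$ injective in that range. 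Some care is needed about whether one wants $D_{\delta M, u}$ itself surjective or merely $D_u$ surjective; I would arrange the perturbation so that both hold generically, which is no harder.

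For the final clause about $J_t$-holomorphic spheres $v: \bC P^1 \to \bar M$: a non-constant $J_t$-sphere is either contained in $\delta M$ or meets it in finitely many points (since $\delta M$ is $J$-complex and $v$ is somewhere injective after factoring through a multiple cover). A dimension count with $c_1(\bar M)=0$ shows such spheres, for generic $(J_t)$, sweep out a subset of codimension $\ge 2$ in $\bar M$ (in each fixed slice $t$), and likewise inside $\delta M$ using $c_1(N)$-controlled index formulas. One-periodic orbits $x(t)$ form a finite set, hence a codimension $2n-2 \ge 2$ condition; requiring $x(t)\notin v(\bC P^1)$ for all $t$ is then generic. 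For solutions $u$ of Floer's equation with $I(u)\le 2$: the space of pairs $(u, (s,t), v, z)$ with $u(s,t) = v(z)$ has expected dimension $I(u) + \dim_{\bR}\{(s,t)\} + (\text{sphere moduli}) - \dim \bar M$, and with $c_1 = 0$ this is $\le 2 + 2 + 2 - (2n-2) < 0$ for $n > 2$, while the cases $n \le 2$ are either excluded or handled by the ad hoc arguments already in play; for $u \subset \delta M$ one runs the same count inside $\delta M$. So the generic $(J_t)$ avoiding all these bad configurations exists by intersecting countably many residual sets.

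The main obstacle will be the transversality argument for Floer trajectories \emph{contained in} $\delta M$ under the severe constraint $J|N = J_N$ fixed: one does not have the freedom to perturb $J$ in the normal direction, so the usual "perturb $J$ to kill the cokernel" argument cannot touch $D_{N,u}$ at all. This is precisely why admissibility (Definition \ref{th:admissible}) is imposed---it is a substitute for normal transversality, forcing $I_N(u) \ge 0$ via the action--index identity \eqref{eq:action-energy-index} so that Lemma \ref{th:automatic-regularity}'s automatic regularity applies. So the real content is checking that the tangential perturbation alone (for $D_{\delta M,u}$) plus the rigid normal behavior (for $D_{N,u}$) together suffice, i.e. that the snake-lemma splicing of surjectivity is legitimate at the Banach-space level and survives the Sard--Smale quotient. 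I expect this to be routine once set up carefully, but it is where all the hypotheses are actually consumed.
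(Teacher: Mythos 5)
Your overall strategy coincides with the paper's: split $D_u$ through the exact sequence \eqref{eq:operator-les}, make $D_{\delta M,u}$ onto by generic perturbation tangent to $\delta M$, get surjectivity of $D_{N,u}$ from admissibility via Lemma \ref{th:using-admissibility}, treat trajectories not contained in $\delta M$ by the standard argument, and dispose of spheres by dimension counts. One point you should make explicit rather than gesture at: for a nonconstant sphere $v$ contained in $\delta M$ one has $\int v^*c_1(\delta M) = -\int v^*c_1(N) = -\int v^*\omega_{\bar{M}} < 0$, so for generic $J_{\delta M}$ such spheres sweep out a set of dimension at most $(2n-4)-6$; this negativity is precisely what substitutes for the forbidden normal perturbation in the sphere part, and it is the only divisor-specific input there.

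The genuine problem is your argument for invertibility of $D_{N,u}$ when $I(u)\le 2$. Admissibility together with $E(u)\ge 0$ and \eqref{eq:action-energy-index} yields only the lower bound $I_N(u)\ge 0$; it does not force $I_N(u)\in\{0,1\}$, because $E(u)$ is not a priori controlled (indeed $E(u)$ and $I_N(u)$ grow together in \eqref{eq:action-energy-index}), so "admissibility forces $I_N(u)\in\{0,1\}$ when $E(u)$ is controlled" is not a proof of anything. Moreover, the injectivity mechanism from the proof of Lemma \ref{th:automatic-regularity} (the count \eqref{eq:linearized-m}) applies only when $I_N(u)\le 0$, so even granting $I_N(u)\in\{0,1\}$ your words do not exclude $I_N(u)=1$; that case is in fact vacuous because \eqref{eq:normal-index} shows $I_N(u)=2(\nu_u\cdot\text{zero-section})$ is even, but you never invoke parity. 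The correct argument (and the paper's) is a kernel count: once $D_u$, $D_{\delta M,u}$ and $D_{N,u}$ are onto, $\dim\ker D_u = I(u)\le 2$ and $\dim\ker D_u = \dim\ker D_{\delta M,u}+\dim\ker D_{N,u}$; the translation field $\partial_s u$ makes $\ker D_{\delta M,u}$ at least one-dimensional, while $\dim\ker D_{N,u}=I_N(u)$ is even, hence must vanish, so $D_{N,u}$ is surjective of index zero, i.e.\ invertible. In short, the upper bound on $I_N(u)$ comes from the total index bound plus the translation symmetry, not from admissibility; admissibility is consumed only in the surjectivity step.
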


\begin{proof}
The case where $u$ is not contained in $\delta M$ is standard. For $u$ contained in $\delta M$, Lemma \ref{th:automatic-regularity} says that $D_{N,u}$ is always surjective. On the other hand, standard transversality inside $\delta M$ says that $D_{\delta M,u}$ is onto for generic $(J_t)$, and by \eqref{eq:operator-les} that implies the surjectivity of $D_u$. The kernel of $D_{\delta M,u}$ is at least one-dimensional, and that of $D_{N,u}$ has even dimension, so if $I(u) \leq 2$, then $D_{N,u}$ must be invertible. The only other point that needs to be mentioned concerns pseudo-holomorphic spheres contained in $\delta M$. For each such (nonconstant) sphere $v$, 
\begin{equation} \label{eq:v-degree}
\textstyle \int v^*c_1(\delta M) = -\int v^*c_1(N) = -\int v^*\omega_{\bar{M}} < 0. 
\end{equation}
Hence, those spheres fill out a subspace of $\delta M$ of dimension $\leq (2n-4)-6$, which is more than sufficient for our purposes.
\end{proof}
%

We need to impose another, more technical, condition on the almost complex structures and Hamiltonians. For that, let's return to the local model in \eqref{eq:local-hamiltonian}. Consider the subclass of Hamiltonians and almost complex structure of the following kind:
\begin{equation} \label{eq:locally-split}
\left\{
\begin{aligned}
& H^{\mathit{local}}(y_1,y_2) = H_1^{\mathit{local}}(y_1) + H_2^{\mathit{local}}(y_2), \\ & \qquad \text{ with } \nabla H_1 = 0 \text{ at $y_1 = 0$, and }
H_2^{\mathit{local}}(y_2) = {\textstyle\frac{a}{2}} |y_2|^2 + O(|y_2|^3) \text{ for some } a \in \bR, \\
& J^{\mathit{local}} = J_1^{\mathit{local}} \oplus J^{\mathit{local}}_2, \\ &
\qquad \text{ with $J_k^{\mathit{local}}$ depending only on $y_k$, and $J_2^{\mathit{local}} = i$ at $y_2 = 0$.}
\end{aligned}
\right.
\end{equation}
Suppose that we have $(H_t^{\mathit{local}}, J_t^{\mathit{local}})$ in this form. By definition, $x^{\mathit{local}}(t) = (0,0)$ is a one-periodic orbit. As usual, let's assume that this is nondegenerate. The Floer equation separates into two components (in $y_1$ and $y_2$ directions). One can apply the analysis from \cite{robbin-salamon02} to the second component, and get a precise description of how Floer trajectories approach $x^{\mathit{local}}$ in normal direction. 

\begin{definition} \label{th:local-splitting}
We say that $(H_t,J_t)$ is locally split if, near each one-periodic $x \subset \delta M$, there are $t$-dependent local coordinates centered at $x(t)$, in which $(\phi_t)$ reduces to the Hamiltonian isotopy induced by a $t$-dependent function as in \eqref{eq:locally-split}; and similarly $J_t$ corresponds to a $t$-dependent almost complex structure as in \eqref{eq:locally-split}.
\end{definition}

This allows us to apply the previously mentioned analysis to the way in which Floer trajectories converge to one-periodic orbits in $\delta M$. We omit the details, and only state the outcome.

\begin{lemma} \label{th:decay}
Suppose $(H_t,J_t)$ is locally split. Let $x$ be a one-periodic orbit contained in $\delta M$, and $u$ a Floer trajectory not contained in $\delta M$, such that $\lim_{s \rightarrow \infty} u(s,\cdot) = x$. Then the component of $u$ in normal direction to $\delta M$ decays as $\exp(-\alpha_N(x) - 2\pi m_+(u))$ for some integer $m_+(u) \geq 0$. Moreover, it approaches zero from the direction of an eigenvector of $Q_N(x)$ for the eigenvalue $\alpha_N(x) + 2\pi m_+(u)$.

In the corresponding situation for $\lim_{s \rightarrow -\infty}$, the decay is as $\exp(-\alpha_N(x) + 2\pi m_-(u))$ for some $m_-(u) \geq 1$, and the relevant eigenvector is that for the eigenvalue $\alpha_N(x) - 2\pi m_-(u)$.
\end{lemma}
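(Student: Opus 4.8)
\textbf{Proof plan for Lemma \ref{th:decay}.}

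The plan is to reduce the statement to the local model of \eqref{eq:locally-split} and then invoke the asymptotic analysis of \cite{robbin-salamon02}. First I would use the locally split hypothesis: near the orbit $x$ we have $t$-dependent coordinates $(y_1,y_2) \in \bR^{2n-4} \times \bC$ centered at $x(t)$ in which the Hamiltonian isotopy and almost complex structure both have the product form \eqref{eq:locally-split}, so that the Floer equation for $u$ decouples into a $y_1$-component (tangent to $\delta M$) and a $y_2$-component (normal to $\delta M$). Since $\delta M = \{y_2 = 0\}$ is $J_t$-invariant and $u$ is not contained in $\delta M$, the normal component $y_2 \circ u$ is a nontrivial solution of a Cauchy-Riemann-type equation $\partial_s (y_2 \circ u) + J_2^{\mathit{local}}(y_2 \circ u)(\partial_t - X_{H_2})(y_2 \circ u) = 0$ on a half-cylinder, converging to $0$ as $s \to \infty$.

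The key step is the asymptotic analysis. Linearizing the normal equation at $y_2 = 0$ gives the operator $\partial_s + Q_N(x)$, with $Q_N(x)$ the selfadjoint operator whose spectrum is $\alpha_N(x) + 2\pi\bZ$ (here $\alpha_N(x) \in (0,2\pi)$ by nondegeneracy, as recorded before Lemma \ref{th:automatic-regularity}). By the Robbin-Salamon asymptotic estimates, a nontrivial solution decaying at $+\infty$ must decay at an exponential rate equal to minus a positive eigenvalue of $Q_N(x)$, i.e.\ $-(\alpha_N(x) + 2\pi m_+(u))$ for some integer $m_+(u) \geq 0$ (positivity of the rate forces $m_+ \geq 0$ since $\alpha_N(x) > 0$), and it approaches zero asymptotically tangent to a corresponding eigenvector. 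For the limit at $-\infty$, the solution decays backwards, so the relevant rate comes from a negative eigenvalue of $Q_N(x)$, namely $\alpha_N(x) - 2\pi m_-(u)$ with $m_-(u) \geq 1$ (strictly positive because the largest negative eigenvalue is $\alpha_N(x) - 2\pi < 0$), and the approach is tangent to the associated eigenvector.

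The main obstacle will be verifying that the local normalization genuinely applies — that is, checking that the off-diagonal and higher-order terms discarded in passing from the ambient Floer equation to the split local model \eqref{eq:locally-split} do not affect the leading asymptotics. This is precisely where Definition \ref{th:local-splitting} is used: the locally split condition is engineered so that, after the coordinate change, the isotopy and almost complex structure are exactly (not just approximately) in product form near each orbit of $\delta M$, so the normal Floer component is a bona fide solution of a one-variable equation to which \cite{robbin-salamon02} applies directly. The remaining work — matching the integer $m_+(u)$ (resp.\ $m_-(u)$) produced by that analysis with the winding/multiplicity bookkeeping used elsewhere, and confirming that the $O(|y_2|^3)$ perturbation of $H_2^{\mathit{local}}$ and the $y_2$-dependence of $J_2^{\mathit{local}}$ are subleading — is routine and I would only state the outcome, as the text already indicates ("We omit the details, and only state the outcome").
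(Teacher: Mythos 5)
Your plan is correct and coincides with the paper's own (sketched) argument: the locally split condition is used exactly to decouple the Floer equation into tangential and normal components in the model \eqref{eq:locally-split}, and the asymptotics of the normal component are then read off from the Robbin--Salamon analysis, with the eigenvalue bookkeeping for $Q_N(x)$ at $\pm\infty$ matching the statement. Nothing further is needed.
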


Let's extend $m_\pm(u)$ to the case where the limit is outside $\delta M$, by setting it to zero. Let $u$ be a Floer trajectory not contained in $\delta M$, and with limits $x_\pm$ which may or may not lie in $\delta M$. One of the consequences of the previous two Lemmas is that $u^{-1}(\delta M)$ is compact; by holomorphic curve theory, it must therefore be finite. Any point $z \in u^{-1}(\delta M)$ has a positive local intersection multiplicity $m_z(u) > 0$. Following what we've done in the linearized case \eqref{eq:linearized-m}, write
\begin{equation} \label{eq:minimal-m}
m(u) = m_+(u) + m_-(u) + \sum_z m_z(u) \geq \begin{cases} 1 & \text{$x_-$ lies in $\delta M$,} \\
0 & \text{otherwise.} \end{cases} 
\end{equation}

At this point we make a further geometric choice, which is not absolutely essential but makes it easier to formulate the rest of the argument.

\begin{setup} \label{th:normal-section}
Given $(H_t)$, take a section $\beta_N$ of $N$ which is transverse to the zero-section, and whose zeros are disjoint from the one-periodic orbits $x$ contained in $\delta M$. Morever, fix a tubular neighbourhood of $\delta M$. Using that neighbourhood, the multiples of $\beta_N$ give rise to a family $(\delta M)_{\zeta}$ of symplectic divisors in $\bar{M}$, parametrized by sufficiently small $\zeta \in \bC$. For $\zeta = 0$ this is just $\delta M$, while for $\zeta \neq 0$ it intersects $\delta M$ transversally in $B_N = \beta_{N}^{-1}(0)$.
\end{setup}

With respect to that trivialization, the preferred nonzero sections \eqref{eq:preferred-trivialization} at any $x \subset \delta M$ have a winding number $w_N(x) \in \bZ$. We spell out the definition for the sake of clarity:
\begin{equation} \label{eq:winding-number}
\mybox{
define $a(t)$ by $\exp(-t\alpha_N(x) J_N) D\phi_t(\beta_{N,x(0)}) \in \bR^{>0} \cdot e^{2\pi i a(t)}\beta_{N,x(t)}$. Then $w_N(x) = a(1) - a(0)$.
}
\end{equation}
Again, we extend that to orbits lying outside $\delta M$, by setting it to zero. Many of our previous formulae can be restated in terms of those winding numbers. For instance, \eqref{eq:normal-action} becomes
\begin{equation} \label{eq:normal-action-1b}
A_N(x) =  - \int_S v^*\omega_{\bar{M}} + \int_{S^1}\, H_t(x(t)) \mathit{dt} + v \cdot (\delta M)_{\zeta} + w_N(x) \quad \text{for any $v: S \rightarrow \bar{M}$, $v|\partial S = x$.}
\end{equation}
Here, unlike \eqref{eq:normal-action}, there is no other condition on how $v$ approaches the boundary; and the intersection is with $(\delta M)_{\zeta}$ for any sufficiently small $\zeta \neq 0$. 
The index formula \eqref{eq:normal-index} can similarly be written as
\begin{equation} \label{eq:normal-index-1b}
\half I_N(u) = w_N(x_+) - w_N(x_-) + u \cdot (\delta M)_{\zeta}.
\end{equation}
Finally, if $u$ is a Floer trajectory not contained in $\delta M$, and the local splitting condition is satisfied, then Lemma \ref{th:decay} implies that
\begin{equation} \label{eq:m-topological}
u \cdot (\delta M)_{\zeta} = m(u) + w_N(x_-) - w_N(x_+).
\end{equation}

We are now ready to assemble the elements introduced above into the definition of relative Floer cohomology. Take $(H_t,J_t)$ as in Lemma \ref{th:relative-transversality}, with nondegenerate one-periodic orbits, and which additionally satisfy Definition \ref{th:local-splitting}. Choose $\beta_N$ as in Setup \ref{th:normal-section}. The associated relative Floer complex is
\begin{equation} \label{eq:relative-floer}
\mathit{CF}^*_q(H_t,J_t) = \bigoplus_x q^{w_N(x)}\bQ[[q]] x\, ,
\end{equation}
with the usual grading. The differential is defined as in \eqref{eq:floer-differential}, but now specifically using the divisors $(\delta M)_{\zeta}$, for sufficiently small $\zeta \neq 0$.
\begin{equation} \label{eq:top-power}
\text{$(x_-)$-coefficient of } d_q x_+ = \sum_u \pm q^{u \cdot (\delta M)_{\zeta}} \, .
\end{equation}
The cohomology of \eqref{eq:relative-floer} is denoted by $\mathit{HF}^*_q(H_t,J_t)$. The advantage of writing the powers of $q$ as in \eqref{eq:top-power} is that after inverting $q$, one obviously gets the standard definition of Floer cohomology over $\bQ((q))$. The flipside is that it's not obvious that \eqref{eq:top-power} actually defines an endomorphism of \eqref{eq:relative-floer}. To see that, one rewrites the intersection number as
\begin{equation} \label{eq:express-intersection}
u \cdot (\delta M)_{\zeta} + w_N(x_+) = w_N(x_-) + \begin{cases} m(u) & \text{if $u$ is not contained in $\delta M$,} \\
\half I_N(u) & \text{if $u$ is contained in $\delta M$.}
\end{cases}
\end{equation}
The first case is \eqref{eq:m-topological}, and the second case is \eqref{eq:normal-index-1b}. One has $m(u) \geq 0$ by definition, and $I_N(u) \geq 0$ by Lemma \ref{th:using-admissibility}, which yields the desired property. The chain complex \eqref{eq:relative-floer} is independent of the choice of $\beta_N$ up to canonical isomorphism (the transitions between different choices are given by multiplying the generators by suitable powers of $q$, compare Remark \ref{th:unique-hf}).

\begin{remark} \label{th:lowest-power}
Consider a Floer trajectory $u$ which is contained in $\delta M$ and appears in the definition of the Floer differential. Then Lemma \ref{th:relative-transversality} applies, which in view of \eqref{eq:express-intersection} shows that
\begin{equation} \label{eq:lowest-constraint}
w_N(x_+) + u \cdot (\delta M)_{\zeta} = w_N(x_-).
\end{equation}
In words, this means that all such trajectories come with the lowest possible power of $q$.
One can put this observation into a more algebraic framework, as follows: take the $\bQ[[q]]$-submodule 
\begin{equation} \label{eq:floer-submodule}
\bigoplus_{x \subset \delta M} q^{w_N(x)+1} \bQ[[q]]x \oplus \bigoplus_{x \not\subset \delta M} q^{w_N(x)} \bQ[[q]] x \subset \mathit{CF}^*_q(H_t,J_t).
\end{equation}
Because of \eqref{eq:minimal-m} and \eqref{eq:express-intersection}, this is preserved by $d_q$. The quotient can be thought of as $\bigoplus_{x \subset \delta M} \bQ x$, and the differential on that quotient only counts Floer trajectories $u$ contained inside $\delta M$, and which additionally satisfy \eqref{eq:lowest-constraint} (in particular, it is not necessarily equal to the Floer complex for $\delta M$ in the standard sense).
\end{remark}


\begin{example} \label{th:transverse-max}
Take a Morse function $H$ and almost complex structure $J$ (as in Setup \ref{th:tangent-hamiltonians} and Definition \ref{th:local-splitting}). We ask that at any critical point $x \in \delta M$, the Hessian of $H$ in normal direction to $\delta M$ should be negative. Finally, we require that the gradient flow should be Morse-Smale. This kind of situation is familiar from the Morse-theoretic construction of relative homology (see e.g.\ \cite[Section 4.2]{schwarz}). Its salient features are:
\begin{align}
& \label{eq:x-plus}
\mybox{
If $x_+$ is a critical point of $H$ contained in $\delta M$, then any gradient trajectory whose low-$H$ limit is $x_+$ must be entirely contained in $\delta M$.}
\\ & 
\label{eq:relative-morse}
\mybox{
Take the Morse complex of $H$. From \eqref{eq:x-plus} it follows that the critical points contained in $\delta M$ form a subcomplex. On the Morse cohomology level, the inclusion of that subcomplex computes $H^*(\bar{M},M) \rightarrow H^*(\bar{M})$.
}
\end{align}
%
The only one-periodic orbits of $\epsilon H$, for small $\epsilon>0$, will be constant ones. Critical points contained in $\delta M$ have $w_N(x) = -1$ (see Example \ref{th:constant-orbit}), and therefore
\begin{equation} \label{eq:classical-action}
A_N(x) = \epsilon H(x) - 1.
\end{equation}
It particular, $\epsilon H$ is Floer-admissible if $\epsilon$ is small. The relative Floer cohomology is fairly easy to compute: there are arbitrarily small $\epsilon>0$ such that
\begin{equation} \label{eq:morse-floer}
\mathit{HF}^*_q(\epsilon H,J) \iso q^{-1} H^*(\bar{M}, M;\bQ) \oplus H^*(\bar{M};\bQ)[[q]].
\end{equation}
To see this, start with a Morse-theoretic model:
\begin{equation} \label{eq:q-morse}
\begin{aligned}
& \mathit{CM}^*_q(H) = \bigoplus_{x \in \delta M} q^{-1}\bQ[[q]]x \oplus \bigoplus_{x \notin \delta M} \bQ[[q]]x,
\end{aligned}
\end{equation}
with the usual Morse differential. In view of \eqref{eq:relative-morse}, it is clear that the cohomology of \eqref{eq:q-morse} is the right hand side of \eqref{eq:morse-floer}. These statements are obviously unchanged if we replace $H$ by $\epsilon H$. To derive \eqref{eq:morse-floer}, one follows the argument from \cite[Proposition 7.4]{hofer-salamon95} to show that isolated Floer trajectories correspond bijectively to Morse trajectories; for such trajectories, the intersection number with $(\delta M)_{\zeta}$ will be $0$, for dimension reasons.
\end{example}

\begin{example} \label{th:transverse-min}
Consider a situation similar to the previous one, but where the Hessian in normal direction at any critical point $x \in\delta M$ is positive definite. Hence, as one-periodic orbit of $\epsilon H$ this satisfies $w_N(x) = 0$ and 
\begin{equation}
A_N(x) = \epsilon H(x),
\end{equation}
which again guarantees admissibility for small $\epsilon>0$. Tracing through the same argument as before, one finds that this time, 
\begin{equation} \label{eq:morse-floer-2}
\mathit{HF}^*_q(\epsilon H,J) \iso H^*(\bar{M};\bQ)[[q]].
\end{equation}
\end{example}

\subsection{Continuation maps}
We need to reconsider \eqref{eq:continuation-map-equation} in our setup (meaning the target space is $\bar{M}$, and the Hamiltonian functions and almost complex structures must be chosen as in Setup \ref{th:tangent-hamiltonians}). Only the behaviour of the Hamiltonians on $\delta M$ is relevant here. Take the curvature \ref{eq:curvature} and set
\begin{align} \label{eq:kmin}
& 
K^{\mathit{min}}(F_{s,t}, H_{s,t}) = \int_{\bR \times S^1} \mathrm{min}\big\{R_{s,t}(x) \,:\, x \in \delta M\big\}, \\
& \label{eq:kmax}
K^{\mathit{max}}(F_{s,t}, H_{s,t}) = \int_{\bR \times S^1} \mathrm{max}\big\{R_{s,t}(x) \,:\, x\in \delta M\big\}, \\
&
K(F_{s,t}, H_{s,t}) = K^{\mathit{max}}(F_{s,t}, H_{s,t}) - K^{\mathit{min}}(F_{s,t}, H_{s,t}) \geq 0.
\end{align}
For solutions of the continuation map equation remaining inside $\delta M$, \eqref{eq:topological-energy} yields
\begin{equation} \label{eq:energy-bound}
E^{\mathit{top}}(u) \geq K^{\mathit{min}}(F_{s,t},H_{s,t}).
\end{equation}
Energy, action and the normal index satisfy the same relation \eqref{eq:action-energy-index} as before. In order to deduce surjectivity of $D_{N,u}$ from that, we need to know that $E^{\mathit{top}}(u) - A_N(x_-) + A_N(x_+) >-1$. In view of \eqref{eq:energy-bound}, this holds if the following condition is satisfied.

\begin{definition} \label{th:admissible-2}
We say that $(F_{s,t}, H_{s,t})$ is continuation-admissible if each of the limiting Hamiltonians $(H_{\pm, t})$ is Floer-admissible, and the following additional property holds. Whenever $x_-$ is a one-periodic orbit of $(H_{-,t})$ contained in $\delta M$, and $x_+$ correspondingly for $(H_{+,t})$, 
\begin{equation} \label{eq:admissible-2}
A_N(x_-) - A_N(x_+) < 1 +  K^{\mathit{min}}(F_{s,t}, H_{s,t}).
\end{equation}
\end{definition}

There is also an analogue of Definition \ref{th:local-splitting}:

\begin{definition} \label{th:local-splitting-2}
We say that $(F_{s,t}, H_{s,t}, J_{s,t})$ is locally split if the limiting Hamiltonians $(H_{\pm,t})$ are locally split, and the following additional property holds. Let $x$ be a one-periodic orbit contained in $\delta M$. Then, for $\pm s \gg 0$, we have $(F_{s,t}, H_{s,t}, J_{s,t}) = (0, H_{\pm,t}, J_{\pm,t})$ in a neighbourhood of $x(t)$.
\end{definition}

To define the continuation map, one starts with data that satisfy Definitions \ref{th:admissible-2} and \ref{th:local-splitting-2}. To achieve transversality, one may not only have to perturb the almost complex structure, but also the Hamiltonians, which is unproblematic since \eqref{eq:admissible-2} is an open condition. After that, one can use \eqref{eq:continuation-map-equation} in the same way as in \eqref{eq:top-power}, to define a chain map
\begin{equation} \label{eq:continuation-map}
\mathit{Cont}_q(F_{s,t}, H_{s,t},J_{s,t}): \mathit{CF}^*_q(H_{+,t},J_{+,t}) \longrightarrow \mathit{CF}^*_q(H_{-,t},J_{-,t}).
\end{equation}

\begin{lemma} \label{th:continuation-isomorphism}
Suppose that
\begin{align} \label{eq:c1}
&
\begin{aligned}
& -\!1-K^{\mathit{max}}(F_{s,t}, H_{s,t}) < A_N(x_-) - A_N(x_+) < 1+K^{\mathit{min}}(F_{s,t}, H_{s,t})
\;\; \text{ for $x_-$ a}
\\ & \qquad
\text{one-periodic orbit of $(H_{-,t})$, and $x_+$ a one-periodic orbit of $(H_{+,t})$, both in $\delta M$;} 
\end{aligned} \intertext{and}
&
\label{eq:c2}
\begin{aligned}
&
|A_N(x_+) - A_N(x_-)| < 1-K(F_{s,t}, H_{s,t}) \;\;
\text{where both $x_\pm$ are one-periodic orbits}
\\ & \qquad \text{ of $(H_{+,t})$ in $\delta M$; and the same for two orbits of $(H_{-,t})$.}
\end{aligned}
\end{align}
Then $\mathit{HF}^*(H_{+,t},J_{+,t})$ and $\mathit{HF}^*(H_{-,t},J_{-,t})$ are isomorphic via continuation maps.
\end{lemma}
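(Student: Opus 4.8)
\textbf{Proof proposal for Lemma \ref{th:continuation-isomorphism}.}

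The plan is to run the usual ``continuation maps in both directions compose to (homotopic to) the identity'' argument, but carried out within the relative Floer framework, where the novelty is that one must check at every stage that the relevant Hamiltonian data are continuation-admissible (Definition \ref{th:admissible-2}) and locally split (Definition \ref{th:local-splitting-2}), so that the maps are actually defined over $\bQ[[q]]$ and not merely over $\bQ((q))$. First I would choose a homotopy $(F_{s,t}, H_{s,t}, J_{s,t})$ from $(H_{+,t}, J_{+,t})$ to $(H_{-,t}, J_{-,t})$ satisfying Definitions \ref{th:admissible-2} and \ref{th:local-splitting-2}; hypothesis \eqref{eq:c1} is exactly the openness-stable condition \eqref{eq:admissible-2} needed to perturb into transversality (for both the forward homotopy and a reversed one), and \eqref{eq:c2} is the corresponding statement ensuring that the two endpoint complexes themselves are Floer-admissible with enough room to spare. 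This produces a chain map $\mathit{Cont}_q(F_{s,t},H_{s,t},J_{s,t})$ as in \eqref{eq:continuation-map}, and symmetrically a chain map in the opposite direction.

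Next I would build the two composition homotopies. Concatenating the forward homotopy with the reverse one gives a two-parameter family of data interpolating to the constant homotopy at $(H_{+,t},J_{+,t})$; a standard gluing/parametrized-moduli argument produces a chain homotopy between $\mathit{Cont}_q^{-} \circ \mathit{Cont}_q^{+}$ and the identity on $\mathit{CF}^*_q(H_{+,t},J_{+,t})$, and likewise for the other order. The point requiring care is that the interpolating family, and in particular its curvature term $R_{s,t}$ from \eqref{eq:curvature}, must be kept continuation-admissible throughout: the relevant quantities $K^{\mathit{min}}, K^{\mathit{max}}, K$ vary continuously, and by shrinking the ``overall $\epsilon$'' (equivalently, by a linear rescaling of the homotopy parameter as in the proofs of Lemmas \ref{th:continuation-inclusion} and \ref{th:continuation-inclusion-2}) one can keep the strict inequalities \eqref{eq:c1}, \eqref{eq:c2} in force along the whole family, so that all intermediate moduli spaces are cut out transversally and the relevant operators $D_{N,u}$ stay surjective by Lemma \ref{th:using-admissibility}. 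The local splitting condition is preserved because, by Definition \ref{th:local-splitting-2}, the data are already equal to the (locally split) endpoints near every one-periodic orbit contained in $\delta M$, and this is unaffected by the interpolation.

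Finally I would assemble the pieces: the chain homotopies show that the induced maps on $\mathit{HF}^*_q(H_{\pm,t},J_{\pm,t})$ are mutually inverse isomorphisms of $\bQ[[q]]$-modules. One should also record, as in Remark \ref{th:unique-hf}, that the resulting isomorphism is independent of the auxiliary choices (the homotopy, the perturbations, and the section $\beta_N$ entering \eqref{eq:relative-floer}) up to canonical identification, which follows from a further application of the same two-parameter argument. The main obstacle is the bookkeeping in the middle step: verifying that the curvature inequalities needed for continuation-admissibility survive along the entire interpolating family of homotopies, and not just at its ends. This is where hypotheses \eqref{eq:c1} and \eqref{eq:c2} are used in an essential way, precisely because they are \emph{strict} inequalities with the curvature integrals $K^{\mathit{min}}, K^{\mathit{max}}$ built in, leaving the slack that a rescaling argument can exploit; if one only had the weak admissibility of the endpoints, the composed homotopy could fail \eqref{eq:admissible-2} in the interior and the $q$-filtered count would break down.
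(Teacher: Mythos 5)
Your overall strategy is the paper's: build continuation maps in both directions, observe that the two halves of \eqref{eq:c1} are exactly the admissibility conditions \eqref{eq:admissible-2} for the forward and reversed families, and then show the two compositions are chain homotopic to the identity by deforming the glued composite continuation equation back to Floer's equation, checking admissibility throughout. Two points in your justification of the key estimates should be repaired. First, the reason the left-hand inequality in \eqref{eq:c1} is what the reverse map needs is the identity $K^{\mathit{min}}(F_{-s,t},H_{-s,t}) = -K^{\mathit{max}}(F_{s,t},H_{s,t})$; you use this implicitly but it is the whole content of that half of the hypothesis and should be stated.

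Second, and more seriously, your mechanism for keeping the composite homotopy admissible --- ``shrinking the overall $\epsilon$'' or linearly rescaling the homotopy parameter as in Lemmas \ref{th:continuation-inclusion}, \ref{th:continuation-inclusion-2} --- does not work here and is not needed. The data $(F_{s,t},H_{s,t})$ are fixed in the statement of the lemma, so there is no small parameter to shrink, and reparametrizing $s$ leaves the integrals \eqref{eq:kmin}, \eqref{eq:kmax} unchanged, so it cannot create slack. The correct bookkeeping is that the curvature integral is additive under concatenation: the glued composite (say from $H_+$ to $H_-$ and back) has $K^{\mathit{min}} = K^{\mathit{min}}(F_{s,t},H_{s,t}) - K^{\mathit{max}}(F_{s,t},H_{s,t}) = -K$, and since its asymptotics are two orbits of the \emph{same} Hamiltonian, its admissibility condition \eqref{eq:admissible-2} is exactly \eqref{eq:c2}; one then chooses the deformation of the composite back to Floer's equation so that the curvature minimum stays bounded below by $-K$, which keeps \eqref{eq:admissible-2} in force along the whole parametrized family. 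Relatedly, your first-paragraph gloss of \eqref{eq:c2} as making ``the endpoint complexes Floer-admissible with room to spare'' misplaces its role: Floer-admissibility of $(H_{\pm,t})$ is already part of Definition \ref{th:admissible-2}, and the extra room quantified by $1-K$ is consumed by the composite homotopy, not by the endpoint complexes.
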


\begin{proof}
For the reverse family $(F_{-s,t},H_{-s,t})$, we have $K^{\mathit{min}}(F_{-s,t}, H_{-s,t}) = -K^{\mathit{max}}(F_{s,t}, H_{s,t})$. Hence, if we want to define a map in reverse direction to \eqref{eq:continuation-map}, the necessary condition is $A_N(x_+) - A_N(x_-) < 1+K^{\mathit{max}}(F_{s,t}, H_{s,t})$. This explains \eqref{eq:c1}. To show that the continuation maps in both directions are homotopy inverses, one considers the composite continuation map equation (from $H_-$ to $H_+$ and then back, or vice versa), and deforms that back to Floer's equation. The conditions \eqref{eq:admissible-2} required by those equations yield \eqref{eq:c2}.
\end{proof}

A trivial special case is when $F_{s,t} = 0$ and $H_{s,t} = H_t$. This shows that our Floer cohomology groups are independent of the choice of almost complex structure, which we will therefore often omit from the notation.

Take $(H_t)$, with possibly degenerate one-periodic periodic orbits, but such that each orbit contained in $\delta M$ is nondegenerate in normal direction ($\alpha_N(x) \neq 0$), and which is Floer-admissible. Consider $\mathit{HF}^*(\tilde{H}_t)$, where $\tilde{H}_t$ is a small perturbation which makes the one-periodic orbits nondegenerate, and additionally is locally split; one can use Lemma \ref{th:continuation-isomorphism} to see that this is independent of the perturbation up to canonical isomorphism. (Strictly speaking, we should spell out ``small'' to show that there is a contractible space of such choices, but we will omit the details; the condition $\alpha_N(x) \neq 0$ is crucial here, since it prevents discontinuous jumps in $A_N(x)$ when perturbing the Hamiltonian). Those Floer cohomology groups form a locally trivial bundle over the space of all $(H_t)$. As a consequence, one has the following:

\begin{lemma} \label{th:deform-h}
Let $(H_{s,t})$, $(s,t) \in [0,1] \times S^1$, be a family of functions as in Setup \ref{th:tangent-hamiltonians}. For $s = 0,1$, these should have nondegenerate one-periodic orbits and be locally split; and for any value of the parameter $s$, $(H_{s,t})$ must be Floer-admissible, and its one-periodic orbits in $\delta M$ must be nondegenerate in normal direction. Then $\mathit{HF}^*_q(H_{0,t}) \iso \mathit{HF}^*_q(H_{1,t})$. More precisely, there is a distinguished such isomorphism, which is invariant under deforming $(H_{s,t})$ rel endpoints, as long as all the conditions remain true within that deformation.
\end{lemma}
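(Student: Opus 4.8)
\textbf{Proof plan for Lemma \ref{th:deform-h}.}

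The plan is to realize the isomorphism as a one-parameter version of the continuation-map construction, using the fact that the Floer cohomology groups form a locally trivial bundle over the (contractible pieces of the) space of Floer-admissible Hamiltonians with normally nondegenerate orbits in $\delta M$. First I would fix $(H_{s,t})$ as in the statement and choose small perturbations $(H_{0,t}^+)$, $(H_{1,t}^+)$ of the endpoint Hamiltonians which are nondegenerate and locally split (these exist by the openness of Floer-admissibility and the condition $\alpha_N(x) \neq 0$, which prevents jumps in $A_N$). Then I would build an $s$-dependent interpolating homotopy $(F_{\sigma,\tau}, H_{\sigma,\tau})$ in the continuation-map sense of \eqref{eq:continuation-map-equation}, whose negative limit is $(H_{0,t}^+)$ and positive limit $(H_{1,t}^+)$, and which passes through the family $(H_{s,t})$: concretely, take $H_{\sigma,\tau}$ to equal $H_{\chi(\sigma),\tau}$ for a reparametrization function $\chi:\bR\to[0,1]$ (with $\chi \equiv 0$ near $-\infty$, $\chi\equiv 1$ near $+\infty$), glued to the two perturbations near the ends, and $F_{\sigma,\tau}=0$. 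One must check that this interpolation is continuation-admissible (Definition \ref{th:admissible-2}) and can be made locally split (Definition \ref{th:local-splitting-2}); the key point is that with $F_{\sigma,\tau}=0$ the curvature term is $R_{\sigma,\tau} = -\partial_\sigma H_{\sigma,\tau}$, whose restriction to $\delta M$ integrates over $\bR\times S^1$ to $H_{0,t}^+(x)-H_{1,t}^+(x)$ pointwise on $\delta M$, so $K^{\mathit{min}}$ and $K^{\mathit{max}}$ are controlled by the (arbitrarily small) difference of the perturbations plus the variation of $H_{s,t}$ along the $\delta M$-orbits, which can be absorbed since all $(H_{s,t})$ are Floer-admissible with the strict inequality \eqref{eq:admissible} holding uniformly on the compact parameter interval.

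Granting that, the continuation map $\mathit{Cont}_q$ associated to this homotopy gives a chain map $\mathit{CF}^*_q(H_{1,t}^+) \to \mathit{CF}^*_q(H_{0,t}^+)$, and running the reverse homotopy gives a map the other way; by Lemma \ref{th:continuation-isomorphism}, with \eqref{eq:c1} and \eqref{eq:c2} verified exactly as in the admissibility check above (here one also uses that the composite homotopy can be deformed back to Floer's equation through admissible data, which is where the uniform strictness along the whole interval is used), the two are mutually inverse up to homotopy. Composing with the canonical identifications $\mathit{HF}^*_q(H_{0,t}) \iso \mathit{HF}^*_q(H_{0,t}^+)$ and $\mathit{HF}^*_q(H_{1,t}) \iso \mathit{HF}^*_q(H_{1,t}^+)$ (independence of the nondegenerate locally split perturbation, already established by Lemma \ref{th:continuation-isomorphism} applied to Hamiltonians differing by a small perturbation), one obtains the desired isomorphism $\mathit{HF}^*_q(H_{0,t}) \iso \mathit{HF}^*_q(H_{1,t})$.

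For the invariance under deformations of $(H_{s,t})$ rel endpoints, I would use the standard two-parameter argument: a homotopy of homotopies $(F_{\sigma,\tau}^\lambda, H_{\sigma,\tau}^\lambda)$, $\lambda \in [0,1]$, through continuation-admissible locally split data (which remains admissible by the same uniform estimates, since the hypothesis is exactly that all conditions persist throughout the deformation) induces a chain homotopy between the two continuation maps, hence equality of the induced maps on cohomology. Together with the uniqueness part of the continuation-map formalism (well-definedness up to the choices of auxiliary data, which goes back to the $G$-connection viewpoint referenced after \eqref{eq:continuation-map-equation}), this shows the isomorphism depends only on the homotopy class of $(H_{s,t})$ rel endpoints.

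\textbf{Main obstacle.} The delicate part is not the abstract continuation-map machinery but the bookkeeping with the relative actions $A_N$: one must verify that along the compact family $(H_{s,t})$ the quantity $A_N(x_-) - A_N(x_+)$ for orbits in $\delta M$ stays within the windows demanded by \eqref{eq:admissible}, \eqref{eq:c1}, \eqref{eq:c2}, even though orbits can appear, disappear, or move as $s$ varies. This is exactly where the hypothesis that $(H_{s,t})$ is Floer-admissible \emph{for every} $s$ (not just at the endpoints) and that its $\delta M$-orbits are normally nondegenerate throughout is essential: normal nondegeneracy ($\alpha_N(x)\neq 0$) guarantees $A_N$ varies continuously and bounded-ly in $s$, and admissibility at every $s$ gives the strict inequality needed to absorb the curvature contribution $K^{\mathit{min}}, K^{\mathit{max}}$ of the interpolating homotopy once the endpoint perturbations are taken small enough. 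I would spell this out as a compactness/uniformity lemma on $[0,1]$ before invoking Lemma \ref{th:continuation-isomorphism}.
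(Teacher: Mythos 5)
There is a genuine gap, and it sits exactly where you locate the ``main obstacle'': the curvature estimate for your single global continuation map is not true. With $F_{\sigma,\tau}=0$ and $H_{\sigma,\tau}=H_{\chi(\sigma),\tau}$, the relevant quantity is $K^{\mathit{min}}=\int_{\bR\times S^1}\min_{x\in\delta M}R_{\sigma,\tau}(x)$ as in \eqref{eq:kmin}, i.e.\ the minimum over $\delta M$ is taken \emph{before} integrating. Your argument replaces this by the pointwise-in-$x$ integral $\int R_{\sigma,\tau}(x)\,d\sigma\,d\tau$, which does telescope to (essentially) the endpoint difference; but $\int\min_x R\le\min_x\int R$, and for a non-monotone family the discrepancy is unbounded. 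For instance, if along the path $H_{s,t}|\delta M$ is first raised by a large constant $C$ on one region of $\delta M$ and then lowered back, while the endpoints coincide, then $K^{\mathit{min}}\approx-C$ and $K^{\mathit{max}}\approx+C$. Floer-admissibility of each slice $(H_{s,t})$ says nothing about these path-dependent quantities, so it cannot ``absorb'' them: the continuation-admissibility condition \eqref{eq:admissible-2}, and a fortiori the hypotheses \eqref{eq:c1}, \eqref{eq:c2} of Lemma \ref{th:continuation-isomorphism} (which involve $K=K^{\mathit{max}}-K^{\mathit{min}}$), can simply fail for the one-step interpolation, no matter how small you take the endpoint perturbations. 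The same problem recurs in your two-parameter argument for homotopy invariance.

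The correct use of the hypothesis that $(H_{s,t})$ is Floer-admissible, with normally nondegenerate $\delta M$-orbits, \emph{for every} $s$ is a subdivision argument, which is what the paper means by saying that the groups form a locally trivial bundle over the space of such Hamiltonians. Normal nondegeneracy ($\alpha_N\neq0$) makes the actions $A_N$ of the $\delta M$-orbits vary continuously in $s$, and the strict inequality \eqref{eq:admissible} gives, by compactness of $[0,1]$, a uniform margin; hence for $s'$ sufficiently close to $s$ the interpolation between (small nondegenerate locally split perturbations of) $H_{s,t}$ and $H_{s',t}$ has $K^{\mathit{min}},K^{\mathit{max}}$ as small as you like, so \eqref{eq:c1} and \eqref{eq:c2} hold and Lemma \ref{th:continuation-isomorphism} applies. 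This gives local constancy; the distinguished isomorphism is obtained by subdividing $[0,1]$ into finitely many such small intervals and composing, and its independence of the subdivision, of the perturbations, and of the path rel endpoints all follow from the same local statement. So the fix is to replace your single continuation map by this parallel-transport/subdivision argument; once that is done, the rest of your outline (use of Lemma \ref{th:continuation-isomorphism}, handling of the endpoint perturbations) is in line with the paper.
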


\subsection{Piunikhin-Salamon-Schwarz in the relative setting\label{subsec:pss}}
The following discussion concerns Cauchy-Riemann equations on $\bC$. To stay close to the previous terminology, we think of that Riemann surface as a thimble
\begin{equation} \label{eq:thimble}
T = (\bR \times S^1) \cup \{+\infty\}. 
\end{equation}
The relevant equations \eqref{eq:continuation-map-equation} have domain $T$, and instead of \eqref{eq:plusminus-limit} we have
\begin{equation} \label{eq:plusminus-limit-2}
\left\{
\begin{aligned}
& (F_{s,t}, H_{s,t},J_{s,t}) \longrightarrow (0,H_t,J_t) \quad \text{for $s \rightarrow -\infty$,} 
\\ & 
F_{s,t} \mathit{ds} + H_{s,t} \mathit{dt} \text{ and } J_{s,t} \text{ extend smoothly over $+\infty$.}
\end{aligned}
\right.
\end{equation}
Write $x$ for the limit of solutions $u$ as $s \rightarrow -\infty$. 
If $u$ lies in $\delta M$, the linearized operator $D_u$ fits in a short exact sequence like \eqref{eq:operator-les}. The analogues of \eqref{eq:normal-index}, \eqref{eq:normal-index-1b} and \eqref{eq:action-energy-index} are
\begin{align} \label{eq:normal-index-2}
& 
\half I_N(u) = \nu_u \cdot \text{zero-section} = -w_N(x) + u \cdot (\delta M)_{\zeta}, 
\\ & \label{eq:action-energy-index-2}
E^{\mathit{top}}(u) = A_N(x) + \half I_N(u).
\end{align}
An automatic regularity argument as in Lemma \ref{th:automatic-regularity}, which we will not spell out here, yields:

\begin{lemma} \label{th:automatic-regularity-2}
If $I_N(u) \geq 0$, $D_{N,u}$ is onto; and if $I_N(u) > 0$, evaluation at $+\infty$ yields a surjective map $\mathit{ker}(D_{N,u}) \rightarrow N_{u(+\infty)}$.
\end{lemma}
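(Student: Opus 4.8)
The statement is a standard automatic-regularity result for a Cauchy–Riemann operator $D_{N,u}$ on the thimble $T$, asserted in Lemma~\ref{th:automatic-regularity-2}; the scheme mirrors the closed-end version in Lemma~\ref{th:automatic-regularity}, with the modification that one end of the domain is a puncture and the other is a smooth interior point $+\infty$. The plan is to argue by contradiction on the adjoint operator. Suppose $D_{N,u}$ is \emph{not} onto; then its $L^2$-cokernel is a nonzero space of solutions $\xi$ of the formal adjoint equation $D_{N,u}^*\xi=0$. Because the coefficients of $D_{N,u}$ are asymptotically $\partial_s + Q_N(x)$ near the puncture $s\to -\infty$ (with $Q_N(x)$ selfadjoint of spectrum $\alpha_N(x)+2\pi\bZ$, and $\alpha_N(x)\in(0,2\pi)$ by nondegeneracy) and smooth over $+\infty$, any such $\xi$ decays exponentially at the puncture with an integer exponent, and vanishes to some integer order $\geq 1$ at $+\infty$ as well — exactly as in \eqref{eq:linearized-m}. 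Comparing $\xi$ with a section $\nu_u$ of the type used in \eqref{eq:relative-energy}/\eqref{eq:normal-index-2} then shows that the algebraic count of zeros of $\xi$, which by the similarity principle is a genuine positive quantity, equals (up to sign) $\nu_u\cdot\text{zero-section}=\tfrac12 I_N(u)$ via \eqref{eq:normal-index-2}. Positivity of that count forces $\tfrac12 I_N(u)<0$, i.e. $I_N(u)<0$, contradicting the hypothesis $I_N(u)\geq 0$; hence $D_{N,u}$ is onto.

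For the second assertion, one runs the same argument on the operator obtained from $D_{N,u}$ by imposing a matching condition at $+\infty$: concretely, consider the corestriction of the evaluation map $\mathrm{ev}_{+\infty}\colon \ker(D_{N,u})\to N_{u(+\infty)}$, and observe that its failure to be surjective is detected by a nonzero element $\xi$ in the cokernel of the operator $D_{N,u}$ \emph{enhanced by a point constraint at $+\infty$}. Such a $\xi$ is again a solution of the adjoint equation, now allowed a simple pole at $+\infty$ rather than forced to vanish there; the count of zeros is correspondingly reduced by one, so the zero-count identity reads $(\text{count of zeros of }\xi)=\tfrac12 I_N(u)-1$, which is positive precisely when $\tfrac12 I_N(u)>1$. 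A positive count contradicts the strict inequality $I_N(u)>0$ in the appropriate range, and one concludes surjectivity of $\mathrm{ev}_{+\infty}$. (One should be slightly careful about the exact normalization of the pole order at $+\infty$ relative to the index shift; this is a bookkeeping point governed by the local form of the operator near the interior marked point.)

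The analytic inputs — exponential decay of adjoint solutions at the puncture with integer exponent, finiteness and positivity of the zero set via the Carleman/similarity principle, and the index computation $\tfrac12 I_N(u)=\nu_u\cdot\text{zero-section}$ — are precisely those already used (and cited from \cite{floer-hofer-salamon94,hofer-salamon95,robbin-salamon02}) in the preceding subsection, so no new machinery is needed; the proof is, as the remark ``which we will not spell out here'' indicates, a routine transcription. The main obstacle, such as it is, is purely the combinatorial bookkeeping of winding numbers and marked-point multiplicities: keeping the sign conventions in \eqref{eq:normal-index-2}, \eqref{eq:relative-energy} and \eqref{eq:linearized-m} consistent so that the inequality on $I_N(u)$ comes out with the stated strictness, and making sure the point constraint at $+\infty$ is counted correctly in the index. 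Once that ledger is balanced, both statements follow immediately from positivity of a zero count.
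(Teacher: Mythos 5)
Your overall strategy is the intended one: the paper proves this by the same zero-counting/duality device as Lemma \ref{th:automatic-regularity} (count zeros and asymptotic winding of a putative cokernel element, compare with $\nu_u$ via \eqref{eq:normal-index-2}, and for the evaluation statement repeat with a point constraint at $+\infty$), and your plan follows exactly that scheme. However, the ledger you write down has concrete errors precisely at the borderline cases, which are the ones the lemma is used for. First, your claim that a cokernel element ``vanishes to some integer order $\geq 1$ at $+\infty$, exactly as in \eqref{eq:linearized-m}'' is unjustified: $+\infty$ is a smooth interior point of the thimble, not a puncture, so an adjoint solution is merely smooth there and its vanishing order is only $\geq 0$; the bound $m_-\geq 1$ in \eqref{eq:linearized-m} is an asymptotic eigenvalue constraint at a cylindrical end for kernel elements of $D_{N,u}$, and for the adjoint the corresponding bound at the negative end flips to $\geq 0$. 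The strict positivity that rules out the case $I_N(u)=0$ comes instead from the correct accounting of the $(0,1)$-form (canonical-bundle) twist at the capped point in the adjoint problem, i.e.\ a nontrivial cokernel forces a relative degree $-\tfrac12 I_N(u)-1\geq 0$; your ``up to sign'' identity, read literally, does not produce this. Relatedly, the sentence ``a positive count contradicts $I_N(u)>0$'' has the logic inverted: contradictions in this game come from forcing a \emph{negative} count of zeros of a holomorphic-type section, never a positive one.

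Second, in the evaluation-map part your stated identity yields a contradiction only when $\tfrac12 I_N(u)-1>0$, i.e.\ $I_N(u)>2$; as written this misses $I_N(u)=2$, which by \eqref{eq:normal-index-2} (note $\tfrac12 I_N(u)\in\bZ$, so $I_N(u)$ is even and $I_N(u)>0$ means $I_N(u)\geq 2$) is exactly the case occurring in the moduli spaces of Lemma \ref{th:relative-transversality-2} and the subsequent remark. The clean way to close this is the one your ``point constraint'' remark gestures at: let $D'$ be $D_{N,u}$ restricted to sections vanishing at $+\infty$ (equivalently, twist down by the point), so $\mathrm{ind}\,D'=I_N(u)-2\geq 0$; part one of the lemma, applied to $D'$ (which has the same asymptotics at the puncture), gives $\mathrm{coker}\,D'=0$, and then the exact sequence
\begin{equation*}
0 \rightarrow \mathit{ker}(D') \rightarrow \mathit{ker}(D_{N,u}) \xrightarrow{\;\mathrm{ev}_{+\infty}\;} N_{u(+\infty)} \rightarrow \mathit{coker}(D') \rightarrow \mathit{coker}(D_{N,u})
\end{equation*}
yields surjectivity of the evaluation map. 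With those two corrections (the adjoint-side bookkeeping at $+\infty$ and at the puncture, and the evenness of $I_N(u)$ used to reduce the second assertion to the first), your argument becomes the paper's.
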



\begin{definition} \label{th:admissible-3}
We say that an $(F_{s,t}, H_{s,t})$ as in \eqref{eq:plusminus-limit-2} is PSS-admissible if $(H_t)$ is Floer-admissible, and the following additional property holds. Whenever $x$ is a one-periodic orbit of $(H_t)$ contained in $\delta M$, 
\begin{equation} \label{eq:admissible-3}
A_N(x) < K^{\mathit{min}}(F_{s,t}, H_{s,t}).
\end{equation}
\end{definition}

There is also a corresponding notion of locally split $(F_{s,t}, H_{s,t}, J_{s,t})$, as in Definition \ref{th:local-splitting-2}. Fix a manifold $C$ and a map $\gamma: C \rightarrow \bar{M}$. Given $(F_{s,t}, H_{s,t}, J_{s,t})$ as in \eqref{eq:plusminus-limit-2} and a one-periodic orbit $x$ of $(H_t)$, we consider the space of pairs $(c,u)$, where
\begin{equation} \label{eq:pair-moduli-space}
\left\{
\begin{aligned}
& u: T \rightarrow \bar{M} \text{ is a solution of \eqref{eq:continuation-map-equation}, with limit $x$ as $s \rightarrow-\infty$}, \\
& c \in C, \;\; \gamma(c) = u(+\infty).
\end{aligned}
\right.
\end{equation}

\begin{lemma} \label{th:relative-transversality-2}
Suppose that $(F_{s,t}, H_{s,t})$ is PSS-admissible. Then, for a generic small perturbation supported in a compact subset of $\bR \times S^1$, and a generic $(J_{s,t})$ (as always, these choices remain within the classes from Setup \ref{th:tangent-hamiltonians}), each moduli space \eqref{eq:pair-moduli-space} is regular.
\end{lemma}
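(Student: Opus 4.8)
\textbf{Proof proposal for Lemma \ref{th:relative-transversality-2}.}

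The plan is to follow the standard Floer-Hofer-Salamon/Hofer-Salamon transversality scheme, adapted to the relative setting in the same spirit as the proof of Lemma \ref{th:relative-transversality}, but now treating the evaluation constraint $\gamma(c) = u(+\infty)$ as part of the moduli problem. First I would set up the universal moduli space of pairs $(c,u)$ together with the perturbation data: a compactly supported perturbation of the Hamiltonian terms $(F_{s,t},H_{s,t})$ away from $+\infty$ and from a neighbourhood of the one-periodic orbit $x$, and a choice of $(J_{s,t})$ within the class of Setup \ref{th:fibration}/Setup \ref{th:tangent-hamiltonians}. The linearization at a solution $u$ is the operator $D_u$ sitting in the short exact sequence \eqref{eq:operator-les}, now coupled to the evaluation map $\mathrm{ev}_{+\infty}: \ker(D_u) \to N_{u(+\infty)} \oplus T(\delta M)_{u(+\infty)}$ and to $T_cC \to T_{u(+\infty)}\bar M$. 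Surjectivity of the full linearized operator of the universal problem is what has to be checked, and then the Sard-Smale theorem gives the conclusion for generic data.

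The argument splits according to whether $u$ lies in $\delta M$ or not. For $u$ not contained in $\delta M$, the situation is essentially classical: somewhere-injectivity of $u$ (away from $\delta M$ and away from the overlap region with $x$, using the asymptotic convergence and unique continuation) allows one to perturb $J_{s,t}$ to kill the cokernel of $D_u$; the evaluation constraint against $\gamma$ costs at most $\dim\bar M$ in the index and is handled in the usual way by enlarging the perturbation space. For $u$ contained in $\delta M$, I would use the splitting \eqref{eq:operator-les}: the normal part $D_{N,u}$ is automatically surjective by Lemma \ref{th:automatic-regularity-2}, because PSS-admissibility (Definition \ref{th:admissible-3}) together with \eqref{eq:action-energy-index-2} and $E^{\mathit{top}}(u) \geq K^{\mathit{min}}(F_{s,t},H_{s,t})$ forces $I_N(u) \geq 0$; moreover when $I_N(u) > 0$ the evaluation $\ker(D_{N,u}) \to N_{u(+\infty)}$ is onto, so the normal component of the point constraint can be met. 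The tangential part $D_{\delta M,u}$ is made surjective (with the $T(\delta M)$-component of the evaluation) by a generic choice of $(J_{s,t})|\delta M$ via the standard transversality theorem applied inside $\delta M$ — the pseudo-holomorphic spheres inside $\delta M$ have negative first Chern number by \eqref{eq:v-degree}, hence do not obstruct somewhere-injectivity in the relevant index range. Combining the two pieces through the snake lemma gives surjectivity of $D_u$ coupled to $\mathrm{ev}_{+\infty}$.

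The remaining point is a bookkeeping one: when $I_N(u) = 0$ the evaluation $\ker(D_{N,u}) \to N_{u(+\infty)}$ need not be surjective, so one must check that this does not actually happen for the moduli spaces of interest — but by \eqref{eq:normal-index-2} the condition $I_N(u) = 0$ together with $w_N(x)$ and $u\cdot(\delta M)_\zeta \geq 0$ pins down $u$ to come with the minimal intersection multiplicity, and such components are then treated exactly as in Remark \ref{th:lowest-power}, i.e.\ they sit inside $\delta M$ as a genuine lower-dimensional stratum and are accounted for separately rather than obstructing regularity of the main stratum. I expect the main obstacle to be precisely this interplay between the automatic-regularity estimate and the point constraint at $+\infty$: one has to make sure that perturbing only in a compact subset of $\bR\times S^1$ (leaving $J_{s,t}$ pinned near $+\infty$ to extend smoothly, as required by \eqref{eq:plusminus-limit-2}) still suffices to achieve transversality of the evaluation, which is where somewhere-injectivity of $u$ in the complement of a neighbourhood of $+\infty$ and of the orbit $x$ is genuinely used. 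Everything else is a routine adaptation of \cite{floer-hofer-salamon94, hofer-salamon95} and of the arguments already given for Lemmas \ref{th:relative-transversality} and \ref{th:automatic-regularity-2}, so I would state it at that level of detail and omit the explicit Banach-manifold calculations.
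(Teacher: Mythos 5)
Your overall scheme is the same as the paper's: the only nontrivial case is a solution $u$ contained in $\delta M$, where one combines automatic regularity in the normal direction (Lemma \ref{th:automatic-regularity-2}) with standard transversality, inside $\delta M$, for the universal linearized operator and its evaluation map at $+\infty$, and then applies Sard--Smale to the fibre product with $\gamma$.

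The place where your write-up goes astray is the final ``bookkeeping'' paragraph. PSS-admissibility is a \emph{strict} inequality, $A_N(x) < K^{\mathit{min}}(F_{s,t},H_{s,t})$; combined with the bound $E^{\mathit{top}}(u) \geq K^{\mathit{min}}(F_{s,t},H_{s,t})$ for solutions inside $\delta M$ and with \eqref{eq:action-energy-index-2}, it gives $\half I_N(u) \geq K^{\mathit{min}}(F_{s,t},H_{s,t}) - A_N(x) > 0$, so $I_N(u) > 0$ (in fact $\geq 2$) for every solution contained in $\delta M$. Hence the second half of Lemma \ref{th:automatic-regularity-2} always applies and the evaluation $\mathit{ker}(D_{N,u}) \rightarrow N_{u(+\infty)}$ is automatically onto; the case $I_N(u) = 0$ that you set out to handle never occurs --- this is exactly what the strictness in Definition \ref{th:admissible-3} is for, and it is the one inequality your argument fails to extract (you only claim $I_N(u) \geq 0$). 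Moreover, the workaround you offer for that hypothetical case is not sound: Remark \ref{th:lowest-power} is an algebraic observation about which powers of $q$ can appear in the differential, not a transversality mechanism, and a solution in $\delta M$ whose normal evaluation failed to be surjective could not simply be ``accounted for separately'' --- since the perturbations permitted by Setup \ref{th:tangent-hamiltonians} do not move the data freely in the normal direction, such a solution would genuinely obstruct regularity of the moduli space \eqref{eq:pair-moduli-space}. Deleting that paragraph and invoking the strict inequality instead brings your argument in line with the paper's proof.
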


\begin{proof}
As usual, the only issue is with solutions entirely contained in $\delta M$. For such a solution $u$, the PSS-admissibility condition, \eqref{eq:energy-bound} and \eqref{eq:normal-index-2} tell us that
\begin{equation} \label{eq:positive-normal-index}
\half I_N(u) \geq K^{\mathit{min}}(F_{s,t}, H_{s,t}) - A_N(x) > 0.
\end{equation}
Hence, Lemma \ref{th:automatic-regularity-2} applies. Look at the universal linearized operator $D_u^{\mathit{univ}}$, which includes deformations of $(F_{s,t}, H_{s,t}, J_{s,t})$. Standard regularity theory inside $\delta M$ tells us that the piece $D_{\delta M,u}^{\mathit{univ}}$ responsible for deformations inside $\delta M$ is onto, and has surjective evaluation map $\mathit{ker}(D_{\delta M,u}^{\mathit{univ}}) \rightarrow T(\delta M)_{u(+\infty)}$. Together with the previously obtained information about the normal direction, this implies the corresponding result for $D_u^{\mathit{univ}}$: it is surjective, and $\mathit{ker}(D_u^{\mathit{univ}}) \rightarrow T\bar{M}_{u(+\infty)}$ is onto. Therefore the fibre product of the universal moduli space with $\gamma$ is smooth; a standard application of the Sard-Smale theorem to that leads to the desired result
\end{proof}

Note that Lemma \ref{th:relative-transversality-2} does not require any assumptions on how $\gamma$ intersects $\delta M$. To complete the transversality argument for compactified moduli spaces, one has to include bubbling, in a way which is similar to, but more complicated than, the situation for Lemma \ref{th:relative-transversality}. The issue here are broken solutions which include simple pseudo-holomorphic chains: this means a solution $u: T \rightarrow \bar{M}$ of \eqref{eq:continuation-map-equation} together with non-multiply-covered $J_{\infty}$-holomorphic maps $v_1,\dots,v_r: \bC P^1 \rightarrow \bar{M}$, which have pairwise different images,  and a $c \in C$ such that
\begin{equation} \label{eq:simple-chain}
u(+\infty) = v_1(0), \, v_1(\infty) = v_2(0), \dots, v_{r-1}(\infty) = v_r(0), \, v_r(\infty) = \gamma(c).
\end{equation}
If $v_k$ is contained in $\delta M$, we have $\int v_k^* c_1(N) > 0$, compare \eqref{eq:v-degree}. Therefore, the linearized operator in normal direction, which we denote by $D_{N,v_k}$, is onto, and the evaluation map at $\infty$ on $\mathit{ker}(D_{N,v_k})$ is also surjective. That is enough to establish transversality results for chains \eqref{eq:simple-chain}. (These are the same kinds of arguments that arise when defining Gromov-Witten invariants via symplectic divisors \cite{cieliebak-mohnke07}.)

Suppose that $\gamma$ is a pseudo-cycle. We assume $(F_{s,t}, H_{s,t}, J_{s,t})$ is PSS-admissible, locally split, and that the necessary regularity properties of the moduli spaces are satisfied. One can then use the zero-dimensional spaces \eqref{eq:pair-moduli-space} to define a Floer cocycle 
\begin{equation} \label{eq:pss-q-def-1}
\mathit{PSS}_q(F_{s,t}, H_{s,t}, J_{s,t}, \gamma) \in \mathit{CF}^{\mathrm{codim(\gamma)}}_q(H_t,J_t),
\end{equation}
where the notation is $\mathrm{codim}(\gamma) = \mathrm{dim}(\bar{M}) - \mathrm{dim}(C)$. This involves counting solutions with $\pm q^{u \cdot (\delta M)_{\zeta}}$ exactly as in \eqref{eq:top-power}. For a solution $u$ not contained in $\delta M$, a topological argument similar to that in the definition of the differential shows that $u \cdot (\delta M)_{\zeta} \geq w_N(x)$. For a solution $u$ contained in $\delta M$, we use \eqref{eq:normal-index-2} and \eqref{eq:positive-normal-index} to get
\begin{equation} \label{eq:w-plus-1}
u \cdot (\delta M)_{\zeta} = u \cdot B = w_N(x) + \half I_N(u) \geq w_N(x) + 1.
\end{equation}
This shows that \eqref{eq:pss-q-def-1} actually lies in the subcomplex \eqref{eq:floer-submodule}. 

\begin{remark}
For $u$ contained in $\delta M$, regularity of the moduli space \eqref{eq:pair-moduli-space} means that $D_u$ is onto, and that the evaluation map $\mathit{ker}(D_u) \rightarrow T\bar{M}_{u(+\infty)}$ must be transverse to the image of $D\gamma_c$. This implies that the index of the $\delta M$ component satisfies
\begin{equation} \label{eq:eval}
I_{\delta M}(u) \geq 
\mathrm{rank}\big(\mathit{ker}(D_{\delta M,u}) \rightarrow T(\delta M)_{u(+\infty)}\big) \geq \mathrm{dim}(\delta M) - \mathrm{dim}(C) = \mathrm{codim}(\gamma)-2.
\end{equation}
Since we are looking at moduli spaces of dimension $\leq 1$ in the definition of \eqref{eq:pss-q-def-1}, we have
\begin{equation}
I(u) = I_{\delta M}(u) + I_N(u) \leq \mathrm{codim}(\gamma)+1.
\end{equation}
Taken together with \eqref{eq:positive-normal-index}, these inequalities imply that $I_N(u) = 2$, so equality holds in \eqref{eq:w-plus-1}. This is parallel to what we observed in Remark \ref{th:lowest-power}.
\end{remark}

Suppose that one takes a pseudo-cycle contained in $\delta M$, which we will accordingly denote by $\gamma_{\delta M}$. In that case, every solution of \eqref{eq:pair-moduli-space} whose limit $x$ is not contained in $\delta M$ must intersect that submanifold with positive multiplicity at $\infty$, which means that in the analogue of \eqref{eq:express-intersection} one has $m(u) > 0$. This, together with our previous observation \eqref{eq:w-plus-1}, shows that 
\begin{equation} \label{eq:pss-q-def-2}
\mathit{PSS}_q(F_{s,t}, H_{s,t}, J_{s,t}, \gamma_{\delta M}) \in q\mathit{CF}^*_q(H_t,J_t).
\end{equation}
Given a formal series $\gamma_q = q^{-1} \gamma_{\delta M} + \gamma_0 + q\gamma_1 + \cdots$, where $\gamma_{\delta M}$ is a pseudo-cycle in $\delta M$ and the $\gamma_k$ are pseudo-cycles in $\bar{M}$, we get (in abbreviated notation)
\begin{equation}
\mathit{PSS}_q(\gamma_q) = q^{-1} \mathit{PSS}_q(\gamma_{\delta M}) + 
\mathit{PSS}_q(\gamma_0) + q\mathit{PSS}_q(\gamma_1) + \cdots \in 
\mathit{CF}^*_q(H_t,J_t).
\end{equation}
Via Poincar\'e duality, this leads to a map
\begin{equation} \label{eq:m-pss}
\mathit{PSS}_q: q^{-1} H^*(\bar{M}, M;\bQ) \oplus H^*(\bar{M};\bQ) \longrightarrow \mathit{HF}^*_q(H_t,J_t).
\end{equation}
Strictly speaking, to complete that construction, one has to show that homologous pseudo-cycles yield cohomologous Floer cocycles. However, that argument is entirely standard, and we'll therefore omit it.

\begin{example}
Take a time-independent function $(H_t = H)$ as in Example \ref{th:transverse-max}, and an arbitrary $(F_{s,t}, H_{s,t})$. Then, for sufficiently small $\epsilon>0$, $(\epsilon F_{s,t}, \epsilon H_{s,t})$ is PSS-admissible. This follows directly from \eqref{eq:classical-action} and the fact that
\begin{equation}
K^{\mathit{min}}(\epsilon F_{s,t}, \epsilon H_{s,t}) = \epsilon K^{\mathit{min}}(F_{s,t}, H_{s,t}).
\end{equation}
The computation of Floer cohomology in \eqref{eq:morse-floer} identifies that with the left hand side of \eqref{eq:m-pss}. (One can show that in these specific circumstances \eqref{eq:m-pss} is an isomorphism, but we do not have any use for that statement.)
\end{example}

\begin{example} 
Take $H$ as in Example \ref{th:transverse-min}. Then, for small $\epsilon>0$, there is no PSS map that lands in $\mathit{HF}^*_q(\epsilon H)$, meaning that the PSS-admissibility condition cannot be satisfied. To see that, suppose that we have $(F_{s,t}, H_{s,t})$ which define such a map. Integrating with respect to the volume form associated to $\omega_{\delta M}$, one has
\begin{equation}
\int_{\delta M} R_{s,t} = -\partial_s \int_{\delta M} H_{s,t} + \partial_t \int_{\delta M} F_{s,t},
\end{equation}
and therefore (using Stokes)
\begin{equation}
K^{\mathit{min}}(F_{s,t},H_{s,t}) \leq {\textstyle \frac{1}{\mathrm{vol}(\delta M)}} \int_{\bR \times S^1 \times \delta M} R_{s,t} \,\mathit{ds} \mathit{dt} = {\textstyle \frac{1}{\mathrm{vol}(\delta M)}} \int_{\delta M} \epsilon H \leq \epsilon H(x^{\mathit{max}}) = A_N(x^{\mathit{max}}),
\end{equation}
where $x^{\mathit{max}}$ is the maximum of $H|\delta M$ (which is a critical point of $H$, by assumption). That obviously contradicts \eqref{eq:admissible-3}. 
\end{example}

\section{Relative Floer theory and Lefschetz pencils\label{sec:apply-to-lefschetz}}
Combining the arguments from Sections \ref{sec:mclean} and \ref{sec:relative-floer}, we will now define the relative Floer groups $\mathit{HF}^*_q(\bar{p},r)$ from \eqref{eq:relative-p-floer}, and prove that they satisfy the properties needed in our main argument. This makes essential use of some specific topological properties of the maps $\bar{p}$ arising from anticanonical Lefschetz pencils.

\subsection{The geometric and Floer-theoretic framework}
We work in a relative version of the situation from Section \ref{sec:mclean}.

\begin{setup} \label{th:fibration-2}
Take $\bar{p}: \bar{E} \rightarrow W$ as in Setup \ref{th:fibration}, together with a symplectic divisor $\delta E \subset \bar{E}$ which represents $[\omega_{\bar{E}}]$. Every point of the divisor should satisfy \eqref{eq:locally-trivial}, and $\mathit{TE}^h$ must be tangent to $\delta E$. As a consequence, if we write $\delta M = \delta E \cap \bar{M}$, $\mathit{TE}^h$-parallel transport yields a symplectic isomorphism 
\begin{equation} \label{eq:outside-trivialization-2}
\xymatrix{
W \times \delta M \ar[rr]^-{\iso} \ar[dr] && 
\delta E \ar[dl] \\
& W,
}
\end{equation}
which is compatible with \eqref{eq:outside-trivialization}. Next, consider the normal bundles $N(\delta M)$ of $\delta M \subset \bar{M}$ and $N(\delta E)$ of $\delta E \subset \bar{E}$. The diffeomorphism \eqref{eq:outside-trivialization} induces an isomorphism
\begin{equation} \label{eq:partial-normal-bundle}
\xymatrix{ \ar[d]
W_{\geq 1/2} \times N(\delta M) \ar[rr]^-{\iso} &&  \ar[d]
N(\delta E)|W_{\geq 1/2} \\
W_{\geq 1/2} \times \delta M \ar[rr]^-{\iso} && \delta E|W_{\geq 1/2}
}
\end{equation}
Choose a compatible complex structure on $J_{N(\delta M)}$ on the normal bundle to $\delta M$; carry that over by the isomorphism \eqref{eq:partial-normal-bundle}; and then choose an extension to all of $N(\delta E)$, denoting it by $J_{N(\delta E)}$. We impose a final condition:
\begin{equation} \label{eq:can-be-extended}
\mybox{Take \eqref{eq:partial-normal-bundle}, and modify that by multiplying it with $w$ on the fibre over $w \in W_{\geq 1/2}$. Then, that modified map can be extended to an isomorphism
\[
\xymatrix{ \ar[d]
W \times N(\delta M) \ar[rr]^-{\iso} &&  \ar[d]
N(\delta E) \\
W \times \delta M \ar[rr]^-{\iso}_-{\eqref{eq:outside-trivialization-2}} && \delta E.
}
\]
}
\end{equation}
Note that this condition implies (but is stricter than) our previous topological assumption \eqref{eq:constant-section}.

We allow Hamiltonians $H$ on $\bar{E}$ of the following form. Take a function $H_{\bar{M}}$ on the fibre, satisfying the conditions from Setup \ref{th:tangent-hamiltonians}, and write $H_{\delta M}$ for its restriction to $\delta M$. Also, take a function $H_W$ on the base, which outside a compact subset agrees with $H_{\gamma}$ for some $\gamma \in \frakg$. Then, we want $H|W \times \delta M = H_W+ H_{\delta M}$, and also that outside a compact subset, $H = H_{\gamma} + H_{\bar{M}}$. Finally, we impose the same conditions as in Setup \ref{th:tangent-hamiltonians} on the behaviour of $H$ on the normal bundle to $\delta E$ (this is not a consequence of the previous requirements). To clarify, the functions $H_{\bar{M}}$ (hence $H_{\delta M}$) and $H_{\gamma}$ can vary depending on which $H$ we consider.

Similarly, for almost complex structures, we start with some $J_{\bar{M}}$ as in Setup \ref{th:tangent-hamiltonians}, and its restriction $J_{\delta M}$ to the divisor. We then consider $J$ whose restriction to $W \times \delta M$ is the product $i \times J_{\delta M}$, and which outside a compact subset agrees with $i \times J_{\bar{M}}$ (as before, $J_{\bar{M}}$ can be different for different almost complex structures). Moreover, on the normal bundle to $\delta E$, this should agree with the previously fixed $J_{N(\delta E)}$.
\end{setup}


We will work with a mild variation of the concepts from Section \ref{sec:relative-floer}. Let $(H_t)$ be a time-dependent Hamiltonian; by definition, this comes with an accompanying Hamiltonian $(H_{\bar{M},t})$ on the fibre. If $x$ is a one-periodic orbit (as usual, assumed to be nullhomologous in $\bar{E}$) lying on $\delta M$, we define 
\begin{equation} \label{eq:partial-action}
\begin{aligned}
& A_{N(\partial E)}^{\mathit{fibrewise}}(x) = - \int_S v^*(\omega_{\bar{E}} - \bar{p}^* \omega_{W}) + \int_{S^1} H_{\bar{M},t}(x(t))\, \mathit{dt} + (v|(S \setminus \partial S) \cdot \delta E),
\\ &
\text{where $v: S \rightarrow \bar{E}$ satisfies the analogue of \eqref{eq:normal-action} with respect to $\delta E \subset \bar{E}$.}
\end{aligned}
\end{equation}
Because $\omega_W$ is exact, $\omega_{\bar{E}}-\bar{p}^*\omega_{W}$ is still Poincar\'e dual to $\delta E$, which implies that \eqref{eq:partial-action} is independent of $v$. Take a map $u = (u_W,u_{\bar{M}}): \bR \times S^1 \rightarrow \delta E = W \times \delta M$ which is asymptotic to orbits $x_\pm$. The analogue of \eqref{eq:relative-energy}, using only the energy of the fibre component, says that 
\begin{equation} \label{eq:relative-energy-fibre}
E(u_{\bar{M}}) = A_{N(\delta E)}^{\mathit{fibrewise}}(x_-) - 
A_{N(\delta E)}^{\mathit{fibrewise}}(x_+) + (\nu_u \cdot \text{zero-section}).
\end{equation}
Assuming nondegeneracy of the one-periodic orbits, we have an operator $D_{N(\delta E),u}$ whose index $I_{N(\delta E)}(u)$ can be computed by our usual formula \eqref{eq:normal-index}. As a consequence, we have the counterpart of \eqref{eq:action-energy-index}:
\begin{equation} \label{eq:action-energy-index-3}
E(u_{\bar{M}}) = A_{N(\delta E)}^{\mathit{fibrewise}}(x_-) - A_{N(\delta E)}^{\mathit{fibrewise}}(x_+) + \half I_{N(\delta E)}(u).
\end{equation}

\begin{definition} \label{th:admissible-1b}
We say that $(H_t)$ is fibrewise Floer-admissible if
\begin{equation} \label{eq:fibrewise-admissible}
|A_{N(\delta E)}^{\mathit{fibrewise}}(x_-) - A_{N(\delta E)}^{\mathit{fibrewise}}(x_+)| < 1
\end{equation}
for all one-periodic orbits $x_{\pm}$ contained in $\delta E$.
\end{definition}

The key point here is that, if $u = (u_W,u_{\bar{M}})$ is a Floer trajectory in $\delta E$, then each component satisfies a separate Floer equation, and therefore $E(u_{\bar{M}}) \geq 0$. This explains why $A_{N(\delta E)}^{\mathit{fibrewise}}$ is the relevant quantity in \eqref{eq:fibrewise-admissible}. We now combine this with the other necessary conditions.
\begin{equation} \label{eq:time-2}
\mybox{
Choose a loop $(\gamma_t)$, and corresponding $(H_t)$, as in \eqref{eq:time}. Additionally, we require that each $H_t$ satisfies: the conditions from Setup \ref{th:fibration-2}, the analogue of the locally splitting property from Definition \ref{th:local-splitting} for $\delta E \subset \bar{E}$; and that it is fibrewise Floer-admissible.
}
\end{equation}
As before, an auxiliary choice of section of the normal bundle $\beta_{N(\delta E)}$, which is nonzero along one-periodic orbits of $(H_t)$, comes in handy. It gives rise to winding numbers $w_{N(\delta E)}(x) \in \bZ$, in the same as way as in \eqref{eq:winding-number}, and leading to a formula parallel to \eqref{eq:normal-action-1b}:
\begin{equation} \label{eq:partial-action-2}
\begin{aligned}
& A_{N(\delta E)}^{\mathit{fibrewise}}(x) =  - \int_S v^*(\omega_{\bar{E}} - \bar{p}^*\omega_W) + \int_{S^1}\, H_{\bar{M},t}(x(t)) \mathit{dt} + v \cdot (\delta E)_{\zeta} + w_N(x) \\ & \qquad \qquad\qquad \qquad \qquad \qquad \qquad \text{for any $v: S \rightarrow \bar{E}$, $v|\partial s = x$.}
\end{aligned}
\end{equation}
We find it convenient to make a more specific choice.

\begin{setup} \label{th:product-section}
Suppose that are given a section $\beta_{N(\delta M)}$ which is transverse to the zero-section, and such that $B_{N(\delta M)} = \beta_{N(\delta M)}^{-1}(0)$ is disjoint from the one-periodic orbits of $(H_{\bar{M},t})$. Because of \eqref{eq:can-be-extended}, we can find a section $\beta_{N(\delta E)}$ which equals $(w/|w|^2)\, \beta_{N(\delta M)}$ over any $|w| \geq 1/2$, and whose zero-set (transversally cut out) is $B_{N(\delta E)} = W \times B_{N(\delta M)}$.
\end{setup}

Setup \ref{th:product-section} has the following useful consequence.
\begin{equation} \label{eq:two-w}
\mybox{
Take a one-periodic orbit in $\delta E|W_{\geq 1/2} = W_{\geq 1/2} \times \delta M$, and write it as $x(t) = (x_W(t),x_{\bar{M}}(t))$. Then 
\[
w_{N(\delta E)}(x) = -\mathrm{wind}(x_W) + w_{N(\delta M)}(x_{\bar{M}}).
\]
Here, for clarity we have written $w_{N(\delta M)}$ for the quantity \eqref{eq:winding-number} associated to $x_{\bar{M}}$ using $\beta_{N(\delta M)}$; and $\mathrm{wind}(x_W)$ is the standard winding number of a loop in $W_{\geq 1/2}$ around the central hole.
}
\end{equation}
From here on, definition of $\mathit{CF}^*_q(H_t,J_t)$ follows exactly the same method as in \eqref{eq:relative-floer}. Note, however, that the resulting relative Floer cohomology group depends not only on $r$, but also on the choice of Hamiltonian.

We will very briefly discuss the situation for continuation maps and PSS maps. When constructing a continuation map, one uses Hamiltonians $(F_{s,t})$ and $(H_{s,t})$ which, because of the general prescription from Setup \ref{th:fibration-2}, come with accompanying $(F_{\bar{M},s,t})$, $(H_{\bar{M},s,t})$. Given a map $u = (u_W,u_{\bar{M}}): \bR \times S^1 \rightarrow W \times \delta M = \delta E$, one considers the topological energy \eqref{eq:topological-energy} only for the fibrewise component. Correspondingly, one only considers the curvature quantity obtained by applying \eqref{eq:curvature}
to $(F_{\bar{M},s,t})$, $(H_{\bar{M},s,t})$. The fibrewise continuation-admissibility condition says that
\begin{equation} \label{eq:fibrewise-admissible-2}
A_{N(\delta E)}^{\mathit{fibrewise}}(x_-) - A_{N(\delta E)}^{\mathit{fibrewise}}(x_+) < 1 + K^{\mathit{min}}(F_{\bar{M},s,t}, H_{\bar{M},s,t}).
\end{equation}
In parallel, the fibrewise admissibility condition for PSS maps is
\begin{equation} \label{eq:fibrewise-admissible-3}
A_{N(\delta E)}^{\mathit{fibrewise}}(x) < K^{\mathit{min}}(F_{\bar{M},s,t}, H_{\bar{M},s,t}).
\end{equation}

\subsection{The explicit construction\label{subsec:concrete}}
We now describe the specific choice of Hamiltonian underlying the definition of our first Floer cohomology group, namely $\mathit{HF}^*_q(\bar{p},1)$. This all takes place under the constraints from \eqref{eq:time-2}, but is additionally designed so that the insights from Section \ref{subsec:compute-hf} carry over.
%
%
\begin{align}
& \label{eq:e-hessian}
\mybox{Fix a small $\epsilon>0$. Take a time-dependent $(H_{\bar{E},t})$ (as in Setup \ref{th:fibration-2}) which agrees with $(1-\epsilon)H^{\mathit{rot}}$ both on $\delta E = W \times \delta M$ and on $W_{\geq 1/2} \times \bar{M}$. This means that all points of $\{0\} \times \delta M \subset \delta E$ are critical points of $H_{\bar{E},t}$ for any $t$. We additionally require that along that submanifold, the Hessian in direction of $N(\delta E)$ should be zero.}
\\ & \label{eq:e-hessian-1b}
\mybox{Take a Morse function $H_{\bar{M}}'$ on the fibre (as in Setup \ref{th:tangent-hamiltonians}), such that for each critical point lying on $\delta M$, the Hessian in normal direction to that submanifold is positive definite.}
\\ &
\label{eq:e-hessian-2}
\mybox{Take a function $H_{\bar{E}}'$ which agrees with $H_{\bar{M}}'$ both on $W \times \delta M$ and $W_{\geq 1/2} \times \bar{M}$. Moreover, at every critical point lying in $\{0\} \times \delta M$, the Hessian in direction normal to $N(\delta E)$ should be negative definite.
}
\end{align}
Introduce another positive constant $\epsilon' \ll \epsilon$, and define
\begin{equation} \label{eq:relative-2}
H_{2,t} = \begin{cases} 
H_{\bar{E},t} + \epsilon' H_{\bar{E}}' & \text{on $\bar{E}|W_{\leq 1/2}$,} \\
\epsilon (H_2 \circ \phi_{-t}^{\mathit{rot}}) + H^{\mathit{rot}} + \epsilon' H_{\bar{M}}'
& \text{on $W_{\geq 1/2} \times \bar{M}$;}
\end{cases}
\end{equation}
This is a special case of \eqref{eq:r2-hamiltonian}, for $r_2 = 1$. The only nontrivial aspect is fibrewise admissibility, which is a consequence of the following observation.

\begin{lemma} \label{th:contained}
Any one-periodic orbit of $(H_{2,t})$ lying in $\delta E = W \times \bar{M}$ is constant in $\bar{M}$-direction; and its action in the sense of \eqref{eq:partial-action} is
\begin{equation}
A_{N(\delta E)}^{\mathit{fibrewise}}(x) = \epsilon' H'_{\bar{M}}(x)-1. 
\end{equation}
\end{lemma}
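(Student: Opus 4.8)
The plan is to analyze one-periodic orbits of $(H_{2,t})$ lying in $\delta E = W \times \delta M$ by exploiting the product structure of the Hamiltonian isotopy on $W_{\geq 1/2} \times \bar{M}$, exactly as in the proof of Lemma \ref{th:h1-h2}. The key structural input is that $H_{2,t}$ restricted to $\delta E$ and to $W_{\geq 1/2} \times \bar{M}$ is a sum of a base term and a fibre term, so the Hamiltonian flow decouples.

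First I would write down the analogue of \eqref{eq:twisted-flow}. On $W_{\geq 1/2} \times \delta M \subset \delta E$, the Hamiltonian $H_{2,t}$ is $\epsilon(H_2\circ\phi^{\mathit{rot}}_{-t}) + H^{\mathit{rot}} + \epsilon' H'_{\delta M}$, where $H'_{\delta M} = H'_{\bar M}|\delta M$; hence its flow is $\phi_{H_{2},t}(w,x) = (\phi^{\mathit{rot}}_{t}(\phi_{H_2,\epsilon t}(w)),\, \phi_{\epsilon' H'_{\delta M},t}(x))$. On $\bar{E}|W_{\leq 1/2}$ restricted to the part of $\delta E$ lying there, $H_{2,t} = H_{\bar{E},t} + \epsilon' H'_{\bar{E}}$, which by \eqref{eq:e-hessian}, \eqref{eq:e-hessian-2} restricts on $\delta E$ to $(1-\epsilon)H^{\mathit{rot}} + \epsilon' H'_{\delta M}$ in the $W$-variable, i.e. it rotates the base with an irrational speed $1-\epsilon$ while flowing along $\epsilon' H'_{\delta M}$ in the fibre. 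As in the proof of Lemma \ref{th:h1-h2}, the Hamiltonian isotopy preserves $\bar{E}|W_{\leq 1/2}$, so a one-periodic orbit in $\delta E$ is either contained in $\delta E \cap \bar{E}|W_{\leq 1/2}$ or in $W_{\geq 1/2} \times \delta M$. In the first region, the base component rotates with non-integer speed $1-\epsilon$ and so has no one-periodic orbit except where the rotational Hamiltonian vector field vanishes, namely over the fixed point of the rotation, i.e. $w = 0$; there the orbit is constant in the base, and (by choosing $\epsilon'$ small and using the Morse-Smale condition on $H'_{\bar E}$) the fibre component is a constant critical point of $H'_{\delta M}$. In the second region, by \eqref{eq:only-constants} the $\epsilon$-periodic orbits of $H_2$ are all constant, so one needs $\phi_{H_2,\epsilon t}(w)$ together with $\phi^{\mathit{rot}}_t$ to return after time $1$; the composite base loop $t\mapsto \phi^{\mathit{rot}}_t(w)$ closes up only at a fixed point of $\phi_{H_2,\epsilon}$, again forcing the orbit to be constant in $\bar{M}$-direction, and then the fibre component is a critical point of $\epsilon' H'_{\bar M}$ on $\delta M$. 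Either way the orbit is constant in $\bar{M}$-direction and sits at a critical point $x$ of $H'_{\bar M}|\delta M$.

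It remains to compute $A_{N(\delta E)}^{\mathit{fibrewise}}(x)$. For a constant orbit at $x\in\delta M$, I would use Example \ref{th:constant-orbit} applied to the fibrewise action \eqref{eq:partial-action}: the fibrewise normal derivative $D\phi_t|N(\delta E)_x$ is governed purely by $H_{\bar M,t}$, which near $x$ equals $\epsilon' H'_{\bar M}$. By \eqref{eq:e-hessian-1b} the Hessian of $H'_{\bar M}$ in normal direction to $\delta M$ is positive definite, so (as in Example \ref{th:transverse-min}) the fibrewise winding number is $w_{N(\delta E)}^{\mathit{fibrewise}}(x) = 0$ when measured with the fibre section $\beta_{N(\delta M)}$ — but with the section $\beta_{N(\delta E)}$ from Setup \ref{th:product-section}, which over $W_{\geq 1/2}$ differs by the factor $w/|w|^2$, one gets an extra shift of $-1$ from the base rotation, by \eqref{eq:two-w} with $\mathrm{wind}(x_W) = 1$ (the base component of the orbit winds once around the hole, since the endpoint of its generating path in $G$ has rotation number $1$). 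Hence $w_{N(\delta E)}(x) = -1$. Plugging into \eqref{eq:partial-action} with a bounding surface contained in a small neighbourhood of $x$, as in Example \ref{th:constant-orbit}, yields $A_{N(\delta E)}^{\mathit{fibrewise}}(x) = \int_{S^1} H_{\bar M,t}(x)\,\mathit{dt} + w_{N(\delta E)}(x) = \epsilon' H'_{\bar M}(x) - 1$, which is the claim.

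The main obstacle is the bookkeeping of winding numbers: one has to be careful to track the difference between $w_{N(\delta M)}$ (computed with $\beta_{N(\delta M)}$, which is $0$ by the positive-definiteness in \eqref{eq:e-hessian-1b}) and $w_{N(\delta E)}$ (computed with $\beta_{N(\delta E)}$, which incorporates the $w/|w|^2$ twist of Setup \ref{th:product-section}), and to verify via \eqref{eq:two-w} that the base winding number of the orbit is exactly $1$. This is precisely the rotation-number $r_2 = 1$ normalization already built into \eqref{eq:r2-hamiltonian}, so the verification is routine once the decoupling is set up; the only genuinely new bit compared to Section \ref{subsec:compute-hf} is checking that the constant value $\epsilon' H'_{\bar M}(x)$ appears in place of $H_{\bar M,t}$-integral, which is immediate since $H'_{\bar M}$ is time-independent near the orbit.
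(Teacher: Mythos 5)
Your classification of the one-periodic orbits is correct and matches the paper: on the divisor the flow splits into the base flow $\phi^{\mathit{rot}}_t \circ \phi_{H_2,\epsilon t}$ and the fibre flow of $\epsilon' (H'_{\bar{M}}|\delta M)$, so (for $\epsilon'$ small) the orbits are either constant orbits $(0,x)$ or orbits $(\phi^{\mathit{rot}}_t(w_2),x)$, with $x$ a critical point of $H'_{\bar{M}}|\delta M$.

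The gap is in the action computation, where you run a single argument that is only valid for the second family. For $(\phi^{\mathit{rot}}_t(w_2),x)$, which lies over $W_{\geq 1/2}$, your reasoning is essentially the paper's: the positive-definite normal Hessian from \eqref{eq:e-hessian-1b} gives $w_{N(\delta M)}=0$, and \eqref{eq:two-w} with $\mathrm{wind}(x_W)=1$ gives $w_{N(\delta E)}=-1$ (although Example \ref{th:constant-orbit} does not apply verbatim to these orbits, since they are non-constant in the base; one instead takes $v$ constant in the $\bar{M}$-direction and uses Setup \ref{th:product-section} to see $v\cdot(\delta E)_\zeta = 0$). But for the constant orbits $(0,x)$ none of this applies: the base component is constant, so $\mathrm{wind}(x_W)=0$ and \eqref{eq:two-w} — which is only stated over $W_{\geq 1/2}$ — is not available, and the normal linearization $D\phi_t|N(\delta E)_{(0,x)}$ is governed not by $\epsilon' H'_{\bar{M}}$ but by $H_{\bar{E},t}+\epsilon' H'_{\bar{E}}$, whose Hessian in the $N(\delta E)$-direction along $\{0\}\times\delta M$ is zero plus an $\epsilon'$-small negative-definite form, by \eqref{eq:e-hessian} and \eqref{eq:e-hessian-2}; this is the opposite sign to \eqref{eq:e-hessian-1b}, reflecting the fact that $N(\delta E|) \iso \scrO_{\bC|}(-1)\boxtimes N(\delta M)$ is not a product over the interior region. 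For those orbits the equality $w_{N(\delta E)}=-1$ comes from that negative-definiteness (as in Example \ref{th:constant-orbit} and Example \ref{th:transverse-max}), with a constant bounding surface; the fact that your argument never invokes \eqref{eq:e-hessian-2} — the hypothesis put into the setup precisely for this step — shows that this case has been skipped. The numerical answer agrees, but the justification for the family of orbits over $w=0$ is missing and the one you give would not be correct there.
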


\begin{proof}
We have
\begin{equation} \label{eq:h2-on-the-divisor}
H_{2,t}|W \times \delta M = H^{\mathit{rot}} + \epsilon(H_2 \circ \phi_{-t}^{\mathit{rot}}) + \epsilon' (H_{\bar{M}}'|\delta M).
\end{equation}
From this, it is clear that the one-periodic orbits are indeed constant in $\bar{M}$-direction. 
In fact, there are two kinds of such orbits (compare Lemma \ref{th:h1-h2}):
\begin{align}
& \label{eq:n-action}
\mybox{Constant orbits $(0,x)$, for $x \in \delta M$ a critical point of $H_M'$. Because of the negative definiteness assumption in \eqref{eq:e-hessian-2}, these orbits have $w_{N(\delta E)} = -1$. In \eqref{eq:partial-action-2} one can then choose $v$ constant,
and then $A_{N(\delta E)}^{\mathit{fibrewise}} = H_{\bar{M}}(x) + w_{N(\delta E)}(0,x)$ as desired (note that the $W$-component of the Hamiltonian, given by the first two terms in \eqref{eq:h2-on-the-divisor}, does not contribute to this computation).}
\\
& \label{eq:n-action-2}
\mybox{Orbits $(\phi_t^{\mathit{rot}}(w_2),x)$, where $w_2$ is as in Lemma \ref{th:morse-function}, and $x$ is as in \eqref{eq:n-action}. The Hessian in normal direction is positive definite by \eqref{eq:e-hessian-1b}. However, in \eqref{eq:two-w} we have $\mathrm{wind}(x_W) = 1$, and hence again $w_{N(\delta E)} = -1$. In \eqref{eq:partial-action-2} one can choose $v$ to be constant in $\bar{M}$-direction, which because of Setup \ref{th:product-section} means that $v \cdot (\delta E)_{\zeta} = \emptyset$. Hence, the outcome is ultimately the same as in the previous case. 
}
\end{align}
\end{proof}

In spite of all the restrictions we have imposed on the Hamiltonian, and corresponding ones on the almost complex structure, one can still achieve regularity of the moduli spaces of Floer trajectories. (For Floer trajectories remaining in $W \times \delta M$, the fact that all our choices respect the product decomposition is not an obstruction to generic regularity inside that manifold; and then, regularity in $\bar{E}$ comes from an appropriate version of Lemma \ref{th:using-admissibility}, which itself relies on \eqref{eq:action-energy-index-3} and automatic regularity.) This gives us the relative Floer complex $\mathit{CF}^*_q(H_{2,t},J_t)$ as in \eqref{eq:relative-floer}. Its cohomology is the desired $\mathit{HF}^*_q(\bar{p},1)$. (One can use a version of Lemma \ref{th:deform-h} to prove that this Floer cohomology group is independent of the various technical details that enter into its definition; that is not really necessary for our purpose, since relative Floer cohomology appears in our argument mainly as an intermediate technical tool, not as a significant invariant in itself).

To define $\mathit{HF}^*_q(\bar{p},r)$ for $r = 1+\epsilon$, one replaces \eqref{eq:relative-2} with the Hamiltonian
\begin{equation} \label{eq:relative-3}
H_{3,t} = \begin{cases} 
H_{\bar{E}} + \epsilon' H_{\bar{E}}' & \text{on $\bar{E}|W_{\leq 1/2}$,} \\
\epsilon (H_3 \circ \phi_{-t}^{\mathit{rot}}) + H^{\mathit{rot}} + \epsilon' H_{\bar{M}}'
& \text{on $W_{\geq 1/2} \times \bar{M}$.}
\end{cases}
\end{equation}
Lemma \ref{th:contained} carries over without any problems, and that establishes fibrewise admissibility; the rest of the construction is as before. (One can extend the definition to $\mathit{HF}^*_q(\bar{p},r)$ for any $r \in (1,2)$ by modifying the Hamiltonian by a further rotation over $W_{\geq \rho_3}$, but those groups are all isomorphic to each other, as can easily be seen by a continuation map argument.) 

Next, consider the construction of the continuation map 
\begin{equation}
C_q^{1,1+\epsilon}: \mathit{HF}^*_q(\bar{p},1) \longrightarrow \mathit{HF}^*_q(\bar{p},1+\epsilon).
\end{equation}
One can use an $(s,t)$-dependent family of Hamiltonians which interpolates between $(H_{2,t})$ and $(H_{3,t})$ in a straightforward way, along the same lines as in the proofs of Lemma \ref{th:continuation-inclusion} and \ref{th:continuation-inclusion-2}. In particular, the $\bar{M}$-part of the Hamiltonian remains $\epsilon H_{\bar{M}}'$ throughout, which means that the curvature bound in \eqref{eq:fibrewise-admissible-2} is actually zero. Lemma \ref{th:contained} and its analogue for \eqref{eq:relative-3} then guarantee that the fibrewise admissibility condition \eqref{eq:fibrewise-admissible-3} is satisfied. On a much more concrete level, the same argument as in Lemma \ref{th:continuation-inclusion-2} applies, showing that the continuation map is an inclusion on the chain level, whose cokernel is generated by one-periodic orbits associated to the critical point $w_3$. The cohomology of the resulting quotient complex can be computed by reducing it to the Floer complex for $\epsilon H_{\bar{M},t}'$, and hence to Example \ref{th:transverse-min}, with the same provisos concerning gradings as in \eqref{eq:third-orbits}. This completes the proof of Lemma \ref{th:les-3} (strictly speaking, we have proved this for $r = 1+\epsilon$, but as explained above, the extension to $r \in (1,2)$ is straightforward).

The same observations apply to the construction of PSS maps \eqref{eq:pss-r-2}. In \eqref{eq:fibrewise-admissible-3} the curvature term will be of order $\epsilon'$, since it depends on deforming $\epsilon' H_{\bar{M}}'$ to zero; as the $A_N{(\delta E)}^{\mathit{fibrewise}}$ are close to $-1$, that admissibility condition \eqref{eq:fibrewise-admissible-3} is obviously satisfied. 

\subsection{Compactification and a Gromov-Witten invariant}
Our remaining task is to prove Lemma \ref{th:kernel-of-pss-2}; or to say it more appropriately, to check that the argument from \cite[Lemma 8.4]{seidel16} goes through in the relative setting. This not particularly difficult, but requires us to set up the geometry of the relevant moduli spaces so that it fits in with the rest of our setting.

\begin{setup} \label{th:fibration-3}
Given the situation from Setup \ref{th:fibration-2}, we construct a compact symplectic manifold $\cornerbar{E}$ with a map \eqref{eq:cornerbar-p} as follows. Taking $\bC|_{\geq 1/2} = \{|w| \geq 1/2\} \cup \{\infty\} \subset \bC| = \bC \cup \{\infty\}$, one sets 
\begin{equation} \label{eq:cornerbar-e}
\cornerbar{E} = E \cup_{(W_{\geq 1/2} \times \bar{M})} (\bC|_{\geq 1/2} \times \bar{M}),
\end{equation}
which comes with an obvious map $\cornerbar{p}$ to $\bC|$. The exceptional divisor is similarly
\begin{equation}
\delta E| = \delta E \cup_{(W_{\geq 1/2} \times \delta M)} (\bC|_{\geq 1/2} \times \delta M) \iso \bC| \times \delta M,
\end{equation}
where the second identification comes from \eqref{eq:outside-trivialization-2}. We fix a complex structure on the normal bundle to $\delta M$. Carry this over to a complex structure for $N(\delta E|)|(\bC_{\geq 1/2} \times \delta M)$ in the obvious way, and then choose an extension to all of $N(\delta E|)$, denoted by $J_{N(\delta E|)}$.

The symplectic form $\omega_{\bar{E}}$ cannot extend to $\cornerbar{E}$, and we remedy that by a suitable cutoff. Take a rotation-invariant symplectic form $\omega_{\bC|} \in \Omega^2(\bC|)$ which agrees with $\omega_W$ on $|w| \leq \rho_\infty$, for some $\rho_\infty$ which is smaller than, but sufficiently close to, $1$. Set
\begin{equation}
\omega_{\cornersubbar{E}} = \begin{cases} \omega_{\bar{E}} & \text{on $\bar{E}|W_{\leq 1/2}$}, \\ 
\omega_{\bC|} + \omega_{\bar{M}} & \text{elsewhere.}
\end{cases}
\end{equation}
\end{setup}

\begin{remark}
When first discussing this in Section \ref{sec:intro}, the examples were taken from algebraic geometry; whereas in the current more technical considerations, we have constructed a space \eqref{eq:cornerbar-e} with very explicit constraints on its symplectic geometry. Still, making the connection between the two is not particularly hard. Given an algebro-geometric Lefschetz pencil, one first blows up the base locus to get a variety fibered over the projective line. One removes the fibre at $\infty$ (assumed to be smooth), and deforms the symplectic form to bring it into the framework from Setup \ref{th:fibration-2}. The resulting compactification \eqref{eq:cornerbar-e} will be deformation equivalent to the original fibered variety; that suffices for our purposes, since the only piece of algebraic geometry we have used is the computation of Gromov-Witten invariants.
\end{remark}

\begin{lemma}
The normal bundle $N(\delta E|)$ (of $\delta E| = \bC| \times \delta M|$ inside $\cornerbar{E}$) is 
\begin{equation} \label{eq:twisted-normal}
N(\delta E|) \iso \scrO_{\bC|}(-1) \boxtimes N(\delta M).
\end{equation}
\end{lemma}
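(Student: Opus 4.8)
The plan is to compute the normal bundle directly from the explicit construction of $\cornerbar{E}$ in Setup \ref{th:fibration-3}, using the way $\delta E|$ is glued together from two pieces. Recall that $\delta E| \iso \bC| \times \delta M$, covered by the two charts $\bC_{\geq 1/2}|$ (where we have literally a product $\bC|_{\geq 1/2} \times \bar M$) and the interior piece $\bar E | W_{\leq 1/2}$. Over the first chart, the normal bundle $N(\delta E|)$ restricts to $N(\delta M)$ pulled back along the projection, by the product structure. The key point is that the transition function between these two local trivializations is not the identity: by \eqref{eq:can-be-extended} and the $w$-twist built into Setup \ref{th:product-section}, the identification \eqref{eq:partial-normal-bundle} of $N(\delta E)|W_{\geq 1/2}$ with $W_{\geq 1/2} \times N(\delta M)$ is the one that, when compared with the extension to all of $W$, gets multiplied by $w$ on the fibre over $w$. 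So the gluing over the overlap $\{|w| = 1/2\}$ (roughly speaking) differs from the product gluing by multiplication by $w/|w|$, i.e. by a loop of winding number $1$ in the structure group.

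First I would make this precise: decompose $N(\delta E|)$ as a bundle over $\bC| \times \delta M$, and observe that it is pulled back from a bundle on the two factors in the sense that over each of the two standard charts of $\bC|$ it is (canonically, via the above) the pullback of $N(\delta M)$ from $\delta M$, with the transition being multiplication by a degree-one loop $\bC^\times$-valued function purely in the $\bC|$-direction. Concretely, writing $\bC| = \bC|_{\leq 1} \cup \bC|_{\geq 1/2}$ with overlap an annulus, the clutching function of $\scrO_{\bC|}(-1)$ is $w^{\mp 1}$ (with the sign depending on orientation conventions; one checks it against the fact that $\omega_{\cornersubbar E}$ on $\delta E|$ is $\omega_{\bC|} + \omega_{\bar M}$ and that $\delta E|$ is Poincaré dual to that class, so $c_1(N(\delta E|))$ restricted to a constant section $\bC| \times \{pt\}$ must be $[\delta E|] \cdot A_* = -1$ by \eqref{eq:a-star-class}). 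From this clutching description it follows formally that $N(\delta E|) \iso \mathrm{pr}_{\bC|}^*\scrO_{\bC|}(-1) \otimes \mathrm{pr}_{\delta M}^* N(\delta M)$, which is exactly \eqref{eq:twisted-normal}.

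To organize this cleanly I would proceed in three steps: (1) exhibit the two local trivializations of $N(\delta E|)$ over the two charts and identify both with pullbacks of $N(\delta M)$, citing Setup \ref{th:fibration-2}, \eqref{eq:can-be-extended}, and Setup \ref{th:product-section}; (2) compute the transition function on the overlap and identify it with $w^{-1}$ times the identity, hence with the clutching data of $\scrO_{\bC|}(-1) \boxtimes N(\delta M)$; (3) conclude the isomorphism, and double-check the twist by computing the first Chern number of the restriction to $\bC| \times \{pt\}$ against $[\delta E|] \cdot A_* = -1$, which pins down the sign $\scrO_{\bC|}(-1)$ rather than $\scrO_{\bC|}(+1)$.

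The main obstacle I expect is bookkeeping rather than conceptual: getting the orientation and winding-number conventions consistent between the symplectic normal bundle, the complex structure $J_{N(\delta E|)}$ fixed in Setup \ref{th:fibration-3}, and the sign in \eqref{eq:partial-normal-bundle} (the "multiply by $w$ on the fibre over $w$"). In particular one must be careful that the constant sections $A_*$ of $\delta E| \iso \bC| \times \delta M$ are the ones with $[\delta E|] \cdot A_* = -1$, consistent with the leading term in \eqref{eq:z1} and with \eqref{eq:constant-section}; this is what forces the $\scrO(-1)$ factor and is the only place where a genuine (rather than formal) input is needed. Everything else is the standard fact that a bundle over a product which is pulled back from each factor over the two halves of $\bC|$, with a scalar clutching function, is an external tensor product.
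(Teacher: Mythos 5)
Your proposal is correct and is essentially the paper's own argument: the proof there simply combines \eqref{eq:can-be-extended} with the obvious identification $N(\delta E|)\,|\,(\bC|_{\geq 1/2}\times\delta M)\iso N(\delta M)$, which is exactly your two-chart clutching description, the multiplication-by-$w$ in \eqref{eq:can-be-extended} being the clutching function of $\scrO_{\bC|}(-1)$. One small remark: the sign is already forced by that multiplication-by-$w$ convention, so the cross-check against $[\delta E|]\cdot A_* = -1$ is optional — and should stay a cross-check, since within Setup \ref{th:fibration-3} that intersection number is deduced from the lemma rather than known beforehand.
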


This is a straightforward consequence of \eqref{eq:can-be-extended}: the isomorphism given there combines with the obvious isomorphism $N(\delta E|) | (\bC|_{\geq 1/2} \times \delta M) \iso N(\delta M)$ to give \eqref{eq:twisted-normal}.

\begin{lemma}
The symplectic class of $\cornerbar{E}$ satisfies
\begin{equation} \label{eq:omega-class}
[\omega_{\cornersubbar{E}} - \cornerbar{p}^*\omega_{\bC|}]  = [\delta E|] + [\cornerbar{E}_\infty].
\end{equation}
\end{lemma}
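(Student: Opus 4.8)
The plan is to prove \eqref{eq:omega-class} by comparing the closed $2$-form $\alpha := \omega_{\cornersubbar{E}} - \cornerbar{p}^*\omega_{\bC|}$ with the cycle $\delta E| + \cornerbar{E}_\infty$ on an open cover adapted to the construction \eqref{eq:cornerbar-e}, and then removing the one remaining degree of freedom by a Mayer--Vietoris argument. All cohomology is with $\bR$-coefficients. Fix $V = \cornerbar{p}^{-1}(\{|w|<3/4\})$ and $U = \cornerbar{p}^{-1}(\{|w|>1/2\}\cup\{\infty\})$; then $U = D_\infty\times\bar{M}$ with $D_\infty\subset\bC|$ an (open) disc, and $U\cap V = \{1/2<|w|<3/4\}\times\bar{M}\simeq S^1\times\bar{M}$ via \eqref{eq:outside-trivialization}. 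On $U$ one reads off $\alpha|_U = \mathrm{pr}_2^*\omega_{\bar{M}}$ directly from Setup \ref{th:fibration-3} (there $\omega_{\cornersubbar{E}} = \mathrm{pr}_1^*\omega_{\bC|} + \mathrm{pr}_2^*\omega_{\bar{M}}$ while $\cornerbar{p}=\mathrm{pr}_1$); since $\delta M$ represents $[\omega_{\bar{M}}]$, this gives $[\alpha|_U] = \mathrm{pr}_2^*\mathrm{PD}_{\bar{M}}[\delta M] = \mathrm{PD}_U[\delta E|\cap U]$, while $[\cornerbar{E}_\infty]|_U = 0$ because $\cornerbar{E}_\infty$ is a fibre of $U\to D_\infty$ over a contractible base. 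On $V$ we have $\omega_{\bC|}=\omega_W$, so $\alpha|_V = \omega_{\bar{E}}-\bar{p}^*\omega_W$, which is Poincar\'e dual to $\delta E$ in $\bar{E}$ (the observation following \eqref{eq:partial-action}); restricting to $V$ this is $\mathrm{PD}_V[\delta E|\cap V]$, and $[\cornerbar{E}_\infty]|_V = 0$ since $\cornerbar{E}_\infty\cap V = \emptyset$. Hence $\theta := [\alpha] - [\delta E|] - [\cornerbar{E}_\infty]$ restricts to $0$ on both $U$ and $V$.

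By exactness of the Mayer--Vietoris sequence, $\theta$ lies in the image of the connecting map $\partial\colon H^1(U\cap V;\bR)\to H^2(\cornerbar{E};\bR)$. Write $H^1(U\cap V;\bR) = H^1(S^1;\bR)\oplus H^1(\bar{M};\bR)$. The $H^1(\bar{M})$-summand extends to closed $1$-forms on $U = D_\infty\times\bar{M}$ by pullback along $\mathrm{pr}_2$, so it is annihilated by $\partial$; the remaining generator (the angular form of the $S^1$-factor) is sent by $\partial$ to $\pm$ the fibre class $[\cornerbar{E}_\infty] = [\bar{M}]$, which is nonzero. This last point is the standard Wang/Gysin computation, and it is clean here because the divisor fibration $\delta E|\to\bC|$ is trivial by \eqref{eq:outside-trivialization-2}, so that a horizontal lift of a small base loop through a point of $\delta M$ bounds inside $V$. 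Consequently $\theta = c\,[\bar{M}]$ for some scalar $c$.

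It remains to pin down $c$, which I would do by pairing with the exceptional line $\ell = \bC|\times\{x\}\subset\delta E|\cong\bC|\times\delta M$. Since $\cornerbar{p}|_\ell$ has degree $1$, $\ell\cdot[\bar{M}] = \ell\cdot[\cornerbar{E}_\infty] = 1$; by \eqref{eq:twisted-normal}, $\ell\cdot[\delta E|] = \deg\bigl((\scrO_{\bC|}(-1)\boxtimes N(\delta M))|_\ell\bigr) = -1$; and $\langle\alpha,\ell\rangle = 0$, because $\langle\cornerbar{p}^*\omega_{\bC|},\ell\rangle = \int_{\bC|}\omega_{\bC|}$, while $\langle\omega_{\cornersubbar{E}},\ell\rangle$ breaks up as $\int_{\{|w|\geq 1/2\}}\omega_{\bC|}$ (the $\mathrm{pr}_2^*\omega_{\bar{M}}$-part of $\omega_{\cornersubbar{E}}$ restricting to zero on $\ell$) plus $\int_{W_{\leq 1/2}\times\{x\}}\omega_{\bar{E}} = \int_{W_{\leq 1/2}}\omega_W$ (since $W_{\leq 1/2}\times\{x\}$ is a horizontal section of $\delta E\to W$ by \eqref{eq:outside-trivialization-2}, and $\omega_{\bar{E}} = \bar{p}^*\omega_W$ on $T\bar{E}^h$ by \eqref{eq:locally-trivial}), and $\omega_{\bC|}=\omega_W$ on $W_{\leq 1/2}$, so the two pieces sum to $\int_{\bC|}\omega_{\bC|}$. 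Therefore $\langle\theta,\ell\rangle = 0-(-1)-1 = 0$, whereas $\langle c[\bar{M}],\ell\rangle = c$; hence $c = 0$ and $\theta = 0$, which is exactly \eqref{eq:omega-class}.

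I expect the middle step to be the main obstacle: carefully verifying that the image of the Mayer--Vietoris connecting map is precisely the line spanned by the (nonzero) fibre class — that no $H^1(\bar{M})$-direction survives and that the $S^1$-direction produces the fibre class rather than $0$ — together with the small but fiddly area bookkeeping across the cutoff region $W_{\geq 1/2}$ in the evaluation of $\langle\alpha,\ell\rangle$.
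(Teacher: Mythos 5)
Your proposal is correct and is essentially the paper's own argument: the paper likewise reduces to showing the two sides differ by a multiple of the fibre class (it does this by noting the identity holds after restriction to the complement of $\cornerbar{E}_\infty$, of which your Mayer--Vietoris step is just a more explicit rendering) and then evaluates on the very same constant section $\bC| \times \{x\}$, using \eqref{eq:twisted-normal} to get intersection number $-1$ with $\delta E|$. Your horizontality/area bookkeeping for $\langle \omega_{\cornersubbar{E}} - \cornerbar{p}^*\omega_{\bC|}, \ell\rangle = 0$ correctly fills in what the paper dismisses as ``the rest is obvious.''
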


Since \eqref{eq:omega-class} is correct after restricting to $H^2(E|)$, all one has to do is to check that the two sides have equal integral over some class in $H_2(\cornerbar{E})$ which has nonzero degree over $\bC|$. For that, one takes a constant section $\bC| \times \{x\}$, for $x \in \delta M|$. By \eqref{eq:twisted-normal}, the intersection number of $\delta E|$ and that section is $-1$, and the rest is obvious.

\begin{setup}
The class of almost complex structures on $\cornerbar{E}$ we will allow is characterized as follows. Start by choosing $J_{\bar{M}}$ on $\bar{M}$ as in Setup \ref{th:tangent-hamiltonians}, and its restriction $J_{\delta M}$. Our almost complex structure $J$ must agree with $i \times \delta J_{\delta M}$ on $\delta E| \iso \bC \times \delta M$, and with $i \times J_{\bar{M}}$ on $\bC|_{\geq \rho_\infty} \times \bar{M}$. Moreover, on the normal bundle to $\delta E|$, it should reproduce the previously fixed $J_{N(\delta E|)}$.

For Hamiltonians, start by taking $H_{\bar{M}}$ as in Setup \ref{th:tangent-hamiltonians}, and its restriction $H_{\delta M}$. Also, take a function $H_{\bC}$ on the base, such that near infinity, the associated Hamiltonian vector field is the infinitesimal rotational vector field at some speed $\theta \in \bR$. Then, we want $H = H_{\bC|} + H_{\delta M}$ on $\bC \times \delta M$, and similarly $H = H_{\bC|} + H_{\bar{M}}$ on $\bC_{\geq \rho_\infty} \times \bar{M}$. Finally, on $N(\delta E|)$ we want the same behaviour as in Setup \ref{th:tangent-hamiltonians}.
\end{setup}

Take a time-dependent Hamiltonian $(H_t)$ of this kind. By assumption, the Hamiltonian vector field rotates the fibres over $\bC|_{\geq \rho_\infty}$ with some speed $\theta_t$, and we require that the total rotation number satisfies
\begin{equation} \label{eq:overall-speed}
r = \frac{1}{2\pi}\int_0^1 \theta_t \mathit{dt} \notin \bZ.
\end{equation}
Take a one-periodic orbit that is not contained in the fibre over $\infty$. This is automatically disjoint from $\bC|_{\geq \rho_\infty} \times M$, hence can be thought of as lying inside our original $\bar{E}$. We will stick to the assumption of considering only those orbits that are nullhomologous in $\bar{E}$. For those, we write down a slightly more general form of \eqref{eq:partial-action}, namely
\begin{equation}
\begin{aligned}
& 
A_{N(\delta E)}^{\mathit{fibrewise}}(x) = - \int_S v^*(\omega_{\cornersubbar{E}} - \cornerbar{p}^* \omega_{\bC|}) + \int_{S^1} H_{\bar{M},t}(x(t))\, \mathit{dt} + \big(v|(S \setminus \partial S) \cdot (\delta E| + \cornerbar{E}_\infty)\big),
\\ &
\text{where $v: S \rightarrow \cornerbar{E}$ has the same boundary behaviour as in \eqref{eq:partial-action}.}
\end{aligned}
\end{equation}
If we choose the bounding surface $v$ to take lie in $\bar{E}$, then that just reproduces \eqref{eq:partial-action}. On the other hand, it is independent of $v$, by \eqref{eq:omega-class}, and the two facts together show that the general formula is correct.

Because our topic of discussion involves a version of PSS maps, we will be working with maps defined on the thimble \eqref{eq:thimble}. This means that we choose $(F_{s,t},H_{s,t},J_{s,t})$ on $\cornerbar{E}$ as in \eqref{eq:plusminus-limit-2}.
Suppose that we are given a map $u = (u_{\bC|}, u_{\bar{M}}): T \longrightarrow \delta E| = \bC| \times \delta M$, whose limit is a one-periodic orbit of the kind discussed above. The analogue of \eqref{eq:relative-energy-fibre} says that
\begin{equation} \label{eq:relative-energy-fibre-2}
\begin{aligned} &
E^{\mathit{top}}(u_{\bar{M}}) = A_{N(\partial E)}^{\mathit{fibrewise}}(x) + (\nu_u \cdot \text{zero-section}) + (u \cdot \cornerbar{E}_\infty), \\
& 
\text{where $\nu_u$ is a section $u^*N(\delta E|)$ with the same asymptotic behaviour as in \eqref{eq:relative-energy}.}
\end{aligned}
\end{equation}
Now suppose that $x$ is nondegenerate. Then we have a linearized operator (or rather, a class of such operators) $D_{N(\delta E|),u}$ in direction normal to $\delta E|$. The formula for its index $I_{N(\delta E|)}(u)$ is as in \eqref{eq:normal-index}, and that combines with \eqref{eq:relative-energy-fibre-2} to give
\begin{equation} \label{eq:aei}
E^{\mathit{top}}(u_{\bar{M}}) = A_{N(\partial E)}^{\mathit{fibrewise}}(x) + \half I_{N(\delta E|)}(u) + (u \cdot \cornerbar{E}_\infty).
\end{equation}

\begin{remark}
As a sanity check on \eqref{eq:aei}, consider what happens if we glue (in a purely topological sense) a sphere $w = (w_{\bC|}, w_{\bar{M}}): S^2 \rightarrow \bC| \times \delta M$ into $u$. The left hand side changes by the area of $w_{\bar{M}}$, which by \eqref{eq:omega-class} is $w \cdot ([\delta E|] + [\cornerbar{E}_\infty])$. Those two terms exactly match the changes in $I_{N(\delta E|)}(u)$ and $u \cdot \cornerbar{E}_\infty$.
\end{remark}

Suppose that $(F_{s,t},H_{s,t})$ satisfy the fibrewise version of the PSS-admissibility condition, introduced in \eqref{eq:fibrewise-admissible-3}. If $u$ is a solution of the associated Cauchy-Riemann equation, we combine \eqref{eq:aei} with an appropriate version of \eqref{eq:energy-bound} to obtain
\begin{equation} \label{eq:eeeee}
0 < \half I_{N(\delta E|)}(u) + (u \cdot \cornerbar{E}_\infty).
\end{equation}
Lemma \ref{th:automatic-regularity-2} also works in this context. The consequence of that and \eqref{eq:eeeee} is:

\begin{corollary} \label{th:final-regularity}
If \eqref{eq:fibrewise-admissible-3} is satisfied and we are considering maps with $u \cdot \cornerbar{E}_\infty = 1$, any associated operator $D_{N(\delta E|),u}$ is surjective.
\end{corollary}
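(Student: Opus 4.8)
\textbf{Proof proposal for Corollary \ref{th:final-regularity}.}

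The plan is to combine the automatic-regularity machinery of Lemma \ref{th:automatic-regularity-2} with the index/energy identity \eqref{eq:aei} specialized to the case $u \cdot \cornerbar{E}_\infty = 1$. First I would observe that $u \cdot \cornerbar{E}_\infty$ is a non-negative intersection number: since $u$ is (pseudo-)holomorphic in a neighbourhood of $u^{-1}(\cornerbar{E}_\infty)$ — the almost complex structure being of the required product form near $\cornerbar{E}_\infty$, which is an almost complex submanifold — every point of $u^{-1}(\cornerbar{E}_\infty)$ carries a positive local multiplicity. Hence $u \cdot \cornerbar{E}_\infty = 1$ is the smallest positive value, and the inequality \eqref{eq:eeeee}, namely $0 < \half I_{N(\delta E|)}(u) + (u \cdot \cornerbar{E}_\infty)$, which is already available under the fibrewise PSS-admissibility hypothesis \eqref{eq:fibrewise-admissible-3}, reads $0 < \half I_{N(\delta E|)}(u) + 1$. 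Since $I_{N(\delta E|)}(u)$ is an even integer (it is twice an intersection count, as in \eqref{eq:normal-index}), this forces $I_{N(\delta E|)}(u) \geq 0$.

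With $I_{N(\delta E|)}(u) \geq 0$ in hand, I would invoke Lemma \ref{th:automatic-regularity-2}, which is stated precisely in this thimble setting: if the normal index is non-negative, the normal linearized operator $D_{N(\delta E|),u}$ is onto. That is exactly the asserted conclusion, so the argument closes. The only point needing care is that Lemma \ref{th:automatic-regularity-2} is formulated for the operator $D_{N,u}$ associated to a divisor $\delta M \subset \bar M$, whereas here we want it for $D_{N(\delta E|),u}$ associated to $\delta E| \subset \cornerbar{E}$; but the proof of that lemma is purely local along the zero-section of $u^*N$, comparing the decay rates of kernel elements against a reference section as in \eqref{eq:linearized-m}, and so it applies verbatim once one uses the eigenvalue structure of $Q_{N(\delta E|)}(x)$ — which is legitimate because $x$ is assumed nondegenerate, giving $\alpha_{N(\delta E|)}(x) \in (0,2\pi)$.

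The main (mild) obstacle is bookkeeping: making sure that \eqref{eq:aei} is applied with the correct sign conventions and that the topological energy bound \eqref{eq:energy-bound} — which was proved for continuation-type equations on $\bR \times S^1$ — is the right analogue on the thimble $T$. On $T$ the relevant bound is $E^{\mathit{top}}(u_{\bar M}) \geq K^{\mathit{min}}(F_{\bar M,s,t},H_{\bar M,s,t})$, and combined with the fibrewise PSS-admissibility inequality $A_{N(\delta E)}^{\mathit{fibrewise}}(x) < K^{\mathit{min}}(F_{\bar M,s,t},H_{\bar M,s,t})$ from \eqref{eq:fibrewise-admissible-3} this is exactly what yields \eqref{eq:eeeee} via \eqref{eq:aei}. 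Since all of these ingredients are already assembled in the excerpt, the proof is essentially a one-line deduction: non-negativity of the intersection number plus \eqref{eq:eeeee} forces $I_{N(\delta E|)}(u) \geq 0$, and then Lemma \ref{th:automatic-regularity-2} gives surjectivity of $D_{N(\delta E|),u}$.
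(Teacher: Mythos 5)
Your proposal is correct and follows essentially the same route as the paper: fibrewise PSS-admissibility \eqref{eq:fibrewise-admissible-3} plus the thimble energy bound and \eqref{eq:aei} give \eqref{eq:eeeee}, which with $u \cdot \cornerbar{E}_\infty = 1$ and the evenness of $I_{N(\delta E|)}(u)$ forces $I_{N(\delta E|)}(u) \geq 0$, and then the automatic-regularity Lemma \ref{th:automatic-regularity-2}, transferred verbatim to the pair $\delta E| \subset \cornerbar{E}$, yields surjectivity. Your extra remark on positivity of intersections with $\cornerbar{E}_\infty$ is unnecessary (the value $1$ is imposed in \eqref{eq:through-infinity}) but harmless.
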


We extend Setup \ref{th:product-section} to the compactification $\cornerbar{E}$, as follows.

\begin{setup} \label{th:product-section-2}
Take $\beta_{N(\delta M)}$ as in Setup \ref{th:product-section}. Then there is a section $\beta_{N(\delta E|)}$ of the normal bundle to $\delta E| \subset \cornerbar{E}$ which equals $(w/|w|^2) \beta_{N(\delta M)} = \bar{w}^{-1} \beta_{N(\delta M)}$ over $\bC|_{\geq 1/2} \times \delta M$, and whose zero-set is $(\bC| \times B(\delta M)) \cup -(\infty \times \delta M)$, with the minus sign denoting the reversal of the usual orientation. The section $\beta_{N(\delta E|)}$ gives rise to perturbed versions of $\delta E|$, denoted as usual by $(\delta E|)_{\zeta}$.
\end{setup}

We are now to ready assemble all the ingredients into the desired construction. Let's assume, in the notation from \eqref{eq:overall-speed}, that $r>0$. On a much more technical level, we assume that the analogue of the locally split condition from Definition \ref{th:local-splitting} holds. Fix some $s_* \in \bR$ and consider maps
\begin{equation} \label{eq:through-infinity}
\left\{
\begin{aligned}
& u: T \longrightarrow \cornerbar{E}, \\
& u \cdot \cornerbar{E}_\infty = 1, \\
& u(\infty) \in \cornerbar{E}_\infty, \\
& u(s_*,0) \in \cornerbar{E}_{w_*}, \\
& \textstyle\lim_{s \rightarrow -\infty} u(s,\cdot) = x(t) \in \bar{E},
\end{aligned}
\right.
\end{equation}
which are solutions of the PSS equation associated to some choice of $(F_{s,t}, H_{s,t})$, satisfying \eqref{eq:fibrewise-admissible-3}. For a solution contained in $\delta E|$, Corollary \ref{th:final-regularity} ensures surjectivity of the linearized operator in normal direction (the intersection conditions in \eqref{eq:through-infinity} do not have a component in normal direction to $\delta E|$, hence do not affect this statement), and that is sufficient in order for the standard transversality methods to go through. One can encode the counting of isolated solutions $u$ in a Floer cocycle in $\mathit{CF}^2_q(H_t,J_t)$, defined as follows:
\begin{equation} \label{eq:provisional}
\text{$x$-coefficient } = \sum_u \pm q^{u \cdot (\delta E|)_{\zeta}}.
\end{equation}
We will not go through all the details of the construction, since it follows ideas that have been thoroughly explored in \cite{seidel16}; but we will touch on the salient points.

First, let's check that the powers of $q$ on the right hand side really define an element of $\mathit{CF}^*_q(H_t,J_t)$, meaning that 
\begin{equation} \label{eq:lies-in}
u \cdot (\delta E|)_{\zeta} \geq w_{N(\delta E)}(x). 
\end{equation}
For a solution $u$ not contained in $\delta E|$, but whose limit is contained in $\delta E|$, we have an analogue of \eqref{eq:m-topological}. This says that $u \cdot (\delta E|)_{\zeta} - w_{N(\delta E)}(x)$ is positive (the sum of a positive contributions from the limit and from each of the points of $u \cdot \delta E|$). For a solution contained in $\delta E|$, one can use Setup \ref{th:product-section-2} to rewrite the index formula as $\half I_N(u) = - w_{N(\delta E)}(x) + u \cdot (\bC \times B(\delta M) - \{\infty\} \times \delta M)$, where the intersection numbers are taken inside $\delta E| = \bC| \times \delta M$; or equivalently, as $\half I_N(u) = -w_{N(\delta E)}(x) + u \cdot (\delta E|)_{\zeta}$, where now the intersection number takes place in $\cornerbar{E}$. This and Corollary \ref{th:final-regularity} complete the verification of \eqref{eq:lies-in}. 

The other issue is whether \eqref{eq:provisional} is actually a cocycle. This involves looking at undesirable points that can occur in the compactification of moduli spaces of dimension $\leq 1$. One of them is a splitting into two components (a Floer trajectory, and a map on the thimble) where the intermediate periodic orbit lies in $\cornerbar{E}_\infty$, see Figure \ref{fig:degenerate}. In that case, one can prove that the sequence converging to that limit must have an additional intersection point with $\cornerbar{E}_\infty$, thereby contradicting the intersection number assumption in \eqref{eq:through-infinity} (technically, the argument is based on \cite[Lemma 7.7]{seidel16}). Another, potentially more problematic, limiting configuration is shown in Figure \ref{fig:degenerate-2}. This can again be ruled out by showing that a sequence converging to that limit must have intersection number $\geq 2$ with $\cornerbar{E}_\infty$. This once more uses \cite[Lemma 7.7]{seidel16}, but in this case additionally depends on $r>0$.
\begin{figure}
\begin{centering}
\includegraphics{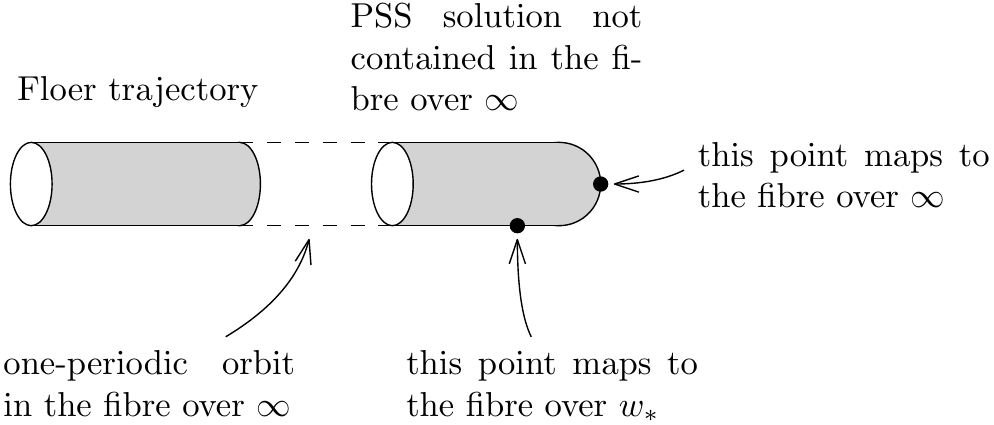}
\caption{\label{fig:degenerate}A hypothetical limit point of the space from \eqref{eq:through-infinity}, which we want to rule out.}
\end{centering}
\end{figure}
\begin{figure}
\begin{centering}
\includegraphics{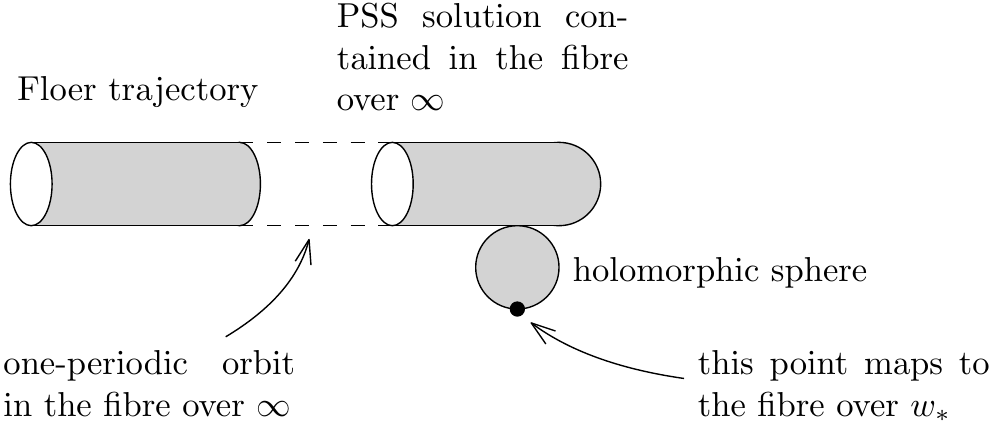}
\caption{\label{fig:degenerate-2}A more complicated variant of Figure \ref{fig:degenerate}.}
\end{centering}
\end{figure}

Actually, the cocycle \eqref{eq:provisional} is not exactly what we're aiming for. Instead, we want to consider a parametrized version where $s_*$ can move. For the purpose of exposition, one can divide the consequences of using such a parametrized moduli space into two parts.

\begin{lemma} \label{th:right}
In the situation above, the class in $\mathit{HF}_q^2(\bar{p},r)$ of the cocycle from \eqref{eq:provisional} is the image under $\mathit{PSS}_q^r$ of $z^{(1)}|\bar{E}$, where $z^{(1)}$ is the Gromov-Witten invariant from \eqref{eq:z1}.
\end{lemma}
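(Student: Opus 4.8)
The plan is to show that the Floer cocycle from \eqref{eq:provisional}, obtained by counting solutions of the thimble equation \eqref{eq:through-infinity}, represents the PSS image of $z^{(1)}|\bar E$. Recall that the definition of $\mathit{PSS}_q^r$ in \eqref{eq:m-pss} proceeds by choosing a pseudo-cycle representative $\gamma_q = q^{-1}\gamma_{\delta M} + \gamma_0 + q\gamma_1 + \cdots$ of the cohomology class in question, and counting pairs $(c,u)$ as in \eqref{eq:pair-moduli-space}, with the intersection condition $\gamma(c) = u(+\infty)$. So the heart of the matter is to exhibit a pseudo-cycle representing $z^{(1)}|\bar E$ which is built out of the same geometry that appears in \eqref{eq:through-infinity}, namely: the cycle swept out by $A$-curves through a generic point, where $A$ ranges over classes with $A\cdot[\bar M] = 1$, with the power $q^{\delta E|\cdot A}$ as bookkeeping. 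In other words, I would argue that the moduli space defining \eqref{eq:provisional} \emph{is}, up to the usual gluing/homotopy manipulations, a parametrized version of the moduli space used to evaluate the genus-zero one-pointed Gromov-Witten invariant $z_A$ paired with the Floer complex.

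The key steps, in order, would be as follows. First, reinterpret the conditions in \eqref{eq:through-infinity}: the constraint $u\cdot\cornerbar E_\infty = 1$ together with $u(\infty)\in\cornerbar E_\infty$ pins the thimble so that it meets the fibre at infinity exactly once, at the marked point; the remaining constraint $u(s_*,0)\in\cornerbar E_{w_*}$ (with $s_*$ moving, in the parametrized version) is the incidence condition with a generic fibre, which after passing to the closed string / neck-stretching limit becomes ``the curve passes through a generic point of $\cornerbar E$.'' Second, run the standard degeneration argument from \cite{seidel16} (the one underlying the non-relative Lemma \ref{th:kernel-of-pss}): stretch the neck / take $s_* \to +\infty$, so the thimble breaks into a PSS half-cylinder landing on the constant orbit $e^r$ and a genuine pseudo-holomorphic sphere in $\cornerbar E$ carrying the point constraint and meeting $\cornerbar E_\infty$ once. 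The count of such spheres, organized by homology class with weight $q^{\delta E|\cdot A}$, is precisely $\mathit{PSS}_q^r$ applied to $\sum_{A\cdot[\bar M]=1} q^{\delta E|\cdot A} z_A = z^{(1)}$, restricted to $\bar E$ since orbits in the fibre at infinity are excluded. Third, check that the relative bookkeeping is consistent: that the power $q^{u\cdot(\delta E|)_\zeta}$ in \eqref{eq:provisional} matches the power $q^{\delta E|\cdot A}$ dictated by Gromov-Witten theory, using the index identity \eqref{eq:aei} and Setup \ref{th:product-section-2}, exactly as the verification of \eqref{eq:lies-in} was carried out. Finally, invoke Corollary \ref{th:final-regularity} and the transversality for simple pseudo-holomorphic chains \eqref{eq:simple-chain} to ensure that no lower-dimensional strata contribute, so that the chain-level count is well-defined and the limiting identification is an honest equality of cohomology classes.

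The main obstacle I expect is the bookkeeping in the relative (over $\bQ[[q]]$, not $\bQ((q))$) setting: one must be careful that the degeneration argument does not produce configurations with components sinking entirely into $\delta E|$ or into $\cornerbar E_\infty$ which would shift $q$-powers, and that the two ``bad'' limiting configurations depicted in Figures \ref{fig:degenerate} and \ref{fig:degenerate-2} are genuinely excluded. These are controlled by the intersection-number normalization $u\cdot\cornerbar E_\infty = 1$, by the hypothesis $r>0$, and by \cite[Lemma 7.7]{seidel16}; making the relative versions of those monotonicity/positivity arguments airtight — in particular tracking $w_{N(\delta E)}(x)$ through the compactification and verifying that every degenerating family acquires an extra intersection with $\cornerbar E_\infty$ — is the delicate part. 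Everything else is a faithful transcription of the closed-string argument from \cite{seidel16}, which is why the proof in the paper is likely to be brief and to refer back to that reference for the analytic details.
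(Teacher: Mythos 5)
Your proposal follows essentially the same route as the paper: one moves $s_*$ to the right and analyzes the bubbling as $s_* \rightarrow +\infty$, where the sphere bubble through $\cornerbar{E}_\infty$ carrying the incidence constraint yields the Gromov--Witten cycle representing $z^{(1)}|\bar{E}$ and the remaining component is a PSS-type solution, with the relative $q$-power bookkeeping checked exactly as in the verification of \eqref{eq:lies-in} and the analytic details deferred to \cite[Lemma 8.4]{seidel16}. (Two small descriptive slips that do not affect the structure: the PSS component in the limit stays asymptotic to the original orbit $x$ rather than to ``the constant orbit $e^r$'', and the incidence condition is with the fibre $\cornerbar{E}_{w_*}$ rather than with a generic point of $\cornerbar{E}$.)
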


\begin{lemma} \label{th:left}
If we moreover assume that $r>1$, the cocycle defined by \eqref{eq:provisional} is nullhomologous.
\end{lemma}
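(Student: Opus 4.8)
\textbf{Proof proposal for Lemma \ref{th:left}.}

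The plan is to show that the Floer cocycle \eqref{eq:provisional}, which by Lemma \ref{th:right} represents $\mathit{PSS}^r_q(z^{(1)}|\bar E)$ for $r \in (1,2)$, is in fact a coboundary — i.e. that the parametrized moduli space furnishes an explicit nullhomotopy. The idea, following the structure of \cite[Lemma 8.4]{seidel16}, is that when $r>1$ the marked point constraint $u(s_*,0)\in\cornerbar{E}_{w_*}$ together with the single intersection $u\cdot\cornerbar E_\infty = 1$ becomes so restrictive that, after letting $w_* \to \infty$, no solutions survive; the one-dimensional parametrized moduli space (over $s_* \in \bR$) then has its two ends accounted for by Floer trajectories glued onto a rigid cocycle representing $\mathit{PSS}^r_q(z^{(1)}|\bar E)$ on one side, and by an empty configuration on the other, which exhibits the cocycle as $d_q$ of the count of the one-dimensional family.

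First I would set up the parametrized moduli space: maps $u$ as in \eqref{eq:through-infinity} but with $s_*$ allowed to vary over $\bR$, with the incidence conditions at the moving point $s_*$ and at $\infty$. The compactification has the usual strata: breaking off a Floer trajectory at $s \to -\infty$ (this gives the $d_q$ of a degree-$0$ count, which is what we want to identify with \eqref{eq:provisional}), degeneration of the domain as $s_* \to \pm\infty$, and sphere bubbling. The key relative-geometry input is Corollary \ref{th:final-regularity} (automatic regularity of $D_{N(\delta E|),u}$ for maps with $u\cdot\cornerbar E_\infty = 1$, using \eqref{eq:fibrewise-admissible-3} and \eqref{eq:eeeee}), which ensures transversality for all configurations contained in $\delta E|$, exactly as in the construction of \eqref{eq:provisional} itself. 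As in Lemmas \ref{th:right} and the discussion around Figures \ref{fig:degenerate} and \ref{fig:degenerate-2}, the dangerous boundary strata — breaking at an intermediate orbit in $\cornerbar E_\infty$, or the more complicated configuration of Figure \ref{fig:degenerate-2} — are excluded because a sequence converging to such a limit would be forced to have intersection number $\geq 2$ with $\cornerbar E_\infty$, contradicting the constraint; it is precisely here that $r>1$ (rather than merely $r>0$) is used, via \cite[Lemma 7.7]{seidel16}. One then lets $w_* \to \infty$: a solution with $u(s_*,0)\in\cornerbar E_{w_*}$ and $u\cdot\cornerbar E_\infty=1$ in the limit either passes through $\cornerbar E_\infty$ a second time (again contradicting $u\cdot\cornerbar E_\infty=1$) or escapes, so for $w_*$ generic and close to $\infty$ the relevant moduli space has no $s_*\to +\infty$ boundary contribution. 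The remaining $s_*\to-\infty$ end reproduces \eqref{eq:provisional} (this is the content of Lemma \ref{th:right}'s proof), so the cocycle identity for the one-dimensional parametrized space reads $d_q(\text{count}) = \pm\,[\text{cocycle \eqref{eq:provisional}}]$, giving the claim. Throughout, the powers of $q$ are controlled by \eqref{eq:lies-in} and the index computation $\tfrac12 I_N(u) = -w_{N(\delta E)}(x) + u\cdot(\delta E|)_\zeta$ from Setup \ref{th:product-section-2}, so everything stays in the relative complex $\mathit{CF}^*_q(H_t,J_t)$ over $\bQ[[q]]$.

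The main obstacle I anticipate is the bookkeeping for the boundary strata of the parametrized moduli space in the relative setting — specifically, verifying that every broken/bubbled configuration that is \emph{not} a genuine Floer breaking at $-\infty$ carries an extra intersection with $\cornerbar E_\infty$ or with $\delta E|$, so that it either violates $u\cdot\cornerbar E_\infty = 1$ or lies in the subcomplex \eqref{eq:floer-submodule} to higher order. This requires the relative analogues of \cite[Lemmas 7.7 and 7.8]{seidel16} about how intersection multiplicities behave under Gromov limits, combined with the automatic-regularity input (Corollary \ref{th:final-regularity}) to know that no spurious components contained in $\delta E|$ contribute. Since these are exactly the arguments already carried out in the absolute case in \cite{seidel16}, and the divisor $\delta E|$ only adds the already-established normal-direction automatic regularity, I expect the adaptation to be routine but notationally heavy; no new analytic phenomenon should appear, and the $r>1$ hypothesis enters in exactly the same place as in the source.
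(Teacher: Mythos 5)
Your general toolkit is the right one (a parametrized moduli space in $s_*$, intersection-number constraints via \cite[Lemma 7.7]{seidel16}, and the automatic regularity of Corollary \ref{th:final-regularity}), but the structure of the argument is garbled in a way that leaves a genuine gap. In the intended argument one fixes the value $s_*^0$ used in \eqref{eq:provisional} and lets the parameter run over $(-\infty, s_*^0]$: the boundary of the one-dimensional parametrized space then consists of the slice at $s_* = s_*^0$ (the cocycle \eqref{eq:provisional}), Floer breaking (which is $d_q$ of the parametrized count), and putative limit configurations as $s_* \to -\infty$; nullhomologousness follows once the latter are shown not to exist. The simplest such limit (Figure \ref{fig:degenerate-3}) is excluded by the intersection condition $u \cdot \cornerbar{E}_\infty = 1$ in \eqref{eq:through-infinity}, and $r>1$ is needed precisely to exclude the more complicated variants in which the intermediate periodic orbit, and possibly the whole PSS component, lies in $\cornerbar{E}_\infty$. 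Your write-up instead asserts that the $s_* \to -\infty$ end ``reproduces \eqref{eq:provisional}'' and attributes this to Lemma \ref{th:right} — but Lemma \ref{th:right} is the $s_* \to +\infty$ bubbling statement, and the $s_* \to -\infty$ end does not reproduce the cocycle; it is exactly the end that must be proved empty, which is where the hypothesis $r>1$ enters. Relatedly, you locate the use of $r>1$ at the Figure \ref{fig:degenerate-2} configurations, but those arise in verifying that \eqref{eq:provisional} is a cocycle at fixed $s_*$ and only require $r>0$.

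The second problem is the extra step of letting $w_* \to \infty$ to kill an alleged $s_* \to +\infty$ boundary. That end is an artifact of letting $s_*$ range over all of $\bR$ rather than over $(-\infty, s_*^0]$; it is not needed, and the argument you give for it is not sound as stated: moving the constraint fibre $\cornerbar{E}_{w_*}$ towards $\infty$ does not force a second (transverse, positive) intersection with $\cornerbar{E}_\infty$ without a compactness analysis of what happens in the limit — sphere bubbling into $\cornerbar{E}_\infty$, breaking at orbits contained in the fibre at infinity, or the whole component falling into $\cornerbar{E}_\infty$ all have to be addressed, and ``or escapes'' is not an argument in a setting where Gromov compactness applies. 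So while your anticipated technical obstacle (relative analogues of the intersection-multiplicity lemmas plus normal automatic regularity) is correctly identified, the proof as proposed does not establish the statement; it needs to be reorganized so that the parameter moves to the left only, the $s_* \to -\infty$ limits are the ones excluded, and $r>1$ is invoked there.
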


This can be applied to the specific Hamiltonians used in Section \ref{subsec:concrete} to define $\mathit{HF}^*_q(\bar{E},1+\epsilon)$, and then, the two Lemmas taken together imply Lemma \ref{th:kernel-of-pss-2}. Since \cite[Lemma 8.4]{seidel16} already contains both parts of the argument, we will only offer the briefest of summaries here. For Lemma \ref{th:left}, one moves $s_*$ to the left, and shows that the resulting parametrized moduli space has no limit points with $s_* \rightarrow -\infty$. In the simplest case, such limits would have the form indicated in Figure \ref{fig:degenerate-3}, but that contradicts the intersection assumption in the original moduli space. The assumption $r>1$ rules out more complicated limits, such as the version of Figure \ref{fig:degenerate-3} where the intermediate periodic orbit, and possibly also the entire PSS component of the limit, lies in $\cornerbar{E}_\infty$. Along the same lines, Lemma \ref{th:right} involves moving $s_*$ to the right, and a bubbling argument for $s_* \rightarrow +\infty$, with the relevant limits shown in Figure \ref{fig:degenerate-5}.
\begin{figure}
\begin{centering}
\includegraphics{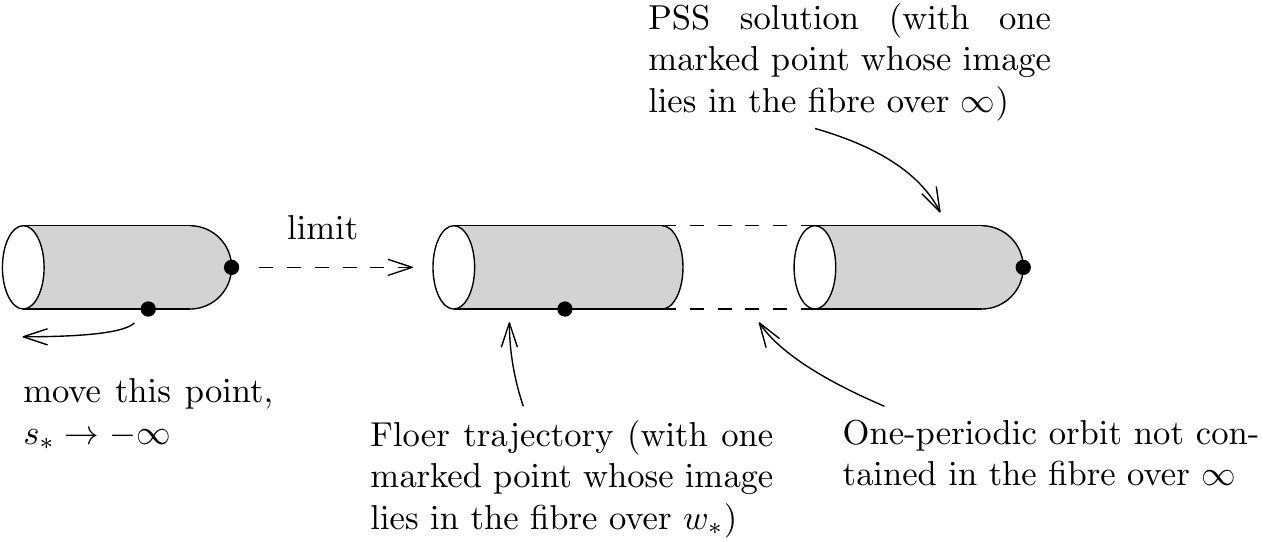}
\caption{\label{fig:degenerate-3}One of the limit points that have to be ruled out to derive Lemma \ref{th:left}.}
\end{centering}
\end{figure}
\begin{figure}
\begin{centering}
\includegraphics{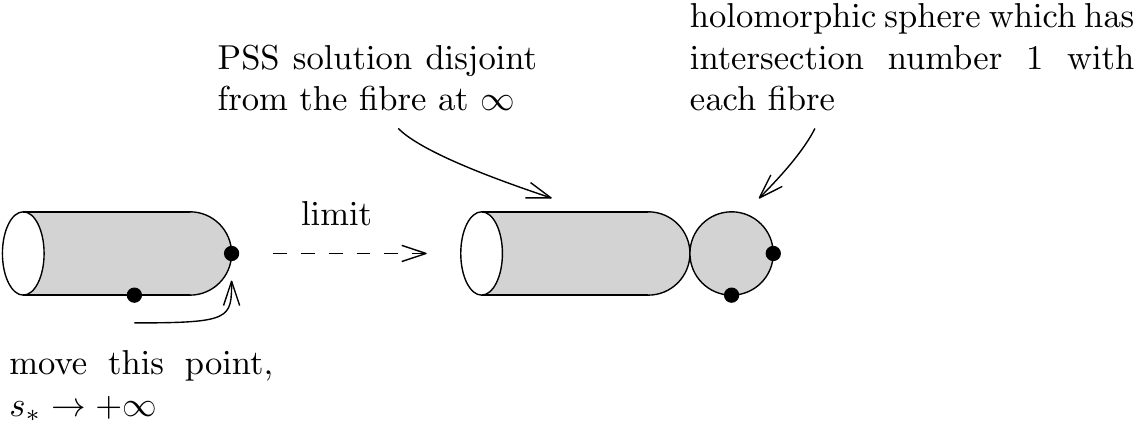}
\caption{\label{fig:degenerate-5}The bubbling process underlying the proof of Lemma \ref{th:right}.}
\end{centering}
\end{figure}

\section{\label{sec:general}$B$-fields}
This section contains the proof of Theorem \ref{th:main}. We will approach this pragmatically, in a way which remains parallel to the special case discussed before, hence requiring only a minimum of additional explanations. The basic insight comes from \cite[Section 3]{seidel15}, where it was shown that one can always introduce a $B$-field twist for which the twisted Gromov-Witten invariants satisfy \eqref{eq:write-z1}. Pursuing that direction would lead to a result which is strictly parallel to Theorem \ref{th:1-variable}, for the correspondingly twisted Fukaya category. However, we're really looking for a statement that applies to the Fukaya category in the standard sense. The idea is to carry out the $B$-field twist argument in a Novikov ring with several variables, where the resulting twisted theory can be specialized back to the ordinary Fukaya category. This is not how we have stated things in Section \ref{subsec:multivariable}, so a bit of extra work has to be done in order to connect the two formulations. After that, we will be rather brief concerning the proof, since that repeats our previous argument, with extra (topologically defined) weights when counting pseudo-holomorphic curves.

\subsection{The formal framework\label{subsec:b-field}}
Let's assume that \eqref{eq:connected} holds, which gives us the preferred homology class $A_*$. From Section \ref{subsec:multivariable} we use the lattice $H$ and its sublattice $H_0^0$ (the degree $0$ part of $H_0$). The dual lattices are
\begin{equation}
\begin{aligned}
& H^\vee = \mathit{Hom}(H,\bZ) = H^2(\cornerbar{E};\bZ)/\mathit{torsion} \iso \bZ^{r+2}, \\
& (H_0^0)^\vee = \mathit{Hom}(H_0^0,\bZ) \iso \bZ^r.
\end{aligned}
\end{equation}
To clarify the situation, note that these sit in a short exact sequence
\begin{equation}
0 \rightarrow \bZ [\delta E|] \oplus {\textstyle\frac{1}{d}}\bZ([\bar{M}] + [\delta E|]) \longrightarrow H^\vee \longrightarrow (H_0^0)^\vee \rightarrow 0,
\end{equation}
where $d$ is the maximal divisibility of $[\bar{M}] + [\delta E|] \in H^\vee$. For concreteness, let's fix classes
\begin{equation} \label{eq:divisors} 
D_1,\dots,D_r \in H^2(\cornerbar{E};\bZ), \quad
D_k \cdot A_* = 0, 
\end{equation}
which give rise to a basis of $(H_0^0)^\vee$ (it is easy to see that such classes exist: start with any basis of $(H_0^0)^\vee$, lift the basis elements to $H^2(\cornerbar{E};\bZ)$, and then add integer multiples of $\delta E|$ or $\bar{M}$ to achieve the desired intersection number). To these classes we associate formal variables $q_1,\dots,q_r$, which we combine with the usual $q$ to form the ring $\bQ[q_1^{\pm 1},\dots,q_r^{\pm 1}][[q]]$. Take
\begin{equation} \label{eq:basic-q}
Q = q^{[\delta E|]} q_1^{D_1} \cdots q_r^{D_r}
\in  Hom(H,\bQ[q^{\pm 1}, q_1^{\pm 1},\dots,q_r^{\pm 1}]^\times).
\end{equation}
Because this homomorphism is multiplicatively-valued, we write $Q^A = q^{\delta E| \cdot A} q_1^{D_1 \cdot A} \cdots q_r^{D_r \cdot A}$ for its evaluation on $A \in H$. Our $B$-fields are formal sums
\begin{equation} \label{eq:b-field}
B  = b \,[\delta E|] + b_1 D_1 + \cdots + b_r D_r, \quad b,b_1,\dots,b_r \in q\bQ[q_1^{\pm 1},\dots,q_r^{\pm 1}][[q]].
\end{equation}
the correspondingly modified version of \eqref{eq:basic-q} is $Q\exp(B)$. One can think of $Q^A \exp(B \cdot A)$ intuitively as $\exp(-\Omega_B \cdot A)$, for
\begin{equation} \label{eq:basic-omega}
-\!\Omega_B = \log(Q) + B = (\log(q) +b )[\delta E|] + (\log(q_1) + b_1) D_1 + \cdots + (\log(q_r) + b_r) D_r.
\end{equation}
We will also use its derivative
\begin{equation} \label{eq:derivative-of-omega-b}
-\!\partial_q \Omega_B = q^{-1}[\delta E|] + \partial_q B = (q^{-1} + \partial_q b) [\delta E|] + (\partial_q b_1) D_1 + \cdots + (\partial_q b_r) D_r.
\end{equation}

\begin{lemma} \label{th:specialize}
There is a unique substitution $(q,q_1,\dots,q_r) \mapsto (g(q),g_1(q),\dots,g_r(g))$, for $g \in q+q^2\bQ[[q]]$, $g_k \in 1 + q\bQ[[q]]$, which transforms $Q\exp(B)$ into $q^{[\delta E|]}$. 
\end{lemma}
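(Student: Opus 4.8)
The plan is to solve for the substitution order by order in $q$, exploiting the fact that $Q\exp(B)$ is a multiplicative-valued homomorphism on $H$, so that the problem decouples into one scalar equation for each of the three classes $[\delta E|]$, $D_1,\dots,D_r$ — except that the coefficients $b, b_k$ themselves depend on all of $q,q_1,\dots,q_r$, which is what makes the iteration nontrivial. Writing out $Q\exp(B)$ on a class $A\in H$ gives
\begin{equation}
Q^A\exp(B\cdot A) = q^{[\delta E|]\cdot A}\, q_1^{D_1\cdot A}\cdots q_r^{D_r\cdot A}\, \exp\bigl((b[\delta E|] + b_1 D_1 + \cdots + b_r D_r)\cdot A\bigr),
\end{equation}
and since $[\delta E|], D_1,\dots,D_r$ together span a finite-index sublattice of $H^\vee$ after tensoring with $\bQ$ (indeed, by the short exact sequence just before \eqref{eq:divisors}, $\bQ[\delta E|]\oplus\bQ([\bar M]+[\delta E|])\oplus\bigoplus_k \bQ D_k = H^\vee\otimes\bQ$, and $[\bar M]+[\delta E|]$ is a multiple of $q^{-1}$ times... — more precisely one checks $[\bar M]$ does not appear, so what we really use is that $\{[\delta E|],D_1,\dots,D_r\}$ is $\bQ$-linearly independent and the map $q^A \mapsto q^{[\delta E|]\cdot A}q_1^{D_1\cdot A}\cdots q_r^{D_r\cdot A}$ is injective on the relevant group of series), matching $Q\exp(B)$ with $q^{[\delta E|]}$ is equivalent to the system of scalar identities obtained by pairing against $[\delta E|]^\vee$ and $D_k^\vee$ in $H\otimes\bQ$. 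This yields
\begin{equation}
\label{eq:b-substitution-system}
g(q)\,\exp\bigl(b(g,g_1,\dots,g_r)\bigr) = q, \qquad
g_k(q)\,\exp\bigl(b_k(g,g_1,\dots,g_r)\bigr) = 1 \quad (k=1,\dots,r),
\end{equation}
where on the right I have abbreviated $b(g(q),g_1(q),\dots,g_r(q))$, etc.

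\textbf{Key steps.} First I would reduce the claim to \eqref{eq:b-substitution-system}, checking carefully that a substitution transforming $Q\exp(B)$ into $q^{[\delta E|]}$ is the same data as a solution of that system, using that $b,b_k\in q\bQ[q_1^{\pm1},\dots,q_r^{\pm1}][[q]]$ so that all compositions with $g\in q+q^2\bQ[[q]]$, $g_k\in 1+q\bQ[[q]]$ are well-defined formal power series in $q$ (the key point: substituting $q\mapsto g(q)$ with $g(0)=0$ into something divisible by $q$ lands in $q\bQ[[q]]$, and $g_k$ being a unit makes the $q_j^{\pm1}$ harmless). Second, I would solve \eqref{eq:b-substitution-system} by induction on the order $N$ in $q$: suppose $g, g_k$ have been determined modulo $q^{N+1}$; the order-$q^N$ coefficients of $b$ and $b_k$, evaluated at the substitution, depend only on the coefficients of $g,g_k$ of order $\le N-1$ (since $b,b_k$ are divisible by $q$ and $g$ has no constant term, a further derivative/chain-rule count shows the order-$q^N$ term of $b(g,\dots)$ involves $g$ only through orders $\le N-1$), so the order-$q^N$ part of \eqref{eq:b-substitution-system} is a triangular linear system that determines the order-$q^N$ coefficients of $g,g_k$ uniquely. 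The base case $N=1$ gives $g(q)=q+O(q^2)$, $g_k(q)=1+O(q)$ directly from $b,b_k\in q\bQ[\dots][[q]]$. Uniqueness is built into the induction; existence is the successful solvability at each stage.

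\textbf{Main obstacle.} The only genuinely delicate point is the bookkeeping in the second step: verifying that the order-$q^N$ coefficient of the composed series $b(g(q),g_1(q),\dots,g_r(q))$ really does not involve the order-$q^N$ (or higher) coefficients of $g,g_k$, so that the recursion is well-posed and triangular. This is where the hypotheses $g\in q+q^2\bQ[[q]]$ and $b\in q\bQ[q_1^{\pm1},\dots,q_r^{\pm1}][[q]]$ are used essentially — a term $q_1^{d_1}\cdots q_r^{d_r}q^m$ of $b$ (with $m\ge 1$) contributes $g_1^{d_1}\cdots g_r^{d_r}g^m$, and since $g$ starts in degree $1$, the lowest-degree occurrence of the order-$N$ coefficient of $g$ inside $g^m$ is in degree $N+m-1\ge N$, with equality only when $m=1$, in which case it multiplies $g_1^{d_1}\cdots g_r^{d_r}|_{q=0}=1$ and enters linearly. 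So at order $q^N$ the unknowns $a_N$ (coefficient of $q^N$ in $g$) and the analogous $a_{k,N}$ appear only through the universally-triangular linear combination coming from the $m=1$ terms, plus a constant depending on lower-order data; inverting that linear system (it is unitriangular after the obvious normalization) finishes the argument. Once this combinatorial lemma is nailed down, the rest is routine, and the statement follows. $\qed$
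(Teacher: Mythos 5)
Your proof is correct and is essentially the paper's argument: both reduce the claim to the scalar system $g\,\exp(b(g,g_1,\dots,g_r))=q$, $g_k\,\exp(b_k(g,g_1,\dots,g_r))=1$ and solve it order by order in $q$, the paper merely writing $g=qe^{\phi}$, $g_k=e^{\phi_k}$ so that the system becomes $\phi=-b(qe^{\phi},e^{\phi_1},\dots,e^{\phi_r})$, $\phi_k=-b_k(qe^{\phi},e^{\phi_1},\dots,e^{\phi_r})$, whose right-hand sides at order $q^m$ involve only coefficients of order $<m$, making the recursion and its uniqueness immediate. One small caution: the assertion in your second paragraph that the order-$q^N$ term of $b(g,g_1,\dots,g_r)$ involves $g$ only through orders $\leq N-1$ is false (the $m=1$ monomials do contribute the order-$N$ coefficient of $g$, as you yourself observe later), but the refined unitriangular analysis in your final paragraph is the correct statement and is what actually carries the induction.
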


\begin{proof}
It is convenient to write $g(q) = qe^{\phi(q)}$, $g_k(q) = e^{\phi_k(q)}$ for $\phi,\phi_k \in \bQ[[q]]$, and to think in terms of \eqref{eq:basic-omega}. The desired equations are
\begin{equation} \label{eq:desired-order-by-order}
\begin{aligned}
& \phi(q) = -b(q e^{\phi(q)},e^{\phi_1(q)},\dots,e^{\phi_r(q)}), \\
& \phi_1(q) = -b_1(q e^{\phi(q)},e^{\phi_1(q)},\dots,e^{\phi_r(q)}), \\
& \dots \\
& \phi_r(q) = -b_r(q e^{\phi(q)},e^{\phi_1(q)},\dots,e^{\phi_r(q)}).
\end{aligned}
\end{equation}
Because $b,b_1,\dots,b_r$ have no $q$-constant term, the $q^m$ term on the right hand side of each equation depends only on the terms of $\phi(q),\phi_1(q),\dots,\phi_r(q)$ of $q$-order $<m$. This allows us to solve \eqref{eq:desired-order-by-order} order-by-order.
\end{proof}

The relevant version of the Gromov-Witten invariant \eqref{eq:zk} is
\begin{equation}
z^{(k)}_B  = \sum_{\bar{M} \cdot A = k} z_A Q^A \exp(B \cdot A),
\end{equation}
which for the cases of interest to us, yields
\begin{equation}
\begin{aligned}
& z^{(1)}_B \in q^{-1}[\delta E|] + H^2(\cornerbar{E};\bQ[q_1^{\pm 1},\dots,q_r^{\pm 1}])[[q]], \\
& z^{(2)}_B \in H^0(\cornerbar{E};\bQ[q_1^{\pm 1},\dots,q_r^{\pm 1}])[[q]] \iso
\bQ[q_1^{\pm 1},\dots,q_r^{\pm 1}][[q]].
\end{aligned}
\end{equation}
The replacement for \eqref{eq:as} is the following condition (a generalization of \cite[Assumption 3.1]{seidel15}):
\begin{equation} \label{eq:choose-b-field}
\begin{aligned}
& -\!\partial_q \Omega_B = \psi_B z^{(1)}_B - \eta_B [\bar{M}]  
\in q^{-1}[\delta E|] + H^2(\cornerbar{E};\bQ[q_1^{\pm 1},\dots,q_r^{\pm 1}])[[q]],
\\[-.2em] & \qquad \qquad
\text{with } \psi_B \in 1 + q\bQ[q_1^{\pm 1},\dots,q_r^{\pm 1}][[q]], \;\;
\eta_B \in \bQ[q_1^{\pm 1},\dots,q_r^{\pm 1}][[q]].
\end{aligned}
\end{equation}
There is a certain freedom in achieving this property: it is preserved under
\begin{equation} \label{eq:beta-change}
\begin{aligned}
& (B,\psi_B,\eta_B) \longmapsto (B|_{q \mapsto \gamma} + \log(\gamma/q)[\delta E|] , 
(\psi_B|_{q \mapsto \gamma}) \partial_q\gamma, (\eta_B|_{q \mapsto \gamma}) \partial_q\gamma) \\
& \qquad \qquad \qquad \qquad \qquad \qquad \text{for $\gamma \in q + q^2\bQ[q_1^{\pm 1},\dots,q_r^{\pm 1}][[q]]$.}
\end{aligned}
\end{equation}
We will need a version of \cite[Lemma 3.2]{seidel15}:

\begin{lemma} \label{th:choose-b-field}
One can choose \eqref{eq:b-field} so that \eqref{eq:choose-b-field} holds. Moreover, such a $B$ is unique up to transformations \eqref{eq:beta-change}.
\end{lemma}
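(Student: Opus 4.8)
\textbf{Proof plan for Lemma \ref{th:choose-b-field}.}

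The plan is to solve the equation \eqref{eq:choose-b-field} order by order in the $q$-adic filtration, exactly mirroring the argument for \cite[Lemma 3.2]{seidel15}, but now tracking the extra variables $q_1,\dots,q_r$. First I would unwind what \eqref{eq:choose-b-field} asks: writing $-\partial_q\Omega_B = q^{-1}[\delta E|] + \partial_q B$ as in \eqref{eq:derivative-of-omega-b}, the requirement is that this class, which lives in $q^{-1}[\delta E|] + H^2(\cornerbar{E};\bQ[q_1^{\pm1},\dots,q_r^{\pm1}])[[q]]$, should lie in the plane spanned by $z^{(1)}_B$ and $[\bar M]$. The leading term of $z^{(1)}_B$ is $q^{-1}[\delta E|]$ (from \eqref{eq:z1}, still valid here since the $B$-twist only affects positive-order terms), so the $q^{-1}$ parts automatically match with $\psi_B \equiv 1 \bmod q$. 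At order $q^m$ ($m \geq 0$), the equation becomes: the coefficient $(\partial_q B)_{m}\in H^2(\cornerbar{E};\bQ[q_1^{\pm1},\dots,q_r^{\pm1}])$ must equal $(\psi_B z^{(1)}_B)_m - (\eta_B)_m[\bar M]$. Here $(\partial_q B)_m = (m+1)B_{m+1}$ is a class supported in the rank-$(r+1)$ sublattice spanned by $[\delta E|], D_1,\dots,D_r$ (by the form \eqref{eq:b-field}), while $z^{(1)}_B$ can a priori have any $H^2$-component; and $(\psi_B)_m z^{(1)}_B|_{q=0}$ together with lower-order data of $B,\psi_B,\eta_B$ are already determined.

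The key observation making the recursion work is a splitting of $H^2(\cornerbar{E};\bQ)$: the classes $[\delta E|]$ and $[\bar M]$ are linearly independent, and the intersection pairing with $A_*$ kills all the $D_k$ and $[\bar M]$ while pairing nontrivially with $[\delta E|]$. So I would decompose $H^2(\cornerbar{E};\bQ) = \bQ[\bar M] \oplus (\text{span of }[\delta E|],D_1,\dots,D_r) \oplus W$ for a complement $W$; the point is that $z^{(1)}_B|_{q=0}=q^{-1}[\delta E|]$'s next-order correction $z_A$-terms can have $W$-components, and those must be absorbed — but they \emph{can} be, because at order $q^m$ we get to choose the scalar $(\psi_B)_m \in \bQ[q_1^{\pm1},\dots,q_r^{\pm1}]$ and $(\eta_B)_m$ freely, and then $B_{m+1}$ is forced. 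Wait — that does not kill the $W$-component. The actual mechanism (as in \cite{seidel15}) is subtler: one uses that $\partial_q\Omega_B$ lands in a \emph{specified} plane, and the freedom is really in choosing $B$ so that $z^{(1)}_B$ itself gets corrected; since $z^{(1)}_B = \sum_A z_A Q^A\exp(B\cdot A)$ and $B$ enters the exponent, modifying $B_{m+1}$ changes $z^{(1)}_B$ at orders $> m+1$, so the recursion closes. Concretely: at step $m$, assume $B$ known mod $q^{m+1}$ so that \eqref{eq:choose-b-field} holds mod $q^m$; then $z^{(1)}_B \bmod q^{m+1}$ is determined, and one solves for $B_{m+1}$ and $(\psi_B)_m, (\eta_B)_m$ from the linear equation in $H^2$, which is solvable precisely because the obstruction (the $W$-component) vanishes. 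The vanishing of that obstruction is where assumption-free algebraic geometry is used in \cite{seidel15}; I would cite that the same argument applies verbatim since none of the $q_k$ interfere with the $H^2$-linear algebra, the coefficient ring $\bQ[q_1^{\pm1},\dots,q_r^{\pm1}]$ being a flat extension of $\bQ$.

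\textbf{Uniqueness.} Given two solutions $(B,\psi_B,\eta_B)$ and $(B',\psi_{B'},\eta_{B'})$, I would show they differ by \eqref{eq:beta-change} by comparing order by order. Both satisfy $-\partial_q\Omega_{B^{(i)}} \in \{\text{plane spanned by }z^{(1)},[\bar M]\}$; the difference of the two $\Omega$'s, after the reparametrization $q \mapsto \gamma(q)$ that matches their $[\delta E|]$-coefficients (this determines $\gamma \in q + q^2\bQ[q_1^{\pm1},\dots,q_r^{\pm1}][[q]]$ uniquely, by the same order-by-order inversion as in Lemma \ref{th:specialize}), must have $\partial_q$ lying in the plane and $[\delta E|]$-coefficient zero, forcing it to be a multiple of $[\bar M]$; integrating, the $D_k$-coefficients of $B$ and $B'|_{q\mapsto\gamma}$ agree, and the $[\delta E|]$-coefficients differ by $\log(\gamma/q)$, which is exactly \eqref{eq:beta-change}.

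\textbf{Main obstacle.} The genuine content — as opposed to bookkeeping with the $q_k$ — is the vanishing of the $W$-valued obstruction class at each order, i.e.\ the fact that $z^{(1)}_B$ can be forced into the $([\delta E|],[\bar M])$-plane by a $B$-field supported on $[\delta E|], D_1,\dots,D_r$. I expect to handle this by reducing to \cite[Lemma 3.2]{seidel15}: the argument there does not use that the relevant lattice has rank $2$, only that $[\delta E|]$ has nonzero $A_*$-pairing and $[\bar M]$ has zero pairing, so it applies to the rank-$(r+1)$ situation here. The adaptation is to check that the $D_k$, having $D_k\cdot A_* = 0$ by \eqref{eq:divisors}, behave like $[\bar M]$ for the purposes of that argument, so the obstruction still lands in the same $W$ and still vanishes.
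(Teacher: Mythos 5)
Your plan goes wrong exactly at the step you single out as the ``main obstacle''. There is no complement $W$ and no obstruction class whose vanishing needs a geometric input: by \eqref{eq:h-lattice} one has $b_2(\cornerbar{E}) = r+2$, and the classes $D_1,\dots,D_r$ were chosen in \eqref{eq:divisors} precisely so that they induce a basis of $(H_0^0)^\vee$; combined with the short exact sequence relating $H^\vee$ to $(H_0^0)^\vee$ (whose kernel is rationally spanned by $[\delta E|]$ and $[\bar{M}]$), this says that $[\delta E|],[\bar{M}],D_1,\dots,D_r$ form a $\bQ$-basis of $H^2(\cornerbar{E};\bQ)$. That is the whole point of passing to the multi-variable $B$-field: the allowed directions of $B$, together with the free coefficient $\eta_B$ in the $[\bar{M}]$-direction, exhaust $H^2$, so the order-by-order equation is solvable by elementary linear algebra. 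Concretely, the paper normalizes $\psi_B=1$ (every orbit of \eqref{eq:beta-change} meets this slice, and exactly once), and at order $q^{k-1}$ first chooses $\eta_{B,k-1}$ to cancel the $[\bar{M}]$-component of the already-determined terms, then solves uniquely for $B_k$ by inverting $\Phi_k(X)=X-\tfrac{1}{k}(X\cdot A_*)[\delta E|]$ on the span of $[\delta E|],D_1,\dots,D_r$, which is invertible because $[\delta E|]\cdot A_*=-1$ and $D_l\cdot A_*=0$. Deferring solvability to an alleged obstruction-vanishing argument in \cite[Lemma 3.2]{seidel15} is not a proof, and it mischaracterizes that reference: there, as here, the mechanism is that the admissible $B$-fields span enough of $H^2$, not that some transcendental obstruction happens to vanish.

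There is also a bookkeeping error that hides the one genuine (if small) subtlety. You assert that modifying $B_{m+1}$ changes $z^{(1)}_B$ only at orders $>m+1$, so that $z^{(1)}_B$ modulo $q^{m+1}$ is determined before $B_{m+1}$ is chosen. Because of the leading term $Q^{A_*}\exp(B\cdot A_*)\,[\delta E|]=q^{-1}e^{-b}[\delta E|]$ of $z^{(1)}_B$ (the multivariable refinement of \eqref{eq:z1}, cf.\ \eqref{eq:refined-z1}), the coefficient $B_{m+1}$ in fact contributes $(B_{m+1}\cdot A_*)[\delta E|]$ already at order $q^m$, i.e.\ at the same order as the term $(m+1)B_{m+1}$ coming from $\partial_q B$; this is exactly why one inverts $\Phi_{m+1}$ rather than merely dividing by $m+1$. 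Note also that this feedback is purely in the $[\delta E|]$-direction, so even granting your accounting it could never absorb a hypothetical transverse component of $z^{(1)}_B$ at the given order --- the resolution really is that no such component exists, by the choice \eqref{eq:divisors}. Finally, your uniqueness sketch compares ``the plane spanned by $z^{(1)}$ and $[\bar{M}]$'' for the two solutions, but $z^{(1)}_B$ depends on $B$, so that plane is not a fixed object; the clean route is the paper's: prove existence and uniqueness of the solution with $\psi_B=1$ within the recursion, and reduce the general case to it by solving $(\psi_B|_{q\mapsto\gamma})\,\partial_q\gamma=1$ for $\gamma$ order by order, in the spirit of your appeal to Lemma \ref{th:specialize}.
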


\begin{proof}
We will show that there is a unique solution with $\psi_B = 1$. After that, the general result follows immediately by applying \eqref{eq:beta-change}. Expand 
\begin{equation}
\begin{aligned}
& B = qB_1 + q^2B_2 + \cdots, \\
& \eta_B = \eta_{B,0} + q\eta_{B,1} + \cdots.
\end{aligned}
\end{equation}
Suppose that we have chosen $B_1,\dots,B_{k-1}$ and $\eta_{B,0},\dots,\eta_{B,k-2}$, so that \eqref{eq:choose-b-field} holds modulo $q^{k-1}$ (with $\psi_B = 1$; this is automatically true for $k = 1$). One has
\begin{equation}
\text{$q^{k-1}$-term of } z_B^{(1)} = (B_k \cdot A_*) [\delta E|] + \text{ (terms depending on the previous choices).}
\end{equation}
The desired equation at order $q^{k-1}$ is therefore
\begin{equation} \label{eq:solve-for-bk}
\begin{aligned}
& kB_k - (B_k \cdot A_*)[\delta E|] = \text{(terms depending on previous choices) } - \eta_{B,k-1}[\bar{M}] \\ & \qquad \qquad \qquad \qquad \qquad \qquad \qquad \qquad \qquad
\in H^2(\cornerbar{E};\bQ[q_1^{\pm 1},\dots,q_r^{\pm 1}]).
\end{aligned}
\end{equation}
One can choose $\eta_{B,k-1}$ so that the right hand side lies in the $\bQ[q_1^{\pm 1},\dots,q_r^{\pm 1}]$-submodule generated by $[\delta E|],\, D_1,\dots,D_r$. The endomorphism $\Phi_k(X) = X - \frac{1}{k} (X \cdot A_*)[\delta E|]$ of that submodule is invertible for all $k \geq 2$, with inverse $\Phi_{-1-k}$. Hence, there is a unique solution for $B_k$ in \eqref{eq:solve-for-bk}.
\end{proof}

Assuming that \eqref{eq:choose-b-field} holds, one considers the analogue of \eqref{eq:schwarz-eq},
\begin{equation} \label{eq:weird-schwarz}
S_q f_B + 8z^{(2)}_B \psi_B^2 + \left(\eta_B - \frac{\psi_B'}{\psi_B}\right)' + \frac12 \left( \eta_B - \frac{\psi_B'}{\psi_B} \right)^2 = 0,
\end{equation}
where the differential and Schwarzian operator are with respect to $q$. This is an equation for an $f_B \in q\bQ[q_1^{\pm 1},\dots,q_r^{\pm 1}][[q]]$ whose $q^1$-term is invertible in $\bQ[q_1^{\pm 1},\dots,q_r^{\pm 1}]$. Turning to $A_\infty$-structures, one can define a version of \eqref{eq:b-algebra} over $\bQ[q_1^{\pm 1},\dots,q_r^{\pm 1}]$, by separating the topological contributions of the different holomogy classes of holomorphic polygons. Moreover, taking the twist \eqref{eq:b-field} into account, one has a version of the relative Fukaya category, which yields a deformation $\scrB_B$ defined over $\bQ[q_1^{\pm 1},\dots,q_r^{\pm 1}][[q]]$. Here is the direct analogue of Theorem \ref{th:1-variable}:

\begin{theorem} \label{th:1-variable-twisted}
Suppose that the topological assumptions \eqref{eq:connected} and \eqref{eq:simply-connected} hold, and choose $B$ so that it satisfies \eqref{eq:choose-b-field}. Let $f_B$ be a solution of \eqref{eq:weird-schwarz}. Then, there is a quasi-isomorphic model for $\scrB_B$, in which all $A_\infty$-operations have coefficients in $\bQ[f_B,q_1^{\pm 1},\dots,q_r^{\pm 1}] \subset \bQ[q_1^{\pm 1},\dots,q_r^{\pm 1}][[q]]$.
\end{theorem}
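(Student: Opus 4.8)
The plan is to mimic the proof of Theorem \ref{th:1-variable}, substituting the multivariable Novikov ring $\bQ[q_1^{\pm 1},\dots,q_r^{\pm 1}][[q]]$ for $\bQ[[q]]$ throughout, with the $B$-field twist built into the curve counts via the weights $Q^A\exp(B\cdot A)$. First I would reconstruct the directed subalgebra picture: the $B$-twisted version $\scrB_B$ has a subalgebra $\scrC_B$ (the twisted analogue of \eqref{eq:b-directed}), quasi-isomorphic to a directed $\tilde\scrC_B$ by the same argument as Lemmas \ref{th:subalgebra-2}--\ref{th:directed-2} (the curvature vanishing and low-dimensional caveats go through unchanged, since they are purely degree-theoretic once \eqref{eq:cf-vk} is arranged). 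The exact triangle of bimodules \eqref{eq:delta-triangle-2} lifts to a $\scrC_B$-bimodule triangle with a map $\epsilon_B$, via the Fukaya category of the Lefschetz fibration exactly as in Section \ref{subsec:fukaya-of-lefschetz}; the vanishing statements \eqref{eq:no-negative-degree-3} persist after a $q$-filtration argument because the coefficient ring is still a filtered ring with $\bQ[q_1^{\pm1},\dots,q_r^{\pm1}]$-linear associated graded, and $\scrC_B\otimes_{\bQ[q_1^{\pm1},\dots]}\bQ[q_1^{\pm1},\dots]$ at $q=0$ is just $\scrC$ base-changed.

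Next I would run the Hamiltonian Floer cohomology argument of Sections \ref{sec:mclean}--\ref{sec:apply-to-lefschetz} with the $B$-field weights. Here the key point, taken from \cite[Section 3]{seidel15} and encoded in \eqref{eq:choose-b-field}, is that after the twist the relevant Gromov--Witten class $z^{(1)}_B$ combines with $-\partial_q\Omega_B = q^{-1}[\delta E|]+\partial_q B$ to land in the span of $[\delta E|]$ and $[\bar M]$ just as assumption \eqref{eq:as} demanded in the untwisted case --- the role of \eqref{eq:as} is now played by \eqref{eq:choose-b-field}, which Lemma \ref{th:choose-b-field} shows can always be achieved. Consequently the Kaledin class of $\scrA_B$ (hence $\scrC_B$) vanishes, an $A_\infty$-connection $\nabla_q^2$ with respect to $\partial_q$ exists, and Corollary \ref{th:1-variable-5}'s argument gives the second-order linear ODE $\nabla_q^2\nabla_q^2\epsilon_B + (\eta_B - \psi_B'/\psi_B)\nabla_q^2\epsilon_B - 4z^{(2)}_B\psi_B^2\epsilon_B = 0$ in the relevant Hochschild-type cohomology group (torsion-freeness and the base-change isomorphisms hold because $\scrA$ is cohomologically smooth). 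After trivializing the deformation $\scrC_B = \scrC[q_1^{\pm1},\dots][[q]]$ using the connection, $\epsilon_B$ becomes an element of $H^n(\mathit{hom}_{(\scrC,\scrC)}(\scrC^\vee,\scrC))\otimes\bQ[q_1^{\pm1},\dots][[q]]$ satisfying the corresponding scalar ODE, whose solutions are spanned by $s_0,s_1$ as in \eqref{eq:y0y1}; writing $f_B = s_1/s_0$ we get $\epsilon_B \in H^n(\mathit{hom}_{(\scrC,\scrC)}(\scrC^\vee,\scrC))\otimes V$ for $V = \bQ s_0\oplus\bQ s_1$. Then Proposition \ref{th:1-variable-2} (via Lemma \ref{th:field-of-definition} and the classification Lemma \ref{th:first-order-2}) produces a model for $\scrB_B$ on $\tilde\scrB_B = \scrC\oplus\scrC^\vee[1-n]$ base-changed to the coefficient ring, whose weight-$r$ part has coefficients homogeneous of degree $r$ in $(s_0,s_1)$; rescaling by $s_0^{-1}$ on the second summand makes the weight-$r$ coefficients polynomials of degree $\leq r$ in $f_B$, hence all operations lie over $\bQ[f_B,q_1^{\pm1},\dots,q_r^{\pm1}]$. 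Finally, by Remark \ref{th:doesnt-matter}, the choice of solution $f_B$ of \eqref{eq:weird-schwarz} is immaterial.

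The main obstacle I anticipate is verifying that the relative Hamiltonian Floer machinery of Sections \ref{sec:relative-floer}--\ref{sec:apply-to-lefschetz} actually carries the $B$-field weights cleanly: one must check that the $B$-twisted counts still satisfy the admissibility estimates (Definitions \ref{th:admissible}, \ref{th:admissible-2}, \ref{th:admissible-3}) --- but since the $B$-field only alters the $q$-power attached to a curve by the \emph{topological} quantity $B\cdot A$, and $B\in q\bQ[q_1^{\pm1},\dots][[q]]$ has no $q$-constant term, the $q$-order of every count is unchanged and the estimates survive verbatim --- and that the closed-open maps and the key Lemma \ref{th:kernel-of-pss-2} (whose relative proof rests on \cite[Lemmas 7.7, 8.4]{seidel16}) remain valid with the extra weights. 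I expect this to be essentially bookkeeping: the weight $Q^A\exp(B\cdot A)$ is multiplicative in gluing, so it passes through all the cobordism and compactification arguments, and the whole Section \ref{sec:general} framework is designed precisely so that "the argument repeats our previous one, with extra (topologically defined) weights when counting pseudo-holomorphic curves." The one genuinely new input is Lemma \ref{th:choose-b-field}, replacing \eqref{eq:as}, and that has already been established above.

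Once Theorem \ref{th:1-variable-twisted} is in hand, Theorem \ref{th:main} follows by specialization: Lemma \ref{th:specialize} provides the substitution $(q,q_1,\dots,q_r)\mapsto(g(q),g_1(q),\dots,g_r(q))$ carrying $Q\exp(B)$ to $q^{[\delta E|]}$, i.e.\ turning $\scrB_B$ into the untwisted $\scrB_q$; under this substitution $f_B$ becomes $f = K(f_\Lambda)$ and the generators $q_k$ become the functions $g_k(q)$, so the coefficient ring $\bQ[f_B,q_1^{\pm1},\dots,q_r^{\pm1}]$ maps to $\bQ[f,g_1^{\pm1},\dots,g_r^{\pm1}]\subset\bQ[[q]]$ --- matching the statement, after one checks (by comparing \eqref{eq:weird-schwarz-0} with \eqref{eq:weird-schwarz} and \eqref{eq:g-lambda-k} with \eqref{eq:desired-order-by-order}) that these specialized $f$ and $g_k$ agree with the ones defined in \eqref{eq:specialize-2}, \eqref{eq:specialize-fk}. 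That last compatibility check is the bridge between the two formulations flagged at the start of Section \ref{sec:general}, and is a routine translation between the $\Lambda$-language and the $B$-field language.
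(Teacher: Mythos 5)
Your proposal is correct and follows essentially the same route as the paper: the paper itself proves Theorem \ref{th:1-variable-twisted} by declaring that the argument of Theorem \ref{th:1-variable} "repeats, with extra (topologically defined) weights when counting pseudo-holomorphic curves", with \eqref{eq:choose-b-field} and Lemma \ref{th:choose-b-field} playing the role of \eqref{eq:as}, and then deduces Theorem \ref{th:main} by the specialization of Lemma \ref{th:specialize} exactly as you do. The only things you gloss over ("go through unchanged", "essentially bookkeeping") are precisely what the paper's Sections \ref{sec:general}(b)--(d) supply: the actual construction of the twisted categories via the cycles $\Delta_{l,i}$, $\Delta_{S,l}$ and \eqref{eq:boundary-neighbourhoods}, and the homological algebra over $\bQ[q_1^{\pm 1},\dots,q_r^{\pm 1}]$ rather than a field (Lemmas \ref{th:unify} and \ref{th:reduced}, the split-injectivity hypotheses, and working with free-module complexes instead of their cohomology in Lemma \ref{th:turn-into-inclusion}) --- routine but not literally "unchanged".
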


Lemma \ref{th:specialize} gives us a change of variables which turns $\scrB_B$ into the ordinary relative Fukaya category $\scrB_q$. In more formal language, if one views that change of coordinates as a map of rings
\begin{equation}
\begin{aligned}
& G_B: \bQ[q_1^{\pm 1},\dots,q_r^{\pm 1}][[q]] \longmapsto \bQ[[q]], \\
& G_B(q) = g, \; G_B(q_k) = g_k,
\end{aligned}
\end{equation}
then
\begin{equation}
\scrB_q \iso \scrB_B \otimes_{G_B} \bQ[[q]].
\end{equation}
As an immediate consequence, we get:

\begin{corollary} \label{th:b}
In the situation of Theorem \ref{th:1-variable-twisted}, there is a quasi-isomorphic model for $\scrB_q$ in which all $A_\infty$-operations have coefficients in $\bQ[f,g_1^{\pm 1},\dots,g_r^{\pm 1}] \subset \bQ[[q]]$, for $g_k = G_B(q_k)$ and $f = G_B(f_B)$.
\end{corollary}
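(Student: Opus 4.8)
The plan is to deduce Corollary~\ref{th:b} from Theorem~\ref{th:1-variable-twisted} purely by the change of variables supplied by Lemma~\ref{th:specialize}. All of the geometric and algebraic substance has already been expended in establishing Theorem~\ref{th:1-variable-twisted}, so what remains is bookkeeping about how coefficient rings transform under specialization of the Novikov variables; accordingly I will keep the argument short.

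First I would invoke Theorem~\ref{th:1-variable-twisted} to fix a quasi-isomorphic model $\scrB_B'$ for the $B$-twisted deformation $\scrB_B$ whose structure constants all lie in the subring $R_B = \bQ[f_B,q_1^{\pm 1},\dots,q_r^{\pm 1}] \subset \bQ[q_1^{\pm 1},\dots,q_r^{\pm 1}][[q]]$, with $f_B$ a solution of \eqref{eq:weird-schwarz}. Next, Lemma~\ref{th:specialize} gives a substitution $(q,q_1,\dots,q_r) \mapsto (g,g_1,\dots,g_r)$ with $g \in q+q^2\bQ[[q]]$ and $g_k \in 1+q\bQ[[q]]$ which turns $Q\exp(B)$ into $q^{[\delta E|]}$. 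Viewed as the ring homomorphism $G_B \colon \bQ[q_1^{\pm 1},\dots,q_r^{\pm 1}][[q]] \to \bQ[[q]]$ with $G_B(q)=g$ and $G_B(q_k)=g_k$, this is exactly the identification under which the $B$-twisted relative Fukaya category reduces to the ordinary one, i.e.\ $\scrB_q \iso \scrB_B \otimes_{G_B} \bQ[[q]]$.

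Then I would base-change $\scrB_B'$ along $G_B$. Since $G_B$ sends $q$ to $g \in q+q^2\bQ[[q]]$ it respects the $q$-adic filtrations, so applying $\otimes_{G_B}\bQ[[q]]$ to the filtered quasi-isomorphism $\scrB_B' \htp \scrB_B$ again produces a filtered quasi-isomorphism; hence $\scrB_B' \otimes_{G_B}\bQ[[q]]$ is a model for $\scrB_q$. Its structure constants lie in $G_B(R_B)$. Each $g_k$ has constant term $1$, hence is a unit in $\bQ[[q]]$, so $G_B(q_k^{\pm 1}) = g_k^{\pm 1} \in \bQ[[q]]$; together with $G_B(f_B) = f$ this gives $G_B(R_B) = \bQ[f,g_1^{\pm 1},\dots,g_r^{\pm 1}] \subset \bQ[[q]]$, which is the asserted coefficient ring.

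The only point requiring a little care — and the step I expect to be the mild obstacle — is reconciling the two descriptions of the specialization: the homomorphism $G_B$ extracted from Lemma~\ref{th:specialize} (Section~\ref{subsec:b-field}) must be matched with the collapse of the multivariable Novikov ring used to phrase Theorem~\ref{th:main} (Section~\ref{subsec:multivariable}), and one must verify that $G_B$ really is the map under which $\scrB_B$, with its topologically weighted curve counts coming from \eqref{eq:basic-q}, reduces on the nose to $\scrB_q$, in a way compatible with passing to a quasi-isomorphic model. This is a matter of matching powers-of-$q$ bookkeeping rather than a new idea, and follows the template of \cite[Section~3]{seidel15}.
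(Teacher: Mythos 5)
Your proposal is correct and follows the paper's own route: the paper likewise obtains the corollary as an immediate consequence of Theorem \ref{th:1-variable-twisted} by base-changing along the ring map $G_B$ from Lemma \ref{th:specialize}, using $\scrB_q \iso \scrB_B \otimes_{G_B} \bQ[[q]]$, so the coefficients land in $G_B(\bQ[f_B,q_1^{\pm 1},\dots,q_r^{\pm 1}]) = \bQ[f,g_1^{\pm 1},\dots,g_r^{\pm 1}]$. (The reconciliation with the multivariable Novikov ring of Section \ref{subsec:multivariable} that you flag is only needed afterwards, to deduce Theorem \ref{th:main} from this corollary, not for the corollary itself.)
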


Let's go back to the intrinsically defined Novikov rings from Section \ref{subsec:multivariable}. There are maps
\begin{equation}
\begin{aligned}
& K_B: \Lambda_{\geq k}^i \longrightarrow t^{i/2} q^k \bQ[q_1^{\pm 1},\dots,q_r^{\pm 1}][[q]], \\
& K_B(q^A) = q^{\delta E| \cdot A} q_1^{D_1 \cdot A} \cdots q_r^{D_r \cdot A} t^{\bar{M} \cdot A} \exp(B \cdot A).
\end{aligned}
\end{equation}
If Lemma \ref{th:specialize} applies, we have a commutative diagram that encodes the relation between this map and the previously defined \eqref{eq:specialize}:
\begin{equation} 
\xymatrix{
\Lambda_{\geq k}^i \ar[rr]^-{K_B} 
\ar[dr]_-{K}
&& 
t^{i/2}q^k\bQ[q_1^{\pm 1},\dots,q_r^{\pm 1}][[q]]
\ar[dl]^-{G_B}
\\ &
t^{i/2} q^k\bQ[[q]].
&
}
\end{equation}
If \eqref{eq:choose-b-field} holds, there is a commutative diagram which generalizes \eqref{eq:kappa-diagram}:
\begin{equation} \label{eq:kappa-diagram-2}
\xymatrix{
\Lambda_{\geq 0}^i \ar[d]_-{\partial_{z^{(1)}_\Lambda}} \ar[rr]^-{K_B} && 
t^{i/2}\bQ[q_1^{\pm 1},\dots,q_r^{\pm 1}][[q]] \ar[d]^-{\partial_{B,t}} \\
\Lambda_{\geq 0}^{i+2} \ar[rr]_-{K_B} &&
t^{i/2+1} \bQ[q_1^{\pm 1},\dots,q_r^{\pm 1}][[q]],
}
\end{equation}
where 
\begin{equation}
\partial_{B,t} = t\psi_B^{-1}( \partial_q + (i/2) \eta_B).
\end{equation}
This implies that under $K_B$, the $g_{\Lambda,k}$ from \eqref{eq:g-lambda-k} get mapped to functions whose $q$-derivative is zero. In fact, if we assume that $D_1,\dots,D_r$ give rise to a basis of $(H_0^0)^\vee$ which is dual to $(A_1,\dots,A_r)$, then $K_B(g_{\Lambda,k}) = q_k$. By combining this with \eqref{eq:kappa-diagram-2}, one sees that the functions $g_1,\dots,g_r$ from Theorem \ref{th:main} coincide with those of the same name in Corollary \ref{th:b}. Similarly, if $f_\Lambda$ satisfies \eqref{eq:weird-schwarz-0}, then $f_B = K_B(f_\Lambda)$ is a solution of
\begin{equation}
S_{B,t} f_B = 8z^{(2)}_B,
\end{equation}
where $S_{B,t}$ is the Schwarzian associated to $\partial_{B,t}$. One sees easily that this equation is equivalent to \eqref{eq:weird-schwarz}, hence $f_B$ can be chosen to coincide with the function of that name which appears in Theorem \ref{th:1-variable-twisted}. Finally, applying \eqref{eq:kappa-diagram-2}, one sees that the functions $f$ from Theorem \ref{th:main} and Corollary \ref{th:b} agree. In that way, Corollary \ref{th:b} implies Theorem \ref{th:main}.

\subsection{Hamiltonian Floer theory with $B$-fields}
We now start our stroll through the various theories encountered in our argument, and see how each of them should be adapted to the presence of $B$-fields. The most straightforward instance is Hamiltonian Floer cohomology. While one can formulate a notion of $B$-field in an intrinsic way, we will stick to the kind of ad hoc language adopted in Section \ref{subsec:b-field}. To make things particularly clear, we will work with a closed symplectic manifold $\bar{M}$, with $c_1(\bar{M}) = 0$ and integral $[\omega_{\bar{M}}]$. Take $(H_t)$ with nondegenerate one-periodic orbits. We choose a codimension two submanifold $\Omega$ representing $[\omega_{\bar{M}}]$, and further codimension two submanifolds $D_1,\dots,D_r$; all of them are assumed to be disjoint from the one-periodic orbits of $(H_t)$. Our $B$-field (realized on the cycle level, rather than just in cohomology) is a formal weighted sum
\begin{equation} \label{eq:cycle-b}
B = b \Omega + b_1 D_1 + \cdots + b_r D_r, \;\; b,b_1,\dots,b_r \in q\bQ[q_1^{\pm 1},\dots,q_r^{\pm 1}][[q]].
\end{equation}
Similarly, we define the formal expression $Q = q^{\Omega} q_1^{D_1} \cdots q_r^{D_r}$.
The Floer complex is then defined by
\begin{equation} \label{eq:cf-b}
\begin{aligned}
& \mathit{CF}^*_{q,q^{-1},B}(H_t,J_t) = \bigoplus_x \bQ[q_1^{\pm 1},\dots,q_r^{\pm 1}]((q)) x, \\
&
\text{$(x_-)$-coefficient of } d_{q,q^{-1},B}\,x_+ = \sum_u \pm Q^u \exp(B \cdot u).
\end{aligned}
\end{equation}
Here, $Q^u = q^{\Omega \cdot u} q_1^{D_1 \cdot u} \cdots q_r^{D_r \cdot u}$ and $B \cdot u = b (\Omega \cdot u) + \cdots + b_r (D_r \cdot u)$. The sum over $u$ in \eqref{eq:cf-b} makes sense since $\exp(B \cdot u) \in 1 + q\bQ[q_1^{\pm 1},\dots,q_r^{\pm 1}][[q]]$, meaning that the growth of powers of $q$ is controlled by $q^{\Omega \cdot u}$. Denote the resulting cohomology by $\mathit{HF}^*_{q,q^{-1},B}(H_t,J_t)$. It is independent of the choice of submanifold representatives up to canonical isomorphism, for the same reason as in Remark \ref{th:unique-hf}. The formula for $d_{q,q^{-1},B}$ also makes sense in the context of relative Floer cohomology, with $\Omega$ replaced by $\delta M$, since the new term $\exp(u \cdot Z)$ does not introduce negative powers of $q$. Under the same additional requirements as in Section \ref{sec:relative-floer}, this leads to groups $\mathit{HF}^*_{q,B}(H_t,J_t)$ defined over $\bQ[q_1^{\pm 1},\dots,q_r^{\pm 1}][[q]]$.

$B$-field twisted Hamiltonian Floer cohomology is generally straightforward to develop. The only aspect that may deserve discussion concerns $q$-differentiation operations. Consider  
\begin{equation} \label{eq:dqdq}
\begin{aligned}
& \partial_q d_{q,q^{-1},B}: \mathit{CF}^*_{q,q^{-1},B}(H_t,J_t) \longrightarrow \mathit{CF}^{*+1}_{q,q^{-1},B}(H_t,J_t), \\
& (\partial_q d_{q,q^{-1},B})(c) = \partial_q (d_{q,q^{-1},B}c) - d_{q,q^{-1},B}(\partial_q c).
\end{aligned}
\end{equation}
Here, $\partial_q$ acts on the Floer complex in the obvious way, given the basis \eqref{eq:cf-b}. Concretely,
\begin{equation}
\text{$(x_-)$-coefficient of } (\partial_q  d_{q,q^{-1},B})(x_+) = \sum_u  \pm \big((q^{-1}\Omega + \partial_q B) \cdot u\big) Q^u \exp(B \cdot u).
\end{equation}
For generic $(J_t)$, one can interpret $\sum_u \pm (q^{-1}\Omega + \partial_q B) \cdot u$ as follows. It counts pairs $(u,\theta)$, where $u$ is an isolated Floer trajectory and $\theta \in S^1$ a point such that $u(0,\theta) \in \Omega$, with multiplicity $q^{-1} + \partial_q b$; and similarly for any $u(0,\theta) \in D_k$, with multiplicities $\partial_q b_k$. This is a special case of a general operation, by which a pseudo-cycle $Z$ in $\bar{M}$ of codimension $c$, with coefficients in $\bQ[q_1^{\pm 1}, \cdots, q_r^{\pm 1}]((q))$, gives rise to a chain map for generic $(J_t)$,
\begin{equation}
L_Z: \mathit{CF}^*_{q,q^{-1},B}(H_t,J_t) \longrightarrow \mathit{CF}^{*+c-1}_{q,q^{-1},B}(H_t,J_t).
\end{equation}
In our case, the cycle that occurs matches what we previously saw in \eqref{eq:derivative-of-omega-b}:
\begin{equation} \label{eq:z-cycle}
Z = (q^{-1} + \partial_q b)\Omega + (\partial_q b_1)D_1 + \cdots + (\partial_qb_r) D_r.
\end{equation} 

The entire discussion carries over without any issues to the case of noncompact manifolds $\bar{E}$. To construct connections generalizing those in \eqref{eq:hamiltonian-connections}, \eqref{eq:hamiltonian-connections-2}, one assumes that the image of \eqref{eq:z-cycle} under a suitable PSS map is nullhomologous. All the material from \cite{seidel16,seidel17} then carries over, always using weights as in \eqref{eq:cf-b} when counting any kind of holomorphic curve (for the parts involving Gromov-Witten invariants, the $B$-field is always chosen to be defined on $\cornerbar{E}$).

\subsection{Relative Fukaya category with $B$-fields}
One can twist Fukaya categories in an analogous way, with a few additional complications. Let's work with a closed $\bar{M}$ as before (and we additionally choose a trivialization of the canonical bundle), together with a symplectic divisor $\delta M$ Poincar{\'e} dual to $[\omega_{\bar{M}}]$. Take a finite collection of closed Lagrangian submanifolds $(V_1,\dots,V_m)$ in $M = \bar{M} \setminus \delta M$, which are Lagrangian branes in an appropriate sense: namely, 
\begin{equation}
\mybox{
they are relatively exact, meaning that $[\delta M] = [\omega_{\bar{M}}] \in H^2(\bar{M},V_i;\bR)$; they are also graded, with respect to our choice of trivialization of the canonical bundle; and they come equipped with a choice of {\em Spin} structure.
}
\end{equation}
The definition of $B$-field in this context is a more elaborate version of \eqref{eq:cycle-b}:
\begin{equation} \label{eq:cycle-b-2}
\mybox{
$B = b\, \delta M + b_1 D_1 + \cdots + b_r D_r$, $b,b_1,\dots,b_r \in q\bQ[q_1^{\pm 1},\dots,q_r^{\pm 1}][[q]]$. We assume that $D_l$ restricts to the trivial class in $H^2(V_i;\bZ)$ for any $(i,l)$.}
\end{equation}
Based on the last-mentioned topological assumption, we would like to make further choices. 
\begin{equation} 
\mybox{
For each $(k,i)$, we choose a codimension two submanifold $\Delta_{l,i} \subset [0,1/2] \times \bar{M}$, such that $\Delta_{l,i} \cap (\{0\} \times \bar{M})$ (transverse intersection) is disjoint from $V_i$, while $\Delta_{l,i} \cap ([1/4,1/2] \times \bar{M}) = [1/4,1/2] \times D_k$.
}
\end{equation}
Choose, for any $(i,j)$ in $\{1,\dots,m\}$, a time-dependent Hamiltonian $(H_{i,j,t})$ whose vector field $(X_{i,j,t})$ is parallel to $\delta M$. A generic choice ensures that:
\begin{equation} \label{eq:disjoint-chord}
\mybox{For any $X_{i,j,t}$-chord $x: [0,1] \rightarrow M$ from $V_i$ to $V_j$, the point $(t,x(t))$, $0 \leq t \leq 1/2$, does not lie on any $\Delta_{l,i}$, and similarly $(t,x(1-t))$ does not lie on any $\Delta_{l,j}$.}
\end{equation}

\begin{figure}
\begin{centering}
\includegraphics{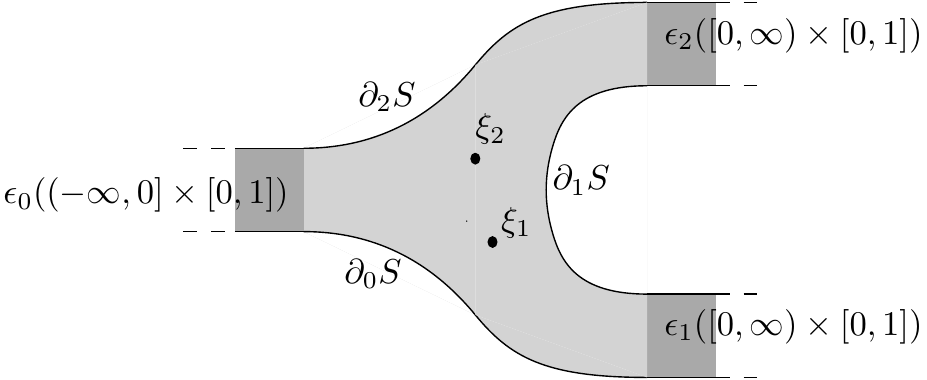}
\caption{\label{fig:surface}Punctured surfaces $S$ with extra interior marked points, which appear in the definition of the relative Fukaya category.}
\end{centering}
\end{figure}%

In the definition of the Fukaya $A_\infty$-structure \cite{sheridan11b, perutz-sheridan20, seidel20}, one uses surfaces which are $(d+1)$-pointed discs with an additional choice of strip-like ends $\epsilon_0,\dots,\epsilon_d$, and (ordered and pairwise disjoint) interior marked points $\xi_1,\dots,\xi_e$, for some $e \geq 0$, see Figure \ref{fig:surface}; this is subject to the stability condition $d+2e \geq 2$. Given $i_0,\dots,i_d \in \{1,\dots,m\}$, one considers maps
\begin{equation} \label{eq:u-map}
\left\{
\begin{aligned}
& u: S \longrightarrow \bar{M}, \\
& u(\partial_k S) \subset V_{i_k}, \\
& \textstyle \lim_{s \rightarrow -\infty} u(\epsilon_0(s,\cdot)) = x_0 \;\; \text{ an $(X_{i_0,i_d,t})$-chord for $(V_{i_0},V_{i_d})$}, \\
& \textstyle \lim_{s \rightarrow +\infty} u(\epsilon_k(s,\cdot)) = x_k \;\; \text{ an $(X_{i_{k-1},i_k,t})$-chord for $(V_{i_{k-1}},V_{i_k})$, if $k>0$}, \\
& u^{-1}(\delta M) = \{\xi_1,\dots,\xi_e\}, \\
& u \cdot \delta M = e. \\
\end{aligned}
\right.
\end{equation}
(These are of course subject to a suitable Cauchy-Riemann equation, but that's not relevant for the purely topological considerations here.) Here, $\partial_k S$ are the connected components of $\partial S$, numbered as in Figure \ref{fig:surface}. Let's equip $S$ with smooth embeddings which have pairwise disjoint images, and which parametrize a tubular neighbourhood of $\partial S$, compatibly with the ends:
\begin{equation} \label{eq:boundary-neighbourhoods}
\left\{
\begin{aligned}
& \delta_0,\dots,\delta_d: \bR \times [0,1/4] \longrightarrow S, \\
& \delta_k^{-1}(\partial S) = \delta_k^{-1}(\partial_k S) = \bR \times \{0\}, \\
& \delta_0(s,t) = \epsilon_0(s,t) \text{ for $s \ll 0$,} \\
& \delta_0(s,t) = \epsilon_1(s,t) \text{ for $s \gg 0$,} \\
& \delta_1(s,t) = \epsilon_1(-s,1-t) \text{ for $s \ll 0$,} \\
& \delta_1(s,t) = \epsilon_2(s,1-t) \text{ for $s \gg 0$,} \\
& \cdots \\
& \delta_d(s,t) = \epsilon_0(-s,1-t) \text{ for $s \gg 0$.}
\end{aligned}
\right.
\end{equation}
This gives rise to codimension two submanifolds $\Delta_{S,1},\dots, \Delta_{S,r} \subset S \times \bar{M}$, namely
\begin{equation} \label{eq:big-delta}
\Delta_{S,l} \cap (\{z\} \times \bar{M}) = \begin{cases} 
\Delta_{l,i_k} \cap (\{t\} \times \bar{M}) 
& \text{for $z = \delta_k(s,t)$, $0 \leq t \leq 1/4$,} \\
D_l & \text{otherwise.}
\end{cases}
\end{equation}
Note that this condition implies
\begin{equation} \label{eq:delta-on-the-ends}
\text{for $s \ll 0$,} \quad
\Delta_{S,l} \cap (\{\epsilon_0(s,t)\} \times \bar{M}) = \begin{cases}
\Delta_{l,i_0} \cap (\{t\} \times \bar{M}) & t \in [0,1/4], \\ 
D_l & t \in [1/4,3/4], \\
\Delta_{l,i_d} \cap (\{1-t\} \times \bar{M}) & t \in [3/4,1];
\end{cases}
\end{equation}
and correspondingly for the other ends. Given that and a map \eqref{eq:u-map}, one has a well-defined intersection number
\begin{equation} \label{eq:delta-intersect}
(\mathit{id},u) \cdot \Delta_{S,l} \in \bZ,
\end{equation}
which (intuitively) counts points $z \in S$ such that $(z,u(z)) \in \Delta_{S,l}$. Because of the boundary conditions and the assumptions on $\Delta_{S,l} \cap (\{\delta_k(s,0)\} \times \bar{M})$ in \eqref{eq:big-delta}, there can be no such points with $z \in \partial S$. Equally, because of the asymptotic condition on $u$, the assumption \eqref{eq:disjoint-chord}, and \eqref{eq:delta-on-the-ends}, there can also be no such points with $z$ sufficiently close to $\infty$ on the $\epsilon_0$ end. Similar observations apply to all ends. Hence, the intersection is proper, which justifies \eqref{eq:delta-intersect}. Of course, in order for these numbers to make sense consistently, one has to assume that \eqref{eq:boundary-neighbourhoods} depend smoothly on moduli, and are compatible with gluing together surfaces along their ends. When defining the twisted version $\scrB_{q,B}$ of the relative Fukaya category, we replace the standard expression $q^{u \cdot \delta M}$ with
\begin{equation}
q^{u \cdot \delta M} q_1^{(\mathit{id},u) \cdot \Delta_{S,1}} \cdots
q_r^{(\mathit{id},u) \cdot \Delta_{S,r}} \exp\big(b \,(u \cdot \delta M) +
b_1 \, ((\mathit{id},u) \cdot \Delta_{S,1}) + \cdots + b_r \, ((\mathit{id},u) \cdot \Delta_{S,r})\big).
\end{equation}
When taking the $q$-derivative, one picks up an additional multiplicative factor $(q^{-1} + \partial_q b) (\delta M \cdot u) + (\partial_q b_1) ((\mathit{id},u) \cdot \Delta_{S,1}) + \cdots + (\partial_q b_r) ((\mathit{id},u) \cdot \Delta_{S,1})$. As before, this allows us to relate the Kaledin class to the image of \eqref{eq:z-cycle} under an closed-open string map.

In our applications, $\bar{M}$ is a fibre of a Lefschetz fibration, and we take the $D_i$ to come from cohomology classes in $\bar{E}$. As a consequence, the cohomological condition from \eqref{eq:cycle-b-2} is satisfied for the vanishing cycles, leading to the definition of a twisted version $\scrB_B$ of the category $\scrB_q$ from Section \ref{subsec:directed-2}, which already appeared in Section \ref{subsec:b-field}. When considering the Fukaya category of $\bar{E}$ itself, as in Section \ref{subsec:fukaya-of-lefschetz}, we only use the (contractible) Lefschetz thimbles as objects, so the cohomological condition is automatically satisfied, giving us the analogue $\scrA_B$ of $\scrA_q$. The basic properties of those categories carry over without any issues. In particular, the $q$-derivative argument hinted at above leads to a $B$-field twisted version of Proposition \ref{th:first-co} (and its partial improvement in Proposition \ref{th:first-co-2}), following the same method as in \cite{seidel21b}.

\subsection{Homological algebra}
The algebraic theory in Sections \ref{sec:outline}--\ref{sec:connections} used $\bQ$-coefficients, and $\bQ[[q]]$ for deformation theory. For the $B$-field twist, that needs to be adapted to work with coefficients in $\bQ[q_1^{\pm 1},\dots,q_r^{\pm 1}]$, respectively $\bQ[q_1^{\pm 1},\dots,q_r^{\pm 1}][[q]]$. For simplicity, we will formulate our discussion for $A_\infty$-algebras (see Remark \ref{th:semisimple}; the same applies here).

Let $R$ be a commutative ring of finite global dimension. When considering $A_\infty$-algebras $\scrA$ over $R$, we assume that each graded piece is a free $R$-module
(this is sensible because unbounded complexes of free $R$-modules are $K$-projective). A cohomological unit for such an algebra is a cocycle $e \in \scrA^0$ such that $Re \subset \scrA^0$ and $\scrA^0/Re$ are free, and which becomes an identity element in $H^*(\scrA)$. This property means that 
\begin{equation} \label{eq:right-mu}
\mu^2_{\scrA}(\cdot,e): \scrA \longrightarrow \scrA
\end{equation}
is a quasi-isomorphism, and therefore invertible up to chain homotopy. We also know that \eqref{eq:right-mu} is chain homotopic to its square. Therefore, it must be chain homotopic to the identity. The same holds for left multiplication with $e$. For the notion of strict unit, one replaces the last condition by the standard requirements on $\mu_{\scrA}^d(\cdots,e,\dots)$, and retains all the other assumptions. The relevant version of \cite[Lemma 2.1]{seidel04} is:

\begin{lemma} \label{th:unify}
Let $\scrA$ be a cohomologically unital $A_\infty$-algebra, such that $R e \rightarrow \scrA$ is split-injective as a map of chain complexes. Then there is a strictly unital $A_\infty$-structure $\scrA^{(1)}$, living on the same graded $R$-module and with the same differential, and an $A_\infty$-homomorphism $\scrF: \scrA \rightarrow \tilde{\scrA}$ whose linear term is the identity.
\end{lemma}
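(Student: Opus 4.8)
The plan is to mimic the proof of the original \cite[Lemma 2.1]{seidel04}, replacing the field $\bQ$ by the coefficient ring $R$ throughout, and using the preliminary remarks in the paragraph preceding the statement as a substitute for the ``identity elements nonzero in cohomology'' hypothesis. The key structural fact is the one just established: since $e$ is a cohomological unit with $Re \hookrightarrow \scrA$ split, both $\mu^2_{\scrA}(\cdot,e)$ and $\mu^2_{\scrA}(e,\cdot)$ are chain homotopic to the identity on $\scrA$, and not merely quasi-isomorphisms. This is exactly what makes the usual obstruction-theoretic bookkeeping go through over $R$: one does not need to divide by anything or pass to cohomology to invert multiplication by the unit.

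First I would fix explicit chain homotopies $h^{\mathit{left}}, h^{\mathit{right}}: \scrA \to \scrA[-1]$ with $\mathit{id} - \mu^2_{\scrA}(\cdot,e) = \mu^1_{\scrA} h^{\mathit{right}} + h^{\mathit{right}} \mu^1_{\scrA}$ and similarly for $e$ on the left; these exist because $\scrA$ is a complex of free $R$-modules, so the mapping cone of a homotopy equivalence is contractible and contractions can be chosen $R$-linearly. Next I would set up the deformation problem: we are looking for a pair $(\scrA^{(1)}, \scrF)$ where $\scrA^{(1)}$ has the prescribed graded $R$-module and differential, is strictly unital with unit $e$, and $\scrF$ has $\scrF^1 = \mathit{id}$. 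As in Lemma \ref{th:turn-into-inclusion}, this is governed by a curved $L_\infty$-algebra built from $\mathit{Hom}_R$-complexes; the point of the strict-unitality constraint is that one works in the \emph{reduced} Hochschild-type complex, i.e.\ multilinear maps that vanish whenever a unit is inserted and whose output of arity relevant to the unit is controlled by $e$. The obstruction groups at each stage (arity $d$) are subquotients of $\mathit{Hom}_R$ from reduced tensor powers of $H^*(\scrA)$ to $H^*(\scrA)$; the quasi-isomorphism between the full and reduced Hochschild complexes — which holds over any base because it is just the assertion that $\mu^2(\cdot,e)$ and $\mu^2(e,\cdot)$ are homotopic to the identity — shows these obstruction groups vanish, so the construction can be carried out order by order in arity. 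I would then produce $\scrA^{(1)}$ and $\scrF$ by this order-by-order procedure, exactly parallel to the proof of Lemma \ref{th:turn-into-inclusion} but with the length filtration on arity and with reduced (unital) cochains in place of all cochains.

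The main obstacle I anticipate is purely bookkeeping rather than conceptual: one must check that at each stage the obstruction cocycle genuinely lies in the reduced subcomplex (so that strict unitality is preserved inductively) and that the homotopies $h^{\mathit{left}}, h^{\mathit{right}}$ can be threaded through consistently — this is where the split-injectivity of $Re \to \scrA$ is used, to guarantee that the ``delete the unit'' operation is a well-defined $R$-linear chain map with the correct homological properties. A secondary subtlety is the hypothesis of finite global dimension on $R$ (here $R = \bQ[q_1^{\pm 1},\dots,q_r^{\pm 1}]$, a localization of a polynomial ring, so Noetherian of finite global dimension): it ensures that unbounded complexes of free modules behave like $K$-projective complexes, so that the homotopy statements above make sense and chain homotopies exist. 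With these points in hand the argument is a routine translation of \cite[Lemma 2.1]{seidel04}; I would phrase it so as to emphasize that the only input beyond the cited result is the homotopy-to-identity property recorded just before the statement, which replaces the erroneous original hypothesis.
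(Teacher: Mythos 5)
Your overall strategy coincides with the paper's: use the remarks preceding the statement to get chain homotopies $h^{\mathit{left}}, h^{\mathit{right}}$ from left/right multiplication by $e$ to the identity, use the split injection $Re \rightarrow \scrA$ to write $\scrA = Re \oplus \bar{\scrA}$, and then run the induction of \cite[Lemma 2.1]{seidel04} with $R$-coefficients. The paper carries out the first nontrivial step explicitly: after normalizing $h^{\mathit{left}}$ so that $h^{\mathit{left}}(e) = h^{\mathit{right}}(e)$ (their difference is a cocycle, removable by changing $h^{\mathit{left}}$ by a chain map vanishing on $\bar{\scrA}$), it defines $\scrF^2$ to be zero on $\bar{\scrA} \otimes \bar{\scrA}$, $h^{\mathit{left}}$ on $\{e\} \times \scrA$ and $h^{\mathit{right}}$ on $\bar{\scrA} \times \{e\}$, lets the $A_\infty$-homomorphism equation define $\mu^2_{\scrA^{(1)}}$, and then defers the higher-arity steps to \cite[Lemma 2.1]{seidel04}, exactly as your final paragraph proposes.

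The genuine problem is the obstruction-theoretic justification you offer for the inductive step. First, the groups you name as obstruction groups, $\mathit{Hom}_R$ from reduced tensor powers of $H^*(\scrA)$ to $H^*(\scrA)$, do not vanish in general, and their vanishing is not what is needed; the construction is unobstructed because one is free to adjust both $\mu_{\scrA^{(1)}}$ and $\scrF$ simultaneously, and the relevant acyclicity statement (if one wants the $L_\infty$/obstruction packaging at all) is that of a comparison complex such as the quotient of full by reduced cochains in each length-filtration piece, in the spirit of Lemma \ref{th:reduced} -- not of any Hom group out of cohomology. Second, and more seriously, the ``reduced Hochschild-type complex'' you propose to work in is not even a subcomplex unless $e$ is already a strict unit: the Hochschild differential involves $\mu_{\scrA}$, which at the start is only cohomologically unital, so cochains vanishing on insertions of $e$ are not closed under $\delta$, and Lemma \ref{th:reduced} (proved in the paper only for strictly unital $\scrA$, via a contracting homotopy that uses strict unitality) cannot be invoked here without circularity. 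This can be repaired, e.g.\ by organizing the induction so that at arity $d$ only the already-corrected lower operations enter the relevant filtration quotient; but as written the middle of your argument does not go through, and the cleanest fix is to drop the reduced-complex framing and perform the explicit arity-by-arity correction of \cite[Lemma 2.1]{seidel04}, which needs only the homotopy-to-identity property of \eqref{eq:right-mu}, its left analogue, and the splitting $\scrA = Re \oplus \bar{\scrA}$.
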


\begin{proof}
Write the chain complex as a sum $\scrA = R e \oplus \bar{\scrA}$. We know that $\mu^2_{\scrA}(\cdot,e)$ and $(-1)^{|a|}\mu^2_{\scrA}(e,\cdot)$ are homotopic to the identity. In sign conventions that are suitable for the $A_\infty$-world, this means that we have maps
\begin{equation}
\begin{aligned}
& h^{\mathit{left}}: \scrA \longrightarrow \scrA[-1], 
\;\; \mu^1_{\scrA} h^{\mathit{left}}(a) + h^{\mathit{left}} \mu^1_{\scrA}(a) = \mu^2_{\scrA}(e,a) - (-1)^{|a|} a, \\
& h^{\mathit{right}}: \scrA \longrightarrow \scrA[-1], \;\;
\mu^1_{\scrA} h^{\mathit{right}}(a) + h^{\mathit{right}} \mu^1_{\scrA}(a) = \mu^2_{\scrA}(a,e) - a.
\end{aligned}
\end{equation}
Note that the difference $h^{\mathit{left}}(e) - h^{\mathit{right}}(e) \in \scrA^{-1}$ is a cocycle. One can change $h^{\mathit{left}}$ by a chain map $\scrA \rightarrow \scrA[-1]$ which is zero on $\bar{\scrA}$, in order to make that difference equal to zero. Take
\begin{equation}
\left\{
\begin{aligned}
& \scrF^2: \scrA \otimes \scrA \longrightarrow \scrA[-1], \\
& \scrF^2\,|\,\bar\scrA \otimes \bar\scrA = 0, \\
& \scrF^2\,|\, \{e\} \times \scrA = h^{\mathit{left}}, \\
& \scrF^2\,|\, \bar\scrA \times \{e\} = h^{\mathit{right}}.
\end{aligned}
\right.
\end{equation}
If we then define $\mu_{\scrA^{(1)}}^2$ so that $\scrF^2$ satisfies the $A_\infty$-homomorphism equation from $(\mu^1_{\scrA},\,\mu^2_{\scrA})$ to $(\mu^1_{\scrA},\, \mu^2_{\scrA^{(1)}})$, the outcome is that $\mu^2_{\scrA^{(1)}}(e,a) = (-1)^{|a|}a$, $\mu^2_{\scrA^{(1)}}(a,e) = a$, as desired. The subsequent steps are as in \cite[Lemma 2.1]{seidel04}.
\end{proof}

We have used \cite[Lemma 2.1]{seidel04} in the proof of Lemma \ref{th:directed}. The version with $B$-fields would rely on Lemma \ref{th:unify} instead; the split-injectivity condition is always satisfied in our case, because of the way we set up the Floer chain complex for a vanishing cycle, compare \eqref{eq:cf-vk}. A related issue that appears in the proof of Lemma \ref{th:directed-2} is addressed by the following:

\begin{lemma} \label{th:reduced}
Let $\scrA$ be a strictly unital $A_\infty$-algebra over $R$. Then, the inclusion of the reduced Hochschild complex $\mathit{CC}^{\mathit{red},*}(\scrA,\scrA)$ (the complex of Hochschild cochains which vanish on $e$) into the entire Hochschild complex $\mathit{CC}^*(\scrA,\scrA)$ is a quasi-isomorphism.
\end{lemma}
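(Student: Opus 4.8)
\textbf{Proof plan for Lemma \ref{th:reduced}.}
The statement is the $R$-coefficient, strictly-unital version of the classical fact that the reduced and unreduced Hochschild complexes are quasi-isomorphic, and the plan is to prove it by the standard filtration (or explicit homotopy) argument, keeping an eye on the mild extra hypotheses imposed by working over $R$ (each $\scrA^i$ is free, $Re \hookrightarrow \scrA$ is split, and $\scrA^0/Re$ is free). First I would fix the splitting $\scrA = Re \oplus \bar{\scrA}$ coming from strict unitality, so that the normalized subcomplex is literally $\mathit{CC}^{\mathit{red},*}(\scrA,\scrA) = \mathit{Hom}(T(\bar{\scrA}[1]),\scrA) \subset \mathit{Hom}(T(\scrA[1]),\scrA)$, and observe that $\delta$ preserves this subcomplex: this is exactly where strict unitality of the $\mu^d_{\scrA}$ is used, since $\mu^d_{\scrA}(\dots,e,\dots) = 0$ for $d \neq 2$ and $\mu^2_{\scrA}(e,\cdot),\mu^2_{\scrA}(\cdot,e)$ are the (signed) identity, so inserting an $e$ into a Hochschild cochain argument either kills the term or contracts the word length.

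Next I would set up the quotient complex $\mathit{CC}^*(\scrA,\scrA)/\mathit{CC}^{\mathit{red},*}(\scrA,\scrA)$ and show it is acyclic; equivalently, build an explicit contracting homotopy. The clean way is to filter $T(\scrA[1]) = \bigoplus_d \scrA[1]^{\otimes d}$ by the number of tensor factors that lie in the $Re$-summand; on the associated graded the differential becomes the two "bar-type" terms that insert/remove an $e$ using $\mu^2_{\scrA}(e,-)$ and $\mu^2_{\scrA}(-,e)$, and these are contractible (the standard simplicial-$e$ homotopy, just as in the field case). Because each $\scrA^i$ is a free $R$-module and $\bar{\scrA}$ is a direct summand, the relevant $\mathit{Hom}$'s and tensor powers are again free $R$-modules and the filtration is complete and exhaustive degreewise, so one may pass from the associated graded acyclicity to acyclicity of the quotient without any flatness subtleties; then the long exact sequence of the pair $0 \to \mathit{CC}^{\mathit{red}} \to \mathit{CC} \to \mathit{CC}/\mathit{CC}^{\mathit{red}} \to 0$ gives the claim. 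Alternatively one writes down the homotopy $H$ directly by "absorbing" the first non-$e$ letter against the units and checking $\delta H + H \delta = \mathit{id} - (\text{projection to }\mathit{CC}^{\mathit{red}})$ by a bookkeeping of signs; I would likely present the filtration argument since it isolates the one nontrivial computation.

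The main obstacle is purely combinatorial-bookkeeping rather than conceptual: getting the signs right in the contracting homotopy for the associated graded, given the paper's Koszul-type sign conventions (\cite{seidel04}, with $\|a\| = |a|-1$), and making sure the homotopy is genuinely $R$-linear and compatible with the splitting $\scrA = Re \oplus \bar{\scrA}$ so that it descends to the quotient complex. A secondary point to be careful about is the completeness of the filtration: $\mathit{CC}^*(\scrA,\scrA)$ is an infinite product over $d$, so I would check that the homotopy converges on each Hochschild cochain (it does, because applying it strictly decreases the number of $Re$-letters in the leading term it acts on, and in each fixed arity the process terminates). Everything else — freeness of the graded pieces, split-injectivity of $Re$, finite global dimension of $R$ — is already in force by hypothesis and is only needed to know we are manipulating $K$-projective complexes, so no further input is required.
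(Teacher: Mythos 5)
Your overall strategy — reduce the lemma to acyclicity of the quotient $\mathit{CC}^*(\scrA,\scrA)/\mathit{CC}^{\mathit{red},*}(\scrA,\scrA)$, prove that by a complete filtration whose graded pieces admit explicit contracting homotopies, and finish with the long exact sequence — is the same as the paper's, and the peripheral points (strict unitality makes the reduced cochains a subcomplex; the splitting $\scrA = Re \oplus \bar{\scrA}$ decomposes everything into free pieces; completeness because $\mathit{CC}^*$ is an infinite product) are fine. The gap is in the choice of filtration. Filtering by the number of tensor factors lying in $Re$ is not compatible with the Hochschild differential: the splitting $\scrA = Re \oplus \bar{\scrA}$ is only a splitting of $R$-modules, not an augmentation, so $\mu^j_{\scrA}$ applied to letters of $\bar{\scrA}$ can have a nonzero component along $Re$. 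The corresponding terms of $\delta$ (precompose $\gamma$ with the collapse of an all-$\bar{\scrA}$ block and pair against the $Re$-component of the output) strictly \emph{lower} your count, while the strict-unitality terms ($\mu^2_{\scrA}$ absorbing an $e$ on either side, inside or outside $\gamma$) \emph{raise} it; since both occur, neither the increasing nor the decreasing count filtration is preserved, so there is no associated graded complex on which ``the differential becomes the two bar-type terms''. A concrete failure already occurs for the associative algebra $\bQ[x]/(x^2-1)$ with $e = 1$, $\bar{\scrA} = \bQ x$: for a $2$-cochain $\gamma$ supported on $Re \otimes Re$, the only surviving term of $(\delta\gamma)(e,x,x)$ is $\pm\gamma(e,\mu^2_{\scrA}(x,x)) = \pm\gamma(e,e) \neq 0$, a component on a word with a single $e$; dually, a $1$-cochain supported on $Re$ has $(\delta\gamma)(e,e) = \pm\gamma(e) \neq 0$, a component on a word with two $e$'s.

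The repair is to filter by \emph{position} rather than by count, which is exactly what the paper does: $F^p$ consists of the cochains vanishing whenever $e$ is inserted into one of the last $p$ slots (in your splitting: components supported on words all of whose $Re$-letters sit in positions $>p$ from the right), and the induced graded pieces of the quotient are contracted by the homotopy $h^p$ that inserts $e$ into slot $p+1$. This filtration is preserved by $\delta$: the two terms absorbing a given unit letter into its left and right neighbours cancel in pairs (the same cancellation that shows $\mathit{CC}^{\mathit{red},*}$ is a subcomplex, with the boundary slots handled by the outer $\mu^2_{\scrA}(e,\cdot)$, $\mu^2_{\scrA}(\cdot,e)$ terms), and every remaining term keeps the unit within the last $p$ slots of the input to $\gamma$; by contrast, nothing cancels the count-lowering terms above. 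With this change your completeness and long-exact-sequence steps go through verbatim, and — as you correctly note — neither the finite global dimension of $R$ nor anything beyond freeness and the splitting of $Re \subset \scrA$ is needed.
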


\begin{proof}
We introduce a decreasing filtration $F^p \mathit{CC}^*(\scrA,\scrA)$, which consists of those $\gamma$ which become zero if we insert $e$ into one of the last $p$ entries. Clearly, $\mathit{CC}^{\mathit{red},*}(\scrA,\scrA) = \bigcap_p F^p \mathit{CC}^*(\scrA,\scrA)$. The induced filtration $F^p \mathit{CC}^*(\scrA,\scrA)/\mathit{CC}^{\mathit{red},*}(\scrA,\scrA)$ of the quotient $\mathit{CC}^*(\scrA,\scrA)/\mathit{CC}^{\mathit{red},*}(\scrA,\scrA)$ is complete. To see that, it is convenient to pick a splitting (of $R$-modules) $\scrA = Re \oplus \bar{\scrA}$. Then, ignoring the gradings to simplify the notation,
\begin{equation}
\frac{\mathit{CC}^*(\scrA,\scrA)}{\mathit{CC}^{*,\mathit{red}}(\scrA,\scrA)} =
\prod_{r \geq 1,\; p_0,\dots,p_r \geq 0} \!\!\! \mathit{Hom}(\bar\scrA^{\otimes p_r} \otimes R e \otimes \bar\scrA^{\otimes p_{r-1}} \otimes \cdots \otimes R e \otimes \bar\scrA^{\otimes p_0}, \scrA),
\end{equation}
and our filtration is by $p_0 \geq p$. Hence, it is sufficient to prove that the successive quotients of our filtration are acyclic. Consider the degree $-1$ endomorphism of the Hochschild complex defined by 
\begin{equation} 
(h^p \gamma)^d(a_d,\dots,a_1) = (-1)^{\|a_1\|+\cdots+\|a_p\|} \gamma^{d+1}(a_d,\dots,a_{p+1},e,a_p,\dots,a_1)
\end{equation}
(the formula is understood to mean zero if $d<p$). For $\gamma \in F^p \mathit{CC}^*(\scrA,\scrA)$, we have
\begin{equation}
\begin{aligned}
& (\delta h^p \gamma + h^p \delta \gamma)^d(a_d,\dots,a_1) \\ & \quad  = \sum_{i>0,\, j \geq p} \pm
\mu^{d-j+1}_{\scrA}(a_d,\dots,a_{i+j+1},\gamma^{j+1}(a_{i+j},e,a_{i+p},\dots,a_{i+1}),a_i,\dots,a_1).
\end{aligned}
\end{equation}
Because of the $i>0$ restriction, the right hand side lies in $F^{p+1}\mathit{CC}^*(\scrA,\scrA)$. Hence, our formula defines a contracting homotopy for $F^p \mathit{CC}^*(\scrA,\scrA)/F^{p+1}\mathit{CC}^*(\scrA,\scrA)$.
\end{proof}

%

In the theory from Section \ref{sec:noncommutative-divisors}, when talking about $A_\infty$-algebras over $R$, one always wants the inclusion of such a subalgebra to be a split injective map of graded $R$-modules (something that is never an issue in our applications). The proof of Lemma \ref{th:turn-into-inclusion} relies on the acyclicity of the complexes
\begin{equation} \label{eq:c-revisited}
F^{r-1} \frakc / F^r c = \mathit{Hom}\big(\scrB^{\otimes r}/\scrA^{\otimes r}, \{\scrB \rightarrow \tilde{\scrB}\big).
\end{equation} 
Here, the differential comes from the differentials on $\scrB$ and $\tilde{\scrB}$, together with the given map $\scrB \rightarrow \tilde{\scrB}$. That map is a quasi-isomorphism by assumption, so $\{\scrB \rightarrow \tilde{\scrB}\}$ is acyclic. Since $\scrB^{\otimes r}/\scrA^{\otimes r}$ is a complex of free $R$-modules, it follows that \eqref{eq:c-revisited} is acyclic as well. In other words, it is not necessary to first pass to the cohomology of $\scrA$ or $\scrB$, as we had chosen to do (for expository reasons) in \eqref{eq:b-filtered}, \eqref{eq:g-filtered}, \eqref{eq:p-filtered}.

Concerning the theory of $A_\infty$-connections (Section \ref{sec:connections}), no modifications or additional explanations are necessary: Lemmas \ref{th:kaledin-2} and \ref{th:unique-gauge} only require unique divisibility by positive integers, hence work well over $\bQ[q_1^{\pm 1},\dots,q_r^{\pm 1}]$.


\begin{thebibliography}{10}

\bibitem{aldi-zaslow06}
M.~Aldi and E.~Zaslow.
\newblock {S}eidel's mirror map for abelian varieties.
\newblock {\em Adv. Theor. Math. Phys.}, 10:591--602, 2006.

\bibitem{biran-cornea13}
P.~Biran and O.~Cornea.
\newblock Lagrangian cobordism. {I}.
\newblock {\em J. Amer. Math. Soc.}, 26:295--340, 2013.

\bibitem{bryan-leung97}
J.~Bryan and N.~Leung.
\newblock The enumerative geometry of {$K3$} surfaces and modular forms.
\newblock {\em J. Amer. Math. Soc.}, 13:371--410, 2000.

\bibitem{cieliebak-mohnke07}
K.~Cieliebak and K.~Mohnke.
\newblock Symplectic hypersurfaces and transversality in {G}romov-{W}itten
  theory.
\newblock {\em J. Symplectic Geom.}, 5:281--356, 2007.

\bibitem{doran00}
C.~Doran.
\newblock Picard-{F}uchs uniformization and modularity of the mirror map.
\newblock {\em Comm. Math. Phys.}, 212(3):625--647, 2000.

\bibitem{floer-hofer-salamon94}
A.~Floer, H.~Hofer, and D.~Salamon.
\newblock Transversality in elliptic {M}orse theory for the symplectic action.
\newblock {\em Duke Math. J.}, 80:251--292, 1995.

\bibitem{fukaya-oh-ohta-ono07}
K.~Fukaya, Y.-G. Oh, H.~Ohta, and K.~Ono.
\newblock Lagrangian surgery and holomorphic discs.
\newblock Unpublished additional chapter for \cite{fooo}, 2007.

\bibitem{fooo}
K.~Fukaya, Y.-G. Oh, H.~Ohta, and K.~Ono.
\newblock {\em Lagrangian intersection {F}loer theory - anomaly and
  obstruction}.
\newblock Amer. Math. Soc., 2010.

\bibitem{futaki-ueda14}
M.~Futaki and K.~Ueda.
\newblock Tropical coamoeba and torus-equivariant homological mirror symmetry
  for the projective space.
\newblock {\em Comm. Math. Phys.}, 332:53--87, 2014.

\bibitem{ganatra16}
S.~Ganatra.
\newblock Automatically generating {F}ukaya categories and computing quantum
  cohomology.
\newblock Preprint arXiv:1605.07702.

\bibitem{ganatra-perutz-sheridan15}
S.~Ganatra, T.~Perutz, and N.~Sheridan.
\newblock Mirror symmetry: from categories to curve-counts.
\newblock Preprint arXiv 1510:03839.

\bibitem{gathmann03}
A.~Gathmann.
\newblock Relative {G}romov-{W}itten invariants and the mirror formula.
\newblock {\em Math. Ann.}, 325:393--412, 2003.

\bibitem{givental96}
A.~Givental.
\newblock Equivariant {G}romov-{W}itten invariants.
\newblock {\em Internat. Math. Res. Notices}, pages 613--663, 1996.

\bibitem{givental00}
A.~Givental.
\newblock A mirror theorem for toric complete intersections.
\newblock In {\em Topological field theory, primitive forms and related topics
  ({K}yoto, 1996)}, volume 160 of {\em Progr. Math.}, pages 141--175.
  Birkh\"{a}user, 1998.

\bibitem{gross-siebert19}
M.~Gross and B.~Siebert.
\newblock Intrinsic mirror symmetry.
\newblock Preprint arXiv:1909.07649.

\bibitem{hofer-salamon95}
H.~Hofer and D.~Salamon.
\newblock Floer homology and {N}ovikov rings.
\newblock In H.~Hofer, C.~Taubes, A.~Weinstein, and E.~Zehnder, editors, {\em
  The {F}loer memorial volume}, volume 133 of {\em Progress in Mathematics},
  pages 483--524. Birkh{\"a}user, 1995.

\bibitem{kaledin07}
D.~Kaledin.
\newblock Some remarks on formality in families.
\newblock {\em Mosc. Math. J.}, 7:643--652, 766, 2007.

\bibitem{klemm-lian-roan-yau93}
A.~Klemm, B.~Lian, S.-S. Roan, and S.-T. Yau.
\newblock A note on {ODE}s from mirror symmetry.
\newblock In {\em Functional analysis on the eve of the 21st century, {V}ol.\
  {II} ({N}ew {B}runswick, {NJ}, 1993)}, volume 132 of {\em Progr. Math.},
  pages 301--323. Birkh{\"a}user, 1995.

\bibitem{lee01b}
Y.-P. Lee.
\newblock Quantum {L}efschetz hyperplane theorem.
\newblock {\em Invent. Math.}, 145:121--149, 2001.

\bibitem{lian-yau96}
B.~Lian and S.-T. Yau.
\newblock Arithmetic properties of mirror map and quantum coupling.
\newblock {\em Comm. Math. Phys.}, 176:163--191, 1996.

\bibitem{lunts10}
V.~Lunts.
\newblock Formality of {DG} algebras (after {K}aledin).
\newblock {\em J. Algebra}, 323:878--898, 2010.

\bibitem{mckay-sebbar00}
J.~MacKay and A.~Sebbar.
\newblock Fuchsian groups, automorphic functions and {S}chwarzians.
\newblock {\em Math. Ann.}, 318:255--275, 2000.

\bibitem{maier11}
R.~Maier.
\newblock Nonlinear differential equations satisfied by certain classical
  modular forms.
\newblock {\em Manuscripta Math.}, 134:1--42, 2011.

\bibitem{mak-wu15}
C.~Mak and W.~Wu.
\newblock Dehn twists exact sequences through {L}agrangian cobordism.
\newblock preprint arXiv:1509.08028.

\bibitem{mclean12}
M.~McLean.
\newblock Symplectic homology of {L}efschetz fibrations and {F}loer homology of
  the monodromy map.
\newblock {\em Selecta Math.}, 18:473--512, 2012.

\bibitem{oh97b}
Y.-G. Oh.
\newblock Gromov-{F}loer theory and disjunction energy of compact {L}agrangian
  embeddings.
\newblock {\em Math. Res. Lett.}, 4:895--905, 1997.

\bibitem{oh11}
Y.-G. Oh.
\newblock Seidel's long exact sequence on {C}alabi-{Y}au manifolds.
\newblock {\em Kyoto J. Math.}, 51(3):687--765, 2011.

\bibitem{osgood98}
B.~Osgood.
\newblock Old and new on the {S}chwarzian derivative.
\newblock In {\em Quasiconformal mappings and analysis ({A}nn {A}rbor, {MI},
  1995)}, pages 275--308. Springer, 1998.

\bibitem{ovsienko-tabachnikov09}
V.~Ovsienko and S.~Tabachnikov.
\newblock What is{\dots} the {S}chwarzian derivative?
\newblock {\em Notices of the AMS}, 56(1):34--36, 2009.

\bibitem{perutz-sheridan15}
T.~Perutz and N.~Sheridan.
\newblock Automatic split-generation for the {F}ukaya category.
\newblock Preprint ArXiv:1510.03848.

\bibitem{perutz-sheridan20}
T.~Perutz and N.~Sheridan.
\newblock Constructing the relative {F}ukaya category. {I}.
\newblock Unpublished manuscript.

\bibitem{piunikhin-salamon-schwarz94}
S.~Piunikhin, D.~Salamon, and M.~Schwarz.
\newblock Symplectic {F}loer-{D}onaldson theory and quantum cohomology.
\newblock In C.~B. Thomas, editor, {\em Contact and symplectic geometry}, pages
  171--200. Cambridge Univ. Press, 1996.

\bibitem{robbin-salamon02}
J.~Robbin and D.~Salamon.
\newblock Asymptotic behaviour of holomorphic strips.
\newblock {\em Ann. Inst. H. Poincar{\'e} Anal. Non Lin{\'e}aire}, 18:573--612,
  2001.

\bibitem{schwarz}
M.~Schwarz.
\newblock {\em Morse homology}, volume 111 of {\em Progress in Mathematics}.
\newblock Birkh{\"a}user, 1993.

\bibitem{seidel08}
P.~Seidel.
\newblock {$A\sb \infty$}-subalgebras and natural transformations.
\newblock {\em Homology, Homotopy Appl.}, 10:83--114, 2008.

\bibitem{seidel04}
P.~Seidel.
\newblock {\em {F}ukaya categories and {P}icard-{L}efschetz theory}.
\newblock European Math. Soc., 2008.

\bibitem{seidel03b}
P.~Seidel.
\newblock {\em Homological mirror symmetry for the quartic surface}, volume
  1116 of {\em Mem. Amer. Math. Soc.}
\newblock Amer. Math. Soc., 2015.

\bibitem{seidel14}
P.~Seidel.
\newblock {P}icard-{L}efschetz theory and dilating {$\mathbb{C}^*$}-actions.
\newblock {\em J. Topology}, 8:1167--1201, 2015.

\bibitem{seidel14b}
P.~Seidel.
\newblock {F}ukaya {$A_\infty$}-structures associated to {L}efschetz
  fibrations. {II}.
\newblock In D.~Auroux, L.~Katzarkov, T.~Pantev, Y.~Soibelman, and
  Y.~Tschinkel, editors, {\em Algebra, {G}eometry and {P}hysics in the 21st
  {C}entury ({K}ontsevich {F}estschrift)}, volume 324 of {\em Progress in
  Math.}, pages 295--364. Birkh{\"a}user, 2017.

\bibitem{seidel15}
P.~Seidel.
\newblock Fukaya {$A_\infty$}-structures associated to {L}efschetz fibrations.
  {II} 1/2.
\newblock {\em Adv. Theor. Math. Phys.}, 20:883--944, 2016.

\bibitem{seidel16}
P.~Seidel.
\newblock Fukaya {$A_\infty$}-structures associated to {L}efschetz fibrations.
  {III}.
\newblock {\em J. Differential Geom.}, 117:485--589, 2021.

\bibitem{seidel17}
P.~Seidel.
\newblock Fukaya {$A_\infty$}-structures associated to {L}efschetz fibrations.
  {IV}.
\newblock In {\em Breadth in contemporary topology}, pages 195--276. Amer.
  Math. Soc., 2019.

\bibitem{seidel18}
P.~Seidel.
\newblock Fukaya {$A_\infty$}-structures associated to {L}efschetz fibrations.
  {IV} 1/2.
\newblock {\em J. Symplectic Geom.}, 18:291--332, 2020.

\bibitem{seidel20}
P.~Seidel.
\newblock Fukaya {$A_\infty$}-structures associated to {L}efschetz fibrations.
  {V}.
\newblock Preprint arXiv:2010.13692, 2020.

\bibitem{seidel21}
P.~Seidel.
\newblock Fukaya {$A_\infty$}-structures associated to {L}efschetz fibrations.
  {VI} 1/2.
\newblock In preparation.

\bibitem{seidel21b}
P.~Seidel.
\newblock Fukaya {$A_\infty$}-structures associated to {L}efschetz fibrations.
  {VII}.
\newblock In preparation.

\bibitem{sheridan11b}
N.~Sheridan.
\newblock Homological mirror symmetry for {C}alabi-{Y}au hypersurfaces in
  projective space.
\newblock {\em Invent. Math.}, 199:1--186, 2015.

\bibitem{verrill01}
H.~Verrill.
\newblock Picard-{F}uchs equations of some families of elliptic curves.
\newblock In {\em Proceedings on {M}oonshine and related topics}, volume~30 of
  {\em CRM Proc. Lecture Notes}, pages 253--268. Amer. Math. Soc., 2001.

\bibitem{wehrheim-woodward15}
K.~Wehrheim and C.~Woodward.
\newblock Exact triangle for fibered {D}ehn twists.
\newblock {\em Research in the Math. Sci.}, 3:Article no. 17, 2016.

\end{thebibliography}

\end{document}